\theoremstyle{plain}
\newcommand{\nn}{\nonumber}
\newcommand{\et}{\mathrm{\acute{e}t}}
\newcommand\scalemath[2]{\scalebox{#1}{\mbox{\ensuremath{\displaystyle #2}}}}
\newcommand{\tsmash}{\wedge}
\newcommand{\holim@}[2]{%
  \vtop{\m@th\ialign{##\cr
    \hfil$#1\operator@font holim$\hfil\cr
    \noalign{\nointerlineskip\kern1.5\ex@}#2\cr
    \noalign{\nointerlineskip\kern-\ex@}\cr}}%
}
\newcommand{\holim}{%
  \mathop{\mathpalette\holim@{\leftarrowfill@\textstyle}}\nmlimits@
}
\newcommand{\hocolim@}[2]{%
  \vtop{\m@th\ialign{##\cr
    \hfil$#1\operator@font holim$\hfil\cr
    \noalign{\nointerlineskip\kern1.5\ex@}#2\cr
    \noalign{\nointerlineskip\kern-\ex@}\cr}}%
}
\newcommand{\hocolim}{%
  \mathop{\mathpalette\holim@{\rightarrowfill@\textstyle}}\nmlimits@
}
\newcommand{\colim@}[2]{%
  \vtop{\m@th\ialign{##\cr
    \hfil$#1\operator@font colim$\hfil\cr
    \noalign{\nointerlineskip\kern1.5\ex@}#2\cr
    \noalign{\nointerlineskip\kern-\ex@}\cr}}%
}
\newcommand{\colim}{%
  \mathop{\mathpalette\colim@{\rightarrowfill@\textstyle}}\nmlimits@
}
\newtheorem{theorem}{Theorem}[section]
\newtheorem{proposition}[theorem]{Proposition}
\newtheorem{lemma}[theorem]{Lemma}
\newtheorem{corollary}[theorem]{Corollary}
\newtheorem{variant}[theorem]{Variant}
\newtheorem{claim}[theorem]{Claim}
\newtheorem{theorem/definition}[theorem]{Theorem/Definition}
\theoremstyle{definition}
\newtheorem{definition}[theorem]{Definition}
\newtheorem{construction}[theorem]{Construction}
\newtheorem{remark}[theorem]{Remark}
\begin{document}

\title{On the algebraic $K$-theory of smooth schemes over truncated Witt vectors}
\author{Xiaowen Hu}
\address{School of Sciences, Great Bay University, Dongguan, China}
\email{huxw06@gmail.com}
\subjclass[2020]{Primary 19D50; Secondary 19D55, 14B10}
\keywords{algebraic $K$-theory, cyclic homology, motivic complex, deformation theory.}

\begin{abstract}
%Using Brun's theorem relating the relative algebraic  $K$-theory and the relative cyclic homology, we compute certain relative algebraic $K$-groups of a $p$-adic smooth scheme over $W_n(k)$, where $k$ is a perfect field of characteristic $p$.  Inspired by this result and  the work of Bloch, Esnault, and Kerz, we define the infinitesimal motivic complexes, and then show a relative Chern character isomorphism. This has a direct consequence on infinitesimal deformations in algebraic $K$-theory, which can be regarded as an infinitesimal analog of Emerton's $p$-adic variational Hodge conjecture.
We study the algebraic $K$-theory of smooth schemes over $W_n(\Bbbk)$, where $\Bbbk$ is a perfect field of characteristic $p>0$. For a $p$-adic smooth scheme $X_{\centerdot}$ over $W_{\centerdot}(k)$, we introduce complexes $p^{r,m}_{r,n}\Omega^{\bullet}_{X_{\centerdot}}$ and infinitesimal motivic complexes $\mathbb{Z}_{X_n}(r)$, and for 
$0 \leq i \leq p-4$, we  establish a Chern character isomorphism between the sheaf $\mathcal{K}_{X_n,X_{m},i}$ and the direct sum of certain cohomology sheaves of $p^{r,m}_{r,n}\Omega^{\bullet}_{X_{\centerdot}}$ with $1\leq r\leq i$. This leads to a criterion for $K$-theoretic infinitesimal deformations, which is related to Emerton's $p$-adic variational Hodge conjecture. By taking the limit $n \rightarrow \infty$ with $m=1$, we recover a theorem of Bloch, Esnault, and Kerz on continuous relative algebraic $K$-theory.

The proof combines Brun's isomorphism relating $K$-theory to derived cyclic homology, computations of relative cyclic homology over $W(\Bbbk)$, and an analysis of multiplicative structures of the mod $p$ relative $K$-theory.  
\end{abstract}

\maketitle

%{\hypersetup{linkcolor=black} \tableofcontents }
\tableofcontents

\section{Introduction} % (fold)
\label{sec:introduction}

%The Hodge conjecture is nearly inaccessible in the general case due to the transcendental nature of the condition for a Hodge class (or more algebraically, a class in the middle Hodge filtration) to have rational periods. 
In \cite{Gro66}, Grothendieck proposed  the %nowadays called 
variational Hodge conjecture (VHC). Put in an analytic setting, it states that if $\pi:\mathcal{X} \rightarrow S$ is a smooth projective submersion between connected complex manifolds, and if $\alpha$ is a  flat section of the Gauss-Manin connection and  lies in $F^p\mathbf{R}^{2p}\pi_* \Omega^{\bullet}_{X/S}$ (Hodge obstruction), and if $\alpha$ restricted to one fiber is an algebraic class, then $\alpha$ is algebraic on all fibers. If $S$ is quasi-projective, then by the global invariant cycle theorem of Deligne the Hodge obstruction vanishes and the statement is the usual VHC (see e.g. \cite[\S3]{ChS14}). A major achievement related to VHC is obtained by Bloch \cite{Blo72}, and its generalization by Buchweitz and Flenner \cite{BuF03}  plays a crucial role in the recently announced proof  by Markman \cite{Mar25} of the Hodge conjecture for abelian 4-folds.  
In \cite{Eme97}, Emerton proposed a $p$-adic version of the VHC, where $S$ is replaced with $\operatorname{Spec}W(\Bbbk)$ and $\Bbbk$ is a perfect field of characteristic $p>0$. Maulik and Poonen showed in \cite[Theorem 9.10]{MaP12} that  if the $p$-adic VHC holds for all sufficiently large primes $p$, then the complex VHC holds.

Let $X$ be a smooth projective scheme over $W(\Bbbk)$ of relative dimension $d$, and let $X_n=X\otimes_{W(\Bbbk)}W_n(\Bbbk)$. Via the Chern character map, $K_0(X)_{\mathbb{Q}}$ is isomorphic to $\mathrm{CH}(X_0)_{\mathbb{Q}}$. Since $K$-theory is sensitive to the non-reduced structure, it is natural to study the $p$-adic VHC as a deformation problem in $K$-theory. 
In \cite{BEK14}, under the condition $d<p-6$, Bloch, Esnault, and Kerz showed that an element of $K(X_1)_{\mathbb{Q}}$ lifts to $\varprojlim_n K(X_n)_{\mathbb{Q}}$ if and only if its  Hodge obstruction vanishes. This provides evidence for the $p$-adic VHC and suggests an approach to understand it. Results in a similar flavor are then obtained in \cite{BEK14b}, \cite{Mor14}, and \cite{Mor19b}; see also \cite{Lan18} and \cite{GrL21} for generalizations of the motivic pro-complexes defined in \cite{BEK14}. A crucial ingredient \cite[Claim 11.2]{BEK14} of their proof is a Chern character isomorphism of \emph{étale} pro-sheaves on $X_1$:
\begin{gather}\label{eq:etalePro-iso-BEK}
\mathrm{ch}: \mathcal{K}_{X_{\centerdot},X_1,i} \rightarrow \bigoplus_{r\leq i}\mathcal{H}^{2r-i-1}\big(p(r)\Omega_{X_{\centerdot}}^{\leq r-1}\big)
\end{gather}
for $0\leq i\leq p-3$, where $p(r)\Omega_{X_{\centerdot}}^{\bullet}$ is the pro-complex 
\begin{gather*}
  p^r \mathcal{O}_{X_{\centerdot}}\rightarrow p^{r-1}\Omega_{X_{\centerdot}/W_{\centerdot}(\Bbbk)}^1\rightarrow\dots\rightarrow
  p\Omega_{X_{\centerdot}/W_{\centerdot}(\Bbbk)}^{r-1} 
  \rightarrow \Omega^r_{X_{\centerdot}/W_{\centerdot}}\rightarrow \Omega^{r+1}_{X_{\centerdot}/W_{\centerdot}}\rightarrow \dots
\end{gather*}

Since the functor $K_0$ on schemes is not representable, the Bloch-Esnault-Kerz theorem on formal deformation in algebraic  $K$-theory does not lead to existence to the algebraic deformation. The algebraization problem is still open, but we learn from the classical deformation theory (of vector bundles, etc.) that to study a deformation problem, it is essential and beneficial to study  the relevant infinitesimal deformations.
%In this regard, however, the theory of \cite{BEK14} does not say anything. 
Our goal in this paper is to find an infinitesimal version of the isomorphism \eqref{eq:etalePro-iso-BEK} and address the problem of infinitesimal deformations  in the algebraic $K$-theory.  
\subsection{Main results} % (fold)
\label{sub:main_results}

The main theorem of this paper is
\begin{theorem}[= Theorem \ref{thm:relative-comparison-local}]\label{thm:relative-comparison-local-intro}
Let $\Bbbk$ be a perfect field of characteristic $p$. Let $X_{\centerdot}$ be a $p$-adic smooth scheme separated and of finite type over $W_{\centerdot}(\Bbbk)$. 
For $0\leq i\leq p-4$ (resp. $0\leq i\leq p-3$) and $n> m\geq 1$, there is a canonical Chern character isomorphism (resp. epimorphism)
\begin{gather}\label{eq:relative-comparison-local-intro}
\mathrm{ch}=\sum_{r=1}^i \mathrm{ch}_r:\mathcal{K}_{X_n,X_m,i}\rightarrow  \bigoplus_{r=1}^i \mathcal{H}^{2r-i-1}\big(p^{r,m}_{r,n}\Omega^{\bullet}_{X_{\centerdot}}\big)
\end{gather}
of \emph{Nisnevich} sheaves on $X_1$, where $p^{r,m}_{r,n}\Omega^{\bullet}_{X_{\centerdot}}$ is the complex
\begin{gather*}%\label{eq:prmrn-complex-intro}
p^{rm} \mathcal{O}_{X_{rn}}\rightarrow p^{(r-1)m}\Omega^1_{X_{(r-1)n}/W_{(r-1)n}}\rightarrow
\dots\rightarrow p^m\Omega_{X_{n}/W_{n}}^{r-1}\rightarrow 0\rightarrow \dots
\end{gather*}
\end{theorem}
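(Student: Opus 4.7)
The plan is to convert the question about relative $K$-theory into one about relative cyclic homology via Brun's theorem (as flagged in the abstract), and then to identify the resulting relative cyclic homology with the explicit truncated de Rham complex appearing in \eqref{eq:relative-comparison-local-intro}. Both sides satisfy Nisnevich descent on smooth schemes, so after choosing local smooth lifts, which exist by smoothness of $X_\centerdot$ over $W_\centerdot(\Bbbk)$, everything reduces to an algebraic computation on an affine Nisnevich neighborhood $\mathrm{Spec}\,A_n \subset X_n$ with reduction $A_m = A_n / p^m$.

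First I would apply Brun's theorem to the nilpotent surjection $A_n \twoheadrightarrow A_m$, whose kernel $(p^m) \subset A_n$ satisfies $(p^m)^{\lceil n/m \rceil} = 0$. This yields a natural isomorphism in the relevant range from relative $K$-theory to relative cyclic homology, compatible weight-by-weight with the Chern character. Next I would compute the relative cyclic homology using the Hochschild--Kostant--Rosenberg decomposition. The weight-$r$ piece of $HC_{i-1}(A_n)$ is controlled by the truncated complex $\Omega^{\leq r-1}_{A_n / W_n}$; intertwining the HKR weight filtration with the $(p^m)$-adic filtration on $A_n$, the $j$-th slot of the relative weight-$r$ piece picks up a factor of $p^{(r-j)m}$, because the kernel of $\Omega^j_{A_n} \to \Omega^j_{A_m}$ agrees with $p^{(r-j)m}\Omega^j$ modulo the $r$-th graded step. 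The slots $\Omega^{\geq r}_{A_n}$ contribute trivially to the \emph{relative} theory since they appear with the same coefficient in $HC(A_n)$ and $HC(A_m)$. This reproduces the complex
\begin{gather*}
p^{rm}\mathcal{O}_{X_{rn}} \to p^{(r-1)m}\Omega^1_{X_{(r-1)n}/W_{(r-1)n}} \to \dots \to p^m \Omega^{r-1}_{X_n / W_n} \to 0,
\end{gather*}
and summing over $1 \leq r \leq i$ in cohomological degree $2r-i-1$ yields the right-hand side of \eqref{eq:relative-comparison-local-intro}.

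The main technical obstacle is ensuring integrality in the stated range $i \leq p-4$. The weight-$r$ Chern character carries a denominator $r!$, which is a unit in $\mathbb{Z}_{(p)}$ only when $r < p$; combined with boundary terms in Brun's comparison and the $p$-adic valuations that appear in the HKR splitting, this should force $i \leq p-4$ for an isomorphism, with $i = p-3$ yielding only an epimorphism as stated. A secondary subtlety is identifying exactly which thickening $X_{(r-j)n}$ is needed in the $j$-th slot: a lift of order $(r-j)n$ is required precisely to realize $p^{(r-j)m}$ as a non-torsion submodule of the sheaf of forms. The local computation then assembles into the claimed Nisnevich sheaf statement because smooth lifts can be chosen compatibly across Nisnevich charts by standard deformation theory, and the Chern character matches on the nose once both sides are identified through HKR, by naturality of Brun's comparison map.
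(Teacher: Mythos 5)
Your proposal identifies the right ingredients (Brun's theorem, HKR, reduction to a Nisnevich-local calculation) but has two genuine gaps that the paper spends most of its length overcoming, and glossing over either would leave the argument broken.

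First, Brun's theorem is stated for an ideal $I$ with $I^m = 0$, and its isomorphism range is $0 \le i < \frac{p}{m-1}-2$. For the surjection $A_n \twoheadrightarrow A_m$ the kernel $(p^m)$ has nilpotency order $\lceil n/m\rceil$, so the range you get degrades as $n/m$ grows: for $m=1$ and $n$ large the isomorphism range is roughly $i < p/(n-1)-2$, which collapses to nothing. The paper explicitly notes (see \S\ref{sub:defining_the_infinitesimal_motivic_complex}) that one \emph{cannot} directly apply Brun's theorem to $K(A_n,A_m)$ for general $n>m$; already for $i=1$ the Brun map is forced to be the logarithm, whose $p$-integrality depends on $m$. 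The paper's workaround is to first reduce to the case $n-m=1$ (where $(p^m)^2=0$ so the range is $i<p-2$) via the five-lemma and the long exact sequence of the triple $(X_n,X_{m+1},X_m)$, then further reduce to a mod $p$ statement, and only then invoke Brun's theorem. Your proposal does not perform these dévissages.

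Second, and more seriously, your proposal asserts that ``the Chern character matches on the nose once both sides are identified through HKR, by naturality of Brun's comparison map.'' This is precisely the claim that the paper says ``is not clear'' (see the last sentence of the first paragraph of \S\ref{sub:outline_of_the_proof_of_the_main_theorem}). The Brun map is constructed as a zig-zag through filtered topological cyclic homology and the Adams isomorphism; there is no known identification of the map on $K_1$ with the Chern character, only that they differ by a universal unit (Proposition \ref{prop:Brun-map-K1}, Remark \ref{rem:universalUnit-u=1}). The entire architecture of the proof --- establishing that the Brun map respects Loday/Bott multiplications (\S\ref{sec:Brun's_isomorphism_and_the_product_structure}, requiring a careful compatibility for the Adams isomorphism in Lemma \ref{lem:adams-iso-homotopical-functoriality}), computing the products explicitly on the cyclic homology side (\S\ref{sec:mod_p_rel_cyclic_homology_and_the_products}) and on the infinitesimal-motivic side (\S\ref{sub:the_mod_p_product_structure}), constructing the canonical relative Chern classes via Gillet's method and Totaro's theorem (\S\ref{sec:infinitesimal_chern_classes_and_chern_characters}), and finally using a functoriality trick (Proposition \ref{prop:functoriality-determine-universal-transform}) to resolve the ambiguity on the ``basic summands'' $\mathcal{O}_{X_1}$ in odd $K$-degree, which are \emph{not} generated by products from lower degree --- exists precisely because one cannot simply transport the result along an unchecked equality of maps. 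Your sentence ``the Chern character matches on the nose \dots by naturality'' is therefore a genuine gap, not a detail: it is the hard core of the theorem.
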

We expect that this theorem also holds for the Zariski $K$-sheaves, see \S\ref{sub:further_problems}(1).

The complex $p^{r,m}_{r,n}\Omega^{\bullet}_{X_{\centerdot}}$ originates from a computation of relative derived cyclic homology in \S\ref{sec:relative_derived_cyclic_homology_of_smooth_algebras_over_texorpdfstring_w_n_bbbk_} as  explained in \S\ref{sub:defining_the_infinitesimal_motivic_complex}. 
%With hindsight
%In retrospect, 
Its appearance is reasonable, as it is the quotient of the pro-complexes  $p^{r,n}\Omega^{\bullet}_{X_{\centerdot}}$ and $p^{r,m}\Omega^{\bullet}_{X_{\centerdot}}$, where  $p^{r,n}\Omega^{\bullet}_{X_{\centerdot}}$ (Definition \ref{def:proComplexes}) is the pro-complex
\begin{gather}\label{eq:motivicProComplex-intro}
p^{rn}\mathcal{O}_{X_{\centerdot}}\rightarrow p^{(r-1)n}\Omega^1_{X_{\centerdot}/W_{\centerdot}}\rightarrow
\dots\rightarrow p^n\Omega_{X_{\centerdot}/W_{\centerdot}}^{r-1}
\rightarrow \Omega^r_{X_{\centerdot}/W_{\centerdot}}\rightarrow \Omega^{r+1}_{X_{\centerdot}/W_{\centerdot}}\rightarrow \dots
\end{gather}
which gives rise to the product structure on the infinitesimal motivic complexes $\mathbb{Z}_{X_n}(r)$, which will be introduced also in \S\ref{sub:defining_the_infinitesimal_motivic_complex}. Moreover, in particular, $p^{r,1}\Omega^{\bullet}_{X_{\centerdot}}$ is the pro-complex  $p(r)\Omega_{X_{\centerdot}}^{\bullet}$ we recalled above.
Theorem \ref{thm:relative-comparison-local-intro} implies the following result on infinitesimal deformations in algebraic $K$-theory, which can be regarded as an infinitesimal version of Emerton's $p$-adic variational Hodge conjecture.
\begin{theorem}[= Proposition \ref{prop:deformationInK-theory}(i)]\label{thm:deformationInK-theory-intro}
Let $\Bbbk$ be a perfect field of characteristic $p$.  Let $X_{\centerdot}$ be a $p$-adic smooth scheme separated and of finite type over $W_{\centerdot}(\Bbbk)$.
Assume $\dim X_1\leq p-6$. Then  a class $\xi\in K_0(X_m)$ lifts to $K_0(X_n)$ if and only if it is sent to 0 by a Hodge obstruction map
  \begin{gather}\label{eq:HodgeOb-intro}
  \mathrm{ob}:
  K_0(X_m) \rightarrow \bigoplus_{r=1}^{d-1}\mathbb{H}^{2r}(X_1,p^{r,m}_{r,n}\Omega^{\bullet}_{X_{\centerdot}}).
  \end{gather}
\end{theorem}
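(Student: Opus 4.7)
The plan is to deduce Theorem \ref{thm:deformationInK-theory-intro} from Theorem \ref{thm:relative-comparison-local-intro} by passing from the sheaf-level Chern character to the global $K$-theory spectrum via Nisnevich hypercohomology. The first step is to invoke Thomason--Trobaugh Nisnevich descent, which gives $K(X_j) \simeq \mathbb{R}\Gamma(X_1, \mathcal{K}(X_j))$ for each $j$. Applying $\mathbb{R}\Gamma(X_1, -)$ to the defining fiber sequence $\mathcal{K}(X_n, X_m) \to \mathcal{K}(X_n) \to \mathcal{K}(X_m)$ of Nisnevich sheaves of spectra on $X_1$ yields a fiber sequence of spectra whose long exact sequence on homotopy contains
\begin{equation*}
  K_0(X_n) \longrightarrow K_0(X_m) \xrightarrow{\ \partial\ } \pi_{-1}\mathbb{R}\Gamma(X_1, \mathcal{K}(X_n, X_m)).
\end{equation*}
By exactness, $\xi \in K_0(X_m)$ lifts to $K_0(X_n)$ if and only if $\partial(\xi) = 0$.

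The second step is to identify $\pi_{-1}\mathbb{R}\Gamma(X_1, \mathcal{K}(X_n, X_m))$ with $\bigoplus_{r=1}^{d-1}\mathbb{H}^{2r}(X_1, p^{r,m}_{r,n}\Omega^{\bullet}_{X_{\bullet}})$. For this, I compare the Nisnevich descent spectral sequence
\begin{equation*}
E_2^{s,t} = H^s_{\mathrm{Nis}}(X_1, \mathcal{K}_{X_n, X_m, t}) \Longrightarrow \pi_{t-s}\mathbb{R}\Gamma(X_1, \mathcal{K}(X_n, X_m))
\end{equation*}
to the analogous descent spectral sequence of the sheaf of complexes $T := \bigoplus_{r \ge 1} p^{r,m}_{r,n}\Omega^{\bullet}_{X_{\bullet}}[2r-1]$, for which $\pi_{-1}\mathbb{R}\Gamma(X_1, T) = \bigoplus_r \mathbb{H}^{2r}(X_1, p^{r,m}_{r,n}\Omega^{\bullet}_{X_{\bullet}})$ decomposes as a direct sum term by term. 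The cutoff $r \le d - 1$ is automatic: for $r \ge d$, the complex $p^{r,m}_{r,n}\Omega^{\bullet}_{X_{\bullet}}$ lives in cohomological degrees $[0, r-1]$, so Grothendieck vanishing together with $\mathrm{cd}_{\mathrm{Nis}}(X_1) \le d$ forces $\mathbb{H}^{2r} = 0$.

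The dimension bound $d \le p - 6$ is then what promotes the comparison from $E_2$ to $E_\infty$. Nonzero $E_2$ entries on total degree $-1$ are concentrated on the diagonal $(s, t) = (\ell, \ell - 1)$ with $2 \le \ell \le d$ (since $\mathcal{K}_{X_n, X_m, 0} = 0$). Any $E_2$ entry linked to $(\ell, \ell - 1)$ by an iterated descent differential takes the form $(\ell \pm r, t)$ with $t \in [0, d - 2]$, which lies safely in the isomorphism range $t \le p - 4$ of Theorem \ref{thm:relative-comparison-local-intro}. By naturality, the Chern character therefore induces an isomorphism on the total-degree-$-1$ parts of the two $E_\infty$ pages, and hence on $\pi_{-1}\mathbb{R}\Gamma$. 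Defining $\mathrm{ob}$ as the composite of $\partial$ with this identification completes the proof. The main non-formal step is precisely this bookkeeping of the iterated spectral sequence differentials: one must track every $E_2$ entry reachable from $(\ell, \ell-1)$ to ensure it remains in the isomorphism range of Theorem \ref{thm:relative-comparison-local-intro}, and the bound $d \le p - 6$ is what guarantees this with ample slack.
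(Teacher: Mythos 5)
Your route is genuinely different from the paper's. The paper deduces the case $i=0$ of Proposition~\ref{prop:deformationInK-theory} from the case $i=1$ via the Bass-type stabilization of Lemma~\ref{lem:splitting-K0-to-K1}: one passes to $Y_{\centerdot}=X_{\centerdot}\times\mathbb{G}_m$, uses $\partial\circ\{T\}=\mathrm{id}$ and compatibility of $\mathrm{ch}$ with $\partial$ and $\{T\}$ to transfer the lifting question for $\xi\in K_0(X_m)$ to the class $\{T\}\xi\in K_1(Y_m)$, and then applies the Chern character isomorphism of Corollary~\ref{cor:smAlg-relKTheory} to $K_0(Y_n,Y_m)$ (the hypothesis $d\leq p-6$ is exactly what makes $i=1$ and $\dim Y_1=d+1$ fall in range). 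Your approach instead works directly with $K_{-1}(X_n,X_m)=\pi_{-1}\mathbf{R}\Gamma(X_1,\mathcal{K}(X_n,X_m))$, which avoids the $\mathbb{G}_m$-trick at the cost of reasoning about negative $K$-groups. Both work, and yours is somewhat more direct. You should note, however, that for the identification $\pi_0\mathbf{R}\Gamma(X_1,\mathcal{K}(X_j))=K_0(X_j)$ and the exactness at $K_0(X_m)\xrightarrow{\partial}K_{-1}(X_n,X_m)$ you need Thomason--Trobaugh descent for the nonconnective Bass spectrum, and that the vanishing $\mathcal{K}_{X_n,X_m,t}=0$ for all $t\leq 0$ (not just $t=0$) holds by nil-invariance of negative $K$-theory of local rings; this is what puts the $E_2$-page in the strip $0\leq s\leq d$, $t\geq 1$. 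You should also say a word about why your $\mathrm{ob}$ agrees with the one in the statement, namely that by Proposition~\ref{prop:infinitesimalChernClass} the relative Chern characters commute with the boundary of the relative $K$-theory sequence and with the connecting map of \eqref{eq:motivicFundamentalTriangle}, so $\mathrm{ch}(\partial\xi)$ equals the composite in \eqref{eq:HodgeOb-intro}.

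The one genuine gap is the bookkeeping claim ``any $E_2$ entry linked to $(\ell,\ell-1)$ by an iterated descent differential takes the form $(\ell\pm r,t)$ with $t\in[0,d-2]$.'' This is not established as stated: the dependency structure of $E_\infty^{\ell,\ell-1}$ on $E_2$ involves entries several total degrees away, and tracking which $t$-values can appear requires a genuine argument, not a one-differential estimate; the paper itself delegates precisely this to \cite[Lemma~B.10]{BEK14} in the proof of Corollary~\ref{cor:smAlg-relKTheory}. The clean way to close your argument without hand-tracking differentials is to pass to the cofiber: let $C$ be the cofiber of the sheafified Chern character $\mathcal{K}(X_n,X_m)\to T$. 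Theorem~\ref{thm:relative-comparison-local-intro} gives $\pi_t C=0$ for $t\leq p-3$, hence by the descent spectral sequence for $C$ together with $\mathrm{cd}_{\mathrm{Nis}}(X_1)\leq d$ one gets $\pi_q\mathbf{R}\Gamma(X_1,C)=0$ for $q\leq p-3-d$, and then the long exact sequence shows $\pi_q\mathbf{R}\Gamma(X_1,\mathcal{K}(X_n,X_m))\to\pi_q\mathbf{R}\Gamma(X_1,T)$ is an isomorphism for $q\leq p-4-d$; since $d\leq p-6$ this covers $q=-1$. Alternatively you can simply observe that the proof of Corollary~\ref{cor:smAlg-relKTheory} is insensitive to the sign of $j-i$ and yields the isomorphism $\mathrm{ch}\colon K_{-1}(X_n,X_m)\to\bigoplus_r\mathbb{H}^{2r}(X_1,p^{r,m}_{r,n}\Omega^{\bullet}_{X_{\centerdot}})$ whenever $-1\leq p-5-d$, which again holds under $d\leq p-6$. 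Either of these replaces your informal bound and makes the argument complete.
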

The Hodge obstruction map is induced by a Chern character map
\begin{gather*}
K_0(X_m) \rightarrow \bigoplus_{r=0}^{p-1} \mathbb{H}^{2r}
\big(X_1, \mathbb{Z}_m(r)\big)[\frac{1}{r!}]
\end{gather*}
 and the following exact triangle \eqref{eq:motivicFundamentalTriangle-intro}.
Note that  no assumption of projectivity is needed in this theorem. A similar assertion in higher $K$-theory is given in Proposition \ref{prop:deformationInK-theory}(ii). In view of the definition of the complex $p^{r,m}_{r,n}\Omega^{\bullet}_{X_{\centerdot}}$, the somewhat surprising aspect of Theorem \ref{thm:deformationInK-theory-intro} is that the precise obstruction to deforming a $K$-theoretic element to $X_n$ is naturally expressed by data involving higher order thickenings.
\begin{remark}\label{rem:mainThm-deformation-intro}
\begin{enumerate}[(i)]
  \item Taking limits, the quasi-isomorphism \eqref{eq:relative-comparison-local-intro}  of  Nisnevich sheaves implies the quasi-isomorphism \eqref{eq:etalePro-iso-BEK} of  étale pro-sheaves. Moreover, the limit of the Hodge obstruction map \eqref{eq:HodgeOb-intro} is the composition of the Chern character defined in \cite[\S9]{BEK14} and the Hodge obstruction map defined in \cite[\S 8]{BEK14}.
  \item In \cite{AMMN22} and \cite{Bei14}, the Chern character isomorphism $K_0^{\mathrm{cont}}(X_{\centerdot})_{\mathbb{Q}}\xrightarrow{\cong}\mathbb{H}^{2r}_{\mathrm{cont}}\big(X_{\centerdot},\mathbb{Z}_{X_{\centerdot}}(r)\big)_{\mathbb{Q}}$ of Bloch, Esnault, and Kerz \cite[Theorem 11.1]{BEK14} is reproved by different methods. However, their methods do not yield the relative Chern character isomorphism with integer coefficients, which is necessary for a meaningful statement on the infinitesimal deformations in algebraic $K$-theory, since the relative $K$-groups $K_i(X_n,X_m)$ are $p$-torsion.
  \item By Thomason-Trobaugh localization theorem of algebraic $K$-theory (\cite{ThT90}), Theorem \ref{thm:relative-comparison-local-intro} implies isomorphism of algebraic $K$-theory for a certain range of $i$, see Corollary \ref{cor:smAlg-relKTheory}. In particular, for $2i-1\leq p-5$, we have a Chern character isomorphism
  \begin{gather*}
  \mathrm{ch}: K_{2i-1}\big(W_{n}(\Bbbk),\Bbbk\big)\cong W_{(n-1)i}(\Bbbk)
  \end{gather*}
  and for $2i\leq p-5$, we have $K_{2i}\big(W_{n}(\Bbbk),\Bbbk\big)=0$. In \cite[Corollary 7.4]{Bru01} and \cite[Theorem C]{Ang11} (see also the very recent \cite{AKN24}), vanishing and isomorphism of this type are obtained for  a wider range of $i$. However, even in these cases, it is not obvious that there is a natural Chern character map which induces isomorphisms.
\end{enumerate}
\end{remark}
  Another consequence of Theorem \ref{thm:relative-comparison-local-intro} is the continuity of algebraic $K$-theory.
\begin{theorem}[= Proposition \ref{prop:continuity-alg-K-theory}]\label{thm:continuity-alg-K-theory-intro}
Let $\Bbbk$ be a perfect field of characteristic $p$.
Let $X$ be a smooth projective scheme over $W(\Bbbk)$ and $X_n=X\times_{\operatorname{Spec}W(\Bbbk)}\operatorname{Spec}W_n(\Bbbk)$.  Then for $0\leq i\leq p-5-d$, the canonical map
\begin{align*}
K_{\mathrm{cont},i}(X_{\centerdot})\xrightarrow{\cong} \varprojlim_{n} K_i(X_n)
\end{align*}
is an isomorphism.
\end{theorem}
% subsection main_results (end)

\subsection{Comparison with the work of Bloch, Esnault, and Kerz} % (fold)
\label{sub:comparison_with_BEK14}
%Our proof of Theorem \ref{thm:relative-comparison-local-intro} is built upon the framework of the proof of \cite[Theorem 11.1]{BEK14}. Namely, we  define  infinitesimal motivic complexes and the relative Chern character from algebraic $K$-theory of infinitesimal deformation to them, and show a Chern character isomorphism by studying the products mod $p$. Two novelties of our proof are sketched as follows, and a more detailed account is given in \S \ref{sub:defining_the_infinitesimal_motivic_complex} and \S\ref{sub:study_of_the_mod_p_products}.

Building on the framework of \cite[Theorem 11.1]{BEK14}, we prove Theorem \ref{thm:relative-comparison-local-intro} by developing infinitesimal motivic complexes and a relative Chern character from infinitesimal deformation K-theory. The isomorphism is proved via an analysis of products mod $p$. Here we highlight two novel ingredients in our proof; see \ref{sub:defining_the_infinitesimal_motivic_complex}-\S\ref{sub:study_of_the_mod_p_products} for more details.

The construction of motivic pro-complexes in \cite{BEK14} draws inspiration from \cite{Kat87} and \cite{FoM87}, and its mod $p$ Chern character isomorphism builds upon established results in continuous $K$-theory and topological cyclic homology (see \cite{GeH06} and \cite{HeM04}).  In our infinitesimal setting, however,  naive analogs of these complexes (namely, replacing $X_{\centerdot}/W_{\centerdot}$ in \eqref{eq:motivicProComplex-intro} with $X_n/W_n$ using a uniform $n$ for all terms) turn out to be incorrect. Moreover, there are no results such as \cite{GeH06} and \cite{HeM04} at our disposal in the infinitesimal case. The central novelty of this paper is to define reasonable infinitesimal motivic complexes according to  computations of certain relative derived cyclic homology and a theorem of Brun \cite{Bru01}. Moreover, due to the limitation of the range in Brun's theorem, we have to show the desired Chern character isomorphism by an inductive comparison of the relative Chern character with Brun's isomorphism.

The second novelty in our proof lies in this comparison.
%, we need to compare the mod $p$ multiplicative structure. 
Unlike the continuous $K$-theory (see \cite[Proposition 10.5]{BEK14}),  the mod $p$ relative $K$-sheaves $(\mathcal{K}/p)_{i}$ of the smooth infinitesimal deformations, with odd $i< p-3$, have a direct summand isomorphic to $\mathcal{O}_{X_1}$, which is not generated by the absolute $\mathcal{K}_1$ and the Bott element. We develop a technique to control the ambiguity in the multiplication arising from this summand (detailed in the final paragraph of \S\ref{sub:study_of_the_mod_p_products} and  in \S\ref{sub:functoriality_resolves_ambiguity}). This technique probably applies more widely, as such summands may occur in the relative algebraic $K$-theory of general infinitesimal deformations.

% subsection comparison_with_BEK14 (end)

\subsection{Defining the infinitesimal motivic complexes} % (fold)
\label{sub:defining_the_infinitesimal_motivic_complex}

Let us first explain the origin of the complex $p^{r,m}_{r,n}\Omega^{\bullet}_{X_{\centerdot}}$. Our starting point is Brun's isomorphism (\cite[Theorem 6.1]{Bru01}): for a simplicial ring  $R$ and an ideal $I$ satisfying $I^m=0$, there is a natural isomorphism (called \emph{Brun maps})
\begin{align}\label{eq:BrunIso-intro}
\mathrm{br}:
K_i(R,I;p)\cong \widetilde{\mathrm{HC}}_{i-1}(R,I;p),
\end{align}
for $0\leq i<\frac{p}{m-1}-2$, and an epimorphism for $0\leq i<\frac{p}{m-1}-1$, where $\widetilde{\mathrm{HC}}$ is the derived cyclic homology, and “$;p$" means the $p$-completion. We outline Brun's proof in \eqref{eq:zigzag-graph-Brun-map}, from which we see that there should be no natural homomorphism from $K_i(R,I;p)$ to $\widetilde{\mathrm{HC}}_{i-1}(R,I;p)$ for $i$ beyond the range $0\leq i<\frac{p}{m-1}-1$, and thus we do not expect to extend Brun's isomorphism to $0\leq i<p-2$ for all $m$ by induction. One can also see this from the case $i=1$, where $K_1(R,I)=1+I$ and $\widetilde{\mathrm{HC}}_{0}(R,I)=I$ (Lemma \ref{lem:relativeHC0}), so a natural map can only be the logarithm, whose existence over $\mathbb{Z}_p$ depends on  $m$. 

Therefore, for a smooth $W(\Bbbk)$-algebra $A$ and $A_n=A/p^n A$, one cannot directly use Brun's isomorphism to obtain  $K(A_n,A_m)$ by computing the relative derived cyclic homology for general $n>m\geq 1$. 
 Nevertheless, we still compute the relative derived cyclic homology. For this, we first show that the derived cyclic homology of $A_n$ as a $\mathbb{Z}$-algebra is equivalent to that of $A_n$ as a $W(\Bbbk)$-algebra (Corollary \ref{cor:reduction-HH-HC-overZ-overW(k)}). Then we take a flat cdga resolution of $A_n$ (\S\ref{sub:a_typical_cdga}), and by using the Hochschild-Kostant-Rosenberg theorem for smooth cdga's (\cite[Prop.~2.5]{BuV88}, \cite[Prop.~5.4.6]{Lod98}), we reduce the computation of $\widetilde{\mathrm{HC}}_{i}(A_n,A_m)$ to computing the homology of a de Rham-like double complex $\mathbf{CC}(A_n,A_m)$ for $i\leq p-2$ (\S\ref{sec:derived_hochschild_homology_of_smooth_algebras} and Proposition \ref{prop:relHC-relBoldHC-bounded-pExp-Isom}).
 Our original computation of the latter homology, which gives Corollary \ref{cor:rel-HC-nonreduced-overZ}, is straightforward and lengthy. Instead, since each summand on the RHS of \eqref{eq:rel-HC-nonreduced-overZ} is a homology of the complex $p^{r,m}_{r,n}\Omega^{\bullet}_{A_{\centerdot}}$ for a certain $r$, we are motivated to define a morphism of complexes $\Psi$ (Lemma \ref{lem:map-Psi-complexes-twist(r)-component}) from such (shifted) complexes to (the total complex of) the above mentioned double complex, and then we show that the direct sum of morphisms $\Psi$ induces a quasi-isomorphism on mod $p$ homology, hence a quasi-isomorphism because both complexes have $p$-torsion homologies of finite exponents. In particular, we obtain canonical isomorphisms
\begin{gather}\label{eq:rel-HC-decomposition-intro}
 \widetilde{\operatorname{HC}}_{i-1}(A_n,A_m)\cong   \bigoplus_{r=1}^i H^{2r-i-1}\big(p^{r,m}_{r,n}\Omega^{\bullet}_{A_{\centerdot}}\big)
 \end{gather}
 and
 \begin{gather}\label{eq:rel-K-decomposition-intro}
 K_i(A_n,A_{n-1})\cong \widetilde{\operatorname{HC}}_{i-1}(A_n,A_{n-1})\cong   \bigoplus_{r=1}^i H^{2r-i-1}\big(p^{r,n-1}_{r,n}\Omega^{\bullet}_{A_{\centerdot}}\big)
 \end{gather}
 for $1\leq i\leq p-3$ and $n>m\geq 1$. 

 The uniformity of \eqref{eq:rel-HC-decomposition-intro} \emph{suggests} introducing the \emph{infinitesimal motivic complex} $\mathbb{Z}_{X_n}(r)$ of Nisnevich sheaves on $X_1$ for $0\leq r<p$ such that there is an exact triangle
 \begin{gather}\label{eq:motivicFundamentalTriangle-intro}
p^{r,m}_{r,n}\Omega^{\bullet}_{X_{\centerdot}}[-1] \rightarrow \mathbb{Z}_{X_n}(r) \rightarrow \mathbb{Z}_{X_m}(r) \rightarrow p^{r,m}_{r,n}\Omega^{\bullet}_{X_{\centerdot}}
\end{gather}
and $ \mathbb{Z}_{X_1}(r)$ is the usual motivic complex of a smooth scheme over the field $\Bbbk$. We do this in \S\ref{sec:infinitesimal_motivic_complexes}, and show that $\mathbb{Z}_{X_n}(r)$ has the expected properties, e.g. the existence of products $\mathbb{Z}_{X_n}(r)\otimes^{\mathbf{L}}_{\mathbb{Z}}\mathbb{Z}_{X_n}(r) \rightarrow \mathbb{Z}_{X_n}(r+s)$,  and $\mathbb{Z}_{X_n}(1)\simeq \mathbb{G}_{m,X_n}[-1]$, and the continuous limit of $\mathbb{Z}_{X_n}(r)$ is the \emph{motivic pro-complex} $\mathbb{Z}_{X_{\centerdot}}(r)$ defined in \cite[\S7]{BEK14}. Then in \S\ref{sec:infinitesimal_chern_classes_and_chern_characters}, by using Gillet's arguments in \cite{Gil81}, and Totaro's theorem \cite[Theorem 9.2]{Tot18} on the Hodge cohomology of the classifying space $\mathrm{BGL}_{\mathbb{Z}}$, and certain arguments from model category, we define  Chern classes and the Chern character valued in the infinitesimal motivic complexes.

% subsection defining_the_infinitesimal_motivic_complex (end)
\subsection{The structure of the mod \texorpdfstring{$p$}{p} products} % (fold)
\label{sub:study_of_the_mod_p_products}

%\subsection{Outline of the proof of the main theorem} % (fold)
%Now we outline the proof of the relative Chern character isomorphism \eqref{eq:relative-comparison-local-intro}. 
To show Theorem \ref{thm:relative-comparison-local-intro}, by certain standard dévissages, it suffices to show that the mod $p$ relative Chern character
\begin{gather}\label{eq:relative-comparison-local-mod-p-intro}
\mathrm{ch}=\sum_{r=1}^i \mathrm{ch}_r:(\mathcal{K}/p)_{X_n,X_{n-1},i}\rightarrow  \bigoplus_{r=1}^i \mathcal{H}^{2r-i-1}\big(p^{r,n-1}_{r,n}\Omega^{\bullet}_{X_{\centerdot}}\otimes^{\mathbf{L}}_{\mathbb{Z}}\mathbb{Z}/p\big)
\end{gather}
is an isomorphism for $0\leq i\leq p-3$ and $n\geq 2$. The RHS is computed in Lemma \ref{lem:rel-HC-nonreduced-mod-p}. 
So by Brun's isomorphism \eqref{eq:rel-K-decomposition-intro}, we have a decomposition
\begin{gather*}
(\mathcal{K}/p)_{X_n,X_{n-1},i}\cong  \bigoplus_{r=1}^i \mathcal{H}^{2r-i-1}\big(p^{r,n-1}_{r,n}\Omega^{\bullet}_{X_{\centerdot}}\otimes^{\mathbf{L}}_{\mathbb{Z}}\mathbb{Z}/p\big)\cong  \bigoplus_{j=0}^{i-1}\Omega^j_{X_1/\Bbbk}\quad.
\end{gather*}
However, it is not clear whether the Brun map on $(\mathcal{K}/p)_{X_n,X_{n-1},i}$ coincides with the Chern character. 
Instead of showing this, we will deduce the mod $p$ Chern character isomorphism by showing that it differs from the Brun isomorphism by an automorphism of the target, see Claim \ref{claim:relative-comparison-m=n-1-modP-induction}.

To achieve this, we study the multiplicative structure on both sides of \eqref{eq:relative-comparison-local-intro} (resp. \eqref{eq:relative-comparison-local-mod-p-intro}),  as inspired by the arguments in \cite[\S11]{BEK14}. There are multiplications by $\mathcal{K}_{X_n,i}$ (resp. $(\mathcal{K}/p)_{X_n,i}$) on both sides; on the left, it is Loday's product, and on the right, it is given by the Chern character $\mathcal{K}_{X_n,i}\rightarrow \mathcal{HC}^{-}_{X_n,i}$ (\cite[\S8.4]{Lod98}) and the multiplication of $\mathrm{HC}^{-}$ on the (relative) HC.
 There are several problems concerning the products that we have to address:
\begin{enumerate}[(i)]%[wide, labelwidth=0pt, labelindent=0pt]
  \item There are two ways to define products in HH (resp. products in $\mathrm{HC}^{-}$, resp. actions of $\mathrm{HC}^{-}$ on $\mathrm{HC}$). The \emph{algebraic product} in HH is the shuffle product, in HC is one induced by the shuffle  product, which is essentially defined in \cite{HoJ87}, and is explicitly given in the cyclic chain complex in \cite[\S5.1.3]{Lod98}.  The \emph{topological product} in HH is induced by the product of two spaces, and in $\mathrm{HC}^{-}$ (resp. $\mathrm{HC}$) is the one induced by the homotopy fixed points (resp. homotopy orbits) of an  $H$-space. The algebraic product and the topological product in HH coincide, which is essentially a consequence of the Eilenberg-Zilber theorem, and is reviewed as Lemma \ref{lem:smashProduct-shuffleProduct}. Whether the algebraic and the topological products in $\mathrm{HC}^{-}$ (resp., actions of $\mathrm{HC}^{-}$ on $\mathrm{HC}$), coincide, turns out to be a nontrivial issue; see  \cite[Problems 11.8 and 14.8]{GrM95}. In \S\ref{sec:products_on_cyclic_homology}, we give an exposition on these products and the compatibility of the algebraic products (resp. the topological products) in HH and HC with Connes' exact sequence. Then by the coincidence of the algebraic and the topological products in HH, there is a filtration in HC such that the algebraic and the topological products of $\mathrm{HC}^{-}$ on the graded pieces coincide. 
  \item There are two equivalent ways to define the derived cyclic homology of a ring: by a flat simplicial resolution or by a flat (c)dga resolution. Both are needed in this paper and we show the equivalence in \S\ref{sub:products_on_hochschild_homology_of_c_dga_s}-\S\ref{sub:products_in_the_derived_hochschild_and_cyclic_homology_comparison_of_products}, 
  \item In \S\ref{sec:Brun's_isomorphism_and_the_product_structure}, we show that Brun's isomorphism \eqref{eq:BrunIso-intro} is compatible with the multiplication by the $K$-theory and the topological multiplication by $\mathrm{HC}^{-}$ on $\mathrm{HC}$. This turns out (see Lemma \ref{lem:commutativity-mu-1_land_Norm}) to be a consequence of a compatibility property (Lemma \ref{lem:adams-iso-homotopical-functoriality}) of the Adams isomorphism. For the latter, we have to recall the construction of the Adams isomorphism, and reduce the problem to an extension property (Lemma \ref{lem:LMS-II.2.8-enhance}).
\end{enumerate}

To compare the mod $p$ multiplications in the Brun map and the relative Chern character, we need to find explicit lower order $K$-theory elements that generate the $\bigoplus_{i\leq p-3}(\mathcal{K}/p)_{X_n,X_{n-1},i}$   as much as possible. Besides the multiplication on $\bigoplus_{i\leq p-3}(\mathcal{K}/p)_{X_n,X_{n-1},i}$  by  $(\mathcal{K}/p)_{X_n,1}$, we need also the multiplication by a Bott element in $(\mathcal{K}/p)_{X_n,2}$. 
There exists a primitive $p$-th root of unity, e.g. $\zeta_n:=1+p^{n-1}$, in  $W_n(\Bbbk)$, which induces a Bott element. However, $\zeta_n$ maps to 0 in $W_{n-1}(\Bbbk)$, and does not lift to a $p$-th root of unity in $W_{n+1}(\Bbbk)$. This has two consequences:
\begin{itemize}
    \item The multiplication by the corresponding Bott element vanishes on the bottom row of the following Figure \ref{fig:multTable} (see also the proofs of Propositions \ref{prop:BottElement-multiplication-on-rel-HC} and \ref{prop:multiplication-Bott-element}).
    \item There is no primitive $p$-th root of unity in $W(\Bbbk)$. Hence in \cite[\S11]{BEK14}, to apply the theorem of Bloch-Kato \cite{BlK86}, they need to take  a base change to $W(\Bbbk)[\zeta]$, where $\zeta$ is a primitive $p$-th root of unity,   and use a norm argument to conclude the Chern character isomorphism \cite[Claim 11.3]{BEK14}.
    The multiplication by the corresponding Bott element turns out to be similar to the dotted arrow “$\xy (0,0); (6,0) **\crv{~*=<3pt>{\scalemath{0.9}{.}} (3,0)}?>* \dir{>}; \endxy$" on the top row of Figure \ref{fig:multTable}.
\end{itemize}  

With these preparations, we compute the products in \S\ref{sec:mod_p_rel_cyclic_homology_and_the_products} and \S\ref{sub:the_mod_p_product_structure}. Our final results on the products (Propositions \ref{prop:K1-multiplication-on-rel-HC}, \ref{prop:BottElement-multiplication-on-rel-HC}, \ref{prop:K1-multiplication-motivic-complex}, and \ref{prop:multiplication-Bott-element}) can be visualized in the following table
%Figure \ref{fig:multTable},
\begin{figure}[ht]
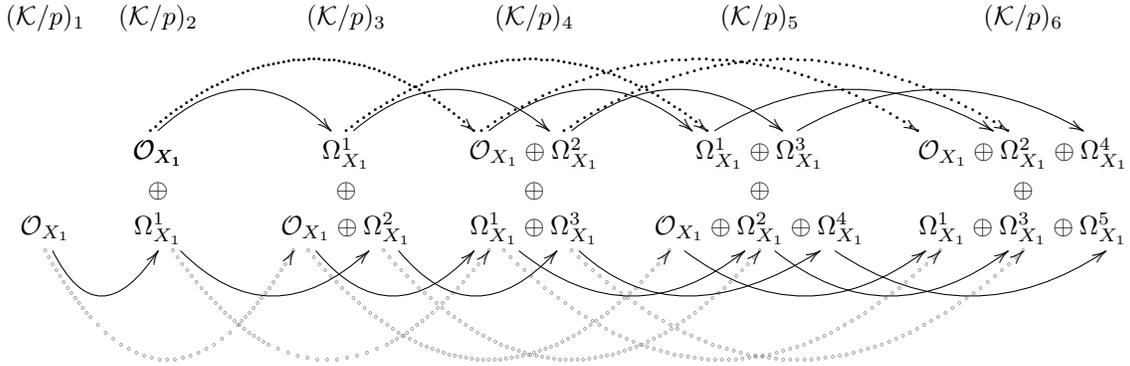

\centering
\begin{gather*}
\xy
(10,8)*+{(\mathcal{K}/p)_1};(25,8)*+{(\mathcal{K}/p)_2};(50,8)*+{(\mathcal{K}/p)_3}; (75,8)*+{(\mathcal{K}/p)_4};
(105,8)*+{(\mathcal{K}/p)_5}; (140,8)*+{(\mathcal{K}/p)_6};
(10,-20)*+{\mathcal{O}_{X_1}};
(25,-10)*+{\mathcal{O}_{X_1}};(25,-15)*+{\oplus}; (25,-20)*+{\Omega^1_{X_1}};
(25,-10)*+{\mathcal{O}_{X_1}};
(50,-10)*+{\Omega^1_{X_1}}; (50,-15)*+{\oplus}; (50,-20)*+{\mathcal{O}_{X_1}\oplus\Omega^2_{X_1}};
(75,-10)*+{\mathcal{O}_{X_1}\oplus\Omega^2_{X_1}}; (75,-15)*+{\oplus}; (75,-20)*+{\Omega^1_{X_1}\oplus\Omega^3_{X_1}};
(105,-10)*+{\Omega^1_{X_1}\oplus\Omega^3_{X_1}}; (105,-15)*+{\oplus}; (105,-20)*+{\mathcal{O}_{X_1}\oplus\Omega^2_{X_1}\oplus\Omega^4_{X_1}}; 
(140,-10)*+{\mathcal{O}_{X_1}\oplus\Omega^2_{X_1}\oplus\Omega^4_{X_1}}; (140,-15)*+{\oplus};  (140,-20)*+{\Omega^1_{X_1}\oplus\Omega^3_{X_1}\oplus\Omega^5_{X_1}};
(25,-7); (48,-7) **\crv{(37,4)}?>*\dir{>};
(51,-7); (77,-7) **\crv{(64,4)}?>*\dir{>};
(80,-7); (108,-7) **\crv{(94,4)}?>*\dir{>};
(110,-7); (148,-7) **\crv{(129,4)}?>*\dir{>};
(69,-7); (96,-7) **\crv{(83,4)}?>*\dir{>};
(99,-7); (136,-7) **\crv{(117,4)}?>*\dir{>};
(11,-23); (25,-23) **\crv{(17,-35)}?>*\dir{>};
(28,-23); (53,-23) **\crv{(40,-35)}?>*\dir{>};
(57,-23); (78,-23) **\crv{(67,-35)}?>*\dir{>};
(81,-23); (113,-23) **\crv{(94,-35)}?>*\dir{>};
(115,-23); (151,-23) **\crv{(133,-35)}?>*\dir{>};
(46,-23); (67,-23) **\crv{(56,-35)}?>*\dir{>};
(73,-23); (103,-23) **\crv{(88,-35)}?>*\dir{>};
(107,-23); (138,-23) **\crv{(122,-35)}?>*\dir{>};
(95,-23); (127,-23) **\crv{(111,-35)}?>*\dir{>};
(24,-7); (67,-7) **\crv{~*=<2pt>{.} (46,12)}?>*\dir{>};
(50,-7); (98,-7) **\crv{~*{.} (75,12)}?>*\dir{>};
(68,-7); (126,-7) **\crv{~*{.} (97,12)}?>*\dir{>};
(79,-7); (138,-7) **\crv{~*{.} (108,12)}?>*\dir{>};
%(10,-23); (43,-23) **\crv{~*=<4pt>{\scriptscriptstyle{\circ}} (26,-52)}?>*\dir{>};
(10,-23); (43,-23) **\crv{~*=<3pt>{\scalemath{0.3}{\circ}} (26,-52)}?>*\dir{>};
(45,-23); (93,-23) **\crv{~*=<3pt>{\scalemath{0.3}{\circ}} (69,-52)}?>*\dir{>};
(27,-23); (69,-23) **\crv{~*=<3pt>{\scalemath{0.3}{\circ}} (47,-52)}?>*\dir{>};
(71,-23); (129,-23) **\crv{~*=<3pt>{\scalemath{0.3}{\circ}} (100,-52)}?>*\dir{>};
(55,-23); (105,-23) **\crv{~*=<3pt>{\scalemath{0.3}{\circ}} (80,-52)}?>*\dir{>};
(80,-23); (140,-23) **\crv{~*=<3pt>{\scalemath{0.3}{\circ}} (110,-52)}?>*\dir{>};
\endxy
\end{gather*}
\caption{Multiplication Table}\label{fig:multTable}
\end{figure}
%\FloatBarrier

where
\begin{itemize}
  \item $(\mathcal{K}/p)_i=(\mathcal{K}/p)_{X_n,X_m,i}$, which decomposes into direct summands as the column below it,
  \item $\Omega^i_{X_1}=\Omega_{X_1/\Bbbk}^i$,
  \item a solid arrow “$\xy (0,0); (6,0) **\crv{(3,0)}?>* \dir{>}; \endxy$" denotes the multiplication by  $\pm\operatorname{dlog}x$ for some $x\in \mathcal{O}_{X_1}^{\times}$,
  \item a dotted arrow “$\xy (0,0); (6,0) **\crv{~*=<3pt>{\scalemath{0.9}{.}} (3,0)}?>* \dir{>}; \endxy$" denotes the isomorphism induced by multiplying a Bott element, and
    \item a hollow-dotted arrow “$\xy (0,0); (6,0) **\crv{~*=<3pt>{\scalemath{0.3}{\circ}} (3,0)}?>* \dir{>}; \endxy$"
    %“$\circ\circ>$"\ 
    denotes the 0 map induced by multiplying a Bott element.
\end{itemize}

From the multiplication table, one sees that the mod $p$ relative $K$-theory in the range $i\leq p-3$ is generated by the summand  $\mathcal{O}_{X_1}\subset (\mathcal{K}/p)_{X_n,X_{n-1},i}$ for odd $i$ and $i=2$ and the multiplication by $(\mathcal{K}/p)_{X_n,1}$ and the Bott element in $(\mathcal{K}/p)_{X_n,2}$. We call these summands the \emph{basic summands}, and their elements the \emph{basic elements} (Definition \ref{def:basicElements}). To show the Chern character  isomorphism \eqref{eq:relative-comparison-local-mod-p-intro}  from the fact that the mod $p$ Brun map is an isomorphism, we develop an ad hoc trick in \S\ref{sub:functoriality_resolves_ambiguity} to resolve the ambiguity of the Chern character map on the basic elements. The idea originates from a simple observation: an endomorphism  of the abelian group $A=\mathbb{Z}/p^n \mathbb{Z}$ induces an automorphism on $A/pA$ if and only if it induces an automorphism on $\leftidx{_p}A$. This observation suffices to show Theorem \ref{thm:relative-comparison-local-intro} in the case $X_{\centerdot}=W_{\centerdot}(\Bbbk)$. In the case of relative dimension $>0$, it is difficult to classify the endomorphisms of the object $p^{r,m}_{r,n}\Omega^{\bullet}_{X_{\centerdot}}$ in the derived category. 
But we can employ the functoriality of the (relative) Chern character to show that the induced maps on mod $p$ cohomology of such endomorphisms have a uniform form which depends only on several universal constants in $\Bbbk$; this is reason that  we have to carefully define the \emph{canonical relative Chern classes} in \S\ref{sec:infinitesimal_chern_classes_and_chern_characters}. Then by studying the case $X_{\centerdot}=\mathbb{A}^1_{W_{\centerdot}(\Bbbk)}$ we determine the universal constants; see the proof of Theorem \ref{prop:functoriality-determine-universal-transform}. Using this and the multiplication table, we complete the proof of Theorem \ref{thm:relative-comparison-local-intro}.

% subsection study_of_the_mod_p_products (end)

\subsection{Further problems}\label{sub:further_problems} % (fold)
\begin{enumerate}[(i)]
  \item We expect that Theorem \ref{thm:relative-comparison-local-intro} holds for the Zariski $K$-sheaves as well. In fact, the only essential use of Nisnevich topology in the proof of Theorem \ref{thm:relative-comparison-local-intro} is in the exactness of \eqref{eq:fundamentalTriangle-S(r)-Nis}, which follows from a theorem of Kato (\cite[Remark 1 on Page 224]{Kat82}). But the syntomic pro-complexes are only used to construct the products in the infinitesimal motivic complexes and show some properties of them. We expect that the products can be constructed directly, and Proposition \ref{prop:properties-Zn(r)} holds also for the associated Zariski sheaves, and then the computations in \S\ref{sub:the_mod_p_product_structure}  work for the Zariski $K$-sheaves.
  \item One advantage of the Chern character isomorphism is that it is possible to have a local explicit expression.  In a future work, we are going to give an explicit formula, and hopefully it will lead  to  explicit constructions of the basic elements. We expect these to help understand the algebraization problem.
  \item Our approach to Theorem \ref{thm:relative-comparison-local-intro} limits us to the range no larger than $i<p$. Incorporating recent techniques on $K$-theory and topological cyclic homology  (see e.g. \cite{AKN24} and the references therein) might give more complete results. However, with the aim at VHC, such limitation of range is not an essential issue: it suffices that the bound on $i$ can be arbitrarily large when varying $p$ and is independent of $n$. Rather than extending this range, we are more concerned with the possibility of relaxing the smoothness condition in Theorem \ref{thm:relative-comparison-local-intro} to the semistable reductions $X \rightarrow \operatorname{Spec}W(\Bbbk)$. 
  This will help generalize VHC to this setting, and then Theorem \ref{thm:deformationInK-theory-intro}, especially the cases $m\geq 2$, due to the presence of vanishing cycles. Such generalizations might lead to new algebraicity results of Hodge cycles.
  \item Concerning the infinitesimal motivic complexes $\mathbb{Z}_{X_n}(r)$, we are aware of the construction of motivic complexes for smooth schemes over a general base in \cite{CiD19}, and the recent constructions of motivic complexes for non-smooth schemes in \cite{ElM23} and \cite{KeS24}. Our set-up seems  complementary to theirs, and comparatively close to \cite{Bou24}. It is interesting to investigate the relationship among these constructions.
  \item Our notation $\mathbb{Z}_{X_n}(r)$ is somewhat misleading: its definition depends on a choice of $X_{\centerdot}$, at least a choice of $X_{nr}$ (see Remark \ref{rem:define-infinitesimalMotivicComp-Xnr}), rather than only $X_n$. For example, if there is a morphism $f:X_{\centerdot} \rightarrow X'_{\centerdot}$ which induces an isomorphism $f/p^n: X_n \xrightarrow{\cong} X'_n$, it is not clear that $f$ induces a quasi-isomorphism $\mathbb{Z}_{X'_n}(r) \rightarrow \mathbb{Z}_{X_n}(r)$. But Theorem \ref{thm:relative-comparison-local-intro} implies a weaker result: the induced map $f^*:\tau^{\geq 2r-p+3}\big(p^{r,m}_{r,n}\Omega^{\bullet}_{X'_{\centerdot}}\big) \rightarrow \tau^{\geq 2r-p+3}\big(p^{r,m}_{r,n}\Omega^{\bullet}_{X_{\centerdot}}\big)$ on canonical truncations is a quasi-isomorphism. So we expect that $\mathbb{Z}_{X_n}(r)$ can be shown to depend only on $X_n$ for all $0\leq r<p$.
\end{enumerate}

% subsection further_problems (end)

\subsection{Organization of the paper} % (fold)
\label{sub:organization_of_the_paper}
%The proof of Theorem \ref{thm:relative-comparison-local-intro} occupies \S \ref{sec:products_on_cyclic_homology} through \S \ref{sec:an_infinitesmial_chern_character_isomorphism}, and  is laid out as outlined in \S \ref{sub:defining_the_infinitesimal_motivic_complex} and \S\ref{sub:study_of_the_mod_p_products}. Many of the materials in \S\ref{sec:products_on_cyclic_homology}, and some in \S\ref{sub:products_in_equivariant_stable_homotopy_theory} and \S\ref{sec:reduction_to_cyclic_homology_over_Wn(k)}, are expository; we include them for the reader's convenience and in some cases for lack of the results exactly in the form we need. The proofs of Theorem \ref{thm:deformationInK-theory-intro} and \ref{thm:continuity-alg-K-theory-intro} are given in \S\ref{sec:consequences_in_k_theory_and_deformation_theory}.

In \S\ref{sec:products_on_cyclic_homology}, we review the algebraic and topological products in Hochschild and cyclic homology, and show a filtered comparison result used later. 
In \S\ref{sec:Brun's_isomorphism_and_the_product_structure}, we analyze  Brun's isomorphism connecting relative algebraic $K$-theory to derived cyclic homology and prove its compatibility with multiplicative structures, a key step in the proof of the main theorem.
In \S\ref{sec:reduction_to_cyclic_homology_over_Wn(k)}, we reduce the computation of derived cyclic homology for smooth algebras over \(W_n(\Bbbk)\) to computations over the base ring $W(\Bbbk)$. Many of the materials in \S\ref{sec:products_on_cyclic_homology}, and some in \S\ref{sub:products_in_equivariant_stable_homotopy_theory} and \S\ref{sec:reduction_to_cyclic_homology_over_Wn(k)}, are expository; we include them for the reader's convenience and for lack of results in the literature in the form we need.

In \S\ref{sec:derived_hochschild_homology_of_smooth_algebras}, we introduce a specific cdga resolution for a smooth algebra over $W_n(\Bbbk)$ and use the HKR isomorphim for smooth cdga's to construct a complex $\mathbf{CC}_\bullet(A/\ell)$ to compute the derived  cyclic homology of $A/\ell$ where $A$ is a smooth $W(\Bbbk)$-algebra.
In \S\ref{sec:relative_derived_cyclic_homology_of_smooth_algebras_over_texorpdfstring_w_n_bbbk_}, we compute the relative derived cyclic homology \(\widetilde{\mathrm{HC}}_i(A/p^L, A/p^M)\), where $A$ is a smooth $W(\Bbbk)$-algebra, leading to the discovery of the complexes \(p_{r,n}^{r,m}\Omega_X^\bullet\).
In \S\ref{sec:mod_p_rel_cyclic_homology_and_the_products}, we study the mod \(p\) product structures in relative cyclic homology, focusing on the actions of $K_1$ and the Bott element.
In \S\ref{sec:infinitesimal_motivic_complexes}, we define the infinitesimal motivic complexes $\mathbb{Z}_{X_n}(r)$ and their ring structure, which are the targets of the Chern character.
In \S\ref{sec:infinitesimal_chern_classes_and_chern_characters}, we  construct the Chern classes and the Chern character map valued in these infinitesimal motivic complexes and show that they are compatible with the Loday product of algebraic $K$-theory.
In \S\ref{sec:an_infinitesmial_chern_character_isomorphism}, we prove Theorem \ref{thm:relative-comparison-local-intro} by combining the previous computations, analyzing mod $p$ product structures, and resolving ambiguities via functoriality as we sketched above.
In  \S\ref{sec:consequences_in_k_theory_and_deformation_theory}, we show the two main consequences, Theorems \ref{thm:deformationInK-theory-intro} and \ref{thm:continuity-alg-K-theory-intro}.

% subsection organization_of_the_paper (end)

\subsection{Notations and conventions} % (fold)
\label{sub:notations}

\begin{enumerate}[(i)]
  \item  We denote the simplicial $n$-sphere $\Delta^n/\partial \Delta^n$ by $S^n$, and the topological $n$-sphere by $\mathbf{S}^n$. In particular, $\mathbf{S}^1$ is the circle group.
%\item Derived and underived hochschild homology (resp. cyclic homology), \hl{the default base ring} in \S2 is a commutative ring $\Bbbk$ and in \S$n\geq 3$ is $\mathbb{Z}$.
\item We denote $\pi_i(K(X)^{\land}_p)$ by $K_i(X;p)$, and similarly for relative $K$-theory, cyclic homology, and topological cyclic homology. On the other hand, $K_i(X)^{\land}_p$ stands for the $p$-completion of the abelian group $K_i(X)$.
\item We denote \emph{the} cone of a map $f$ between two complexes by $\mathrm{Cone}(f)$. We denote \emph{a} cone of a map $f$ between two objects in a triangulated category by $\mathrm{cone}(f)$. The latter notion is in general only unique up to non-canonical isomorphisms, and thus is only well-defined under some additional conditions (see e.g. \cite[Lemma A.2]{BEK14}).
\item %We adopt the convention of \cite[\S 1.0]{Lod98} for (co)chain complexes:
Let $\mathbf{A}$ be an additive category. A \emph{complex} of objects in $\mathbf{A}$ means a cochain complex. 
 We regard a complex $\dots\rightarrow C^i\rightarrow C^{i+1}\rightarrow \dots$   as a chain complex $\dots \rightarrow C_{-i}\rightarrow C_{-i-1}\rightarrow \dots$ by letting $C_{i}:=C^{-i}$, and vice versa. Moreover, we define the degree shift of a chain complex by the shift by the same degree of its corresponding cochain complex; thus $(C[n])_i=C_{i-n}$. For a  complex $C^{\bullet}$, denote by $C^{\leq n}$ its $n$-th naive truncation, namely the complex with $(C^{\leq n})^i=C^i$ when $i\leq n$, and $=0$ when $i>n$. In case that $\mathbf{A}$ is an abelian category, denote by $\tau^{\leq n}C$ the $n$-th canonical truncation. We adopt similar notations for canonical truncations of chain complexes. So, for example, for a chain complex $C$, we have $\tau_{\geq n}C=\tau^{\leq -n}C$.
\item For an abelian category $\mathbf{A}$, we let $s \mathbf{A}$ be the category of simplicial objects in $\mathbf{A}$, and $\mathrm{Ch}_+(\mathbf{A})$ the category of nonnegatively graded chain complexes of objects in $\mathbf{A}$. The Dold-Kan correspondence is the equivalence of categories
\begin{gather*}
\mathrm{N}: s \mathbf{A} \rightleftarrows \mathrm{Ch}_+(\mathbf{A}): \Gamma
\end{gather*}
and  $\Gamma$ is left adjoint to $\mathrm{N}$. For a chain complex $C$ and $n\in \mathbb{Z}$, let\footnote{Our notation differs from \cite[Page 212]{Jar15} by a sign, due to the above convention on the degree shifts of chain complexes.}
\begin{gather*}
\mathrm{K}(C,n):=\Gamma \circ\tau_{\geq 0}(C[n]).
\end{gather*}
When $n=0$, we abbreviate this notation by letting $\mathrm{K}(C):=\mathrm{K}(C,0)$.
%\item \hl{Double complexes and commutative double complexes. Triple complexes. Multicomplexes and Multisimplicial sets ...}
\item For a field $\Bbbk$ of characteristic $p$, we denote by $\mathrm{Sm}_{W_{\centerdot}(\Bbbk)}$ the category of inductive systems of morphisms of schemes
\begin{gather*}
\xymatrix{
X_1 \ar[r] \ar[d] & X_2 \ar[r] \ar[d] & \dots \ar[r] \ar[d] & X_n \ar[r] \ar[d] & \dots \\
\operatorname{Spec} W_1(\Bbbk) \ar[r] & \operatorname{Spec} W_2(\Bbbk) \ar[r] & \dots \ar[r] & \operatorname{Spec} W_n(\Bbbk) \ar[r] & \dots 
}
\end{gather*}
satisfying that the morphisms $X_n \rightarrow \operatorname{Spec} W_n(\Bbbk)$ are smooth separated and of finite type, and every square in this diagram is cartesian. We denote this object simply by $X_{\centerdot}$ and call it a \emph{$p$-adic smooth scheme over $W_{\centerdot}(\Bbbk)$}. We say that $X_{\centerdot}$ has relative dimension $d$ if $\dim X_1=d$. If $X$ is a smooth scheme separated and of finite type over $W(\Bbbk)$ and $X_n=X\times_{\operatorname{Spec}W(\Bbbk)}\operatorname{Spec}W_n(\Bbbk)$ then we say that $X_{\centerdot}$ is the object of $\mathrm{Sm}_{W_{\centerdot}(\Bbbk)}$ \emph{associated with $X$}. We say that $X_{\centerdot}$ is quasi-projective (resp. projective) if $X_{\centerdot}$ is associated with a smooth quasi-projective (resp.  smooth projective) scheme over $W(\Bbbk)$.
\item For a scheme $X$, $\mathcal{K}^M_{X,r}$ is the improved Milnor $K$-sheaf defined by Kerz \cite{Ker10}. If $X$ is defined over an artinian ring with an infinite residue field, then $\mathcal{K}^M_{X,r}$ coincides with the classical (=naive) Milnor $K$-sheaf.
%\item \hl{Default topology=Nisnevich for sheaves and sheaf cohomology, open subset=Zariski open subset.}
%\item \hl{Small and big Nisnevich sites ...}
\end{enumerate}

% subsection notations (end)

\subsection{Acknowledgement}
I thank Mao Sheng for initiating a reading seminar on the paper \cite{BEK14} and for many helpful discussions on the VHC. 
I thank  Morten Brun for answering my  questions on his paper \cite{Bru01}, which is the starting point of this paper.
 I thank Guozhen Wang for giving a course on cyclic homology in SCMS in the 2019 autumn.
 I thank Esnault and Kerz for answering my  questions on \cite{BEK14}.
I thank Peng Du, Fangzhou Jin, Heng Xie, and Yigeng Zhao  for their encouragement and many valuable suggestions on this paper. I thank Mao Sheng, Junchao Shentu, Jinxing Xu, Jinbang Yang, and Lei Zhang (USTC) for organizing workshops on Hodge theory in every recent year, and for many helpful discussions on the Hodge conjecture. 
I thank the IWoAT organizers for providing the opportunities, including the very recent IWoAT 2025,  to keep in contact with developments in algebraic topology.
I also thank   Li Cai,  Huai-Liang Chang,  Hanlong Fang, Rixin Fang, John Greenlees, Jingbang Guo, Zhan Li, Sam Raskin, Longke Tang, Junyi Xie, Sen Yang, and Lei Zhang (SYSU),  for helpful discussions on various related problems. Parts of  this paper have been reported since 2024 spring in several workshops, and I thank the invitations by Wenfei Liu, Peng Sun, Yixin Bao, Heer Zhao, Jin Cao, Nanjun Yang, and Chuanhao Wei, as I learned a lot from the comments by the participants. This work is supported by NSFC 12371063, and CAS Project for Young Scientists in Basic Research, Grant No. YSBR-032.
% section introduction (end)

\section{Products in Hochschild homology and cyclic homology} % (fold)
\label{sec:products_on_cyclic_homology}
%In this section, we recall two definitions of products in cyclic homology theories and compare them on  a filtration. By the way, we record formulas for later computations. 

In this section,  we recall several facts about the products in Hochschild homology and (resp. negative)  cyclic homology. These facts should be well-known to experts, and we give an exposition because of a lack of a complete account in the literature. There are two ways to define these products. One is defined on the Hochschild complex and the cyclic complex as given in \cite[Chap.~4, 5]{Lod98}, and another way uses that the realization of the Hochschild complex has a natural $\mathbb{S}^1$-action and the  (resp. negative)  cyclic homology is the homotopy orbit (resp. homotopy fixed points) and thus have induced products. We need the latter product because in \S\ref{sec:Brun's_isomorphism_and_the_product_structure} we need to compare this product with the product in $K$-theory, and we also need the former product because we need to compute the products explicitly in \S\ref{sec:products_on_cyclic_homology}. The problem concerning the coincidence of the two products has been proposed in \cite[Problems 11.8 and 14.8]{GrM95}.
In this section, we show a weak, essentially trivial, comparison result (Propositions \ref{prop:algebraic-and-topological-products-weak-comparison} and \ref{prop:algebraic-and-topological-products-weak-comparison-derived}), which suffices for the proof of Theorem \ref{thm:relative-comparison-local}.

We fix a commutative base ring $k$, and $\otimes$ without subscripts means $\otimes_{k}$. A cyclic $k$-module $M$ (\cite[\S2.5.1]{Lod98}) is a simplicial $k$-module with face operators $d_i$ and degeneracy operators $s_i$, and endowed with maps $t_n:M_n\rightarrow M_n$ satisfying
\begin{gather*}
\begin{cases}
t_n^{n+1}=\mathrm{id},\\
d_it_n=-t_{n-1}d_i\ \mbox{and}\ s_it_n=-t_{n+1}s_{i-1} & \mbox{for}\ 1\leq i\leq n,\\
d_0t_n=(-1)^n d_n\ \mbox{and}\ s_0t_n=(-1)^nt_{n+1}^2s_n.
\end{cases}
\end{gather*}
A $k$-algebra $A$ gives rise to a cyclic $k$-module $C(A)$ with $C(A)_n=A^{\otimes n+1}$ and
\begin{align*}
\mbox{faces}\ & b_i(a_0\otimes \dots\otimes a_n)=\begin{cases}
a_0\otimes \dots\otimes a_ia_{i+1}\otimes \dots a_n,& \mbox{if}\ 0\leq i\leq n-1\\
a_na_0\otimes \dots\otimes a_{n-1},& \mbox{if}\ i=n
\end{cases},\\
\mbox{degeneracies}\ & s_i(a_0\otimes\dots\otimes a_n)=a_0\otimes \dots a_i\otimes 1\otimes a_{i+1}\otimes \dots \otimes a_n,\ 0\leq i\leq n,\\
& t_n(a_0\otimes \dots\otimes a_n)=(-1)^n a_n\otimes a_{0}\otimes \dots a_{n-1}.
\end{align*}
On the graded  $k$-module $C(A)_*=\bigoplus_{n\geq 0} C(A)_n$ there are two operators 
\begin{align*}
b&=\sum_{i=0}^n(-1)^ib_i: C(A)_n\rightarrow C(A)_{n-1},\quad (C(A)_{-1}\ \mbox{is understood to be 0}) \\
B&=(1-t_{n+1})sN: C(A)_{n}\rightarrow C(A)_{n+1},
\end{align*}
where the norm operator $N:C(A)_n\rightarrow C(A)_n$ and the extra degeneracy operator $s:C(A)_n\rightarrow C(A)_{n+1}$ are defined as
\begin{gather*}
N=\sum_{i=0}^i t_n^i,\quad s(a_0\otimes\dots \otimes a_n)=1\otimes a_0\otimes \dots\otimes a_n.
\end{gather*}
The operators $b$ and $B$ satisfy
\begin{gather}\label{eq:b-B-complex}
b^2=0,\ B^2=0,\ bB+Bb=0.
\end{gather}
\begin{definition}\label{def:b-B-complex}
A 
%\emph{$b$-$B$-complex} 
\emph{mixed complex}  
of $k$-modules is a non-negatively graded $k$-module $C$ with  an operator $b$ of degree $-1$ and an operator $B$ of degree 1 satisfying \eqref{eq:b-B-complex}. 
\end{definition}
%Thus $\big(C(A)_*,b,B)$ is a double complex with $\deg(b)=-1$ and $\deg(B)=1$. 
In the following, we let $\operatorname{HH}(A)$ denote the cyclic module $C(A)$, and let $C(A)$ denote the associated mixed complex. The Hochschild homology group of $A$ is defined to be the homology of the complex $C(A)$ in the $b$-direction, or equivalently to be the homotopy group of the simplicial set $\operatorname{HH}(A)$, namely
\begin{gather*}
\operatorname{HH}_n(A):=\pi_n| \operatorname{HH}(A)|=H_n(C(A),b).
\end{gather*}

Now let $C$ be a mixed complex of $k$-modules. Following \cite[\S5.1.7]{Lod98}, we define the  total cyclic complexes
\begin{subequations}
\begin{align}
&\mathrm{Tot}(\mathcal{B}C^-):=k[u]\widehat{\otimes} C, \label{eq:totComplex-cyclicHomology}\\ 
&\mathrm{Tot}(\mathcal{B}C^{\mathrm{per}}):=k[u,u^{-1}]\widehat{\otimes} C, \label{eq:totComplex-perCyclicHomology}\\
&\mathrm{Tot}(\mathcal{B}C):=k[u,u^{-1}]/uk[u,u^{-1}] \label{eq:totComplex-NegCyclicHomology}
\end{align}
\end{subequations}
with differentials $\partial=uB+b$. 
Here $u$ is endowed with degree $-2$, so that $\partial$  has degree $-1$, and $\widehat{\otimes}$ means the completed tensor product with respect to the $(u)$-topology. 
 The  negative  cyclic homology $\operatorname{HC}_*^-(C)$, the periodic cyclic homology $\operatorname{HC}_*^{\mathrm{per}}(C)$, and the cyclic homology $\operatorname{HC}_*(C)$ of  $C$  are defined to be the homology of these complexes respectively. Then the negative cyclic  (resp. periodic cyclic, resp. cyclic) homology of a $k$-algebra $A$ is defined to be that of $C(A)$. There is  a  map of complexes
\begin{gather*}%\label{eq:map-h-HC-to-HH}
h:\mathrm{Tot}(\mathcal{B}C^-)\rightarrow (C,b),\quad
x=\sum_{i}x_{p+2i}u^i\mapsto x_p,
\end{gather*}
and a short exact sequence
\begin{gather*}
0\longrightarrow (C,b)\overset{I}{\longrightarrow} \mathrm{Tot}(\mathcal{B}C)\overset{S}{\longrightarrow} \mathrm{Tot}(\mathcal{B}C)[2]\longrightarrow 0,\\
I(x_p)= x_pu^0,\quad S(\sum_{i\geq 0}x_{p-2i}u^{-i})=\sum_{i\geq 0}x_{p-2-2i}u^{-i}
\end{gather*}
which in the case $C=C(A)$ induces the map
\begin{gather}\label{eq:map-h}
 h: \operatorname{HC}^-_p(A)\rightarrow \operatorname{HH}_p(A)
\end{gather}
and Connes' long exact sequence
\begin{gather}\label{eq:ISB-sequence}
\dots\xrightarrow{B} \operatorname{HH}_p(A)\xrightarrow{I} \operatorname{HC}_p(A)\xrightarrow{S} \operatorname{HC}_{p-2}(A)\xrightarrow{B}
\operatorname{HH}_{p-1}(A)\xrightarrow{I} \dots
\end{gather}

\subsection{The algebraic products} % (fold)
\label{sub:the_algebraic_products}

 For two mixed complexes of $k$-modules $C$ and $C'$, we define $C\otimes C'$ to be the mixed complex with
 \begin{gather*}
 (C\otimes C')_n=\bigoplus_{p+q=n}C_p\otimes C'_q,
 \end{gather*}
 and
 \begin{gather*}
 b=b\otimes 1+1\otimes b,\ B=B\otimes 1+1\otimes B.
 \end{gather*}
 Here we adopt the Koszul sign convention (\cite[\S1.0.15]{Lod98}):
 \begin{gather*}
 b(x_p\otimes y_q)=bx_p\otimes y_q+(-1)^p x_p\otimes by_q,\ \mbox{for}\ x_p\in C_p,\ y_q\in C'_q,
 \end{gather*}
 and similar for $B$. 
The external product on $\mathrm{HC}^{-}$ in \cite[\S 5.1.3]{Lod98} is defined as
\begin{gather*}%\label{eq-product-NHC-Loday}
\times:\mathrm{Tot}(\mathcal{B}C^-)_p\otimes \mathrm{Tot}(\mathcal{B}C^{\prime-})_q\rightarrow \mathrm{Tot}\big(\mathcal{B}(C\otimes C')^-\big)_{p+q},\\
x\otimes_k y\mapsto x\underset{k[u]}{\widehat{\otimes}}y\nn
\end{gather*}
where
\begin{gather*}
x=\sum_{i\geq 0}x_{p+2i}u^{i},\ y=\sum_{j\geq 0}y_{q+2j}u^{j},
\end{gather*}
and $x_k\in C_p$ and $y_k\in C'_k$.
With similar notations, the induced external action of $\mathrm{HC}^-$ on $\mathrm{HC}$ is given by
\begin{gather*}%\label{eq-action-HC-Loday}
\times:\mathrm{Tot}(\mathcal{B}C^-)_p\otimes \mathrm{Tot}(\mathcal{B}C')_q\rightarrow \mathrm{Tot}\big(\mathcal{B}(C\otimes C')\big)_{p+q},\\
x\otimes_{k}y\mapsto x\underset{k[u,u^{-1}]}{\widehat{\otimes}}y\mod uk[u]\nn
%x\otimes y\mapsto \Big(\sum_{i\geq 0}x_{p+2i}u^{i}\Big)\Big(\sum_{j\geq 0}y_{q-2j}u^{-j}\Big)\mod uk[u].\nn
\end{gather*}
where $y=\sum_{j\geq 0}y_{q-2j}u^{-j}$ is a finite sum by the definition \eqref{eq:totComplex-NegCyclicHomology}. 
The resulting external products on homology coincide with those defined by Hood and Jones \cite[\S2]{HoJ87}; this is a consequence of their homotopical uniqueness theorem (\cite[Theorem 2.3]{HoJ87}).
\begin{definition}[{\cite[\S5.1.13-5.1.15]{Lod98}}]\label{def:algebraicProduct-cyclicHomology}
Let $A$ be a $k$-algebra.
\begin{enumerate}[(i)]
  \item The \emph{algebraic product} in $\operatorname{HC}^-_*(A)$ is the map
\begin{gather*}
\operatorname{HC}^-_p(A)\otimes \operatorname{HC}^-_q(A)\rightarrow \operatorname{HC}^-_{p+q}(A)
\end{gather*}
induced by the composition
\begin{gather*}
\mathrm{Sh}^-\circ \times: \mathrm{Tot}(\mathcal{B}C(A)^-)_p\otimes \mathrm{Tot}(\mathcal{B}C(A)^-)_q\rightarrow \mathrm{Tot}(\mathcal{B}C(A)^-)_{p+q}
\end{gather*}
where
\begin{gather*}
\mathrm{Sh}^-=\mathrm{sh}+\mathrm{sh}^-:  \mathrm{Tot}\big(\mathcal{B}(C(A)\otimes C(A))^-\big)_{p+q}\rightarrow \mathrm{Tot}(\mathcal{B}C(A)^-)_{p+q}
\end{gather*}
and $\mathrm{sh}$ and $\mathrm{sh}^-$ are the shuffle product and the cyclic shuffle product respectively (\cite[\S4.2.3, \S4.3.6]{Lod98}).
  \item  The \emph{algebraic product} of $\operatorname{HC}^-_*(A)$ on $\operatorname{HC}_*(A)$  is the map
\begin{gather*}
\operatorname{HC}^-_p(A)\otimes \operatorname{HC}_q(A)\rightarrow \operatorname{HC}_{p+q}(A)
\end{gather*}
induced by the composition
\begin{gather*}
\mathrm{Sh}^-\circ \times: \mathrm{Tot}(\mathcal{B}C(A)^-)_p\otimes \mathrm{Tot}(\mathcal{B}C(A))_q\rightarrow \mathrm{Tot}(\mathcal{B}C(A))_{p+q}.
\end{gather*}
  \item Using the map $h$ defined in \eqref{eq:map-h}, we define the \emph{algebraic product} of $\operatorname{HC}^-_*(A)$ on $\operatorname{HH}_*(A)$ as
\begin{gather*}
\operatorname{HC}^-_p(A)\otimes \operatorname{HH}_q(A)\xrightarrow{h\otimes \mathrm{id}}\operatorname{HH}_p(A)\otimes \operatorname{HH}_q(A)\xrightarrow{\times}\operatorname{HH}_{p+q}(A)
\end{gather*}
where $\times$ is the product in $\operatorname{HH}_*(A)$ defined via the shuffle products (\cite[\S4.2.1]{Lod98}).
\end{enumerate}
\end{definition}

\begin{lemma}\label{lem:h-I-S-preserve-alg-products}
Let $A$ be a $k$-algebra. Then the maps $h$, $I$, and $S$ preserve products. Namely, the diagrams
\begin{gather*}
\xymatrix{
  \operatorname{HC}^-_p(A)\otimes \operatorname{HC}^-_q(A) \ar[d] \ar[r]^{\mathrm{id}\otimes h} & \operatorname{HC}^-_p(A)\otimes \operatorname{HH}_q(A) \ar[d] \\
  \operatorname{HC}^-_{p+q}(A)  \ar[r]^{h} &  \operatorname{HH}_{p+q}(A)
}
\end{gather*}
and
\begin{gather*}
\xymatrix{
  \operatorname{HC}^-_p(A)\otimes \operatorname{HH}_q(A) \ar[d] \ar[r]^{\mathrm{id}\otimes I} & \operatorname{HC}^-_p(A)\otimes \operatorname{HC}_q(A) \ar[d] \ar[r]^{\mathrm{id}\otimes S} & \operatorname{HC}^-_p(A)\otimes \operatorname{HC}_{q-2}(A) \ar[d] \\
  \operatorname{HH}_{p+q}(A)  \ar[r]^{I} &  \operatorname{HC}_{p+q}(A) \ar[r]^{S} & \operatorname{HC}_{p+q-2}(A)
}
\end{gather*}
are commutative, where the vertical arrows are the algebraic products.
\end{lemma}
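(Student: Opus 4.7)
The plan is to verify each of the three squares by a direct chain-level computation, using the explicit formulas in Definition~\ref{def:algebraicProduct-cyclicHomology} together with the formula \eqref{eq:map-h-HC-to-HH} for $h$ and the explicit formulas for $I$ and $S$ from the short exact sequence preceding \eqref{eq:ISB-sequence}. The key structural observation is that, in the decomposition $\mathrm{Sh}^- = \mathrm{sh} + \mathrm{sh}^-$, the shuffle component $\mathrm{sh}$ is $k[u]$-linear and preserves the $u$-degree, whereas the cyclic shuffle component $\mathrm{sh}^-$ strictly raises the $u$-degree. Consequently, any operation which detects only the $u^{0}$ coefficient (as does $h$) or which annihilates strictly positive powers of $u$ (as does reduction modulo $uk[u]$ inside $\mathrm{Tot}(\mathcal{B}C)$) sees only the ordinary shuffle contribution.

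Granted this observation, the $h$ square is immediate: writing chain representatives $x = \sum_{i \geq 0} x_{p+2i} u^{i}$ and $y = \sum_{j \geq 0} y_{q+2j} u^{j}$, the external product $x \widehat{\otimes}_{k[u]} y$ has $u^{0}$-coefficient $x_{p} \otimes y_{q}$, and the $u^{0}$-coefficient of $\mathrm{Sh}^{-}$ applied to it is $\mathrm{sh}(x_{p} \otimes y_{q}) = \mathrm{sh}(h(x) \otimes h(y))$, which is the algebraic product of $h(x)$ and $h(y)$ in $\operatorname{HH}_{p+q}(A)$ in the sense of Definition~\ref{def:algebraicProduct-cyclicHomology}(iii). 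The $I$ square is handled the same way: for $y \in \operatorname{HH}_{q}(A)$ regarded as $I(y) = y\,u^{0} \in \mathrm{Tot}(\mathcal{B}C(A))_{q}$, the reduction $x \widehat{\otimes}_{k[u,u^{-1}]} (y\,u^{0}) \bmod uk[u]$ collapses to $x_{p} \otimes y$ (all higher $u$-powers vanish), and $\mathrm{Sh}^{-}$ again contributes only via its shuffle component, producing $\mathrm{sh}(h(x) \otimes y)\,u^{0} = I\bigl(\mathrm{sh}(h(x) \otimes y)\bigr)$, which matches the upper-right path by Definition~\ref{def:algebraicProduct-cyclicHomology}(iii). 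The $S$ square follows from the fact that $S$ acts by the degree shift $\sum_{i \geq 0} x_{q-2i} u^{-i} \mapsto \sum_{i \geq 0} x_{q-2-2i} u^{-i}$, which is $k[u,u^{-1}]$-linear in an obvious sense, while the external product $\times$ and $\mathrm{Sh}^{-}$ are constructed from $k[u,u^{-1}]$-equivariant data.

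The main obstacle is the bookkeeping of $u$-degrees in the cyclic shuffle $\mathrm{sh}^{-}$: one must check, from the formula in \cite[\S5.1.13--5.1.15]{Lod98}, that $\mathrm{sh}^{-}$ really does carry an extra factor of $u$, so that it vanishes on the $u^{0}$ slice and on the quotient modulo $uk[u]$. Once this is pinned down, each square reduces to inspection of a single coefficient in an explicit chain-level formula. No passage to homology is needed beyond noting that all maps in sight are chain maps, so the squares, which commute on the total complexes, descend to the desired diagrams in homology.
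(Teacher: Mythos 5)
Your proof is correct and takes essentially the same chain-level approach as the paper: compute the algebraic products via the explicit formulas for $\times$ and $\mathrm{Sh}^{-}=\mathrm{sh}+\mathrm{sh}^{-}$ on $\mathrm{Tot}$-complexes, and use the fact that $\mathrm{sh}^{-}$ raises $u$-degree so that $h$ (and reduction mod $uk[u]$) sees only the shuffle component. The paper's proof is terser—it writes the one-line chain identities $h(x.y)=h\circ\mathrm{Sh}^-(x\times y)=\mathrm{sh}(x_p\otimes y_q)=x.h(y)$ and $I(x.y_q)=I\circ\mathrm{sh}(x_p\otimes y_q)=\mathrm{Sh}^-(x\times I(y_q))$ and leaves $S$ to the reader—but your $u$-degree bookkeeping is exactly the mechanism behind those equalities.
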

\begin{proof}
(See also \cite[Theorem 4.3.4]{McC96}.)
For the first diagram, let $x=\sum_{i\geq 0}x_{p+2i}u^i\in \mathrm{Tot}\big(\mathcal{B}C(A)^-\big)_p$ and $y=\sum_{i\geq 0}y_{q+2i}u^i\in \mathrm{Tot}\big(\mathcal{B}C(A)^-\big)_q$. Then
\begin{gather*}
h(x.y)=h\circ \mathrm{Sh}^-(x\times y)= \mathrm{sh}(x_p\otimes y_q)=x.h(y)\quad.
\end{gather*}
For the left square in the  second diagram, let $x=\sum_{i\geq 0}x_{p+2i}u^i\in \mathrm{Tot}\big(\mathcal{B}C(A)^-\big)_p$ and $y_q\in C(A)_q$. Then
\begin{gather*}
I(x.y_q)=I\circ \mathrm{sh}(x_p\otimes y_q)= \mathrm{Sh}^-(x\times I(y_q)).
\end{gather*}
The proof for the right square in the  second diagram is similar, and we leave it to the reader.
\end{proof}
% subsection the_algebraic_products (end)

\subsection{The topological products and a filtered comparison} % (fold)
\label{sub:the_topological_products_and_a_filtered_comparison}
It is known that for a cyclic module $C$, the geometric realization of the underlying simplicial set of $C$ has a circle action, and the cyclic homology groups of a cyclic module $C$ can be defined as (see e.g. \cite{BuF86}, \cite{Jon87}, \cite{Goo85}, and \cite[Theorem 6.5.2.1]{DGM13}) 
\begin{gather*}%\label{eq:top-def-HC}
\mathrm{HC}^-_*(C)=\pi_* |C|^{h \mathbf{S}^1},\ \mathrm{HC}_*(C)=\pi_* |C|_{h\mathbf{S}^1},\ \mathrm{HC}^{\mathrm{per}}_*(C)=\pi_* |C|^{t \mathbf{S}^1},
\end{gather*}
where $|\cdot|^{tS^1}$ is the Tate construction (\cite[Introduction]{GrM95}).

\begin{definition}\label{def:EG}
Let $G$ be a topological group. Without otherwise stated, a $G$-space is a topological space with a \emph{left} $G$-action. We choose a contractible space with a free \emph{right} $G$-action and denote it by $EG$.  By transposing the right action, we also have a left $G$-action on $EG$, namely $g.x=x.g^{-1}$. Moreover, we choose a distinguished point of $EG$ and denote it by 0. (The point 0 should not be confused with the base point of $EG_+$.) If one use the one-sided bar construction $B(G/*)$ or $B(*/G)$ of $EG$, then there is a canonical choice of 0. In the case $G=\mathbf{S}^1$,we take $E \mathbf{S}^1=\varinjlim_n \mathbf{S}(\mathbb{C}^n)$, the colimit of the odd-dimensional spheres, where the transition maps are the inclusions $\mathbf{x}\mapsto (\mathbf{x},0)$, and take $0=(1,0,\dots)$. 
\end{definition}

\begin{definition}\label{def:topological-product}
Let $X$ be a pointed $G$-space. There is a left $G$-action on $\mathrm{Map}_*(EG_+,X)^{G}$ given by $(g.f)(x)=g.f(g^{-1}x)$.   We define the space of homotopy fixed points to be $X^{hG}=\mathrm{Map}_*(EG_+,X)^{G}$. We define  the space  of homotopy orbits to be $X_{hG}=EG_+\underset{G}{\land} X$, namely $EG_+\land X$ modulo the relation $(u,gx)\sim (ug,x)$ for $u\in EG_+$, $x\in X$ and $g\in G$. The point $0\in EG$ induces maps
\begin{gather*}
X^{hG}\rightarrow X,\quad f\mapsto f(0),\\
X\rightarrow X_{hG},\quad x\mapsto (0,x).
\end{gather*}
 Now let  $X$ and $Y$ be pointed $G$-spaces.
\begin{enumerate}[(i)]
  \item We define the external product on homotopy fixed points by 
\begin{gather*}
X^{hG}\land Y^{hG}\rightarrow (X\land Y)^{hG},\\
(f_1,f_2)\mapsto \big(w\mapsto f_1(w)\land f_2(w)\big).
\end{gather*}
If $X$ comes with a $G$-map $\mu: X\land X\rightarrow X$, then there is an induced map $\mu:(X\land Y)^{hG}\rightarrow X^{hG}$, and we define the  product on homotopy fixed points to be the composition 
\begin{gather*}
X^{hG}\land Y^{hG}\rightarrow (X\land Y)^{hG}\xrightarrow{\mu} X^{hG}.
\end{gather*}
  \item We define the external product of homotopy fixed points on homotopy orbits to be  
\begin{gather*}
X^{hG}\land Y_{hG}\rightarrow (X\land Y)_{hG},\\
(f,w,y)\mapsto \big(w, f(w),y\big).
\end{gather*}
If $X$ and $Y$ come with a $G$-map $\mu: X\land Y\rightarrow Y$, then there is an induced map $\mu:(X\land Y)_{hG}\rightarrow Y_{hG}$, and  we define the  product of the homotopy fixed points on the homotopy orbits to be the composition
\begin{gather*}
X^{hG}\land Y_{hG}\rightarrow (X\land Y)_{hG}\xrightarrow{\mu} Y_{hG}.
\end{gather*}
\end{enumerate}
\end{definition}

\begin{construction}\label{cons:tensor-product-cyclic-modules}
Let $M$ and $M'$ be simplicial $k$-modules, and denote their underlying simplicial sets by $\underline{M}$ and $\underline{M}'$, respectively. The product simplicial $k$-module $M\times M'$ has as $\{M_n\otimes M'_n\}_{n\geq 0}$ as the underlying simplicial set (\cite[\S1.6.8]{Lod98}). The obvious map of simplicial sets $\underline{M}\times \underline{M}'\rightarrow M\times M'$ and the canonical homeomorphism of geometric realizations $|\underline{M}\times \underline{M}'|\cong|M|\times |M'|$ induce a map  $|M|\times |M'|\rightarrow |M\times M'|$, which descends to a map 
\begin{gather}\label{eq:tensor-product-simplicial-modules}
|M|\land |M'|\rightarrow |M\times M'|.
\end{gather}
If $M$ and $M'$ are cyclic modules, then the involved maps of simplicial sets preserve the cyclic structures\footnote{One needs to take care of the sign convention on the cyclic modules and cyclic sets, see \cite[\S6.1.2.2]{Lod98}.}, hence this map is an $\mathbf{S}^1$-map and induces a map
\begin{gather*}
  |M|^{h\mathbf{S}^1}\land |M'|^{h\mathbf{S}^1}\rightarrow |M\times M'|^{h\mathbf{S}^1}.
\end{gather*}

\end{construction}

\begin{definition}
Let $A$ be a commutative $k$-algebra. 
There is a map of cyclic modules $\mathrm{HH}(A)\times \mathrm{HH}(A)\rightarrow \mathrm{HH}(A)$, such that
\begin{gather*}%\label{eq:topologicalProduct-HH}
 (a_0\otimes \dots \otimes a_p,b_0\otimes \dots\otimes b_p)\mapsto (a_0b_0\otimes\dots\otimes a_pb_p).
\end{gather*}
Then we apply Definition \ref{def:topological-product} to $X=Y=\mathrm{HH}(A)$ and use Construction \ref{cons:tensor-product-cyclic-modules} to  get maps
\begin{gather*}
|\mathrm{HH}(A)|^{h\mathbf{S}^1}\land |\mathrm{HH}(A)|^{h\mathbf{S}^1}\rightarrow |\mathrm{HH}(A)|^{h\mathbf{S}^1}, \label{eq-NHC-product}\\
|\mathrm{HH}(A)|^{h\mathbf{S}^1}\land |\mathrm{HH}(A)|_{h\mathbf{S}^1}\rightarrow |\mathrm{HH}(A)|_{h\mathbf{S}^1}. \label{eq-HC-action}
\end{gather*}
We call the induced maps
\begin{gather*}
\mathrm{HH}_p(A)\times \mathrm{HH}_q(A)\rightarrow \mathrm{HH}_{p+q}(A),\\
\mathrm{HC}^-_p(A)\times \mathrm{HC}^-_p(A)\rightarrow \mathrm{HC}^-_{p+q}(A),\\
\mathrm{HC}^-_p(A)\times \mathrm{HC}_q(A)\rightarrow \mathrm{HC}_{p+q}(A)
\end{gather*}
the \emph{topological products}. Moreover, the map $|\mathrm{HH}(A)|^{h\mathbf{S}^1}\rightarrow |\mathrm{HH}(A)|$ induces a map 
\begin{gather*}
h': \mathrm{HC}^-_*(A)\rightarrow \mathrm{HH}_*(A).
\end{gather*}
We get the induced topological product
\begin{gather*}
\mathrm{HC}^-_p(A)\times \mathrm{HH}_q(A)\xrightarrow{h'\times \mathrm{id}} \mathrm{HH}_p(A)\times \mathrm{HH}_q(A)\rightarrow \mathrm{HH}_{p+q}(A)
\end{gather*}
where the second map is the topological product in $\mathrm{HH}_*(A)$.
\end{definition}

\begin{lemma}\label{lem:comparison-h-I}
Let $A$ be a commutative $k$-algebra.
\begin{enumerate}[(i)]
  \item The map $h: \mathrm{HC}^-_*(A)\rightarrow \mathrm{HH}_*(A)$ is induced by the map $|\operatorname{HH}(A)|^{h\mathbf{S}^1}\rightarrow |\operatorname{HH}(A)|$.
  \item The map $I: \mathrm{HH}_*(A)\rightarrow \mathrm{HC}_*(A)$ is induced by the map $|\operatorname{HH}(A)|\rightarrow |\operatorname{HH}(A)|_{h\mathbf{S}^1}$.
  \item The map $S: \mathrm{HC}_*(A)\rightarrow \mathrm{HC}_{*-2}(A)$ is the  Thom-Gysin homomorphism (\cite[\S D.6]{Lod98}) induced by the $\mathbf{S}^1$-bundle $E\mathbf{S}^1_+\land X \rightarrow X_{h\mathbf{S}^1}$.
    % $|\operatorname{HH}(A)|_{hS^1}\rightarrow |\operatorname{HH}(A)|_{hS^1}$.
\end{enumerate}
\end{lemma}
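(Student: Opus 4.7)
\noindent\emph{Proof proposal.} The plan is to compare the algebraic models $\mathrm{Tot}(\mathcal{B}C^-)$, $(C,b)$, and $\mathrm{Tot}(\mathcal{B}C)$ with the topological constructions $|C|^{h\mathbf{S}^1}$, $|C|$, and $|C|_{h\mathbf{S}^1}$ by means of a single $\mathbf{S}^1$-equivariant cellular filtration of $E\mathbf{S}^1$, and to observe that the three claims become tautologous once the filtrations are identified.

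First, I would fix the filtration $\mathbf{S}^1 \subset \mathbf{S}^3 \subset \cdots \subset E\mathbf{S}^1 = \varinjlim_n \mathbf{S}^{2n+1}$ from Definition \ref{def:EG}, viewed as a free $\mathbf{S}^1$-CW structure with one free $\mathbf{S}^1$-cell in each odd dimension and the distinguished point $0$ lying in the $1$-skeleton. I would then invoke the classical chain-level description of the circle action on the geometric realization of a cyclic module (see \cite[\S 2.1, \S 7.1]{Lod98} and \cite{Jon87}): after passing to normalized chains via the Dold--Kan correspondence, $(|C|,b)$ inherits a chain-level representative of the generator of $H_1(\mathbf{S}^1;k)$ equal to Connes' operator $B = (1-t)sN$. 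This is the single technical input on which the rest of the argument rests.

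Second, I would feed this filtration into $\mathrm{Map}_*(E\mathbf{S}^1_+, |C|)^{\mathbf{S}^1}$ and $E\mathbf{S}^1_+ \wedge_{\mathbf{S}^1} |C|$. Since the subquotients of the filtration are $\mathbf{S}^1_+$-free with underlying nonequivariant cells in even dimensions, the associated graded of the resulting normalized chain complexes is $(C,b)$ placed in consecutive $u$-filtration degrees, and the attaching maps of the odd-dimensional free cells contribute precisely $uB$ as the differential. Taking the inverse limit in the first case and the colimit in the second recovers $\mathrm{Tot}(\mathcal{B}C^-)$ and $\mathrm{Tot}(\mathcal{B}C)$ from \eqref{eq:totComplex-cyclicHomology} and \eqref{eq:totComplex-NegCyclicHomology}, with differential $b+uB$. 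With these identifications in hand, all three claims reduce to comparisons of cellular filtrations. For (i), the evaluation at $0$ is induced by the inclusion of the $0$-cell $\{0\} \hookrightarrow E\mathbf{S}^1$; at chain level this is projection to the $u^0$-summand of $k[u]\widehat\otimes C$, which is precisely the algebraic $h$ of \eqref{eq:map-h-HC-to-HH}. For (ii), the map $x \mapsto (0,x)$ is induced by the same inclusion on the opposite side and lands in the $u^0$-summand of $\mathrm{Tot}(\mathcal{B}C)$, matching the algebraic $I$. For (iii), the Thom--Gysin map for the principal $\mathbf{S}^1$-bundle $E\mathbf{S}^1_+ \wedge |C| \to |C|_{h\mathbf{S}^1}$ is cap product with the Euler class $u \in H^2(B\mathbf{S}^1;k)$; under the filtration identification this is multiplication by $u$ on $\mathrm{Tot}(\mathcal{B}C)$, whose effect on $\sum_{i\geq 0} x_{p-2i}u^{-i}$ is $\sum_{i\geq 0} x_{p-2-2i}u^{-i}$, matching the algebraic $S$.

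The main obstacle is the second step, and specifically the chain-level translation of the topological $\mathbf{S}^1$-action into the $b$-$B$ bicomplex with the correct signs and normalization. Rather than reproving this from scratch, I would rely on the accounts in \cite[Chap.~7]{Lod98} and in \cite{Jon87}, paying particular attention to the sign convention contrasting cyclic modules with cyclic sets noted in \cite[\S 6.1.2]{Lod98}; once this translation is in place, the matching of the three structural maps with their algebraic counterparts is forced by the choice of the cellular filtration and of the basepoint $0$.
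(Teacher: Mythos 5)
Your proposal is correct and takes essentially the same route as the paper's cited references: the paper's own ``proof'' is simply a pointer to Burghelea--Fiedorowicz, Jones, and \cite[Theorem~6.5.2.1]{DGM13}, and those references do precisely what you describe --- filter $E\mathbf{S}^1$ by odd spheres, identify the induced filtration of the homotopy (co)invariants with the $u$-filtration of $\mathrm{Tot}(\mathcal{B}C^{\pm})$ via the chain-level identification of the circle action with Connes' $B$, and then read off $h$, $I$, and $S$ as the $u^0$-projection, the $u^0$-inclusion, and multiplication by $u$ (the Euler class). You have correctly flagged the one genuinely delicate point (the chain-level sign conventions when translating between cyclic modules and cyclic sets) and correctly deferred it to \cite[\S6.1.2]{Lod98}.
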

This is essentially shown in \cite{BuF86} and \cite{Jon87}. A direct  proof can be extracted  from the proof of \cite[Theorem 6.5.2.1]{DGM13}.

\begin{lemma}\label{lem:I-S-preserves-product}
\begin{enumerate}[(i)]
  \item The map $I: \mathrm{HH}_*(A)\rightarrow \mathrm{HC}_*(A)$ preserves the topological multiplication by $\mathrm{HC}^-(A)$.
  \item  The map $S: \mathrm{HC}_*(A)\rightarrow \mathrm{HC}_{*-2}(A)$  preserves the topological multiplication by $\mathrm{HC}^-(A)$.
\end{enumerate}
\end{lemma}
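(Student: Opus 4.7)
The plan is to verify both statements directly at the level of pointed $\mathbf{S}^1$-spaces, using Definition \ref{def:topological-product} together with the descriptions of $I$ and $S$ in Lemma \ref{lem:comparison-h-I}. Set $X=|\mathrm{HH}(A)|$ and let $\mu:X\wedge X\to X$ denote its $\mathbf{S}^1$-equivariant multiplication from \eqref{eq:topologicalProduct-HH}.

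For (i), by Lemma \ref{lem:comparison-h-I}(ii) the map $I$ is induced on homotopy groups by the natural map $\iota:X\to X_{h\mathbf{S}^1}$, $x\mapsto (0,x)$. I would show that the square
\begin{gather*}
\xymatrix{
X^{h\mathbf{S}^1}\wedge X \ar[rr]^-{\mathrm{id}\wedge \iota} \ar[d] & & X^{h\mathbf{S}^1}\wedge X_{h\mathbf{S}^1} \ar[d] \\
X \ar[rr]_-{\iota} & & X_{h\mathbf{S}^1}
}
\end{gather*}
commutes on the nose, where the left vertical arrow is $(f,x)\mapsto \mu(f(0),x)$ (the topological action of $\mathrm{HC}^-$ on $\mathrm{HH}$) and the right vertical arrow is the topological action of $\mathrm{HC}^-$ on $\mathrm{HC}$, which after unwinding Definition \ref{def:topological-product}(ii) sends $(f,(w,x))$ to $(w,\mu(f(w),x))$. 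Both compositions carry $(f,x)$ to $(0,\mu(f(0),x))$; passing to homotopy groups yields (i).

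For (ii), by Lemma \ref{lem:comparison-h-I}(iii) the map $S$ is the Thom--Gysin homomorphism associated to the principal $\mathbf{S}^1$-bundle $E\mathbf{S}^1_+\wedge(-)\to(-)_{h\mathbf{S}^1}$, and such Thom--Gysin maps are natural in maps of $\mathbf{S}^1$-spaces. The topological action of $\mathrm{HC}^-$ on $\mathrm{HC}$ factors as the external product $X^{h\mathbf{S}^1}\wedge X_{h\mathbf{S}^1}\to(X\wedge X)_{h\mathbf{S}^1}$ of Definition \ref{def:topological-product}(ii), followed by $\mu_{h\mathbf{S}^1}:(X\wedge X)_{h\mathbf{S}^1}\to X_{h\mathbf{S}^1}$. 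Naturality of $S$ applied to the $\mathbf{S}^1$-map $\mu$ gives $S\circ\mu_{h\mathbf{S}^1}=\mu_{h\mathbf{S}^1}\circ S$, so it remains to check that $S$ commutes with the external product itself. This holds because the external product is built using a single copy of $E\mathbf{S}^1$ (the $f(w)$ and $w$ share the $w$-coordinate), hence is compatible with the skeletal filtration of $E\mathbf{S}^1$ that produces $S$ via the associated Gysin sequence.

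The routine space-level check renders (i) transparent, and the naturality of $S$ reduces (ii) to the filtration-compatibility of the external product; this last point is the main obstacle, since a fully rigorous treatment requires a spectrum-level argument. A cleaner alternative, which I would pursue if the direct check becomes awkward, is to deduce (ii) from (i) via Connes' sequence \eqref{eq:ISB-sequence}, which arises from a cofiber triangle in the category of $\mathrm{HC}^-(A)$-module spectra; once $I$ is known to be $\mathrm{HC}^-$-linear (by (i)) and the analogous statement for $B$ is verified, the fact that any third arrow in a cofiber triangle of module maps is itself a module map forces $S$ to preserve the $\mathrm{HC}^-$-action.
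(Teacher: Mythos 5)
Part (i) matches the paper's argument: both check the space-level square comparing $X^{hG}\land Y$ with $X^{hG}\land Y_{hG}$ directly from the formulas in Definition \ref{def:topological-product}, so that portion is fine.

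For part (ii), your first approach reproduces the paper's factorization of the action through the external product and $\mu_{h\mathbf{S}^1}$, and the step involving $\mu_{h\mathbf{S}^1}$ is indeed just naturality. But the claim that the external product ``is compatible with the skeletal filtration of $E\mathbf{S}^1$'' is where your argument stops being a proof --- you flag this yourself. The paper closes precisely that gap: the external product $X^{hG}\land Y_{hG}\rightarrow (X\land Y)_{hG}$ is the map induced on $G$-orbit spaces by the $G$-equivariant map of total spaces
\begin{gather*}
EG_+\land Y\land X^{hG}\longrightarrow EG_+\land Y\land X,\qquad (a,y,f)\longmapsto (a,y,f(a)),
\end{gather*}
which covers the external product as a map of principal $G$-bundles over it (here $X^{hG}=\mathrm{Map}_*(EG_+,X)^G$ carries the trivial $G$-action, so the source bundle is $(EG_+\land Y\rightarrow Y_{hG})\land X^{hG}$, whose Thom--Gysin map is $S\land\mathrm{id}$). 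Naturality of the Thom--Gysin homomorphism along this bundle map then gives the commutation you want, with no reference to a particular skeleton of $E\mathbf{S}^1$. The ``shared $w$-coordinate'' observation you make is exactly the reason this $G$-map exists; you just need to turn it into the explicit bundle map.

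Your proposed alternative via Connes' sequence is a genuinely different route, but as phrased it does not actually close the gap. The slogan ``the third arrow in a cofiber triangle of module maps is a module map'' is not a theorem in a bare triangulated category; it becomes true once you work with cofiber sequences in the $\infty$-category (or model category) of $\mathrm{HC}^-$-module spectra. At that level, however, the whole norm cofiber sequence $\mathrm{HH}\rightarrow \mathrm{HC}\rightarrow \mathrm{HC}[-2]$ is a cofiber sequence of $\mathrm{HC}^-$-modules essentially by construction, so all three arrows are $\mathrm{HC}^-$-linear at once and part (i) is not needed as a separate input; nor is checking ``the analogous statement for $B$'' any easier than the original task of checking $S$. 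So the alternative, done honestly, is a reasonable strategy but trades one piece of machinery (the $G$-map argument) for another (the module-spectrum formalism), and the two-out-of-three phrasing should be dropped.
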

\begin{proof}
By Lemma \ref{lem:comparison-h-I}(ii), the assertion (i) follows from the commutativity of the diagram
\begin{gather*}
\xymatrix{
  X^{hG}\land Y \ar[r] \ar[d] & X^{hG}\land Y_{hG} \ar[d] \\
  X\land Y \ar[r] & (X\land Y)_{hG}
}
\end{gather*}
where the arrows are the maps defined in Definition \ref{def:topological-product}. This commutativity can be checked directly from the formulas in this definition. The assertion (ii) follows from the naturality of the Thom-Gysin homomorphism and the $G$-map $EG_+\land X\land \mathrm{Map}_*(EG_+,Y)^G \rightarrow EG_+\land X\land Y$, $(a,x,f)\mapsto (a,x,f(a))$.
\end{proof}

\begin{lemma}\label{lem:smashProduct-shuffleProduct}
Via the isomorphism (denoted by $\gamma$) from $\pi_*$ of a simplicial $k$-module to $H_*$ of the associated Moore complex, the smash product in homotopy groups induced by \eqref{eq:tensor-product-simplicial-modules} is computed by the shuffle product. More precisely, let $M$ and $N$ be simplicial $k$-modules, then the diagram
\begin{gather*}
\xymatrix{
\pi_p(M)\times \pi_q(N) \ar[d]_{\cong}^{\gamma} \ar[r]^{\land} & \pi_{p+q}(M\times N) \ar[d]^{\cong}_{\gamma} \\
H_p(M)\times H_q(N) \ar[r]^{\mathrm{sh}} &  H_{p+q}(M\times N)
}
\end{gather*}
commutes. %(The Hurewicz map is denoted by $\gamma$.)
\end{lemma}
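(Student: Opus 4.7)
The strategy is to recognize this as a packaging of the classical Eilenberg--Zilber theorem. First I would unwind the smash map in the top row: by construction \eqref{eq:tensor-product-simplicial-modules} it factors through the canonical homeomorphism $|\underline{M}|\times|\underline{N}|\cong|\underline{M}\times\underline{N}|$ composed with the realization of the simplicial-set map $\underline{M}\times\underline{N}\to\underline{M\times N}$, $(m,n)\mapsto m\otimes n$. Thus at the level of homotopy groups the top row is the cross product
$$\pi_p(|M|)\otimes\pi_q(|N|)\longrightarrow \pi_{p+q}(|M|\times|N|)\longrightarrow \pi_{p+q}(|M\times N|)$$
in which the first arrow is the standard exterior product for simplicial $k$-modules and the second is induced by the tensor map $M_n\times N_n\to M_n\otimes N_n$.

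Next, the Dold--Kan correspondence supplies, naturally in the simplicial $k$-module $P$, the isomorphism $\gamma:\pi_*(|P|)\xrightarrow{\cong} H_*(N(P))$ from homotopy groups to the homology of the Moore complex (no Hurewicz step is needed, since $|P|$ is already a topological abelian group). Under $\gamma$, the external smash product on homotopy of simplicial $k$-modules becomes, by definition, a cross product on normalized chain complexes; the Eilenberg--Zilber theorem (see e.g.\ \cite[Prop.~4.2.2]{Lod98} or \cite[\S8.5]{Wei94}) identifies this cross product with the shuffle map
$$\mathrm{sh}:N(M)\otimes N(N)\longrightarrow N(M\times N),$$
precisely because the target $M\times N$ in the statement is the levelwise tensor product used in the construction of $\mathrm{sh}$. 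Composing the two steps yields the claimed commutative square.

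The key steps, in order, are: (i) factor the smash map through the realization of the tensor simplicial-set map; (ii) transport via $\gamma$ to normalized Moore complexes; (iii) invoke Eilenberg--Zilber to recognize the induced map as the shuffle. I do not expect a conceptual obstacle, but the one point that needs genuine care is the sign convention: the Koszul signs in \cite[\S1.0.15]{Lod98} must be matched against the signs implicit in the exterior product on homotopy groups of simplicial modules. Both are fixed by requiring compatibility with the interchange isomorphism up to $(-1)^{pq}$, so the formula for $\mathrm{sh}$ in \cite[\S4.2.1]{Lod98} is the correct one. By naturality of $\nabla$ it suffices to check the diagram on a single pair of representing simplices $x\in M_p$, $y\in N_q$, where the verification is the standard triangulation of $\Delta^p\times\Delta^q$ by $(p,q)$-shuffles; this reduces the whole lemma to a well-known combinatorial identity.
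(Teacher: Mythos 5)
Your proposal is, at bottom, the same argument as the paper's: both ultimately reduce to the $(p,q)$-shuffle triangulation of $|\Delta^p|\times|\Delta^q|$, which the paper performs directly by choosing representatives $x\in M_p$, $y\in N_q$ with vanishing faces, forming $\tilde f:|\Delta^p|\times|\Delta^q|\to|M|\times|N|$, and observing that the shuffle subdivision yields $\gamma([x]\land[y])=\sum_{(\mu,\nu)}\pm[\tilde f\circ f_{\mu,\nu}]$ with $\tilde f\circ f_{\mu,\nu}=s_{\nu_q}\cdots s_{\nu_1}|x|\times s_{\mu_p}\cdots s_{\mu_1}|y|$ — which is exactly the shuffle product.

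The one place your write-up is not quite right as stated is the phrase ``by definition'' in step (ii). The identification of the smash product on $\pi_*$ with the chain-level cross product under $\gamma$ is not definitional; it is precisely the content of the lemma. Citing Eilenberg--Zilber gives you that the cross product on Moore-complex homology is the shuffle map, but it does not by itself tell you that the Dold--Kan isomorphism carries the geometric smash (maps of spheres into realizations) to that cross product — this translation is what requires the shuffle triangulation. You do supply it in your final paragraph, so the argument is correct as an outline; just be aware that the combinatorial verification you postpone to the end is the real substance, not a routine check, and the ``by definition'' framing inverts the logical order.
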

\begin{proof}
(See also the proof of \cite[Prop.~4.4.3]{McC96} and \cite[(3.18)]{Whi62}.)
Let $x$ be a $p$-simplex of $M$, and $y$ a $q$-simplex of $N$, such that $d_ix=0$ for $0\leq i\leq p$ and $d_jy=0$ for $0\leq j\leq q$. Then $x$ represents a map $\Delta^p\rightarrow M$, and thus induces a map $|\Delta^p|\rightarrow |M|$ which we denote by $|x|$. Similarly, $y$ induces a map $|y|:|\Delta^q|\rightarrow |N|$. Let $\tilde{f}:|\Delta^p|\times |\Delta^q|\rightarrow |M|\times |N|$ be the product map of $|x|$ and $|y|$. Then $\tilde{f}$ descends to a map $f:\mathbf{S}^{p+q}\rightarrow |M|\land |N|$, and the composition of $f$ with 
\eqref{eq:tensor-product-simplicial-modules}  represents $[x]\land [y]\in \pi_{p+q}(M\times N)$.
%Moreover, $|x|$ descends to a map $\mathbf{S}^p\rightarrow |M|$, and $|y|$ descends to $\mathbf{S}^q\rightarrow |M|$, and their smash product yields a map $f:\mathbf{S}^{p+q}\rightarrow |M|\land |N|$, which represents $[x]\land [y]\in \pi_{p+q}(M\times N)$.

Recall that (\cite[Page 243]{McL63}) a $(p,q)$-shuffle is a partition $(\mu,\nu)$ of $[0,p+q-1]$ into two disjoint subset $\mu=\{\mu_1<\dots<\mu_p\}$ and $\nu=\{\nu_1<\dots<\nu_q\}$, and one  associates a sequence $(i_l,j_l)_{0\leq l\leq p+q}$ in $\mathbb{Z}^2$ with $(\mu,\nu)$ in the following way: $(i_0,j_0)=(0,0)$, and $(i_{l+1},j_{l+1})=(i_l+1,j_l)$ if $l\in \mu$ and $=(i_l,j_{l}+1)$ otherwise. The map $l\mapsto (i_l,j_l)$ linearly extends to a map $f_{\mu,\nu}:|\Delta^{p+q}|\rightarrow |\Delta^{p}|\times |\Delta^{q}|$, and with $(\mu,\nu)$ running over the $(p,q)$-shuffles, $f_{\mu,\nu}$'s form a simplicial subdivision of $|\Delta^{p}|\times |\Delta^{q}|$. Therefore
\begin{gather}\label{eq:smashProduct-shuffleProduct-1}
\gamma([x]\land[y])=
\gamma([f])=\sum_{(\mu,\nu)}(\pm)[\tilde{f}\circ f_{\mu,\nu}]
\end{gather}
where the sign means the orientation of $|\Delta^{p+q}|$ induced by $f_{\mu,\nu}$
But notice that
\begin{gather*}
\tilde{f}\circ f_{\mu,\nu}=s_{\nu_q}\cdots s_{\nu_1}|x|\times s_{\mu_p}\cdots s_{\mu_1}|y|.
\end{gather*}
Hence \eqref{eq:smashProduct-shuffleProduct-1} is nothing other than the shuffle map.
\end{proof}

\begin{lemma}\label{lem:comparison-product-HH-and-HNonHH}
Let $A$ be a commutative $k$-algebra.
\begin{enumerate}[(i)]
  \item The algebraic and the topological products in $\mathrm{HH}_*(A)$ coincide. 
  \item The algebraic and the topological products of $\mathrm{HC}^-_*(A)$ on $\mathrm{HH}_*(A)$ coincide. 
\end{enumerate}
\end{lemma}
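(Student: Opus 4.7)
My plan is to deduce (i) directly from Lemma~\ref{lem:smashProduct-shuffleProduct} by unwinding both sides on the Moore complex, and then to obtain (ii) as a formal consequence of (i) combined with the identification $h=h'$ given by Lemma~\ref{lem:comparison-h-I}(i).

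For (i), I first unwind the topological product. By definition it is the map on $\pi_*$ induced by
\begin{gather*}
|\mathrm{HH}(A)|\land|\mathrm{HH}(A)|\longrightarrow |\mathrm{HH}(A)\times\mathrm{HH}(A)|\xrightarrow{\mu}|\mathrm{HH}(A)|,
\end{gather*}
where the first map is \eqref{eq:tensor-product-simplicial-modules} and $\mu$ is the cyclic map \eqref{eq:topologicalProduct-HH}. Applying Lemma~\ref{lem:smashProduct-shuffleProduct} with $M=N=\mathrm{HH}(A)$, the first map corresponds via the Dold--Kan isomorphism $\gamma$ to the Eilenberg--Zilber shuffle
\begin{gather*}
C(A)\otimes C(A)\longrightarrow C_*\bigl(\mathrm{HH}(A)\times \mathrm{HH}(A)\bigr),\qquad
x\otimes y\longmapsto \sum_{(\mu,\nu)}\pm (s_{\nu_q}\cdots s_{\nu_1}x)\otimes (s_{\mu_p}\cdots s_{\mu_1}y).
\end{gather*}
Composing with the componentwise multiplication $\mu_*$, the degeneracies $s_i$ insert $1$'s in prescribed positions; after multiplying the two tensors slot by slot, the factor $a_0b_0$ lands in position $0$ and the remaining slots become exactly the $(p,q)$-shuffle interleaving of $(a_1,\dots,a_p)$ with $(b_1,\dots,b_q)$. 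This is precisely Loday's shuffle formula of \cite[\S4.2.1]{Lod98} defining the algebraic product on $C(A)$, so the two products coincide.

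For (ii), the topological and the algebraic products of $\mathrm{HC}^-_*(A)$ on $\mathrm{HH}_*(A)$ are by definition the compositions
\begin{gather*}
\mathrm{HC}^-_p(A)\otimes \mathrm{HH}_q(A)\xrightarrow{h'\otimes \mathrm{id}}\mathrm{HH}_p(A)\otimes \mathrm{HH}_q(A)\xrightarrow{\mathrm{top}}\mathrm{HH}_{p+q}(A),\\
\mathrm{HC}^-_p(A)\otimes \mathrm{HH}_q(A)\xrightarrow{h\otimes \mathrm{id}}\mathrm{HH}_p(A)\otimes \mathrm{HH}_q(A)\xrightarrow{\mathrm{alg}}\mathrm{HH}_{p+q}(A),
\end{gather*}
respectively. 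By Lemma~\ref{lem:comparison-h-I}(i), $h=h'$; by part (i) just proved, the algebraic and topological products in $\mathrm{HH}_*(A)$ coincide. Therefore the two products of $\mathrm{HC}^-_*(A)$ on $\mathrm{HH}_*(A)$ agree.

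The only genuinely non-formal step is in (i), where one must verify that the signs appearing in Lemma~\ref{lem:smashProduct-shuffleProduct} (coming from the orientations on $|\Delta^{p+q}|$ induced by the simplicial subdivision $f_{\mu,\nu}$) match the signs $\mathrm{sgn}(\sigma,\tau)$ in Loday's shuffle formula. This is a routine, if somewhat tedious, sign-bookkeeping check; there is no conceptual obstacle beyond it, since the passage between $(\mu,\nu)$-shuffles indexing degeneracies and $(\sigma,\tau)$-shuffles indexing tensor positions is the standard bijection.
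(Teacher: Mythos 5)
Your proposal is correct and follows essentially the same route as the paper: part (i) from Lemma \ref{lem:smashProduct-shuffleProduct} (your unwinding of the Eilenberg--Zilber shuffle composed with the componentwise multiplication $\mu$ is just a spelled-out version of what the paper invokes in one line), and part (ii) from part (i) together with the identification $h=h'$ from Lemma \ref{lem:comparison-h-I}(i). Your cautionary remark about matching the orientation signs of $f_{\mu,\nu}$ with $\mathrm{sgn}(\sigma)$ in Loday's formula is a reasonable caveat that the paper leaves implicit, but it is the standard identification and not a gap.
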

\begin{proof}
Recall that the algebraic product on HH is defined by the shuffle product. Then Part (i) follows from Lemma \ref{lem:smashProduct-shuffleProduct}. Part (ii) follows from (i) and Lemma \ref{lem:comparison-h-I}(i).
\end{proof}

\begin{definition}\label{def:filtration_HC_ungraded}
Let $A$ be a  $k$-algebra.
We define an increasing filtration on $\mathrm{HC}_n(A)$ by 
\begin{gather*}
F_0 \mathrm{HC}_n(A)=0,\quad F_1 \mathrm{HC}_n(A)=\mathrm{Im}\big(\mathrm{HH}_n(A)\xrightarrow{I} \mathrm{HC}_n(A)\big),
\end{gather*}
 and for $i>1$ inductively
\begin{gather*}
F_i \mathrm{HC}_n(A)=S^{-1}F_{i-1} \mathrm{HC}_{n-2}(A)
\end{gather*}
where $S$ is the homomorphism in \eqref{eq:ISB-sequence}.
\end{definition}

\begin{proposition}[Filtered comparison]\label{prop:algebraic-and-topological-products-weak-comparison}
Let $A$ be a commutative $k$-algebra.
The algebraic and the topological multiplications of $\mathrm{HC}^-_*$ on $\operatorname{HC}_*(A)$ preserve the filtrations $F_i \operatorname{HC}_*(A)$
% (i.e. both induce maps $\mathrm{HC}^-_p(A)\otimes F_i\mathrm{HC}_{q}(A)\rightarrow F_i\mathrm{HC}_{p+q}(A)$) 
and  coincide on the quotients $F_i \operatorname{HC}_*(A)/F_{i-1}\operatorname{HC}_*(A)$ for $i\geq 1$.
\end{proposition}
\begin{proof}
The assertion for $i=1$ follows from Lemmas \ref{lem:comparison-h-I}(ii), \ref{lem:h-I-S-preserve-alg-products}, \ref{lem:I-S-preserves-product}(i), and  \ref{lem:comparison-product-HH-and-HNonHH}(ii). The map $S$ induces injective maps
\begin{gather*}
\frac{F_i\operatorname{HC}_n(A)}{F_{i-1}\operatorname{HC}_n(A)}\overset{S}{\longhookrightarrow} \frac{F_{i-1}\operatorname{HC}_{n-2}(A)}{F_{i-2}\operatorname{HC}_{n-2}(A)}\quad \mbox{for}\ i\geq 2.
\end{gather*}
By induction on $i$, the assertion  for $i\geq 2$ follows from Lemmas \ref{lem:h-I-S-preserve-alg-products} and \ref{lem:I-S-preserves-product}(ii).
\end{proof}

% subsection the_topological_products_and_a_weak_comparison (end)

\subsection{The Hochschild and cyclic homology of cdga's}
\label{sub:the_hochschild_and_cyclic_homology_of_cdga_s} % (fold)
We recall the Hochschild and cyclic complexes of (c)dga's and  several related notions  from \cite[\S 5.3-\S5.4]{Lod98}. 
%Let $k$ be a fixed commutative ring.

\label{sub:the_hochschild_and_cyclic_homology_of_cdgas}
\begin{definition}
A \emph{differential graded algebra}  over $k$ ($k$-dga for short) is a graded associative $k$-algebra $\mathscr{A}=\bigoplus_{n\geq 0}\mathscr{A}_n$ endowed with a degree $-1$ map $\updelta:\mathscr{A}_n\rightarrow \mathscr{A}_{n-1}$ satisfying
\begin{gather*}
\mathscr{A}_p\cdot \mathscr{A}_q\subset \mathscr{A}_{p+q},\\
\updelta(\mathscr{A}_0)=0,\ %1\not\in \mathrm{Im \updelta},\
\updelta^2=0,\ \updelta(ab)=(\updelta a)b+(-1)^{|a|}a(\updelta b),
\end{gather*} 
for all $a,b\in \mathscr{A}$. 
A $k$-dga $\mathscr{A}$  is called a \emph{commutative differential graded algebra} ($k$-cdga for short) if it is \emph{graded commutative}, namely,
\begin{gather*}
ab=(-1)^{|a||b|}ba,\quad \mbox{for all $a,b\in \mathscr{A}$}.
\end{gather*}

\end{definition}
For an element $a\in \mathscr{A}_i$, let $|a|=i$, called the \emph{weight} of $a$. An element 
\begin{gather*}
x=(a_0,\dots,a_n):=a_0\otimes \cdots\otimes a_n\in \mathscr{A}^{\otimes n+1}
\end{gather*}
has \emph{length} $l(x)=n$, and \emph{weight} $|x|=\sum_{i=0}^n |a_i|$ if each $a_i$ has a pure weight.

The Hochschild complex $C(\mathscr{A})$ and cyclic complex $\mathcal{B}C(\mathcal{A})$ are formulated as the ungraded case, modified according to the Koszul sign convention (\cite[\S1.0.15]{Lod98}) according to weights. 
Thus $C(\mathscr{A})_n=\mathscr{A}^{\otimes_k n+1}$, and $b_n: C(\mathscr{A})_n \rightarrow C(\mathscr{A})_{n-1}$ is given by
\begin{gather*}
b_n(a_0,\dots, a_n)=(-1)^{|a_n|(\sum_{j=0}^{n-1}|a_j|)}(a_na_0,a_1,\dots,a_{n-1}),
\end{gather*}
while $b_i$ for $0\leq i\leq n-1$ and the degeneracies $s_i:C(\mathscr{A})_n \rightarrow C(\mathscr{A})_{n+1}$ are defined by the same formula as the ungraded case. 
Similarly, the cyclic operator $t_n: C(\mathscr{A})_n \rightarrow C(\mathscr{A})_{n}$ is given by
\begin{gather*}%\label{eq:cyclicOperator-cdga}
t_n(a_0,\dots, a_n)=(-1)^n(-1)^{|a_n|\cdot(|a_0|+\dots+|a_{n-1}|)}(a_n\otimes a_0\otimes \dots\otimes a_{n-1}).
\end{gather*}
With these operators,  $C(\mathscr{A})_{\bullet}$ becomes a cyclic $k$-module.
The operator $\updelta$ extends to $C(\mathscr{A})_n$ by 
\begin{align*}
\updelta(a_0,\dots,a_n)=\sum_{i=0}^n(-1)^{\sum_{j=0}^{i-1}|a_j|}(a_0,\dots,\updelta a_i,\dots,a_n).
\end{align*}
One easily checks that 
\begin{equation}\label{eq:commutativity-delta-b-s-t}
  \begin{gathered}
\updelta\circ b_i=b_i\circ \updelta\ \mbox{and}\ \updelta\circ s_i=s_i\circ \updelta\quad \mbox{for}\ 0\leq i\leq n,\\
\mbox{and}\quad
\updelta\circ t=t\circ \updelta,\quad \updelta\circ s=s\circ \updelta.
\end{gathered}
\end{equation}
It follows that 
\begin{gather*}
\updelta\circ b=b\circ \updelta\quad \mbox{and}\quad \updelta\circ B=B\circ \updelta.
\end{gather*}
As in the ungraded case, we have $b\circ B+B\circ b=0$. By replacing $\updelta$ with an alternating $\pm\updelta=(-1)^{l(-)}\updelta$,  the Hochschild homology of $\mathscr{A}$ is defined  by the total complex of the bicomplex $C(\mathscr{A})$, with differential $b\pm \updelta$ (\cite[(5.2.3.1)]{Lod98}). The cyclic homology of $\mathscr{A}$ is computed by the total complex of a tricomplex $CC(\mathscr{A})$ with differential $b\pm \updelta+B$.

When we need  to emphasize the choice of the operator $\updelta$, we use notations such as $\operatorname{HH}(\mathscr{A},\updelta)$ and $\operatorname{HC}(\mathscr{A},\updelta)$. For example, for a dga $(\mathscr{A},\updelta)$, $(\mathscr{A},0)$ is also a dga, and $\operatorname{HH}(\mathscr{A},0)$ differs from the Hochschild homology of the underlying ungraded algebra $\mathscr{A}$.
% subsection the_hochschild_and_cyclic_homology_of_cdgas (end)

\subsection{Products in Hochschild homology of cdga's} % (fold)
\label{sub:products_on_hochschild_homology_of_c_dga_s}
For a cdga, the shuffle product is defined by taking into account the Koszul convention according to weights. So for $\sigma\in S_n$, which only permutes $i$ and $i+1$, let 
\begin{gather*}
\sigma.(a_0,a_1,\dots,a_{n}):=(-1)^{|a_i||a_{i+1}|}(a_0,\dots,a_{i-1},a_{i+1},a_i,a_{i+2},\dots,a_n)
\end{gather*}
and for an arbitrary $\sigma\in S_n$ let $\sigma.(a_0,a_1,\dots,a_{n})$ be the composition of the permutations of neighboring indices. Then 
\begin{equation}\label{eq:shuffleProduct-cdga}
\begin{gathered}
(a_0,\dots,a_m)\times (a'_0,\dots,a'_n)\\
:=\sum_{\sigma\in \{(m,n)\text{-shuffles}\}}(-1)^{|a'_0|\sum_{i=1}^m|a_i|}
\mathrm{sgn}(\sigma)\sigma.(a_0a'_0,a_0,\dots,a_m,a'_0,\dots,a'_n).
\end{gathered}
\end{equation}
The cyclic shuffle product (\cite[\S4.3.2, \S4.3.6]{Lod98}) for cdga is defined similarly.
\begin{definition}
We define the \emph{algebraic product} in the cyclic homology of a cdga in the same way as the ungraded case in \S\ref{sec:products_on_cyclic_homology}, using the (cyclic) shuffle product recalled above.  
\end{definition}

\begin{definition}\label{def:topologicalProducts-dga}
Let $(\mathscr{A},\updelta)$ be a $k$-dga.
Since $\updelta$ commutes with the  operators $b_i,s_i,t_i$ by \eqref{eq:commutativity-delta-b-s-t}, the bisimplicial sets $\Gamma \big(C_{\bullet}(\mathscr{A}),\updelta\big)_{\bullet}$ in the first direction is a cyclic set. So  $\Gamma \big(C_{\bullet}(\mathscr{A}),\updelta\big)_{\bullet}$ is a simplicial cyclic set.
Let $(\mathscr{B},\updelta)$ be another  $k$-dga.
By \cite[\S2.3]{ScS03}, there is a natural map for each $i\geq 0$
\begin{gather}\label{eq:schwedeShipleyMap}
\Gamma \big(C_i(\mathscr{A}),\updelta\big)\otimes \Gamma \big(C_i(\mathscr{B}),\updelta\big) \rightarrow \Gamma \big(C_i(\mathscr{A})\otimes C_i(\mathscr{B}) ,\updelta\big)\underset{\tau}{\xrightarrow{\cong}} \Gamma \big(C_i(\mathscr{A}\otimes \mathscr{B}) ,\updelta\big),
\end{gather}
which induces a map of simplicial sets
\begin{gather*}
\operatorname{diag}\Gamma \big(C_{\bullet}(\mathscr{A}),\updelta\big)_{\bullet}\land \operatorname{diag}\Gamma \big(C_{\bullet}(\mathscr{B}),\updelta\big)_{\bullet}  \rightarrow \operatorname{diag}\Gamma \big(C_{\bullet}(\mathscr{A}\otimes \mathscr{B}),\updelta\big)_{\bullet}\quad.
\end{gather*}
Then by the equivalence $\operatorname{HH}(\mathscr{A},\updelta)\cong \operatorname{diag}\Gamma \big(C_{\bullet}(\mathscr{A}),\updelta\big)_{\bullet}$, we define the \emph{exterior topological product}
\begin{gather}\label{eq:topologicalProducts-HH-dga}
|\operatorname{HH}(\mathscr{A},\updelta)|\land |\operatorname{HH}(\mathscr{B},\updelta)| \rightarrow 
|\operatorname{HH}(\mathscr{A}\otimes \mathscr{B},\updelta)|.
\end{gather}
If $\mathscr{A}$ is a $k$-cdga, then the multiplication $\mathscr{A}\otimes_k \mathscr{A} \rightarrow \mathscr{A}$ is a cdga map, and we call the composition 
\begin{gather*}
\big|\operatorname{HH}(\mathscr{A},\updelta)\big|\land \big|\operatorname{HH}(\mathscr{A},\updelta)\big| \rightarrow \big|\operatorname{HH}(\mathscr{A}\otimes \mathscr{A},\updelta)\big| \rightarrow \big|\operatorname{HH}(\mathscr{A},\updelta)\big|,
\end{gather*}
and also the induced map and homotopy groups,  the \emph{topological product} in the Hochschild homology of $\mathscr{A}$.
\end{definition}

\begin{proposition}\label{prop:TopProduct-dga}
For $k$-dga's $\mathscr{A}$ and $\mathscr{B}$, the exterior topological product and the exterior algebraic product in Hochschild homology coincide. For a $k$-cdga $\mathscr{A}$, the  topological product and the algebraic product in Hochschild homology coincide.
\end{proposition}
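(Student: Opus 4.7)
The plan is to reduce Proposition \ref{prop:TopProduct-dga} to the ungraded comparison of Lemma \ref{lem:smashProduct-shuffleProduct} by exploiting the bisimplicial structure of the dga Hochschild complex. First note that once the exterior statement is established, the commutative case is immediate: for a $k$-cdga $\mathscr{A}$, the multiplication $\mathscr{A}\otimes\mathscr{A}\to\mathscr{A}$ is a map of cdga's by graded commutativity, and both the topological and the algebraic product on $\operatorname{HH}(\mathscr{A},\updelta)$ are defined by functorially post-composing the corresponding exterior product with this map. So I would focus on showing that the exterior topological product \eqref{eq:topologicalProducts-HH-dga} agrees with the exterior algebraic shuffle product \eqref{eq:shuffleProduct-cdga} on homotopy / homology.

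The set-up I would use is the bisimplicial picture already visible in Definition \ref{def:topologicalProducts-dga}. For each fixed Hochschild degree $i$, Dold-Kan sends the chain complex $(C_i(\mathscr{A}),\updelta)$ to the simplicial $k$-module $\Gamma(C_i(\mathscr{A}),\updelta)$; letting $i$ vary gives the bisimplicial $k$-module $\Gamma(C_\bullet(\mathscr{A}),\updelta)_\bullet$, whose diagonal realizes $\operatorname{HH}(\mathscr{A},\updelta)$. By the Eilenberg-Zilber theorem for bisimplicial modules, the normalized chain complex of the diagonal is naturally chain-homotopy equivalent to the total complex of $(C(\mathscr{A}),b,\pm\updelta)$ via the shuffle map in one direction and the Alexander-Whitney map in the other. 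Consequently $\operatorname{HH}_*(\mathscr{A},\updelta)$ is the homology of this total complex, and any product constructed geometrically on the left transports through the shuffle map to a total-complex product on the right.

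Now I would analyse the topological product as a composition of two pieces. Piece (a) is the smash product of realizations in the outer (Hochschild) simplicial direction; by Lemma \ref{lem:smashProduct-shuffleProduct} this is computed on normalized chains by the shuffle map in the $b$-direction. Piece (b) is, for each fixed outer degree, the Schwede-Shipley lax monoidal map $\Gamma(C_i(\mathscr{A}),\updelta)\otimes\Gamma(C_i(\mathscr{B}),\updelta)\to\Gamma\bigl(C_i(\mathscr{A})\otimes C_i(\mathscr{B}),\updelta\bigr)$ followed by the Koszul-sign rearrangement $\tau:C_i(\mathscr{A})\otimes C_i(\mathscr{B})\xrightarrow{\cong}C_i(\mathscr{A}\otimes\mathscr{B})$. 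The lax monoidal structure on $\Gamma$ is identified on normalized chain complexes with the classical shuffle map of chain complexes (this is essentially the statement that the shuffle map is a lift of the Eilenberg-Zilber equivalence to a natural transformation). Composing (a) and (b) yields exactly the shuffle product on the total complex of $C(\mathscr{A}\otimes\mathscr{B})$, further twisted by the permutation $\tau$ which encodes the Koszul signs coming from rearranging $\mathscr{A}$- and $\mathscr{B}$-tensor factors by weight.

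On the algebraic side, the exterior shuffle product in \eqref{eq:shuffleProduct-cdga} is by construction the total-complex shuffle product carrying precisely those same Koszul signs prescribed by weight. So on homology the two products match. The main obstacle I anticipate is the sign bookkeeping: one must verify that the Schwede-Shipley map reduces on normalized chains to the classical shuffle map, and that the composition of the two shuffle maps, together with the sign contributions from $\tau$ and from the Koszul conventions on the bicomplex, reproduces exactly the combinatorics of \eqref{eq:shuffleProduct-cdga}. This reduces, after identifying the lax monoidal structure on $\Gamma$ with the Eilenberg-Zilber shuffle, to comparing two sign formulas indexed by the same collection of $(m,n)$-shuffles and checking they agree termwise; this is routine but genuinely the only computational content of the proof.
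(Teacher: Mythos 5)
Your proposal is correct and takes essentially the same approach as the paper: the paper's one-sentence proof simply invokes Lemma \ref{lem:smashProduct-shuffleProduct} for the outer (Hochschild) simplicial direction and attributes the cdga sign factors in \eqref{eq:shuffleProduct-cdga} to the map $\tau$ in \eqref{eq:schwedeShipleyMap}, which is exactly the decomposition into pieces (a) and (b) you spell out. Your version is considerably more explicit — in particular you name the identification of the lax monoidal structure on $\Gamma$ with the Eilenberg--Zilber shuffle map, and you acknowledge that the remaining sign check against \eqref{eq:shuffleProduct-cdga} is the genuine computational content — but the ingredients and their roles match the paper's.
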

\begin{proof}
The first assertion follows from Lemma \ref{lem:smashProduct-shuffleProduct}; the sign factors in \eqref{eq:shuffleProduct-cdga} arising from exchanging elements with weights are due to the map $\tau$ in \eqref{eq:schwedeShipleyMap}. 
\end{proof}
% subsection products_on_hochschild_homology_of_c_dga_s (end)

\subsection{Products in derived Hochschild homology and cyclic homology
%; comparison of products
} % (fold)
\label{sub:products_in_the_derived_hochschild_and_cyclic_homology_comparison_of_products}

Let $A$ be a  $k$-algebra. 
By definition (e.g. \cite[\S7]{Bru01}), the derived Hochschild homology $\widetilde{\mathrm{HH}}(A)$ (resp.  derived cyclic homology $\widetilde{\mathrm{HC}}(A)$) is defined to be  $\mathrm{HH}(P_{\bullet})$ 
(resp. $\mathrm{HC}(P_{\bullet})$, resp. $\mathrm{HC}^{-}(P_{\bullet})$), where $P_{\bullet}\rightarrow A$ is a  resolution of $A$ by a simplicial  $k$-algebra $P_{\bullet}$ which is degreewise flat over $k$. 
The following is a folklore result.

%(\cite[\S 2.1]{Ang11}, \cite[Proof of Lemma 7.2]{Bru01})
\begin{proposition}\label{lem:equivalence-Shukla-derivedHH-HC}
Suppose $\mathscr{A} \rightarrow A$ is a flat resolution of $A$ by a $k$-dga.
Then there are canonical equivalences 
\begin{gather}\label{eq:equivalence-Shukla-derivedHH-HC}
\widetilde{\mathrm{HH}}(A)\simeq \mathrm{HH}(\mathscr{A}),\ 
\widetilde{\mathrm{HC}}(A)\simeq \mathrm{HC}(\mathscr{A}),\
\widetilde{\mathrm{HC}}^{-}(A)\simeq \mathrm{HC}^{-}(\mathscr{A}).
\end{gather}
\end{proposition}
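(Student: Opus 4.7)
The plan is to exhibit both $\widetilde{\mathrm{HH}}(A)$ and $\mathrm{HH}(\mathscr{A})$ (and likewise for HC and HC$^{-}$) as values of a single functor on a common refinement in the category of flat $k$-dgas, transporting quasi-isomorphisms by means of the Eilenberg--Zilber theorem on the one hand and the flat-Künneth principle on the other.

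First I pass from simplicial algebras to dgas. Fix any flat simplicial $k$-algebra resolution $P_\bullet \to A$ used to define $\widetilde{\mathrm{HH}}, \widetilde{\mathrm{HC}}, \widetilde{\mathrm{HC}}^{-}$. Normalising with the Eilenberg--Zilber shuffle product makes $\mathrm{N}P_\bullet$ into a flat $k$-dga equipped with a quasi-isomorphism $\mathrm{N}P_\bullet \to A$. Iterating the Eilenberg--Zilber theorem produces natural quasi-isomorphisms
\[
(\mathrm{N}P_\bullet)^{\otimes(n+1)} \xrightarrow{\;\sim\;} \mathrm{N}\!\left(P_\bullet^{\otimes(n+1)}\right)
\]
which are strictly compatible with the $b$-differential and compatible with the degeneracies and with the cyclic operator $t_n$ up to coherent chain homotopies. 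Assembled over all $n$, they give a zigzag of quasi-isomorphisms between the cyclic simplicial object $\mathrm{HH}(P_\bullet)$ and the $b$-$B$-complex $\mathrm{HH}(\mathrm{N}P_\bullet)$ of \S\ref{sub:the_hochschild_and_cyclic_homology_of_cdga_s}. Via the intrinsic description \eqref{eq:top-def-HC} and the functoriality of HC and HC$^{-}$ on the cyclic object (respectively on the $b$-$B$-complex), the same zigzag yields $\widetilde{\mathrm{HH}}(A) \simeq \mathrm{HH}(\mathrm{N}P_\bullet)$ and the analogous equivalences for HC and HC$^{-}$.

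Second I compare the two flat $k$-dga resolutions $\mathrm{N}P_\bullet$ and $\mathscr{A}$ of $A$. Using the projective model structure on $k$-dgas (Hinich), I choose a cofibrant, hence degreewise $k$-flat, $k$-dga $\mathscr{B}$ fitting into a diagram of quasi-isomorphisms $\mathrm{N}P_\bullet \leftarrow \mathscr{B} \to \mathscr{A}$ over $A$. For any quasi-isomorphism $f : \mathscr{A}_1 \to \mathscr{A}_2$ of $k$-flat $k$-dgas, the flat Künneth formula applied iteratively gives that $f^{\otimes(n+1)}$ is a quasi-isomorphism for every $n$; naturality in $n$ promotes this to a map of tricomplexes $CC(\mathscr{A}_1) \to CC(\mathscr{A}_2)$ that is a quasi-isomorphism on each column, and the spectral sequence of the $(b\pm\updelta)$-filtration then gives a quasi-isomorphism of total complexes. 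The same mechanism applied to the $b$-$B$-bicomplex gives invariance of HC and HC$^{-}$. Applying this to the two legs $\mathscr{B} \to \mathrm{N}P_\bullet$ and $\mathscr{B} \to \mathscr{A}$ produces $\mathrm{HH}(\mathrm{N}P_\bullet) \simeq \mathrm{HH}(\mathscr{B}) \simeq \mathrm{HH}(\mathscr{A})$ and analogously for HC and HC$^{-}$, which combined with the first step yields \eqref{eq:equivalence-Shukla-derivedHH-HC}. Canonicity follows because any two choices of refinement $\mathscr{B}$ are themselves joined by a further zigzag of trivial fibrations of flat $k$-dgas, and all functors in play are invariant under such maps.

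The main technical obstacle is making the first step precise at the level of cyclic structures, since the EZ shuffle map is compatible with $t_n$ only up to natural chain homotopy. Two clean routes are available: either bookkeep explicit homotopies in the style of the proof of Proposition \ref{prop:TopProduct-dga}, producing the desired zigzag directly at the level of $b$-$B$-complexes; or pass to geometric realisations and observe that \eqref{eq:schwedeShipleyMap} identifies $|\mathrm{N}P_\bullet \otimes \mathrm{N}P_\bullet|$ with $|\mathrm{N}(P_\bullet \otimes P_\bullet)|$ as $\mathbf{S}^1$-spaces, whereupon the HC/HC$^{-}$ comparison is automatic from \eqref{eq:top-def-HC}. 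Once this compatibility is in place, the remaining invariance statements in the second step are standard applications of degreewise flatness and the convergence of the $b$-then-$\updelta$ spectral sequence.
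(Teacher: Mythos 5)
Your proof takes a genuinely different route from the paper, and as written it has a real gap in its first step. The paper's proof never normalizes the simplicial resolution $P_\bullet$ into a dga: instead it embeds both $P_\bullet$ (as a simplicial dga concentrated in dga-degree $0$) and $\mathscr{A}$ (as a constant simplicial dga) in the category of \emph{simplicial} dgas with the Reedy model structure built on the projective model structure for dgas, finds a common cofibrant replacement $Q_\bullet$, and then observes that a quasi-isomorphism of degreewise-flat simplicial dgas preserves HH, HC, HC$^{-}$ by Künneth and the standard filtration spectral sequences. This sidesteps entirely the comparison between the cyclic structure on $\operatorname{diag}\mathrm{HH}(P_\bullet)$ and the mixed-complex structure on $\mathrm{HH}(\mathrm{N}P_\bullet)$, which is the crux of your Step 1.

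That comparison is the gap. You correctly identify that the Eilenberg--Zilber shuffle maps $(\mathrm{N}P_\bullet)^{\otimes(n+1)} \to \mathrm{N}(P_\bullet^{\otimes(n+1)})$ intertwine $b$ strictly but intertwine the cyclic operator $t_n$ (and hence $B$) only up to homotopy, and you flag that making the homotopies coherent is the main technical obstacle. But neither of the two routes you offer closes it. Route 1 (``bookkeep explicit homotopies'') names what would need to be done without doing it; the coherence here is genuinely the content, not a formality. Route 2 is misstated: you appeal to \eqref{eq:schwedeShipleyMap} to identify ``$|\mathrm{N}P_\bullet \otimes \mathrm{N}P_\bullet|$ with $|\mathrm{N}(P_\bullet \otimes P_\bullet)|$ as $\mathbf{S}^1$-spaces,'' but neither of those is a cyclic object (they are chain complexes, not simplicial sets), so they carry no $\mathbf{S}^1$-action, and \eqref{eq:schwedeShipleyMap} concerns the lax monoidal structure of the Dold--Kan functor, not any cyclic equivariance. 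The objects whose realizations you actually need to compare $\mathbf{S}^1$-equivariantly are $\operatorname{diag}\mathrm{HH}(P_\bullet)$ and $\operatorname{diag}\Gamma\big(C_\bullet(\mathrm{N}P_\bullet),\updelta\big)_\bullet$, and that equivariant comparison is exactly the unproved point. Your Step 2 (comparing two flat dga resolutions of $A$ via a cofibrant refinement $\mathscr{B}$ and degreewise-flat Künneth) is correct and is essentially the same device the paper uses, but promoted to simplicial dgas so that Step 1 becomes unnecessary; if you rewrite your argument in the paper's category of simplicial dgas you can delete Step 1 and the proof goes through cleanly.
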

\begin{proof}
%It follows in a standard way from the model structure of the category of simplicial dga's. 
There is a projective model structure on the category of dga's (see e.g. \cite[\S4.2]{ScS03} and references therein). 
We take an induced Reedy model structure on the category of simplicial dga's. Then one can find a common cofibrant simplicial dga covering both $\mathscr{A}$ and $P_{\bullet}$ by trivial fibrations. The conclusion thus follows from standard homological algebra.
\end{proof}

\begin{definition}
Let $A$ and $B$ be  $k$-algebras, and $P_{\bullet} \rightarrow A$ and  $Q_{\bullet} \rightarrow B$ be  resolutions  by simplicial commutative  $k$-algebras  which are degreewise flat over $k$.
There are natural maps
\begin{gather*}
C_j(P_i)\otimes C_j(Q_i) \rightarrow C_j(P_i\otimes Q_i),
\end{gather*}
which induces a map
\begin{gather*}
\mathrm{diag} \operatorname{HH}(P_{\bullet}) \land \mathrm{diag} \operatorname{HH}(Q_{\bullet}) \rightarrow 
\mathrm{diag} \operatorname{HH}(P_{\bullet}\otimes Q_{\bullet}),
\end{gather*}
and thus the  \emph{exterior topological product}
\begin{gather}\label{eq:exteriorProduct-derivedHH-simpResolution}
\big|\widetilde{\operatorname{HH}}(A)\big|\land \big|\widetilde{\operatorname{HH}}(B)\big| \rightarrow \big|\widetilde{\operatorname{HH}}(A\otimes^{\mathbf{L}}_k B)\big|.
\end{gather}
By the constructions in \S\ref{sub:the_topological_products_and_a_filtered_comparison}, there are also \emph{exterior topological products}
\begin{gather*}
\big|\widetilde{\operatorname{HC}}^{-}(A) \big|\land \big|\widetilde{\operatorname{HC}}^{-}(A)\big| \rightarrow \big|\widetilde{\operatorname{HC}}^{-}(A\otimes^{\mathbf{L}}_k B)\big|
\end{gather*} 
and
\begin{gather*}
\big|\widetilde{\operatorname{HC}}^{-}(A)\big| \land \big|\widetilde{\operatorname{HC}}(A)\big| \rightarrow \big|\widetilde{\operatorname{HC}}(A\otimes^{\mathbf{L}}_k B)\big|.
\end{gather*} 
When $A$ is a commutative $k$-algebra, the compositions of the above maps with the maps induced by $A\otimes^{\mathbf{L}}_k A \rightarrow A$ yield inner \emph{topological products}. 
\end{definition}
\begin{definition}
Let $A$ and $B$ be  $k$-algebras, and $\mathscr{A} \rightarrow A$ and  $\mathscr{B} \rightarrow B$ be  resolutions  by    $k$-dga's  which are degreewise flat over $k$. We define \emph{exterior algebraic products}, and inner \emph{algebraic products} if $A$ is commutative, in the derived Hochschild homology and derived cyclic homology of $A$ and $B$ by those of $\mathscr{A}$ and $\mathscr{B}$ and the isomorphisms \eqref{eq:equivalence-Shukla-derivedHH-HC}.
\end{definition}

For $k$-algebras $A$ and $B$, we can also take flat dga resolutions $\mathscr{A} \rightarrow A$ and $\mathscr{B} \rightarrow B$, and define an exterior product  $|\widetilde{\operatorname{HH}}(A)|\land |\widetilde{\operatorname{HH}}(B)| \rightarrow |\widetilde{\operatorname{HH}}(A\otimes^{\mathbf{L}}_k B)|$ by the equivalence \eqref{eq:equivalence-Shukla-derivedHH-HC} and the topological exterior products \eqref{eq:topologicalProducts-HH-dga}.

\begin{proposition}\label{eq:comparison-topologicalProduct-simplicalRes-vs-dgaRes}
For $k$-algebras $A$ and $B$, the exterior topological product \eqref{eq:exteriorProduct-derivedHH-simpResolution}  in the derived Hochschild homology  is independent of the choice of the simplicial  resolution $P_{\bullet}$ and $Q_{\bullet}$, and  coincides with the topological product defined by flat dga resolutions. The same holds for the (inner) topological product in the derived Hochschild homology of a $k$-algebra.
\end{proposition}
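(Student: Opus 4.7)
The plan is to unify the two constructions by working in the category of simplicial $k$-dga's, which contains both simplicial $k$-algebras (as simplicial dga's concentrated in weight $0$) and $k$-dga's (as constant simplicial dga's). For a degreewise-flat simplicial $k$-dga $\mathscr{P}_\bullet$, one defines $\operatorname{HH}(\mathscr{P}_\bullet)$ as the diagonal of the evident bisimplicial set obtained by applying the Dold-Kan functor to $(C(\mathscr{P}_i),\updelta)$ in the weight direction. Applying the Schwede-Shipley map \eqref{eq:schwedeShipleyMap} levelwise in the simplicial direction yields a natural exterior product $|\operatorname{HH}(\mathscr{P}_\bullet)| \land |\operatorname{HH}(\mathscr{Q}_\bullet)| \to |\operatorname{HH}(\mathscr{P}_\bullet \otimes \mathscr{Q}_\bullet)|$, which specializes to \eqref{eq:exteriorProduct-derivedHH-simpResolution} when the inputs have weight $0$ and to \eqref{eq:topologicalProducts-HH-dga} when they are constant.

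With this unified framework in hand, I would first establish independence of the simplicial resolution. Given two degreewise-flat simplicial resolutions $P_\bullet$ and $P'_\bullet$ of $A$, the Reedy-type model structure on simplicial dga's used in the proof of Proposition \ref{lem:equivalence-Shukla-derivedHH-HC} produces a cofibrant simplicial dga covering both by trivial fibrations, and similarly on the $B$ side. Naturality of the exterior product, combined with the fact that weak equivalences of degreewise-flat simplicial dga's induce equivalences on $\operatorname{HH}$, then forces the two exterior products to coincide under the canonical identifications.

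Next I would carry out the simplicial-versus-dga comparison. Given $P_\bullet \to A$ and a flat dga resolution $\mathscr{A} \to A$, the same model-categorical argument produces a cofibrant simplicial dga $\widetilde{\mathscr{P}}_\bullet$ admitting trivial fibrations to both $P_\bullet$ and $\mathscr{A}$ (viewed as a constant simplicial dga); analogously $\widetilde{\mathscr{Q}}_\bullet$ on the $B$ side. Both the simplicial-algebra exterior product and the dga exterior product then factor through the intermediate product defined for $\widetilde{\mathscr{P}}_\bullet$ and $\widetilde{\mathscr{Q}}_\bullet$, so they agree after the canonical equivalences \eqref{eq:equivalence-Shukla-derivedHH-HC}. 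The inner topological product for commutative $A$ is deduced by composing with the map induced by $A \otimes_k^{\mathbf{L}} A \to A$, its canonicity being automatic from the exterior case.

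The main obstacle I anticipate is verifying that the degreewise Schwede-Shipley maps assemble into a morphism of simplicial cyclic objects that is strictly natural in simplicial dga morphisms; this amounts to bookkeeping with the Koszul signs and the canonical twist $\tau$ in \eqref{eq:schwedeShipleyMap}, but it is precisely this strict compatibility that forces the comparison diagrams to commute on the nose rather than merely up to higher coherent homotopies, which is what is needed to pin down the exterior product up to canonical equivalence rather than only up to an ambiguous choice.
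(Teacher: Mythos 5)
Your proposal follows essentially the same route as the paper: unify the two constructions by defining the exterior product on simplicial dga resolutions, take a common simplicial dga resolution via the model structure on simplicial dga's (the paper's Lemma~\ref{lem:equivalence-Shukla-derivedHH-HC} uses a Reedy structure, as you do), and conclude via Proposition~\ref{prop:TopProduct-dga}. Your write-up supplies more detail than the paper's terse three-sentence proof — in particular your closing caveat about strict naturality of the degreewise Schwede--Shipley maps is a real point that the paper does not spell out — but the underlying argument is the same.
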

\begin{proof}
One can define the exterior product by a \emph{simplicial dga resolution}, which is compatible with both the exterior product \eqref{eq:exteriorProduct-derivedHH-simpResolution} and the topological product defined by a dga resolution. Then, by taking a common simplicial dga resolution in the  projective model  category of simplicial dga's, the first assertion follows from Proposition \ref{prop:TopProduct-dga}. The second assertion follows from the first one.
\end{proof}
The same argument yields
\begin{proposition}
For $k$-algebras $A$ and $B$, the exterior algebraic products,  and inner algebraic products if $A$ is commutative,  are independent of the choices of the flat (c)dga resolutions.
\end{proposition}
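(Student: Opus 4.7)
The plan is to reuse the skeleton of the proof of Proposition \ref{eq:comparison-topologicalProduct-simplicalRes-vs-dgaRes}, but to replace the appeal to Proposition \ref{prop:TopProduct-dga} with the strict naturality of the shuffle and cyclic shuffle products. First I would invoke the projective model structure on the category of (c)dga's (as cited in the proof of Proposition \ref{eq:comparison-topologicalProduct-simplicalRes-vs-dgaRes}) to replace any two flat (c)dga resolutions $\mathscr{A}, \mathscr{A}' \twoheadrightarrow A$ by a common cofibrant flat resolution $\mathscr{A}''$ admitting weak equivalences onto both; similarly for any pair of resolutions of $B$. This reduces the comparison to a single (c)dga morphism $f : \mathscr{A}'' \to \mathscr{A}$ between flat resolutions of $A$, together with a parallel $g : \mathscr{B}'' \to \mathscr{B}$ for $B$.

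The key observation is that the shuffle product \eqref{eq:shuffleProduct-cdga} and the cyclic shuffle product (\cite[\S4.3.2, \S4.3.6]{Lod98}) are given by universal formulas whose coefficients depend only on the lengths and weights of the tensor factors. Hence any (c)dga morphism $f : \mathscr{A}'' \to \mathscr{A}$ induces a map of $b$-$B$-bicomplexes $C(\mathscr{A}'') \to C(\mathscr{A})$, and of the total cyclic complexes \eqref{eq:totComplex-cyclicHomology}--\eqref{eq:totComplex-NegCyclicHomology}, which strictly intertwines both factors of the algebraic products $\mathrm{Sh}^{-} \circ \times$ of Definition \ref{def:algebraicProduct-cyclicHomology}. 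The same strict intertwining holds for the exterior algebraic products built from the tensor product morphism $f \otimes g : \mathscr{A}'' \otimes \mathscr{B}'' \to \mathscr{A} \otimes \mathscr{B}$, because the shuffle formula on the source of an exterior product is pulled back from the shuffle formula on the target along $f\otimes g$.

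Finally, since $\mathscr{A}''$ is degreewise flat, tensoring with it preserves quasi-isomorphisms of flat (c)dga's, so $\mathscr{A}'' \otimes \mathscr{B}''$ remains a flat (c)dga resolution of $A \otimes^{\mathbf{L}}_{k} B$. Proposition \ref{lem:equivalence-Shukla-derivedHH-HC} then identifies the induced quasi-isomorphisms on $\mathrm{HH}$, $\mathrm{HC}$, and $\mathrm{HC}^{-}$ with the canonical equivalences defining the derived theories, and chasing the commutative squares from the previous step yields the asserted independence for the exterior algebraic products. The inner algebraic products in the commutative case follow by precomposing with the multiplication $\mathscr{A}'' \otimes \mathscr{A}'' \to \mathscr{A}''$, which is itself a cdga morphism and therefore respects all the naturality already used. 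The only step requiring real input is the first one (the existence of a common cofibrant flat resolution and the fact that the resulting tensor product remains flat); the strict naturality of the shuffle formulas removes any homotopical subtlety from the actual comparison of products, so I do not expect any further obstacle.
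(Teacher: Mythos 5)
Your proof is correct and follows essentially the same route as the paper: find a common flat resolution dominating both given ones, and exploit the strict naturality of the (cyclic) shuffle formulas under (c)dga morphisms. The paper is terse here — it simply writes ``the same argument yields'' — and that argument, by analogy with the preceding proposition, passes through a common \emph{simplicial} dga resolution in the Reedy model category of simplicial dga's (the structure set up in the proof of Lemma~\ref{lem:equivalence-Shukla-derivedHH-HC}), whereas you invoke the projective model structure on (c)dga's directly. For dga's over an arbitrary base ring your shortcut is unproblematic (semifree objects are degreewise free, hence flat), and it is arguably cleaner than detouring through simplicial dga's when only (c)dga resolutions appear in the statement. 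One thing worth being aware of, though: for strictly commutative dga's over a base such as $\mathbb{Z}$ the existence of a projective model structure with degreewise-flat cofibrant objects is more delicate than for associative dga's, which is precisely why the paper routes through simplicial dga's in Lemma~\ref{lem:equivalence-Shukla-derivedHH-HC}. Either flag this and fall back to the paper's simplicial-dga comparison for the commutative case, or construct a common Koszul--Tate-type semifree cdga resolution by hand; the rest of your argument — the universality of the shuffle and cyclic shuffle formulas, the flatness of $\mathscr{A}''\otimes\mathscr{B}''$, and the reduction of the inner product to the exterior one via the cdga multiplication — is exactly the content the paper is gesturing at.
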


The exact sequence \eqref{eq:ISB-sequence} for $P_i$ yields an exact sequence of derived homologies
\begin{gather*}%\label{eq:ISB-sequence-derived}
\dots\xrightarrow{B} \widetilde{\operatorname{HH}}_p(A)\xrightarrow{I} \widetilde{\operatorname{HC}}_p(A)\xrightarrow{S} \widetilde{\operatorname{HC}}_{p-2}(A)\xrightarrow{B}
\widetilde{\operatorname{HH}}_{p-1}(A)\xrightarrow{I} \dots
\end{gather*}
and we define a filtration on $\widetilde{\operatorname{HC}}_n(A)$ as in Definition \ref{def:filtration_HC_ungraded}.

\begin{proposition}[Filtered comparison]\label{prop:algebraic-and-topological-products-weak-comparison-derived}
Let $A$ be a $k$-algebra. Then the algebraic and the topological multiplications of $\widetilde{\mathrm{HC}}^-_*(A)$ on $\widetilde{\operatorname{HC}}_*(A)$ preserve the filtrations $F_i \widetilde{\operatorname{HC}}_*(A)$  and  coincide on $F_i \widetilde{\operatorname{HC}}_*(A)/F_{i-1}\widetilde{\operatorname{HC}}_*(A)$ for $i\geq 1$.
\end{proposition}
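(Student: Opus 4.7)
The plan is to reduce to the non-derived filtered comparison (Proposition \ref{prop:algebraic-and-topological-products-weak-comparison}) by passing to a flat simplicial resolution $P_\bullet \to A$ by commutative $k$-algebras. With this resolution, $\widetilde{\mathrm{HH}}(A)$, $\widetilde{\mathrm{HC}}(A)$, and $\widetilde{\mathrm{HC}}^-(A)$ are computed as the realizations of the levelwise Hochschild and cyclic invariants, and the Connes exact sequence \eqref{eq:ISB-sequence-derived} is the $\pi_*$ of the diagonal of the levelwise sequences \eqref{eq:ISB-sequence}. Consequently, the filtration $F_i\,\widetilde{\mathrm{HC}}_*(A)$ of Definition \ref{def:filtration_HC_ungraded} is identified with the filtration inherited from $F_i\,\mathrm{HC}_*(P_n)$ at each simplicial level.

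Next I would show that both the algebraic and the topological multiplications of $\widetilde{\mathrm{HC}}^-_*(A)$ on $\widetilde{\mathrm{HC}}_*(A)$ arise from the levelwise products on $P_\bullet$. For the topological product this is essentially definitional: apply Construction \ref{cons:tensor-product-cyclic-modules} at each level of $P_\bullet$, then compose with the multiplication $P_n \otimes P_n \to P_n$, as in the definitions of \S\ref{sub:the_topological_products_and_a_filtered_comparison}. For the algebraic product, I would invoke Proposition \ref{eq:comparison-topologicalProduct-simplicalRes-vs-dgaRes} together with its algebraic counterpart, comparing the flat-dga and simplicial models through a common simplicial cdga resolution, so that the dga-based Definition \ref{def:algebraicProduct-cyclicHomology} matches with the levelwise shuffle products on $P_\bullet$. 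Applying Proposition \ref{prop:algebraic-and-topological-products-weak-comparison} to each $P_n$ gives, levelwise, that both products preserve $F_i$ and coincide on $F_i/F_{i-1}$; by naturality in $P_\bullet$ this assembles to the same conclusion for $\widetilde{\mathrm{HC}}$.

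The main obstacle is precisely the identification of the two perspectives on the algebraic product asserted in the previous paragraph. The standard way to settle this, following the proof of Proposition \ref{eq:comparison-topologicalProduct-simplicalRes-vs-dgaRes}, is to find a common simplicial cdga resolution dominating both a flat dga resolution (viewed as a constant simplicial object) and the chosen simplicial resolution $P_\bullet$; such a common cover exists in the Reedy model structure on simplicial dga's. The required compatibility then reduces to naturality of the shuffle formula \eqref{eq:shuffleProduct-cdga} under cdga maps, which is immediate. With this compatibility in hand, Lemmas \ref{lem:h-I-S-preserve-alg-products}, \ref{lem:I-S-preserves-product}, and \ref{lem:comparison-product-HH-and-HNonHH} (or their cdga analogs, which are Propositions \ref{prop:TopProduct-dga} and the evident variants) apply levelwise; the induction on $i$ used in the proof of Proposition \ref{prop:algebraic-and-topological-products-weak-comparison} — first $i = 1$ from the HH-level comparison, then the general case by injectivity of $S$ on associated graded pieces — carries over verbatim to the diagonal, and the derived filtered comparison follows.
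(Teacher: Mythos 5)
Your final paragraph captures the paper's actual argument: the terse proof in the paper simply cites Proposition~\ref{eq:comparison-topologicalProduct-simplicalRes-vs-dgaRes} and Proposition~\ref{prop:TopProduct-dga} (which together show that the derived algebraic and topological products on $\widetilde{\mathrm{HH}}$ coincide), and then the induction on $i$ from the proof of Proposition~\ref{prop:algebraic-and-topological-products-weak-comparison} is rerun with the derived ISB sequence~\eqref{eq:ISB-sequence-derived} and the derived analogues of Lemmas~\ref{lem:h-I-S-preserve-alg-products} and~\ref{lem:I-S-preserves-product}. So the last paragraph of your proposal is the paper's proof with details written out.

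The issue is with the scaffolding in your first two paragraphs. You assert that ``the filtration $F_i\,\widetilde{\mathrm{HC}}_*(A)$ of Definition~\ref{def:filtration_HC_ungraded} is identified with the filtration inherited from $F_i\,\mathrm{HC}_*(P_n)$ at each simplicial level,'' and then propose to apply Proposition~\ref{prop:algebraic-and-topological-products-weak-comparison} ``to each $P_n$'' and ``assemble by naturality.'' This is not justified. The derived filtration is \emph{defined} by the derived ISB sequence~\eqref{eq:ISB-sequence-derived}, not by a levelwise recipe; the homotopy groups $\widetilde{\mathrm{HC}}_*(A)$ are computed from the realization of a bicyclic object, so the passage from ``$F_i\mathrm{HC}_q(P_n)$ for each $n$'' to ``$F_i\widetilde{\mathrm{HC}}_m(A)$'' involves a spectral sequence and does not give a straightforward identification of filtrations or of associated graded pieces. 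As written, the levelwise reduction would not by itself prove the statement. Fortunately you do not ultimately rely on it: once you have the coincidence of products on $\widetilde{\mathrm{HH}}$, running the induction directly in the derived setting (your final paragraph) avoids the levelwise assembly entirely. I would drop the levelwise framing, or at least flag it as heuristic, and present the direct derived induction as the proof.
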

\begin{proof}
This follows from Propositions \ref{eq:comparison-topologicalProduct-simplicalRes-vs-dgaRes} and \ref{prop:TopProduct-dga}.
\end{proof}

\begin{variant}\label{var:generalizations_to_dgas_simplicial_rings_and_homology_with_coefficients}
In this paper, we need to use the algebraic and the topological multiplications on relative Hochschild and cyclic homology, and these homology with coefficients $\mathbb{Z}/p \mathbb{Z}$. One easily deduces these versions of filtered comparison from the arguments in this section. 
\end{variant}

% subsection products_in_the_derived_hochschild_and_cyclic_homology_comparison_of_products (end)

% section preliminaries_on_cyclic_homology (end)

\section{Brun's isomorphism and the multiplicative structure}
\label{sec:Brun's_isomorphism_and_the_product_structure}
From now on, the default base ring of Hochschild and cyclic homology will be $\mathbb{Z}$.
In this section, we recall Brun's isomorphism theorem in \cite{Bru01}, and show some properties of this isomorphism that will be used in the proof of Theorem \ref{thm:relative-comparison-local}.
\begin{theorem}[{\cite[Theorem 6.1]{Bru01}}]\label{thm:Brun}
Let $R$ be a simplicial ring with an ideal $I$ satisfying $I^m=0$. %Suppose that $R$ and $R/I$ are degreewise flat as modules over $\mathbb{Z}$. 
Then there are natural isomorphisms and surjections 
\begin{align*}
K_i(R,I;p)&\cong \widetilde{\mathrm{HC}}_{i-1}(R,I;p)\quad \mbox{for}\ 0\leq i\leq \lceil\frac{p}{m-1}\rceil-3,\\
K_i(R,I;p)&\twoheadrightarrow \widetilde{\mathrm{HC}}_{i-1}(R,I;p)\quad \mbox{for}\ i\leq \lceil\frac{p}{m-1}\rceil-2,
\end{align*}
where $\lceil r\rceil:=\min\{s\in \mathbb{Z}| s\geq r\}$ for $r\in \mathbb{R}$.
\end{theorem}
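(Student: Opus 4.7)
The strategy is to route the comparison through topological cyclic homology, following Brun's original approach. The first step is to invoke McCarthy's theorem (in the form extended by Dundas--McCarthy and Dundas--Goodwillie--McCarthy): for a simplicial ring $R$ with a nilpotent ideal $I$, the cyclotomic trace induces an equivalence
\[
K(R,I;p) \xrightarrow{\simeq} \mathrm{TC}(R,I;p).
\]
This reduces the theorem to a purely trace-theoretic statement about $\mathrm{TC}$ and bypasses the representability obstacles of $K$-theory.

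The next step is to approximate $\mathrm{TC}(R,I;p)$ by a degree shift of relative derived cyclic homology. Starting from the defining fiber sequence $\mathrm{TC}(R;p) \to \mathrm{TR}(R;p) \xrightarrow{F-1} \mathrm{TR}(R;p)$ and the tower that assembles $\mathrm{TR}$ from the $C_{p^n}$-fixed points of $\mathrm{THH}$, I would exploit the cyclotomic structure: the Frobenius is divisible by $p$ on positive-degree relative theory, so on $\widetilde{\mathrm{THH}}(R,I;p)$ filtered by powers of $I$ its iterates become highly nilpotent when $I^m = 0$. Combining the $I$-adic filtration with the $p$-divisibility of $F$ forces $F-1$ to be an isomorphism in a range of degrees controlled by $p/(m-1)$, so $\mathrm{TC}(R,I;p)$ collapses to $\bigl(\Sigma \widetilde{\mathrm{THH}}(R,I;p)\bigr)^{h\mathbf{S}^1}$, which after a degree shift is $\widetilde{\mathrm{HC}}(R,I;p)[1]$.

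The last ingredient is the linearization map $\widetilde{\mathrm{THH}}(R,I;p) \to \widetilde{\mathrm{HH}}(R,I;p)$ of Pirashvili--Waldhausen. Its fiber is analyzed via the B\"okstedt spectral sequence; the key input is that the Dyer--Lashof-type obstructions to linearization live above a range depending on $p$, and on the relative theory the $I^m = 0$ hypothesis further improves the connectivity to the same order $p/(m-1)$. Assembling the three identifications and passing to homotopy groups produces the natural map $K_i(R,I;p) \to \widetilde{\mathrm{HC}}_{i-1}(R,I;p)$ and proves it is an isomorphism in the stated range and a surjection one degree higher.

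The main obstacle is the careful bookkeeping of connectivity. One must combine (i) the McCarthy equivalence (which is essentially optimal), (ii) the collapse of $\mathrm{TC}$ to negative cyclic homology via the vanishing of $F-1$ in a range, and (iii) the linearization $\mathrm{THH} \to \mathrm{HH}$, arranging that the accumulated losses of connectivity conspire to yield exactly the bound $i < p/(m-1) - 2$ for the isomorphism and $i < p/(m-1) - 1$ for the surjection. Achieving the optimal constant, rather than some weaker estimate, will likely require inducting on the nilpotency degree $m$: one uses the filtration by $I^j/I^{j+1}$ and compares the relative theories layerwise against the case $m=2$, where the Hesselholt--Madsen computation of $\mathrm{TC}$ of a square-zero extension applies most sharply and provides the base case of the induction.
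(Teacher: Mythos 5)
Your first step (route through McCarthy/Dundas--Goodwillie--McCarthy, $K(R,I;p) \simeq \mathrm{TC}(R,I;p)$) is exactly the opening move of Brun's proof. After that, the proposal diverges from the actual argument in ways that matter, and some steps as written cannot succeed.

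\emph{The collapse mechanism is wrong.} You propose that on the relative theory $F-1$ becomes an isomorphism in a range, and deduce a collapse of $\mathrm{TC}(R,I;p)$. But if $F-1$ were an isomorphism, the equalizer defining $\mathrm{TC}$ would vanish in that range, giving $\mathrm{TC}(R,I;p)\simeq 0$, which contradicts the theorem. Brun's actual argument is different: he introduces \emph{filtered} $\mathrm{THH}$ and $\mathrm{TC}$ (the filtration built from powers of $I$), and for the quotient $\mathrm{TC}(R)(-1)/\mathrm{TC}(R)(-p)$ the restriction map $R$ becomes \emph{zero}, so the fundamental cofiber sequence $X_{hC_{p^n}}\xrightarrow{N}X^{C_{p^n}}\xrightarrow{R}X^{C_{p^{n-1}}}$ identifies the relevant fixed points with homotopy \emph{orbits} $X_{hC_{p^n}}$. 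Taking the homotopy limit over the transfer maps and applying the Adams isomorphism produces $\Sigma\bigl(\tfrac{\mathrm{THH}(-1)}{\mathrm{THH}(-p)}\bigr)_{h\mathbf{S}^1}$ after $p$-completion (the map $f_I$ in the zigzag recalled in \S3 of this paper, an equivalence, not merely a highly connected map). The ingredient you are missing is the filtered norm cofiber sequence, not a nilpotence estimate on $F$.

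\emph{Variance.} You write that $\mathrm{TC}(R,I;p)$ collapses to $(\Sigma\widetilde{\mathrm{THH}}(R,I;p))^{h\mathbf{S}^1}$ and call the result $\widetilde{\mathrm{HC}}[1]$. That is the wrong $\mathbf{S}^1$-construction: $\widetilde{\mathrm{HC}}$ is homotopy \emph{orbits}, $\widetilde{\mathrm{HH}}_{h\mathbf{S}^1}$, whereas $(\cdot)^{h\mathbf{S}^1}$ gives negative cyclic homology $\widetilde{\mathrm{HC}}^-$. The correct collapse target is $S^1\wedge\widetilde{\mathrm{THH}}(R,I)_{h\mathbf{S}^1}$ and, after linearization, $S^1\wedge\widetilde{\mathrm{HH}}(R,I)_{h\mathbf{S}^1}\simeq S^1\wedge\widetilde{\mathrm{HC}}(R,I)$. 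The $(i{-}1)$ shift in the theorem is the suspension, not a degree shift of fixed points.

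\emph{Where $p/(m-1)$ comes from.} You attribute the bound to improved connectivity of the Pirashvili--Waldhausen linearization on the relative theory. In fact, the linearization $h_I:\,S^1\wedge\mathrm{THH}(R,I)_{hS^1}\to S^1\wedge\mathrm{HH}(R,I)_{hS^1}$ is $2p$-connected independently of $m$, and for $m\ge 2$ that is never the bottleneck. The $m$-dependent bound comes from the connectivity of the filtered pieces: Brun's Lemma 6.5 shows $\mathrm{THH}(R)(-p)$ is $(\lceil p/(m-1)\rceil - 2)$-connected because passing to filtration level $-p$ involves at least $\lfloor p/(m-1)\rfloor$-fold products of $I$, each losing a unit of connectivity under $I^m = 0$. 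This controls $e_I$ and $g_I$, not $h_I$.

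\emph{The proposed induction on $m$.} You end by suggesting one reduce to $m=2$ by filtering by $I^j/I^{j+1}$. The paper itself warns, in \S1.2, that this cannot work for the optimal bound: "One sees that there is no natural homomorphism from $K_i(R,I;p)$ to $\widetilde{\mathrm{HC}}_{i-1}(R,I;p)$ for general $i$, and thus one should not expect to extend Brun's isomorphism to $0\le i < p-2$ for all $m$ by induction." The Brun map is a zigzag through highly connected and invertible maps, not a natural map one could chain along the $I$-adic filtration. Brun's argument handles all $m$ uniformly by building $m$ directly into the filtered $\mathrm{THH}$/$\mathrm{TC}$ construction rather than by induction.
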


We call the above maps \emph{Brun maps}, and denote them by
\begin{gather*}
\mathrm{br}:  K_i(R,I;p) \rightarrow \widetilde{\mathrm{HC}}_{i-1}(R,I;p).
\end{gather*}

\begin{remark}\label{rem:Brun'sThm-flatness}
(See the introduction of \cite{Bru01} for another explanation.)
In the statement of \cite[Theorem 6.1]{Bru01}, it is assumed that both $R$ and $R/I$ are degreewise flat over $\mathbb{Z}$. We drop this flatness assumption because it is only used in \cite[Lemma 6.3]{Bru01}, and when we replace the underived Hochschild homology with the derived one, and use the spectral sequence $E_{p,q}^2=\widetilde{\mathrm{HH}}\big(R, \operatorname{THH}(\mathbb{Z},R)\big)\Longrightarrow \pi_{p+q}\operatorname{THH}(R)$ (\cite[Remark 4.2(a)]{PiW92}),  this lemma and its proof remain valid without the flatness assumption.
\end{remark}

We refer the reader to \cite[\S 2, \S 5]{Bru01} for the definition and notations for \emph{filtered topological cyclic homology} associated with the pair $(R,I)$; see also \S\ref{sub:a_recap_of_filtered_thh_and_filtered_tc}.
Recall the following graph in the proof of Brun's theorem (see \cite[Proposition 6.6]{Bru01})
\begin{equation}\label{eq:zigzag-graph-Brun-map}
  \begin{gathered}
\xymatrix{
  \mathrm{TC}(R)(-1)\ar[r]^<<<<{e_I} \ar@{=}[d] & \frac{\mathrm{TC}(R)(-1)}{\mathrm{TC}(R)(-p)}\xleftarrow{f_I} S^1\land \Big(\frac{\mathrm{THH}(R)(-1)}{\mathrm{THH}(R)(-p)}\Big)_{hS^1} & S^1\land \mathrm{THH}(R)(-1)_{hS^1} \ar[l]_<<<<{g_I} \ar@{=}[d] \\
  \mathrm{TC}(R,I) & S^1\land\mathrm{HC}(R,I)\cong S^1\land \mathrm{HH}(R,I)_{hS^1}  & S^1\land \mathrm{THH}(R,I)_{hS^1} \ar[l]_<<<<<<{h_I}
  }
\end{gathered}
\end{equation}
where
\begin{itemize}
  \item the $p$-completion of  $e_I$ is ($\lceil \frac{p}{m-1}\rceil -2$)-connected (by \cite[Lemmas 5.2 and 6.5]{Bru01}),
  \item the $p$-completion of $f_I$ is an equivalence (\cite[Lemma 5.3]{Bru01}),
  \item the $p$-completion of  $g_I$ is $(\lceil \frac{p}{m-1}\rceil)$-connected (see Lemma \ref{lem:connectedness-g_I}),
  \item %assuming flatness of $R$, 
  the $p$-completion of  
  $h_I$ is $2p$-connected (by \cite[Lemma 6.3]{Bru01} and that $\mathrm{THH}_{2p-1}(R;p) \twoheadrightarrow \widetilde{\operatorname{HH}}_{2p-1}(R;p)$).
  %induces isomorphisms on $\pi_i$ for $i\leq 2p-1$.
\end{itemize}
%Here the second item claims a higher connectedness than that in  \cite[Proposition 6.6]{Bru01}. 
\begin{lemma}\label{lem:connectedness-g_I}
The $p$-completion of $g_I$ is $\lceil \frac{p}{m-1}\rceil$-connected.
\end{lemma}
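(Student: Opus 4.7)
The plan is to identify the fibre of $g_I$ and then bound its connectivity using the hypothesis $I^m=0$. Since $(-)_{hS^1}$ is a homotopy colimit and $S^1\wedge(-)$ is a stable equivalence, both preserve cofibre sequences; applying $S^1\wedge(-)_{hS^1}$ to the cofibre sequence
\begin{equation*}
\mathrm{THH}(R)(-p)\longrightarrow \mathrm{THH}(R)(-1)\longrightarrow \mathrm{THH}(R)(-1)/\mathrm{THH}(R)(-p)
\end{equation*}
identifies the fibre of $g_I$ with $S^1\wedge\mathrm{THH}(R)(-p)_{hS^1}$. It therefore suffices to show, after $p$-completion, that $\mathrm{THH}(R)(-p)_{hS^1}$ is $(\lceil p/(m-1)\rceil-2)$-connected.

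The key input is the connectivity of $\mathrm{THH}(R)(-p)$ itself. By construction of Brun's filtration in \cite[\S2,\S5]{Bru01}, induced from the $I$-adic filtration $R\supset I\supset\cdots\supset I^{m-1}\supset I^m=0$, the subspectrum $\mathrm{THH}(R)(-k)_n$ in simplicial degree $n$ is generated by tensors in $I^{a_0}\otimes\cdots\otimes I^{a_n}$ with $\sum_i a_i\geq k$. Since $I^a=0$ for $a\geq m$, only multi-indices with $a_i<m$ contribute, so for $k=p$ one needs $(n+1)(m-1)\geq p$, i.e.\ $n\geq \lceil p/(m-1)\rceil-1$. A simplicial spectrum vanishing below simplicial degree $N$ is $(N-1)$-connective, so $\mathrm{THH}(R)(-p)$ is $(\lceil p/(m-1)\rceil-2)$-connected. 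Following Remark~\ref{rem:Brun'sThm-flatness}, the same estimate holds when $\mathrm{THH}$ is replaced by its derived variant obtained from a degreewise-flat simplicial resolution, since passing to such a resolution only raises connectivity.

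Since $BS^1$ is connected, the homotopy-orbit spectral sequence
\begin{equation*}
E^2_{s,t}=H_s(BS^1;\pi_tX)\Longrightarrow \pi_{s+t}X_{hS^1}
\end{equation*}
has $E^2_{s,t}=0$ for $s<0$, so $X_{hS^1}$ is at least as connective as $X$ for any connective $S^1$-spectrum $X$. Applied to $X=\mathrm{THH}(R)(-p)$ this yields that $\mathrm{THH}(R)(-p)_{hS^1}$ is $(\lceil p/(m-1)\rceil-2)$-connected; suspending raises the connectivity by one, so the fibre of $g_I$ is $(\lceil p/(m-1)\rceil-1)$-connected and $g_I$ itself is $\lceil p/(m-1)\rceil$-connected. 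Each step is preserved by $p$-completion for connective spectra, giving the claim.

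The main obstacle is justifying the simplicial-degree vanishing in the generality demanded by Remark~\ref{rem:Brun'sThm-flatness}, i.e.\ for a simplicial pair $(R,I)$ without the flatness assumption of \cite{Bru01}. Concretely, one has to verify that, in the derived filtration, the associated graded pieces, built from derived tensor powers of the simplicial abelian groups $I^a/I^{a+1}$ for $0\leq a<m$, still vanish below simplicial degree $\lceil p/(m-1)\rceil-1$. This reduces to the combinatorial observation that any tuple $(a_0,\dots,a_n)$ with $\sum a_i\geq p$ and $a_i<m$ already forces $n\geq \lceil p/(m-1)\rceil-1$, combined with stability of connectivity under derived tensor products of connective objects.
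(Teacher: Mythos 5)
Your argument follows the same route as the paper: obtain the connectivity of $\mathrm{THH}(R)(-p)$, push it through the homotopy-orbit spectral sequence (whose $E^2$-page vanishes in negative $s$), and gain one more degree from the suspension $S^1\wedge(-)$. The only difference is that where the paper simply cites \cite[Lemma 6.5]{Bru01} for the fact that $\mathrm{THH}(R)(-p)$ is $(\lceil p/(m-1)\rceil-2)$-connected, you rederive it from the combinatorics of the filtration; this is fine. Your closing worry about the flatness assumption is misplaced, however: Remark \ref{rem:Brun'sThm-flatness} in the paper is explicit that flatness is used only in \cite[Lemma 6.3]{Bru01} (the comparison of $\mathrm{THH}$ with $\mathrm{HH}$), not in the connectivity estimate of \cite[Lemma 6.5]{Bru01}, which is a statement purely about the filtered $\mathrm{THH}$ spectrum and holds without any flatness hypothesis. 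So there is no extra obstacle to clear here.
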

\begin{proof}
By \cite[Lemma 6.5]{Bru01}, $\mathrm{THH}(R)(-p)$ is $(\lceil \frac{p}{m-1}\rceil-2)$-connected. Then by the spectral sequence for homotopy orbits $E_2^{s,t}=H_s(BS^1,\pi_t X)\Longrightarrow \pi_{s+t}(X_{hS^1})$,  $\mathrm{THH}(R)(-p)_{hS^1}$ is also $(\lceil \frac{p}{m-1}\rceil-2)$-connected, and so is its $p$-completion. Smashing with $S^1$, we get the claimed connectedness.
\end{proof}

In \cite{Bru01}, Theorem \ref{thm:Brun} is deduced from the diagram \eqref{eq:zigzag-graph-Brun-map} and McCarthy's theorem \cite{McC97}  (see also \cite{Dun97}): the cyclotomic trace induces a natural equivalence  $K(R,I;p)\simeq \operatorname{TC}(R,I;p)$. By \cite{DGM13}, this holds without passing to $p$-completions.

\begin{remark}\label{rem:BrunIsom-withCoeff}
By the universal coefficient exact sequence for homotopy groups, the function space functor $F(P^m(\mathbb{Z}/\ell),-)$ preserves $n$-connectedness of maps, where $P^m(\mathbb{Z}/\ell)$ is a Moore space. Thus, applying this functor to \eqref{eq:zigzag-graph-Brun-map}, we obtain that Theorem \ref{thm:Brun} holds for homotopy groups with coefficients, and we denote the resulting map still by $\mathrm{br}$. 
\end{remark}

\subsection{On the Brun map for the relative \texorpdfstring{$K_1$}{K1}} % (fold)
\label{sub:the_explicit_brun_map_on_relative_k1}
In the proof of our main Theorem \ref{thm:relative-comparison-local}, we need to know about the Brun map in the case $i=1$. Let $R$ be a commutative ring and $I$ be an ideal of $R$ such that $I^m=0$. Then $K_0(R,I)=0$ and $K_1(R,I)=1+I$. By \cite[Theorem 11.1.2 and Proposition 10.1.11]{MaPo12}, if the condition
\begin{equation*}%\label{eq:condition-boundedExponents-K}
  K_{i-1}(R,I)\ \mbox{and}\ K_i(R,I)\ \mbox{have bounded $p$-exponents} 
\end{equation*}
is satisfied then $K_i(R,I;p) \cong K_i(R,I)^{\land}_p$, and if the condition
\begin{equation*}%\label{eq:condition-boundedExponents-HC}
  \widetilde{\mathrm{HC}}_{i-1}(R,I)\ \mbox{and}\ \widetilde{\mathrm{HC}}_i(R,I)\ \mbox{have bounded $p$-exponents} 
\end{equation*}
is satisfied  then $\widetilde{\mathrm{HC}}_i(R,I;p) \cong \widetilde{\mathrm{HC}}_i(R,I)^{\land}_p$.
%$K_1(R,I;p)\cong \operatorname{Ext}^1\big(\mathbb{Z}[\frac{1}{p}]/\mathbb{Z},K_1(R,I)\big)$.

\begin{lemma}\label{lem:relativeHC0}
Let  $R$ be a commutative ring, and $I$ be a nilpotent ideal of $R$. Then there is a canonical isomorphism $\widetilde{\mathrm{HC}}_0(R,I)\cong I$.
\end{lemma}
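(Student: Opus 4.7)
The plan is to compute $\widetilde{\mathrm{HH}}_0(R, I)$ first, and then transfer the result to $\widetilde{\mathrm{HC}}_0(R, I)$ by Connes' relative SBI sequence \eqref{eq:ISB-sequence-derived}.

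Choose a degreewise flat simplicial commutative ring resolution $P_{\bullet} \to R$, and lift the ideal $I$ to a simplicial ideal by setting $J_n := \ker(P_n \to R/I)$. The short exact sequence of simplicial abelian groups $0 \to J_{\bullet} \to P_{\bullet} \to P_{\bullet}/J_{\bullet} \to 0$, together with $\pi_0(P_{\bullet}/J_{\bullet}) = R/I$ and $\pi_i(P_{\bullet}) = \pi_i(P_{\bullet}/J_{\bullet}) = 0$ for $i \geq 1$, yields via the long exact homotopy sequence that $\pi_0(J_{\bullet}) = I$ and $\pi_i(J_{\bullet}) = 0$ for $i \geq 1$, so $J_{\bullet} \to I$ is a flat resolution.

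Next I compute $\widetilde{\mathrm{HH}}_0(R, I)$ through the homotopy fiber $\mathrm{HH}(P_{\bullet}, J_{\bullet}) := \mathrm{fib}\bigl(\mathrm{HH}(P_{\bullet}) \to \mathrm{HH}(P_{\bullet}/J_{\bullet})\bigr)$. Since each $P_n$ and $P_n/J_n$ is commutative, $\mathrm{HH}_0(P_n) = P_n$ and $\mathrm{HH}_0(P_n/J_n) = P_n/J_n$, so the relative group is $\mathrm{HH}_0(P_n, J_n) = J_n$. The first-quadrant spectral sequence of the bisimplicial spectrum
\begin{gather*}
E^2_{p,q} = \pi_p\,\mathrm{HH}_q(P_{\bullet}, J_{\bullet}) \Longrightarrow \widetilde{\mathrm{HH}}_{p+q}(R, I)
\end{gather*}
has, in total degree $0$, only the entry $E^2_{0,0} = \pi_0(J_{\bullet}) = I$; no $d_r$ with $r \geq 2$ can touch it, since both incoming differentials come from $E^r_{r, -r+1}$ (vanishing for $q < 0$) and outgoing differentials land in $E^r_{-r, r-1}$ (vanishing for $p < 0$). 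Hence $\widetilde{\mathrm{HH}}_0(R, I) = I$ canonically.

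Finally, applying the relative version of the SBI sequence \eqref{eq:ISB-sequence-derived} at $p = 0$ gives
\begin{gather*}
\widetilde{\mathrm{HC}}_{-1}(R, I) \to \widetilde{\mathrm{HH}}_0(R, I) \to \widetilde{\mathrm{HC}}_0(R, I) \to \widetilde{\mathrm{HC}}_{-2}(R, I),
\end{gather*}
and since $\widetilde{\mathrm{HC}}_n = 0$ for $n < 0$, the middle arrow is an isomorphism, yielding $\widetilde{\mathrm{HC}}_0(R, I) \cong I$. The proof is essentially bookkeeping; the only point requiring care is naturality (independence of the resolution $P_{\bullet}$ and the choice of lift $J_{\bullet}$), which is handled by the standard Reedy/projective model-category argument already used in Proposition \ref{lem:equivalence-Shukla-derivedHH-HC}. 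Note that the nilpotence of $I$ is not actually used at this stage; it only enters through the ambient setup.
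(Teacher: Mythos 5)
Your argument takes a genuinely different route from the paper's, and the overall strategy is sound: the paper computes $\widetilde{\mathrm{HC}}_0(R)\cong R$ and $\widetilde{\mathrm{HC}}_1(R)\cong\Omega^1_{R/\mathbb{Z}}/\mathrm{d}R$ by a direct diagram chase in the double complex built from a simplicial resolution, and then invokes the absolute-to-relative exact sequence for $R\to R/I$; you instead compute $\widetilde{\mathrm{HH}}_0(R,I)\cong I$ relatively and then apply the SBI sequence \eqref{eq:ISB-sequence-derived}. Your route is cleaner in that it avoids identifying $\widetilde{\mathrm{HC}}_1$ at all, and the SBI step is correct: with the kernel-complex model, the relative complex is concentrated in nonnegative degrees, so both $\widetilde{\mathrm{HC}}_{-1}(R,I)$ and $\widetilde{\mathrm{HC}}_{-2}(R,I)$ vanish and $I:\widetilde{\mathrm{HH}}_0\to\widetilde{\mathrm{HC}}_0$ is an isomorphism.

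There is, however, a real gap in your construction of $J_{\bullet}$. Setting $J_n:=\ker(P_n\to R/I)$ gives $P_n/J_n\cong R/I$ for every $n$ (each $P_n\to R$ is surjective, as the iterated degeneracies split the face maps), so $P_{\bullet}/J_{\bullet}$ is the \emph{constant} simplicial ring $R/I$. Unless $R/I$ happens to be $\mathbb{Z}$-flat, this is not a flat resolution of $R/I$, and $\mathrm{HH}(P_{\bullet}/J_{\bullet})$ computes the \emph{underived} Hochschild homology $\mathrm{HH}_*(R/I)$ rather than $\widetilde{\mathrm{HH}}_*(R/I)$. Consequently the fiber $\mathrm{HH}(P_{\bullet},J_{\bullet})$ is $\mathrm{fib}\bigl(\widetilde{\mathrm{HH}}(R)\to\mathrm{HH}(R/I)\bigr)$, not $\widetilde{\mathrm{HH}}(R,I)$, and the abutment you wrote for the bisimplicial spectral sequence is mislabelled. (The flatness of $J_{\bullet}$ over $\mathbb{Z}$, which you emphasize, is not the relevant condition; what must be flat is $P_{\bullet}/J_{\bullet}$.) The final degree-zero answer is nonetheless correct, because $\widetilde{\mathrm{HH}}_n(R/I)\to\mathrm{HH}_n(R/I)$ is an isomorphism for $n=0,1$, and by the five lemma applied to the two long exact sequences of fibers this forces $\pi_0\mathrm{HH}(P_{\bullet},J_{\bullet})\cong\widetilde{\mathrm{HH}}_0(R,I)$ — but your write-up does not make this observation and so, as it stands, the identification in your spectral sequence is unjustified. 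The clean fix is to choose compatible flat simplicial resolutions $P_{\bullet}\to R$ and $Q_{\bullet}\to R/I$ with a degreewise surjection $P_{\bullet}\twoheadrightarrow Q_{\bullet}$ lifting $R\to R/I$ (for instance the functorial cotriple resolutions, where surjectivity of $R\to R/I$ propagates level by level), and to set $J_{\bullet}:=\ker(P_{\bullet}\to Q_{\bullet})$; then $P_{\bullet}/J_{\bullet}\cong Q_{\bullet}$ is flat and the rest of your argument goes through verbatim. Your closing remark that nilpotence of $I$ is not used here is correct and is also implicit in the paper's proof.
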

\begin{proof}
%In this proof we denote   $\widetilde{\mathrm{HC}}_i(R/\Bbbk)$ simply by $\widetilde{\mathrm{HC}}_i(R)$  and similar for the relative $\widetilde{\mathrm{HC}}$.
Let $S$ be a simplicial resolution of $R$ by commutative flat $\mathbb{Z}$-algebras. Then $\widetilde{\mathrm{HC}}_i(R)$ is computed by the double complex 
\begin{gather*}
\xymatrix{
\dots & \dots & \dots & \dots \\
\dots & \dots \ar[r] \ar[d] & \dots \ar[r] \ar[d] & \mathrm{Tot}(\mathcal{B}S_0)_2 \ar[d] \\
\dots & \dots \ar[r] \ar[d] & \mathrm{Tot}(\mathcal{B}S_1)_1 \ar[r] \ar[d] & \mathrm{Tot}(\mathcal{B}S_0)_1 \ar[d] \\
\dots & \mathrm{Tot}(\mathcal{B}S_2)_0 \ar[r] & \mathrm{Tot}(\mathcal{B}S_1)_0 \ar[r] & \mathrm{Tot}(\mathcal{B}S_0)_0
}
\end{gather*}
where the $i$-th row horizontal sequence is the Moore complex associated with the simplicial abelian group $\mathrm{Tot}(\mathcal{B}S_{\centerdot})_i$. Since $\mathrm{Tot}(\mathcal{B}S_i)_0=S_i$, the 0-th row is exact away from $i=0$ and has $H_0=S_0/S_1=R$. Since $S_i$ are commutative, the vertical arrows $\mathrm{Tot}(\mathcal{B}S_i)_1 \rightarrow \mathrm{Tot}(\mathcal{B}S_i)_0$ are 0. Recall that $H_1(\mathrm{Tot}(\mathcal{B}S_i))= \mathrm{HC}_1(S_i)=\Omega_{S_i/\mathbb{Z}}^1/ \mathrm{d} S_i$ (\cite[Proposition 2.1.14]{Lod98}). Then a simple diagram chase yields $\widetilde{\mathrm{HC}}_0(R)\cong R$
and
\begin{gather*}
\widetilde{\mathrm{HC}}_1(R)=\frac{\Omega_{S_0/\mathbb{Z}}^1/ \mathrm{d} S_0}{\Omega_{S_1/\mathbb{Z}}^1/ \mathrm{d} S_1}=\Omega_{R/\mathbb{Z}}^1/ \mathrm{d} R.
\end{gather*}
Applying these results to $R$ and $R/I$ in the  exact sequence 
\begin{gather*}
\dots \rightarrow \widetilde{\mathrm{HC}}_1(R) \rightarrow \widetilde{\mathrm{HC}}_1(R/I) \rightarrow \widetilde{\mathrm{HC}}_0(R,R/I) \rightarrow \widetilde{\mathrm{HC}}_0(R) \rightarrow \widetilde{\mathrm{HC}}_0(R/I),
\end{gather*}
we obtain $\widetilde{\mathrm{HC}}_0(R,R/I)\cong I$.
\end{proof}
Suppose now that $I^2=0$ and $p\geq 5$. If $I$ has bounded $p$-exponents, then $K_1(R,R/I;p)\cong I^{\land}_p$ and $\widetilde{\mathrm{HC}}_0(R,I;p)\cong I^{\land}_p$. We need to compare the Brun map with the obvious identification of $I^{\land}_p$. 
The following partial result turns out to be  enough for us.
\begin{proposition}\label{prop:Brun-map-K1}
There exists $u\in \mathbb{Z}_p^{\times}$ such that for any  commutative ring $R$  and any square zero ideal $I$ of $R$ satisfying that $I$ has bounded $p$-exponents, under the identifications  $K_1(R,R/I)=1+I$ and $\widetilde{\mathrm{HC}}_0(R,R/I)=I$, Brun's isomorphism $K_1(R,R/I;p)\xrightarrow{\sim}\widetilde{\mathrm{HC}}_0(R,I;p)$ is given by $1+x\mapsto u x$. In other words, it is given by $I/p^n \rightarrow I/p^n$, $x\mapsto (u\ \mathrm{mod\ } p^n)\times x$ for all $n\in \mathbb{N}$.
\end{proposition}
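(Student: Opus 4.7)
The plan is to use the naturality of Brun's map (which is built into the zig-zag \eqref{eq:zigzag-graph-Brun-map} as a composition of natural arrows in the pair $(R,I)$) to reduce to a single universal example. The ``universal'' square-zero pair I have in mind is
\[ R_n := \mathbb{Z}/p^n[t]/(t^2), \qquad I_n := (t) \subset R_n, \]
for each $n \geq 1$. Under the standard identifications, $K_1(R_n, R_n/I_n) = 1 + I_n \cong \mathbb{Z}/p^n$ (generated by $1+t$) and, by Lemma \ref{lem:relativeHC0}, $\widetilde{\mathrm{HC}}_0(R_n, R_n/I_n) \cong I_n = \mathbb{Z}/p^n$ (generated by $t$). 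Both sides have bounded $p$-exponent, so $p$-completion is harmless. For $p \geq 5$, the numerical range $1 < \tfrac{p}{m-1} - 2$ (with $m = 2$) is satisfied and Theorem \ref{thm:Brun} gives an isomorphism $\mathrm{br}: \mathbb{Z}/p^n \xrightarrow{\sim} \mathbb{Z}/p^n$, necessarily multiplication by some $u_n \in (\mathbb{Z}/p^n)^\times$.

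The reduction map $R_{n+1} \to R_n$ is a morphism of square-zero pairs sending $t \mapsto t$. Applying naturality of Brun to this map yields a commutative square whose horizontal arrows are the Brun isomorphisms and whose vertical arrows are the obvious reductions $\mathbb{Z}/p^{n+1} \to \mathbb{Z}/p^n$. This forces $u_{n+1} \equiv u_n \pmod{p^n}$, so the family $\{u_n\}$ assembles into a unit $u \in \varprojlim_n (\mathbb{Z}/p^n)^\times = \mathbb{Z}_p^\times$.

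For an arbitrary pair $(R, I)$ as in the proposition, use the bounded $p$-exponent of $I$ to choose $n$ with $p^n I = 0$. For each $x \in I$ the assignment $t \mapsto x$ extends to a well-defined ring map $\phi_x: R_n \to R$ (since $x^2 = 0$ and $p^n x = 0$) carrying $I_n$ into $I$, with $\phi_x(1+t) = 1+x \in K_1(R, R/I)$ and (under the identification of Lemma \ref{lem:relativeHC0}) $\phi_x(t) = x \in \widetilde{\mathrm{HC}}_0(R, R/I)$. Naturality of Brun gives
\[ \mathrm{br}(1+x) = \phi_x\bigl(\mathrm{br}(1+t)\bigr) = \phi_x(u_n \cdot t) = u_n \cdot x = (u \bmod p^n) \cdot x, \]
which is the claimed formula.

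The main obstacle is the last clause in the middle paragraph: verifying that the identification $\widetilde{\mathrm{HC}}_0(R,R/I) \cong I$ from Lemma \ref{lem:relativeHC0} is genuinely natural in the pair $(R,I)$, and in particular sends the generator $t$ of $\widetilde{\mathrm{HC}}_0(R_n, R_n/I_n)$ to $\phi_x(t) = x$. This requires unwinding the spectral-sequence proof of Lemma \ref{lem:relativeHC0} and chasing $t$ through the comparison of flat simplicial resolutions of $R_n$ and $R$ compatible with $\phi_x$; once this bookkeeping is carried out, the rest of the argument is purely formal.
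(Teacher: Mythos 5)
Your proof is correct and uses the same key idea as the paper: reduce to a universal square-zero example via naturality of the Brun map. The only substantive difference is the choice of universal example: you use the family $\{\mathbb{Z}/p^n[t]/(t^2)\}_{n\geq 1}$ and then assemble a compatible system $u_n\in(\mathbb{Z}/p^n)^\times$ into a unit $u\in\mathbb{Z}_p^\times$, whereas the paper uses the single ring $\mathbb{Z}[X]/(X^2)$ with ideal $(X)$. Since $(X)\otimes\mathbb{Z}_p\cong\mathbb{Z}_p$, the paper's choice delivers the $\mathbb{Z}_p$-unit in one step (an additive automorphism of the $p$-complete $\mathbb{Z}_p$-module $\mathbb{Z}_p$ is automatically $\mathbb{Z}_p$-linear, hence multiplication by a unit), skipping your assembly argument entirely; it also serves simultaneously as the base for the ring map $X\mapsto x$ at the end, so no choice of $n$ depending on the exponent of $I$ is needed. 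Note that although $\mathbb{Z}[X]/(X^2)$ does not itself satisfy the ``bounded $p$-exponent'' hypothesis, that hypothesis is only needed to relate $K_1(R,R/I;p)$ to $K_1(R,R/I)^\wedge_p$ for the given $(R,I)$; Brun's Theorem~\ref{thm:Brun} applies to the universal ring regardless. The naturality issue you flag at the end (that the identification of Lemma~\ref{lem:relativeHC0} is natural in $(R,I)$) is a genuine dependency of \emph{both} arguments, but it is less onerous than you suggest: the isomorphism $\widetilde{\mathrm{HC}}_0(R,R/I)\cong I$ is produced by the boundary in the natural long exact sequence together with the natural isomorphisms $\widetilde{\mathrm{HC}}_0(R)\cong R$ and $\widetilde{\mathrm{HC}}_1(R)\cong\Omega^1_{R/\mathbb{Z}}/\mathrm{d}R$, so naturality is automatic once one checks that $\widetilde{\mathrm{HC}}_1(R)\to\widetilde{\mathrm{HC}}_1(R/I)$ is surjective (which follows from surjectivity of $\Omega^1_{R}\to\Omega^1_{R/I}$).
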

\begin{proof}
For any $x\in I$, there is an induced ring map $\mathbb{Z}[X]/(X^2)\rightarrow R$ sending $X$ to $x$. So it suffices to find the image of $1+X$ under Brun's isomorphism. Since an automorphism of the $\mathbb{Z}_p$-module $(X)\otimes \mathbb{Z}_p$ is determined by $u\in \mathbb{Z}_p^{\times}$, we are done.
\end{proof}

\begin{remark}\label{rem:universalUnit-u=1}
We expect that a study of the proof of \cite[Lemma 5.3]{Bru01} and a comparison of the norm map $N:S^1\land \mathrm{HH}_{hS^1}\rightarrow \mathrm{HH}^{hS^1}$ with Connes' $B$-operator will show $u=\pm 1$.
\end{remark}

% subsection the_explicit_brun_map_on_relative_k1 (end)

\subsection{Compatibility of the Brun map with the products} % (fold)
\label{sub:compatibility_of_the_brun_map_with_the_products}
In the proof of the main theorem \ref{thm:relative-comparison-local}, we need to build up a large portion of the mod $p$ relative $K$-theory from relative $K_1$ and a certain Bott element, using the product structure. 
The main task of \S \ref{sec:Brun's_isomorphism_and_the_product_structure} is to show the following compatibility result. 
\begin{proposition}\label{prop:brun-iso-product}
%Keep the assumption in Theorem \ref{thm:Brun}. 
Let $R$ be a simplicial ring with an ideal $I$ satisfying $I^m=0$.
Let $i,j\in \mathbb{Z}$ satisfy $0\leq i\leq i+j<\frac{p}{m-1}-1$.
Then the diagram
\begin{gather*}
\xymatrix@C=3pc{
  K_i(R)\times K_j(R,I;p) \ar[d] \ar[r]^<<<<<<{\mathrm{ch}^{-}\times \mathrm{br}} & \widetilde{\operatorname{HC}}^{-}_i(R)\times\widetilde{\operatorname{HC}}_{j-1}(R,I;p)\ar[d]^{\overset{\operatorname{top}}{\times}} \\
  K_{i+j}(R,I;p)  \ar[r]^{\mathrm{br}} & \widetilde{\operatorname{HC}}_{i+j-1}(R,I;p)
}
\end{gather*}
is commutative, where the left vertical arrow is  Loday's products \cite{Lod76} in algebraic $K$-theory  and  $\overset{\operatorname{top}}{\times}$ is the product on completions induced by the topological product in \S\ref{sub:products_in_the_derived_hochschild_and_cyclic_homology_comparison_of_products}, and $\mathrm{ch}^{-}$ is the Chern character (\cite[\S11.4.1]{Lod98}).
\end{proposition}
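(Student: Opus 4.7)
The plan is to unwind the Brun map through the zigzag \eqref{eq:zigzag-graph-Brun-map} combined with McCarthy's equivalence $K(R,I;p)\simeq \mathrm{TC}(R,I;p)$, and to track multiplicative structure at every stage. Loday's product on the left is realized at the spectrum level as a $K(R)$-module action $K(R)\land K(R,I;p)\to K(R,I;p)$, and the topological product on the right is an $\widetilde{\mathrm{HC}}^{-}(R)$-module action $\widetilde{\mathrm{HC}}^{-}(R)\land \widetilde{\mathrm{HC}}(R,I)\to \widetilde{\mathrm{HC}}(R,I)$ as constructed in \S\ref{sub:the_topological_products_and_a_filtered_comparison}. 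The Chern character $\mathrm{ch}^{-}$ factors through the cyclotomic trace $K(R)\to \mathrm{TC}(R)\to \mathrm{THH}(R)^{h\mathbf{S}^1}$, which is a map of $E_\infty$-ring spectra, and McCarthy's equivalence is $K(R)$-linear. So the proposition reduces to showing that each arrow in \eqref{eq:zigzag-graph-Brun-map} respects the appropriate module structure, where the ring spectrum acting on the top row is (an absolute variant of) $\mathrm{TC}(R)$ and on the bottom row is $\mathrm{THH}(R)^{h\mathbf{S}^1}\simeq \widetilde{\mathrm{HC}}^{-}(R)$.

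For $e_I$ and $h_I$, module compatibility is formal from the naturality of the relevant filtrations and of the trace. For $g_I$, which is obtained by applying $S^1\land (-)_{h\mathbf{S}^1}$ to a natural inclusion of THH-spectra, compatibility is a direct consequence of the formulas of Definition \ref{def:topological-product}: the inclusion $\mathrm{THH}(R,I)\hookrightarrow \mathrm{THH}(R)(-1)$ is $\mathrm{THH}(R)$-equivariant, and the external product on homotopy-fixed-on-orbit pairings is natural in both factors. The real content is in the comparison $f_I$, which identifies the cofiber of the filtered $\mathrm{TC}$-tower with the $S^1$-smash of a homotopy orbit. This identification is built from the Adams isomorphism $X_{h\mathbf{S}^1}\simeq (S^1\land X)^{h\mathbf{S}^1}$, and compatibility with multiplication amounts to showing that the norm/Adams map intertwines the $Y^{h\mathbf{S}^1}$-module actions on $X_{h\mathbf{S}^1}$ and on $(S^1\land X)^{h\mathbf{S}^1}$; this is precisely the statement signalled in \S\ref{sub:outline_of_the_proof_of_the_main_theorem}(iii) as Lemma \ref{lem:commutativity-mu-1_land_Norm}.

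The hard part is therefore the multiplicativity of the Adams isomorphism with respect to a homotopy-fixed-point pairing. My approach would be to return to the explicit construction of the Adams isomorphism as a zigzag of equivariant maps of spectra, and to show that the required multiplicative naturality reduces to extending a certain $\mathbf{S}^1$-equivariant map across a cofibration --- essentially an enhancement of \cite[II.2.8]{LMS} that is the content of Lemma \ref{lem:LMS-II.2.8-enhance}. Once that extension property has been upgraded to respect the smash-product pairing, the homotopical functoriality of the Adams isomorphism (Lemma \ref{lem:adams-iso-homotopical-functoriality}) and hence Lemma \ref{lem:commutativity-mu-1_land_Norm} follow by naturality. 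Concatenating the module compatibilities for $e_I, f_I, g_I, h_I$ with the multiplicativity of the cyclotomic trace and of the McCarthy equivalence then gives commutativity of the square, finishing the proof. The range hypothesis $0\leq i\leq i+j<\frac{p}{m-1}-1$ enters only to guarantee that the relevant connectivities in \eqref{eq:zigzag-graph-Brun-map} are large enough for the zigzag to define an honest map on $\pi_{i+j}$ after $p$-completion.
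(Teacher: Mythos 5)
Your proposal is correct and follows essentially the same route as the paper: reduce to the corresponding diagram on $\mathrm{TC}$ via the ring-map property of the cyclotomic trace, track the product through Brun's zigzag, and isolate the only nonformal step as compatibility of the Adams isomorphism/norm map with multiplication, which the paper settles via the enhanced extension property of Lemma~\ref{lem:LMS-II.2.8-enhance} feeding into Lemmas~\ref{lem:adams-iso-homotopical-functoriality} and~\ref{lem:commutativity-mu-1_land_Norm} and Proposition~\ref{prop:commutativity-action-hCpn-hS1}. Two small imprecisions do not affect the architecture: the Adams isomorphism enters in the form $D/N\simeq(\Sigma^{-A}i_*D)^N$ rather than $X_{h\mathbf{S}^1}\simeq(S^1\wedge X)^{h\mathbf{S}^1}$, and the ring acting on the bottom row is $\mathrm{HH}(R)^{h\mathbf{S}^1}$, reached from $\mathrm{THH}(R)^{h\mathbf{S}^1}$ by linearization, which is the explicit final step of the paper's proof.
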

By \cite[Lemma 6.3.1.1]{DGM13}, there is a natural equivalence 
\begin{gather*}
(\operatorname{THH}(R)^{h \mathbf{S}^1}\big)_p^{\wedge} \xrightarrow{\simeq} \big(\holim_{F}\operatorname{THH}(R)^{h C_{p^n}}\big)_p^{\wedge}
\end{gather*}
and thus a map 
\begin{gather}\label{eq:pCompletionMap-TC-to-HC}
\operatorname{TC}(R)_p^{\wedge} \rightarrow (\operatorname{THH}(R)^{h \mathbf{S}^1}\big)_p^{\wedge} \rightarrow (\operatorname{HH}(R)^{h \mathbf{S}^1}\big)_p^{\wedge}=(\operatorname{HC}(R)\big)_p^{\wedge}\quad .
\end{gather}
Since the cyclotomic trace $\mathrm{trc}$ is a ring spectrum map \cite[Prop.~6.3.1]{GeH99}, the diagram
\begin{gather*}
\xymatrix@C=3pc{
  K_i(R)\times K_j(R,I;p) \ar[d] \ar[r]^<<<<<<{\mathrm{trc}\times \mathrm{trc}} & \operatorname{TC}_i(R)\times\operatorname{TC}_j(R,I)\ar[d] \\
  K_{i+j}(R,I;p)  \ar[r]^{\mathrm{trc}} & \operatorname{TC}_{i+j}(R,I)
}
\end{gather*}
commutes. Therefore it suffices to show that  the diagram
\begin{equation}\label{eq:brun-iso-product-TC-to-HC}
  \begin{gathered}
\xymatrix@C=3pc{
   \operatorname{TC}_i(R;p)\times\operatorname{TC}_j(R,I;p) \ar[d] \ar[r]^<<<<<<{\eqref{eq:pCompletionMap-TC-to-HC}\times \mathrm{br}} & \widetilde{\operatorname{HC}}^{-}_i(R;p)\times\widetilde{\operatorname{HC}}_{j-1}(R,I;p)\ar[d]^{\overset{\operatorname{top}}{\times}} \\
  \operatorname{TC}_{i+j}(R,I;p)  \ar[r]^{\mathrm{br}} & \widetilde{\operatorname{HC}}_{i+j-1}(R,I;p)
}
\end{gathered}
\end{equation}
commutes, where we denote the map $\operatorname{TC}_{i}(R,I;p) \rightarrow \operatorname{HC}_{i-1}(R,I;p)$ induced by the diagram \eqref{eq:zigzag-graph-Brun-map} also by $\mathrm{br}$. The proof of the commutativity of \eqref{eq:brun-iso-product-TC-to-HC}  occupies the rest of this section.

In the following, $\operatorname{TC}$ (resp. $\operatorname{TR}$, resp. $\operatorname{TF}$) means $\operatorname{TC}(\ ; p)$ (resp. $\operatorname{TR}(\ ; p)$, resp. $\operatorname{TF}(\ ; p)$).

% subsection compatibility_of_the_brun_map_with_the_products (end)

\subsection{Products and the Adams isomorphism} % (fold)
\label{sub:products_in_equivariant_stable_homotopy_theory}
For an $\mathbf{S}^1$-space $X$, we have the equivalence (\cite[Cor. A.7.6.4]{DGM13})
\begin{gather*}
\big(X^{h \mathbf{S}^1}\big)^{\wedge}_p \xrightarrow{\sim} \big(\holim_n X^{hC_{p^n}}\big)^{\wedge}_p
\end{gather*}
and the equivalence (\cite[Lemma 4.4.9]{Mad94}, \cite[Lemma 4.5]{Bru01})
\begin{gather}\label{eq:transfer-iso-homotopyOrbits}
 (\Sigma X_{h \mathbf{S}^1})^{\wedge}_p \xrightarrow{\sim} (\holim_n X_{hC_{p^n}})^{\wedge}_p\quad .
\end{gather}
The main purpose of this subsection is to show the compatibility of these equivalences and the exterior products, i.e:
\begin{proposition}\label{prop:commutativity-action-hCpn-hS1}
Let $X$ and $Y$ be  $\mathbf{S}^1$-spaces. Then the following diagram is commutative.
\begin{gather*}
\xymatrix{
  \big(\holim_n X^{hC_{p^n}}\big)^{\wedge}_p \land   (\holim_n Y_{hC_{p^n}})^{\wedge}_p \ar[r]  \ar@{}[d]|-{\land}\ar@{}[dr]|{\circlearrowleft}& \big(\holim_n (X\land Y)_{hC_{p^n}}\big)^{\wedge}_p \\
    \big(X^{h \mathbf{S}^1}\big)^{\wedge}_p \land   (\Sigma Y_{h \mathbf{S}^1})^{\wedge}_p \ar@<5ex>[u]^{\wr}  \ar@<-5ex>[u]^{\wr}  \ar[r]  & \big(\Sigma (X\land Y)_{h \mathbf{S}^1}\big)^{\wedge}_p \ar[u]^{\wr}
}
\end{gather*}
\end{proposition}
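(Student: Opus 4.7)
My plan is to decompose the claimed commutative square into simpler pieces and reduce the verification to a compatibility statement for the Adams isomorphism itself. First, I would observe that the left-hand vertical equivalence $(X^{h \mathbf{S}^1})^{\wedge}_p \xrightarrow{\sim} (\holim_n X^{hC_{p^n}})^{\wedge}_p$ is essentially formal: it is induced by restriction along the subgroup inclusions $C_{p^n} \subset \mathbf{S}^1$, and such restriction maps preserve the exterior product pairings on homotopy fixed points tautologically, arising from pullback along maps of classifying spaces. Hence the delicate content of the proposition lies in the right-hand vertical equivalence $(\Sigma Y_{h \mathbf{S}^1})^{\wedge}_p \xrightarrow{\sim} (\holim_n Y_{hC_{p^n}})^{\wedge}_p$ of \eqref{eq:transfer-iso-homotopyOrbits}, and the task is to show that this equivalence intertwines the relevant module actions.

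The natural strategy is to work at each finite stage $n$. For each $n$, the pairing $X^{hC_{p^n}} \land Y_{hC_{p^n}} \to (X \land Y)_{hC_{p^n}}$ of Definition \ref{def:topological-product} is natural under restriction along $C_{p^{n+1}} \supset C_{p^n}$, so these pairings assemble into the top horizontal map of the diagram after taking $\holim_n$ and $p$-completing. Via the vertical equivalences, the $\mathbf{S}^1$-pairing below must correspond to this $C_{p^n}$-level pairing through the Adams isomorphism $Y_{hC_{p^n}} \simeq (L \land Y)^{C_{p^n}}$, where $L$ is the one-point compactification of the tangent representation of $\mathbf{S}^1/C_{p^n}$. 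The question thereby becomes whether this identification can be produced in a fashion that is natural under replacing $Y$ by $X \land Y$ and multiplying by a class in $X^{hC_{p^n}}$.

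To settle this, I would recall the LMS construction of the Adams isomorphism for a free action: it extends an equivariant pretransfer map, defined on a neighborhood of the orbit in $E \mathbf{S}^1 \land Y$, to an equivariant map on the entire space, using freeness so that isotropy imposes no obstruction. Compatibility with exterior products then amounts to showing that, for $Z = X \land Y$ with the diagonal action, the extension used to define the Adams map for $Y$ can be smashed against a fixed class in $X^{hC_{p^n}}$ to give a valid extension for $Z$. This is formal once the extension is chosen functorially enough, which will be the content of Lemma \ref{lem:LMS-II.2.8-enhance} enhancing the original LMS Lemma II.2.8; the resulting compatibility of the Adams isomorphism is Lemma \ref{lem:adams-iso-homotopical-functoriality}, from which the present proposition will follow by taking $\holim_n$ and $p$-completing.

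The main obstacle is precisely this functoriality enhancement: the classical LMS extension lemma extends one equivariant map at a time, whereas here I need to extend a family parametrised by a second $G$-space in a way that respects smash products, and an arbitrary extension need not do so. My plan is to construct the extension cellularly with respect to a chosen $G$-CW structure, making coherent choices on each cell so that smashing with $X$ preserves extensions skeleton by skeleton. Granting the enhanced extension lemma, the remainder of the proof is bookkeeping: the two vertical equivalences in the proposition are each $\holim_n$ and $p$-completions of stage-$n$ maps, and each stage-$n$ map intertwines the product pairings by the enhanced Adams compatibility, so the diagram commutes at every finite stage and hence after passage to the limit.
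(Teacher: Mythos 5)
Your proposal follows the paper's own proof route: the proposition reduces to compatibility of the Adams isomorphism with exterior multiplication by homotopy fixed points (Lemma \ref{lem:adams-iso-homotopical-functoriality}), which rests on the enhanced relative extension Lemma \ref{lem:LMS-II.2.8-enhance}, itself proved via the $\mathcal{F}(\Pi)$-Whitehead theorem — essentially the cell-by-cell extension you describe. One caution worth internalizing from the paper: a genuinely functorial spectrum-level Adams map does \emph{not} exist (as remarked before Lemma \ref{lem:adams-iso-homotopical-functoriality}), so the enhancement is an ad hoc relative extension for each fixed pair $(X,E)$ — given a map $\beta$ on the subcomplex $E_2 = X^G \land i_*\theta^*E$ and a map $\alpha$ on $j_*E_1$ with $j_*[\beta] = [\alpha|_{j_*E_2}]$, a single compatible lift $\gamma$ on $E_1 = i_*\theta^*(X\land E)$ is produced — rather than a coherent choice natural in $X$, as your phrase "making coherent choices so that smashing with $X$ preserves extensions" might suggest.
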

For the proof, we need some preparations on the products in equivariant stable homotopy theory.

We adapt Definition \ref{def:topological-product} to the $G$-equivariant spectra in the sense of \cite{LMS86}. Let $G$ still be a compact Lie group, and $U$ be a complete $G$-universe. Let $D\in G \mathcal{S}U$. Let $H$ be a closed normal subgroup of $G$, and $J=G/H$. Recall from \cite[Definition I.3.7]{LMS86} that the fixed point $J$-spectrum $D^H\in J\mathcal{S}U^H$ is defined as
\begin{gather*}
(D^H)(V)=(DV)^H\ \mbox{for finite-dimensional $J$-subspaces}\ V\subset U^H.
\end{gather*}
For the orbit spectrum, we first define the prespectrum $\ell D/H\in J\mathcal{P}U^H$ by
\begin{gather*}
(\ell D/H)_{\mathcal{P}}(V)=(DV)/H\ \mbox{for finite-dimensional $J$-subspaces}\ V\subset U^H.
\end{gather*}
Then the spectrum $D/H\in J\mathcal{S}U^H$ is defined to be $D/H=L(\ell D/H)$. 

Let $G$ be a compact Lie group. Let $U$ be a complete $G$-universe. 
We fix a $G$-linear isometry $f\in \mathcal{L}(U\oplus U,U)$. For $D,E\in G \mathcal{S}U$ we  define $D\land E=f_*(D\land E)$ and $\mathrm{F}(D,E)=\mathrm{F}(D,f^*E)$. Let $i:U^G\rightarrow U$ be the inclusion of $G$-universes. 
\begin{lemma}\label{lem:functionSpectrum-induce-mapOnFixPointsOrbits}
Let $D,E\in G \mathcal{S}U$.
\begin{enumerate}[(i)]
  \item There exists a natural map  $\mathrm{F}(D,E)^G\rightarrow \mathrm{F}(D^G,E^G)$ of spectra indexed on $U^G$.
  \item There exists a natural map $\mathrm{F}(D,E)^G\rightarrow \mathrm{F}(D/G,E/G)$  of spectra indexed on $U^G$.
\end{enumerate}
\end{lemma}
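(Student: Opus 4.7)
Plan. My plan is to construct both maps via the smash--hom adjunction in $e\mathcal{S}U^G$: giving a map $\mathrm{F}(D,E)^G \to \mathrm{F}(X, Y)$ for $X, Y \in e\mathcal{S}U^G$ is the same as giving a map $\mathrm{F}(D,E)^G \land X \to Y$. So it will suffice to construct, naturally in $D$ and $E$, maps $\mathrm{F}(D,E)^G \land D^G \to E^G$ and $\mathrm{F}(D,E)^G \land D/G \to E/G$ in $e\mathcal{S}U^G$, and then take their adjoints.

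For (i), I would start from the $G$-equivariant evaluation $\epsilon \colon \mathrm{F}(D,E) \land D \to E$ in $G\mathcal{S}U$, apply the fixed-point functor to obtain $\epsilon^G \colon (\mathrm{F}(D,E) \land D)^G \to E^G$, and precompose with the natural lax-monoidality comparison $\mathrm{F}(D,E)^G \land D^G \to (\mathrm{F}(D,E) \land D)^G$ for the functor $(-)^G$ (see \cite[II.3]{LMS86}). For (ii), I would use the $G$-equivariant counit $\iota \colon i_* \mathrm{F}(D,E)^G \to \mathrm{F}(D,E)$ for the change-of-universe / fixed-points adjunction recalled in \cite{LMS86}; smashing with $D$, composing with $\epsilon$, and applying the orbit functor will produce a map $(i_* \mathrm{F}(D,E)^G \land D)/G \to E/G$. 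Because $i_* \mathrm{F}(D,E)^G$ carries trivial $G$-action, the orbit functor commutes with smashing by it, and the source is canonically identified with $\mathrm{F}(D,E)^G \land D/G$, yielding the required map.

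The main obstacle will be that, while the ingredient natural transformations---the lax monoidality $A^G \land B^G \to (A \land B)^G$, the counit $i_* A^G \to A$, and the canonical identification $(X \land Y)/G \cong X \land (Y/G)$ for $X$ with trivial $G$-action---are transparent at the space level, verifying that they are defined and natural at the level of genuine $G$-spectra (rather than only prespectra) requires careful passage through the spectrification functor $L$ used to define orbit spectra. All three facts are nevertheless available in \cite{LMS86}, so I expect the proof to reduce to a bookkeeping exercise once the adjoint formulation is in hand; naturality of both maps in $D$ and $E$ follows from the naturality of each ingredient.
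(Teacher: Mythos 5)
Your approach is correct, but it takes a genuinely different route than the paper. The paper works entirely at the prespectrum level, using the concrete description $\mathrm{F}(D,E)^G(V) = G\mathcal{P}U(D, f^*E[V])$: for each indexing $V \subset U^G$, a $G$-prespectrum map $D \to f^*E[V]$ is shown directly to induce a map on $G$-fixed points (resp.\ on $G$-orbits), and compatibility with structural maps is checked by hand; part (ii) further requires a prespectrum-level comparison $\big((f^{G*}E)[V]\big)/G \to f^{G*}(E/G)[V]$, which is where the adjunction $L \dashv \ell$ enters. You instead argue abstractly via the smash--hom adjunction, assembling the maps from the evaluation $\mathrm{F}(D,E) \land D \to E$, the lax-monoidality comparison $A^G \land B^G \to (A \land B)^G$, and (for (ii)) the counit $i_*\mathrm{F}(D,E)^G \to \mathrm{F}(D,E)$ together with the identification $(i_*X \land D)/G \cong X \land (D/G)$ for $X$ with trivial action.

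Two points worth flagging. First, the paper later \emph{defines} the lax-monoidality map $D^G \land E^G \to (D \land E)^G$ (Construction \ref{cons-fixedPoint-orbit-G-spectrum-product}(i)) by applying the present lemma; your proof runs the dependency the other way. This is not circular so long as you obtain the lax-monoidality independently, e.g.\ from $(-)^G$ being right adjoint to the composite ``trivial action followed by change of universe $i_*$'' as in \cite[II.3]{LMS86} --- but you should cite that fact directly rather than appealing loosely to \cite[II.3]{LMS86} for ``the natural lax-monoidality comparison,'' since in the paper's development that map is derived from this very lemma. Second, the commutation $(i_*X \land D)/G \cong X \land (D/G)$ genuinely needs checking at the level of spectra (both the smash product and the orbit functor pass through the spectrification $L$); you correctly flag this as the main obstacle, but it is the crux of part (ii), and the paper's hands-on approach sidesteps it by constructing the needed prespectrum comparison directly. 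In short: the paper's route is more self-contained and index-by-index explicit; yours is cleaner conceptually but shifts the burden onto monoidality and commutation facts for $(-)^G$, $(-)/G$, and $i_*$ that themselves require the same kind of careful passage through $L$.
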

\begin{proof}
(i) For an indexing subspace $V\subset U^G$, by the definition of function spectra (\cite[Definition II.3.3]{LMS86}), we have $\mathrm{F}(D,E)^G(V)= G\mathcal{P}U(D,f^*E[V])$, and $\mathrm{F}(D^G,E^G)(V)= \mathcal{P}U^G(D^G,f^{G*}E^G[V])$. For any indexing subspace $W\subset U^G$, any $G$-map $D(W)\rightarrow E(W\oplus V)$ induces a map $D^G(W)=D(W)^G\rightarrow E(W\oplus V)^G=(f^{G*}E^G)(W\oplus V)$. Moreover, for  any indexing subspaces $W\subset W'\subset U^G$, if we have a commutative diagram of $G$-maps
\begin{gather*}
  \xymatrix{
    D(W) \ar[r] \ar[d] & \Omega^{W'-W}D(W') \ar[d] \\
    E(W\oplus V) \ar[r]  & \Omega^{W'-W}E(W'\oplus V),
  }
\end{gather*}
then the induced diagram
\begin{gather*}
  \xymatrix{
    D^G(W) \ar[r] \ar[d] & \Omega^{W'-W}D^G(W') \ar[d] \\
    (f^{G*}E^G)(W\oplus V) \ar[r]  & \Omega^{W'-W}(f^{G*}E^G)(W'\oplus V)
  }
\end{gather*}
is also commutative.  Thus we have a natural map $\mathrm{F}(D,E)^G(V)\rightarrow \mathrm{F}(D^G,E^G)(V)$, and by checking compatibility with structural maps in the same way, we get the map $\mathrm{F}(D,E)^G\rightarrow \mathrm{F}(D^G,E^G)$.

(ii) For an indexing subspace $V\subset U^G$,  we have $\mathrm{F}(D/G,E/G)(V)= \mathcal{P}U^G(D/G,f^{G*}(E/G)[V])$. A $G$-map $D\rightarrow (f^*E)[V]$ induces a map $D/G\rightarrow (f^{G*}E)[V]/G$. It remains to define $(f^{G*}E)[V]/G\rightarrow f^{G*}(E/G)[V]$. Recall that $(f^{G*}E)[V]/G=L\Big(\ell \big((f^{G*}E)[V]\big)/G\Big)$,  by adjunction it suffices to define 
\begin{gather*}
\ell \big((f^{G*}E)[V]\big)/G\rightarrow  \ell\big(f^{G*}(E/G)[V]\big).
\end{gather*}
But for an indexing subspace $W\subset U^G$, 
\begin{gather}\label{eq:functionSpectrum-induce-mapOnOrbits-1}
\big((f^{G*}E)[V]\big)/G(W)=\big((f^{G*}E)[V]\big)(W)/G=(f^{G*}E)(W\oplus V)/G=E\big(f(W\oplus V)\big)/G,
\end{gather}
and
\begin{gather}\label{eq:functionSpectrum-induce-mapOnOrbits-2}
\ell\big(f^{G*}(E/G)[V]\big)(W)=f^{G*}(E/G)(W\oplus V)=(E/G)\big(f(W\oplus V)\big).
\end{gather}
Since $E/G=L\big((\ell E)/G\big)$, there is a natural map from \eqref{eq:functionSpectrum-induce-mapOnOrbits-1} to \eqref{eq:functionSpectrum-induce-mapOnOrbits-2}, and by checking compatibility with structural maps as in (i), we get the map $\mathrm{F}(D,E)^G\rightarrow \mathrm{F}(D/G,E/G)$.
\end{proof}
\begin{construction}\label{cons-fixedPoint-orbit-G-spectrum-product}
Let $D,E\in G \mathcal{S}U$.
\begin{enumerate}[(i)]
  \item The map $D\rightarrow \mathrm{F}(E,D\land E)$ induces a map $D^G\rightarrow \mathrm{F}(E,D\land E)^G$. 
  We  define $D^G\land E^G\rightarrow (D\land E)^G$ by applying adjunction to the composition
 \begin{gather*}
  \xymatrix{
    D^G \ar[r] \ar[dr] & \mathrm{F}(E,D\land E)^G \ar[d] \\
    & \mathrm{F}\big(E^G,(D\land E)^G\big),
  }
 \end{gather*}
  where the vertical map is the map defined in Lemma \ref{lem:functionSpectrum-induce-mapOnFixPointsOrbits}(i).
 \item We define $D^G\land (E/G)\rightarrow (D\land E)/G$ by applying adjunction to the composition
 \begin{gather*}
  \xymatrix{
    D^G \ar[r] \ar[dr] & \mathrm{F}(E,D\land E)^G \ar[d] \\
    & \mathrm{F}\big(E/G,(D\land E)/G\big),
  }
 \end{gather*}
 where the vertical map is the map defined in Lemma \ref{lem:functionSpectrum-induce-mapOnFixPointsOrbits}(ii).
  \item We define  $D^{hG}\land (EG_+\land E)^G\rightarrow  (EG_+\land D\land E)^G$ to be the composition
\begin{align*}
D^{hG}\land (EG_+\land E)^G&=\mathrm{F}(EG_+,D)^G\land (EG_+\land E)^G\\
 &\rightarrow \big(\mathrm{F}(EG_+,D)\land EG_+\land E\big)^G \rightarrow (EG_+\land D\land E)^G
\end{align*}
where the first arrow is the map defined in (i) and the second arrow is induced by the evaluation map $\mathrm{F}(EG_+,D)\land EG_+\rightarrow D$.
  \item We define $D^G\land E_{hG}\rightarrow (D\land E)_{hG}$ to be the composition
  \begin{gather*}
    D^G\land E_{hG}=D^G\land (EG_+\land E)/G\rightarrow (EG_+\land D\land E)/G=(D\land E)_{hG},
  \end{gather*}
  where the  arrow is the map defined in (ii).
\end{enumerate}
\end{construction}

\begin{definition}
Let $G$ be a compact Lie group and $U$ a $G$-universe. Let $D,E\in G \mathcal{S}U$. For any indexing subspaces $V\subset W$ in $U$, by definition of $G$-spectra we have the structural $G$-homeomorphism $DV\xrightarrow{\cong}\Omega^{W-V}DW$, and $EV\xrightarrow{\cong}\Omega^{W-V}EW$, and thus the induced $G$-map $\mathrm{F}(DW,EW)\rightarrow \mathrm{F}(DV,EV)$ between the function spaces. We define the \emph{function space}
\begin{gather*}
\mathsf{F}(D,E):=\varprojlim_{V\subset U}\mathrm{F}(DV,EV)
\end{gather*}
where the limit is taken over the indexing subspaces in $U$.
\end{definition}

\begin{lemma}\label{lem:function-G-space}
\begin{enumerate}[(i)]
  \item For any $X\in G \mathcal{T}$, and $D,E\in G \mathcal{S}U$, there is an adjunction isomorphism
  \begin{gather*}
  G \mathcal{T}(X,\mathsf{F}(D,E))\cong G \mathcal{S}U(X\land D,E)
  \end{gather*}
  which is natural in $X$, $D$, and $E$.
  \item There are natural maps of spaces $\mathsf{F}(D,E)^G\rightarrow \mathsf{F}(D^G,E^G)$ and $\mathsf{F}(D,E)^G\rightarrow \mathsf{F}(D/G,E/G)$.
\end{enumerate}
\end{lemma}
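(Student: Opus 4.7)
The plan for (i) is to unwind both sides of the claimed isomorphism in terms of indexing subspaces and invoke the ordinary pointed $G$-space adjunction levelwise. By the definition of the smash product of $X\in G\mathcal{T}$ with a $G$-spectrum, a $G$-spectrum map $X\land D\to E$ is the same data as a compatible family of $G$-maps $X\land DV\to EV$, one for each indexing subspace $V\subset U$, that commutes with the structural homeomorphisms $DV\xrightarrow{\cong}\Omega^{W-V}DW$ and $EV\xrightarrow{\cong}\Omega^{W-V}EW$. By the standard pointed $G$-space adjunction between $X\land(-)$ and $\mathrm{F}(X,-)$ (applied with $X$ as the ``external'' factor), such a map corresponds naturally to a $G$-map $X\to \mathrm{F}(DV,EV)$. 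The compatibility condition for the $X\land DV\to EV$ is, under the adjunction, exactly the compatibility condition defining a map into the inverse limit $\mathsf{F}(D,E)=\varprojlim_V \mathrm{F}(DV,EV)$. Naturality in $X$, $D$, and $E$ is automatic from naturality of the pointwise adjunction and of the limit.

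For (ii), I construct both maps on the level of the defining inverse limits. An element $f\in \mathsf{F}(D,E)^G$ is a $G$-equivariant compatible family of $G$-maps $f_V:DV\to EV$ indexed by $V\subset U$. For the fixed-point map, restrict to indexing subspaces $V\subset U^G$ and take $G$-fixed points: $f_V^G:(DV)^G\to (EV)^G$, which by definition of the fixed-point spectrum is a map $D^G(V)\to E^G(V)$. Compatibility with structural maps of $D^G$ and $E^G$ follows from compatibility of the $f_V$ with those of $D$ and $E$, so this defines an element of $\mathsf{F}(D^G,E^G)$. For the orbit map, again restrict to $V\subset U^G$; the $G$-map $f_V$ descends to the quotient to give $DV/G\to EV/G$, i.e.\ a map of prespectra $\ell D/G\to \ell E/G$ levelwise on $U^G$. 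Applying the spectrification functor $L$ and using $D/G=L(\ell D/G)$, $E/G=L(\ell E/G)$, we obtain maps $(D/G)(V)\to (E/G)(V)$ compatible with the structural maps, hence an element of $\mathsf{F}(D/G,E/G)$. Both assignments are clearly continuous and natural in $D$ and $E$.

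The whole statement is essentially bookkeeping — there is no substantive obstacle, only the care required to track that levelwise constructions (adjunction, fixed points, quotients followed by spectrification) are all compatible with the structural maps indexing the inverse limit $\mathsf{F}(D,E)$. The only mildly subtle point is the orbit case, where one must use spectrification to pass from the prespectrum-level quotient back to the spectrum $D/G$; but this is packaged functorially so that a prespectrum map induces a spectrum map, which is all that is needed.
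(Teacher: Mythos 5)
The paper omits a proof of this lemma (``We leave the proof to the reader''), so there is nothing to compare your argument against; on its own terms the argument is correct. In (i) the phrase ``commutes with the structural homeomorphisms $DV\xrightarrow{\cong}\Omega^{W-V}DW$'' should more precisely read that the family $\{X\land DV\to EV\}$ commutes with the induced structural maps of the prespectrum $X\land D$ (and of $\ell E$), after which the $L\dashv\ell$ adjunction converts this prespectrum-level data into a spectrum map $X\land D\to E$; you implicitly invoke this but it is worth stating, since the transition map in $\mathsf{F}(D,E)$ is $\mathrm{F}(DW,EW)\to\mathrm{F}(DV,EV)$, $g\mapsto\alpha_E^{-1}\circ\Omega^{W-V}g\circ\alpha_D$, and one must check that the adjoint of this matches the prespectrum compatibility condition for $X\land D$. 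In (ii) the fixed-point case uses, as you note, that for $V\subset W\subset U^G$ the representation $W-V$ is $G$-trivial, so $(\Omega^{W-V}DW)^G=\Omega^{W-V}(DW)^G$ and the structural maps of $D^G$ are obtained from those of $D$ by passage to fixed points; the orbit case is handled correctly via functoriality of spectrification $L$ applied to the levelwise quotient prespectrum map.
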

\begin{proof}
We leave the proof to the reader.
\end{proof}

\begin{construction}\label{cons-fixedPoint-orbit-G-space-spectrum-product}
Let $X\in G \mathcal{T}$ and $E\in G \mathcal{S}U$.
\begin{enumerate}[(i)]
  \item The map $X\rightarrow \mathsf{F}(E,X\land E)$ induces a map $X^G\rightarrow \mathsf{F}(E,X\land E)^G$. 
  We  define $X^G\land E^G\rightarrow (X\land E)^G$ by applying adjunction in Lemma \ref{lem:function-G-space}(i) to the composition
 \begin{gather*}
  \xymatrix{
    X^G \ar[r] \ar[dr] & \mathsf{F}(E,X\land E)^G \ar[d] \\
    & \mathsf{F}\big(E^G,(X\land E)^G\big),
  }
 \end{gather*}
  where the vertical map is the map defined in Lemma \ref{lem:function-G-space}(ii).
 \item We define $X^G\land (E/G)\rightarrow (X\land E)/G$ by applying adjunction to the composition
 \begin{gather*}
  \xymatrix{
    X^G \ar[r] \ar[dr] & \mathsf{F}(E,X\land E)^G \ar[d] \\
    & \mathsf{F}\big(E/G,(X\land E)/G\big),
  }
 \end{gather*}
 where the vertical map is the map defined in Lemma \ref{lem:function-G-space}(ii).
  \item We define  $X^{hG}\land (EG_+\land E)^G\rightarrow  (EG_+\land X\land E)^G$ to be the composition
\begin{align*}
X^{hG}\land (EG_+\land E)^G&=\mathrm{F}(EG_+,X)^G\land (EG_+\land E)^G\\
 &\rightarrow \big(\mathrm{F}(EG_+,X)\land EG_+\land E\big)^G \rightarrow (EG_+\land D\land E)^G
\end{align*}
where the first arrow is the map defined in (i) and the second arrow is induced by the evaluation map $\mathrm{F}(EG_+,X)\land EG_+\rightarrow X$.
  \item We define $X^G\land E_{hG}\rightarrow (X\land E)_{hG}$ to be the composition
  \begin{gather*}
    X^G\land E_{hG}=X^G\land (EG_+\land E)/G\rightarrow (EG_+\land X\land E)/G=(X\land E)_{hG}
  \end{gather*}
  where the  arrow is the map defined in (ii).
\end{enumerate}
\end{construction}

Now we recall the Adams isomorphism (\cite{Ada84}, \cite{LMS86}). Let $N$ be a  normal closed subgroup of $G$, let $J=G/N$, and let $A$ be the tangent space of $N$ at $e\in G$, which is regarded as a representation of $G$. Let $i:U^N\hookrightarrow U$ be the inclusion of $G$-universes. For any $N$-free $G$-spectra $D$ indexed on $U^N$,   in \cite[\S II.7]{LMS86} a map of $J$-spectra
\begin{gather*}
\tilde{\tau}: D/N\rightarrow (\Sigma^{-A}i_*D)^N
\end{gather*}
is constructed and is shown to be an equivalence of $J$-spectra. We call $\tilde{\tau}$ an \emph{Adams map}.  From the construction of $\tilde{\tau}$ in loc. cit., it is  natural in $D$ in $hGS\mathcal{U}^N$
% (the equivariant stable homotopy category indexed on $\mathcal{U}^N$), 
i.e for a map $D\rightarrow E$ in $G \mathcal{S}U^N$,  the induced diagram
\begin{gather*}
  \xymatrix{
    D/N \ar[r] \ar[d]_{\tilde{\tau}}   & E/N \ar[d]_{\tilde{\tau}} \\
  \big( \Sigma^{-A}i_*D\big)^N\ar[r] & \big( \Sigma^{-A}i_*E\big)^N
  }
\end{gather*}
commutes in $hGS\mathcal{U}^N$.
Besides this, we need the following form of naturality  of $\tilde{\tau}$. 
\begin{lemma}\label{lem:adams-iso-homotopical-functoriality}
%Let $G$ be a  compact Lie group. Let $A$ be the adjoint representation of $G$.
For $X\in G \mathcal{T}$ and $E\in G \mathcal{S}U^N$, the  diagram
\begin{equation}\label{eq:adams-iso-homotopical-functoriality}
    \begin{gathered}
\xymatrix{
  X^G\land (E/N) \ar[r] \ar[d]_{1\land \tilde{\tau}} & \big(X\land E\big)/N \ar[d]_{\tilde{\tau}}   \\
  X^G\land (\Sigma^{-A}i_*E)^N \ar[r] & (X\land \Sigma^{-A}i_*E)^N 
  }
\end{gathered}
\end{equation}
commutes in $hGS\mathcal{U}^N$, where   we have used the identification
 $i_*(X\land E)=X\land i_*E$ (see \cite[Prop.~II.1.4]{LMS86}).
\end{lemma}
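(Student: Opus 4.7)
The plan is to deduce the homotopical naturality \eqref{eq:adams-iso-homotopical-functoriality} from the ordinary naturality of $\tilde{\tau}$ in $hG\mathcal{S}U^N$ (which is already part of the construction in \cite[\S II.7]{LMS86}), parametrized by the $G$-trivial space $X^G$. The key observation is that for each point $y\in X^G$, smashing with $y$ yields a $G$-equivariant map $y\wedge-\colon E\to X\wedge E$, so that the naturality of $\tilde{\tau}$ in $D\in hG\mathcal{S}U^N$ applies to $y\wedge-$ for each such $y$.

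First, I would take adjoints. Using the adjunction $X^G\wedge(-)\dashv \mathsf{F}(X^G,-)$ together with the spectrum-level analogues, the commutativity of \eqref{eq:adams-iso-homotopical-functoriality} is equivalent to the homotopy commutativity of a diagram of $J$-spaces
\[
\xymatrix{
X^G \ar[r] \ar[d] & \mathsf{F}\bigl(E/N,\,(X\wedge E)/N\bigr) \ar[d] \\
X^G \ar[r] & \mathsf{F}\bigl((\Sigma^{-A}i_*E)^N,\,(\Sigma^{-A}i_*(X\wedge E))^N\bigr),
}
\]
where the horizontal maps send $y$ to the map induced by $y\wedge-$ and the vertical arrows are the conjugation by $\tilde{\tau}$ on source and target (respectively, by smashing with $\Sigma^{-A}i_*$ and taking $N$-fixed points). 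Pointwise in $y$, the commutativity of this diagram is exactly the naturality of $\tilde{\tau}$ on the morphism $y\wedge-\in hG\mathcal{S}U^N$.

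Second, and this is the real content, I would upgrade pointwise commutativity over $X^G$ to commutativity of diagrams of $J$-spectra. For this I would revisit the construction of $\tilde{\tau}$ in \cite[\S II.7]{LMS86}, which proceeds through a zigzag involving the pretransfer, Thom spaces of the adjoint representation $A$, and certain universal free $N$-resolutions. Each stage is natural in $D\in G\mathcal{S}U^N$, but the homotopies that make the zigzag compose to $\tilde{\tau}$ depend on choices of null-homotopies whose coherent variation in a parameter space is not immediate. The plan is to choose these homotopies so that they vary continuously as $y$ ranges over $X^G$; this coherence is exactly the statement of the extension result Lemma \ref{lem:LMS-II.2.8-enhance}, which is an $X^G$-parametrized enhancement of \cite[II.2.8]{LMS86}.

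The hardest step is the last: translating pointwise naturality into a homotopy-commutative diagram of $J$-spectra. Each individual compatibility in the LMS zigzag is straightforward for a single $G$-map, but when the ``morphism'' is parametrized by $X^G$, the homotopies must be chosen coherently across the parameter space; the extension lemma is precisely what makes this possible. Once Lemma \ref{lem:LMS-II.2.8-enhance} is in place, the remaining verification is a formal, step-by-step check that the LMS construction is compatible with the $X^G\wedge(-)$ operation.
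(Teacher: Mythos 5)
Your proposal correctly identifies the engine of the proof — Lemma \ref{lem:LMS-II.2.8-enhance} — but mischaracterizes what that lemma does and how it is applied, and the resulting plan has a genuine gap. Lemma \ref{lem:LMS-II.2.8-enhance} is \emph{not} an ``$X^G$-parametrized'' enhancement of \cite[Thm.\ II.2.8]{LMS86}; it is a statement about a single relative $\Gamma$-CW pair $E_2\subset E_1$: given $\alpha\colon j_*E_1\to j_*F$ and $\beta\colon E_2\to F$ with $j_*[\beta]=[\alpha|_{E_2}]$, one can find $\gamma\colon E_1\to F$ with $j_*\gamma=\alpha$ and $\gamma|_{E_2}=\beta$ on the nose. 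Your plan — argue pointwise naturality for each $y\in X^G$ via $y\wedge-\colon E\to X\wedge E$, then ``choose the homotopies to vary continuously over $X^G$'' — needs a mechanism for that coherence, and the lemma, as stated, does not supply a parametrized family of homotopies indexed by points of $X^G$. What it supplies is an extension from a subcomplex, which is a different kind of statement; to use it you must first \emph{exhibit} the relevant relative CW pair, and this identification is precisely the content your plan omits.

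Concretely, the paper's proof avoids the pointwise stage entirely. It runs steps $1^\circ$--$4^\circ$ of the LMS construction of $\tilde\tau$ for both $D=E$ and $D=X\wedge E$, choosing a lift $\hat\tau$ for $D=E$ first, and then in step $2^\circ$ for $D=X\wedge E$ applies Lemma \ref{lem:LMS-II.2.8-enhance} with $E_1=i_*\theta^*(X\wedge E)$, $E_2=X^G\wedge i_*\theta^*E$ (a $\Pi$-free $\Gamma$-CW subspectrum of $E_1$, since $X^G$ is a subcomplex of $X$), $F=\Sigma^{-A}(i_*\theta^*(X\wedge E)\wedge N_+)$, $\alpha=1\wedge t$ (the universal Thom-type map for $D=X\wedge E$), and $\beta=\iota\circ(1_{X^G}\wedge\hat\tau)$. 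The lemma then produces a lift $\gamma=\hat\tau_{X\wedge E}$ whose restriction to $X^G\wedge i_*\theta^*E$ strictly equals $\iota\circ(1_{X^G}\wedge\hat\tau_E)$, and commutativity of \eqref{eq:adams-iso-homotopical-functoriality} follows by running the functorial steps $3^\circ$, $4^\circ$. Without naming this relative CW pair and the data $\alpha,\beta$, the phrase ``a formal, step-by-step check'' at the end of your proposal does not carry the argument. The initial adjunction to function $J$-spaces is also an unnecessary detour: the paper works directly in $hJ\mathcal{S}U^N$, and the ``non-immediate coherence'' you flag at the adjoint level is resolved at the source by controlling the lift $\hat\tau_{X\wedge E}$ on the subcomplex, not by coherentizing a family of homotopies over $X^G$.
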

To show this lemma, we have to recall several key points in the construction of $\tilde{\tau}$ from \cite[\S II.7]{LMS86}. WLOG (\cite[Theorem I.5.12]{LMS86}), let $D$ be a $G$-CW-spectrum indexed on $U^N$. Let $\Gamma=G\ltimes N$ be the semidirect product with multiplication $(h,m) (g,n) = (hg,g^{-1}mgn)$  for $h, g\in G$ and $m,n\in N$. We identify $G$ with a subgroup of $\Gamma$ by $g\mapsto (g,1)$. There are group homomorphisms
\begin{gather*}
\theta:\Gamma\rightarrow G,\ \theta (g,n) =gn,\\
\epsilon:\Gamma\rightarrow G,\ \epsilon (g,n) =g,
\end{gather*}
and we let $\Pi:=e\ltimes N=\ker(\epsilon)$. Let $\Gamma$ act on $N$ by $(g,n)m = gnmg^{-1}$.
Via $\epsilon$, we regard $U$ as an (incomplete) $\Gamma$-universe, and $A$ as a representation of  $\Gamma$. Take a complete $\Gamma$-universe $U'$ and let $j:U\hookrightarrow U'$ be an inclusion of $\Gamma$-universes such that $U=(U')^{\Pi}$. Then the construction of $\tilde{\tau}$ consists of the following steps:
\begin{enumerate}
   \item[$1^{\circ}$] By the Thom-type construction \cite[Cons. II.5.1 and Page 99 of \S II.7]{LMS86}, one finds a map $t : S \rightarrow \Sigma^{-A}\Sigma^{\infty}(N_+)$ of $\Gamma$-spectra indexed on $U'$, which induces a map
   \begin{gather*}
   1\land t: j_*i_*\theta^* D\rightarrow j_*\Sigma^{-A}(i_*\theta^*D\land N_+).
   \end{gather*}
   \item[$2^{\circ}$] Suppose now that $D$ is $N$-free. Then $i_*\theta^*D$ is a $\Pi$-free $\Gamma$-spectrum. By 
   \cite[Theorem II.2.8(i)]{LMS86}, $j_*$ induces an isomorphism 
   \begin{gather*}
   [i_*\theta^* D,\Sigma^{-A}(i_*\theta^*D\land N_+)]_{\Gamma}\xrightarrow{\cong}   [j_*i_*\theta^* D, j_*\Sigma^{-A}(i_*\theta^*D\land N_+)]_{\Gamma}
   \end{gather*}
   of $\Gamma$-equivariant homotopy classes. Hence there exists a map $\hat{\tau}:i_*\theta^* D\rightarrow \Sigma^{-A}(i_*\theta^*D\land N_+)$ which induces $1\land t$ by $j_*$.
   \item[$3^{\circ}$] By \cite[Lemma II.7.4]{LMS86}, there are natural isomorphisms
\begin{gather*}
(i_*\theta^*D \land N_+) /\Pi \cong i_*D\ \mbox{and}\ (i_*\theta^* D)/\Pi\cong i_*(D/N)
\end{gather*}
of $G$-spectra indexed on $U$. Applying these to $\tilde{\tau}$ yields 
\begin{gather*}
\tau:i_*(D/N)\rightarrow  \Sigma^{-A}i_*\theta^*D.
\end{gather*}
   \item[$4^{\circ}$] Finally, by adjunction, we obtain a map $\tilde{\tau}:D/N\rightarrow (\Sigma^{-A}i_*\theta^*D)^N$.
 \end{enumerate} 
 We can choose a map $t$ common for all $D$. 
Then the only obstacle to lift the natural Adams map in $hG \mathcal{S}U^N$ to a functorial one in $G \mathcal{S}U^N$ lies in the step $2^{\circ}$; even in the case $G=N$ and $G$ is commutative, there is no functorial lifting. 
\begin{remark}\label{rem:adams-map-functoriality}
For finite groups $G$, replacing $D$ with $EN_+\land D$, Reich and Varisco (\cite{ReV16}) constructed a functorial Adams map  in the spectrum level, in terms of orthogonal $G$-spectra.
\end{remark}

We bypass this obstacle in an ad hoc way by the following lemma which slightly enhances \cite[Theorem II.2.8(i)]{LMS86}.
\begin{lemma}\label{lem:LMS-II.2.8-enhance}
Let $\Gamma$ be a compact Lie group, and $\Pi$ be a normal subgroup. Let $U'$ be a complete $\Gamma$-universe and $U=U^{\prime\Pi}$, and $j:U \hookrightarrow U'$ be the inclusion.
%Let \(N\) be a normal subgroup of \(G\), let \(U\) be a complete $G$-universe, and let i: \({U}^{N} \rightarrow U\) be the inclusion.
Let $E_1\in \Gamma \mathcal{S}U$ be a $\Pi$-free $\Gamma$-CW spectrum, and $E_2\in \Gamma \mathcal{S}U$ be a $\Pi$-free sub-$\Gamma$-CW spectrum of $E_1$. Let   $F\in \Gamma \mathcal{S}U$ be any $\Gamma$-spectrum. Then in the following diagram
%Let   \(F_1\in G \mathcal{S}U^{N}\) be any $G$-spectrum and $F_2$ be a sub-$G$-spectrum of $F_2$. Then
\begin{equation}\label{eq:LMS-II.2.8-enhance}
\begin{gathered}
\xymatrix{
  [E_1,F]_{\Gamma} \ar[d]_{|_{E_2}} \ar[r]^<<<<{j_*} & [j_* E_1,j_*F]_{\Gamma} \ar[d]^{|_{j_*E_2}} \\
    [E_2,F]_{\Gamma} \ar[r]^<<<<{j_*} & [j_* E_2,j_*F]_{\Gamma} 
}
\end{gathered}
\end{equation}
the horizontal maps are isomorphisms. Moreover, for any $\Gamma$-maps $\alpha:j_* E_1\rightarrow j_*F$ and $\beta:E_2\rightarrow F$ such that $j_*[\beta]=[\alpha|_{E_2}]$, there exists a $\Gamma$-map $\gamma:E_1\rightarrow F$ such that $j_* \gamma=\alpha$ and $\gamma|_{E_2}=\beta$.
\end{lemma}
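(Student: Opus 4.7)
The first assertion (that the horizontal maps in \eqref{eq:LMS-II.2.8-enhance} are isomorphisms) is a direct application of \cite[Thm.\ II.2.8(i)]{LMS86}, used twice: once on the $\Pi$-free $\Gamma$-CW spectrum $E_1$ and once on $E_2$. Once this is granted, the ``moreover'' clause becomes essentially formal, the only nontrivial input being the homotopy extension property for inclusions of sub-$\Gamma$-CW spectra.

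The plan for the enhancement is as follows. Using the bijection $j_*:[E_1,F]_\Gamma \xrightarrow{\cong} [j_*E_1,j_*F]_\Gamma$ supplied by the first assertion, first pick a $\Gamma$-map $\gamma_0:E_1 \to F$ whose class satisfies $j_*[\gamma_0] = [\alpha]$. Restricting to $E_2$ gives
\[
j_*\bigl[\gamma_0|_{E_2}\bigr] \;=\; \bigl[(j_*\gamma_0)|_{j_*E_2}\bigr] \;=\; [\alpha|_{j_*E_2}] \;=\; j_*[\beta],
\]
and the injectivity of $j_*$ on $[E_2,F]_\Gamma$ (again by the first assertion) forces $[\gamma_0|_{E_2}] = [\beta]$. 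Choose a $\Gamma$-homotopy $H:E_2 \wedge I_+ \to F$ from $\gamma_0|_{E_2}$ to $\beta$.

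Since the inclusion of a sub-$\Gamma$-CW spectrum into a $\Gamma$-CW spectrum is a $\Gamma$-cofibration (see \cite[Ch.\ I]{LMS86}), the homotopy $H$ extends to a $\Gamma$-homotopy $\widetilde H:E_1 \wedge I_+ \to F$ with $\widetilde H|_{t=0} = \gamma_0$. Setting $\gamma := \widetilde H|_{t=1}$, one has $\gamma|_{E_2} = \beta$ on the nose and $[\gamma] = [\gamma_0]$, so $j_*[\gamma]=[\alpha]$ as required. The only subtlety worth mentioning is to verify that the HEP and the cofibration structure are available in the category $\Gamma\mathcal{S}U$ of $\Gamma$-spectra indexed on the incomplete universe $U$; this is standard, but is the place where one has to be slightly careful since some model-categorical facts are usually stated for complete universes. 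The substantive content of the lemma is thus entirely packaged in \cite[Thm.\ II.2.8(i)]{LMS86}, with the enhancement being a routine relative obstruction argument along the cofibration $E_2 \hookrightarrow E_1$.
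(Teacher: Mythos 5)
Your proof is correct, and it takes a genuinely different route from the paper. The paper does not use the bijection from Theorem II.2.8(i) as a black box for the enhancement; rather, it re-opens the mechanism behind it. Concretely, the paper uses the factorization $[E_i,F]_{\Gamma} \xrightarrow{\eta_*} [E_i, j^*j_*F]_\Gamma \cong [j_*E_i, j_*F]_\Gamma$, where $\eta\colon F \to j^*j_*F$ is the adjunction unit, then invokes LMS Cor.\ II.1.8 and Lem.\ II.2.4(i) to identify $\eta$ as an $\mathcal{F}(\Pi)$-equivalence (for $\mathcal{F}(\Pi)$ the family of closed $H\leq\Gamma$ with $H\cap\Pi=\{e\}$), and finally applies the relative $\mathcal{F}(\Pi)$-Whitehead theorem (LMS Thm.\ II.2.2) to the relative $\mathcal{F}(\Pi)$-CW pair $(E_1,E_2)$ and the equivalence $\eta$ to produce the lift $\gamma$ directly. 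Your argument instead takes the bijection/injectivity statements as given, chooses any preimage $\gamma_0$ of $[\alpha]$, observes $\gamma_0|_{E_2}\simeq\beta$, and corrects $\gamma_0$ by a homotopy extension along the $\Gamma$-cofibration $E_2\hookrightarrow E_1$. The two arguments are logically equivalent in strength, and both in fact produce only $\gamma|_{E_2}=\beta$ on the nose together with $j_*\gamma\simeq\alpha$ up to homotopy (this is also all that the paper's Whitehead argument yields, and all that is used in the application in Lemma \ref{lem:adams-iso-homotopical-functoriality}). The paper's route is marginally finer — the Whitehead lifting can be arranged so that the homotopy $\eta\circ\gamma\simeq\tilde\alpha$ restricts on $E_2$ to the given homotopy witnessing $j_*[\beta]=[\alpha|_{E_2}]$, a relative control that your deformation-and-HEP argument does not automatically provide — but this extra control is not needed for the paper's purposes. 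Your flagged concern about the availability of the HEP and cofibration structure for sub-$\Gamma$-CW inclusions in $\Gamma\mathcal{S}U$ is warranted but benign: this is established in LMS Ch.\ I for arbitrary $G$-universes, not only complete ones.
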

\begin{proof}
The first statement is \cite[Theorem II.2.8(i)]{LMS86}. For the second, we use the argument in the proof of \cite[Lemma II.2.4(ii)]{LMS86}. Let $\mathcal{F}(\Pi)$ be the set of closed subgroups of $\Gamma$ such that $H\cap \Pi=\{e\}$.
Then by \cite[Cor. II.1.8 and Lem. II.2.4(i)]{LMS86}, the adjunction map $\eta:F \rightarrow j^*j_*F$ is an $\mathcal{F}(\Pi)$-equivalence. The diagram \eqref{eq:LMS-II.2.8-enhance} factors as
\begin{gather*}
\xymatrix{
  [E_1,F]_{\Gamma} \ar[d]_{|_{E_2}} \ar[r]^<<<<{\eta_*} & [E_1,j^*j_*F]_{\Gamma} \ar@{}[r]|-{\cong} \ar[d]^{|_{E_2}}  & [j_* E_1,j_*F]_{\Gamma} \ar[d]^{|_{j_*E_2}} \\
    [E_2,F]_{\Gamma}  \ar[r]^<<<<{\eta_*} & [E_2,j^*j_*F]_{\Gamma} \ar@{}[r]|-{\cong} & [j_* E_2,j_*F]_{\Gamma} 
}
\end{gather*}
Then since $E_2$ is obtained from $E_1$ by attaching $\Pi$-free cells $S^n_H$ where $H\subset \mathcal{F}(\Pi)$, the second statement follows from the $\mathcal{F}(\Pi)$-Whitehead theorem \cite[Theorem II.2.2]{LMS86}
%\colorbox{yellow}{Adopt the proof of} \cite[Theorem II.2.6(i)]{LMS86}, use  and induction on cells from $E_2$ to $E_1$.
\end{proof}

\begin{proof}[Proof of Lemma \ref{lem:adams-iso-homotopical-functoriality}]
Do the above construction of $\tilde{\tau}$ for $E$ and for $X\land E$. In step $2^{\circ}$, first run it for $E$, so that we choose a $\hat{\tau}$ for $D=E$, then apply Lemma \ref{lem:LMS-II.2.8-enhance} to $F=\Sigma^{-A}(i_*\theta^*(X\land E)\land N_+)$, $E_1=i_*\theta^* (X\land E)$,  $E_2=X^G\land i_*\theta^* E$, $\alpha=$ the map $1\land t$ for $D=X\land E$, and $\beta=\iota\circ (1_{X^G}\land \hat{\tau})$, where $\iota$ is the inclusion $ X^G\land \Sigma^{-A} ( i_*\theta^* E\land N_+)\hookrightarrow F$. Thus the diagram
\begin{gather*}
 \xymatrix{
    X^G \land  i_*\theta^* E \ar@{^{(}->}[r] \ar[d]_{1_{X^G}\land \hat{\tau}} & i_*\theta^* (X\land E) \ar[d]^{\gamma} \\
    X^G\land \Sigma^{-A} ( i_*\theta^* E\land N_+) \ar[r]^{\iota} & \Sigma^{-A} i_*\theta^* (X\land E) \land N_+
 }
 \end{gather*} 
is homotopy commutative. Hence running the remaining steps yields that the diagram \eqref{eq:adams-iso-homotopical-functoriality} commutes in $hGS\mathcal{U}^N$.
\end{proof}

\begin{corollary}\label{cor:transfer-products-homotopyOrbit-fixetPoints}
Let $G$ be a compact Lie group, and $A$ be the adjoint representation of $G$.
Let $X\in G \mathcal{T}$ and $D\in G \mathcal{S}U^G$, and let $\tilde{\tau}$ be the Adams map.
Then the diagram 
\begin{gather*}
\xymatrix{
  X^{hG}\land D_{hG} \ar[r] \ar[d]_{\wr}^{1\land \tilde{\tau}} \ar@{}[dr]|{\circlearrowleft} & (X\land D)_{hG} \ar[d]_{\wr}^{\tilde{\tau}} \\
  X^{hG}\land (\Sigma^{-A}EG_+\land i_*D)^G \ar[r] & (\Sigma^{-A}EG_+\land X\land i_*D)^G
  }
\end{gather*}
commutes in $hGS \mathcal{U}^{G}$.
\end{corollary}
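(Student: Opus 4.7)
The plan is to deduce the corollary from Lemma~\ref{lem:adams-iso-homotopical-functoriality} by decomposing the product map $X^{hG} \land D_{hG} \to (X \land D)_{hG}$ into simpler pieces and checking each resulting square separately. Setting $Y = \mathrm{F}(EG_+, X)$, so that $X^{hG} = Y^G$, I would first express the top horizontal map as a composition
\begin{gather*}
Y^G \land (EG_+ \land D)/G \xrightarrow{\alpha} (Y \land D)_{hG} \xrightarrow{\mathrm{ev}} (X \land D)_{hG},
\end{gather*}
where $\alpha$ is the instance of Construction~\ref{cons-fixedPoint-orbit-G-space-spectrum-product}(iv) obtained by specializing to the $G$-space $Y$ and the $G$-spectrum $D$, and $\mathrm{ev}$ is induced by the $G$-equivariant diagonal $EG_+ \to EG_+ \land EG_+$ followed by the evaluation $G$-map $EG_+ \land Y \to X$. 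The bottom horizontal map of the corollary admits the same decomposition, with $(EG_+ \land D)/G$ and $(X\land D)_{hG}$ replaced throughout by their Adams-equivalents $(\Sigma^{-A}EG_+ \land i_*D)^G$ and $(\Sigma^{-A}EG_+ \land X \land i_*D)^G$. The corollary then reduces to the commutativity of two smaller squares joined along their common middle column.

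Each smaller square commutes in $hGS\mathcal{U}^G$. The first, which relates the product map $\alpha$ to the Adams isomorphism $\tilde\tau$, is precisely Lemma~\ref{lem:adams-iso-homotopical-functoriality} applied with $X$ replaced by $Y$, $E$ replaced by $D$, and $N = G$ (so that the adjoint representation is $A$ and $i:U^G \hookrightarrow U$ is as in the corollary). The second square, arising from the evaluation step $\mathrm{ev}$, commutes by the homotopical functoriality of $\tilde\tau$ in the $G$-spectrum variable noted immediately before Lemma~\ref{lem:adams-iso-homotopical-functoriality}: the $G$-equivariant map $EG_+ \land Y \land D \to X \land D$ induces compatible morphisms after orbiting by $G$ and after applying $(\Sigma^{-A}i_*(-))^G$, and these are linked through the naturality of $\tilde\tau$ in $hGS\mathcal{U}^G$. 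Concatenating the two commutative squares yields the claimed commutativity of the diagram in the corollary.

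The main delicacy is to verify that the $G$-equivariant diagonal on $EG_+$ used to separate the ``product'' copy of $EG_+$ from the ``orbit'' copy is chosen compatibly on both source and target of $\tilde\tau$, and that this diagonal interacts well with Construction~\ref{cons-fixedPoint-orbit-G-space-spectrum-product}(iv); otherwise, the two squares above would not compose strictly to the desired outer square. This causes no genuine obstacle because the diagonal is a $G$-equivariant map of $G$-free spaces, so it is transported through $\tilde\tau$ by the very naturality established in Lemma~\ref{lem:adams-iso-homotopical-functoriality}. The only honest price paid is that commutativity must be asserted in $hGS\mathcal{U}^G$ rather than on the nose, which is precisely the form of the statement in the corollary.
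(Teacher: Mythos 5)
Your decomposition and choice of lemmas match the paper's own proof: both factor the top arrow through $\big(\mathrm{F}(EG_+,X)\land EG_+\land D\big)/G$, invoke Lemma~\ref{lem:adams-iso-homotopical-functoriality} for the left square and the naturality of $\tilde\tau$ (stated just before that lemma) for the right square. Two substitutions are off, though, and as literally written they would make the argument fail. For the left square you propose to apply Lemma~\ref{lem:adams-iso-homotopical-functoriality} with ``$E$ replaced by $D$''; with that substitution the top-left corner of the lemma's diagram is $Y^G\land(D/G)$, whereas what is needed is $Y^G\land D_{hG}=Y^G\land(EG_+\land D)/G$. The correct substitution is $E=EG_+\land D$, which also guarantees that $E$ is $G$-free, as is required for the Adams map in the lemma to exist. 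For the right square you cite naturality of $\tilde\tau$ along ``the $G$-equivariant map $EG_+\land Y\land D\to X\land D$,'' but $X\land D$ is not $G$-free, so $\tilde\tau$ is not even defined on it; naturality must be applied to the induced map of $G$-free spectra $EG_+\land Y\land D\to EG_+\land X\land D$ (the diagonal on $EG_+$ followed by evaluation, exactly as you describe when setting up $\mathrm{ev}$ — the target copy of $EG_+$ simply must not be dropped). With those two corrections the argument is right and coincides with the paper's; your explicit bookkeeping of the diagonal on $EG_+$ is a useful clarification of what Construction~\ref{cons-fixedPoint-orbit-G-space-spectrum-product}(iii) leaves implicit.
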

\begin{proof}
By Construction \ref{cons-fixedPoint-orbit-G-space-spectrum-product}(iii) and (iv), this diagram factors as 
\begin{gather*}
\xymatrix{
  X^{hG}\land D_{hG} \ar[r] \ar[d]_{1\land \tilde{\tau}} & \big(\mathrm{F}(EG_+,X)\land EG_+\land D\big)/G \ar[r] \ar[d]_{\tilde{\tau}}   & (EG_+\land X\land D)/G \ar[d]_{\tilde{\tau}} \\
  X^{hG}\land (\Sigma^{-A}EG_+\land i_*D)^G \ar[r] & \big(\Sigma^{-A}\mathrm{F}(EG_+,X)\land EG_+\land i_*D\big)^G \ar[r] & (\Sigma^{-A}EG_+\land X\land i_*D)^G.
  }
\end{gather*}
The left-hand square commutes by Lemma \ref{lem:adams-iso-homotopical-functoriality}, and the right-hand square commutes by the discussion preceeding Lemma \ref{lem:adams-iso-homotopical-functoriality}.
\end{proof}

Now we are ready to show Proposition \ref{prop:commutativity-action-hCpn-hS1}.
\begin{proof}[Proof of Proposition \ref{prop:commutativity-action-hCpn-hS1}]
The map in \eqref{eq:transfer-iso-homotopyOrbits} is defined via the Adams isomorphism and the obvious map from $\mathbf{S}^1$-fixed points to $C_{p^n}$-fixed points. Hence the conclusion follows from Corollary \ref{cor:transfer-products-homotopyOrbit-fixetPoints} and the commutativity of exterior products of (homotopy-)fixed points with change of the groups.
\end{proof}

\begin{remark}\label{rem:commutativity-action-S1-spectra}
We expect that Proposition \ref{prop:commutativity-action-hCpn-hS1} holds for $\mathbf{S}^1$-spectra as well. But for the proof, one needs some effort to generalize Lemma \ref{lem:adams-iso-homotopical-functoriality}.
\end{remark}

% subsection products_in_equivariant_stable_homotopy_theory (end)

\subsection{A recap of filtered \texorpdfstring{$\mathrm{THH}$}{THH} and filtered \texorpdfstring{$\mathrm{TC}$}{TC}} % (fold)
\label{sub:a_recap_of_filtered_thh_and_filtered_tc}
We recall several notions of filtered objects from \cite{Bru01}. Let $\mathcal{S}_*$ be the category of pointed simplicial sets. Let $\Gamma$ be the category of pointed finite sets. A $\Gamma$-filtered space is a pointed functor  $\Gamma \rightarrow \mathcal{S}_*^{\mathbb{Z}}$. The smash product of two $\Gamma$-filtered spaces $X$ and $Y$ is given by
\begin{gather*}
(X\land Y)(n^+)=\colim_{n_1^+\land n_2^+ \rightarrow n^+}X(n_1^+)\land Y(n_2^+).
\end{gather*}
Let $A$ be a  simplicial ring, and $I$ an ideal of $A$. Let $A(s)=A$ for $s\geq 0$ and $A(s)=I^{-s}$ for $s\leq 0$. Let $\widetilde{A}$ be  the $\Gamma$-filtered space with
$\widetilde{A}(s)(n^+)=\widetilde{\mathbb{Z}}(n^+)\otimes_{\mathbb{Z}}A(s)= \mathbb{Z}\{n+\}/\mathbb{Z}\{0\}\otimes_{\mathbb{Z}}A(s)$. 
%Then $\widetilde{A}$ is filtered by cofibrations. 
Let $\mathcal{I}$ be the Bökstedt subcategory of $\Gamma$ which consists of all objects but only injective maps.  
 The $\Gamma$-filtered space $\operatorname{THH}(A)$ is defined by
\begin{gather*}
\operatorname{THH}(A,n^+)(s)_q=\holim_{\mathbf{x}\in \mathcal{I}^{q+1}}\mathrm{Map}_*\left(
 \bigwedge_{i=0}^q S^{x_i},n^+\land \Big(\bigwedge_{i=1}^q \widetilde{A}(S^{x_i})\Big)(s)
\right)
\end{gather*}
for $n^+\in \operatorname{ob}\Gamma$ and $s\in \mathbb{Z}$.
In particular, $\operatorname{THH}(A)(0)$ is the usual $\Gamma$-space $\operatorname{THH}(A)$ (e.g. \cite[\S4.2]{DGM13}). The \emph{filtered space} $\operatorname{THH}(A)$ is defined  so that $\operatorname{THH}(A)(s)=\operatorname{THH}(A,S^0)(s)$, which is an \emph{epicyclic filtered space} in the sense of \cite[\S 4.2]{Bru01}. For $s\leq 0$, there are maps
\begin{gather*}
\operatorname{THH}(A)(s) \xrightarrow{i} \operatorname{THH}(A)(\lfloor \frac{s}{p}\rfloor),\quad
\big|\operatorname{THH}(A)(s)\big|^{C_{p^n}} \xrightarrow{R} \Big|\operatorname{THH}(A)(\lfloor \frac{s}{p}\rfloor)\Big|^{C_{p^{n-1}}},\\
\operatorname{THH}(A)(s)^{C_{p^n}} \xrightarrow{F} \operatorname{THH}(A)(s)^{C_{p^{n-1}}}
\end{gather*}
where $i$ is the obvious inclusion, $R$ is induced by the edgewise subdivision, and $F$ is the obvious inclusion of $C_{p^n}$-fixed points. Then (\cite[\S 5]{Bru01}) the filtered TR, TF, and TC are defined by homotopy equalizers:
\begin{align}
\mathrm{TR}(A)(s)&:=\mathrm{hoeq}\bigg(\prod_{n\geq 0}\big|\operatorname{THH}(A)(s)\big|^{C_{p^n}}
\overset{R}{\underset{i}{\rightrightarrows}}\prod_{n\geq d0}\Big|\operatorname{THH}(A)(\lfloor \frac{s}{p}\rfloor)\Big|^{C_{p^{n}}}\bigg),\label{eq:def-filteredTR} \\
\mathrm{TF}(A)(s)&:=\mathrm{hoeq}\bigg(\prod_{n\geq 0}\big|\operatorname{THH}(A)(s)\big|^{C_{p^n}}
\overset{F}{\underset{\mathrm{id}}{\rightrightarrows}}\prod_{n\geq d0}\Big|\operatorname{THH}(A)(s)\Big|^{C_{p^{n}}}\bigg),\nn%\label{eq:def-filteredTF}
\\
\mathrm{TC}(A)(s)&:=\mathrm{hoeq}\Big(\operatorname{TR}(A)(s)
\overset{F}{\underset{\mathrm{id}}{\rightrightarrows}}\operatorname{TR}(A)(s)\Big).\label{eq:def-filteredTC}
\end{align}

% subsection a_recap_of_filtered_thh_and_filtered_tc (end)
\subsection{Comparison of  multiplications of \texorpdfstring{$\mathrm{TF}$}{TF} on relative \texorpdfstring{$\mathrm{HC}$}{HC}} % (fold)
\label{sub:the_multiplication_of_TF_on_relative_HC}
Let $A$ be a commutative simplicial ring, and $I$ be an ideal of $A$ with $I^m=0$. 
%Following \cite{Bru01}, 
For brevity of notations, we let  $X=|\mathrm{THH}(A)|$,
%For simplicity, \emph{we assume that $A$ and $X$ are $p$-completed}. 
and any map  induced by the multiplication $A\otimes A\rightarrow A$ will be denoted by $\mu$.

\begin{construction}\label{cons:multiplication-TF-on-relHC-hS1}
There is a natural map $\operatorname{TF}(A)=\holim_{F}X(0)^{C_{p^n}}\rightarrow \holim X(0)^{hC_{p^n}}$.
Recall the equivalence (\cite[Lemma 6.3.1.1]{DGM13})
%$X(0)$ has been assumed $p$-complete
\begin{gather*}%\label{eq-equivalence-limit-CpnFixedPoint}
 \holim X(0)^{hC_{p^n}}\simeq X(0)^{h \mathbf{S}^1}.
\end{gather*}
We define the multiplication
\begin{gather}\label{eq:product_TF_on_relative_HC}
\operatorname{TF}(A;p) \land \bigg(\left(\frac{X(-1)}{X(-p)}\right)_{h \mathbf{S}^1}\bigg)_p^{\land} 
 \xrightarrow{\mu}   \left(\left(\frac{X(-1)}{X(-p)}\right)_{h \mathbf{S}^1}\right)_p^{\land}
\end{gather}
to be the composition
\begin{gather*}
\operatorname{TF}(A;p) \land \left(\left(\frac{X(-1)}{X(-p)}\right)_{h \mathbf{S}^1}\right)_p^{\land}
\rightarrow \holim_{F}X(0)^{hC_{p^n}} \land \left(\left(\frac{X(-1)}{X(-p)}\right)_{h \mathbf{S}^1}\right)_p^{\land}\\
\simeq  \left(X(0)^{h \mathbf{S}^1}\right)_p^{\land} \land  \left(\left(\frac{X(-1)}{X(-p)}\right)_{h \mathbf{S}^1}\right)_p^{\land}
\simeq \left(X(0)^{h \mathbf{S}^1} \land \left(\frac{X(-1)}{X(-p)}\right)_{h \mathbf{S}^1}\right)_p^{\land}\\
 \xrightarrow{\mu}   \left(\left(\frac{X(-1)}{X(-p)}\right)_{h \mathbf{S}^1}\right)_p^{\land}.
\end{gather*}
\end{construction}

\begin{construction}\label{cons:multiplication-TF-on-relHC-hCpn}
Taking homotopy limits of the commutative diagrams
\begin{equation}\label{eq:product-X(0)CpnFixPoints-on-X(-1)-X(-p)CpnOrbits}
  \begin{gathered}
\xymatrix{
 X(0)^{C_{p^n}}\land \left(\frac{X(-1)}{X(-p)}\right)_{hC_{p^n}} \ar[r]^<<<<<<<{\mu} \ar[d]_{F\land \mathrm{trf}} &  \left(\frac{X(-1)}{X(-p)}\right)_{hC_{p^n}}  \ar[d]^{\mathrm{trf}}\\
 X(0)^{C_{p^{n-1}}}\land \left(\frac{X(-1)}{X(-p)}\right)_{hC_{p^{n-1}}} \ar[r]^<<<<{\mu} &  \left(\frac{X(-1)}{X(-p)}\right)_{hC_{p^{n-1}}}
 }
\end{gathered}
\end{equation}
yields a map
\begin{gather*}%\label{eq:TF-on-THH(-1)/THH(-p)/hCpn}
\holim_{F}X(0)^{C_{p^n}} \land \holim_{\mathrm{trf}}\left(\frac{X(-1)}{X(-p)}\right)_{hC_{p^n}} \overset{\mu}{\longrightarrow}  \holim_{\mathrm{trf}}\left(\frac{X(-1)}{X(-p)}\right)_{hC_{p^n}}.
\end{gather*}
Combining with the equivalence \eqref{eq:transfer-iso-homotopyOrbits}, we get a map
\begin{gather}\label{eq:TF-on-THH(-1)/THH(-p)/hS1}
\operatorname{TF}(A;p) \land \left(\left(\frac{X(-1)}{X(-p)}\right)_{h \mathbf{S}^1}\right)_p^{\land} \overset{\mu}{\longrightarrow}   \left(\left(\frac{X(-1)}{X(-p)}\right)_{h \mathbf{S}^1}\right)_p^{\land}\quad.
\end{gather}
\end{construction}
\begin{proposition}\label{prop:action-homotopyPointOnHomotopyFixedPoint}
  The multiplication \eqref{eq:TF-on-THH(-1)/THH(-p)/hS1} is naturally equivalent to \eqref{eq:product_TF_on_relative_HC}.
\end{proposition}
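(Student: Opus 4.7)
The plan is to reduce the statement to Proposition \ref{prop:commutativity-action-hCpn-hS1} applied to the pair of $\mathbf{S}^1$-spaces $X := X(0) = |\mathrm{THH}(A)|$ and $Y := X(-1)/X(-p)$. The essential observation is that Construction \ref{cons:multiplication-TF-on-relHC-hCpn} uses the genuine $C_{p^n}$-fixed point exterior pairing, whereas Construction \ref{cons:multiplication-TF-on-relHC-hS1} uses the $\mathbf{S}^1$-homotopy fixed point exterior pairing; the compatibility of these, after passing to the homotopy limit and $p$-completing, is precisely the content of Proposition \ref{prop:commutativity-action-hCpn-hS1}.

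First, I would factor the $C_{p^n}$-equivariant pairing in \eqref{eq:product-X(0)CpnFixPoints-on-X(-1)-X(-p)CpnOrbits} through the canonical map $X(0)^{C_{p^n}} \to X(0)^{hC_{p^n}}$. From Construction \ref{cons-fixedPoint-orbit-G-space-spectrum-product}(iv), the genuine pairing $X(0)^{C_{p^n}} \land Y_{hC_{p^n}} \to Y_{hC_{p^n}}$ factors through the canonical map followed by the $C_{p^n}$-homotopy fixed-point-on-orbit pairing of Definition \ref{def:topological-product}(ii). This factorization is manifestly natural with respect to $F$ on the left factor and $\mathrm{trf}$ on the right, so after taking homotopy limits and $p$-completing, \eqref{eq:TF-on-THH(-1)/THH(-p)/hS1} rewrites as the composite
\[
\operatorname{TF}(A;p) \land \bigl(Y_{h\mathbf{S}^1}\bigr)^{\wedge}_p \to \bigl(\holim_F X(0)^{hC_{p^n}}\bigr)^{\wedge}_p \land \bigl(\holim_{\mathrm{trf}} Y_{hC_{p^n}}\bigr)^{\wedge}_p \to \bigl(\holim_{\mathrm{trf}}  Y_{hC_{p^n}}\bigr)^{\wedge}_p \xrightarrow{\mu} \bigl(Y_{h\mathbf{S}^1}\bigr)^{\wedge}_p,
\]
where the final equivalence uses \eqref{eq:transfer-iso-homotopyOrbits}.

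Second, I would invoke Proposition \ref{prop:commutativity-action-hCpn-hS1} with $X = X(0)$ and $Y = X(-1)/X(-p)$, which says exactly that under the $p$-completed equivalences \eqref{eq-equivalence-limit-CpnFixedPoint} and \eqref{eq:transfer-iso-homotopyOrbits} the middle pairing on $\holim$-level homotopy (fixed points, orbits) agrees with the $\mathbf{S}^1$-homotopy fixed-point-on-orbit pairing $X(0)^{h\mathbf{S}^1} \land Y_{h\mathbf{S}^1} \to (X(0)\land Y)_{h\mathbf{S}^1}$. Post-composing with $\mu$ on both sides then identifies the composite above with \eqref{eq:product_TF_on_relative_HC}, yielding the desired natural equivalence.

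The main thing to watch is the suspension shift built into \eqref{eq:transfer-iso-homotopyOrbits}: Proposition \ref{prop:commutativity-action-hCpn-hS1} is formulated precisely so that the $\Sigma$ appears on the homotopy-orbit side, and this matches the $\Sigma$ appearing implicitly in the definition of \eqref{eq:product_TF_on_relative_HC}. Once Proposition \ref{prop:commutativity-action-hCpn-hS1} is in hand, the only real work is this bookkeeping together with checking the naturality of the pairings with respect to $F$ and $\mathrm{trf}$, which is immediate from the explicit formulas in Construction \ref{cons-fixedPoint-orbit-G-space-spectrum-product}.
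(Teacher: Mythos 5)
Your proposal is correct and follows the same route as the paper, which proves the proposition simply by invoking Proposition~\ref{prop:commutativity-action-hCpn-hS1} with $X=\operatorname{THH}(A)(0)$ and $Y=\operatorname{THH}(A)(-1)/\operatorname{THH}(A)(-p)$. You have made explicit the intermediate step of factoring the genuine $C_{p^n}$-fixed-point pairing from Construction~\ref{cons:multiplication-TF-on-relHC-hCpn} through the canonical map $X(0)^{C_{p^n}}\to X(0)^{hC_{p^n}}$ so that the top row of the diagram in Proposition~\ref{prop:commutativity-action-hCpn-hS1} applies, which the paper leaves implicit.
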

\begin{proof}
This follows by  applying Proposition \ref{prop:commutativity-action-hCpn-hS1} to $X=\operatorname{THH}(A)(0)$ and $Y=\frac{\operatorname{THH}(A)(-1)}{\operatorname{THH}(A)(-p)}$.
\end{proof}

% subsection the_multiplication_of_ (end)

\subsection{Comparision with the multiplication of \texorpdfstring{$\mathrm{TC}$}{TC} on  \texorpdfstring{$\left(\frac{\operatorname{THH}(-1)}{\operatorname{THH}(-p)}\right)_{hC_{p^n}}$}{(THH(-1)/THH(-p))hCpn}} % (fold)
\label{sub:the_multiplication_of_TF_on_relative_THH(-1)/THH(-p))hCpn}
\begin{lemma}[Brun]\label{lem:rel-TC-equals-rel-homotopyOrbits-Cpn}
There is a natural equivalence
\begin{gather*}%\label{eq:rel-TC-equals-rel-homotopyOrbits-Cpn}
\frac{\mathrm{TC}(A)(-1)}{\mathrm{TC}(A)(-p)}\simeq \holim_{n,\mathrm{trf}}\left(\frac{X(-1)}{X(-p)}\right)_{hC_{p^n}}.
\end{gather*}
where $\mathrm{trf}:\left(\frac{X(-1)}{X(-p)}\right)_{hC_{p^n}} \rightarrow \left(\frac{X(-1)}{X(-p)}\right)_{hC_{p^{n-1}}}$ is the transfer map induced by the Adams isomorphism.
\end{lemma}
This is an intermediate statement  shown in the proof of \cite[Lemma 5.3]{Bru01}. For Construction \ref{cons:multiplication-TC(0)-on-TC(-1)/TC(-p)}, we have to recall Brun's arguments.
\begin{proof}
As we recalled in \S\ref{sub:the_multiplication_of_TF_on_relative_HC}, for $s\leq 0$, there are  $\mathbf{S}^1$-maps $X(s)\xrightarrow{i}X\big(\lfloor \frac{s}{p}\rfloor\big)$, which induce maps $X(s)^{C_{p^n}}\xrightarrow{i}X(\lfloor \frac{s}{p}\rfloor)^{C_{p^n}}$, and for $s\in \mathbb{Z}$, there are maps $X(s)^{C_{p^n}}\xrightarrow{R}X\big(\lfloor \frac{s}{p}\rfloor\big)^{C_{p^{n-1}}}$. 
Then $i$ and $R$ induce maps between the quotients:
\begin{gather*}
\frac{X(-1)}{X(-p)}\xrightarrow{i}\frac{X(-1)}{X(-1)}=\ast,\
\left(\frac{X(-1)}{X(-p)}\right)^{C_{p^n}}\xrightarrow{R} \left(\frac{X(-1)}{X(-1)}\right)^{C_{p^{n-1}}}=\ast.
\end{gather*}
By the definition \eqref{eq:def-filteredTR}, we get equivalences
\begin{gather*}
\frac{\mathrm{TR}(A)(-1)}{\mathrm{TR}(A)(-p)}\simeq\prod_{n\geq 0}\left(\frac{X(-1)}{X(-p)}\right)^{C_{p^n}}\simeq\prod_{n\geq 0}\frac{X(-1)^{C_{p^n}}}{X(-p)^{C_{p^n}}}
\end{gather*}
where we have used that finite colimits commute with limits in the stable homotopy category.
The filtered fundamental cofibration sequence (\cite[Page 220]{Bru01})
\begin{gather*}
X(s)_{hC_{p^n}}\xrightarrow{N} X^{C_{p^n}}(s)\xrightarrow{R} X^{C_{p^{n-1}}}(\lfloor \frac{s}{p}\rfloor),\ \mbox{for}\ s\in \mathbb{Z}
\end{gather*}
where $N$ is the \emph{norm map}, implies the horizontal equivalences in the commutative diagram
\begin{equation}\label{eq:equivalence-trf-Frob}
  \begin{gathered}
\xymatrix{
  \left(\frac{X(-1)}{X(-p)}\right)_{hC_{p^n}} \ar[r]^{\sim}_{N} \ar[d]_{\mathrm{trf}} & \left(\frac{X(-1)}{X(-p)}\right)^{C_{p^n}} \ar[d]^{F} \\
  \left(\frac{X(-1)}{X(-p)}\right)_{hC_{p^{n-1}}} \ar[r]^{\sim}_{N} & \left(\frac{X(-1)}{X(-p)}\right)^{C_{p^{n-1}}}.
}
\end{gathered}
\end{equation}
Hence by definition \eqref{eq:def-filteredTC}, we obtain
\begin{equation}\label{eq:equiv-relTC-relHC}
  \begin{gathered}
\frac{\mathrm{TC}(A)(-1)}{\mathrm{TC}(A)(-p)}= \operatorname{hoeq}\left(\frac{\mathrm{TR}(A)(-1)}{\mathrm{TR}(A)(-p)}
\overset{F}{\underset{i}{\rightrightarrows}}\frac{\mathrm{TR}(A)(-1)}{\mathrm{TR}(A)(-p)}\right)\\
\simeq \holim_{n,F}\left(\frac{X(-1)}{X(-p)}\right)^{C_{p^n}}\overset{N^{-1}}{\cong} \holim_{n,\mathrm{trf}}\left(\frac{X(-1)}{X(-p)}\right)_{hC_{p^n}}.
\end{gathered}
\end{equation}
\end{proof}

\begin{construction}\label{cons:multiplication-TC(0)-on-TC(-1)/TC(-p)}
We have products $X(0)\land X(s)\rightarrow X(s)$ for $s\leq 0$. The   diagrams
\begin{gather*}
\xymatrix{
  X(0)^{C_{p^n}}\land  X(-1)^{C_{p^n}} \ar[r]^<<<<<<<{\mu} \ar[d]_{R\land R} &  X(-1)^{C_{p^n}} \ar[d]^{R}\\
   X(0)^{C_{p^{n-1}}}\land  X(-1)^{C_{p^{n-1}}} \ar[r]^<<<<{\mu} &  X(-1)^{C_{p^{n-1}}}
}\ \mbox{and}\ 
\xymatrix{
  X(0)^{C_{p^n}}\land  X(-p)^{C_{p^n}} \ar[r]^<<<<<<<{\mu} \ar[d]_{R\land R} &  X(-p)^{C_{p^n}} \ar[d]^{R}\\
   X(0)^{C_{p^{n-1}}}\land  X(-1)^{C_{p^{n-1}}} \ar[r]^<<<<{\mu} &  X(-1)^{C_{p^{n-1}}}
}
\end{gather*}
are commutative, thus induce a map
\begin{gather*}
\mathrm{TR}(A)(0)\tsmash \prod_{n\geq 0}\left(\frac{X(-1)}{X(-p)}\right)^{C_{p^n}}\overset{\mu}{\longrightarrow} \prod_{n\geq 0}\left(\frac{X(-1)}{X(-p)}\right)^{C_{p^n}}.
\end{gather*}
Moreover, the   diagrams
\begin{equation}\label{eq:product-X(0)CpnFixPoints-on-X(-1)-X(-p)CpnFixPoints}
\begin{gathered}
\xymatrix{
  X(0)^{C_{p^n}}\land  X(-1)^{C_{p^n}} \ar[r]^<<<<<<<{\mu} \ar[d]_{F\land F} &  X(-1)^{C_{p^n}} \ar[d]^{F}\\
   X(0)^{C_{p^{n-1}}}\land  X(-1)^{C_{p^{n-1}}} \ar[r]^<<<<{\mu} &  X(-1)^{C_{p^{n-1}}}
}
\mbox{and}
\xymatrix{
  X(0)^{C_{p^n}}\land  X(-p)^{C_{p^n}} \ar[r]^<<<<<<<{\mu} \ar[d]_{F\land F} &  X(-p)^{C_{p^n}} \ar[d]^{F}\\
   X(0)^{C_{p^{n-1}}}\land  X(-p)^{C_{p^{n-1}}} \ar[r]^<<<<{\mu} &  X(-p)^{C_{p^{n-1}}}
}
\end{gathered}
\end{equation}
are commutative. By  \eqref{eq:equiv-relTC-relHC} and the commutative diagram \eqref{eq:equivalence-trf-Frob},  we get a product 
\begin{gather}\label{eq:TC-rel-product}
\mathrm{TC}(A)(0)\land \frac{\mathrm{TC}(A)(-1)}{\mathrm{TC}(A)(-p)}\overset{\mu}{\longrightarrow} \frac{\mathrm{TC}(A)(-1)}{\mathrm{TC}(A)(-p)}.
\end{gather}
Then by Lemma \ref{lem:rel-TC-equals-rel-homotopyOrbits-Cpn}, we get a product
\begin{gather}\label{eq:TC-on-THH(-1)/THH(-p)/hS1}
\mathrm{TC}(A)(0)\land \holim_{n,\mathrm{trf}}\left(\frac{X(-1)}{X(-p)}\right)_{hC_{p^n}}\overset{\mu}{\longrightarrow} \holim_{n,\mathrm{trf}}\left(\frac{X(-1)}{X(-p)}\right)_{hC_{p^n}}.
\end{gather}
\end{construction}

\begin{lemma}\label{lem:commutativity-mu-1_land_Norm}
The  diagram 
\begin{equation}\label{eq:commutativity-mu-1_land_Norm}
\begin{gathered}
\xymatrix{
  X(0)^{C_{p^n}}\land \left(\frac{X(-1)}{X(-p)}\right)_{hC_{p^n}} \ar[r]^{\mathrm{id}\land N} \ar[d]_{\mu} &  X(0)^{C_{p^n}}\land \left(\frac{X(-1)}{X(-p)}\right)^{C_{p^n}} \ar[d]^{\mu} \\
   \left(\frac{X(-1)}{X(-p)}\right)_{hC_{p^n}}  \ar[r]^{N}&  \left(\frac{X(-1)}{X(-p)}\right)^{C_{p^n}}
}
\end{gathered}
\end{equation}
commutes.
\end{lemma}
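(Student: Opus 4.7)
The plan is to decompose the diagram \eqref{eq:commutativity-mu-1_land_Norm} by inserting a middle row, reducing the commutativity to two sub-squares, and to identify the critical one with the statement of Lemma \ref{lem:adams-iso-homotopical-functoriality}.

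Since $G=C_{p^n}$ is finite, its adjoint representation $A$ vanishes and the universe inclusion $i$ is the identity, so the Adams map specializes to an equivalence $\tilde\tau\colon (EG_+\wedge Y)/G \xrightarrow{\sim} (EG_+\wedge Y)^G$ in $hG\mathcal{S}U^G$, and the norm map $N\colon Y_{hG}\to Y^G$ from the filtered fundamental cofibration sequence identifies with the composition $(EG_+\wedge Y)/G \xrightarrow{\tilde\tau} (EG_+\wedge Y)^G \xrightarrow{\epsilon_*} Y^G$, where $\epsilon\colon EG_+\to S^0$ is the collapse. Moreover, by Construction \ref{cons-fixedPoint-orbit-G-spectrum-product}(i) and (iv), the multiplications $\mu$ in the two columns of \eqref{eq:commutativity-mu-1_land_Norm} both factor as a pairing followed by $\mu_*\colon X(0)\wedge Y\to Y$, the latter being a $G$-equivariant map.

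Inserting the intermediate row $(X(0)\wedge Y)_{hG} \xrightarrow{N} (X(0)\wedge Y)^G$ splits \eqref{eq:commutativity-mu-1_land_Norm} into two squares. The bottom one commutes by the naturality of $N$ applied to the $G$-equivariant map $\mu_*$. The problem thus reduces to showing the commutativity of the upper square
\[
\xymatrix{
X(0)^G\wedge Y_{hG} \ar[r]^-{\mathrm{id}\wedge N} \ar[d] & X(0)^G\wedge Y^G \ar[d] \\
(X(0)\wedge Y)_{hG} \ar[r]^-{N} & (X(0)\wedge Y)^G,
}
\]
whose vertical arrows are the pairings. Using $N=\epsilon_*\circ\tilde\tau$, split this further into a collapse sub-square (which commutes by naturality of the pairings in the $G$-spectrum variable applied to the $G$-map $\epsilon$) and an Adams sub-square, which reads exactly as the conclusion of Lemma \ref{lem:adams-iso-homotopical-functoriality} in the case $G=N=C_{p^n}$ with $E=EG_+\wedge Y$.

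The main obstacle is that Lemma \ref{lem:adams-iso-homotopical-functoriality} is stated for $X\in G\mathcal{T}$ a $G$-space, whereas here $X(0)$ is a $G$-spectrum. I would overcome this by running through the proof of that lemma at the level of $G$-spectra: the Thom-type map $t$ and the existence step for $\hat\tau$ are independent of the ``$X$-variable'', while the crucial extension step invokes Lemma \ref{lem:LMS-II.2.8-enhance}, which is already formulated for $\Gamma$-spectra. Hence the proof transports verbatim with $X(0)$ in place of $X$, yielding the commutativity of the Adams sub-square in $hG\mathcal{S}U^G$ and thus of \eqref{eq:commutativity-mu-1_land_Norm}.
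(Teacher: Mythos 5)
Your decomposition and choice of key lemma are the same as the paper's, merely organized in a different order: the paper splits \eqref{eq:commutativity-mu-1_land_Norm} into two squares side by side after factoring $N=\pi\circ\tilde\tau$, disposes of the right-hand one by naturality of $\pi$, and credits the left-hand one to Lemma~\ref{lem:adams-iso-homotopical-functoriality} together with naturality of $\tilde\tau$; you instead insert a middle row $(X(0)\wedge Y)_{hG}\xrightarrow{N}(X(0)\wedge Y)^G$, dispose of the bottom square by naturality of $N$, and then split the top square along $N=\epsilon_*\circ\tilde\tau$. Up to transposition, the ``Adams sub-square'' you isolate is exactly the square the paper hands to Lemma~\ref{lem:adams-iso-homotopical-functoriality}.

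Where your proposal goes off track is the ``obstacle'' you flag and its claimed resolution. The paper does \emph{not} treat $X(0)$ as a $G$-spectrum in this application: $X=|\operatorname{THH}(A)|$ is the realization of Brun's $\Gamma$-filtered space, Constructions~\ref{cons-fixedPoint-orbit-G-space-spectrum-product} and Lemma~\ref{lem:adams-iso-homotopical-functoriality} are both set up for $X\in G\mathcal{T}$, and Remark~\ref{rem:commutativity-action-S1-spectra} explicitly defers the spectrum-level statement. More seriously, your resolution — that the proof of Lemma~\ref{lem:adams-iso-homotopical-functoriality} ``transports verbatim'' because Lemma~\ref{lem:LMS-II.2.8-enhance} is already stated for $\Gamma$-spectra — does not hold up. The hypothesis of Lemma~\ref{lem:LMS-II.2.8-enhance} is that $E_2$ is a sub-$\Gamma$-CW spectrum of $E_1$; in the proof of Lemma~\ref{lem:adams-iso-homotopical-functoriality} this is applied with $E_1=i_*\theta^*(X\wedge E)$ and $E_2=X^G\wedge i_*\theta^*E$, and the required inclusion of CW objects comes precisely from the fact that $X^G$ is a $G$-CW \emph{sub-complex} of the $G$-CW \emph{space} $X$. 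When $X$ is replaced by a $G$-spectrum, the fixed-point spectrum $X^G$ is defined on the fixed universe and is not a sub-CW spectrum of $X$, so the inclusion $E_2\hookrightarrow E_1$ and hence the applicability of Lemma~\ref{lem:LMS-II.2.8-enhance} would need a genuinely new argument; this is exactly the ``effort'' the paper alludes to in Remark~\ref{rem:commutativity-action-S1-spectra}. You should either accept the paper's framework in which $X(0)$ is a $G$-space — in which case the proof goes through as you set it up, without any extra work — or, if you insist on the spectrum-level formulation, supply the missing argument for the sub-CW hypothesis rather than asserting the proof transports verbatim.
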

\begin{proof}
For a finite group $G$ and a $G$-spectrum $Y$, the norm map $Y_{hG}\rightarrow Y^{G}$ is defined to be the composite $EG_+\land_G Y\xrightarrow{\tilde{\tau}} (EG_+\land Y)^G\xrightarrow{\pi} (\mathbf{S}^0\land Y)^G=Y^G$, where $\pi$ is induced by the  projection $EG_+\rightarrow \mathbf{S}^0$. Thus the diagram \eqref{eq:commutativity-mu-1_land_Norm} factors as
\begin{gather*}
\xymatrix{
  X(0)^{C_{p^n}}\land \left(\frac{X(-1)}{X(-p)}\right)_{hC_{p^n}} \ar[r]^<<<<{\mathrm{id}\land \tilde{\tau}} \ar[d]_{\mu} & X(0)^{C_{p^n}}\land \left(EC_{p^n+}\land \frac{X(-1)}{X(-p)}\right)^{C_{p^n}} \ar[d]^{\mu} \ar[r]^<<<<{\pi} &  X(0)^{C_{p^n}}\land \left(\frac{X(-1)}{X(-p)}\right)^{C_{p^n}} \ar[d]^{\mu} \\
   \left(\frac{X(-1)}{X(-p)}\right)_{hC_{p^n}}  \ar[r]^<<<<<<<<<<<{\tilde{\tau}}&  \left(EC_{p^n+}\land\frac{X(-1)}{X(-p)}\right)^{C_{p^n}} \ar[r]^<<<<<<<<<<<<{\pi} &  \left(\frac{X(-1)}{X(-p)}\right)^{C_{p^n}}.
}
\end{gather*}
The left-hand square commutes by Lemma \ref{lem:adams-iso-homotopical-functoriality} and the naturality of the map $\tilde{\tau}$, while the right-hand square commutes by the naturality of  $\pi$. Hence \eqref{eq:commutativity-mu-1_land_Norm} commutes.
\end{proof}

\begin{corollary}\label{cor:comparison-product-by-TF-and-by-TC}
The product \eqref{eq:TF-on-THH(-1)/THH(-p)/hS1} is compatible with \eqref{eq:TC-on-THH(-1)/THH(-p)/hS1}. Namely, the diagram
\begin{gather*}
\xymatrix{
  \mathrm{TC}(A)(0)\land \holim\limits_{n,\mathrm{trf}}\left(\frac{X(-1)}{X(-p)}\right)_{hC_{p^n}} \ar[d] \ar[r]^<<<<{\mu} &
\holim\limits_{n,\mathrm{trf}}\left(\frac{X(-1)}{X(-p)}\right)_{hC_{p^n}} \ar@{=}[d] \\
 \mathrm{TF}(A)(0)\land \holim\limits_{n,\mathrm{trf}}\left(\frac{X(-1)}{X(-p)}\right)_{hC_{p^n}} \ar[r]^<<<<{\mu} &
\holim\limits_{n,\mathrm{trf}}\left(\frac{X(-1)}{X(-p)}\right)_{hC_{p^n}} 
}
\end{gather*}
commutes.
\end{corollary}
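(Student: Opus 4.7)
The strategy is to factor the comparison through $\operatorname{TF}$ and then reduce to verifying, for each $n$, that the norm map $N$ intertwines the two levelwise multiplications.

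First, the natural map $\operatorname{TC}(A)(0)\to\operatorname{TF}(A)(0)$ is the composite of the equalizer inclusion $\operatorname{TC}(A)(0)=\operatorname{hoeq}(F,\mathrm{id})\hookrightarrow\operatorname{TR}(A)(0)$ with the evident forgetful map to $\operatorname{TF}(A)(0)$. So the vertical map on the left of the claimed square is that map, and it is enough to prove that the multiplication \eqref{eq:TC-on-THH(-1)/THH(-p)/hS1}, pulled back along $\operatorname{TC}(A)(0)\to\operatorname{TF}(A)(0)$, agrees with the multiplication \eqref{eq:TF-on-THH(-1)/THH(-p)/hS1}. Recalling that \eqref{eq:TF-on-THH(-1)/THH(-p)/hS1} was built from \eqref{eq:TF-on-THH(-1)/THH(-p)/hCpn} by combining with the equivalence \eqref{eq:transfer-iso-homotopyOrbits}, and that \eqref{eq:TC-on-THH(-1)/THH(-p)/hS1} was built from \eqref{eq:TC-rel-product} via the equivalence \eqref{eq:rel-TC-equals-rel-homotopyOrbits-Cpn} (which in turn is $N^{-1}$ applied levelwise, followed by $\holim_{n,F}$, in view of \eqref{eq:equivalence-trf-Frob}), the task becomes: verify that at each level $n$, the $F\wedge F$ multiplications in \eqref{eq:product-X(0)CpnFixPoints-on-X(-1)-X(-p)CpnFixPoints} correspond under $N$ to the $F\wedge\mathrm{trf}$ multiplications in \eqref{eq:product-X(0)CpnFixPoints-on-X(-1)-X(-p)CpnOrbits}, compatibly as $n$ varies.

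The key step is the levelwise compatibility. Applying Lemma~\ref{lem:commutativity-mu-1_land_Norm} to the pair $X(-1),X(-p)$ (and passing to the cofiber, which commutes with the norm), we obtain for each $n$ a commutative square
\[
\xymatrix@C=2.6pc{
X(0)^{C_{p^n}}\wedge \left(\frac{X(-1)}{X(-p)}\right)_{hC_{p^n}} \ar[r]^{\mathrm{id}\wedge N}_{\sim} \ar[d]_{\mu} &  X(0)^{C_{p^n}}\wedge \left(\frac{X(-1)}{X(-p)}\right)^{C_{p^n}} \ar[d]^{\mu} \\
\left(\frac{X(-1)}{X(-p)}\right)_{hC_{p^n}} \ar[r]^{N}_{\sim} &  \left(\frac{X(-1)}{X(-p)}\right)^{C_{p^n}}.
}
\]
On the other hand, the commutative diagram \eqref{eq:equivalence-trf-Frob} says that $N$ intertwines the structure map $\mathrm{trf}$ on the orbit side with the structure map $F$ on the fixed-point side. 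Together, these two commutativities exhibit the tower $n\mapsto X(0)^{C_{p^n}}\wedge\bigl(X(-1)/X(-p)\bigr)_{hC_{p^n}}$ with its $F\wedge\mathrm{trf}$ structure as equivalent, as a diagram, to the tower with its $F\wedge F$ structure on the fixed-point side, and the equivalence intertwines the two multiplications.

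Taking $\holim_n$ of these equivalences and precomposing with $\operatorname{TC}(A)(0)\to\operatorname{TF}(A)(0)$ yields the desired commutative square. The only genuine content is the levelwise statement, which is Lemma~\ref{lem:commutativity-mu-1_land_Norm} together with \eqref{eq:equivalence-trf-Frob}; the rest is coherence. The main obstacle I anticipate is precisely this coherence: one needs the squares above to be natural in $n$ with respect to both structure maps ($\mathrm{trf}$ on the orbit side and $F$ on the fixed-point side), so that taking the homotopy limit is well defined and gives a single commutative diagram rather than just a square of homotopy classes. This naturality, however, is built into the construction of $N$ via the Adams isomorphism (cf.\ Lemma~\ref{lem:adams-iso-homotopical-functoriality}), whose homotopical functoriality is exactly what the preceding sections have established.
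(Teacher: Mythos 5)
Your proposal is correct and follows essentially the same line as the paper's proof: both reduce the statement to the levelwise compatibility expressed by Lemma~\ref{lem:commutativity-mu-1_land_Norm} (that the norm $N$ intertwines the $F\wedge\mathrm{trf}$ multiplication on orbits with the $F\wedge F$ multiplication on fixed points), combined with the identification \eqref{eq:equivalence-trf-Frob} of the two tower structures, and then pass to homotopy limits. One small imprecision: Lemma~\ref{lem:commutativity-mu-1_land_Norm} is already stated directly for the quotient $X(-1)/X(-p)$, so there is no need to ``apply it to the pair $X(-1),X(-p)$ and pass to the cofiber'' --- the square you display is literally the lemma's assertion, and this does not affect the correctness of the argument.
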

\begin{proof}
By Lemma \ref{lem:commutativity-mu-1_land_Norm}, the products \eqref{eq:product-X(0)CpnFixPoints-on-X(-1)-X(-p)CpnFixPoints} induce the same product  as \eqref{eq:product-X(0)CpnFixPoints-on-X(-1)-X(-p)CpnOrbits} on the quotients. Then the conclusion follows from the commutativity of the maps $R$ and $F$.
\end{proof}

% subsection the_multiplication_of_ (end)

\subsection{Completion of the proof of Proposition \ref{prop:brun-iso-product}} % (fold)
\label{sub:completion_of_the_proof_of_proposition_prop:brun-iso-product}

\begin{proof}[Proof of Proposition \ref{prop:brun-iso-product}]
By the discussion in \S\ref{sub:compatibility_of_the_brun_map_with_the_products}, we have to show the commutativity of \eqref{eq:brun-iso-product-TC-to-HC}. By the first item in the discussion following \eqref{eq:zigzag-graph-Brun-map}, the left-hand vertical arrow in \eqref{eq:brun-iso-product-TC-to-HC} is the map on homotopy groups of the map \eqref{eq:TC-rel-product}. 
By Corollary \ref{cor:comparison-product-by-TF-and-by-TC}, the latter map factors through the product \eqref{eq:TF-on-THH(-1)/THH(-p)/hS1}, which by Proposition \ref{prop:action-homotopyPointOnHomotopyFixedPoint} is equivalent to \eqref{eq:product_TF_on_relative_HC}.

The linearization $\mathrm{THH}(A)\rightarrow \mathrm{HH}(A)$ (\cite[Page 156]{DGM13}) is a cyclic map and thus induces a commutative diagram
\begin{gather*}
\xymatrix{
  X(0)^{h \mathbf{S}^1} \land  \left(\frac{X(-1)}{X(-p)}\right)_{h \mathbf{S}^1} \ar[r] \ar[d] &    \left(\frac{X(-1)}{X(-p)}\right)_{h \mathbf{S}^1} \ar[d]\\
  \mathrm{HH}(A)(0)^{h \mathbf{S}^1} \land  \left(\frac{\mathrm{HH}(A)(-1)}{\mathrm{HH}(A)(-p)}\right)_{h \mathbf{S}^1} \ar[r] &    \left(\frac{\mathrm{HH}(A)(-1)}{\mathrm{HH}(A)(-p)}\right)_{h \mathbf{S}^1}
}
\end{gather*}
where  the bottom horizontal is the map (\ref{eq-HC-action}). By Construction \ref{cons:multiplication-TF-on-relHC-hS1}, the product \eqref{eq:product_TF_on_relative_HC} factors through the top horizontal arrow. Hence the commutativity of the diagram \eqref{eq:brun-iso-product-TC-to-HC}  follows.
% from Proposition \ref{prop:action-homotopyPointOnHomotopyFixedPoint}.
\end{proof}
% subsection completion_of_the_proof_of_proposition_prop:brun-iso-product (end)

\begin{comment}
\begin{variant}\label{var:brun-iso-product_with_coefficients}
In this paper, we need to use the statement of Proposition \ref{prop:brun-iso-product} with coefficients $\mathbb{Z}/p \mathbb{Z}$. One easiy deduces this version from the arguments in this section. 
\end{variant}
\end{comment}

% section the_product_structure_in_brun_s_theorem (end)

\section{Reduction to derived cyclic homology over %\texorpdfstring{$\mathbb{Z}/p^n$}{Z/pn}
\texorpdfstring{$W(\Bbbk)$}{Wn(k)}} % (fold)
\label{sec:reduction_to_cyclic_homology_over_Wn(k)}

For a smooth algebra over a general commutative ring $R$, it seems implausible to compute $\operatorname{HH}(A/\mathbb{Z})$ or $\operatorname{HC}(A/\mathbb{Z})$, or the derived versions,  directly from the definition. The goal of this section is to show Corollary \ref{cor:reduction-HH-HC-overZ-overW(k)}. 
We need to use the following facts about cotangent complexes and their relations to HH and HC. Recall that for an algebra $A$ over $R$ and a simplicial resolution $P_{\bullet} \rightarrow A$ of $A$ by a degreewise  flat  $R$-algebra, the cotangent complex  $\mathbb{L}_{A/R}$ is defined, up to natural weak equivalences, to be the simplicial $A$-module $\Omega^1_{P_{\bullet}/R}\otimes_{P_{\bullet}}A$.  Similarly, for  $i\geq 0$, we define $\mathbb{L}^i_{A/R}:=\Omega^i_{P_{\bullet}/R}\otimes_{P_{\bullet}}A$. We denote their associated chain complexes also by $\mathbb{L}_{A/R}$ and $\mathbb{L}^i_{A/R}$.

\begin{proposition}[{\cite[Prop.~2.27]{Mor19}}]
Let $R$ be a commutative ring and $A$ a commutative $R$-algebra. If $B$ is a commutative $R$-algebra such that $\mathrm{Tor}_i^{R}(A,B)=0$ for $i>0$, then there are natural quasi-isomorphisms
  \begin{align}\label{eq-baseChange-cotangentComplex}
  \mathbb{L}_{A/R}\otimes_R^{\mathbf{L}}B\xrightarrow{\sim} \mathbb{L}_{(A\otimes_R B)/B},\tag{Base change}
  \end{align}
  \begin{align}\label{eq-Kunneth}
  \mathbb{L}_{A\otimes_R B/R}\cong (\mathbb{L}_{A/R}\otimes_R^{\mathbf{L}}B)\oplus (\mathbb{L}_{B/R}\otimes_R^{\mathbf{L}}A).\tag{Künneth}
  \end{align} 
  Moreover, if $B$ is a commutative $A$-algebra, then the resulting sequence
  \begin{align}\label{eq-transitivity-seq}
  \mathbb{L}_{A/R}\otimes_A^{\mathbf{L}}B\rightarrow \mathbb{L}_{B/R}\rightarrow \mathbb{L}_{B/A} \tag{Transitivity%sequence
  }
  \end{align}
  is a fiber sequence.  
\end{proposition}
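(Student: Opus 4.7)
The plan is to reduce all three assertions to strictly commutative statements about Kähler differentials of polynomial algebras, by choosing appropriate simplicial resolutions by free $R$-algebras, following the standard strategy of Quillen and Illusie. Throughout, the guiding principle is that for a polynomial $R$-algebra $P = R[x_\alpha]$ the module $\Omega^1_{P/R}$ is free on $\{dx_\alpha\}$, commutes strictly with base change along any $R \to R'$, and fits into a split short exact sequence of differentials under a further polynomial extension.

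For base change, choose a simplicial resolution $P_\bullet \to A$ where each $P_n$ is a polynomial $R$-algebra (in particular flat). Under the hypothesis $\mathrm{Tor}_i^R(A,B) = 0$ for $i > 0$, the natural map $P_\bullet \otimes_R B \to A \otimes_R B$ is a resolution of $A \otimes_R B$ by free (hence flat) $B$-algebras, so it computes $\mathbb{L}_{(A \otimes_R B)/B}$. The identification
\[
\Omega^1_{P_n \otimes_R B / B} = \Omega^1_{P_n/R} \otimes_R B,
\]
valid because each $P_n$ is polynomial over $R$, upgrades to simplicial $B$-modules, and tensoring down to $A \otimes_R B$ together with flatness of each $P_n$ yields the quasi-isomorphism
\[
\mathbb{L}_{A/R} \otimes_R^{\mathbf{L}} B \simeq \bigl(\Omega^1_{P_\bullet/R} \otimes_{P_\bullet} A\bigr) \otimes_R^{\mathbf{L}} B \simeq \Omega^1_{P_\bullet \otimes_R B / B} \otimes_{P_\bullet \otimes_R B} (A \otimes_R B).
\]

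For transitivity, refine the above by first resolving $A$ by a polynomial simplicial $R$-algebra $P_\bullet$, then taking a simplicial resolution $Q_\bullet \to B$ of $B$ in the category of simplicial $P_\bullet$-algebras, where each $Q_n$ is a polynomial $P_n$-algebra. Such a $Q_\bullet$ exists by the standard small-object/step-by-step construction in the model category of simplicial commutative algebras. Then $Q_\bullet$ is also a degreewise flat resolution of $B$ over $R$ and over $A$. For each $n$, since $Q_n$ is polynomial over $P_n$, the classical short exact sequence
\[
0 \to \Omega^1_{P_n/R} \otimes_{P_n} Q_n \to \Omega^1_{Q_n/R} \to \Omega^1_{Q_n/P_n} \to 0
\]
is split exact. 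Tensoring over $Q_\bullet$ with $B$ and passing to the associated chain complex gives the required fiber sequence of cotangent complexes.

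Finally, the Künneth splitting follows by combining the first two parts. Apply transitivity to the tower $R \to A \to A \otimes_R B$ to obtain the fiber sequence
\[
\mathbb{L}_{A/R} \otimes_A^{\mathbf{L}} (A \otimes_R B) \to \mathbb{L}_{A \otimes_R B/R} \to \mathbb{L}_{A \otimes_R B / A}.
\]
The base change result identifies the outer terms with $\mathbb{L}_{A/R} \otimes_R^{\mathbf{L}} B$ and $\mathbb{L}_{B/R} \otimes_R^{\mathbf{L}} A$ respectively. The canonical $R$-algebra section $B \to A \otimes_R B$ splits the transitivity fiber sequence, yielding the direct sum decomposition.

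The technical heart is the step of constructing $Q_\bullet$ compatibly over $P_\bullet$ without losing control of either flatness or the polynomial property, and the unobvious  point is verifying the Tor-vanishing hypothesis is indeed unnecessary for Künneth and transitivity once one works derivedly throughout. The main potential pitfall is ensuring that the various quasi-isomorphisms can be assembled functorially up to homotopy; this is handled by the standard machinery of the model structure on simplicial commutative $R$-algebras, which guarantees that any two flat simplicial resolutions can be compared by a common refinement. I would invoke this implicitly rather than grinding through the bookkeeping.
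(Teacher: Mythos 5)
Your proof is correct and follows the standard Quillen--Illusie strategy of reducing to split-exact sequences of Kähler differentials via degreewise-polynomial simplicial resolutions. The paper does not present its own proof of this proposition -- it cites Morrow's Arizona Winter School notes (\cite{Mor19}, Prop.\ 2.27) -- so there is no in-paper argument to compare against; your route is the usual one found in the cited reference and in Illusie's book.

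Two small points of presentation worth flagging, neither a genuine gap. First, in the base-change step the display $\bigl(\Omega^1_{P_\bullet/R}\otimes_{P_\bullet}A\bigr)\otimes^{\mathbf L}_R B$ slides past the fact that $\Omega^1_{P_\bullet/R}\otimes_{P_\bullet}A$ need not be $R$-flat when $A$ is not; the clean way to compute this derived tensor is to use the $R$-flat representative $\Omega^1_{P_\bullet/R}$ itself (quasi-isomorphic to $\Omega^1_{P_\bullet/R}\otimes_{P_\bullet}A$ because $P_\bullet\to A$ is a resolution), after which the base change to $\Omega^1_{P_\bullet\otimes_R B/B}$ is strict. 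Second, calling $B\to A\otimes_R B$ a ``section'' is a misnomer (it is the coprojection, not a retract of anything), but the splitting argument you sketch is correct: the induced map $\mathbb{L}_{B/R}\otimes^{\mathbf L}_B(A\otimes_R B)\to\mathbb{L}_{A\otimes_R B/R}$, identified via base change with a map out of $\mathbb{L}_{A\otimes_R B/A}$, does split the transitivity triangle since its composite with the projection recovers the base-change quasi-isomorphism. Also note that Tor-vanishing is not dispensable for K\"unneth \emph{as stated}: without it the correct statement replaces $A\otimes_R B$ by the simplicial ring $A\otimes^{\mathbf L}_R B$; your remark is accurate only in that derived framework, which the hypothesis renders equivalent to the underived one.
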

\begin{proposition}[{\cite[Proposition IV.4.1]{NiS18}, \cite[Prop.~2.28]{Mor19}}]\label{prop:HH-HC-filtration}
Let $R$ be a commutative ring and $A$ be a commutative $R$-algebra. 
\begin{enumerate}[(i)]
  \item The derived Hochschild complex $\mathrm{HH}(A/R)$, viewed as an object of $D(A)$, admits a natural, complete, descending $\mathbb{N}$-indexed filtration whose $i$-th graded piece is equivalent to $\mathbb{L}_{A/R}^i[i]$, for $i\geq 0$.
  \item The derived cyclic complex $\mathrm{HC}(A/R)$ admits a natural, complete, descending $\mathbb{N}$-indexed filtration whose $i$-th graded piece is $\bigoplus_{n\geq 0}\mathbb{L}_{A/R}^i[i+2n]$, for $i\geq 0$. 
\end{enumerate}
\end{proposition}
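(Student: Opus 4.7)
The plan is to bootstrap from the Hochschild-Kostant-Rosenberg (HKR) theorem for polynomial algebras via a simplicial resolution $P_\bullet \to A$. For a polynomial $R$-algebra $P$, HKR gives a natural quasi-isomorphism $\mathrm{HH}(P/R) \simeq \bigoplus_{i\geq 0}\Omega^i_{P/R}[i]$, functorial in $P$, which induces a split descending filtration $F^{\geq i}_{\mathrm{HKR}}\mathrm{HH}(P/R) := \bigoplus_{j\geq i}\Omega^j_{P/R}[j]$ with $\mathrm{gr}^i \simeq \Omega^i_{P/R}[i]$.

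For general $A$, pick a simplicial polynomial $R$-algebra resolution $P_\bullet \to A$. By Proposition \ref{lem:equivalence-Shukla-derivedHH-HC}, $\widetilde{\mathrm{HH}}(A/R)\simeq |\mathrm{HH}(P_\bullet/R)|$. One applies $F^{\geq i}_{\mathrm{HKR}}$ level-wise and realizes:
\begin{gather*}
F^{\geq i}\widetilde{\mathrm{HH}}(A/R) := \big|F^{\geq i}_{\mathrm{HKR}}\mathrm{HH}(P_\bullet/R)\big|.
\end{gather*}
Exactness of geometric realization gives $\mathrm{gr}^i \widetilde{\mathrm{HH}}(A/R)\simeq |\Omega^i_{P_\bullet/R}[i]|$, which equals $\mathbb{L}^i_{A/R}[i]$ by the definition of $\mathbb{L}^i_{A/R}$ recalled just before the statement (the $P_n$-flatness of $\Omega^i_{P_n/R}$ ensures the realization computes the correct derived object). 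The growing connectivity of $\mathbb{L}^i_{A/R}$ in $i$ gives completeness, and the standard comparison of simplicial resolutions (as in the proof of Proposition \ref{lem:equivalence-Shukla-derivedHH-HC}) yields independence of $P_\bullet$ and naturality in $A$.

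For HC, write $\widetilde{\mathrm{HC}}(A/R)\simeq |\mathrm{HH}(P_\bullet/R)|_{h\mathbf{S}^1}$ using \eqref{eq:totComplex-NegCyclicHomology} or equivalently homotopy orbits for the cyclic $\mathbf{S}^1$-action. Under HKR, Connes' operator $B$ corresponds to the de Rham differential $\Omega^i_{P_n/R}\to\Omega^{i+1}_{P_n/R}$, which \emph{raises} the HKR weight. Hence $F^{\geq i}_{\mathrm{HKR}}$ is stable under the $\mathbf{S}^1$-action and induces a filtration on $\widetilde{\mathrm{HC}}(A/R)$. Moreover, $B$ vanishes on the associated graded, so the induced $\mathbf{S}^1$-action on $\mathrm{gr}^i\widetilde{\mathrm{HH}}(A/R)$ is trivial, giving
\begin{gather*}
\mathrm{gr}^i\widetilde{\mathrm{HC}}(A/R)\simeq \big(\mathbb{L}^i_{A/R}[i]\big)_{h\mathbf{S}^1}\simeq \mathbb{L}^i_{A/R}[i]\otimes H_*(B\mathbf{S}^1;\mathbb{Z}) \simeq \bigoplus_{n\geq 0}\mathbb{L}^i_{A/R}[i+2n].
\end{gather*}
Completeness is inherited from that of the HH filtration.

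The delicate step is upgrading the HKR equivalence $\mathrm{HH}(P_n/R)\simeq \bigoplus_i\Omega^i_{P_n/R}[i]$, which is a priori only an equivalence of $b$-complexes, to a genuinely $\mathbf{S}^1$-equivariant filtered statement so that the filtration descends through $(-)_{h\mathbf{S}^1}$. A concrete route is to work at the level of the $b$-$B$ bicomplex \eqref{eq:totComplex-NegCyclicHomology}: define $F^{\geq i}_{\mathrm{HKR}}$ directly via the weight decomposition on the cyclic module $C(P_n)$ given by the eigenspaces of the Euler/Adams operators, and verify stability under $b+uB$ using the shuffle-product identification of $B$ with the de Rham differential modulo lower-weight contributions. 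An alternative, avoiding explicit computation, is to invoke the Postnikov-type construction of Nikolaus-Scholze recalled in \cite[Proposition IV.4.1]{NiS18}.
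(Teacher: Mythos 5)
The paper gives no proof of this proposition; it is cited directly from \cite[Proposition IV.4.1]{NiS18} and \cite[Prop. 2.28]{Mor19}, so there is no internal argument to compare against. Your outline reproduces the broad strategy of those references (resolve $A$ by a simplicial polynomial $R$-algebra, impose the HKR weight filtration levelwise, realize, and descend to HC via $S^1$-equivariance), and the HH part is essentially sound once one replaces your ``split'' filtration by a genuine one: the direct-sum decomposition $\bigoplus_{j}\Omega^j_{P_n/R}[j]$ is \emph{not} literally a subcomplex of the Hochschild chain complex $C(P_n/R)$ over a general base $R$, because the HKR projection $\pi$ has no chain-level section without inverting factorials. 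The filtration should instead be taken to be $\tau_{\geq i}\,\mathrm{HH}(P_n/R)$ (the Whitehead/Postnikov truncation), or equivalently $\pi^{-1}\big(\bigoplus_{j\geq i}\Omega^j_{P_n/R}\big)$, both of which exist integrally, are $b$- and $B$-stable, and have graded pieces $\Omega^i_{P_n/R}[i]$.

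Where I see a genuine gap is in how you propose to resolve the $S^1$-equivariance, which you correctly identify as the delicate step. Your first route --- ``eigenspaces of the Euler/Adams operators on $C(P_n)$'' --- does not exist over an arbitrary commutative ring $R$: Adams operations yield an eigenspace decomposition only after rationalization; integrally they produce only the $\gamma$-filtration, so the terminology and the construction as stated fail in the generality of the proposition. Your second route --- invoking \cite[Prop. IV.4.1]{NiS18} --- is circular, since that is precisely the result being proved/cited. The correct repair is the one you mention only as an afterthought: use the Postnikov filtration, which is functorial and hence automatically $S^1$-equivariant, observe that $B$ strictly raises weight so the induced $S^1$-action on each graded piece $\Omega^i_{P_n/R}[i]$ is trivial, and then apply $(-)_{hS^1}$ to get graded pieces $\Omega^i_{P_n/R}[i]\otimes H_*(B\mathbf{S}^1;\mathbb{Z})\simeq\bigoplus_{n\geq 0}\Omega^i_{P_n/R}[i+2n]$. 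With that substitution the rest of your argument (flatness of $\Omega^i_{P_n/R}$ over $P_n$ to identify the graded piece with $\mathbb{L}^i_{A/R}[i]$, growing connectivity for completeness, functoriality of the resolution) goes through.
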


 For any commutative ring $R$, let $R[T^{-\infty}]:=\varinjlim_m R[T^{p^{-m}}]$, with transition maps the inclusions $T^{p^{-m}} \mapsto T^{p^{-m}}$. For $x\in R$, denote by $\underline{x}$ the Teichmüller representative of $x$ in $W(R)$ and $W_n(R)$.

\begin{lemma}\label{lem-WnKInf}
Let $R$ be a commutative ring satisfying $pR=0$. Then the maps of $W_n(R)$-algebras
\begin{gather*}
W_n(R)[T^{p^{-m}}]\xrightarrow{g_m}  W_n(R[T^{p^{-m}}]),\ T^{p^{-m}}\mapsto \underline{T^{p^{-m}}}
\end{gather*}
induce an isomorphism on colimits
\[
W_n(R)[T^{p^{-\infty}}]\cong W_n(R[T^{p^{-\infty}}]).
\]
\end{lemma}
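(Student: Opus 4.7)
First, check that each $g_m$ is a well-defined $W_n(R)$-algebra map (immediate, since $W_n(R)[T^{p^{-m}}]$ is a polynomial ring) and that the $g_m$ are compatible under the transitions $T^{p^{-m}}\mapsto (T^{p^{-m-1}})^p$ on the left and $\underline{T^{p^{-m}}}=\underline{T^{p^{-m-1}}}^p$ on the right, which follows from the multiplicativity of Teichmüller. Since $W_n$ is given by a fixed tuple of polynomial operations on $n$ copies of the ring, it commutes with filtered colimits, so $W_n(R[T^{p^{-\infty}}])=\colim_m W_n(R[T^{p^{-m}}])$ and the colimit map $g_\infty$ is defined.

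The main idea is to prove $g_\infty$ is an isomorphism by induction on $n$, using two universal identities in $W_n$: the multiplicativity $F(\underline{s})=\underline{s^p}$ and the Frobenius reciprocity $V(x)\cdot y=V(x\cdot F(y))$. Combined, they give, for any $d\in W_{n-1}(R)$ and $\alpha\in\mathbb{Z}[1/p]_{\geq 0}$,
\begin{equation*}
V(d)\cdot\underline{T^{\alpha}}=V\bigl(d\cdot\underline{T^{\alpha p}}\bigr)\quad\text{in }W_n(R[T^{p^{-\infty}}]).
\end{equation*}
The base case $n=1$ is trivial, since $g_m$ is then the identity map $R[T^{p^{-m}}]\to R[T^{p^{-m}}]$.

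\textbf{Surjectivity of $g_\infty^{(n)}$, given surjectivity of $g_\infty^{(n-1)}$.} Let $\xi=(a_0,\ldots,a_{n-1})\in W_n(R[T^{p^{-\infty}}])$ and write $a_0=\sum_k r_kT^{q_k}$ with $r_k\in R$. Set $\zeta_1:=\sum_k\underline{r_k}T^{q_k}\in W_n(R)[T^{p^{-\infty}}]$. The first Witt component of a Witt sum or product equals the corresponding operation on first components, so the first Witt component of $g_\infty(\zeta_1)=\sum_k\underline{r_kT^{q_k}}$ equals $a_0$. Hence $\xi-g_\infty(\zeta_1)$ has first component $0$ and lies in the image of $V\colon W_{n-1}(R[T^{p^{-\infty}}])\to W_n(R[T^{p^{-\infty}}])$, say $\xi-g_\infty(\zeta_1)=V(\eta)$. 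By the inductive hypothesis, $\eta=g_\infty^{(n-1)}(\tilde\eta)$ for some $\tilde\eta=\sum_k e_kT^{\gamma_k}$, and by the key identity applied termwise,
\begin{equation*}
g_\infty^{(n)}\Bigl(\sum_k V(e_k)T^{\gamma_k/p}\Bigr)=\sum_k V(e_k)\cdot\underline{T^{\gamma_k/p}}=V\Bigl(\sum_k e_k\cdot\underline{T^{\gamma_k}}\Bigr)=V(\eta),
\end{equation*}
using that $\gamma_k/p\in\mathbb{Z}[1/p]_{\geq 0}$ and the additivity of $V$. Adding this to $\zeta_1$ exhibits $\xi$ in the image.

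\textbf{Injectivity of $g_\infty^{(n)}$, given injectivity of $g_\infty^{(n-1)}$.} If $f=\sum_k c_kT^{\alpha_k}$ (distinct $\alpha_k$) satisfies $g_\infty(f)=0$, the first-component argument above yields $c_{k,0}=0$, so $c_k=V(d_k)$ for some $d_k\in W_{n-1}(R)$. Then by the key identity and additivity of $V$,
\begin{equation*}
g_\infty^{(n)}(f)=\sum_k V(d_k)\cdot\underline{T^{\alpha_k}}=V\bigl(g_\infty^{(n-1)}\bigl(\textstyle\sum_k d_kT^{\alpha_k p}\bigr)\bigr).
\end{equation*}
Since $V\colon W_{n-1}\to W_n$ is injective, $g_\infty^{(n-1)}(\sum_k d_kT^{\alpha_k p})=0$; the inductive hypothesis and distinctness of the $\alpha_k p$ force $d_k=0$, hence $f=0$. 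The main obstacle is the non-additivity of the Teichmüller lift, which produces Witt correction terms obstructing any componentwise comparison; the induction on $n$ together with the Verschiebung-Teichmüller identity bypasses this by isolating the ``top layer'' $a_0$ and reducing the deeper layers to a smaller Witt length.
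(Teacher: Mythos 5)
Your proof is correct, and it takes a genuinely different route from the paper's. The paper first asserts that $W_n(R)[T]\to W_n(R[T])$ is surjective (citing \cite[Cor. 2.4]{LaZ04}), then shows the $g_m$ are injective, and finally uses the characteristic-$p$ relation $p^{n-1}\underline{T^{1/p^{n-1}}}=V^{n-1}\underline{T}$ (which rests on $V(1)=p$ in $W_n(A)$ when $pA=0$) to push the generators of $W_n(R[T])$ into the image of $g_{n-1}$. You instead induct on the Witt length $n$: you strip off the zeroth Witt component $a_0$ (which is a ring homomorphism, so this step is exact) by subtracting a Teichmüller-coefficient polynomial, and absorb the remaining $V$-layer using Frobenius reciprocity $V(d)\cdot\underline{T^\alpha}=V(d\cdot\underline{T^{\alpha p}})$ at the cost of halving the exponent, which is available precisely because you are working over $T^{p^{-\infty}}$. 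This is self-contained (no appeal to \cite{LaZ04}) and handles injectivity and surjectivity by the same mechanism. One interesting feature worth flagging: your argument uses only the universal identities $F(\underline{s})=\underline{s^p}$ and $V(x)y=V(xF(y))$, which hold over any commutative ring, so nowhere do you actually invoke the hypothesis $pR=0$ --- your proof appears to establish the lemma in full generality, whereas the paper's route explicitly requires $pR=0$ for the Verschiebung identity it uses. (The hypothesis is of course harmless for the paper's application, where the lemma is only applied in characteristic $p$.)
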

\begin{proof}
By the definition of Witt vectors, the map $W_n(R)[T] \rightarrow W_n(R[T])$ is surjective; one can also see this from a description of the generators of $W_n(R[T])$ as an abelian group in \cite[Cor. 2.4]{LaZ04}.
Then in the commutative diagram
\[
\xymatrix{
  W_n(R)[T] \ar[r] \ar[d]_{g_0} & W_n(R)[T^{\frac{1}{p}}] \ar[r] \ar[d]_{g_1} & \dots \ar[r] & W_n(R)[T^{\frac{1}{p^{n-1}}}]  \ar[d]_{g_{n-1}}  \\
  W_n(R[T])  \ar[r] &  W_n\big(R[T^{\frac{1}{p}}]\big) \ar[r] & \dots \ar[r] & W_n\big(R[T^{\frac{1}{p^{n-1}}}]\big)
}
\]
the vertical arrows are injective. Thus the induced map on (filtered) colimits is also injective. Since $pR=0$, we have
\[
p^{n-1}\underline{T^{\frac{1}{p^{n-1}}}}=V^{n-1}\underline{T}.
\]
It follows that the image of $W_n(R[T])$ in $W_n(R[T^{\frac{1}{p^{n-1}}}])$ lies in the image of $W_n(R)[T^{\frac{1}{p^{n-1}}}]$. Hence the induced map on colimits is surjective.
\end{proof}

\begin{proposition}\label{prop:cotangentComp-Wnk}
Let $\Bbbk$ be a perfect field of characteristic $p$. Then $\mathbb{L}_{W_n(\Bbbk)/W_n(\mathbb{F}_p)}\simeq 0$.
\end{proposition}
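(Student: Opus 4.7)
The plan is to reduce modulo $p$ and invoke the classical vanishing $\mathbb{L}_{\Bbbk/\mathbb{F}_p}\simeq 0$ for a perfect field (which follows since the absolute Frobenius of any $\mathbb{F}_p$-algebra induces the zero map on its cotangent complex, while for $\Bbbk$ perfect it is also an isomorphism). The argument will have three steps: flatness of $W_n(\Bbbk)$ over $W_n(\mathbb{F}_p) = \mathbb{Z}/p^n$, base change, and derived Nakayama.

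First, since $\Bbbk$ is perfect, the Frobenius $F$ is bijective on $W_n(\Bbbk)$, so the decomposition $p^k = V^k F^k$ gives $p^k W_n(\Bbbk) = V^k W_n(\Bbbk)$ for each $0 \leq k \leq n$. A direct check in Witt coordinates shows that $V^k W_n(\Bbbk)$ equals $\{(0,\ldots,0,b_0,\ldots,b_{k-1}) : b_i \in \Bbbk\}$, which coincides with the $p^{n-k}$-torsion $W_n(\Bbbk)[p^{n-k}]$; this is the standard criterion for flatness of a module over $\mathbb{Z}/p^n$. In particular $W_n(\Bbbk)/p W_n(\Bbbk) \cong \Bbbk$ via the projection to the zeroth Witt coordinate. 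Applying the base-change formula \eqref{eq-baseChange-cotangentComplex} with $A = W_n(\Bbbk)$, $R = W_n(\mathbb{F}_p)$, $B = \mathbb{F}_p$ (the required Tor-independence being supplied by flatness) then gives
\[
\mathbb{L}_{W_n(\Bbbk)/W_n(\mathbb{F}_p)} \otimes^{\mathbf{L}}_{W_n(\mathbb{F}_p)} \mathbb{F}_p \;\simeq\; \mathbb{L}_{\Bbbk/\mathbb{F}_p} \;\simeq\; 0.
\]

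To conclude that $\mathbb{L}_{W_n(\Bbbk)/W_n(\mathbb{F}_p)}$ itself vanishes, I will apply a derived Nakayama argument: the cotangent complex is connective, and its homotopy modules are $W_n(\mathbb{F}_p) = \mathbb{Z}/p^n$-modules. If $k$ denotes the smallest index with $\pi_k(\mathbb{L}) \neq 0$, then the Tor spectral sequence identifies $\pi_k\!\left(\mathbb{L} \otimes^{\mathbf{L}}_{\mathbb{Z}/p^n} \mathbb{F}_p\right)$ with $\pi_k(\mathbb{L})/p$; hence $\pi_k(\mathbb{L}) = p\,\pi_k(\mathbb{L})$, and iterating this equality inside the $\mathbb{Z}/p^n$-module $\pi_k(\mathbb{L})$ forces $\pi_k(\mathbb{L}) = p^n \pi_k(\mathbb{L}) = 0$, contradicting its nonvanishing. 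Hence $\mathbb{L}_{W_n(\Bbbk)/W_n(\mathbb{F}_p)} \simeq 0$, as required. The only real technical point is the Witt-coordinate flatness verification in the first step; the remainder is formal derived algebra.
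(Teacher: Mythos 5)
Your proof is correct, and it takes a genuinely different and more streamlined route than the paper. The paper (following Hesselholt--Madsen) reduces to the case of the perfect closure of a purely transcendental extension via a chain of colimit arguments, invokes \'etaleness of $W_n$ of \'etale extensions of perfect rings, and then makes a hands-on computation using the identification $W_n(\mathbb{F}_p)[T^{p^{-\infty}}]\cong W_n(\mathbb{F}_p[T^{p^{-\infty}}])$ of Lemma~\ref{lem-WnKInf} together with the fact that $\mathrm{d}T^{p^{-k}}$ dies one truncation level higher. Your argument instead packages the whole thing into three well-known facts: (a) $W_n(\Bbbk)$ is flat over $\mathbb{Z}/p^n$ when $\Bbbk$ is perfect (via $p^k=V^kF^k$ and bijectivity of Frobenius), (b) the base-change formula~\eqref{eq-baseChange-cotangentComplex} plus $\mathbb{L}_{\Bbbk/\mathbb{F}_p}\simeq 0$, and (c) derived Nakayama over the nilpotent ring $\mathbb{Z}/p^n$. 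This is shorter and more conceptual, though it outsources the vanishing $\mathbb{L}_{\Bbbk/\mathbb{F}_p}\simeq 0$ (the Frobenius-acts-by-zero argument you sketch is correct), whereas the paper's proof in effect reproves that vanishing for Witt vectors from scratch without citing it. One minor slip worth flagging: in Witt coordinates $V^k$ produces $k$ leading zeros, so $V^kW_n(\Bbbk)$ should be written as $\{(0,\dots,0,b_0,\dots,b_{n-k-1})\}$, not $\{(0,\dots,0,b_0,\dots,b_{k-1})\}$; your chain of equalities $p^kW_n(\Bbbk)=V^kW_n(\Bbbk)=W_n(\Bbbk)[p^{n-k}]$ is nonetheless correct and gives exactly the flatness criterion you need. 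The rest of the argument (the Tor spectral sequence in the lowest nonvanishing degree giving $\pi_k(\mathbb{L})/p$, followed by iterating $M=pM$ to $M=p^nM=0$) is airtight.
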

\begin{proof}
We use the argument of \cite[Lemma 5.5]{HeM97}. Let $\{T_j|j\in J\}$ be a transcendental basis of $\Bbbk$ over $\mathbb{F}_p$. Let $l=\varinjlim_{r\in \mathbb{N}} \mathbb{F}_p(T_j^{p^{-r}}| j\in J)$, which is a perfect field. Then we have $\Bbbk=\varinjlim_{\alpha}\Bbbk_{\alpha}$, where the colimit runs over finite subextensions $l\subset \Bbbk_{\alpha}\subset \Bbbk$ and is filtered. 
%Then the absolute Frobenius on each $\Bbbk_{\alpha}$ is a finite ring map. 
Since $l$ is perfect, $l \rightarrow \Bbbk_{\alpha}$ are étale, and then by  \cite[Chap.~0 Prop.~1.5.8]{Ill79},
%more generally by \cite[Prop. A.14]{LaZ04}
$W_n(\Bbbk_{\alpha})$ are étale over $W_n(l)$. Thus $\mathbb{L}_{W_n(\Bbbk_{\alpha})/W_n(l)}\simeq 0$, so by \eqref{eq-transitivity-seq} we have
\begin{gather*}
H_i\big(\mathbb{L}_{W_n(\Bbbk)/W_n(\mathbb{F}_p)}\big)=\varinjlim_{\alpha}H_i\big(\mathbb{L}_{W_n(\Bbbk_{\alpha})/W_n(\mathbb{F}_p)}\big)
=\varinjlim_{\alpha}W_n(\Bbbk_{\alpha})\otimes_{W_n(l)}H_i\big(\mathbb{L}_{W_n(l)/W_n(\mathbb{F}_p)}\big)\\
=W_n(\Bbbk)\otimes_{W_n(l)}H_i\big(\mathbb{L}_{W_n(l)/W_n(\mathbb{F}_p)}\big).
\end{gather*}
For any finite subset $J$ of $I$, let $l_J=\mathbb{F}_p(T_j^{p^{-\infty}}| j\in J)$, and $L_J=\mathbb{F}_p[T_j^{p^{-\infty}}| j\in J]$. Then
\begin{gather*}
H_i\big(\mathbb{L}_{W_n(l)/W_n(\mathbb{F}_p)}\big)=\varinjlim_{\mathrm{finite\ subsets}\ J\subset I}H_i\big(\mathbb{L}_{W_n(l_J)/W_n(\mathbb{F}_p)}\big).
\end{gather*}
Since $l_J$ is a localization of $L_J$, and in particular is essentially étale over $L_J$, 
%using \cite[Prop. A.14]{LaZ04} and \eqref{eq-transitivity-seq} again
we have
\begin{gather*}
 H_i\big(\mathbb{L}_{W_n(l_J)/W_n(\mathbb{F}_p)}\big)
=W_n(l_J)\otimes_{W_n(L_J)}H_i\big(\mathbb{L}_{W_n(L_J)/W_n(\mathbb{F}_p)}\big).
\end{gather*} 
Then applying Lemma \ref{lem-WnKInf} to $L_J$ yields
\begin{gather*}
H_i\big(\mathbb{L}_{W_n(L_J)/W_n(\mathbb{F}_p)}\big)\cong H_i\big(\mathbb{L}_{W_n(\mathbb{F}_p)[T_j^{p^{-\infty}}| j\in J]/W_n(\mathbb{F}_p)}\big)\\
=\varinjlim_{k_j \rightarrow +\infty,j\in J} H_i\big(\mathbb{L}_{W_n(\mathbb{F}_p)[T_j^{p^{-k_j}}| j\in J]/W_n(\mathbb{F}_p)}\big).
\end{gather*}
For any $\{k_j\in \mathbb{N}\}_{j\in J}$, $W_n(\mathbb{F}_p)[T_j^{p^{-k_j}}| j\in J]$ is a polynomial algebra over $W_n(\mathbb{F}_p)$. Thus 
\begin{gather*}
H_i\big(\mathbb{L}_{W_n(\mathbb{F}_p)[T_j^{p^{-k_j}}| j\in J]/W_n(\mathbb{F}_p)}\big)\cong 
\begin{cases}
\Omega^1_{W_n(\mathbb{F}_p)[T_j^{p^{-k_j}}| j\in J]/W_n(\mathbb{F}_p)} &  \mbox{if}\ i=0 \\
0 & \mbox{otherwise}
\end{cases}.
\end{gather*}
But in $W_n(\mathbb{F}_p)[T_j^{p^{-(k_j+n)}}| j\in J]$, $\mathrm{d}T_j^{p^{-k_j}}=0$. Hence $H_i\big(\mathbb{L}_{W_n(L_J)/W_n(\mathbb{F}_p)}\big)=0$ for all $i$, and the conclusion follows.
\end{proof}

\begin{proposition}\label{prop:cotangentComp-Wk}
Let $\Bbbk$ be a perfect field of characteristic $p$. Let $A_n$ be a commutative smooth  $W_n(\Bbbk)$-algebra. Then there is a natural equivalence of cotangent complexes
\begin{gather*}
\mathbb{L}_{A_n/\mathbb{Z}}\simeq \mathbb{L}_{A_n/W(\Bbbk)}.
\end{gather*}
\end{proposition}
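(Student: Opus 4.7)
The plan is to use the transitivity fiber sequence for the tower $\mathbb{Z}\to W(\Bbbk)\to A_n$, which reads
\begin{gather*}
\mathbb{L}_{W(\Bbbk)/\mathbb{Z}}\otimes^{\mathbf{L}}_{W(\Bbbk)}A_n \longrightarrow \mathbb{L}_{A_n/\mathbb{Z}} \longrightarrow \mathbb{L}_{A_n/W(\Bbbk)}.
\end{gather*}
Thus it suffices to prove that $\mathbb{L}_{W(\Bbbk)/\mathbb{Z}}\otimes^{\mathbf{L}}_{W(\Bbbk)}A_n\simeq 0$. Since $A_n$ is smooth (hence flat) over $W_n(\Bbbk)$, we can factor
\begin{gather*}
\mathbb{L}_{W(\Bbbk)/\mathbb{Z}}\otimes^{\mathbf{L}}_{W(\Bbbk)}A_n \simeq \bigl(\mathbb{L}_{W(\Bbbk)/\mathbb{Z}}\otimes^{\mathbf{L}}_{W(\Bbbk)}W_n(\Bbbk)\bigr)\otimes_{W_n(\Bbbk)}A_n,
\end{gather*}
so it is enough to show $\mathbb{L}_{W(\Bbbk)/\mathbb{Z}}\otimes^{\mathbf{L}}_{W(\Bbbk)}W_n(\Bbbk)\simeq 0$.

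Next, I would apply transitivity to $\mathbb{Z}\to W(\Bbbk)\to W_n(\Bbbk)$ to obtain the fiber sequence
\begin{gather*}
\mathbb{L}_{W(\Bbbk)/\mathbb{Z}}\otimes^{\mathbf{L}}_{W(\Bbbk)}W_n(\Bbbk) \longrightarrow \mathbb{L}_{W_n(\Bbbk)/\mathbb{Z}} \longrightarrow \mathbb{L}_{W_n(\Bbbk)/W(\Bbbk)},
\end{gather*}
and reduce to proving the second arrow is a quasi-isomorphism. The two outer terms here can be computed directly: transitivity for $\mathbb{Z}\to \mathbb{Z}/p^n\to W_n(\Bbbk)$ together with Proposition \ref{prop:cotangentComp-Wnk} (which gives $\mathbb{L}_{W_n(\Bbbk)/(\mathbb{Z}/p^n)}\simeq 0$) yields an equivalence
\begin{gather*}
\mathbb{L}_{\mathbb{Z}/p^n/\mathbb{Z}}\otimes^{\mathbf{L}}_{\mathbb{Z}/p^n}W_n(\Bbbk)\xrightarrow{\simeq}\mathbb{L}_{W_n(\Bbbk)/\mathbb{Z}},
\end{gather*}
and since $p^n$ is a non-zero-divisor in $\mathbb{Z}$ the conormal complex formula gives $\mathbb{L}_{\mathbb{Z}/p^n/\mathbb{Z}}\simeq (p^n)/(p^{2n})[1]\simeq \mathbb{Z}/p^n\cdot d(p^n)[1]$. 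Similarly, since $p^n$ is a non-zero-divisor in $W(\Bbbk)$, $\mathbb{L}_{W_n(\Bbbk)/W(\Bbbk)}\simeq W_n(\Bbbk)\cdot d(p^n)[1]$.

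Finally, the natural map $\mathbb{L}_{W_n(\Bbbk)/\mathbb{Z}}\to \mathbb{L}_{W_n(\Bbbk)/W(\Bbbk)}$ is functorial in the base, and under the above conormal identifications it is induced by the map of pairs $(\mathbb{Z},(p^n))\to (W(\Bbbk),(p^n))$; this sends the generator $d(p^n)$ to $d(p^n)$, hence is an isomorphism of free rank-one $W_n(\Bbbk)$-modules shifted into degree $1$. Therefore the fiber $\mathbb{L}_{W(\Bbbk)/\mathbb{Z}}\otimes^{\mathbf{L}}_{W(\Bbbk)}W_n(\Bbbk)$ is zero, which completes the proof. The only subtle point, and the one I would be most careful about, is verifying that the functorially induced map $\mathbb{L}_{W_n(\Bbbk)/\mathbb{Z}}\to \mathbb{L}_{W_n(\Bbbk)/W(\Bbbk)}$ really does send the class $d(p^n)$ to $d(p^n)$ under these two independent computations; this can be checked by realizing both sides simultaneously from a common presentation (e.g. writing $W_n(\Bbbk)$ as the cofiber of $p^n$ on a chosen flat simplicial resolution of $W(\Bbbk)$ over $\mathbb{Z}$, and tracing the resulting map on Koszul conormal complexes).
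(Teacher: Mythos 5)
Your proof is correct, but it takes a noticeably different route from the paper's. Your opening reduction is actually cleaner: you pass from $A_n$ to $W_n(\Bbbk)$ using only that $A_n$ is flat over $W_n(\Bbbk)$, writing $\mathbb{L}_{W/\mathbb{Z}}\otimes^{\mathbf{L}}_W A_n \simeq (\mathbb{L}_{W/\mathbb{Z}}\otimes^{\mathbf{L}}_W W_n(\Bbbk))\otimes_{W_n(\Bbbk)}A_n$, whereas the paper detours through Elkik's lifting theorem to produce $A/p^nA=A_n$ and an \'etale factorization $W\to B\to A$ through a polynomial algebra $B$ before making essentially the same flatness reduction. The real divergence is in how you show the remaining term $\mathbb{L}_{W/\mathbb{Z}}\otimes^{\mathbf{L}}_W W_n(\Bbbk)$ (equivalently $\mathbb{L}_{W/\mathbb{Z}}\otimes^{\mathbf{L}}_{\mathbb{Z}}\mathbb{Z}/p^n$) vanishes: the paper does it in one line using the base change formula \eqref{eq-baseChange-cotangentComplex}, which identifies this term with $\mathbb{L}_{W_n(\Bbbk)/W_n(\mathbb{F}_p)}$, and then applies Proposition \ref{prop:cotangentComp-Wnk}. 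You instead invoke the transitivity fiber sequence for $\mathbb{Z}\to W(\Bbbk)\to W_n(\Bbbk)$, identify the two outer terms as shifted free rank-one modules via the conormal formula (one of the identifications routed through Proposition \ref{prop:cotangentComp-Wnk} and transitivity for $\mathbb{Z}\to\mathbb{Z}/p^n\to W_n(\Bbbk)$), and check that the transitivity map carries generator to generator. That last compatibility is exactly the point you flag, and it is fine: it is the functoriality of the cotangent complex for the commutative square $(\mathbb{Z},\mathbb{Z}/p^n)\to(W(\Bbbk),W_n(\Bbbk))$ together with naturality of the conormal identification for Koszul-regular ideals. A small simplification that would let you avoid inspecting generators at all: from $\Omega^1_{W_n(\Bbbk)/(\mathbb{Z}/p^n)}=0$ (the $H_0$ consequence of Proposition \ref{prop:cotangentComp-Wnk}) the map $H_1(\mathbb{L}_{W_n(\Bbbk)/\mathbb{Z}})\to H_1(\mathbb{L}_{W_n(\Bbbk)/W})$ is surjective, hence an isomorphism between free rank-one $W_n(\Bbbk)$-modules, and the fiber vanishes. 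Net comparison: the paper's base-change move is more economical because it bypasses both the conormal computations and the compatibility check; your version is more hands-on and exhibits explicit generators, which may be useful elsewhere. Both ultimately rest on Proposition \ref{prop:cotangentComp-Wnk}.
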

\begin{proof}
Let $W=W(\Bbbk)$.
%, and $W_n=W/p^n W=W_n(\Bbbk)$. 
By Elkik's theorem (\cite[Théorème 6]{Elk73}, see also \cite[Theorem 1.3.1]{Ara01}), there exists a smooth $W(\Bbbk)$-algebra $A$ lifting $A_n=A/p^n A$. Then there exists a smooth polynomial algebra $B=W[X_1,\dots,X_n]$ over $W$, and an étale $W$-homomorphism $B\rightarrow A$, such that $W\rightarrow A$ factors as $W\rightarrow B\rightarrow A$. Let  $B_n=B/p^n B$. Then (\ref{eq-transitivity-seq}) yields a fiber sequence
\begin{gather}\label{eq-fiberSeq-An/Z}
\mathbb{L}_{W/\mathbb{Z}}\otimes^{\mathbf{L}}_W A_n\rightarrow \mathbb{L}_{A_n/\mathbb{Z}}\rightarrow \mathbb{L}_{A_n/W}.
\end{gather}
We have
\begin{gather*}
\mathbb{L}_{W/\mathbb{Z}}\otimes^{\mathbf{L}}_W A_n
\simeq\mathbb{L}_{W/\mathbb{Z}}\otimes^{\mathbf{L}}_W B_n\otimes_{B_n}A_n,
\end{gather*}
and
\begin{gather*}
\mathbb{L}_{W/\mathbb{Z}}\otimes^{\mathbf{L}}_W B_n
%\simeq\mathbb{L}_{W/\mathbb{Z}}\otimes^{\mathbf{L}}_{\mathbb{Z}}\frac{\mathbb{Z}}{p^n \mathbb{Z}}[X_1,\dots,X_n]
\simeq
\mathbb{L}_{W/\mathbb{Z}}\otimes^{\mathbf{L}}_{\mathbb{Z}}W_n(\mathbb{F}_p)\otimes_{W_n(\mathbb{F}_p)}W_n(\mathbb{F}_p)[X_1,\dots,X_n].
\end{gather*}
By (\ref{eq-baseChange-cotangentComplex}) and Proposition \ref{prop:cotangentComp-Wnk}, we have
\begin{gather*}
\mathbb{L}_{W/\mathbb{Z}}\otimes^{\mathbf{L}}_{\mathbb{Z}}W_n(\mathbb{F}_p)\simeq \mathbb{L}_{W_n(\Bbbk)/W_n(\mathbb{F}_p)}\simeq 0.
\end{gather*}
Hence $\mathbb{L}_{W/\mathbb{Z}}\otimes^{\mathbf{L}}_W A_n\simeq 0$ and thus (\ref{eq-fiberSeq-An/Z}) implies $\mathbb{L}_{A_n/\mathbb{Z}}\simeq \mathbb{L}_{A_n/W}$.
\end{proof}

\begin{corollary}\label{cor:reduction-HH-HC-overZ-overW(k)}
Let $\Bbbk$ be a perfect field of characteristic $p$. Let $A$ be a commutative smooth  $W_n(\Bbbk)$-algebra. Then for all $i\geq 0$, there are natural isomorphisms 
\begin{align*}
\widetilde{\mathrm{HH}}_i(A)\cong \widetilde{\mathrm{HH}}_i\big(A/W(\Bbbk)\big),\quad
\widetilde{\mathrm{HC}}_i(A)\cong \widetilde{\mathrm{HC}}_i\big(A/W(\Bbbk)\big).
\end{align*}
\end{corollary}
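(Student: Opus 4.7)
The plan is to reduce this comparison to the cotangent complex computation of Proposition \ref{prop:cotangentComp-Wk} by means of the HKR-style filtrations of Proposition \ref{prop:HH-HC-filtration}. The ring maps $\mathbb{Z}\to W(\Bbbk)\to A$ induce a natural, filtration-preserving morphism $\widetilde{\mathrm{HH}}(A/\mathbb{Z})\to \widetilde{\mathrm{HH}}(A/W(\Bbbk))$, and likewise for $\widetilde{\mathrm{HC}}$. Since both filtrations are complete and $\mathbb{N}$-indexed, it is enough to show that the induced maps on each graded piece are quasi-isomorphisms. These graded pieces are $\mathbb{L}_{A/\mathbb{Z}}^i[i]$ versus $\mathbb{L}_{A/W(\Bbbk)}^i[i]$ in the Hochschild case, and $\bigoplus_{n\geq 0}\mathbb{L}_{A/\mathbb{Z}}^i[i+2n]$ versus $\bigoplus_{n\geq 0}\mathbb{L}_{A/W(\Bbbk)}^i[i+2n]$ in the cyclic case. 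So the entire statement reduces to showing that the canonical map $\mathbb{L}_{A/\mathbb{Z}}^i\to \mathbb{L}_{A/W(\Bbbk)}^i$ is a quasi-isomorphism for every $i\geq 0$.

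For $i=0$ this is trivial, and for $i=1$ it is exactly Proposition \ref{prop:cotangentComp-Wk}. For $i\geq 2$ the idea is to bootstrap as follows. Choose a simplicial resolution $P_{\bullet}\to A$ by $W(\Bbbk)$-algebras that are flat in each simplicial degree, which is then automatically also flat over $\mathbb{Z}$ since $W(\Bbbk)$ is $\mathbb{Z}$-flat. Both derived wedge powers can be computed from this common resolution as $\Omega^i_{P_{\bullet}/\mathbb{Z}}\otimes_{P_{\bullet}}A$ and $\Omega^i_{P_{\bullet}/W(\Bbbk)}\otimes_{P_{\bullet}}A$ respectively, and the canonical surjection realises the wedge of the underlying cotangent maps. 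In each simplicial degree $\Omega^1_{P_j/R}$ is a projective $P_j$-module, so taking exterior powers preserves degreewise quasi-isomorphisms, and the case $i=1$ propagates to arbitrary $i$. Equivalently, the proof of Proposition \ref{prop:cotangentComp-Wk} actually extracts the stronger vanishing $\mathbb{L}_{W(\Bbbk)/\mathbb{Z}}\otimes_{W(\Bbbk)}^{\mathbf{L}}A\simeq 0$, and feeding this into the transitivity triangle (\ref{eq-transitivity-seq}) together with the functoriality of the derived exterior algebra on connective simplicial $A$-modules yields the same conclusion.

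Combining these ingredients gives the claimed isomorphisms $\widetilde{\mathrm{HH}}_i(A)\cong \widetilde{\mathrm{HH}}_i(A/W(\Bbbk))$ and $\widetilde{\mathrm{HC}}_i(A)\cong \widetilde{\mathrm{HC}}_i(A/W(\Bbbk))$. The main obstacle is the passage from the linear comparison of cotangent complexes to the comparison of all higher derived wedge powers: this is precisely why one needs the vanishing $\mathbb{L}_{W(\Bbbk)/\mathbb{Z}}\otimes_{W(\Bbbk)}^{\mathbf{L}}A\simeq 0$ (a property of the map $W(\Bbbk)\to A$), rather than merely an abstract equivalence between $\mathbb{L}_{A/\mathbb{Z}}$ and $\mathbb{L}_{A/W(\Bbbk)}$ at the linear level, so that derived functoriality of the exterior algebra applies cleanly.
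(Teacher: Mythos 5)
Your proof is correct and follows essentially the same route as the paper: reduce via the complete HKR filtration of Proposition \ref{prop:HH-HC-filtration} to the comparison of the graded pieces $\mathbb{L}^i$, and invoke Proposition \ref{prop:cotangentComp-Wk}. You helpfully spell out the step the paper leaves implicit — that the equivalence $\mathbb{L}_{A/\mathbb{Z}}\simeq\mathbb{L}_{A/W(\Bbbk)}$ propagates to all derived wedge powers $\mathbb{L}^i$, which requires either a common degreewise-flat simplicial resolution or the fact that the derived exterior power, as a polynomial functor, preserves weak equivalences between degreewise-projective simplicial modules.
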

\begin{proof}
By Proposition \ref{prop:cotangentComp-Wk} we have $\mathbb{L}_{A_n/\mathbb{Z}}\simeq \mathbb{L}_{A_n/W(\Bbbk)}$, and thus $\mathbb{L}^i_{A_n/\mathbb{Z}}\simeq \mathbb{L}^i_{A_n/W(\Bbbk)}$ for $i\in \mathbb{N}$. Namely, the natural map 
$\widetilde{\operatorname{HC}}(A/\mathbb{Z}) \rightarrow \widetilde{\operatorname{HC}}(A/W(\Bbbk))$ induces equivalences on all graded pieces of the filtrations in Proposition \ref{prop:HH-HC-filtration}(ii).  Then from the completeness of this filtration,  the conclusion follows.
\end{proof}

% section reduction_to_the_polynomial_algebras (end)

\section{Derived Hochschild homology of smooth algebras}
\label{sec:derived_hochschild_homology_of_smooth_algebras}
In this section we review  the HKR isomorphism for a smooth cdga (\cite[Prop.~2.5]{BuV88}, \cite[Prop.~5.4.6]{Lod98}), as a preparation for the computation of cyclic homology in the next section.
We fix a commutative ring $k$ as our base ring, and all the Hochschild homology and cyclic homology are those over $k$, so we omit $k$ in the notations such as $\operatorname{HH}(A/k)$. 

\subsection{A cdga resolution} % (fold)
\label{sub:a_typical_cdga}
We present the cdga which will be studied from \S\ref{sub:formality-cyclic-complex} to \S\ref{sec:mod_p_rel_cyclic_homology_and_the_products}.
Let $k$ be a commutative ring, $\ell\in k$. Let $A$ be a commutative flat $k$-algebra, and let $\overline{A}=A/\ell A$.
Let $\mathscr{A}$ be the cdga with $\mathscr{A}_0=A$, $\mathscr{A}_1=A\{\varepsilon\}$ (namely, a free $A$-module of rank 1 with a  generator $\varepsilon$), and $\mathscr{A}_i=0$ for $i\geq 2$, and
\begin{gather*}
\updelta:\mathscr{A}_1\rightarrow \mathscr{A}_0,\ r \varepsilon\mapsto \ell r\ \mbox{for}\ r\in A.
\end{gather*}
By Lemma \ref{lem:equivalence-Shukla-derivedHH-HC}  we have canonical equivalences
\begin{gather*}
\widetilde{\mathrm{HH}}(\overline{A})\simeq \mathrm{HH}(\mathscr{A}),\ 
\widetilde{\mathrm{HC}}(\overline{A})\simeq \mathrm{HC}(\mathscr{A}),\
\widetilde{\mathrm{HC}}^{-}(\overline{A})\simeq \mathrm{HC}^{-}(\mathscr{A}).
\end{gather*}

% subsection a_typical_cdga (end)

\subsection{Formality of the cyclic complex  of a smooth algebra}
\label{sub:formality-cyclic-complex}
Recall that (\cite[\S5.4.3]{Lod98}) for a cdga over $k$, the Kähler differential module $\Omega^{\bullet}_{\mathscr{A}/k}$ is defined by the relations
\begin{gather*}
\mathrm{d}(ab)=a \mathrm{d}b+(\mathrm{d}a)b=a \mathrm{d}b+(-1)^{|a||b|}b \mathrm{d}a,\label{eq-differentialModule-cdga-Leibniz}\\
\mathrm{d}a \mathrm{d}b=-(-1)^{|a||b|}\mathrm{d}b \mathrm{d}a.\label{eq-differentialModule-cdga-commutativity}
\end{gather*}

Now keep the setup of \S\ref{sub:a_typical_cdga} and assume further that  $A$ is a smooth algebra over $k$ of finite type; in fact, most of this subsection holds for all smooth cdga $\mathscr{A}$ over $k$, but we restrict ourselves to this case.  Then an element of $\Omega^{\bullet}_{\mathscr{A}/k}$ can be uniquely written as a sum of elements of the form $\alpha \varepsilon^j (\mathrm{d}\varepsilon)^{m}$ with $\alpha\in \Omega^{i}_{A/k}$, $i,m\in \mathbb{Z}$ satisfying $i,m\geq 0$, and $j=0$ or $1$. We define the \emph{differential degree} $\|-\|$ of $\alpha \varepsilon^j (\mathrm{d}\varepsilon)^{m}$ to be its degree as a Kähler differential, namely,
\begin{gather*}
\| \alpha \varepsilon^j (\mathrm{d}\varepsilon)^{m}\|=i+m,
\end{gather*}
and define its the \emph{total degree} to be
\begin{gather*}
\deg(\alpha \varepsilon^j \big(\mathrm{d}\varepsilon)^{m}\big)=|\alpha|+\|\alpha\|=i+j+2m.
\end{gather*}
%Denote $\Omega^s_{A/W}$ by $\Omega^s$ for simplicity.
%Fix an $\ell\in \mathbb{N}$. 

For any integer $n\geq 0$, let $\mathbf{C}_n(A/\ell)$ be the submodule of $\Omega^{\bullet}_{\mathscr{A}/k}$ of the elements of total degree $n$. We have operators
\begin{align*}
\updelta:\mathbf{C}_n(A/\ell)\rightarrow \mathbf{C}_{n-1}(A/\ell),\ \mbox{and}\ \mathrm{d}:\mathbf{C}_n(A/\ell)\rightarrow \mathbf{C}_{n+1}(A/\ell),
\end{align*}
where
\begin{align}\label{eq:HHbf-partial-delta}
\updelta\big(\varepsilon \alpha (\mathrm{d}\varepsilon)^{m-1}\big)= \ell\alpha (\mathrm{d}\varepsilon)^{m-1},\quad
\updelta\big(\beta (\mathrm{d}\varepsilon)^{m}\big)= 0,
\end{align}
and
\begin{align*}
\mathrm{d}\big(\varepsilon \alpha (\mathrm{d}\varepsilon)^{m-1}\big)
&= (\mathrm{d}\alpha) \varepsilon (\mathrm{d}\varepsilon)^{m-1}
+(-1)^{\|\alpha\|}\alpha (\mathrm{d}\varepsilon)^{m},\\
\mathrm{d}\big(\beta (\mathrm{d}\varepsilon)^{m}\big)&=(\mathrm{d}\beta) (\mathrm{d}\varepsilon)^{m}.
\end{align*}

\begin{definition}\label{def:formal-HH}
Define $\mathbf{HH}_n(A/\ell)$ to be the $n$-th homology of the chain complex $\big(\mathbf{C}_{\bullet}(A/\ell),\updelta\big)$. 
%Let $\mathscr{A}_{\ell}$ be the cdga associated with $A/\ell$ as in \S \ref{sub:relation_to_the_derived_hochschild_homology}. 
Define a homomorphism 
$\pi:C_n(\mathscr{A},\updelta) \rightarrow \mathbf{C}_n(A/\ell)$ by
\begin{gather*}
\pi(a_0,\dots,a_n)=a_0 \mathrm{d}a_1\cdots \mathrm{d}a_n
\end{gather*}
for $a_i\in \mathscr{A}$, and let
$\varphi:C_n(\mathscr{A},\updelta) \rightarrow \mathbf{C}_n(A/\ell)[\frac{1}{n!}]$ be
\begin{gather*}
\varphi(a_0,\dots,a_n)=\frac{1}{n!}\pi(a_0,\dots,a_n).
%=\frac{1}{n!}a_0 \mathrm{d}a_1\cdots \mathrm{d}a_n
\end{gather*}
%for $a_i\in \mathscr{A}$.
\end{definition}

\begin{lemma}\label{lem:phi-commutesWith-partial-and-delta-product}
\begin{enumerate}[(i)]
  \item $\updelta\circ \mathrm{d}=\mathrm{d}\circ \updelta$.
  \item  $\updelta\circ \varphi=\varphi\circ \updelta$ and $\mathrm{d}\circ\varphi=\varphi\circ B$.
  \item $\varphi$ intertwines the shuffle product and the product of Kähler differentials.
\end{enumerate}
\end{lemma}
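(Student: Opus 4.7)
My plan is to verify each claim by reducing to generators of the Kähler differential algebra or to tensors in the Hochschild complex, and then exploit that every operator involved is a graded derivation for the total degree.

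For (i), I check $\updelta\circ \mathrm{d}=\mathrm{d}\circ \updelta$ on the generators $a,\varepsilon\in\mathscr{A}$ and their de Rham differentials $\mathrm{d}a,\mathrm{d}\varepsilon$. On $a\in A$ both sides vanish: $\updelta a=0$ by construction of $\mathscr{A}$, and $\updelta(\mathrm{d}a)=0$ by the second case of \eqref{eq:HHbf-partial-delta}. On $\varepsilon$: $\mathrm{d}\updelta\varepsilon=\mathrm{d}\ell=0$ since $\ell\in k$, and $\updelta\mathrm{d}\varepsilon=0$ again by \eqref{eq:HHbf-partial-delta}. Since $\updelta$ and $\mathrm{d}$ are graded derivations for the total degree (of degrees $-1$ and $+1$ respectively), agreement on generators propagates to all of $\Omega^{\bullet}_{\mathscr{A}/k}$ by the graded Leibniz rule.

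For the first half of (ii), the identity $\updelta\circ\varphi=\varphi\circ\updelta$ reduces to $\pi\circ\updelta=\updelta\circ\pi$. Both expressions unfold to sums indexed by the slot $i$ where $\updelta$ is applied; the former yields $\mathrm{d}(\updelta a_i)$ in the $i$-th differential, while the latter yields $\updelta(\mathrm{d}a_i)$, and part (i) identifies the two. The Koszul sign $(-1)^{\sum_{j<i}|a_j|}$ carried by $\updelta$ on the Hochschild complex must be matched against the sign produced when $\updelta$ is moved past the factors $a_0,\mathrm{d}a_1,\dots,\mathrm{d}a_{i-1}$ in $\Omega^{\bullet}_{\mathscr{A}/k}$; this is a standard Koszul bookkeeping exercise, using that $\deg(\mathrm{d}a_j)=|a_j|+1$ and that the sign produced by shifting $\updelta$ matches by parity. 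For $\mathrm{d}\circ\varphi=\varphi\circ B$, I unfold $B=(1-t_{n+1})sN$: every term containing $t_{n+1}s$ places a $1$ in a non-initial slot, so its image under $\pi$ contains $\mathrm{d}1=0$ and drops out, while each of the $n+1$ terms from $sN$ produces, under $\pi$ and after normalizing by graded anticommutativity of $\mathrm{d}a_0,\dots,\mathrm{d}a_n$, one copy of $\mathrm{d}a_0\,\mathrm{d}a_1\cdots\mathrm{d}a_n$ with identical sign. Summing and dividing by $(n+1)!$ collapses the prefactor to $1/n!$, reproducing $\mathrm{d}\varphi(a_0,\dots,a_n)$.

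For (iii), I use the explicit shuffle formula \eqref{eq:shuffleProduct-cdga}. Applying $\pi$ to the shuffled tensor $\sigma\cdot(a_0b_0,a_1,\dots,a_p,b_1,\dots,b_q)$ produces $a_0b_0$ multiplied by a permuted wedge of the $\mathrm{d}a_i$'s and $\mathrm{d}b_j$'s; by graded anticommutativity of one-forms, this permutation sign exactly cancels $\mathrm{sgn}(\sigma)$ and the Koszul weight factor $(-1)^{|b_0|\sum|a_i|}$ used to move $b_0$ past the differentials. All $\binom{p+q}{p}$ shuffles therefore contribute the same form $a_0b_0\,\mathrm{d}a_1\cdots\mathrm{d}a_p\,\mathrm{d}b_1\cdots\mathrm{d}b_q$; combined with the prefactor $1/(p+q)!$ this gives $\frac{1}{p!\,q!}$ times that form, which is exactly $\varphi(a_0,\dots,a_p)\cdot\varphi(b_0,\dots,b_q)$ computed via the Leibniz rule in $\Omega^{\bullet}_{\mathscr{A}/k}$. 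The main obstacle is the sign bookkeeping in part (ii), especially in the comparison with $B$: one must reconcile the signs from the cyclic operator $t_n$ (governed by the weight $|a_n|$) with those from reordering the $\mathrm{d}a_i$'s under graded anticommutativity, in the presence of potentially nonzero weights $|a_i|\in\{0,1\}$. Parts (i) and (iii) are essentially formal after one identifies the correct generator and shuffle reductions.
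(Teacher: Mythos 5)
Your proof carries out the standard HKR computation that the paper, in its one-line proof, simply cites from \cite[pp.~185--186]{Lod98} for (i)--(ii) and calls a direct check for (iii); so the overall route is the same. One correction worth making: under the paper's conventions for $\Omega^{\bullet}_{\mathscr{A}/k}$, namely $(\mathrm{d}a)b = (-1)^{|a||b|}b\,\mathrm{d}a$ and $\mathrm{d}a\,\mathrm{d}b = -(-1)^{|a||b|}\mathrm{d}b\,\mathrm{d}a$, the Koszul signs are governed by the cdga \emph{weight} $|\cdot|$, not the total degree $|\cdot|+\|\cdot\|$, so $\updelta$ and $\mathrm{d}$ are derivations for the weight grading rather than the total degree as you assert in (i); the commutator argument still works (the graded commutator of two weight-graded derivations is a derivation vanishing on generators), but the correct grading is exactly what you need to close the ``Koszul bookkeeping'' you flag as the main obstacle in (ii), where otherwise an extra $(-1)^{i-1}$ appears when matching $\pi\circ\updelta$ with $\updelta\circ\pi$. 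Relatedly, in (iii) the Koszul factor $(-1)^{|b_0|\sum|a_i|}$ from the shuffle formula is not ``cancelled'' but rather exactly reproduced when $b_0$ is moved past the $\mathrm{d}a_i$'s on the $\varphi\cdot\varphi$ side --- and this match again hinges on using weight-only commutation rather than total-degree commutation.
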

\begin{proof}
(i) and (ii) are standard, see \cite[Page 185-186]{Lod98}. (iii) can be verified directly from \eqref{eq:shuffleProduct-cdga}.
\end{proof}

\begin{theorem}\label{thm:formality}
For integers $n\geq 0$,
\begin{enumerate}[(i)]
  \item $\pi$ induces an isomorphism $\mathrm{HH}_n(\mathscr{A},0)[\frac{1}{n!}]\xrightarrow{\cong} \mathbf{C}_n(A/\ell)[\frac{1}{n!}]$, and
  \item $\varphi$ induces an isomorphism
\begin{gather*}
\widetilde{\mathrm{HH}}_n(A/\ell)[\frac{1}{n!}] \xrightarrow{\cong} \mathbf{HH}_n(A/\ell)[\frac{1}{n!}].
\end{gather*}
\end{enumerate}
%Inverting $N!$, $\varphi$ induces a quasi-isomorphism in the range $i\leq N-1$.
\end{theorem}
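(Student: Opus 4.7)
\medskip

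\noindent\textbf{Plan.} Both statements are HKR-type results, and my strategy is to prove (i) first as an HKR theorem for the smooth graded-commutative $k$-algebra underlying $\mathscr{A}$, and then bootstrap to (ii) by turning on the differential $\updelta$ via a spectral sequence of the Hochschild bicomplex. The chain maps $\pi$ and $\varphi$ from Definition \ref{def:formal-HH} are already set up to intertwine the relevant differentials (Lemma \ref{lem:phi-commutesWith-partial-and-delta-product}), so the content of both parts is that they are quasi-isomorphisms on the appropriate (bi)complexes after inverting $n!$.

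\medskip

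\noindent For part (i), observe that as a graded-commutative algebra, $\mathscr{A}$ is simply the exterior algebra $\Lambda_A(\varepsilon)$ on the odd-degree generator $\varepsilon$, and this is smooth over $k$ because $A$ is smooth over $k$ and adjoining a free odd-degree generator is a smooth operation in the graded sense. The map $\pi$ is the classical antisymmetrization from the Hochschild complex to the Kähler differentials; it is a $b$-chain map essentially by the Leibniz rule. To show it induces an isomorphism on homology after inverting $n!$, I would invoke directly the HKR theorem for smooth cdga's (\cite[Prop.~2.5]{BuV88}, \cite[Prop.~5.4.6]{Lod98}) applied to $(\mathscr{A},0)$. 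Alternatively, one reduces to the affine case by Zariski-localizing $A$, then uses the Künneth-type decomposition $\mathrm{HH}_*(\Lambda_A(\varepsilon),0)\simeq \mathrm{HH}_*(A)\otimes_A \mathrm{HH}_*(\Lambda_A(\varepsilon)/A)$ together with the classical HKR for the smooth $k$-algebra $A$ and an explicit computation of $\mathrm{HH}_*(\Lambda_A(\varepsilon)/A)$. Lemma \ref{lem:phi-commutesWith-partial-and-delta-product}(iii) ensures compatibility with products, which makes the reduction to the polynomial case go through smoothly.

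\medskip

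\noindent For part (ii), filter the total Hochschild complex $\big(C(\mathscr{A}),\,b\pm \updelta\big)$ by the weight (the total $\varepsilon$-degree), which is bounded below on each fixed total-degree component because $\mathscr{A}$ is concentrated in weights $0$ and $1$. This yields a convergent spectral sequence whose $E_1$ page is $\mathrm{HH}_*(\mathscr{A},0)$ with $d_1$ induced by $\updelta$. By part (i), after inverting $n!$ this $E_1$ page is identified via $\varphi$ with $\big(\mathbf{C}_*(A/\ell),\updelta\big)$, and Lemma \ref{lem:phi-commutesWith-partial-and-delta-product}(ii) guarantees that the induced differential coincides with the $\updelta$ of \eqref{eq:HHbf-partial-delta}. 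Hence $E_2[\tfrac{1}{n!}] = \mathbf{HH}_n(A/\ell)[\tfrac{1}{n!}]$. The main obstacle will be verifying degeneration at $E_2$, i.e.\ that $\varphi$ itself (viewed directly as a map from the total complex to $\mathbf{C}_*(A/\ell)$) is a quasi-isomorphism rather than only inducing the right $E_2$ term. I would handle this by proving a strict statement: since the weight filtration has finite length on each component of fixed total degree, any $E_2$-cycle lifts, and the associated graded version of $\varphi$ is an isomorphism by (i). Combined with the compatibility in Lemma \ref{lem:phi-commutesWith-partial-and-delta-product}(ii), the five-lemma on the filtration steps yields the desired isomorphism $\widetilde{\mathrm{HH}}_n(A/\ell)[\tfrac{1}{n!}]\xrightarrow{\cong}\mathbf{HH}_n(A/\ell)[\tfrac{1}{n!}]$.
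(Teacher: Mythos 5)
Your proposal takes essentially the same route as the paper: part (i) reduces to the free graded-commutative case where the antisymmetrization map of \cite[Prop.~2.5]{BuV88}, \cite[Prop.~5.4.4, 5.4.6]{Lod98} gives the HKR isomorphism after inverting $n!$; part (ii) is the weight-filtration/five-lemma argument that the paper compresses into the single line ``follows from (i) and Lemma \ref{lem:phi-commutesWith-partial-and-delta-product}.'' Your spelled-out version of (ii) is correct.

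One conceptual imprecision worth flagging: you describe the remaining step in (ii) as ``verifying degeneration at $E_2$.'' Degeneration is not what is required, and by itself it would not suffice, since one needs the specific map $\varphi$ to be a quasi-isomorphism, not merely an abstract coincidence of $E_2$ terms. What is actually needed --- and what your closing sentences correctly supply --- is a \emph{comparison of filtered complexes}: filter both $\mathrm{Tot}\,C(\mathscr{A})$ and $(\mathbf{C}_\bullet(A/\ell),\updelta)$ by internal weight, which is finite in each total degree since $\mathscr{A}$ lives in weights $0,1$; by Lemma \ref{lem:phi-commutesWith-partial-and-delta-product}(ii), $\varphi$ is a map of filtered complexes, and by part (i) it is a quasi-isomorphism on each associated-graded piece after inverting $n!$. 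The finite-filtration five-lemma then gives the quasi-isomorphism on totals, with no appeal to degeneration of either spectral sequence. So the final argument you land on is right; only the framing en route is off.
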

\begin{proof}
%Let $\pi(a_0,\dots,a_n)=a_0 \mathrm{d}a_1\cdots \mathrm{d}a_n$ for $a_i\in \mathscr{A}$. 
 By the base change of the Hochschild homology, it suffices to show the case that $A$ is a polynomial algebra $k[V]$, where $V$ is a finite free $k$-module. Then $(\mathscr{A},0)=\Lambda V$. In this case, there is an antisymmetric map which induces an isomorphism $\epsilon:\mathbf{C}_n(A/\ell) \xrightarrow{\cong}\mathrm{HH}_n(\mathscr{A},0)$, and $\pi\circ \epsilon=n!$ on the $n$-th homology (\cite[Prop.~2.5]{BuV88}, and  also \cite[Prop.~5.4.4 and 5.4.6]{Lod98}) and then (i) follows.

(ii) follows from (i) and Lemma \ref{lem:phi-commutesWith-partial-and-delta-product}(i).
\end{proof}

\subsection{Towards the computation of (relative) cyclic homology} % (fold)
\label{sub:towards_the_computation_of_(relative)_cyclic_homology}
Keep the setup and notations in \S\ref{sub:formality-cyclic-complex}.
We make some preparations for the computations in \S\ref{sec:relative_derived_cyclic_homology_of_smooth_algebras_over_texorpdfstring_w_n_bbbk_}. As we recalled in \S\ref{sub:the_hochschild_and_cyclic_homology_of_cdga_s}, and by Theorem \ref{thm:formality}, the suitably localized derived cyclic homology of $A/\ell$ can be computed by alternating the operator $\updelta$; more precisely, by the following chain complex $\big(\mathbf{CC}_{\bullet}(A/\ell),\mathbf{D}'\big)$. However, it will turn out to be convenient\footnote{The convenience is in the construction of the map $\Psi$ in Lemma \ref{lem:map-Psi-complexes-twist(r)-component} and the proof of 
Theorem \ref{thm:rel-HC-homotopyEquiv}.} to introduce 
the operator
\begin{align*}
B:\mathbf{C}_n(A/\ell)\rightarrow \mathbf{C}_{n+1}(A/\ell)
\end{align*}
given by
\begin{align*}
B\big(\varepsilon \alpha (\mathrm{d}\varepsilon)^{m-1}\big)&= -\varepsilon\mathrm{d}\alpha (\mathrm{d}\varepsilon)^{m-1}
+\alpha (\mathrm{d}\varepsilon)^{m},\\
B\big(\alpha (\mathrm{d}\varepsilon)^{m}\big)&= \mathrm{d}\big(\alpha (\mathrm{d}\varepsilon)^{m}\big)=(\mathrm{d}\alpha) (\mathrm{d}\varepsilon)^{m},
\end{align*}
for $\alpha\in \Omega^{\bullet}_{A/k}$. 
One checks immediately that $B^2=0$ and
\begin{align*}
\updelta\circ B+ B\circ \updelta=0.
\end{align*}

\begin{definition}
We define a complex
\begin{align*}
\mathbf{CC}_n(A/\ell):=\mathbf{C}_n(A/\ell)\oplus \mathbf{C}_{n-2}(A/\ell)\oplus\dots\oplus\begin{cases}
\mathbf{C}_0(A/\ell),& \mbox{if}\ 2|n\\
\mathbf{C}_1(A/\ell),& \mbox{if}\ 2\nmid n
\end{cases}
\end{align*}
and   operators
\begin{gather*}
  D':=(-1)^{\|-\|}\updelta+\mathrm{d}:\mathbf{CC}_n(A/\ell)\rightarrow \mathbf{CC}_{n+1}(A/\ell),\\
  D:=\updelta+B:\mathbf{CC}_n(A/\ell)\rightarrow \mathbf{CC}_{n+1}(A/\ell),
\end{gather*}
where $(-1)^{\|-\|}$ is the operator $\alpha\mapsto (-1)^{\|\alpha\|}\alpha$ for $\alpha\in \Omega^{\bullet}_{\mathscr{A}/k}$ of pure differential degrees. Then we define  compositions
\begin{gather*}
\mathbf{D}':\mathbf{CC}_n(A/\ell)\xrightarrow{D'} \mathbf{CC}_{n+1}(A/\ell)\xrightarrow{\mathrm{proj}}\mathbf{CC}_{n-1}(A/\ell),\\
\mathbf{D}:\mathbf{CC}_n(A/\ell)\xrightarrow{D} \mathbf{CC}_{n+1}(A/\ell)\xrightarrow{\mathrm{proj}}\mathbf{CC}_{n-1}(A/\ell),
\end{gather*}
where $\mathrm{proj}$ is the obvious projection. Then $(\mathbf{D}')^2=\mathbf{D}^2=0$, and we denote the associated chain complex by $\big(\mathbf{CC}_{\bullet}(A/\ell),\mathbf{D}'\big)$ and $\big(\mathbf{CC}_{\bullet}(A/\ell),\mathbf{D}\big)$, respectively.
\end{definition}
The following lemma is straightforward.
\begin{lemma}\label{lem:iso-(pmDelta,d)-to-(delta,B)}
Let $g:\mathbb{N}\cup\{0\} \rightarrow \mathbb{Z}$ be a function such that $g(m)=g(m-1)+m-1$ for all $m$. 
Then the $k$-linear automorphism $\varrho$ of $\Omega^{\bullet}_{\mathscr{A}/k}$ defined by
\begin{align*}
\varrho\big(\alpha \varepsilon (\mathrm{d}\varepsilon)^{m-1}\big)&= (-1)^{\|\alpha\|+g(m)}\alpha \varepsilon (\mathrm{d}\varepsilon)^{m-1}\\
\mbox{and}\quad 
\varrho\big(\alpha (\mathrm{d}\varepsilon)^{m}\big)&= (-1)^{g(m)}\alpha (\mathrm{d}\varepsilon)^{m}
\end{align*}
satisfies
\begin{gather*}
B\circ \varrho=\varrho\circ \mathrm{d},\quad \updelta\circ \varrho=\varrho\circ (-1)^{\|-\|}\updelta.
\end{gather*}
In particular, $\varrho$ induces a quasi-isomorphism
\begin{gather*}
\big(\mathbf{CC}_{\bullet}(A/\ell),\mathbf{D}'\big)\xrightarrow{\sim} \big(\mathbf{CC}_{\bullet}(A/\ell),\mathbf{D}\big).
\end{gather*}
\end{lemma}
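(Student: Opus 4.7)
The lemma is a direct sign-bookkeeping exercise, so my plan is to verify the two commutation identities on the two standard forms of basis elements $\alpha(\mathrm{d}\varepsilon)^m$ and $\alpha\varepsilon(\mathrm{d}\varepsilon)^{m-1}$ (with $\alpha\in\Omega^{\bullet}_{A/k}$ of pure differential degree), and then deduce the quasiisomorphism by a formal argument.

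First I would check $\updelta\circ\varrho = \varrho\circ(-1)^{\|-\|}\updelta$. Both sides vanish on $\alpha(\mathrm{d}\varepsilon)^m$ by \eqref{eq:HHbf-partial-delta}. On $\alpha\varepsilon(\mathrm{d}\varepsilon)^{m-1}$, the left-hand side equals $(-1)^{\|\alpha\|+g(m)}\ell\alpha(\mathrm{d}\varepsilon)^{m-1}$. For the right-hand side, note that $\|\updelta(\alpha\varepsilon(\mathrm{d}\varepsilon)^{m-1})\| = \|\ell\alpha(\mathrm{d}\varepsilon)^{m-1}\| = \|\alpha\|+m-1$, and $\ell\alpha(\mathrm{d}\varepsilon)^{m-1}$ is of the $j=0$ form, so $\varrho$ scales it by $(-1)^{g(m-1)}$; the right-hand side equals $(-1)^{\|\alpha\|+m-1+g(m-1)}\ell\alpha(\mathrm{d}\varepsilon)^{m-1}$. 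The two match precisely when $g(m)\equiv g(m-1)+m-1 \pmod 2$, which is the hypothesis.

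Next I would check $B\circ\varrho = \varrho\circ\mathrm{d}$. On $\alpha(\mathrm{d}\varepsilon)^m$ both $B$ and $\mathrm{d}$ produce $(\mathrm{d}\alpha)(\mathrm{d}\varepsilon)^m$, and $\varrho$ introduces the same scalar $(-1)^{g(m)}$ on both sides. On $\alpha\varepsilon(\mathrm{d}\varepsilon)^{m-1}$ one expands both sides using the explicit formulas for $B$ and $\mathrm{d}$, obtaining sums of terms proportional to $(\mathrm{d}\alpha)\varepsilon(\mathrm{d}\varepsilon)^{m-1}$ and $\alpha(\mathrm{d}\varepsilon)^m$; matching signs then amounts to using $\|\mathrm{d}\alpha\|=\|\alpha\|+1$ and the bookkeeping of $\varrho$. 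No additional constraint on $g$ appears here.

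Finally, combining the two identities gives
\begin{gather*}
\mathbf{D}\circ\varrho = (\updelta+B)\circ\varrho = \varrho\circ\bigl((-1)^{\|-\|}\updelta + \mathrm{d}\bigr) = \varrho\circ\mathbf{D}'
\end{gather*}
on $\Omega^{\bullet}_{\mathscr{A}/k}$, and this passes through the projection defining $\mathbf{D}$ and $\mathbf{D}'$ because the direct-sum decomposition of $\mathbf{CC}_{\bullet}(A/\ell)$ is preserved by $\varrho$. Thus $\varrho$ is a chain map $\bigl(\mathbf{CC}_{\bullet}(A/\ell),\mathbf{D}'\bigr) \to \bigl(\mathbf{CC}_{\bullet}(A/\ell),\mathbf{D}\bigr)$; since $\varrho$ acts on each basis element by $\pm 1$, it is a $k$-linear isomorphism in each degree, hence an isomorphism of chain complexes and a fortiori a quasiisomorphism. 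The only non-routine step is the sign arithmetic, and the recurrence on $g$ is calibrated precisely to force the $\updelta$-identity.
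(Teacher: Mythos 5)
Your proof is correct and is exactly the direct verification the paper has in mind (it gives no proof, calling the lemma ``straightforward''). The two commutation identities are each checked on the two types of generators; the only place the recurrence $g(m)=g(m-1)+m-1$ enters is where $\updelta$ lowers $m$ to $m-1$, and the $B$--$\mathrm{d}$ identity works with no constraint on $g$, just as you observe. The passage from $D\circ\varrho=\varrho\circ D'$ on $\Omega^\bullet_{\mathscr{A}/k}$ to $\mathbf{D}\circ\varrho=\varrho\circ\mathbf{D}'$ on $\mathbf{CC}_\bullet$ is indeed immediate because $\varrho$ acts diagonally (by signs) on each $\mathbf{C}_k$ and therefore commutes with the truncating projection; I would only note that your display writes $\mathbf{D}\circ\varrho=(\updelta+B)\circ\varrho$, which conflates $\mathbf{D}$ with $D$, but your subsequent sentence makes clear you mean to first verify the identity for $D,D'$ and then project. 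Since $\varrho$ is a degreewise isomorphism (scalar $\pm1$ on each summand), the resulting chain map is an isomorphism of complexes, hence a quasiisomorphism.
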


\begin{definition}
We define
$\mathbf{HC}_n(A/\ell)$ to be the $n$-th homology of the complex $\big(\mathbf{CC}_{\bullet}(A/\ell),\mathbf{D}\big)
$.
\end{definition}

\begin{lemma}\label{lem:connesExaSeq-HCbf}
There is a short exact sequence of chain complexes
\begin{gather*}
0 \rightarrow \big(\mathbf{C}(A/\ell),\updelta\big) \rightarrow \big(\mathbf{CC}(A/\ell),\mathbf{D}\big) \rightarrow \big(\mathbf{CC}(A/\ell)[2],\mathbf{D}\big) \rightarrow 0,
\end{gather*}
where the second arrow is given by the obvious inclusions $\mathbf{C}_n(A/\ell) \rightarrow \mathbf{CC}_n(A/\ell)$, and the third arrow the projections $\mathbf{CC}_n(A/\ell) \twoheadrightarrow \mathbf{CC}_{n-2}(A/\ell)$.
\end{lemma}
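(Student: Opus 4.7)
The plan is a direct verification. First I would observe that, by the very definition of $\mathbf{CC}_n(A/\ell)$ as an internal direct sum, there is a canonical splitting
\[
\mathbf{CC}_n(A/\ell)=\mathbf{C}_n(A/\ell)\oplus\big(\mathbf{C}_{n-2}(A/\ell)\oplus\mathbf{C}_{n-4}(A/\ell)\oplus\cdots\big)=\mathbf{C}_n(A/\ell)\oplus\mathbf{CC}_{n-2}(A/\ell),
\]
where the second summand is, by the convention in \S\ref{sub:notations}(iv), exactly $(\mathbf{CC}(A/\ell)[2])_n$. Hence the indicated sequence is short exact as a sequence of graded abelian groups (in each fixed degree $n$), and the only issue is to check that the two displayed maps are chain maps, i.e. compatible with the differentials.

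For the inclusion $\iota\colon\mathbf{C}_n(A/\ell)\hookrightarrow\mathbf{CC}_n(A/\ell)$, I would compute $\mathbf{D}\iota(\alpha)$ for $\alpha\in\mathbf{C}_n(A/\ell)$: by definition, one first applies $D=\updelta+B$, landing in $\mathbf{CC}_{n+1}(A/\ell)=\mathbf{C}_{n+1}\oplus\mathbf{C}_{n-1}\oplus\cdots$, and then projects to $\mathbf{CC}_{n-1}(A/\ell)=\mathbf{C}_{n-1}\oplus\mathbf{C}_{n-3}\oplus\cdots$. Now $B\alpha\in\mathbf{C}_{n+1}(A/\ell)$ sits precisely in the summand $\mathbf{C}_{n+1}$ which is killed by this projection, whereas $\updelta\alpha\in\mathbf{C}_{n-1}(A/\ell)$ survives. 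Thus $\mathbf{D}\iota(\alpha)=\iota(\updelta\alpha)$, so $\iota$ intertwines $\updelta$ and $\mathbf{D}$. The key point is that $B$ is exactly the ``ignored'' piece in top filtration degree, which is precisely why the quotient filtration yields Connes' $S$-like map.

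For the projection $p\colon\mathbf{CC}_n(A/\ell)\twoheadrightarrow\mathbf{CC}_{n-2}(A/\ell)$, I would argue by cases on an element $\alpha\in\mathbf{C}_k(A/\ell)\subset\mathbf{CC}_n(A/\ell)$ (with $k\le n$, $k\equiv n\pmod 2$): if $k=n$, then $p(\alpha)=0$ and also $\mathbf{D}\alpha=\updelta\alpha\in\mathbf{C}_{n-1}(A/\ell)$, which is the top summand of $\mathbf{CC}_{n-1}(A/\ell)$ and is therefore killed by the next projection $p\colon\mathbf{CC}_{n-1}(A/\ell)\twoheadrightarrow\mathbf{CC}_{n-3}(A/\ell)$; if $k<n$, one checks that the same formula $(\updelta+B)\alpha$, truncated by dropping the topmost degree component, is computed either by running $\mathbf{D}$ on $\mathbf{CC}_n$ and then projecting, or by first projecting and then running $\mathbf{D}$ on $\mathbf{CC}_{n-2}$. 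The shift $[2]$ introduces a sign $(-1)^2=1$, so no sign correction is needed. Kernel-image exactness at the middle term is immediate from the direct sum decomposition above.

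I do not expect any serious obstacle: the content is essentially bookkeeping with the internal ``total degree'' grading and noticing that $\mathbf{D}$ differs from the naive $\updelta+B$ exactly by truncation of the top $B$-term, which is precisely what allows the inclusion to be a chain map. The only mild care required is with the indexing convention $(\mathbf{CC}[2])_n=\mathbf{CC}_{n-2}$ and the associated sign for the shift, both of which are harmless here.
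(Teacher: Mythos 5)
Your proof is correct and follows essentially the same route as the paper: verify degreewise exactness from the internal direct sum $\mathbf{CC}_n=\mathbf{C}_n\oplus\mathbf{CC}_{n-2}$, then check directly that the inclusion and projection commute with $\updelta$ and $\mathbf{D}$, the key observation in both cases being that $\mathbf{D}$ differs from $\updelta+B$ only by truncating the top $B$-term. The paper's own proof is in fact terser, stating only that commutativity of the analogous diagram is "straightforward to check," so your write-out supplies the bookkeeping the paper leaves implicit.
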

\begin{proof}
The  assertion amounts to showing the commutativity of the diagram
\begin{gather*}
\xymatrix{
  0 \ar[r] & \mathbf{C}_n \ar[d]_{\updelta} \ar[r] & \mathbf{CC}_n \ar[r] \ar[d]_{\mathbf{D}} & \mathbf{CC}_{n-2} \ar[r] \ar[d]^{\mathbf{D}} & 0 \\
    0 \ar[r] & \mathbf{C}_{n-1} \ar[r] & \mathbf{CC}_{n-1} \ar[r]  & \mathbf{CC}_{n-3} \ar[r]  & 0.}
\end{gather*}
Taking care of the projection, it is straightforward to check the commutativity.
\end{proof}

\begin{corollary}\label{cor:relHC-relBoldHC-quasiIsom}
The composition $\varrho\circ \varphi$ induces quasi-isomorphisms
\begin{gather*}
\tau_{\leq n} \widetilde{CC}(A/\ell)[\frac{1}{n!}]  \xrightarrow{\sim} \tau_{\leq n} \widetilde{\mathbf{CC}}(A/\ell)[\frac{1}{n!}]
\end{gather*}
for all integers $n\geq 0$,
\end{corollary}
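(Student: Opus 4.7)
The plan is to compare the two sides degree by degree via the Connes long exact sequence, with Theorem \ref{thm:formality} providing the input at the Hochschild level and a five-lemma induction on $n$ doing the rest.

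First I would verify that $\varrho \circ \varphi$ assembles, after inverting $n!$, into a chain map from $\widetilde{CC}(A/\ell)[\tfrac{1}{n!}]$ to $(\mathbf{CC}(A/\ell)[\tfrac{1}{n!}], \mathbf{D})$. Lemma \ref{lem:phi-commutesWith-partial-and-delta-product}(ii) gives $\updelta \circ \varphi = \varphi \circ \updelta$ and $\mathrm{d} \circ \varphi = \varphi \circ B$, so componentwise $\varphi$ intertwines the $\updelta$- and $B$-directions of the cyclic tricomplex of $\mathscr{A}$ with the $\updelta$- and $\mathrm{d}$-directions of $\mathbf{CC}$. Because $\mathscr{A}$ is graded commutative, a short sign check yields $\pi \circ b = 0$, so $\varphi$ kills the Hochschild differential $b$ as well. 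With the Koszul signs matched to those of $\mathbf{D}' = (-1)^{\|-\|}\updelta + \mathrm{d}$, this gives a chain map into $(\mathbf{CC}, \mathbf{D}')$, and post-composing with $\varrho$ (Lemma \ref{lem:iso-(pmDelta,d)-to-(delta,B)}) lands in $(\mathbf{CC}, \mathbf{D})$.

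Next I would observe that this chain map intertwines the derived Connes sequence \eqref{eq:ISB-sequence-derived} on the source with the long exact sequence coming from Lemma \ref{lem:connesExaSeq-HCbf} on the target. On both sides $I$ is induced by the inclusions $C_m(\mathscr{A}) \hookrightarrow \widetilde{CC}_m$ and $\mathbf{C}_m(A/\ell) \hookrightarrow \mathbf{CC}_m(A/\ell)$, and $S$ by the projections $\widetilde{CC}_m \twoheadrightarrow \widetilde{CC}_{m-2}$ and $\mathbf{CC}_m \twoheadrightarrow \mathbf{CC}_{m-2}$, both evidently compatible with $\varphi$; the connecting map $B$ on the source is intertwined with $\mathrm{d}$ on the target by Lemma \ref{lem:phi-commutesWith-partial-and-delta-product}(ii).

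Finally I would conclude by induction on $n$ via the five-lemma. Theorem \ref{thm:formality}(ii) gives $\widetilde{\mathrm{HH}}_m(A/\ell)[\tfrac{1}{m!}] \xrightarrow{\cong} \mathbf{HH}_m(A/\ell)[\tfrac{1}{m!}]$, which after inverting $n!$ is an isomorphism for every $m \leq n$ since $m! \mid n!$. The base cases $n = 0, 1$ follow from $\widetilde{\mathrm{HC}}_i \cong \widetilde{\mathrm{HH}}_i$ and $\mathbf{HC}_i \cong \mathbf{HH}_i$ for $i = 0, 1$. For the inductive step, apply the five-lemma to the comparison of the segment
\[
\mathrm{HC}_{n-1} \xrightarrow{B} \mathrm{HH}_n \xrightarrow{I} \mathrm{HC}_n \xrightarrow{S} \mathrm{HC}_{n-2} \xrightarrow{B} \mathrm{HH}_{n-1}
\]
of the two Connes sequences: the inductive hypothesis controls the outer $\mathrm{HC}$ terms and the HKR isomorphism controls the two $\mathrm{HH}$ terms (all after inverting $n!$), yielding the isomorphism at $\widetilde{\mathrm{HC}}_n(A/\ell)[\tfrac{1}{n!}]$. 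The main obstacle I foresee is the chain-map verification in the first step: correctly matching the Koszul sign conventions of the cdga cyclic tricomplex with those of $\mathbf{D}'$, and in particular checking $\pi \circ b = 0$ in the graded commutative setting with full care of signs.
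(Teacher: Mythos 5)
Your proposal is correct and takes essentially the same approach as the paper, which gives a one-line proof invoking Theorem \ref{thm:formality}, Lemma \ref{lem:connesExaSeq-HCbf}, and induction on $n$; you have simply spelled out the chain-map verification, the compatibility of the two Connes sequences, and the five-lemma induction that the paper's terse phrasing implicitly delegates to the reader.
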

\begin{proof}
%By Lemma \ref{lem:phi-commutesWith-partial-and-delta-product}, $\varphi$ \hl{is a morphism of complexes}.
 The quasi-isomorphisms follow from Theorem \ref{thm:formality}, Lemma \ref{lem:connesExaSeq-HCbf}, and induction on $n$.
\end{proof}

\section{Relative derived cyclic homology of smooth algebras over \texorpdfstring{$W_n(\Bbbk)$}{Wn(k)}}
\label{sec:relative_derived_cyclic_homology_of_smooth_algebras_over_texorpdfstring_w_n_bbbk_}
Let $\Bbbk$ be a perfect field of characteristic $p$. 
In this section, we fix a  smooth algebra $A$ of finite type over $W=W(\Bbbk)$, and compute the relative derived cyclic homology $\widetilde{\mathrm{HC}}_i(A_L,A_M)$ in the range $i\leq p-2$. We keep the notations of \S \ref{sub:formality-cyclic-complex}. Denote $\Omega^i_{A/W}$ by $\Omega^i$ for brevity. For $n\in \mathbb{N}$, we denote $A/p^n A$ by $A_n$. 
%Let $\mathbf{C}_n(A/\ell)$ be as in \S \ref{sec:formality-cyclic-complex}. 
%Here $(\ell)$ indicates the dependence on $\ell$. 
Suppose   $L,M\in \mathbb{Z}$ and $L>M\geq 1$.   We impose transition maps 
\begin{align*}
\mathbf{C}_n(A_L)\rightarrow \mathbf{C}_n(A_M)
\end{align*}
given by
\begin{equation}\label{eq:transitionMaps}
\begin{gathered}
\varepsilon\alpha   (\mathrm{d}\varepsilon)^{m-1}\mapsto \overline{\varepsilon \alpha  (\mathrm{d}\varepsilon)^{m-1}}:=p^{(L-M)m}\varepsilon  \alpha (\mathrm{d}\varepsilon)^{m-1},\\
 \beta (\mathrm{d}\varepsilon)^{m}\mapsto \overline{ \beta (\mathrm{d}\varepsilon)^{m}}:=p^{(L-M)m} \beta (\mathrm{d}\varepsilon)^{m},
\end{gathered}
\end{equation}
for $\alpha\in \Omega^{n-2m+1}$, $\beta\in \Omega^{n-2m}$. 

Recall our notations on (co)chain complexes in \S \ref{sub:notations}(iv).
Let $\mathbf{C}_{\bullet}(A_L,A_M)$ be  the total complex of the double complex
\begin{align*}
\xymatrix{
  \dots \ar[r]^<<<<<{\updelta} & \mathbf{C}_{n+1}(A_L) \ar[r]^{\updelta} \ar[d] & \mathbf{C}_{n}(A_L) \ar[r]^{\updelta} \ar[d] & \mathbf{C}_{n-1}(A_L) \ar[r]^>>>>>{\updelta} \ar[d] & \dots & (\mathrm{deg}\ 0) \\
  \dots \ar[r]^<<<<<{-\updelta} & \mathbf{C}_{n+1}(A_M) \ar[r]^{-\updelta} & \mathbf{C}_{n}(A_M) \ar[r]^{-\updelta} & \mathbf{C}_{n-1}(A_M) \ar[r]^>>>>>{-\updelta} & \dots & (\mathrm{deg}\ -1) \\
}
\end{align*}
and define
\begin{align*}
\mathbf{HH}_n(A_L,A_M):=\mbox{the $n$-th homology of the complex}\ \mathbf{C}_{\bullet}(A_L,A_M). 
\end{align*}
Let $\mathbf{CC}_{\bullet}(A_L,A_M)$ be  the total complex of the double complex
\begin{align*}
\xymatrix{
  \dots \ar[r]^<<<<<{\mathbf{D}} & \mathbf{CC}_{n+1}(A_L) \ar[r]^{\mathbf{D}} \ar[d] & \mathbf{CC}_{n}(A_L) \ar[r]^{\mathbf{D}} \ar[d] & \mathbf{CC}_{n-1}(A_L) \ar[r]^>>>>>{\mathbf{D}} \ar[d] & \dots & (\mathrm{deg}\ 0) \\
  \dots \ar[r]^<<<<<{-\mathbf{D}} & \mathbf{CC}_{n+1}(A_M) \ar[r]^{-\mathbf{D}} & \mathbf{CC}_{n}(A_M) \ar[r]^{-\mathbf{D}} & \mathbf{CC}_{n-1}(A_M) \ar[r]^>>>>>{-\mathbf{D}} & \dots & (\mathrm{deg}\ -1) \\
}
\end{align*}
and define
\begin{align*}
\mathbf{HC}_n(A_L,A_M):=\mbox{the $n$-th homology of the complex}\ \mathbf{CC}_{\bullet}(A_L,A_M). 
\end{align*}
Taking the cone of the exact sequence in Lemma \ref{lem:connesExaSeq-HCbf}, we obtain  a short exact sequence of chain complexes
\begin{gather}\label{eq:connesExaSeq-HCbf-HHbf-rel}
0 \rightarrow \mathbf{C}(A_L,A_M) \rightarrow \mathbf{CC}(A_L,A_M) \rightarrow \mathbf{CC}(A_L,A_M)[2] \rightarrow 0.
\end{gather}
By Corollary \ref{cor:relHC-relBoldHC-quasiIsom}, $\varrho\circ \varphi$ induces quasi-isomorphisms
\begin{gather}\label{eq:relHC-relBoldHC-quasiIsom} 
\tau_{\leq n} \widetilde{CC}(A_L,A_M)[\frac{1}{(n+1)!}]  \xrightarrow{\sim} \tau_{\leq n} \widetilde{\mathbf{CC}}(A_L,A_M)[\frac{1}{(n+1)!}] .
\end{gather}

\begin{proposition}\label{prop:relHC-relBoldHC-bounded-pExp-Isom}
Let $n\geq 0$ and $L>M\geq 1$ be integers. 
\begin{enumerate}[(i)]
  \item $\widetilde{\operatorname{HH}}_n(A_L,A_M)$ and $\widetilde{\operatorname{HC}}_n(A_L,A_M)$ are $p$-primary torsion groups of finite exponents.
  \item $\mathbf{HH}_n(A_L,A_M)$ and $\mathbf{HC}_n(A_L,A_M)$ are $p$-primary torsion groups of finite exponents.
  \item For $0\leq n\leq p-2$, $\varrho\circ \varphi$ induces isomorphisms
\begin{gather*}
\widetilde{\operatorname{HC}}_n(A_L,A_M)\xrightarrow{\cong} \mathbf{HC}_n(A_L,A_M).
\end{gather*}
\end{enumerate}
\end{proposition}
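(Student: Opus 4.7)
The three parts build on each other, and the only real work is in part (ii).

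For \textbf{(i)} I would use that $\widetilde{\mathrm{HH}}_{\ast}(A_L)$ and $\widetilde{\mathrm{HC}}_{\ast}(A_L)$ are graded modules over $\widetilde{\mathrm{HH}}_0(A_L)\cong\widetilde{\mathrm{HC}}_0(A_L)\cong A_L$ (for $\widetilde{\mathrm{HC}}_0$ one appeals to the proof of Lemma \ref{lem:relativeHC0}). Since $p^L\cdot 1=0$ in $A_L$, both absolute groups are killed by $p^L$, and analogously by $p^M$ for $A_M$. Equivalently, one can invoke the complete filtration of Proposition \ref{prop:HH-HC-filtration}, whose graded pieces are shifts of the $A_L$-modules $\mathbb{L}^i_{A_L/\mathbb{Z}}$, and observe that a complete filtration with $p^L$-torsion quotients has a $p^L$-torsion limit. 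The long exact sequence attached to the cofibre definition of the relative theories then shows $\widetilde{\mathrm{HH}}_n(A_L,A_M)$ and $\widetilde{\mathrm{HC}}_n(A_L,A_M)$ are annihilated by $p^{L+M}$.

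For \textbf{(ii)} the plan is to write down the explicit degree-one map $h\colon\mathbf{C}_\bullet(A/\ell)\to\mathbf{C}_{\bullet+1}(A/\ell)$ given by
\[
h\bigl(\varepsilon\alpha\,(\mathrm{d}\varepsilon)^{m-1}\bigr)=0,\qquad h\bigl(\beta\,(\mathrm{d}\varepsilon)^{m}\bigr)=\varepsilon\beta\,(\mathrm{d}\varepsilon)^{m}.
\]
Using the formulas \eqref{eq:HHbf-partial-delta}, a direct calculation yields $\updelta h+h\updelta=\ell\cdot\mathrm{id}$, so multiplication by $\ell=p^L$ is null-homotopic on $(\mathbf{C}_\bullet(A_L),\updelta)$ and therefore annihilates $\mathbf{HH}_{\ast}(A_L)$; the analogous statement holds for $A_M$. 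To pass from $\mathbf{HH}$ to $\mathbf{HC}$ I would iterate Connes' long exact sequence coming from Lemma \ref{lem:connesExaSeq-HCbf},
\[
\cdots\to\mathbf{HH}_n(A_L)\to\mathbf{HC}_n(A_L)\to\mathbf{HC}_{n-2}(A_L)\to\mathbf{HH}_{n-1}(A_L)\to\cdots,
\]
starting from $\mathbf{HC}_0(A_L)=\mathbf{HH}_0(A_L)=A_L$; an induction on $n$ then bounds the $p$-exponent of $\mathbf{HC}_n(A_L)$. The relative statement follows from the long exact sequence associated with \eqref{eq:connesExaSeq-HCbf-HHbf-rel} and the absolute bounds just obtained.

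Part \textbf{(iii)} is then formal. For $0\le n\le p-2$ the integer $(n+1)!$ is a product of positive integers strictly less than $p$, hence coprime to $p$. On any $p$-primary torsion abelian group, multiplication by an integer coprime to $p$ is a bijection, so inverting $(n+1)!$ leaves such a group unchanged. Parts (i) and (ii) place both $\widetilde{\mathrm{HC}}_n(A_L,A_M)$ and $\mathbf{HC}_n(A_L,A_M)$ in this class, so the quasi-isomorphism \eqref{eq:relHC-relBoldHC-quasiIsom} induces the desired isomorphism directly on the un-localised $n$-th homology groups.

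The only non-routine input is the chain homotopy $h$ in part (ii); once $\mathbf{HH}_{\ast}(A_L)$ is shown to be $p$-power torsion, the inductive passage through Connes' sequence and the absolute-to-relative step are bookkeeping, and part (iii) reduces to the observation about localisation at integers coprime to $p$.
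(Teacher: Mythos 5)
Your proof is correct and follows essentially the paper's route. The chain homotopy $h$ with $\updelta h+h\updelta=\ell\cdot\mathrm{id}$ (which checks out against \eqref{eq:HHbf-partial-delta}) makes explicit what the paper states tersely in (ii), and (iii) is exactly the paper's observation that inverting $(n+1)!$ is a no-op on $p$-primary torsion once $n\le p-2$.

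Two imprecisions in (i) are worth cleaning up, though they do not sink the argument. First, cyclic homology is not a ring, so $\widetilde{\mathrm{HC}}_\ast(A_L)$ is not naturally a module over $\widetilde{\mathrm{HC}}_0(A_L)$; the action you want comes from $\widetilde{\mathrm{HC}}^-_0(A_L)$, or (as the paper does) one deduces the $\widetilde{\mathrm{HC}}$ bound from the $\widetilde{\mathrm{HH}}$ bound by induction along Connes' exact sequence \eqref{eq:ISB-sequence} followed by the absolute-to-relative sequence. Second, the assertion ``a complete filtration with $p^L$-torsion quotients has a $p^L$-torsion limit'' is false as stated (e.g.\ $\mathbb{Z}_p=\varprojlim\mathbb{Z}/p^n$); what saves your filtration fallback is that the graded pieces in Proposition \ref{prop:HH-HC-filtration}(ii) are increasingly connective, so for fixed homotopy degree $n$ only finitely many of them contribute, yielding a $p^{N}$-bound for some $N$ depending on $n$. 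Since the proposition only demands finite exponent in each degree this is enough, but the exponent is not $p^L$ or $p^{L+M}$ on the nose.
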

\begin{proof} 
(i) We have $\widetilde{\operatorname{HH}}_0(\mathbb{Z}/p^L)=\mathbb{Z}/p^L$. Since $\widetilde{\operatorname{HH}}_i(A_L)$ are $\widetilde{\operatorname{HH}}_0(\mathbb{Z}/p^L)$-modules, they are $p$-primary groups with finite $p$-exponents. The claim for $\widetilde{\operatorname{HC}}_i(A_L,A_M)$ then follows from Connes' exact sequence \eqref{eq:ISB-sequence} and the absolute-to-relative exact sequence. 

(ii) For $\mathbf{HH}_n(A_L,A_M)$, this follows from \eqref{eq:HHbf-partial-delta}. Then by the exact sequence induced by \eqref{eq:connesExaSeq-HCbf-HHbf-rel}  and induction on $n$, the assertion on $\mathbf{HC}_n(A_L,A_M)$ follows. 
%That for $\mathbf{HC}_i(A_L,A_M)$ follows from Theorem \ref{thm:rel-HC-homotopyEquiv}.
The isomorphism (iii) then follows from  \eqref{eq:relHC-relBoldHC-quasiIsom}.
\end{proof}

\begin{definition}\label{def:relative-infinitesimal-complexes}
%Let $A_n=A/p^n A$. 
For an integer $L\geq 1$, we define $p^{r,L}\Omega^{\bullet}_{A}$ to be the complex
\begin{gather*}
p^{rL}A\rightarrow p^{(r-1)L}\Omega^1_{A/W}\rightarrow
\dots\rightarrow p^L\Omega_{A/W}^{r-1}
\rightarrow \Omega^r_{A/W}\rightarrow \Omega^{r+1}_{A/W}\rightarrow \dots
\end{gather*}
For integers $L>M\geq 1$, we define $p^{r,M}_{r,L}\Omega^{\bullet}_{A}$ to be the complex
\begin{gather*}
p^{rM} A_{rL}\rightarrow p^{(r-1)M}\Omega^1_{A_{(r-1)L}/W_{(r-1)L}}\rightarrow
\dots\rightarrow p^M\Omega_{A_{L}/W_{L}}^{r-1}\rightarrow 0\rightarrow \dots
\end{gather*}
\end{definition}
In the following of this section, we fix integers $L>M\geq 1$. There is an obvious map of complexes $p^{r,L}\Omega^{\bullet}_{A}\rightarrow p^{r,M}\Omega^{\bullet}_{A}$ and quasi-isomorphisms
\begin{gather}\label{eq:quasi-iso-prML}
\mathrm{Cone}\big(p^{r,L}\Omega^{\bullet}_{A}\rightarrow p^{r,M}\Omega^{\bullet}_{A}\big)\xrightarrow{\sim} 
\mathrm{Cone}\big(p^{r,L}\Omega^{<r}_{A}\rightarrow p^{r,M}\Omega^{<r}_{A}\big)\xrightarrow{\sim} p^{r,M}_{r,L}\Omega^{\bullet}_{A}\quad.
\end{gather}

\begin{lemma}\label{lem:map-Psi-complexes-twist(r)-component}
For $r\geq 0$, there exists a morphism of complexes
\begin{gather*}
\Psi: \mathrm{Cone}\big(p^{r+1,L}\Omega^{<r}_{A}\rightarrow p^{r+1,M}\Omega^{<r}_{A}\big)[2r] \rightarrow \mathbf{CC}^{\bullet}(A_L,A_M)
\end{gather*}
defined as follows. For $n\leq r$, the degree $-n$ component of $\mathrm{Cone}\big(p^{r+1,L}\Omega^{<r}_{A}\rightarrow p^{r+1,M}\Omega^{<r}_{A}\big)[2r]$ is zero.
Let $n\geq r+1$ be an integer. For an element $(\theta_{2r-n+1},\lambda_{2r-n})$ in the (cohomological)  degree $-n$ component, there exists unique $\eta_{2r-n+1} \in  \Omega^{2r-n+1}$ and $\kappa_{2r-n} \in  \Omega^{2r-n}$ such that 
\begin{gather*}
\theta_{2r-n+1}= p^{(n-r)L}\eta_{2r-n+1},\ \lambda_{2r-n}= p^{(n-r+1)M}\kappa_{2r-n}.
\end{gather*}
Moreover, if $n=r+1$, then $\theta_{2r-n+1}=0$. 
Then we define
\begin{align*}
\Psi(\theta_{2r-n+1},\lambda_{2r-n})={}&
\bigg(\sum_{m=1}^{n-r} (-1)^{m-1}p^{(n-r-m)L}\eta_{2r-n+1}\varepsilon(\mathrm{d}\varepsilon)^{m-1}
+\lambda_{2r-n},\\
&\quad\sum_{m=1}^{n-r+1}(-1)^{m-1}p^{(n-r-m+1)M}\kappa_{2r-n}\varepsilon(\mathrm{d}\varepsilon)^{m-1}\bigg).
\end{align*}
\end{lemma}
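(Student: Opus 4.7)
The plan is to verify that $\Psi$ is a well-defined morphism of complexes in four steps: (i) check that $\eta_{2r-n+1}$ and $\kappa_{2r-n}$ are well-defined, (ii) check that each summand of $\Psi(\theta,\lambda)$ lands in the claimed component of the total complex $\mathbf{CC}^{\bullet}(A_L,A_M)$, (iii) carry out the telescoping computation needed for commutation with the $A_L$-row differential $\mathbf{D}=\updelta+B$, and (iv) match the vertical (transition) contribution with the $A_M$-part of $\Psi$ and with $\Psi$ applied to the cone differential.

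For step (i), the uniqueness of $\eta_{2r-n+1}\in \Omega^{2r-n+1}_{A/W}$ and $\kappa_{2r-n}\in \Omega^{2r-n}_{A/W}$ is immediate from the flatness of $A$ over $W(\Bbbk)$, which makes multiplication by $p^N$ injective on each $\Omega^{i}_{A/W}$. For step (ii), a direct degree count shows that the first slot of $\Psi(\theta,\lambda)$ consists of elements of total Kähler degree $2r-n+2m$ (for $1\leq m\leq n-r$) and $2r-n$, all of which lie in $\mathbf{C}_k(A_L)\subset \mathbf{CC}_n(A_L)$ for $k\leq n$ of the correct parity; similarly, the second slot has elements of total degree $2r-n+2m-1\leq n+1$, which lie in $\mathbf{CC}_{n+1}(A_M)$. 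This identifies $\Psi(\theta,\lambda)$ with an element of the correct summand of the total complex at total chain degree $n$.

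For step (iii), the key computation is
\begin{gather*}
\mathbf{D}\bigl(p^{(n-r-m)L}\eta\,\varepsilon(\mathrm{d}\varepsilon)^{m-1}\bigr)
=p^{(n-r-m+1)L}\eta\,(\mathrm{d}\varepsilon)^{m-1}
-p^{(n-r-m)L}\varepsilon\,\mathrm{d}\eta\,(\mathrm{d}\varepsilon)^{m-1}
+p^{(n-r-m)L}\eta\,(\mathrm{d}\varepsilon)^{m},
\end{gather*}
using \eqref{eq:HHbf-partial-delta} and the formula for $B$ from \S\ref{sub:towards_the_computation_of_(relative)_cyclic_homology}. The alternating signs $(-1)^{m-1}$ in the definition of $\Psi$ ensure that the last two types of terms telescope when we sum over $m$: the $(\mathrm{d}\varepsilon)^m$ contribution at index $m$ cancels the $\varepsilon\,\mathrm{d}\eta(\mathrm{d}\varepsilon)^m$ contribution at index $m+1$ up to the expected $\Psi(\mathrm{d}\eta)$-type boundary. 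Only the endpoint terms at $m=1$ (which contains $\lambda_{2r-n}$-free piece) and $m=n-r$ (which produces the pure $\eta$ form) survive, together with the differential applied to $\lambda_{2r-n}$ in the $A_L$-component.

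For step (iv), the vertical transition map \eqref{eq:transitionMaps} sends $p^{(n-r-m)L}\eta\,\varepsilon(\mathrm{d}\varepsilon)^{m-1}\mod p^L$ to $p^{(n-r-m)L+(L-M)m}\eta\,\varepsilon(\mathrm{d}\varepsilon)^{m-1}\mod p^M$, whose exponent rearranges to $(n-r)L-mM+(\text{terms vanishing mod }p^M)$. Matching this against the second slot of $\Psi$ (expressed in terms of $\kappa_{2r-n}$) and the second slot of $\Psi$ applied to the image of $\theta$ under the cone map $p^{r+1,L}\Omega^{<r}_A\to p^{r+1,M}\Omega^{<r}_A$ completes the identification. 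I expect the main obstacle to be bookkeeping — tracking the signs $(-1)^{m-1}$, the parity sign $(-1)^{\|\alpha\|}$ in $B$, and the $\pm\mathbf{D}$ sign convention between the two rows of the bicomplex — rather than anything conceptually deep, since the underlying telescoping is forced by the definitions of $\updelta$ and $B$ combined with the flatness of $A$ over $W(\Bbbk)$.
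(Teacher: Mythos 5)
Your overall strategy matches the paper's: both amount to a direct check that $\Psi\circ\mathrm{d}=\mathbf{D}\circ\Psi$, expanding both sides, observing the telescoping in the $A_L$-row, and then matching the transition-map contribution against the $A_M$-slot. Steps (i), (ii), and (iv) are fine in essence (though in (iv) the ``$\bmod p^L$'' and ``$\bmod p^M$'' annotations are misplaced --- the coefficients of $\mathbf{C}_\bullet(A_\ell)$ live in the flat cdga over $A$, not in $A_L$ or $A_M$, and the exponent is exactly $(n-r)L-mM$ with no extra ``terms vanishing $\bmod p^M$'').

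However, step (iii) misidentifies which terms telescope. You claim that ``the $(\mathrm{d}\varepsilon)^m$ contribution at index $m$ cancels the $\varepsilon\,\mathrm{d}\eta\,(\mathrm{d}\varepsilon)^m$ contribution at index $m+1$,'' but these are \emph{different basis elements} of $\Omega^\bullet_{\mathscr{A}/W}$ (one is $\eta(\mathrm{d}\varepsilon)^m$, the other is $\varepsilon\,\mathrm{d}\eta\,(\mathrm{d}\varepsilon)^m$) and cannot cancel each other. The actual telescoping is between the $B$-contribution at index $m$, namely $(-1)^{m-1}p^{(n-r-m)L}\eta\,(\mathrm{d}\varepsilon)^m$, and the $\updelta$-contribution at index $m+1$, namely $(-1)^{m}p^{(n-r-m-1)L}\cdot p^L\,\eta\,(\mathrm{d}\varepsilon)^m=(-1)^{m}p^{(n-r-m)L}\eta\,(\mathrm{d}\varepsilon)^m$: same monomial, opposite sign, exact cancellation. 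The $\varepsilon\,\mathrm{d}\eta$ terms never participate in telescoping at all; they survive intact as $\sum_m (-1)^{m}p^{(n-r-m)L}\mathrm{d}\eta\,\varepsilon(\mathrm{d}\varepsilon)^{m-1}$ (after projection drops the top-degree piece) and are matched against $\Psi$ of the first coordinate of the cone differential, i.e.\ against $\Psi(-\mathrm{d}\theta_{2r-n+1},\dots)$. If you tried to pair terms as you wrote, the computation would not close. The framework is sound but the predicted cancellation pattern is wrong; redo the bookkeeping with the $\updelta$-vs-$B$ pairing and the $\mathrm{d}\eta$ terms kept separate.
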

\begin{proof}
We need to show $\Psi\circ \mathrm{d}=\mathbf{D}\circ \Psi$. We have
\begin{align*}
&\Psi\circ\mathrm{d}(\theta_{2r-n+1},\lambda_{2r-n})\\
={}&\Psi(- \mathrm{d}\theta_{2r-n+1},\theta_{2r-n+1}+ \mathrm{d}\lambda_{2r-n})\\
={}&\Psi\big(p^{(n-r-1)L}(-p^L \mathrm{d}\eta_{2r-n+1}),p^{(n-r)L}\eta_{2r-n+1}+ p^{(n-r+1)M}\mathrm{d}\kappa_{2r-n}\big)\\ 
={}&\Psi\big(p^{(n-r-1)L}(-p^L \mathrm{d}\eta_{2r-n+1}),p^{(n-r)M}(p^{(n-r)(L-M)}\eta_{2r-n+1}+ p^{M}\mathrm{d}\kappa_{2r-n})\big)\\ 
={}&\bigg(\sum_{m=1}^{n-r-1} (-1)^{m-1}p^{(n-r-1-m)L}(-p^L \mathrm{d}\eta_{2r-n+1})\varepsilon(\mathrm{d}\varepsilon)^{m-1}\\
&+p^{(n-r)L}\eta_{2r-n+1}+ p^{(n-r+1)M}\mathrm{d}\kappa_{2r-n},\\
&\sum_{m=1}^{n-r}(-1)^{m-1}p^{(n-r-m)M}(p^{(n-r)(L-M)}\eta_{2r-n+1}+ p^{M}\mathrm{d}\kappa_{2r-n})\varepsilon(\mathrm{d}\varepsilon)^{m-1}\bigg)\\
={}&\bigg(\sum_{m=1}^{n-r-1} (-1)^{m}p^{(n-r-m)L}\mathrm{d}\eta_{2r-n+1}\varepsilon(\mathrm{d}\varepsilon)^{m-1}
+p^{(n-r)L}\eta_{2r-n+1}+ \mathrm{d}\lambda_{2r-n},\\
&\quad\sum_{m=1}^{n-r}(-1)^{m-1}p^{(n-r)L-mM}\eta_{2r-n+1}\varepsilon(\mathrm{d}\varepsilon)^{m-1}
+\sum_{m=1}^{n-r}(-1)^{m-1}p^{(n-r-m+1)M}\mathrm{d}\kappa_{2r-n}\varepsilon(\mathrm{d}\varepsilon)^{m-1}\bigg).
\end{align*}
On the other hand, we have
\begin{align*}
& \mathbf{D}\circ \Psi (\theta_{2r-n+1},\lambda_{2r-n}) \\
={}&\bigg(\mathbf{D}\sum_{m=1}^{n-r} (-1)^{m-1}p^{(n-r-m)L}\eta_{2r-n+1}\varepsilon(\mathrm{d}\varepsilon)^{m-1}
+\mathbf{D}\lambda_{2r-n},\\
&\quad\quad \sum_{m=1}^{n-r} (-1)^{m-1}p^{(n-r-m)L}p^{m(L-M)}\eta_{2r-n+1}\varepsilon(\mathrm{d}\varepsilon)^{m-1}
+\lambda_{2r-n}\\
&\hspace{3cm}- \mathbf{D}\sum_{m=1}^{n-r+1}(-1)^{m-1}p^{(n-r-m+1)M}\kappa_{2r-n}\varepsilon(\mathrm{d}\varepsilon)^{m-1}\bigg).
\end{align*}
Recall that $\mathbf{D}=\mathrm{proj}\circ D$. 
Let us first compute 
\begin{eqnarray}\label{eq:map-Psi-complexes-D}
&\bigg(D\sum_{m=1}^{n-r} (-1)^{m-1}p^{(n-r-m)L}\eta_{2r-n+1}\varepsilon(\mathrm{d}\varepsilon)^{m-1}
+D\lambda_{2r-n},\\
&\quad\quad \sum_{m=1}^{n-r} (-1)^{m-1}p^{(n-r-m)L}p^{m(L-M)}\eta_{2r-n+1}\varepsilon(\mathrm{d}\varepsilon)^{m-1}
+\lambda_{2r-n}\nn\\
&\hspace{3cm}- D\sum_{m=1}^{n-r+1}(-1)^{m-1}p^{(n-r-m+1)M}\kappa_{2r-n}\varepsilon(\mathrm{d}\varepsilon)^{m-1}\bigg)\nn\\
=&\bigg(\sum_{m=1}^{n-r}  (-1)^{m-1}p^{(n-r-m)L}
\big(p^L\eta_{2r-n+1}(\mathrm{d}\varepsilon)^{m-1}- \mathrm{d}\eta_{2r-n+1}\varepsilon(\mathrm{d}\varepsilon)^{m-1}+\eta_{2r-n+1}(\mathrm{d}\varepsilon)^{m}\big)
+\mathrm{d}\lambda_{2r-n},\nn\\
 &\sum_{m=1}^{n-r}(-1)^{m-1}p^{(n-r)L-mM}\eta_{2r-n+1}\varepsilon(\mathrm{d}\varepsilon)^{m-1}
+\lambda_{2r-n}\nn\\
&-\sum_{m=1}^{n-r+1}(-1)^{m-1}p^{(n-r-m+1)M}
\big(p^M\kappa_{2r-n}(\mathrm{d}\varepsilon)^{m-1}- \mathrm{d}\kappa_{2r-n}\varepsilon(\mathrm{d}\varepsilon)^{m-1}+\kappa_{2r-n}(\mathrm{d}\varepsilon)^{m}\big)\bigg).\nn
\end{eqnarray}
We have cancellations
\begin{align*}
&\sum_{m=1}^{n-r}  (-1)^{m-1}p^{(n-r-m)L}
\big(p^L\eta_{2r-n+1}(\mathrm{d}\varepsilon)^{m-1}+\eta_{2r-n+1}(\mathrm{d}\varepsilon)^{m}\big)\\
={}&p^{(n-r)L}\eta_{2r-n+1}+(-1)^{n-r+1}\eta_{2r-n+1}(\mathrm{d}\varepsilon)^{n-r},
\end{align*}
and
\begin{align*}
&\sum_{m=1}^{n-r+1}(-1)^{m-1}p^{(n-r-m+1)M}
\big(p^M\kappa_{2r-n}(\mathrm{d}\varepsilon)^{m-1}+\kappa_{2r-n}(\mathrm{d}\varepsilon)^{m}\big)\\
={}&p^{(n-r+1)M}\kappa_{2r-n}+(-1)^{n-r}\kappa_{2r-n}(\mathrm{d}\varepsilon)^{n-r+1}.
\end{align*}
Thus the RHS of \eqref{eq:map-Psi-complexes-D} is equal to
\begin{align*}
&\bigg(\sum_{m=1}^{n-r} (-1)^{m}p^{(n-r-m)L}
\mathrm{d}\eta_{2r-n+1}\varepsilon(\mathrm{d}\varepsilon)^{m-1}
+p^{(n-r)L}\eta_{2r-n+1}\\
&\quad\quad +(-1)^{n-r+1}\eta_{2r-n+1}(\mathrm{d}\varepsilon)^{n-r}
+\mathrm{d}\lambda_{2r-n},\\
&\quad \sum_{m=1}^{n-r}  (-1)^{m-1}p^{(n-r)L-mM}\eta_{2r-n+1}\varepsilon(\mathrm{d}\varepsilon)^{m-1}
+\lambda_{2r-n}\\
&\quad\quad -\sum_{m=1}^{n-r+1}(-1)^{m}p^{(n-r-m+1)M}
\mathrm{d}\kappa_{2r-n}\varepsilon(\mathrm{d}\varepsilon)^{m-1}\\
&\quad\quad\quad -p^{(n-r+1)M}\kappa_{2r-n}-(-1)^{n-r}\kappa_{2r-n}(\mathrm{d}\varepsilon)^{n-r+1}\bigg).
\end{align*}
But we have
\begin{gather*}
\deg \mathrm{d}\eta_{2r-n+1}\varepsilon(\mathrm{d}\varepsilon)^{(n-r)-1}=\deg \eta_{2r-n+1}(\mathrm{d}\varepsilon)^{n-r}=n+1,\\
\deg \mathrm{d}\kappa_{2r-n}\varepsilon(\mathrm{d}\varepsilon)^{(n-r+1)-1}=\deg \kappa_{2r-n}(\mathrm{d}\varepsilon)^{n-r+1}=n+2,
\end{gather*}
thus these terms are eliminated by the projection. Moreover, $\lambda_{2r-n}=p^{(n-r+1)M}\kappa_{2r-n}$.
Therefore 
\begin{align*}
& \mathbf{D}\circ \Psi (\theta_{2r-n+1},\lambda_{2r-n}) \\
={}&\bigg(\sum_{m=1}^{n-r-1} (-1)^{m}p^{(n-r-m)L}
\mathrm{d}\eta_{2r-n+1}\varepsilon(\mathrm{d}\varepsilon)^{m-1}
+p^{(n-r)L}\eta_{2r-n+1}
+\mathrm{d}\lambda_{2r-n},\\
&\quad \sum_{m=1}^{n-r}  (-1)^{m-1}p^{(n-r)L-mM}\eta_{2r-n+1}\varepsilon(\mathrm{d}\varepsilon)^{m-1}
-\sum_{m=1}^{n-r}(-1)^{m}p^{(n-r-m+1)M}
\mathrm{d}\kappa_{2r-n}\varepsilon(\mathrm{d}\varepsilon)^{m-1}
\bigg).
\end{align*}
Hence $\Psi\circ \mathrm{d}=\mathbf{D}\circ \Psi$.
\end{proof}

\begin{lemma}\label{lem:rel-HC-nonreduced-mod-p}
Assume $L> M\geq  1$.   Then 
\begin{gather*}
H_n\left( \bigoplus_{r\geq 0}p^{r+1,M}_{r+1,L}\Omega^{\bullet}_A[2r];\mathbb{Z}/p\right)\cong \bigoplus_{s=0}^n \Omega^s_{A_1/\Bbbk}.
\end{gather*}
Moreover, 
%$\mathbf{HC}_n(A_L,A_M;\mathbb{Z}/p^l \mathbb{Z})$ is a $A_l$-module for $1\leq l\leq M$.
$H_n\big(p^{r,M}_{r,L}\Omega^{\bullet}_A;\mathbb{Z}/p^l\big)$ is a $A_l$-module for $1\leq l\leq M$.
\end{lemma}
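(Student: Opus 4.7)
The plan is to compute, for each $r \geq 0$, the derived reduction $p^{r+1,M}_{r+1,L}\Omega^{\bullet}_A \otimes^{\mathbf{L}}_{\mathbb{Z}}\mathbb{Z}/p$ explicitly, and then to sum contributions over $r$. The key input is the short exact sequence of complexes
\[
0 \longrightarrow p^{r+1,L}\Omega^{\leq r}_A \longrightarrow p^{r+1,M}\Omega^{\leq r}_A \longrightarrow p^{r+1,M}_{r+1,L}\Omega^{\bullet}_A \longrightarrow 0
\]
coming from \eqref{eq:quasi-iso-prML}. Because $A$ is smooth over $W=W(\Bbbk)$, each $\Omega^j_{A/W}$ is $p$-torsion free, so the first two complexes are degreewise $p$-torsion free and their derived mod-$p$ reductions agree with the naive quotients.

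Next I will show that those two reduced complexes are formal and that the map between them vanishes. Under the rescaling $p^{(r+1-j)L}\Omega^j_A/p^{(r+1-j)L+1}\Omega^j_A \xrightarrow{\sim} \Omega^j_{A_1/\Bbbk}$, the de Rham differential becomes $p^L\cdot d \equiv 0 \pmod p$; the analogous rescaling for $M$ yields $p^M\cdot d \equiv 0 \pmod p$. Hence both $p^{r+1,\ast}\Omega^{\leq r}_A \otimes^{\mathbf{L}}_{\mathbb{Z}}\mathbb{Z}/p$ for $\ast\in\{L,M\}$ are quasi-isomorphic to $\bigoplus_{j=0}^{r}\Omega^j_{A_1/\Bbbk}[-j]$. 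Moreover, the inclusion $p^{(r+1-j)L}\Omega^j\hookrightarrow p^{(r+1-j)M}\Omega^j$ rescales to multiplication by $p^{(r+1-j)(L-M)}$, which vanishes mod $p$ since $r+1-j\geq 1$ and $L-M\geq 1$. The induced distinguished triangle therefore splits and
\[
p^{r+1,M}_{r+1,L}\Omega^{\bullet}_A \otimes^{\mathbf{L}}_{\mathbb{Z}}\mathbb{Z}/p \;\simeq\; \bigoplus_{j=0}^{r}\Omega^j_{A_1/\Bbbk}[-j]\;\oplus\; \bigoplus_{j=0}^{r}\Omega^j_{A_1/\Bbbk}[-j+1].
\]

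After the shift by $2r$ and the identification $H_n = H^{-n}$, the first summand contributes $\Omega^{2r-n}_{A_1/\Bbbk}$ whenever $r\leq n\leq 2r$, and the second contributes $\Omega^{2r+1-n}_{A_1/\Bbbk}$ whenever $r+1\leq n\leq 2r+1$. Substituting $s=2r-n$ in the first case forces $s\equiv n\pmod{2}$ with $0\leq s\leq n$; substituting $s=2r+1-n$ in the second forces $s\not\equiv n\pmod{2}$ with $0\leq s\leq n-1$. These two sets partition $\{0,\ldots,n\}$ and each admissible $s$ is reached by a unique $r$, yielding the claimed $\bigoplus_{s=0}^{n}\Omega^s_{A_1/\Bbbk}$.

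The second assertion proceeds by the same argument with $\mathbb{Z}/p^l$ in place of $\mathbb{Z}/p$. The hypothesis $1\leq l\leq M$ guarantees $p^L \equiv p^M \equiv 0 \pmod{p^l}$, so the rescaled differentials on the two outer reduced complexes still vanish; each is therefore formal with terms $\Omega^j_{A_l/W_l}[-j]$, and the connecting map becomes multiplication by $p^{(r-j)(L-M)}$ on $\Omega^j_{A_l}$. Kernels, cokernels, and extensions of $A_l$-linear endomorphisms of $\Omega^j_{A_l}$ are $A_l$-modules, so the cohomology of the resulting distinguished triangle is an $A_l$-module. The main (and only) subtle step is the splitting of the distinguished triangle in the derived category, ensured by the vanishing of its connecting map; the rest is combinatorial bookkeeping.
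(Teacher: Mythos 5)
Your proof is correct and is essentially the same argument as the paper's: the short exact sequence you write down is exactly the cone presentation encoded in the paper's flat resolution \eqref{eq-infinitesimalMOtivicComplex-flatResolution}, and the two key observations (the de Rham differentials $p^L d$, $p^M d$ vanish mod $p$, and the transition maps $p^{(r+1-j)(L-M)}$ vanish mod $p$) are precisely what the paper reads off from that diagram to get \eqref{eq:rel-HC-nonreduced-modP-components}; your explicit splitting of the triangle and the parity bookkeeping recover the paper's sum. One small imprecision in the second half: "extensions of $A_l$-linear endomorphisms ... are $A_l$-modules" is not quite the right invocation (an abelian-group extension of two $A_l$-modules need not be an $A_l$-module); the clean statement is that after rescaling the cone becomes a complex whose terms are $A_l$-modules and whose differentials are $A_l$-linear (multiplication by the scalars $p^{(r-j)(L-M)}$), so its cohomology is automatically an $A_l$-module — equivalently, your own explicit computation shows the cohomology in each degree is a \emph{direct sum} (not merely an extension) of a kernel and a cokernel of $A_l$-linear scalar maps. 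Also note your closing sentence about "the main (and only) subtle step" being the splitting only applies to the mod $p$ case; for $\mathbb{Z}/p^l$ you correctly do not rely on splitting.
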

\begin{proof}
The quasi-isomorphism \eqref{eq:quasi-iso-prML} gives the following flat resolution of $p^{r,M}_{r,L}\Omega^{\bullet}_A$:
\begin{flalign}\label{eq-infinitesimalMOtivicComplex-flatResolution}
\xymatrix{
A\ar[r]^{p^L \mathrm{d}} \ar[d]_{p^{r(L-M)}} &  \Omega^1_{A/W}\ar[r]^{p^L \mathrm{d}} \ar[d]_{p^{(r-1)(L-M)}}  &  \Omega^2_{A/W}\ar[r]^{p^L \mathrm{d}} \ar[d]_{p^{(r-2)(L-M)}} & 
\dots \ar[r]^<<<{p^L \mathrm{d}} &  \Omega_{A/W}^{r-1} \ar[d]^{p^{L-M}} \\
 A\ar[r]^{p^M \mathrm{d}} \ar[d]_{p^{rM}}
 &  \Omega^1_{A/W}\ar[r]^{p^M \mathrm{d}} \ar[d]_{p^{(r-1)M}} &  \Omega^2_{A/W}\ar[r]^{p^M \mathrm{d}} \ar[d]_{p^{(r-2)M}} & 
\dots \ar[r]^<<<{p^M\mathrm{d}} &  \Omega_{A/W}^{r-1} \ar[d]^{p^M} \\
  p^{rM} A_{rL}\ar[r]^<<<{\mathrm{d}} &  p^{(r-1)M}\Omega^1_{A_{(r-1)L}/W_{(r-1)L}}\ar[r]^{\mathrm{d}}  &  p^{(r-2)M}\Omega^2_{A_{(r-2)L}/W_{(r-2)L}}\ar[r]^<<<{\mathrm{d}} & 
\dots \ar[r]^<<<{\mathrm{d}} &  p^M\Omega_{A_{L}/W_{L}}^{r-1}
}
\end{flalign}
Thus for $L>M\geq 1$ there are canonical isomorphisms
\begin{gather}\label{eq:rel-HC-nonreduced-modP-components}
H^i\big(p^{r,M}_{r,L}\Omega^{\bullet}_A\otimes_{\mathbb{Z}}^{\mathbf{L}}\mathbb{Z}/p\big)\cong\begin{cases}
\quad\quad\quad\quad  A_1,& i=-1,\\
\Omega^{i}_{A_1/\Bbbk}\oplus \Omega^{i+1}_{A_1/\Bbbk},& 0\leq i\leq r-2,\\
\Omega^{r-1}_{A_1/\Bbbk},& i=r-1.
\end{cases}
\end{gather}
Hence
\begin{align*}
& H_n\left( \bigoplus_{r\geq 0}p^{r+1,M}_{r+1,L}\Omega^{\bullet}_A[2r];\mathbb{Z}/p\right)=\bigoplus_{r\geq 0}
H^{-n+2r}\big(p^{r+1,M}_{r+1,L}\Omega^{\bullet}_A\otimes_{\mathbb{Z}}^{\mathbf{L}}\mathbb{Z}/p\big)\\
& \cong \bigoplus_{r\leq n-1}(\Omega^{-n+2r}_{A_1/\Bbbk}\oplus \Omega^{-n+2r+1}_{A_1/\Bbbk})\oplus \Omega^{n}_{A_1/\Bbbk}
=\bigoplus_{s=0}^n \Omega^s_{A_1/\Bbbk},
\end{align*}
where we have set  $\Omega^{i}_{A_1/\Bbbk}=0$ if $i<0$.
\end{proof}
\begin{remark}\label{rem:p(r)Complex-modP}
It seems to be an essential feature of $p^{r,M}_{r,L}\Omega_{A}^{\bullet}$ that its mod $p$ cohomology are $A_1$-modules. In contrast, 
%Let $p(r)\Omega_{A}^{\bullet}$ be 
the complex $p^{r} A_n \xrightarrow{\mathrm{d}}  p^{r-1}\Omega^1_{A_{n}/W_{n}}
 \xrightarrow{\mathrm{d}}  p^{r-2}\Omega^2_{A_{n}/W_{n}} \xrightarrow{\mathrm{d}}  \dots $  after  sheafification and taking limits $n \rightarrow \infty$ also becomes the pro-complex $p(r)\Omega_{X_{\centerdot}}^{\bullet}$ in \cite[\S 2]{BEK14}, while its  mod $p$ cohomology groups  are in general  not $A_1$-modules. 
\end{remark}

\begin{theorem}\label{thm:rel-HC-homotopyEquiv}
The morphism of complexes
\begin{gather*}
\Psi:
\bigoplus_{r=0}^{+\infty}
\mathrm{Cone}\big(p^{r+1,L}\Omega^{<r}_{A}\rightarrow p^{r+1,M}\Omega^{<r}_{A}\big)[2r] \rightarrow \mathbf{CC}^{\bullet}(A_L,A_M),
\end{gather*}  
which in each direct summand is defined in Lemma \ref{lem:map-Psi-complexes-twist(r)-component}, is a quasi-isomorphism.
\end{theorem}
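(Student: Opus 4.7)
The plan is to reduce the claim to its mod-$p$ version and then verify it directly using Lemma \ref{lem:rel-HC-nonreduced-mod-p}. In each fixed degree $n$, only finitely many $r$-summands of the source contribute (those with $r\leq n/2$), so the issue is a degreewise finiteness check.

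\textbf{Step 1 (reduction to mod $p$).} Both the source and target of $\Psi$ have cohomology that is $p$-primary torsion of finite exponent: for the target this is Proposition \ref{prop:relHC-relBoldHC-bounded-pExp-Isom}(ii), and for the source it follows from the definition of $p^{r+1,M}_{r+1,L}\Omega^{\bullet}_A$ together with the quasi-isomorphism \eqref{eq:quasi-iso-prML}. Applying $\otimes^{\mathbf{L}}\mathbb{Z}/p$ to $\mathrm{Cone}(\Psi)$, the hypothesis that $\Psi$ is a mod-$p$ quasi-isomorphism forces each $H^i(\mathrm{Cone}(\Psi))$ to be $p$-divisible; combined with bounded $p$-exponent this gives vanishing. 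Hence it suffices to show that $\Psi\otimes^{\mathbf{L}}\mathbb{Z}/p$ is a quasi-isomorphism.

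\textbf{Step 2 (the source modulo $p$).} By \eqref{eq:quasi-iso-prML} and \eqref{eq:rel-HC-nonreduced-modP-components}, each summand $\mathrm{Cone}\big(p^{r+1,L}\Omega^{<r}_A\to p^{r+1,M}\Omega^{<r}_A\big)[2r]$ contributes in mod-$p$ degree $n$ the summand $\Omega^{2r-n+1}_{A_1/\Bbbk}\oplus \Omega^{2r-n}_{A_1/\Bbbk}$ (with the convention that negative-degree differentials vanish), and summing over $r$ recovers the explicit expression $\bigoplus_{s=0}^{n}\Omega^s_{A_1/\Bbbk}$ of Lemma \ref{lem:rel-HC-nonreduced-mod-p}. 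In particular, the generators can be labelled by the explicit representatives $\eta_{2r-n+1}\in \Omega^{2r-n+1}_{A_1/\Bbbk}$ and $\kappa_{2r-n}\in \Omega^{2r-n}_{A_1/\Bbbk}$ that appear in the definition of $\Psi$.

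\textbf{Step 3 (the target modulo $p$ and compatibility).} To compute $H_n\bigl(\mathbf{CC}^\bullet(A_L,A_M)\otimes^{\mathbf{L}}\mathbb{Z}/p\bigr)$ I will filter the bicomplex by the $\varepsilon$-column degree (equivalently by the number of $\mathrm{d}\varepsilon$-factors). The associated graded is a direct sum of Koszul-type complexes, whose $E_1$-page is computed by Theorem \ref{thm:formality} applied to $A_L$ and $A_M$; the $d_1$-differential is induced by $\updelta$, and after taking cones in the two Witt-vector levels and tracking the transition maps \eqref{eq:transitionMaps}, one obtains a canonical mod-$p$ decomposition isomorphic to $\bigoplus_{s=0}^n\Omega^s_{A_1/\Bbbk}$. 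The formula in Lemma \ref{lem:map-Psi-complexes-twist(r)-component} sends
\begin{equation*}
(\theta_{2r-n+1},\lambda_{2r-n})=(p^{(n-r)L}\eta_{2r-n+1},\,p^{(n-r+1)M}\kappa_{2r-n})
\end{equation*}
to a linear combination with leading term $\eta_{2r-n+1}\varepsilon(\mathrm{d}\varepsilon)^{n-r-1}$ on the $A_L$-row and $\kappa_{2r-n}\varepsilon(\mathrm{d}\varepsilon)^{n-r}$ on the $A_M$-row, both of which lie in the top piece of the filtration; reducing modulo $p$ kills all other terms (they carry positive powers of $p$) and exhibits $\Psi$ as the identification of Step 2 with the identification produced by the spectral sequence, summand by summand.

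The main obstacle will be Step 3: the bicomplex mixes the differential $\updelta+B$ with a cone across two Witt-vector levels, and one must simultaneously keep track of the column degree, the power of $\varepsilon$, and the $p$-adic valuations forced by \eqref{eq:transitionMaps}. The column-filtration spectral sequence reduces this bookkeeping to a clean $d_1$-computation and matches the explicit leading-order behaviour of $\Psi$ to the explicit mod-$p$ generators produced by Lemma \ref{lem:rel-HC-nonreduced-mod-p}, thereby completing the argument.
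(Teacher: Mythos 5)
Steps 1 and 2 match the paper: the reduction to $\Psi\otimes^{\mathbf{L}}\mathbb{Z}/p$ via bounded $p$-exponents, and the mod-$p$ computation of the source using Lemma \ref{lem:rel-HC-nonreduced-mod-p} and \eqref{eq:quasi-iso-prML}, are exactly the paper's opening moves.

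Step 3, however, has a genuine gap. The paper computes $\mathbf{HC}_n(A_L,A_M;\mathbb{Z}/p)$ by a direct, explicit calculation: it writes out the cycles and boundaries of $\mathbf{CC}(A_L,A_M)\otimes\mathbb{Z}/p$, simplifies the relations \eqref{rel-nonreduced-1}--\eqref{rel-nonreduced-8} modulo $p$, solves the resulting linear systems \eqref{eq-linearEq-relHC-nonreduced-1-mod-p} and \eqref{eq-linearEq-relHC-nonreduced-2-mod-p}, and reads off \eqref{eq:rel-HC-nonreduced-modP}; comparing representatives with \eqref{eq-infinitesimalMOtivicComplex-flatResolution} then shows $\Psi/p$ is an isomorphism. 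Your Step 3 replaces this with a column-filtration spectral-sequence argument, but the argument as stated does not hold up. First, Theorem \ref{thm:formality} is not applicable: that theorem compares the \emph{derived} Hochschild complex $\widetilde{\mathrm{HH}}(A/\ell)$ with the formal complex $\mathbf{C}(A/\ell)$; here we are already working inside $\mathbf{CC}(A_L,A_M)$, which is built entirely out of Kähler differentials, so there is no Hochschild homology left to formalize and no $E_1$-page for it to compute. Second, the differential attribution is reversed: under a filtration by $\mathrm{d}\varepsilon$-degree, $\updelta$ preserves the column degree (it strips $\varepsilon$ and multiplies by a $p$-power, leaving the number of $\mathrm{d}\varepsilon$ factors unchanged), while it is a component of $B$ that raises it; moreover $\updelta\equiv 0$ and the transition maps \eqref{eq:transitionMaps} vanish mod $p$ for $m\geq 1$ but are the identity for $m=0$, so the interaction of the vertical differential with the filtration has to be tracked carefully and is not captured by the phrase "the $d_1$-differential is induced by $\updelta$." Third, you do not verify degeneration or rule out higher differentials. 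The spectral-sequence route could in principle be made to work, but what you have written would not survive a line-by-line check; the paper's elementary direct computation is both correct and no longer than a properly justified spectral-sequence argument would be, and I would recommend following it.
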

\begin{proof}
By Proposition \ref{prop:relHC-relBoldHC-bounded-pExp-Isom}(ii), the homology groups of the RHS are $p$-primary groups of finite exponents.
The cohomology groups of the LHS are also $p$-primary groups of finite exponents because of the quasi-isomorphism \eqref{eq:quasi-iso-prML} and the components of $p^{r,M}_{r,L}\Omega^{\bullet}$ are already $p$-primary groups of finite exponents. Thus it suffices to show that $\Psi\otimes^{\mathbf{L}}\mathbb{Z}/p$ is a quasi-isomorphism.
We first compute $\mathbf{HC}_n(A_L,A_M;\mathbb{Z}/p)$ in several steps. In this proof, we denote $\Omega_{A/W}^{i}$ by $\Omega^i$ for brevity.

\noindent\textbf{Step 1:} 
By the definition, $\mathbf{HC}_n(A_L,A_M)$ is equal to the following  quotient group:
the group  
\begin{gather}\label{eq:relativeHC-numerator}
\begin{array}{c}
\left\{\omega=\left.\begin{array}{c}\sum_{m=1}^{
\lfloor \frac{n+1}{2}\rfloor}\sum_{n-s-2m+1\in 2\mathbb{Z}_{\geq 0}} \alpha_{m;s}\varepsilon(\mathrm{d}\varepsilon)^{m-1}\\
+\sum_{m=0}^{\lfloor \frac{n}{2}\rfloor}\sum_{n-s-2m\in 2\mathbb{Z}_{\geq 0}}\beta_{m;s}(\mathrm{d}\varepsilon)^{m}
\end{array}\right| \alpha_{m;s},\beta_{m;s}\in\Omega^s\right\}\\
\oplus\left\{\upsilon=\left.\begin{array}{c}\sum_{m=1}^{
\lfloor \frac{n+2}{2}\rfloor}\sum_{n-s-2m+2\in 2\mathbb{Z}_{\geq 0}}\gamma_{m;s}\varepsilon(\mathrm{d}\varepsilon)^{m-1}\\
+\sum_{m=0}^{\lfloor \frac{n+1}{2}\rfloor}\sum_{n-s-2m+1\in 2\mathbb{Z}_{\geq 0}} \delta_{m;s}(\mathrm{d}\varepsilon)^m
\end{array}\right|  \gamma_{m;s},\delta_{m;s}\in\Omega^s\right\}
\end{array}
\end{gather}
consisting of  the elements $(\omega,\upsilon)$ satisfying 
\begin{gather*}
\mathbf{D}(\omega)=0,\ \mbox{and}\
\mathbf{D}(\upsilon)=\overline{\omega},
\end{gather*}
modulo the subgroup generated by 
%the relations
\begin{gather}
\Big(\mathbf{D}\big(\alpha_{m;s} \varepsilon(\mathrm{d}\varepsilon)^{m-1}\big),\overline{\alpha_{m;s}\varepsilon(\mathrm{d}\varepsilon)^{m-1}}\Big)\ \mbox{for}\ 1\leq m\leq \lfloor \frac{n}{2}\rfloor,n-s-2m\in 2\mathbb{Z}_{\geq 0},\alpha_{m;s}\in \Omega^s;\label{rel-nonreduced-1}\tag{rel-1}\\
\Big(\mathbf{D}\big(\beta_{m;s}(\mathrm{d}\varepsilon)^{m}\big),\overline{\beta_{m;s}(\mathrm{d}\varepsilon)^{m}}\Big)\ \mbox{for}\ 0\leq m\leq \lfloor \frac{n-1}{2}\rfloor, n-s-2m-1\in 2\mathbb{Z}_{\geq 0},\beta_{m;s}\in \Omega^s;\label{rel-nonreduced-2}\tag{rel-2}\\
\Big(\updelta\big(\alpha_{m;s} \varepsilon(\mathrm{d}\varepsilon)^{m-1}\big),\overline{\alpha_{m;s}\varepsilon(\mathrm{d}\varepsilon)^{m-1}}\Big)\ \mbox{for}\ 1\leq m\leq \lfloor \frac{n+2}{2}\rfloor, s+2m=n+2,\alpha_{m;s}\in \Omega^s;\label{rel-nonreduced-3}\tag{rel-3}\\
\Big(\updelta\big(\beta_{m;s}(\mathrm{d}\varepsilon)^{m}\big),\overline{\beta_{m;s}(\mathrm{d}\varepsilon)^{m}}\Big)\ \mbox{for}\ 0\leq m\leq \lfloor \frac{n+1}{2}\rfloor,s+2m=n+1,\beta_{m;s}\in \Omega^s;\label{rel-nonreduced-4}\tag{rel-4}\\
\Big(0,\mathbf{D}\big(\gamma_{m;s}\varepsilon(\mathrm{d}\varepsilon)^{m-1}\big)\Big)\ \mbox{for}\ 1\leq m\leq \lfloor \frac{n+1}{2}\rfloor, n+1-s-2m\in 2\mathbb{Z}_{\geq 0},\gamma_{m;s}\in \Omega^s;\label{rel-nonreduced-5}\tag{rel-5}\\
\Big(0,\mathbf{D}\big(\delta_{m;s}(\mathrm{d}\varepsilon)^m\big)\Big)\ \mbox{for}\ 0\leq m\leq \lfloor \frac{n}{2}\rfloor, n-s-2m\in 2\mathbb{Z}_{\geq 0},\delta_{m;s}\in \Omega^s;\label{rel-nonreduced-6}\tag{rel-6}\\
\Big(0,\updelta\big(\gamma_{m;s}\varepsilon(\mathrm{d}\varepsilon)^{m-1}\big)\Big)\ \mbox{for}\ 1\leq m\leq \lfloor \frac{n+3}{2}\rfloor,s+2m=n+3,\gamma_{m;s}\in \Omega^s;\label{rel-nonreduced-7}\tag{rel-7}\\
\Big(0,\updelta\big(\delta_{m;s}(\mathrm{d}\varepsilon)^m\big)\Big)\ \mbox{for}\ 0\leq m\leq \lfloor \frac{n+2}{2}\rfloor,s+2m=n+2,\delta_{m;s}\in \Omega^s.\label{rel-nonreduced-8}\tag{rel-8}
\end{gather}
Since  $\Big(0,\updelta\big(\delta_{m;s}(\mathrm{d}\varepsilon)^m\big)\Big)=0$, \eqref{rel-nonreduced-8} is trivial.
Mod $p$, \eqref{rel-nonreduced-3} and \eqref{rel-nonreduced-7} become trivial, and the remaining five relations become
\begin{gather}%\label{eq-relative-HC-relations-1}
\left(-\varepsilon(\mathrm{d}\alpha_{m;s})(\mathrm{d}\varepsilon)^{m-1}+\alpha_{m;s}(\mathrm{d}\varepsilon)^{m},0\right)\label{rel-nonreduced-1-mod-$p$}\tag{rel-1-mod-$p$},\
 \mbox{for}\ 1\leq m\leq \lfloor \frac{n}{2}\rfloor.\nn\\
(\mathrm{d}\beta_{0;s},\beta_{0;s}),\ 
\mbox{and}\ 
\left(\mathrm{d}\beta_{m;s}(\mathrm{d}\varepsilon)^{m},0\right)\ \mbox{for}\ 1\leq m\leq \lfloor \frac{n-1}{2}\rfloor.\label{rel-nonreduced-2-mod-$p$}\tag{rel-2-mod-$p$}\\
\big(0,\beta_{0;n+1}\big).\label{rel-nonreduced-4-mod-$p$}\tag{rel-4-mod-$p$}\nn\\
\big(0,-\varepsilon(\mathrm{d}\gamma_{m;s})(\mathrm{d}\varepsilon)^{m-1}+\gamma_{m;s}(\mathrm{d}\varepsilon)^{m}\big),\label{rel-nonreduced-5-mod-$p$}\tag{rel-5-mod-$p$}\ 
\mbox{for}\ 1\leq m\leq \lfloor \frac{n+1}{2}\rfloor.\nn\\
\big(0, (\mathrm{d}\delta_{m;s})(\mathrm{d}\varepsilon)^m\big),\label{rel-nonreduced-6-mod-$p$}\tag{rel-6-mod-$p$}\
\mbox{for}\ 0\leq m\leq \lfloor \frac{n}{2}\rfloor.\nn
\end{gather}

\noindent\textbf{Step 2:}
%\begin{lemma}\label{lem-relativeHC-relation-nonreduced-1}
In $\mathbf{HC}_n(A_L,A_M; \mathbb{Z}/p)$, by \eqref{rel-nonreduced-1-mod-$p$} we obtain, for $1\leq m\leq \lfloor \frac{n}{2}\rfloor$,
\begin{align*}
\big(\alpha(\mathrm{d}\varepsilon)^m,0\big)=\left(\varepsilon\mathrm{d}\alpha(\mathrm{d}\varepsilon)^{m-1},0\right).
\end{align*}
Then \eqref{rel-nonreduced-2-mod-$p$} reduces to $(\mathrm{proj}\circ\mathrm{d}\alpha,\alpha)=0$ for $\alpha\in \Omega^{s}$ with $n-s-1\in 2 \mathbb{Z}_{\geq 0}$.
By \eqref{rel-nonreduced-5-mod-$p$} we obtain, for $1\leq m\leq \lfloor \frac{n+1}{2}\rfloor$,
\begin{align*}%\label{eq:relativeHC-relation-nonreduced-5-modP}
\big(0,\alpha(\mathrm{d}\varepsilon)^m\big)&=\left(0,\varepsilon\mathrm{d}\alpha(\mathrm{d}\varepsilon)^{m-1}\right).
\end{align*}
Then (\ref{rel-nonreduced-6-mod-$p$}) reduces to $(0,\mathrm{d}\alpha)=0$. Thus together with \eqref{rel-nonreduced-4-mod-$p$}, the effect of the mod $p$ relations is the elimination of the terms $\beta_{m;s}(\mathrm{d}\varepsilon)^{m}$ for $m\geq 1$ in the first summand, and $\delta_{m;s}(\mathrm{d}\varepsilon)^{m}$ for $m\geq 0$ in the second summand in \eqref{eq:relativeHC-numerator}.  It follows that
\begin{flalign*}
%\label{lem-relativeHC-presentation-1}
&\mathbf{HC}_n(A_L,A_M;\mathbb{Z}/p)\\
\cong&\left\{\left.\begin{array}{c}
\left\{\omega=\left.\begin{array}{c}\sum_{m=1}^{
\lfloor \frac{n+1}{2}\rfloor}\sum_{n-s-2m+1\in 2\mathbb{Z}_{\geq 0}} \varepsilon\alpha_{m;s}(\mathrm{d}\varepsilon)^{m-1}\\
+\sum_{n-s\in 2\mathbb{Z}_{\geq 0}}\beta_{0;s}
\end{array}\right| \alpha_{m;s},\beta_{0;s}\in\Omega^s/p\Omega^s\right\}\\
\oplus\left\{\upsilon=\left.\begin{array}{c}\sum_{m=1}^{
\lfloor \frac{n+2}{2}\rfloor}\sum_{n-s-2m+2\in 2\mathbb{Z}_{\geq 0}}\varepsilon\gamma_{m;s}(\mathrm{d}\varepsilon)^{m-1}
\end{array}\right|  \gamma_{m;s}\in\Omega^s/p\Omega^s\right\}
\end{array}\right| \begin{array}{c}
\mathbf{D}(\omega)\equiv0 \\
\mathbf{D}(\upsilon)\equiv\overline{\omega}\\ \mod p\end{array} \right\}.\nn
\end{flalign*}

\noindent\textbf{Step 3:} Now we compute $\mathbf{D}(\omega)$ and $\mathbf{D}(\upsilon)$.
%\begin{proof}
We have
\begin{align*}
&\mathbf{D}(\upsilon)=\mathrm{pr}_{\leq n}\circ D\big(\sum_{m=1}^{\lfloor \frac{n+2}{2}\rfloor}\sum_{s\in n-2m+2- 2\mathbb{Z}_{\geq 0}}\varepsilon\gamma_{m;s}(\mathrm{d}\varepsilon)^{m-1}\big)\\
\equiv& -\sum_{m=1}^{\lfloor \frac{n}{2}\rfloor}\sum_{s\in n-2m- 2\mathbb{Z}_{\geq 0}}\varepsilon(\mathrm{d}\gamma_{m;s})(\mathrm{d}\varepsilon)^{m-1}
+\sum_{m=1}^{\lfloor \frac{n}{2}\rfloor}\sum_{s\in n-2m- 2\mathbb{Z}_{\geq 0}}\gamma_{m;s}(\mathrm{d}\varepsilon)^{m}\ \mod p
\end{align*}
and
\begin{align*}
&\mathbf{D}(\omega)=\mathrm{pr}_{\leq n-1}\circ D\big(\sum_{m=1}^{\lfloor \frac{n+1}{2}\rfloor}\sum_{s\in n-2m+1- 2\mathbb{Z}_{\geq 0}}\varepsilon\alpha_{m;s}(\mathrm{d}\varepsilon)^{m-1}+\sum_{s\in n- 2\mathbb{Z}_{\geq 0}}\beta_{0;s}\big)\\
\equiv{}& -\sum_{m=1}^{\lfloor \frac{n-1}{2}\rfloor}\sum_{s\in n-2m-1- 2\mathbb{Z}_{\geq 0}}\varepsilon(\mathrm{d}\alpha_{m;s})(\mathrm{d}\varepsilon)^{m-1}\\
&+\sum_{m=1}^{\lfloor \frac{n-1}{2}\rfloor}\sum_{s\in n-2m-1- 2\mathbb{Z}_{\geq 0}}\alpha_{m;s}(\mathrm{d}\varepsilon)^{m}
+\sum_{s\in n-2- 2\mathbb{Z}_{\geq 0}}\mathrm{d}\beta_{0;s}\ \mod p\quad.
\end{align*}
Thus $\mathbf{D}(\upsilon)\equiv\overline{\omega}\mod p$ is equivalent to 
\begin{align*}%\label{eq-linearEq-relHC-nonreduced-1-mod-p}
\begin{cases}
\gamma_{m;s}\equiv0\mod p,& \mbox{for}\ 1\leq m\leq \lfloor \frac{n}{2}\rfloor,\ s\in n-2m-2\mathbb{Z}_{\geq 0};\\
\beta_{0;s}\equiv0\mod p,& \mbox{for}\ s\in n- 2\mathbb{Z}_{\geq 0}.
\end{cases}
\end{align*}
Similarly, $\mathbf{D}(\omega)\equiv0 \mod p$ is equivalent to
\begin{align*}%\label{eq-linearEq-relHC-nonreduced-2-mod-p}
\begin{cases}
\alpha_{m;s}\equiv0\mod p,& \mbox{for}\ 1\leq m\leq \lfloor \frac{n-1}{2}\rfloor,\ s\in n-2m-1- 2\mathbb{Z}_{\geq 0};\\
\mathrm{d}\beta_{0;s-1}\equiv0\mod p,& \mbox{for}\ s\in n-1- 2\mathbb{Z}_{\geq 0},
\end{cases}
\end{align*}
Hence
\begin{flalign}\label{eq:rel-HC-nonreduced-modP}
&\mathbf{HC}_n(A_L,A_M;\mathbb{Z}/p)\\
\cong&\left\{
\bigg(\sum_{m=0}^{\lfloor \frac{n-1}{2}\rfloor} \varepsilon\alpha_{n-1-2m}(\mathrm{d}\varepsilon)^{m}
,\sum_{m=0}^{\lfloor \frac{n}{2}\rfloor}\varepsilon\gamma_{n-2m}(\mathrm{d}\varepsilon)^{m}\bigg)
\bigg| \alpha_{s},\gamma_s\in\Omega^s/p\Omega^s \right\}\nn\\
={}&\bigoplus_{i\geq 0}\Omega^{n-1-2i}_{A_1/\Bbbk}\oplus \bigoplus_{i\geq 0}\Omega^{n-2i}_{A_1/\Bbbk}\quad.\nn
\end{flalign}
This completes the computation. From \eqref{eq-infinitesimalMOtivicComplex-flatResolution} and \eqref{eq:rel-HC-nonreduced-modP}, it is clear that $\Psi/p$ induces isomorphisms on homology groups. So the theorem is proved.
\end{proof}

\begin{corollary}\label{cor:rel-HC-nonreduced-overZ}
For any integer $n\geq 0$ and integers $L>M\geq 1$, there is a canonical isomorphism
\begin{align}\label{eq:rel-HC-nonreduced-overZ}
\mathbf{HC}_n(A_L,A_M)
\cong & \bigoplus_{s\in n-2 \mathbb{Z}_{\geq 0}}\frac{\{\omega\in p^{(\frac{n-s}{2}+1)M} \Omega^{s}_{A/W}| \mathrm{d}\omega\in p^{\frac{n-s}{2}L} \Omega^{s+1}_{A/W}\}}{p^{(\frac{n-s}{2}+1)L} \Omega^{s}_{A/W}+\mathrm{d}\big(p^{(\frac{n-s}{2}+2)M} \Omega^{s-1}_{A/W}\big)}.
\end{align}
\end{corollary}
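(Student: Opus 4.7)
The plan is to combine the quasi-isomorphism $\Psi$ of Theorem \ref{thm:rel-HC-homotopyEquiv} with the quasi-isomorphism \eqref{eq:quasi-iso-prML} identifying each mapping cone with $p^{r+1,M}_{r+1,L}\Omega^{\bullet}_{A}$. Passing to homology in chain degree $n$ (equivalently, to cohomology in degree $-n$), and using that cohomology commutes with direct sums and with the shift $[2r]$, I obtain a canonical decomposition
\[
\mathbf{HC}_n(A_L,A_M) \;\cong\; \bigoplus_{r\geq 0} H^{2r-n}\bigl(p^{r+1,M}_{r+1,L}\Omega^{\bullet}_{A}\bigr).
\]
The problem thus reduces to computing the cohomology of the explicit finite complex $p^{r+1,M}_{r+1,L}\Omega^{\bullet}_A$, which is concentrated in cohomological degrees $0,1,\dots,r$.

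For this computation I would present the complex as the termwise quotient of the subcomplex inclusion $p^{r+1,L}\Omega^{\leq r}_{A}\hookrightarrow p^{r+1,M}\Omega^{\leq r}_{A}$ inside the de Rham complex, using the identification $p^{(r+1-j)M}\Omega^{j}_{A/W}/p^{(r+1-j)L}\Omega^{j}_{A/W}\cong p^{(r+1-j)M}\Omega^{j}_{A_{(r+1-j)L}/W_{(r+1-j)L}}$ (available because $\Omega^{j}_{A/W}$ is $p$-torsion free). A direct cocycle/coboundary analysis then gives, for $0\leq j\leq r$,
\[
H^{j}\bigl(p^{r+1,M}_{r+1,L}\Omega^{\bullet}_{A}\bigr) \;=\; \frac{\{\omega\in p^{(r+1-j)M}\Omega^{j}_{A/W}\mid \mathrm{d}\omega\in p^{(r-j)L}\Omega^{j+1}_{A/W}\}}{p^{(r+1-j)L}\Omega^{j}_{A/W}+\mathrm{d}\bigl(p^{(r+2-j)M}\Omega^{j-1}_{A/W}\bigr)},
\]
with the understanding that at the top degree $j=r$ the constraint on $\mathrm{d}\omega$ becomes vacuous, matching the exponent $p^{0}=1$ that occurs in the statement.

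It remains only to reindex. Setting $s=j=2r-n$ and $i=n-r=(n-s)/2$, the range $0\leq j\leq r$ becomes $0\leq i\leq \lfloor n/2\rfloor$, so as $r$ varies over the nonnegative integers the exponent $s$ runs through exactly the nonnegative elements of $n-2\mathbb{Z}_{\geq 0}$ (summands with $s<0$ are automatically zero). Substituting $r+1-j=i+1$, $r-j=i$, and $r+2-j=i+2$ into the formula above yields the summand indexed by $s$ in the statement. The whole argument is essentially bookkeeping once Theorem \ref{thm:rel-HC-homotopyEquiv} is granted, so I do not anticipate any serious obstacle at this step.
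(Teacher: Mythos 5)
Your argument is correct and is exactly the paper's intended (though implicit) derivation of this corollary: the paper states it as a consequence of Theorem \ref{thm:rel-HC-homotopyEquiv} without further proof, and the intermediate decomposition $\mathbf{HC}_n(A_L,A_M)\cong\bigoplus_{r\geq 0}H^{2r-n}\big(p^{r+1,M}_{r+1,L}\Omega^{\bullet}_{A}\big)$ that you pass through is precisely the reindexed version of \eqref{eq:rel-HC-decomposition-intro} in the introduction. The termwise quotient description of $p^{r+1,M}_{r+1,L}\Omega^{\bullet}_A$, the cocycle/coboundary computation, and the change of variables $s=2r-n$ are all correct.
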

The underwaved terms in the following proposition will be used explicitly in Propositions \ref{prop:K1-multiplication-on-rel-HC} and \ref{prop:BottElement-multiplication-on-rel-HC}.

\begin{proposition}\label{prop:decomposition-HHbf-HCbf}
Let $n\geq 0$ be an integer. 
\begin{enumerate}[(i)]
  \item There is a canonical isomorphism
\begin{gather}\label{eq:HHbf-rel-mod-p-decomp}
\mathbf{HH}_n(A_L,A_M;\mathbb{Z}/p)\cong \bigoplus_{i=0}^{n-1}\Omega_{A_1/\Bbbk}^i \oplus \bigoplus_{i=0}^{n}\Omega_{A_1/\Bbbk}^i,
\end{gather}
 which is characterized by the following choices of representatives:
    \begin{align*}
    &\uwave{\big(\varepsilon \alpha (\mathrm{d}\varepsilon)^{m-1},0\big) \mapsto (\alpha \mod p,0)} &  \mbox{for}\ m\geq 1\ \mbox{and}\ \alpha\in \Omega_{A/W}^{n+1-2m},\\
    &\big(\alpha (\mathrm{d}\varepsilon)^{m},0\big) \mapsto (\alpha \mod p,0) & \mbox{for}\ m\geq 1\ \mbox{and}\ \alpha\in \Omega_{A/W}^{n-2m},\\
    &\uwave{\big(0,\varepsilon \alpha (\mathrm{d}\varepsilon)^{m-1}\big) \mapsto (0,\alpha \mod p)} &  \mbox{for}\ m\geq 1\ \mbox{and}\ \alpha\in \Omega_{A/W}^{n+2-2m},\\
    &\big(0,\alpha (\mathrm{d}\varepsilon)^{m}\big) \mapsto (0,\alpha \mod p) & \mbox{for}\ m\geq 1\ \mbox{and}\ \alpha\in \Omega_{A/W}^{n+1-2m}.
    \end{align*}
  \item There is a canonical isomorphism
\begin{gather}\label{eq:HCbf-rel-mod-p-decomp}
\mathbf{HC}_n(A_L,A_M;\mathbb{Z}/p)\cong \bigoplus_{i=0}^{n}\Omega_{A_1/\Bbbk}^i
\end{gather}
which is characterized by the following choices of representatives 
\begin{align*}
&\big(\varepsilon\alpha(\mathrm{d}\varepsilon)^{m},0\big)\mapsto \alpha \mod p & \mbox{for}\ m\geq 0\ \mbox{and}\ \alpha\in \Omega_{A/W}^{n-1-2m},\\
&\big(0,\varepsilon \alpha(\mathrm{d}\varepsilon)^{m}\big)\mapsto \alpha \mod p & \mbox{for}\ m\geq 0\ \mbox{and}\ \alpha\in \Omega_{A/W}^{n-2m}.
\end{align*}
  \item The long exact sequence of mod $p$ homology induced by \eqref{eq:connesExaSeq-HCbf-HHbf-rel} splits into short exact sequences
\begin{gather*}
0 \rightarrow \mathbf{HC}_{n-1}(A_L,A_M;\mathbb{Z}/p) \xrightarrow{\boldsymbol{\delta}} \mathbf{HH}_n(A_L,A_M;\mathbb{Z}/p) \xrightarrow{\mathbf{I}}
\mathbf{HC}_n(A_L,A_M;\mathbb{Z}/p) \rightarrow 0.
\end{gather*}
We denote the summand $\Omega_{A_1/\Bbbk}^i$ in the first big direct sum in RHS of \eqref{eq:HHbf-rel-mod-p-decomp} by 
  $(\Omega_{A_1/\Bbbk}^i)^{(1)}$ for $0\leq i\leq n-1$, and that in the second big direct sum by $(\Omega_{A_1/\Bbbk}^i)^{(2)}$ for $0\leq i\leq n$.
  Then via the identification \eqref{eq:HHbf-rel-mod-p-decomp} and \eqref{eq:HCbf-rel-mod-p-decomp}, the maps $\boldsymbol{\delta}$ and $\mathbf{I}$ have the following formulas:
  \begin{align*}
  &\boldsymbol{\delta}|_{\Omega_{A_1/\Bbbk}^{n-2i}}= 1-\mathrm{d}:\Omega_{A_1/\Bbbk}^{n-2-2i} \rightarrow (\Omega_{A_1/\Bbbk}^{n-2-2i})^{(1)}\oplus (\Omega_{A_1/\Bbbk}^{n-1-2i})^{(1)}\quad & \mbox{for}\ i\geq 0,\\
  &\boldsymbol{\delta}|_{\Omega_{A_1/\Bbbk}^{n-1-2i}}= 1-\mathrm{d}:\Omega_{A_1/\Bbbk}^{n-1-2i} \rightarrow (\Omega_{A_1/\Bbbk}^{n-1-2i})^{(2)}\oplus (\Omega_{A_1/\Bbbk}^{n-2i})^{(2)}\quad & \mbox{for}\ i\geq 0,\\
  &\uwave{\mathbf{I}|_{(\Omega_{A_1/\Bbbk}^{n-1-2i})^{(1)}}= 1:(\Omega_{A_1/\Bbbk}^{n-1-2i})^{(1)} \rightarrow \Omega_{A_1/\Bbbk}^{n-1-2i}}\quad & \mbox{for}\ i\geq 0,\\
  &\mathbf{I}|_{(\Omega_{A_1/\Bbbk}^{n-2-2i})^{(1)}}= \mathrm{d}:(\Omega_{A_1/\Bbbk}^{n-2-2i})^{(1)} \rightarrow \Omega_{A_1/\Bbbk}^{n-1-2i}\quad & \mbox{for}\ i\geq 0,\\
  &\uwave{\mathbf{I}|_{(\Omega_{A_1/\Bbbk}^{n-2i})^{(2)}}= 1:(\Omega_{A_1/\Bbbk}^{n-2i})^{(2)} \rightarrow \Omega_{A_1/\Bbbk}^{n-2i}}\quad & \mbox{for}\ i\geq 0,\\
  &\mathbf{I}|_{(\Omega_{A_1/\Bbbk}^{n-1-2i})^{(2)}}= \mathrm{d}:(\Omega_{A_1/\Bbbk}^{n-1-2i})^{(2)} \rightarrow \Omega_{A_1/\Bbbk}^{n-2i}\quad & \mbox{for}\ i\geq 0.
  \end{align*}
\end{enumerate}
\end{proposition}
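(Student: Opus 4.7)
For part (i), the plan is to compute $\mathbf{HH}_n(A_L,A_M;\mathbb{Z}/p)$ directly from the defining relative double complex, paralleling but drastically simplifying the analysis in the proof of Theorem~\ref{thm:rel-HC-homotopyEquiv}. The key observation will be that mod $p$ each horizontal $\updelta$ vanishes (it is multiplication by $p^L$ or $p^M$ on $\varepsilon$-terms and zero on pure forms), and the vertical transition map \eqref{eq:transitionMaps} carries the factor $p^{(L-M)m}$, which vanishes mod $p$ whenever $m\geq 1$ since $L>M$. The only surviving differential is the identity on ``pure'' forms $\beta\in\Omega^\bullet$ (the $m=0$ part of the transition). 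I would then compute the homology at total degree $n$ as the total complex minus the single $\Omega^n$ piece of $\mathbf{C}_n(A_L)$ (which is not in the kernel) and the single $\Omega^{n+1}$ piece of $\mathbf{C}_{n+1}(A_M)$ (which lies in the image from total degree $n+1$). Matching the remaining classes to the listed representatives will give the stated decomposition.

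Part (ii) is essentially already contained in the proof of Theorem~\ref{thm:rel-HC-homotopyEquiv}, specifically in the identification \eqref{eq:rel-HC-nonreduced-modP}; one only needs to repackage that result in the present language. For the splitting of the long exact sequence in (iii), I will use the observation that the coboundary $\mathbf{S}:\mathbf{HC}_n\to\mathbf{HC}_{n-2}$ induced by \eqref{eq:connesExaSeq-HCbf-HHbf-rel} is the projection $\mathbf{CC}_n\twoheadrightarrow\mathbf{CC}_{n-2}$ removing the top chain-degree component. By (ii), every class in $\mathbf{HC}_n(A_L,A_M;\mathbb{Z}/p)$ admits a representative concentrated in the top piece (namely $\mathbf{C}_n(A_L)$ for the first component and $\mathbf{C}_{n+1}(A_M)$ for the second); hence $\mathbf{S}\equiv 0$ mod $p$, and the long exact sequence breaks into the claimed short exact sequences.

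For the explicit formulas of $\mathbf{I}$ and $\boldsymbol{\delta}$, I would proceed as follows. The map $\mathbf{I}$ is the inclusion $\mathbf{C}\hookrightarrow\mathbf{CC}$: the $\varepsilon\alpha(\mathrm{d}\varepsilon)^{m-1}$ representatives pass through as the identity, while the $\alpha(\mathrm{d}\varepsilon)^m$ representatives must first be recast in canonical $\mathbf{HC}$-form via the mod-$p$ cyclic relation $\alpha(\mathrm{d}\varepsilon)^m\equiv\varepsilon(\mathrm{d}\alpha)(\mathrm{d}\varepsilon)^{m-1}$ (cf.\ relation (rel-1-mod-$p$) in the proof of Theorem~\ref{thm:rel-HC-homotopyEquiv}), accounting for the $\mathrm{d}$-summands. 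For $\boldsymbol{\delta}$ I will apply the standard snake-lemma prescription: lift an $\mathbf{HC}_{n-1}$ representative $\varepsilon\alpha(\mathrm{d}\varepsilon)^m$ trivially to $\mathbf{CC}_{n+1}$ (top piece set to zero), apply $\mathbf{D}=\updelta+B$, and read off the resulting top $\mathbf{C}_n$ component. Mod $p$ only the $B$-term survives, yielding $-\varepsilon(\mathrm{d}\alpha)(\mathrm{d}\varepsilon)^m+\alpha(\mathrm{d}\varepsilon)^{m+1}$, which under the identifications of (i) reads as $(\alpha,-\mathrm{d}\alpha)$---the asserted $1-\mathrm{d}$ pattern. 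The main obstacle is purely combinatorial: reconciling the three indexings ($m$, $s$, $i$), tracking signs introduced by the cone conventions, and handling the $-1$ shift between the $A_L$- and $A_M$-rows uniformly across (i), (ii), and both formulas in (iii); once this bookkeeping is fixed, each step reduces to a short direct substitution.
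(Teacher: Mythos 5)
Your proposal is correct and follows essentially the same route as the paper: compute the mod~$p$ boundary operator directly (observing that $\updelta\equiv0$ and that the transition map only survives on pure $m=0$ forms) for (i), reference the identification already obtained in the proof of Theorem~\ref{thm:rel-HC-homotopyEquiv} for (ii), and extract the formulas in (iii) by a diagram chase on the short exact sequence \eqref{eq:connesExaSeq-HCbf-HHbf-rel}. The paper explicitly states the argument for (i) in this form, explicitly says (ii) is a reformulation of \eqref{eq:rel-HC-nonreduced-modP}, and declares (iii) an exercise, which your outline (in particular the observation that representatives of $\mathbf{HC}_n(\mathbb{Z}/p)$ live in the top chain-degree piece so that $\mathbf{S}\equiv0$) correctly carries out.
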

\begin{proof}
(i) By definition, $\mathbf{C}_n(A/\ell)$ is a free $k$-module with a basis consisting of the elements
\begin{gather*}
\begin{cases}
\varepsilon \alpha (\mathrm{d}\varepsilon)^{m-1},\
\mbox{with}\ m\geq 1,\ 0\leq i\leq n,\ i+2m-1=n,\ \mbox{and}\ \alpha\in \Omega^i_{A/k}, \\
\beta (\mathrm{d}\varepsilon)^{m},\ \mbox{with}\ m\geq 0,\ 0\leq i\leq n,\ i+2m=n,\ \beta\in \Omega^i_{A/k}.
\end{cases}
\end{gather*}
%where both $\varepsilon$ and the exterior differential $\mathrm{d}$ have  degree 1.
 By \eqref{eq:transitionMaps}, the boundary operator of $\mathbf{C}_{\bullet}(A_L,A_M)\otimes \mathbb{Z}/p$ annihilates its generators except that it sends $\big(\beta (\mathrm{d}\varepsilon)^0,0\big)\mapsto (0,\beta (\mathrm{d}\varepsilon)^0)$ for $\beta \in \Omega^n \mod p$ and $n\geq 0$. Thus the assertion of part (i)  follows.

(ii) This is a reformulation of \eqref{eq:rel-HC-nonreduced-modP}.

(iii) This is obtained by a diagram chase on the exact sequence \eqref{eq:connesExaSeq-HCbf-HHbf-rel}, and we leave the details as an exercise.
\end{proof}

Then by Proposition \ref{prop:relHC-relBoldHC-bounded-pExp-Isom}(iii), we obtain
\begin{corollary}\label{cor:decomposition-HH-HC}
For any integer $n$ satisfying $0\leq n\leq p-2$, replacing $\mathbf{HH}$ with $\widetilde{\mathrm{HH}}$, and $\mathbf{HC}$ with $\widetilde{\mathrm{HC}}$, Corollary \ref{cor:rel-HC-nonreduced-overZ} and Proposition \ref{prop:decomposition-HHbf-HCbf} hold.
\end{corollary}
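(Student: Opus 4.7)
The plan is to deduce everything by transporting the explicit cdga-level computations to the derived level through the comparison map $\varrho \circ \varphi$. The essential input is already available: Proposition~\ref{prop:relHC-relBoldHC-bounded-pExp-Isom}(iii) supplies an isomorphism $\widetilde{\operatorname{HC}}_n(A_L,A_M) \xrightarrow{\cong} \mathbf{HC}_n(A_L,A_M)$ for $0 \le n \le p-2$, and the only remaining task is to get the analogous statement for $\widetilde{\operatorname{HH}}$ and check that the identifications are compatible with the Connes sequence.

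First, I would transfer Corollary~\ref{cor:rel-HC-nonreduced-overZ}: for $0 \le n \le p-2$, composing the explicit formula \eqref{eq:rel-HC-nonreduced-overZ} with $(\varrho \circ \varphi)^{-1}$ gives the same description for $\widetilde{\operatorname{HC}}_n(A_L,A_M)$. Next I would establish the HH analog. Theorem~\ref{thm:formality}(ii) says $\varphi$ induces an isomorphism $\widetilde{\operatorname{HH}}_n(A/\ell)[\tfrac{1}{n!}] \xrightarrow{\cong} \mathbf{HH}_n(A/\ell)[\tfrac{1}{n!}]$, and the same holds relatively because the map of complexes is compatible with forming the cone for the pair $(A_L,A_M)$. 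Both $\widetilde{\operatorname{HH}}_n(A_L,A_M)$ and $\mathbf{HH}_n(A_L,A_M)$ are $p$-primary of bounded $p$-exponent by Proposition~\ref{prop:relHC-relBoldHC-bounded-pExp-Isom}(i)-(ii); since $n! \in \mathbb{Z}_{(p)}^{\times}$ whenever $n \le p-2$, the localization at $n!$ is already an isomorphism, and thus $\varphi$ itself induces $\widetilde{\operatorname{HH}}_n(A_L,A_M) \cong \mathbf{HH}_n(A_L,A_M)$.

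For Proposition~\ref{prop:decomposition-HHbf-HCbf}(iii), I would observe that the short exact sequence \eqref{eq:connesExaSeq-HCbf-HHbf-rel} is the $\mathbf{CC}$-theoretic incarnation of the $I$-$S$-$B$ triangle \eqref{eq:ISB-sequence-derived}: Lemma~\ref{lem:phi-commutesWith-partial-and-delta-product}(ii) shows that $\varphi$ intertwines $\updelta$ with $\updelta$ and $B$ with $\mathrm{d}$, and Lemma~\ref{lem:iso-(pmDelta,d)-to-(delta,B)} tells us that the twist $\varrho$ absorbs the sign conventions. Hence $\varrho \circ \varphi$ is a map of Connes triangles, and the short-exact splitting on the derived side follows from the splitting on the cdga side together with the explicit representatives in (i) and (ii) of the same proposition.

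The step likely to require most care is verifying that $\varphi$ genuinely respects the relative structure, i.e.\ that the comparison of absolute homologies assembles into a morphism of the cones defining $\widetilde{\operatorname{HH}}(A_L,A_M)$ and $\mathbf{C}_\bullet(A_L,A_M)$, compatibly with the transition maps \eqref{eq:transitionMaps}. Once this naturality is spelled out, the bounded $p$-exponent argument and Proposition~\ref{prop:relHC-relBoldHC-bounded-pExp-Isom}(iii) give the corollary essentially for free; the explicit representatives recorded in Proposition~\ref{prop:decomposition-HHbf-HCbf}(i)-(ii) then specify the derived identifications uniquely.
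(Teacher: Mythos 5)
Your proof is correct and follows the same route the paper takes, except that the paper compresses it to a one-line citation of Proposition~\ref{prop:relHC-relBoldHC-bounded-pExp-Isom}(iii), which formally only covers the $\mathrm{HC}$ half of the claim. Your filling-in of the missing $\mathrm{HH}$ half via Theorem~\ref{thm:formality}(ii) together with the bounded $p$-exponent argument from Proposition~\ref{prop:relHC-relBoldHC-bounded-pExp-Isom}(i)--(ii), and the use of Lemmas~\ref{lem:phi-commutesWith-partial-and-delta-product}(ii) and \ref{lem:iso-(pmDelta,d)-to-(delta,B)} to see that $\varrho\circ\varphi$ is a map of Connes triangles, is exactly the intended argument. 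The one step you flag as requiring care --- that $\varphi$ commutes with the transition maps \eqref{eq:transitionMaps} so that the comparison assembles into a map of cones --- is indeed the only nontrivial check; it follows because the transition map is induced by the cdga morphism $\mathscr{A}_L\to\mathscr{A}_M$, $\varepsilon\mapsto p^{L-M}\varepsilon$, and under this map $\alpha\varepsilon^j(\mathrm{d}\varepsilon)^m\mapsto p^{(L-M)(j+m)}\alpha\varepsilon^j(\mathrm{d}\varepsilon)^m$, which agrees with \eqref{eq:transitionMaps}. A minor imprecision: you write that $n!\in\mathbb{Z}_{(p)}^\times$ for $n\le p-2$; in fact this holds for $n\le p-1$, and the bound $n\le p-2$ in the corollary is forced by the $(n+1)!$ denominator appearing in \eqref{eq:relHC-relBoldHC-quasiIsom} on the $\mathrm{HC}$ side, not by the $n!$ on the $\mathrm{HH}$ side. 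This does not affect the conclusion.
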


\section{Mod \texorpdfstring{$p$}{p} relative cyclic homology and the products} % (fold)
\label{sec:mod_p_rel_cyclic_homology_and_the_products}
In this section, for $i=1,2$ and $0\leq n\leq n+i\leq p-2$, by using the results in \S\ref{sec:derived_hochschild_homology_of_smooth_algebras} and \S\ref{sec:relative_derived_cyclic_homology_of_smooth_algebras_over_texorpdfstring_w_n_bbbk_},  we study the mod $p$ product 
\begin{gather*}
 K_i(A_L; \mathbb{Z}/p)\times  \widetilde{\operatorname{HC}}_n(A_L,A_M; \mathbb{Z}/p) \rightarrow \widetilde{\operatorname{HC}}_{n+i}(A_L,A_M; \mathbb{Z}/p)
 \end{gather*}  
 induced by the Chern character $\mathrm{ch}^{-}:K_i(A_L) \rightarrow \operatorname{HC}^{-}_i(A_L)$ and the topological product
 \begin{gather*}
 \operatorname{HC}^{-}_i(A_L; \mathbb{Z}/p)\times \widetilde{\operatorname{HC}}_n(A_L,A_M; \mathbb{Z}/p) \rightarrow \widetilde{\operatorname{HC}}_{n+i}(A_L,A_M; \mathbb{Z}/p).
\end{gather*} 
In \S\ref{sub:ring_complexes_and_mod_p_products} we set up representatives of mod $p$ cohomology and their products, which will be used in \S\ref{sub:the_k1_action_on_mod_p_relative_HC}, \ref{sub:the_bott_element_in_hh}, and also \S\ref{sub:the_mod_p_product_structure}.
\subsection{Ring complexes and mod \texorpdfstring{$p$}{p} products} % (fold)
\label{sub:ring_complexes_and_mod_p_products}

% subsection ring_complexes_and_ (end)
We recall the notion of ring complexes (\cite[Chap.~I \S 2]{Kat87}).
\begin{definition}\label{def:Kato's-product-structure}
Let $T$ be a topos. Let $A=(\cdots\rightarrow A^{i-1}\rightarrow A^i\rightarrow A^{i+1}\rightarrow\cdots)$ be a complex of abelian groups in $T$.  We say that $A$ is a \emph{ring complex} if there exists a global section $1$ of $A^0$, a homomorphism of complexes $A\otimes_{\mathbb{Z}} A\rightarrow A$, $x\otimes y\mapsto xy$, satisfying
\begin{gather*}
1x=x=x1,\quad x(yz)=(xy)z\quad.
\end{gather*}
In particular, we have
\begin{gather*}
\mathrm{d}(xy)=\mathrm{d}(x)y+(-1)^q x \mathrm{d}(y),\ \mbox{if}\ x\in A^q,\ y\in A^{q'}.
\end{gather*}
A \emph{ring map} of ring complexes is a morphism of complexes that preserves $1$ and the product.
\end{definition}

For a ring complex $S$, the elements in $\mathcal{H}^i(S\otimes^{\mathbf{L}}\mathbb{Z}/p)$ can be represented in two ways, by resolving $S$ or $\mathbb{Z}/p$. We record such representatives in the following two lemmas, and give a formula for the products of mod $p$ cohomology classes.

\begin{lemma}\label{lem:mod-p-product}
Let $T$ be a topos, and $S$ be a ring complex in $T$. Let $\mathbb{Z}/p \mathbb{Z}$ be the constant abelian group  in $T$ with the obvious product. Then $S\otimes^{\mathbf{L}}\mathbb{Z}/p \mathbb{Z}$ has an induced product structure, and the product on cohomology is given by the following description. There are splittable short exact sequences
\begin{gather*}
0\rightarrow H^i(S)/p\rightarrow H^i(S\otimes^{\mathbf{L}}\mathbb{Z}/p \mathbb{Z})\rightarrow \leftidx{_p}H^{i+1}(S)\rightarrow 0.
\end{gather*}
Moreover, $H^i(S\otimes^{\mathbf{L}}\mathbb{Z}/p \mathbb{Z})$ is computed by the complex
\begin{gather*}
\cdots \rightarrow S^{i-1}\oplus S^i\rightarrow S^i\oplus S^{i+1}\rightarrow S^{i+1}\oplus S^{i+2}\rightarrow \cdots
\end{gather*}
with
\begin{gather}\label{eq:mod-p-differential}
\mathrm{d}(a_i\oplus a_{i+1})=(\mathrm{d}a_i+p a_{i+1})\oplus(- \mathrm{d}a_{i+1}),
\end{gather}
and the product on this complex is given by
\begin{gather}\label{eq:mod-p-product}
(a_i\oplus a_{i+1})\cdot (b_j\oplus b_{j+1})=(a_ib_j)\oplus (a_{i+1}b_j+(-1)^ia_ib_{j+1}).
\end{gather}
Moreover, via the Eilenberg-Mac Lane functor $H$, this differential and product coincide with that induced by identifying $H(S\otimes^{\mathbf{L}}\mathbb{Z}/p)$ with $H(\mathbb{Z}/p)\land H(S)$ and using the product 
\begin{equation*}%\label{eq:mod-p-product-Eilenberg-MacLane}
\begin{gathered}
\big(H(\mathbb{Z}/p)\land H(S)\big)\times \big(H(\mathbb{Z}/p)\land H(S)\big)
\cong H(\mathbb{Z}/p)\land H(\mathbb{Z}/p)\land H(S)\land H(S)\\
\xrightarrow{\mathrm{aw}} H(\mathbb{Z}/p\otimes^{\mathbf{L}}\mathbb{Z}/p)\land H(S \otimes^{\mathbf{L}} S) \rightarrow H(\mathbb{Z}/p)\land H(S)
\end{gathered}
\end{equation*}
where the first arrow $\mathrm{aw}$ is induced by the Alexander-Whitney map (see e.g. \cite[Definition 29.7]{May67}), and $H(\mathbb{Z}/p\otimes^{\mathbf{L}}\mathbb{Z}/p) \rightarrow H(\mathbb{Z}/p)$ is given by the projection
\begin{gather*}
\mathbb{Z}/p\otimes^{\mathbf{L}}\mathbb{Z}/p\simeq \mathbb{Z}/p\oplus \mathbb{Z}/p[-1] \rightarrow \mathbb{Z}/p\quad.
\end{gather*}
\end{lemma}
\begin{proof}
%Only the product formula needs care. It can be shown by using \cite[VIII, Thm. 2.1]{McL63}. In particular, the sign factor in $(-1)^i a_ib_{j+1}$ comes from the \emph{middle four interchange} (\cite[VI, (8.4)]{McL63}).
We need only to show that the differential \eqref{eq:mod-p-differential} and the product \eqref{eq:mod-p-product} coincide with the induced ones. We can compute the induced differential and product by a cdga model of $H(\mathbb{Z}/p)\land H(S)$, namely, $\mathscr{S}=S\oplus \varepsilon S$ with $\varepsilon^2=0$ and $\mathrm{d}\varepsilon=p\in$ the first summand $S$. Then
\begin{gather*}
\mathrm{d}(a_i\oplus a_{i+1})=\mathrm{d}(a_i+\varepsilon a_{i+1})= \mathrm{d}a_{i}+pa_{i+1}-\varepsilon \mathrm{d}a_{i+1}
=(\mathrm{d}a_{i}+pa_{i+1})\oplus (- \mathrm{d}a_{i+1}),
\end{gather*}
and
\begin{align*}
(a_i\oplus a_{i+1})\cdot (b_j\oplus b_{j+1})&=(a_i+\varepsilon a_{i+1})\cdot (b_j+\varepsilon b_{j+1})\\
&=a_ib_j+\varepsilon a_{i+1}b_j+\varepsilon(-1)^ia_ib_{j+1}=(a_ib_j)\oplus (a_{i+1}b_j+(-1)^ia_ib_{j+1}).
\end{align*}
\end{proof}

\begin{remark}\label{rem:choice-mod-p-product}
One can also identify  $H(S\otimes^{\mathbf{L}}\mathbb{Z}/p)$ with $ H(S)\land H(\mathbb{Z}/p)$, 
%which seems more natural, 
and then replace the formula \eqref{eq:mod-p-product} with $(a_i\oplus a_{i+1})\cdot (b_j\oplus b_{j+1})=(a_ib_j)\oplus ((-1)^j a_{i+1}b_j+a_ib_{j+1})$, and the formula \eqref{eq:mod-p-differential} with $\mathrm{d}(a_i,a_{i+1})=(\mathrm{d}a_{i}+(-1)^{i+1}pa_{i+1},\mathrm{d}a_{i+1})$. A different identification will affect some intermediate mod $p$ computations, but not the results with integral coefficients, e.g. the main Theorem \ref{thm:relative-comparison-local}.
\end{remark}

The following two lemmas are straightforward.
\begin{lemma}\label{lem:mod-p-representative-flatResolution}
Suppose $S$ to be a bounded above complex of abelian groups in a topos $T$. Let $Q\xrightarrow{\varphi} S$ be a flat resolution. We have isomorphisms
\begin{gather*}
S\otimes^{\mathbf{L}}\mathbb{Z}/p \mathbb{Z}\cong Q\otimes \mathbb{Z}/p \mathbb{Z} \cong Q\otimes (\underset{\mathrm{deg}=-1}{\mathbb{Z}}\xrightarrow{\times p}
\underset{\mathrm{deg}=0}{\mathbb{Z}})\cong S\otimes (\underset{\mathrm{deg}=-1}{\mathbb{Z}}\xrightarrow{\times p}\underset{\mathrm{deg}=0}{\mathbb{Z}})
\end{gather*}
in the derived category. Let $\alpha$ be a cocycle in $(Q\otimes \mathbb{Z}/p \mathbb{Z})^i$. That is, $\alpha\in Q^i$ and there exists some $\beta\in Q^{i+1}$ (and thus unique by flatness) such that  $\mathrm{d}\alpha+p\beta=0$; by flatness we have $\mathrm{d}\beta=0$. Then $(\alpha,\beta)\in Q^i\oplus Q^{i+1}$ represents the class of $\alpha$ in $\big(Q\otimes (\mathbb{Z}\xrightarrow{\times p}\mathbb{Z})\big)^i$ via the above isomorphisms. Moreover, $(\varphi\alpha,\varphi\beta)\in S^i\oplus S^{i+1}$ represents the class of $\alpha$ in $\big(S\otimes (\mathbb{Z}\xrightarrow{\times p}\mathbb{Z})\big)^i$ in terms of the representatives in Lemma \ref{lem:mod-p-product}.
\end{lemma}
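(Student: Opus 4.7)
The plan is to verify the chain of quasi-isomorphisms and then unwind the definition of the representatives. First, $S\otimes^{\mathbf{L}}\mathbb{Z}/p\mathbb{Z}\simeq Q\otimes\mathbb{Z}/p\mathbb{Z}$ holds by definition of the derived tensor product, since $Q\to S$ is a flat resolution. Second, the two-term complex $\bigl(\mathbb{Z}\xrightarrow{\times p}\mathbb{Z}\bigr)$ with the nonzero terms placed in degrees $-1$ and $0$ is a free resolution of $\mathbb{Z}/p\mathbb{Z}$, so tensoring it with the flat complex $Q$ produces the second equivalence $Q\otimes\mathbb{Z}/p\mathbb{Z}\simeq Q\otimes\bigl(\mathbb{Z}\xrightarrow{\times p}\mathbb{Z}\bigr)$. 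Third, because $\bigl(\mathbb{Z}\xrightarrow{\times p}\mathbb{Z}\bigr)$ consists of flat abelian groups, tensoring with it preserves quasi-isomorphisms of bounded-above complexes, so $\varphi\otimes\mathrm{id}$ yields the third isomorphism.

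Next I would identify the representative explicitly. With the sign convention of Lemma \ref{lem:mod-p-product}, the degree-$i$ piece of the total complex $Q\otimes\bigl(\mathbb{Z}\xrightarrow{\times p}\mathbb{Z}\bigr)$ is $Q^i\oplus Q^{i+1}$, with total differential given by the formula \eqref{eq:mod-p-differential}. A class in $\bigl(Q\otimes\mathbb{Z}/p\mathbb{Z}\bigr)^i$ is represented by $\alpha\in Q^i$ with $\mathrm{d}\alpha\equiv 0\pmod{p}$, i.e. there exists $\beta\in Q^{i+1}$ with $\mathrm{d}\alpha+p\beta=0$ in $Q^{i+1}$. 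Uniqueness of $\beta$ follows from the injectivity of multiplication by $p$ on the flat abelian group $Q^{i+1}$. Applying $\mathrm{d}$ to this equation gives $p\,\mathrm{d}\beta=-\mathrm{d}^{2}\alpha=0$, hence $\mathrm{d}\beta=0$ again by flatness. So $(\alpha,\beta)$ is a cocycle in the total complex, and its image in $Q\otimes\mathbb{Z}/p\mathbb{Z}$ under the projection onto the first factor, followed by reduction mod $p$, is $\alpha$; this exhibits $(\alpha,\beta)$ as a representative of the class of $\alpha$ under the isomorphisms above.

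The last assertion then follows by naturality: the chain map $\varphi:Q\to S$ induces $\varphi\otimes\mathrm{id}$ between the corresponding total complexes, carrying the cocycle $(\alpha,\beta)$ to $(\varphi\alpha,\varphi\beta)\in S^i\oplus S^{i+1}$, which therefore represents the image class in $S\otimes\bigl(\mathbb{Z}\xrightarrow{\times p}\mathbb{Z}\bigr)$ in the sense of Lemma \ref{lem:mod-p-product}. There is no real obstacle in this argument; the only care required is consistent Koszul sign bookkeeping to ensure that the induced total differential matches \eqref{eq:mod-p-differential} rather than its mirror variant mentioned in Remark \ref{rem:choice-mod-p-product}.
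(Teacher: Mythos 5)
Your proof is correct, and it supplies exactly the details the paper omits: the statement is dismissed in the text with "The following two lemmas are straightforward" and no argument is given. The three quasi-isomorphisms (flat resolution $Q\to S$, free resolution $\mathbb{Z}\xrightarrow{\times p}\mathbb{Z}\to\mathbb{Z}/p$, and preservation of quasi-isomorphisms under tensoring with a bounded flat complex) are the intended ones, and your verification that $(\alpha,\beta)$ is a cocycle against the differential \eqref{eq:mod-p-differential} — namely that $\mathrm{d}\alpha+p\beta=0$ and $-\mathrm{d}\beta=0$ — matches the sign convention fixed in Lemma \ref{lem:mod-p-product}. The uniqueness of $\beta$ and the vanishing $\mathrm{d}\beta=0$ both follow, as you say, from injectivity of $\times p$ on flat abelian groups, and the naturality step for $\varphi$ is immediate. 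Nothing to correct.
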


\begin{lemma}
For a cdga $(A,\updelta)$ over a commutative ring $k$, the Hochschild complex  $C(A,\updelta)$ with the shuffle product \eqref{eq:shuffleProduct-cdga} is a ring complex (on a point topos).
\end{lemma}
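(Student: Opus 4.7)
The plan is to verify the four conditions of Definition \ref{def:Kato's-product-structure}: existence of a unit, the two-sided unit axiom, associativity, and the graded Leibniz rule for the total differential. The unit axioms are immediate: the element $1 \in A = C(\mathscr{A})_0$, viewed as a global section of $C^0$ under the chain/cochain identification of \S\ref{sub:notations}, satisfies $1 \cdot x = x = x \cdot 1$ directly from \eqref{eq:shuffleProduct-cdga}, since a shuffle with one length-$0$ factor is the unique trivial shuffle and all Koszul sign prefactors collapse to $+1$.

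Associativity is the graded analog of the classical associativity of the shuffle product on an ungraded commutative algebra (see e.g. \cite[\S4.2]{Lod98}). I would verify it by the same argument, adapted via the Koszul sign rule: both iterated products $(x \times y) \times z$ and $x \times (y \times z)$ rewrite as a common sum over all $(p,q,r)$-shuffles of permutations applied to $(x_0 y_0 z_0, x_1, \dots, x_p, y_1, \dots, y_q, z_1, \dots, z_r)$, once one checks that the two ways of decomposing such a shuffle---as a $(p,q)$-shuffle followed by a $(p+q,r)$-shuffle, or as a $(q,r)$-shuffle followed by a $(p,q+r)$-shuffle---produce the same total Koszul sign.

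For the Leibniz rule, recall from \S\ref{sub:the_hochschild_and_cyclic_homology_of_cdga_s} that the total differential on $C(\mathscr{A})$ is $\partial = b + (-1)^{l(-)} \updelta$. I would check that each of $b$ and $(-1)^{l(-)}\updelta$ is a graded derivation of the shuffle product of total degree $-1$ (in chain grading). The derivation property of $b$ is the graded analog of the classical statement; the proof is a direct computation tracking the Koszul signs in the face maps $b_i$ and the shuffle formula, and reduces to the combinatorial identity underlying the ungraded case. For the second term, $\updelta$ acts diagonally on tensor factors with Koszul signs, and because $\updelta$ is a graded derivation of $\mathscr{A}$, it intertwines with the symmetric group action on tensors in the graded sense; the sign twist $(-1)^{l(-)}$ is precisely what converts the length-preserving operator $\updelta$ into a total-degree $-1$ derivation compatibly with the length-additive shuffle product.

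The main obstacle is purely sign book-keeping: balancing Koszul signs from weights, signs from shuffle permutations, and the length-dependent twist $(-1)^{l(-)}$. No conceptual difficulty is expected, and the verification can be streamlined by reducing the derivation identities to their action on length-$1$ tensors and inducting via the recursive structure of shuffles, exactly as in \cite[\S4.2--\S5.3]{Lod98} in the ungraded case.
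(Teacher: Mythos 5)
The paper does not actually give a proof of this lemma: it prefaces both this lemma and the previous one with ``The following two lemmas are straightforward,'' so there is no proof to compare against. Your overall strategy---verify unit, associativity, and the Leibniz rule---is the right one. However, the Leibniz-rule step as you describe it is not quite correct, and the issue is worth flagging.

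You assert that each of $b$ and $(-1)^{l(-)}\updelta$ is separately a graded derivation of the shuffle product of \emph{total} degree $-1$. This is false. With the shuffle product \eqref{eq:shuffleProduct-cdga}, whose Koszul signs involve only the \emph{weight} grading while the permutation sign $\mathrm{sgn}(\sigma)$ handles the \emph{length} grading, the correct derivation rules are
\[
b(x\times y)=b(x)\times y+(-1)^{l(x)}\,x\times b(y),
\qquad
\updelta(x\times y)=\updelta(x)\times y+(-1)^{|x|}\,x\times\updelta(y),
\]
with $l(x)$ the length and $|x|$ the weight of $x$. Neither of these is the total-degree sign $(-1)^{l(x)+|x|}$. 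A short computation with, say, $x=(a)$ and $y=(b_0,b_1)$ confirms that the combined operator $D=b+(-1)^{l(-)}\updelta$ fails the total-degree Leibniz rule for the shuffle product exactly as written: $D(x\times y)$ and $D(x)\times y+(-1)^{|a|}x\times D(y)$ differ by the sign of the $(\updelta a\cdot b_0,b_1)$ term. What actually holds is that $D$ becomes a derivation once the shuffle product is renormalised by the global sign $(-1)^{|x|\,l(y)}$ (equivalently, $C(\mathscr A)$ is a ring complex for the conjugate differential $(-1)^{|\cdot|}b+\updelta$; the two conventions are intertwined by $x\mapsto(-1)^{l(x)|x|}x$). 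This is the standard ``$\mathrm{Tot}$ is only lax monoidal up to a sign'' phenomenon for bicomplexes, and it is precisely the kind of coherence twist your phrase ``the sign twist $(-1)^{l(-)}$ is precisely what converts\ldots'' elides. In a write-up you would need either to verify the two separate derivation identities above (with the \emph{two different} signs) and then introduce the compatibility twist explicitly, or to check the Leibniz rule for $D$ directly against a correctly-signed shuffle map; the decomposition into two total-degree derivations, as stated, does not go through.

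The unit verification and the $(p,q,r)$-shuffle associativity argument are fine, and your remark that a Koszul-sign bookkeeping is the only real content is accurate. Since the paper treats this as a routine sign check and gives no argument, your proposal is at roughly the same level of rigour as the paper itself; the one place where it says something concretely wrong is the claim about the two operators being total-degree derivations.
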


\subsection{The action of \texorpdfstring{$K_1$}{K1} on the mod \texorpdfstring{$p$}{p}  relative HC} % (fold)
\label{sub:the_k1_action_on_mod_p_relative_HC}
\begin{proposition}\label{prop:K1-multiplication-on-rel-HC}
Let $n$ be an integer such that $0\leq n\leq p-3$. 
Let $x\in A^{\times}$. Then the multiplication of $\mathrm{ch}^{-}(x)\in \mathrm{HC}_1^{-}(A)$ on $\widetilde{\mathrm{HC}}_n(A_L,A_M;\mathbb{Z}/p)$ is given by
\begin{gather*}
\xymatrix@R=1pc@C=4pc{
  \widetilde{\mathrm{HC}}_{n}(A_L,A_M;\mathbb{Z}/p) \ar[r]^{\mathrm{ch}^-(x)} & \widetilde{\mathrm{HC}}_{n+1}(A_L,A_M;\mathbb{Z}/p) \\
  \Omega_{A_1/\Bbbk}^{n-2i-1} \ar[r]^{-(\mathrm{d}\log x)\land} \ar@{}[u]|-{\rotatebox{90}{$\subset$}} &  
  \Omega_{A_1/\Bbbk}^{n-2i} \ar@{}[u]|-{\rotatebox{90}{$\subset$}}
}
\end{gather*}
and
\begin{gather*}
\xymatrix@R=1pc@C=4pc{
  \widetilde{\mathrm{HC}}_{n}(A_L,A_M;\mathbb{Z}/p) \ar[r]^{\mathrm{ch}^-(x)} & \widetilde{\mathrm{HC}}_{n+1}(A_L,A_M;\mathbb{Z}/p) \\
  \Omega_{A_1/\Bbbk}^{n-2i} \ar[r]^{(\mathrm{d}\log x)\land} \ar@{}[u]|-{\rotatebox{90}{$\subset$}} &  
  \Omega_{A_1/\Bbbk}^{n-2i+1} \ar@{}[u]|-{\rotatebox{90}{$\subset$}}
}
\end{gather*}
where the vertical inclusions are induced by the decomposition \eqref{eq:HCbf-rel-mod-p-decomp} and Corollary \ref{cor:decomposition-HH-HC}.
\end{proposition}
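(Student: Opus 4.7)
The plan is to reduce the topological multiplication to an algebraic product computed on a cdga model and then identify the shuffle product with a wedge product via the HKR isomorphism.

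By the filtered comparison of Proposition \ref{prop:algebraic-and-topological-products-weak-comparison-derived} (in the relative, mod-$p$ form discussed in Variant \ref{var:generalizations_to_dgas_simplicial_rings_and_homology_with_coefficients}), the algebraic and the topological multiplications of $\widetilde{\mathrm{HC}}^-$ on $\widetilde{\mathrm{HC}}(A_L,A_M;\mathbb{Z}/p)$ agree on the associated graded of the filtration $F_i$. I would first verify that in our range $0\leq n\leq p-3$ the decomposition of Proposition \ref{prop:decomposition-HHbf-HCbf}(ii) is compatible with the $S$-operator, so that each summand $\Omega^j_{A_1/\Bbbk}$ lies in a distinct filtration layer; this rigidity forces the algebraic and the topological products to agree on the nose, and reduces the proposition to a shuffle-product computation on the cdga side.

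Working with the flat cdga resolution $(\mathscr{A},\updelta)$ of $A_L$ from \S\ref{sub:a_typical_cdga}, the Chern character of $x\in A^\times$ is represented by the cycle $(x^{-1},x)\in C_1(\mathscr{A})$, which under the HKR map $\varphi$ of Theorem \ref{thm:formality} corresponds to $\mathrm{d}\log x\in\Omega^1_{A/W}=\mathbf{C}_1(A_L)$. Since the corresponding element of $\mathrm{Tot}(\mathcal{B}C(\mathscr{A})^-)$ is concentrated in the $u^0$-component, the algebraic product of Definition \ref{def:algebraicProduct-cyclicHomology}(ii) on a $\widetilde{\mathrm{HC}}$-representative reduces to a pure shuffle product, which by Lemma \ref{lem:phi-commutesWith-partial-and-delta-product}(iii) becomes the wedge product $\mathrm{d}\log x\wedge(-)$ in $\Omega^\bullet_{\mathscr{A}/W}$.

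Plugging in the explicit representatives from Proposition \ref{prop:decomposition-HHbf-HCbf}(ii): for $\alpha\in\Omega^{n-2i-1}_{A_1/\Bbbk}$ represented by $(\varepsilon\tilde\alpha(\mathrm{d}\varepsilon)^i,0)$, one computes $\mathrm{d}\log x\wedge\varepsilon\tilde\alpha(\mathrm{d}\varepsilon)^i=-\varepsilon(\mathrm{d}\log x\wedge\tilde\alpha)(\mathrm{d}\varepsilon)^i$ by the Koszul rule, since both $\mathrm{d}\log x$ and $\varepsilon$ have total degree $1$ in $\Omega^\bullet_{\mathscr{A}/W}$; this represents $-\mathrm{d}\log x\wedge\alpha\in\Omega^{n-2i}_{A_1/\Bbbk}$ under Proposition \ref{prop:decomposition-HHbf-HCbf}(ii) in degree $n+1$. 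The second case $(0,\varepsilon\tilde\alpha(\mathrm{d}\varepsilon)^i)$ yields the same cdga-level expression, but now the shift in the cone direction of $\mathbf{CC}(A_L,A_M)$ contributes an extra Koszul sign for the action of a degree-$1$ element on the second row (which carries the differential $-\mathbf{D}$), flipping the total sign and yielding $+\mathrm{d}\log x\wedge\alpha$.

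I expect the main obstacle to be the careful bookkeeping of signs simultaneously under three conventions: the Koszul rule on weighted Kähler differentials of $\mathscr{A}$, the cone/double-complex sign convention of $\mathbf{CC}(A_L,A_M)$, and the identifications of Proposition \ref{prop:decomposition-HHbf-HCbf}(ii). I would sanity-check the two signs by specializing to $A=W(\Bbbk)[T]$ with $\alpha$ a monomial Kähler form, then invoke biadditivity of the product and the functoriality of $\mathrm{ch}^-$ to conclude in general.
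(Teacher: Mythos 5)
Your overall plan — reduce the topological multiplication to the algebraic one via the filtered comparison, then compute the shuffle product on the cdga model via HKR — is the right one and matches the paper's approach. But your justification for the first step is incorrect. You claim that "each summand $\Omega^j_{A_1/\Bbbk}$ lies in a distinct filtration layer; this rigidity forces the algebraic and the topological products to agree on the nose." Even if the summands did sit in distinct layers of the filtration $F_\bullet$, Proposition \ref{prop:algebraic-and-topological-products-weak-comparison-derived} only guarantees agreement on the associated graded $F_i/F_{i-1}$, not on the nose; the difference of the two products on a class in $F_i$ could land in $F_{i-1}$, which is nonzero for $i\geq 2$. Moreover the claim itself is false here: Proposition \ref{prop:decomposition-HHbf-HCbf}(iii) (see the underwaved formulas for $\mathbf{I}$) shows that $\mathbf{I}$ is \emph{surjective} on mod $p$ homology, which forces $F_1\widetilde{\mathrm{HC}}_n(A_L,A_M;\mathbb{Z}/p)$ to be the whole group and hence $F_1=F_2=\dots$. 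The correct argument, which the paper uses, is exactly this degeneracy: since $F_1$ is everything and $F_0=0$, "agreement on $F_1/F_0$" is already agreement on the nose. This is not a cosmetic point — the two lines of reasoning describe genuinely different structures, and yours as stated does not close the gap.

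A second, smaller issue: you assert that because $\mathrm{ch}^-(x)$ is concentrated in the $u^0$ component, the algebraic product of Definition \ref{def:algebraicProduct-cyclicHomology}(ii) "reduces to a pure shuffle product." This is not literally true, since $\mathrm{Sh}^- = \mathrm{sh}+\mathrm{sh}^-$ still has a cyclic-shuffle summand regardless of $u$-degree. What makes the pure shuffle computation legitimate is again the surjectivity of $\mathbf{I}$ together with Lemma \ref{lem:h-I-S-preserve-alg-products}: one lifts a mod $p$ class from $\widetilde{\mathrm{HC}}$ to $\widetilde{\mathrm{HH}}$ via $\mathbf{I}$, multiplies by $h(\mathrm{ch}^-(x))=(x^{-1},x)$ using the shuffle product (which by Lemma \ref{lem:phi-commutesWith-partial-and-delta-product}(iii) is $\mathrm{d}\log x\wedge(-)$ after applying $\varphi$), and pushes back down via $\mathbf{I}$. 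Your Koszul-sign computation for the two summands then needs to be routed through the explicit mod $p$ representatives of Lemma \ref{lem:mod-p-representative-flatResolution}, the product formula \eqref{eq:mod-p-product}, the underwaved $\mathbf{I}$-formulas in Proposition \ref{prop:decomposition-HHbf-HCbf}(i), and the change of model $\varrho$ in Lemma \ref{lem:iso-(pmDelta,d)-to-(delta,B)}; your intuition about where the extra sign comes from (the cone direction of $\mathbf{CC}(A_L,A_M)$) is in the right ballpark but needs these lemmas to be made rigorous.
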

\begin{proof}
By Proposition \ref{prop:decomposition-HHbf-HCbf}(iii) (and Corollary \ref{cor:decomposition-HH-HC}), especially the underwaved terms, the map $\mathbf{I}$ is surjective on mod $p$ homology. Thus by Proposition \ref{prop:algebraic-and-topological-products-weak-comparison} 
and Variant \ref{var:generalizations_to_dgas_simplicial_rings_and_homology_with_coefficients}, the topological product on $\widetilde{\mathrm{HC}}_n(A_L,A_M;\mathbb{Z}/p)$ coincides with the algebraic product. 

By \cite[Prop.~8.4.9]{Lod98}, $h\circ\operatorname{ch}^{-}(x)=(x^{-1},x)$. 
Then by lemma  \ref{lem:phi-commutesWith-partial-and-delta-product}, we have
%by \eqref{eq:map-varphi},
\begin{gather*}
\varphi\big((x^{-1},x)\times (a_0,\dots,a_n)\big)
=\mathrm{d}\log x\cdot \varphi(a_0,\dots,a_n).
\end{gather*}
where the product in the parenthesis on the LHS is the shuffle product (namely the algebraic product). Hence the conclusion follows from Lemma \ref{lem:mod-p-representative-flatResolution}, the formula \eqref{eq:mod-p-product}, the underwaved maps in Proposition \ref{prop:decomposition-HHbf-HCbf}(i), and the map $\varrho$ in Lemma \ref{lem:iso-(pmDelta,d)-to-(delta,B)}.
\end{proof}

\subsection{The Bott element in Hochschild homology} % (fold)
\label{sub:the_bott_element_in_hh}

\begin{definition}[{\cite[2.7.2]{Wei91}}]\label{def:bottElement}
Let $m\geq 2$ be an integer.
Let $R$ be a ring.  Suppose $x\in R$ and $x^m=1$. Let $\mu_m$ be the group of $m$-roots of 1 in $\mathbb{C}$, and $\zeta_m=\exp(\frac{2\pi\sqrt{-1}}{m})$.
The group homomorphism $\mu_m\rightarrow R^{\times}$, $\zeta_m\mapsto x$, induces a map $B\mu_m\rightarrow B GL(R)$, and thus the composition of the maps
\begin{gather*}
\mu_m\cong \leftidx_m\pi_1(B\mu_m)\cong \pi_2(B\mu_m;\mathbb{Z}/m)\rightarrow \pi_2(BGL(R);\mathbb{Z}/m)\rightarrow \pi_2(BGL(R)^+;\mathbb{Z}/m)=K_2(R;\mathbb{Z}/m)\quad.
\end{gather*}
We denote  the image of $\zeta_m$ under this map by $\beta_{x}$, and call it the Bott element of $K_2(R;\mathbb{Z}/m)$  (associated with $x$).
\end{definition}
For any ring $R$ and any $m\in \mathbb{Z}$, we have the Dennis trace map $\mathrm{Dtr}:K_i(R;\mathbb{Z}/m)\rightarrow \widetilde{\mathrm{HH}}_i(R;\mathbb{Z}/m)$ (see e.g. \cite{McC96}). In this subsection we compute the Dennis trace of the Bott element $\beta_x$, and use the result to compute the (filtered) multiplication by $\beta_x$ on the relative derived cyclic homology with coefficients $\mathbb{Z}/p$. We will  use  the following facts:
\begin{enumerate}[(i)]
   \item $\mathrm{Dtr}$ is functorial in $R$.
   \item  (\cite[\S 8.4.2]{Lod98})
   The composition
   \begin{gather*}
   K_i(R;\mathbb{Z}/m)\xrightarrow{\mathrm{Dtr}} \widetilde{\mathrm{HH}}_i(R;\mathbb{Z}/m)\rightarrow \mathrm{HH}_i(R;\mathbb{Z}/m)
   \end{gather*}
   is the Dennis trace map valued in the underived Hochschild homology and we denote it still by $\mathrm{Dtr}$. Recall the trace map $\mathrm{tr}:H_i(GL(R);\mathbb{Z}/m) \rightarrow \mathrm{HH}_i(R;\mathbb{Z}/m)$ induced by
   \begin{gather}\label{eq:DennisTrace}
   \xymatrix{
     \mathbb{Z}[GL_r(R)^i]\ar@{^{(}->}[r] & \mathbb{Z}[GL_r(R)^{i+1}] \ar[r] & M_r(R)^{i+1} \ar[r]^<<<<<{\mathrm{tr}} & R^{i+1}
     }
   \end{gather}
   where the first arrow is given by
   \begin{gather*}
        (g_1,\dots,g_i) \mapsto  \big((g_1\cdots g_i)^{-1},g_1,\dots,g_i\big)\quad.
   \end{gather*}
   Then the underived $\mathrm{Dtr}$ is equal to the composition
   \begin{gather*}
   K_i(R;\mathbb{Z}/m)\xrightarrow{\mathrm{Hurewicz\ map}} H_i(GL(R);\mathbb{Z}/m) \xrightarrow{\mathrm{tr}}  \mathrm{HH}_i(R;\mathbb{Z}/m).
   \end{gather*}
   \end{enumerate} 

 \begin{proposition}\label{prop:dennisTrace-bottEle-HH}
 Let $R$ and $x\in R$ be as in Definition \ref{def:bottElement}. Then in the normalized Hochschild complex which computes $\mathrm{HH}_2(R;\mathbb{Z}/m)$, we have
 \begin{gather}\label{eq:dennisTrace-bottEle-HH}
 \mathrm{Dtr}(\beta_x)=\sum_{i=1}^{m-1}(x^{-i-1},x^i,x)\quad.
 \end{gather}
 \end{proposition}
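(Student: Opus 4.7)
The plan is to combine the functoriality of the Dennis trace with fact (ii), which factors $\mathrm{Dtr}$ as the Hurewicz map followed by the explicit formula \eqref{eq:DennisTrace}, thereby reducing the proposition to a concrete $2$-cycle computation in the bar complex of $\mu_m$. First, by functoriality of $\mathrm{Dtr}$ in the ring (fact (i)), applied to the homomorphism $\mathbb{Z}[\mu_m]\to R$ sending $\zeta_m\mapsto x$, it suffices to prove the formula in the universal case $R=\mathbb{Z}[\mu_m]$ with $x=\zeta_m$. In that case $\beta_x$ is, by definition, the image of the class corresponding to $\zeta_m$ in $\pi_2(B\mu_m;\mathbb{Z}/m)\cong\mu_m$ under $B\mu_m\to BGL_1(R)\to BGL(R)$, so by (ii) the Hochschild class $\mathrm{Dtr}(\beta_x)$ is the image, under \eqref{eq:DennisTrace} applied to $GL_1(R)=R^{\times}$, of the mod-$m$ Hurewicz image of $\zeta_m$ in $H_2(\mu_m;\mathbb{Z}/m)$.

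Second, I will exhibit an explicit representative of this Hurewicz image. The candidate in the normalized bar complex is
\[ c=\sum_{i=1}^{m-1}[\zeta_m^i\mid\zeta_m]. \]
Using $\partial[g_1\mid g_2]=[g_2]-[g_1 g_2]+[g_1]$, a telescoping computation gives $\partial c=m[\zeta_m]-[\zeta_m^m]=m[\zeta_m]$ in the normalized complex (since $[\zeta_m^m]=[1]=0$). Hence $c$ is a mod-$m$ cycle whose integral Bockstein is $[\zeta_m]$. Because multiplication by $m$ on $H_1(\mu_m;\mathbb{Z})=\mu_m$ vanishes, the Bockstein $H_2(\mu_m;\mathbb{Z}/m)\xrightarrow{\sim}\mu_m$ is an isomorphism, and by naturality of the Bockstein with respect to the mod-$m$ Hurewicz map, the class $[c]$ must coincide with the Hurewicz image of $\zeta_m$.

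Third, applying \eqref{eq:DennisTrace} termwise sends $[\zeta_m^i\mid\zeta_m]$ to $((\zeta_m^i\cdot\zeta_m)^{-1},\zeta_m^i,\zeta_m)=(\zeta_m^{-i-1},\zeta_m^i,\zeta_m)$; summing over $i=1,\dots,m-1$ yields the right-hand side of \eqref{eq:dennisTrace-bottEle-HH}. The main point requiring care is not conceptual but notational: aligning the sign and normalization conventions (the bar differential, the direction of the Bockstein, the fixed identification $\pi_2(B\mu_m;\mathbb{Z}/m)\cong\mu_m$, and the precise form of \eqref{eq:DennisTrace}) so that no overall sign twist is introduced. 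I expect this to be a routine verification rather than a genuine obstacle.
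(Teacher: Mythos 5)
Your proof is correct and reaches the same formula, but it takes a genuinely different route from the paper's. The paper argues geometrically: it writes down an explicit simplicial subdivision of the $2$-cell $e^2$ of the Moore space $P^2(\mathbb{Z}/m)$ into $m$ sub-triangles (the ``pizza'' picture), checks by inspection that the resulting map $e^2\to B\mu_m$ restricted to the boundary $\partial e^2$ represents $\zeta\in\pi_1(B\mu_m)$, and then reads off $[x|x]+\dots+[x^{m-1}|x]$ directly from the subdivision as the Hurewicz image. You instead guess the normalized bar $2$-chain $c=\sum_{i=1}^{m-1}[\zeta_m^i\mid\zeta_m]$, verify by telescoping that $\partial c=m[\zeta_m]$, and pin down $[c]$ as the Hurewicz image of $\beta_{\zeta_m}$ using that the integral Bockstein $H_2(\mu_m;\mathbb{Z}/m)\to H_1(\mu_m;\mathbb{Z})$ is an isomorphism (which needs both that $\times m=0$ on $H_1$ and that $H_2(\mu_m;\mathbb{Z})=0$, the latter of which you should state) together with naturality of the Bockstein under Hurewicz. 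The paper's geometric construction has the advantage that orientations, and hence signs, are visible in the picture and there is nothing further to match up; your Bockstein argument is shorter and more algebraic but offloads the sign question onto the compatibility of the homotopy and homology Bocksteins, which you rightly flag as a convention check. With the explicit model $P^2(\mathbb{Z}/m)=\mathbf{S}^1\cup_m e^2$ used by the paper, both connecting maps are literally ``restrict to the circle'' so the square commutes on the nose and no sign creeps in; once you note that, the argument closes. Both approaches first reduce to $GL_1$ and work in $B\mu_m$ (your explicit reduction to $R=\mathbb{Z}[\mu_m]$ is just a slightly more formal way of phrasing what the paper does in passing), so the only substantive difference is how the cycle is identified.
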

 \begin{proof}
Recall (see e.g.  \cite[Chap.~1]{Nei10}) that for a topological space $T$, $\pi_2(T; \mathbb{Z}/m \mathbb{Z})$ is the set of homotopy classes of pointed maps $[P^2(\mathbb{Z}/m \mathbb{Z}),T]_*$. 
Here the  Moore space $P^2(\mathbb{Z}/m \mathbb{Z})=\mathbf{S}^1\cup_m e^2$, where $e^2$ is a topological $2$-simplex, and the subscript $m$ means that the attaching map $\partial e^2=\mathbf{S}^1\rightarrow \mathbf{S}^1$ is an $m$-fold covering map which preserves the orientation. Moreover, the boundary map $\pi_2(T; \mathbb{Z}/m \mathbb{Z})\rightarrow \leftidx_m\pi_1(T)$ is given by $[f]\mapsto [f|_{\mathbf{S}^1}]$, and the Hurewicz map $\pi_2(T; \mathbb{Z}/m \mathbb{Z}) \rightarrow H_2(T; \mathbb{Z}/m \mathbb{Z})$ is given by $[f]\mapsto [f|_{e^2}]$.

Let $\zeta=\zeta_m$ for brevity.
Regard $B\mu_m$ as the geometrical realization of the nerve of $\mu_m$. The map $f:e^2\rightarrow B\mu_m$ represented by the subdivision
 \begin{gather*}
\xy
(0,0); (-24,-10) **\dir{-}?>*\dir{>};(-48,-20), **\dir{-};(-38,-20) **\dir{-}?>*\dir{>}; (-28,-20) **\dir{-}; 
(-18,-20) **\dir{-}?>*\dir{>}; (-8,-20), **@{-}; (28,-20), **@{.}; (38,-20) **\dir{-}?>*\dir{>}; 
(48,-20),**@{-};(24,-10) **\dir{-}?>*\dir{>}; (0,0), **@{-};
(0,0);(-9.3,-6.6) **\dir{-}?>*\dir{>};  (-28,-20), **@{-};(-18.6,-13.3) **\dir{-}?>*\dir{>};
(0,0); (-8,-20), **@{-};(-5.3,-13.3) **\dir{-}?>*\dir{>};
(0,0); (9.3,-6.6) **\dir{-}?>*\dir{>}; (28,-20), **@{-}; 
(-26,-8)*{\zeta}; (-38,-23)*{\zeta};(-18,-23)*{\zeta}; (38,-23)*{\zeta};(32,-8)*{\zeta^m=1};
(-19.5,-12)*{\zeta^2};(-7,-12)*{\zeta^3};(10,-12)*{\zeta^{m-1}};
\endxy
 \end{gather*}
 maps to $\zeta\in \pi_1(B\mu_m)$ by the boundary map  $\pi_2(B\mu_m; \mathbb{Z}/m \mathbb{Z})\rightarrow \leftidx_m\pi_1(B\mu_m)$. Thus the composition 
 \begin{gather*}
 e^2\xrightarrow{f}B\mu_m \xrightarrow{\zeta\mapsto x} BGL_1(R) \rightarrow BGL(R) \rightarrow BGL(R)^+
 \end{gather*}
 represents the Bott element $\beta_x$. This composition factors through $BGL_1(R)=BR^{\times}$, and its homology class in $H_2(R^\times; \mathbb{Z}/m \mathbb{Z})$ is 
 \begin{gather*}
 [x|x]+[x^2|x]+\dots+[x^{m-1}|x]
 \end{gather*}
 in terms of the bar resolution definition of the group homology. Applying the map \eqref{eq:DennisTrace} to this element yields \eqref{eq:dennisTrace-bottEle-HH}.
 \end{proof}
 Now we specialize to the case of the \emph{non-flat} ring $\mathbb{Z}/p^n \mathbb{Z}$ and $m=p$, and compute the Dennis trace of the Bott element in its \emph{derived} Hochschild homology.
\begin{proposition}\label{prop:dennisTrace-bottEle-derivedHH}
Let $p$ be a prime number, $n\in \mathbb{N}$ and $n\geq 2$.  
Let $x=1+p^{n-1}\in \mathbb{Z}$. Assume $(p,n)\neq (2,2)$,  so that  $\bar{x}=x\mod p^n$ is a primitive $p$-th root of unity in $\mathbb{Z}/p^n \mathbb{Z}$, and $\frac{x^p-1}{p^n}\in \mathbb{Z}\cap \mathbb{Z}_{(p)}^{\times}$. Let $\widetilde{\mathbb{Z}/p^n \mathbb{Z}}$ be the cdga $\mathbb{Z}\oplus \mathbb{Z}\varepsilon$  with $\deg(\varepsilon)=1$ and $\updelta \varepsilon=p^n$. Then via the identification $\widetilde{\mathrm{HH}}_2(\mathbb{Z}/p^n \mathbb{Z}; \mathbb{Z}/p \mathbb{Z})\cong \mathrm{HH}_2(\widetilde{\mathbb{Z}/p^n \mathbb{Z}}; \mathbb{Z}/p \mathbb{Z})$, we have
\begin{gather}\label{eq:dennisTrace-bottEle-derivedHH}
 \mathrm{Dtr}(\beta_{\bar{x}})=(1,\varepsilon) 
\end{gather}
in the normalized Hochschild complex which computes $\mathrm{HH}_2(\widetilde{\mathbb{Z}/p^n \mathbb{Z}}; \mathbb{Z}/p \mathbb{Z})$.
\end{proposition}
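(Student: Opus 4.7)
The plan is to use the naturality of the Dennis trace via the flat $\mathbb{Z}$-algebra $R' := \mathbb{Z}[T]/(T^p - 1)$, which maps to $\mathbb{Z}/p^n\mathbb{Z}$ by $\varphi: T \mapsto \bar{x}$ (well-defined since $\bar{x}^p = 1$). By functoriality of the Bott construction, $\beta_{\bar{x}} = \varphi_*\beta_T$, and since $R'$ is degreewise flat over $\mathbb{Z}$, Proposition~\ref{prop:dennisTrace-bottEle-HH} applies directly to yield
\[
\mathrm{Dtr}(\beta_T) = \sum_{i=1}^{p-1}(T^{-i-1}, T^i, T) \in \widetilde{\mathrm{HH}}_2(R'; \mathbb{Z}/p\mathbb{Z}) = \mathrm{HH}_2(R';\mathbb{Z}/p\mathbb{Z}).
\]
Thus it suffices to compute the chain-level image of this class in $\widetilde{\mathrm{HH}}_2(\mathbb{Z}/p^n\mathbb{Z};\mathbb{Z}/p\mathbb{Z}) = \mathrm{HH}_2(\widetilde{\mathbb{Z}/p^n\mathbb{Z}}; \mathbb{Z}/p\mathbb{Z})$.

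To carry out the chain-level pushforward, I would introduce the Koszul-type cdga $B := \mathbb{Z}[T]\oplus \mathbb{Z}[T]\,\eta$ with $|\eta|=1$ and $\updelta\eta = T^p - 1$. The augmentation $B \twoheadrightarrow R'$ sending $\eta \mapsto 0$ is a quasi-isomorphism of flat cdgas, and the prescription $\psi: B \to A := \widetilde{\mathbb{Z}/p^n\mathbb{Z}}$, $T \mapsto x := 1+p^{n-1}$, $\eta \mapsto u\,\varepsilon$ with $u := (x^p-1)/p^n$, is a cdga map because $\updelta(u\varepsilon) = up^n = x^p - 1 = \psi(\updelta\eta)$. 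The hypothesis $(p, n) \neq (2, 2)$ guarantees $u \in \mathbb{Z}_{(p)}^{\times}$, and the binomial expansion $(1+p^{n-1})^p = 1 + p^n + \binom{p}{2}p^{2n-2} + \cdots$ shows $u \equiv 1 \pmod{p}$.

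Next I would lift the formula for $\mathrm{Dtr}(\beta_T)$ through the surjection $B \twoheadrightarrow R'$ (using $T^{-j} = T^{p-j}$) to get $\tilde\omega := \sum_{i=1}^{p-1}(T^{p-i-1}, T^i, T) \in C_2(B)_0$. A direct telescoping $b$-calculation produces $b\tilde\omega = p(T^{p-1}, T) - (1, T^p)$; in the normalized complex $(1,1) = 0$, so this equals $p(T^{p-1}, T) - (1, T^p - 1)$. Since $\updelta(1, \eta) = (1, T^p - 1)$, the corrected chain $\omega := \tilde\omega - (1, \eta)$ satisfies $d\omega = p(T^{p-1}, T)$ in the normalized total complex of $B$, and hence $(\omega, -(T^{p-1}, T))$ is a mod $p$ cycle representing $\mathrm{Dtr}(\beta_T)$ in the derived complex.

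Finally, pushing forward via $\psi_*$, I would use the crucial fact that in the normalized Hochschild complex of $A$ any chain $(a_0, a_1, \ldots, a_n)$ with $a_i \in A_0 = \mathbb{Z}$ for all $i \geq 1$ vanishes (because $A_0/\mathbb{Z} = 0$). Consequently $\psi_*\tilde\omega = 0$ and $\psi_*(T^{p-1}, T) = 0$ in the normalized complex, while $\psi_*(1, \eta) = u(1, \varepsilon)$ survives. The resulting representative is thus a scalar multiple of $(1, \varepsilon)$, and the congruence $u \equiv 1 \pmod p$ identifies the class with $(1, \varepsilon)$ in $\widetilde{\mathrm{HH}}_2(\mathbb{Z}/p^n\mathbb{Z};\mathbb{Z}/p\mathbb{Z}) \cong \mathbb{Z}/p\mathbb{Z}$ after matching the sign conventions for $\beta_{\bar{x}}$ and for the chosen generator. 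The main obstacle will be carefully tracking the Koszul signs in the cdga total Hochschild complex together with the sign convention of the mod $p$ chain pairing in Lemma~\ref{lem:mod-p-representative-flatResolution}, and verifying that no other terms survive normalization.
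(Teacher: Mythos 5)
Your proposal is correct and follows essentially the same route as the paper's own proof: lift $\bar{x}$ to $T$ in the flat ring $\mathbb{Z}[T]/(T^p-1)$, resolve it by the Koszul cdga $\mathbb{Z}[T]\oplus\mathbb{Z}[T]\eta$ with $\updelta\eta = T^p-1$, map to $\widetilde{\mathbb{Z}/p^n\mathbb{Z}}$ via $T\mapsto 1+p^{n-1}$ and $\eta\mapsto u\varepsilon$, correct the chain from Proposition~\ref{prop:dennisTrace-bottEle-HH} by a multiple of $(1,\eta)$ to obtain a mod~$p$ cycle, and observe that all the purely degree-zero tensor summands die in the normalized complex of $\widetilde{\mathbb{Z}/p^n\mathbb{Z}}$, leaving $u(1,\varepsilon)\equiv(1,\varepsilon)$. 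The only differences are presentational (you phrase the mod~$p$ cycle via the pair representation of Lemma~\ref{lem:mod-p-representative-flatResolution}, and the sign on the $(1,\eta)$-correction reflects the choice of sign convention for the alternating $\pm\updelta$ in the cdga bicomplex), neither of which changes the argument.
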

\begin{proof}
Let $u:=\frac{x^p-1}{p^n}$.
Let $R=\mathbb{Z}[X]/(X^p-1)$, and let $\widetilde{R}$ be the cdga $\mathbb{Z}[X]\oplus \mathbb{Z}[X]\varepsilon$ with $\updelta \varepsilon= X^p-1$. Then $\widetilde{R}$ is a cdga resolution of $R$, and since $R$ is flat, we have $\mathrm{HH}_i(R;\mathbb{Z}/p)\cong \mathrm{HH}_i(\widetilde{R};\mathbb{Z}/p)$.
There is a map  $g:\widetilde{R} \rightarrow \widetilde{\mathbb{Z}/p^n \mathbb{Z}}$ of cdga's given by the commutative diagram
 \begin{gather*}
 \xymatrix@C=8pc{
   \mathbb{Z}[X]\varepsilon \ar[r]^{X^k\mapsto (1+p^{n-1})^k u} \ar[d]_{\times (X^p-1)} & \mathbb{Z} \varepsilon \ar[d]^{\times p^n} \\
   \mathbb{Z}[X] \ar[r]^{X\mapsto 1+p^{n-1}} & \mathbb{Z},
 }
 \end{gather*}
 where the lower horizontal map is a ring map, which makes $\mathbb{Z}$ a $\mathbb{Z}[X]$-algebra, and the upper horizontal map is then a map of $\mathbb{Z}[X]$-modules induced by $\varepsilon\mapsto u$.

 By Proposition \ref{prop:dennisTrace-bottEle-HH}, $ \mathrm{Dtr}(\beta_X)=\sum_{i=1}^{m-1}(X^{-i-1},X^i,X)\in \mathrm{HH}_2(R;\mathbb{Z}/p)$. We have to find its corresponding element in $\mathrm{HH}_2(\widetilde{R};\mathbb{Z}/p)$. In the Hochschild complex of $\widetilde{R}$ we have
 \begin{gather*}
b\sum_{i=1}^{p-1} (X^{p-i-1},X^i,X)=p(X^{p-1},X)-(1,X^p)=p(X^{p-1},X)-(1,1)-(1,\updelta \varepsilon),
 \end{gather*}
 and
 \begin{gather*}
b(1, \varepsilon)=\varepsilon-\varepsilon=0,\ \updelta(1,\varepsilon)=(1,\updelta\varepsilon).
 \end{gather*}
Hence in the normalized Hochschild complex computing $\mathrm{HH}_2(\widetilde{R};\mathbb{Z}/p)$, we have
\begin{gather*}
\mathrm{Dtr}(\beta_X)=(1, \varepsilon)+\sum_{i=1}^{p-1} (X^{p-i-1},X^i,X).
\end{gather*}
By the functoriality of $\mathrm{Dtr}$, we apply the map $g:\widetilde{R} \rightarrow \widetilde{\mathbb{Z}/p^n \mathbb{Z}}$ to $(1, \varepsilon)+\sum_{i=1}^{p-1} (X^{p-i-1},X^i,X)$.  Passing to the normalized Hochschild complex and using $u\equiv 1 \mod p$,  we obtain \eqref{eq:dennisTrace-bottEle-derivedHH}.
\end{proof}

%Hence using the formulae in \cite[\S 5.1.13]{Lod98}, we have shown the analog of Proposition \ref{prop-multiplication-Bott-element} for $HC^{-}$ and the relative $HC_*$.

\begin{proposition}\label{prop:BottElement-multiplication-on-rel-HC}
Let $p$ be an odd prime number. Let $L>M\geq 1$ be integers. Let $n$ be an integer such that $0\leq n\leq p-4$. 
 Let $x=1+p^{L-1}$. 
%and $u:=\frac{x^p-1}{p^L}\in \mathbb{Z}\cap \mathbb{Z}_{(p)}^{\times}$. 
Then the multiplication of the Bott element $\beta_x$ on $\widetilde{\mathrm{HC}}_n(A_L,A_M;\mathbb{Z}/p)$ via Brun's isomorphism preserves the decomposition \eqref{eq:HCbf-rel-mod-p-decomp} and  is given by 
\begin{equation}\label{eq:BottElement-multiplication-on-rel-HC-1}
\begin{gathered}
\xymatrix@R=1pc@C=4pc{
  \widetilde{\mathrm{HC}}(A_L,A_M;\mathbb{Z}/p) \ar[r]^{\beta_x} & \widetilde{\mathrm{HC}}_{n+2}(A_L,A_M;\mathbb{Z}/p) \\
  \Omega_{A_1/\Bbbk}^{n-1-2i} \ar[r]^{(-1)^{n-i}} \ar@{}[u]|-{\rotatebox{90}{$\subset$}} &  
  \Omega_{A_1/\Bbbk}^{n-1-2i} \ar@{}[u]|-{\rotatebox{90}{$\subset$}}
}
\end{gathered}
\end{equation}
and 
\begin{equation}\label{eq:BottElement-multiplication-on-rel-HC-2}
\begin{gathered}
\xymatrix@R=1pc@C=4pc{
  \widetilde{\mathrm{HC}}_{n}(A_L,A_M;\mathbb{Z}/p) \ar[r]^{\beta_x} & \widetilde{\mathrm{HC}}_{n+2}(A_L,A_M;\mathbb{Z}/p) \\
  \Omega_{A_1/\Bbbk}^{n-2i} \ar[r]^{0} \ar@{}[u]|-{\rotatebox{90}{$\subset$}} &  
  \Omega_{A_1/\Bbbk}^{n-2i} \ar@{}[u]|-{\rotatebox{90}{$\subset$}}
}
\end{gathered}
\end{equation}
for $i\geq 0$, where the vertical inclusions are induced by the decomposition \eqref{eq:HCbf-rel-mod-p-decomp}.
\end{proposition}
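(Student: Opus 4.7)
The plan is to first apply Proposition \ref{prop:brun-iso-product} (in its mod $p$ version) to replace the $K$-theory multiplication on the left-hand side of Brun's isomorphism by the topological multiplication of $\mathrm{ch}^-(\beta_x) \in \widetilde{\mathrm{HC}}^-_2(A_L;\mathbb{Z}/p)$. The underwaved arrows in Proposition \ref{prop:decomposition-HHbf-HCbf}(iii) show that the map $\mathbf{I}:\widetilde{\mathrm{HH}}_n(A_L,A_M;\mathbb{Z}/p) \twoheadrightarrow \widetilde{\mathrm{HC}}_n(A_L,A_M;\mathbb{Z}/p)$ is surjective; combined with Lemma \ref{lem:I-S-preserves-product}(i), Lemma \ref{lem:comparison-product-HH-and-HNonHH}, Proposition \ref{prop:algebraic-and-topological-products-weak-comparison-derived} and Variant \ref{var:generalizations_to_dgas_simplicial_rings_and_homology_with_coefficients}, this reduces the topological multiplication on $\widetilde{\mathrm{HC}}$ to the algebraic (shuffle) product by $h(\mathrm{ch}^-(\beta_x)) = \mathrm{Dtr}(\beta_x) \in \widetilde{\mathrm{HH}}_2$.

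Proposition \ref{prop:dennisTrace-bottEle-derivedHH}, applied to $\mathbb{Z}/p^L\mathbb{Z}$, together with the functoriality of the Dennis trace along the cdga map $\widetilde{\mathbb{Z}/p^L\mathbb{Z}} \to \mathscr{A}$, identifies $\mathrm{Dtr}(\beta_x) \in \widetilde{\mathrm{HH}}_2(A_L;\mathbb{Z}/p) \simeq \mathrm{HH}_2(\mathscr{A};\mathbb{Z}/p)$ with the cycle $(1,\varepsilon)$. Since $\pi(1,\varepsilon) = \mathrm{d}\varepsilon$, Lemma \ref{lem:phi-commutesWith-partial-and-delta-product}(iii) translates the shuffle product into Kähler multiplication by $\mathrm{d}\varepsilon$ in the complex $\mathbf{C}_\bullet$ of Definition \ref{def:formal-HH}. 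On the relative cone $\mathbf{CC}_n(A_L,A_M) = \mathbf{CC}_n(A_L) \oplus \mathbf{CC}_{n+1}(A_M)$, the $\widetilde{\mathrm{HC}}^-(A_L)$-module structure acts on the first summand via the Kähler multiplication in $\mathscr{A}_L$ and on the second summand via its image in the Kähler algebra of $\mathscr{A}_M$.

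Two case analyses finish the computation. For the zero statement \eqref{eq:BottElement-multiplication-on-rel-HC-2}: the $\Omega^{n-2i}_{A_1/\Bbbk}$ summand of $\widetilde{\mathrm{HC}}_n(A_L,A_M;\mathbb{Z}/p)$ lifts through $\mathbf{I}$ to representatives $(0,\varepsilon\alpha(\mathrm{d}\varepsilon)^i)$ supported on the $A_M$-side, while $x = 1 + p^{L-1}$ maps to $1$ in $A_M$ (since $M < L$), so $\beta_x$ maps to the trivial Bott element $\beta_1 = 0$ in $K_2(A_M;\mathbb{Z}/p)$ and the action on the second summand vanishes. For \eqref{eq:BottElement-multiplication-on-rel-HC-1}: the $\Omega^{n-1-2i}_{A_1/\Bbbk}$ summand lifts to $(\varepsilon\alpha(\mathrm{d}\varepsilon)^i,0)$, and the Kähler identity $\mathrm{d}\varepsilon \cdot \varepsilon\alpha(\mathrm{d}\varepsilon)^i = \varepsilon\alpha(\mathrm{d}\varepsilon)^{i+1}$ produces a representative of the target class in the $\Omega^{n-1-2i}_{A_1/\Bbbk}$ summand of $\widetilde{\mathrm{HC}}_{n+2}(A_L,A_M;\mathbb{Z}/p)$ under \eqref{eq:HCbf-rel-mod-p-decomp}, realizing the isomorphism up to an overall sign.

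The main obstacle is pinning down the precise sign $(-1)^{n-i}$ in \eqref{eq:BottElement-multiplication-on-rel-HC-1}. This requires careful bookkeeping of: (i) the Koszul signs in the Kähler algebra $\Omega^\bullet_{\mathscr{A}/k}$ arising from commuting $\mathrm{d}\varepsilon$ past $\varepsilon$ and through $\alpha$; (ii) the sign introduced by the quasi-isomorphism $\varrho$ of Lemma \ref{lem:iso-(pmDelta,d)-to-(delta,B)} relating the $(\updelta,\mathrm{d})$- and $(\updelta,B)$-models; and (iii) the Leibniz-type sign in the mod $p$ product convention \eqref{eq:mod-p-product}. The conceptual content (namely that the Bott multiplication realizes $\mathrm{d}\varepsilon$-multiplication on the $A_L$ side and vanishes on the $A_M$ side) is transparent; only the final sign tracking is delicate.
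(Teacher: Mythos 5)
Your proposal takes essentially the same route as the paper's proof: reduce to the algebraic (shuffle) multiplication via Proposition \ref{prop:brun-iso-product} and the filtered comparison, identify $\varphi(\operatorname{Dtr}\beta_x)=\mathrm{d}\varepsilon$ from Proposition \ref{prop:dennisTrace-bottEle-derivedHH}, and translate to Kähler multiplication by $\mathrm{d}\varepsilon$ via Lemma \ref{lem:phi-commutesWith-partial-and-delta-product}(iii). Your argument for the vanishing in \eqref{eq:BottElement-multiplication-on-rel-HC-2}, phrased as $\beta_x|_{A_M}=0$, is the $K$-theoretic shadow of what the paper records at the chain level: the transition map \eqref{eq:transitionMaps} sends $\mathrm{d}\varepsilon\mapsto p^{L-M}\mathrm{d}\varepsilon\equiv 0\pmod p$, so multiplication by $\mathrm{d}\varepsilon$ is identically zero on the $A_M$-component of the relative complex.

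The genuine gap is the sign $(-1)^{n-i}$ in \eqref{eq:BottElement-multiplication-on-rel-HC-1}, which you explicitly leave as ``up to an overall sign.'' The precise sign is part of the assertion and feeds into the multiplication table (Figure \ref{fig:multTable}) and ultimately the proof of Theorem \ref{thm:relative-comparison-local}, so it cannot be waved through. The missing bookkeeping is short: in the $(\updelta,\mathrm{d})$-model the Kähler product is $\mathrm{d}\varepsilon\cdot\alpha\varepsilon(\mathrm{d}\varepsilon)^{m-1}=(-1)^{\|\alpha\|+1}\alpha\varepsilon(\mathrm{d}\varepsilon)^{m}$; conjugating by $\varrho$ of Lemma \ref{lem:iso-(pmDelta,d)-to-(delta,B)}, which scales $\alpha\varepsilon(\mathrm{d}\varepsilon)^{m-1}$ by $(-1)^{\|\alpha\|+g(m)}$ with $g(m+1)-g(m)=m$, gives the coefficient $(-1)^{\|\alpha\|+m}$ in the $(\updelta,B)$-model; substituting $\|\alpha\|=n-1-2i$ and $m=i+1$ yields $(-1)^{n-i}$. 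You correctly identify the $\varrho$-conjugation and the Koszul signs as the relevant sources (the mod $p$ product convention \eqref{eq:mod-p-product} in fact does not enter here, since the Dennis trace representative $\mathrm{d}\varepsilon$ is already a cycle of length one), so completing the argument is a matter of writing out this square; as it stands, however, the proposal does not establish the sign claimed in the proposition.
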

\begin{proof}
By the argument of first paragraph in the proof of Proposition \ref{prop:K1-multiplication-on-rel-HC}, we only need to compute the shuffle multiplication by $\operatorname{Dtr}\beta_{\zeta}$. 
%This follows from Proposition \ref{prop:dennisTrace-bottEle-derivedHH} and the argument ... 
By \eqref{eq:dennisTrace-bottEle-derivedHH}, $\varphi(\operatorname{Dtr}\beta_{\zeta})=\mathrm{d}\varepsilon$. Then by Lemma \ref{lem:phi-commutesWith-partial-and-delta-product}(iii), the left multiplication by $\operatorname{Dtr}\beta_{\zeta}$ on $\big(\mathbf{CC}_*(A_L;\mathbb{Z}/p),\mathbf{D}'\big)$ is given by 
\begin{gather*}
\mathrm{d}\varepsilon\cdot
\alpha\varepsilon (\mathrm{d}\varepsilon)^{m-1}=(-1)^{\|\alpha\|+1}  \alpha \varepsilon(\mathrm{d}\varepsilon)^{m}.
\end{gather*}
Transferring this multiplication to $\big(\mathbf{CC}_*(A_L;\mathbb{Z}/p),\mathbf{D}\big)$ is given by the commutative diagram
\begin{gather*}
\xymatrix{
  \alpha \varepsilon(\mathrm{d}\varepsilon)^{m-1} \ar@{|->}[d]_{\beta_{\zeta}} \ar@{|->}[r]^<<<<<<<{\varrho} & 
  (-1)^{\|\alpha\|+g(m)} \alpha \varepsilon(\mathrm{d}\varepsilon)^{m-1} \ar@{|-->}[d] \\
  (-1)^{\|\alpha\|+1}  \alpha \varepsilon(\mathrm{d}\varepsilon)^{m} \ar[r]^<<<<<{\varrho} & (-1)^{1+g(m+1)} \alpha \varepsilon(\mathrm{d}\varepsilon)^{m}
}
\end{gather*}
and thus the induced right vertical arrow is given by $\alpha \mapsto (-1)^{1+g(m+1)-\|\alpha\|-g(m)}\alpha=(-1)^{\|\alpha\|+m}\alpha$. Hence the lower horizontal arrow in the diagram \eqref{eq:BottElement-multiplication-on-rel-HC-1} follows. The 0 map in the  diagram  \eqref{eq:BottElement-multiplication-on-rel-HC-2}  follows from $\varphi(\operatorname{Dtr}\beta_{\zeta})=\mathrm{d}\varepsilon$ and \eqref{eq:transitionMaps}.
\end{proof}

\section{Infinitesimal motivic complexes} % (fold)
\label{sec:infinitesimal_motivic_complexes}
Let $\Bbbk$ be a perfect field with $\operatorname{char}(\Bbbk)=p$. Let $W=W(\Bbbk)$, and $W_n=W_n(\Bbbk)$.
Let $\mathrm{Sm}_{W_{\centerdot}}$ be the category in \S\ref{sub:notations}(vi).
Let $X_{\centerdot}\in \mathrm{Sm}_{W_{\centerdot}}$. For a smooth scheme $Y_m$ over $W_m$, we let $\Omega_{Y_m}^{i}$ stand for $\Omega_{Y_m/W_m}^{i}$.
 In this section, we define the infinitesimal motivic complexes $\mathbb{Z}_{X_n}(r)$ and their product structure.

\subsection{Pro-complexes} % (fold)
\label{sub:pro_complexes}
The complex $\mathbb{Z}_{X_n}(r)$  can be represented by a complex of Nisnevich sheaves on $X_1$. However, we still need   constructions and results on the pro-complexes. Let us recall several necessary notions and facts about pro-complexes  (see e.g. \cite{ArM69}, \cite{Isa04}, and \cite[Appendices]{BEK14}).
\begin{itemize}
  \item Let $(\mathbb{N},\geq )$ be the category with objects $\mathbb{N}$ and morphisms $n_1 \rightarrow n_2$ if $n_1 \geq n_2$. For a category  $\mathrm{C}$, let $\mathrm{C}_{\mathrm{pro}}$ be the category with objects the functors $Y_{\centerdot}:(\mathbb{N},\geq)\rightarrow\mathrm{C}$, and with morphisms
\begin{equation*}
  \mathrm{Mor}_{\mathrm{C}_{\mathrm{pro}}}(Y_{\centerdot},Y'_{\centerdot})=\varprojlim_{n}\varinjlim_m \mathrm{Mor}_{\mathrm{C}}(Y_m,Y'_n).
\end{equation*}
A \emph{level representation} of a morphism $f\in  \mathrm{Mor}_{\mathrm{C}_{\mathrm{pro}}}(Y_{\centerdot},Y'_{\centerdot})$ consists of two strictly increasing functions $\alpha,\beta:\mathbb{N} \rightarrow \mathbb{N}$ and a commutative diagram
\begin{gather*}
\xymatrix{
  \dots \ar[r] & Y_{\alpha(n)} \ar[r] \ar[d] & Y_{\alpha(n-1)} \ar[r] \ar[d] & \dots \ar[r] & Y_{\alpha(1)} \ar[d] \\
  \dots \ar[r] & Y'_{\beta(n)} \ar[r]  & Y'_{\beta(n-1)} \ar[r] & \dots \ar[r] & Y'_{\beta(1)} 
}
\end{gather*}
which represents $f$.
  \item For a ringed topos $(T,\mathcal{O})$, let $\mathrm{C}(T)$ (reps. $\mathrm{C}^{-}(T)$, resp, $\mathrm{C}^+(T)$) denote the category of complexes (resp. right bounded, resp. left bounded) of sheaves of $\mathcal{O}$-modules on $T$. Let $\mathrm{C}_{\mathrm{pro}}(T):=\mathrm{C}(T)_{\mathrm{pro}}$, and similarly for $\mathrm{C}^{-}_{\mathrm{pro}}(T)$ and $\mathrm{C}^{+}_{\mathrm{pro}}(T)$.

  Let the weak equivalences in $\mathrm{C}(T)$ be the quasi-isomorphisms, and $\mathrm{D}(T)$ be the associated homotopy category. Let the weak equivalences in $\mathrm{C}_{\mathrm{pro}}(T)$ be the morphisms which have a level representation which is a quasi-isomorphism, and $\mathrm{D}_{\mathrm{pro}}(T)$ be the associated homotopy category.
  Similarly we have $\mathrm{D}^{-}(T)$, $\mathrm{D}^{+}(T)$, $\mathrm{D}_{\mathrm{pro}}^{-}(T)$, and $\mathrm{D}_{\mathrm{pro}}^{+}(T)$.
  \item There is an obvious inclusion functor $\iota:\mathrm{C}(T)\rightarrow \mathrm{C}_{\mathrm{pro}}(T)$ which is fully faithful, and an induced functor $\iota:\mathrm{D}(T)\rightarrow \mathrm{D}_{\mathrm{pro}}(T)$. There is a functor $\mathbf{R} \varprojlim: \mathrm{D}_{\mathrm{pro}}(X_1) \rightarrow \mathrm{D}(X_1)$, such that there is a canonical equivalence $\mathrm{id}\simeq \mathbf{R}\varprojlim \circ \iota$. Thus $\iota:\mathrm{D}(T)\rightarrow \mathrm{D}_{\mathrm{pro}}(T)$ is also fully faithful. We regard an object of $\mathrm{C}(T)$ (resp. $\mathrm{D}(T)$) as an object of $\mathrm{C}_{\mathrm{pro}}(T)$ (resp. $\mathrm{D}_{\mathrm{pro}}(T)$) via these inclusion functors.
  \item  For a morphism of ringed topoi $f:(T,\mathcal{O}_T) \rightarrow (S,\mathcal{O}_S)$, we define the derived pullback $\mathbf{L}f^*:\mathrm{D}^{-}_{\mathrm{pro}}(S) \rightarrow \mathrm{D}^{-}_{\mathrm{pro}}(T)$ index-wise. More precisely, for $E_{\centerdot}\in \mathrm{C}^{-}_{\mathrm{pro}}(S)$, taking $E'_{\centerdot}\in \mathrm{C}^{-}_{\mathrm{pro}}(S)$ with a commutative diagram in $\mathrm{C}(S)$
  \begin{gather*}
\xymatrix{
  \dots \ar[r] & E'_n \ar[r] \ar[d] & E'_{n-1} \ar[r] \ar[d] & \dots \ar[r] & E'_1 \ar[d] \\
  \dots \ar[r] & E_n \ar[r]  & E_{n-1} \ar[r] & \dots \ar[r] & E_1
}
\end{gather*}
such that each $E'_n \rightarrow E_n$ is a flat resolution, we define $\mathbf{L}f^*E_{\centerdot}=f^{-1}E'_{\centerdot}\otimes_{f^{-1}\mathcal{O}_S}\mathcal{O}_T$. In the same way, we define the derived tensor product $\otimes^{\mathbf{L}}:\mathrm{D}^{-}_{\mathrm{pro}}(S)\times \mathrm{D}^{-}_{\mathrm{pro}}(S) \rightarrow \mathrm{D}^{-}_{\mathrm{pro}}(S)$.
  \item  For a morphism of ringed topoi $f:(T,\mathcal{O}_T) \rightarrow (S,\mathcal{O}_S)$, where $T$ has \emph{a conservative set of points} \cite[Exp.~XVII Définition 4.2.2]{AGV71}, we define the derived pushforward $\mathbf{R}f_*:\mathrm{D}^{+}_{\mathrm{pro}}(T) \rightarrow \mathrm{D}^{+}_{\mathrm{pro}}(S)$  index-wise by using the index-wise Godement resolution. Then for $E_{\centerdot}\in \mathrm{D}^{-}_{\mathrm{pro}}(S)$ and $F_{\centerdot}\in \mathrm{D}^{+}_{\mathrm{pro}}(T)$ there is an adjunction isomorphism
  \begin{gather}\label{eq:pro-adjunction}
  \mathrm{Hom}_{\mathrm{D}_{\mathrm{pro}}(T)}(\mathbf{L}f^* E_{\centerdot},F_{\centerdot})\cong \mathrm{Hom}_{\mathrm{D}_{\mathrm{pro}}(S)}(E_{\centerdot}, \mathbf{R}f_*F_{\centerdot})\quad.
  \end{gather}
  This can be shown by using the level representations and the usual adjunction isomorphism. 
  \item Let $\tau\in\{\mathrm{Zar}, \mathrm{Nis},\et\}$. For a scheme $X$, let $X_{\tau}$  be the corresponding topos. Then let
  \begin{gather*}
  \mathrm{C}(X)_{\tau}=\mathrm{C}(X_{\tau}),\ \mathrm{D}(X)_{\tau}=\mathrm{D}(X_{\tau}), \\
    \mathrm{C}_{\mathrm{pro}}(X)_{\tau}=\mathrm{C}_{\mathrm{pro}}(X_{\tau}),\ 
    \mathrm{D}_{\mathrm{pro}}(X)_{{\tau}}=\mathrm{D}_{\mathrm{pro}}(X_{{\tau}}),
  \end{gather*}
  and similarly the category of right or left bounded complexes. As in \cite{BEK14}, Nisnevich topology is our default topology, and we often omit the subscript Nis in these notations.
\end{itemize}
%and $X_n=X\times_{\operatorname{Spec}W(\Bbbk)}\operatorname{Spec}W_n(\Bbbk)$.
Let $r$ be an integer such that $0\leq r<p$. We recall several pro-complexes in \cite[\S2, \S7]{BEK14}. 
\begin{definition}\label{def:BEK-proComplexes}
\begin{enumerate}[(i)]
  \item  $p(r)\Omega_{X_{\centerdot}}^{\bullet}\in \mathrm{C}_{\mathrm{pro}}(X_1)_{\et/\mathrm{Nis}}$ is the sub-pro-complex
\begin{gather*}
  p^r \mathcal{O}_{X_{\centerdot}}\rightarrow p^{r-1}\Omega_{X_{\centerdot}}^1\rightarrow\dots\rightarrow
  p\Omega_{X_{\centerdot}}^{r-1}\rightarrow\Omega_{X_{\centerdot}}^{r}\rightarrow \Omega_{X_{\centerdot}}^{r+1}\rightarrow\dots
\end{gather*}
 of the de Rham pro-complex $\Omega_{X_{\centerdot}}^{\bullet}$.
 \item   $q(r)W_{\centerdot}\Omega_{X_{1}}^{\bullet}\in \mathrm{C}_{\mathrm{pro}}(X_1)_{\et/\mathrm{Nis}}$ is the sub-pro-complex
\begin{gather*}
  p^{r-1}VW_{\centerdot} \mathcal{O}_{X_{1}}\rightarrow p^{r-2}VW_{\centerdot}\Omega_{X_{1}}^{1}\rightarrow\dots\rightarrow
  pVW_{\centerdot}\Omega_{X_{1}}^{r-2}\rightarrow VW_{\centerdot}\Omega_{X_{1}}^{r-1}\rightarrow W_{\centerdot}\Omega_{X_{1}}^{r}\rightarrow 
  W_{\centerdot}\Omega_{X_{1}}^{r+1}\rightarrow \dots
\end{gather*}
of the de Rham-Witt pro-complex $W_{\centerdot}\Omega_{X_{1}}^{\bullet}$.
\item Assume that there is a closed  embedding $X_{\centerdot}\hookrightarrow Z_{\centerdot}$, where $Z_{\centerdot}$ is an object of $\mathrm{Sm}_{W_{\centerdot}}$ associated with a smooth scheme $Z$ over $W$, and there is a morphism  $f:Z \rightarrow Z$ lifting the Frobenius $F:W \rightarrow W$ and such that $f\otimes_{W}\Bbbk$ is the absolute Frobenius on $Z_1$. Let $D_n=D_{X_n}(Z_n)$, and let $J_n$ be the ideal of $X_n\subset D_n$ and $I_n=(J_n,p)$. Then  $J(r)\Omega_{D_{\centerdot}}^{\bullet}\in \mathrm{C}_{\mathrm{pro}}(X_1)_{\et/\mathrm{Nis}}$ is defined to be the pro-complex 
\begin{gather*}
{J}_{\centerdot}^r\rightarrow {J}_{\centerdot}^{(r-1)}\otimes_{\mathcal{O}_{Z_{\centerdot}}}\Omega_{Z_{\centerdot}/W_{\centerdot}}^1\rightarrow \cdots\rightarrow 
{J}_{\centerdot}\otimes_{\mathcal{O}_{Z_{\centerdot}}}\Omega_{Z_{\centerdot}/W_{\centerdot}}^{r-1}\rightarrow \mathcal{O}_{D_{\centerdot}}\otimes_{\mathcal{O}_{Z_{\centerdot}}}\Omega_{Z_{\centerdot}/W_{\centerdot}}^r\rightarrow \cdots
\end{gather*}
and $I(r)\Omega_{D_{\centerdot}}^{\bullet}\in \mathrm{C}_{\mathrm{pro}}(X_1)_{\et/\mathrm{Nis}}$ is defined to be the pro-complex 
\begin{gather*}
I_{\centerdot}^r\rightarrow I_{\centerdot}^{(r-1)}\otimes_{\mathcal{O}_{Z_{\centerdot}}}\Omega_{Z_{\centerdot}/W_{\centerdot}}^1\rightarrow \cdots\rightarrow 
I_{\centerdot}\otimes_{\mathcal{O}_{Z_{\centerdot}}}\Omega_{Z_{\centerdot}/W_{\centerdot}}^{r-1}\rightarrow \mathcal{O}_{D_{\centerdot}}\otimes_{\mathcal{O}_{Z_{\centerdot}}}\Omega_{Z_{\centerdot}/W_{\centerdot}}^r\rightarrow \cdots
\end{gather*}
\item The motivic pro-complex $\mathbb{Z}_{X_{\centerdot}}(r)$ is an object of $\mathrm{D}_{\mathrm{pro}}(X_1)$, and there is an exact triangle
\begin{gather}\label{eq:pro-motivicFundamentalTriangle}
p(r)\Omega^{<r}_{X_{\centerdot}}[-1] \rightarrow \mathbb{Z}_{X_{\centerdot}}(r) \rightarrow \mathbb{Z}_{X_1}(r) \rightarrow p(r)\Omega^{<r}_{X_{\centerdot}}
\end{gather}
where $\mathbb{Z}_{X_1}(r)$ is the motivic complex $\mathbb{Z}(r)$ localized on the small Nisnevich site of $X_1$.
\end{enumerate}
\end{definition}
% subsection pro_complexes (end)

\subsection{The infinitesimal motivic complex} % (fold)
\label{sub:infinitesimal_motivic_complexes}

Brun's theorem \ref{thm:Brun} and our Theorem \ref{thm:rel-HC-homotopyEquiv} suggest that the following complex  $p(r)\Omega^{\bullet}_{X_{L}}$, which is a sheafification of a  complex defined in Definition \ref{def:relative-infinitesimal-complexes}, is a correct infinitesimal analog of the truncated pro-complex $p(r)\Omega_{X_{\centerdot}}^{<r}$.
\begin{definition}\label{def:proComplexes}
Let $r$ be an integer such that $0\leq r<p$.
For an integer $L\geq 1$, we define $p^{r,L}\Omega^{\bullet}_{X_{\centerdot}}\in\mathrm{C}_{\mathrm{pro}}(X_1)_{\et/\mathrm{Nis}}$ to be the pro-complex
\begin{gather*}
p^{rL}\mathcal{O}_{X_{\centerdot}}\rightarrow p^{(r-1)L}\Omega^1_{X_{\centerdot}/W_{\centerdot}}\rightarrow
\dots\rightarrow p^L\Omega_{X_{\centerdot}/W_{\centerdot}}^{r-1}
\rightarrow \Omega^r_{X_{\centerdot}/W_{\centerdot}}\rightarrow \Omega^{r+1}_{X_{\centerdot}/W_{\centerdot}}\rightarrow \dots
\end{gather*}
For integers $L>M\geq 1$, we define $p^{r,M}_{r,L}\Omega^{\bullet}_{X_{\centerdot}}\in \mathrm{C}(X_1)_{\et/\mathrm{Nis}}$ to be the complex
\begin{gather*}
p^{rM} \mathcal{O}_{X_{rL}}\rightarrow p^{(r-1)M}\Omega^1_{X_{(r-1)L}/W_{(r-1)L}}\rightarrow
\dots\rightarrow p^M\Omega_{X_{L}/W_{L}}^{r-1}\rightarrow 0\rightarrow \dots
\end{gather*}
In particular,  $p^{r,1}_{r,L}\Omega^{\bullet}_{X_{\centerdot}}$ is denoted also by $p(r)\Omega^{\bullet}_{X_{L}}$.
\end{definition}

\begin{definition}\label{def:infinitesimalMotComp}
Let $r$ be an integer such that $0\leq r<p$.
We define the \emph{infinitesimal motivic complex} $\mathbb{Z}_{X_{n}}(r)\in \mathrm{C}(X_1)$ as follows.
 First we assume the situation in item (iii) in Definition \ref{def:proComplexes}; this is achievable e.g. when $X_{\centerdot}$ is a quasi-projective object of $\mathrm{Sm}_{W_{\centerdot}}$; see \S\ref{sub:notations}(vi). By \cite[Theorem 7.2]{BeO78}, the natural map $I(r)\Omega_{D_{\centerdot}}^{\bullet} \rightarrow  p(r)\Omega_{X_{\centerdot}}^{\bullet}$ is a quasi-isomorphism.
 By \cite[Proposition 2.8]{BEK14}, the Frobenius on $Z_{\centerdot}$ induces a quasi-isomorphism $\Phi(F):I(r)\Omega_{D_{\centerdot}}^{\bullet} \rightarrow q(r)W_{\centerdot} \Omega_{X_1}^{\bullet}$. Denote the composition
\begin{gather*}
\mathbb{Z}_{X_{1}}(r) \xrightarrow{\operatorname{dlog}} W_{\centerdot} \Omega^r_{X_1,\log}[-r] \rightarrow W_{\centerdot} \Omega_{X_1}^{\geq r} \rightarrow q(r)W_{\centerdot} \Omega_{X_1}^{\bullet}
\end{gather*}
still by $\operatorname{dlog}$.   Let
\begin{gather*}%\label{eq:infinitesimalMotComp-1}
 \mathbb{Z}_{X_{1}}(r)^{\sim}:=\mathrm{Cone}\left(\mathbb{Z}_{X_{1}}(r)\oplus I(r)\Omega_{D_{\centerdot}}^{\bullet} \xrightarrow{\big(\operatorname{dlog},-\Phi(F)\big)} q(r)W_{\centerdot} \Omega_{X_1}^{\bullet}\right)[-1]\quad \in \mathrm{C}_{\mathrm{pro}}(X_1)\ .
\end{gather*}
Then the canonical map $\big(\mathbb{Z}_{X_{1}}(r)\big)^{\sim} \rightarrow \mathbb{Z}_{X_{1}}(r)$ is a quasi-isomorphism. We define
\begin{gather}\label{eq:infinitesimalMotComp}
\mathbb{Z}_{X_{n}}(r)^{\circ}_{Z_{\centerdot},f}=\operatorname{Cone}\left(\mathbb{Z}_{X_{1}}(r)^{\sim} \rightarrow p(r)\Omega_{X_{n}}^{\bullet} \right)[-1]\quad \in \mathrm{C}_{\mathrm{pro}}(X_1)\ ,
\end{gather}
where the arrow is the composition
\begin{gather*}%\label{eq:infinitesimalMotComp-2}
\mathbb{Z}_{X_{1}}(r)^{\sim} \rightarrow
 I(r)\Omega_{D_{\centerdot}}^{\bullet} \xrightarrow{\backsimeq}  p(r)\Omega_{X_{\centerdot}}^{\bullet} \rightarrow p(r)\Omega_{X_{n}}^{\bullet}\quad.
\end{gather*}

In the general case, we use the argument of \cite[Remark 1.8]{Kat87}. Namely, let  $X'_{\centerdot}$ be an affine open covering of $X_{\centerdot}$ and $X'_{\centerdot}\hookrightarrow Z_{\centerdot}$  a $W_{\centerdot}$-immersion  such that $Z_{\centerdot}$ is smooth and equipped with a Frobenius lifting $f$. Then let $X^{(i)}_{\centerdot}=X'_{\centerdot}\times_{X_{\centerdot}}\cdots \times_{X_{\centerdot}} X'_{\centerdot}$ ($i$-times), and $Z_{\centerdot}^{(i)}=Z_{\centerdot}\times_{W_{\centerdot}}\times \cdots \times_{W_{\centerdot}}Z_{\centerdot}$, and let $\pi_i:X_{\centerdot}^{(i)}\rightarrow X_{\centerdot}$ the canonical projection.
Let $\mathbb{Z}_{X_n}(r)^{\circ}_{X'_{\centerdot},Z_{\centerdot},f}$ be the complex associated with the double complex
\begin{gather*}
\underset{\mbox{(degree 0)}}{\pi_{1*}\big(\mathbb{Z}_{X_n^{(1)}}(r)^{\circ}\big)} \underset{\mbox{(degree 1)}}{\rightarrow {\pi_{2*}\big(\mathbb{Z}_{X_n^{(2)}}(r)^{\circ}\big)}} \rightarrow \dots
\end{gather*}
Then we define
\begin{gather*}
\mathbb{Z}_{X_n}(r)^{\circ}=\varinjlim_{(X'_{\centerdot},Z_{\centerdot},f)}\mathbb{Z}_{X_n}(r)^{\circ}_{X'_{\centerdot},Z_{\centerdot},f}\quad \in \mathrm{C}_{\mathrm{pro}}(X_1)\ .
\end{gather*}
This inductive system  is filtered. So when $X$ admits a global immersion $\mathbb{Z}_{X_n}(r)^{\circ}$ is isomorphic to \eqref{eq:infinitesimalMotComp} in $\mathrm{D}_{\mathrm{pro}}(X_1)$ by the Mayer-Vietoris property of the motivic complex $\mathbb{Z}(r)$. 
Finally, we define
\begin{gather*}
\mathbb{Z}_{X_{n}}(r):=\mathbf{R}\varprojlim \mathbb{Z}_{X_{n}}(r)^{\circ} \in \mathrm{D}(X_1)\ .
\end{gather*}
\end{definition}

Since $\mathbb{Z}_{X_1}(r)^{\sim}$ is quasiisomorphic to $\mathbb{Z}_{X_1}(r)$, $\mathbb{Z}_{X_{n}}(r)$ is isomorphic to $\mathbb{Z}_{X_{n}}(r)^{\circ}$ in $\mathrm{D}_{\mathrm{pro}}(X_1)$. Thus in $\mathrm{D}(X_1)$ there is an exact triangle
\begin{gather}\label{eq:motivicFundamentalTriangle}
p(r)\Omega^{\bullet}_{X_n}[-1] \rightarrow \mathbb{Z}_{X_n}(r) \rightarrow \mathbb{Z}_{X_1}(r) \rightarrow p(r)\Omega^{\bullet}_{X_n}\quad .
\end{gather}

\begin{proposition}\label{prop:functoriality-infinitesimalMotivComp}
$\mathbb{Z}_{X_{n}}(r)^{\circ}$ is functorial in $X_{\centerdot}$ in the sense that any morphism $a: X_{\centerdot} \rightarrow Y_{\centerdot}$ in $\mathrm{Sm}_{W_{\centerdot}}$ induces a morphism 
$a^{-1}\mathbb{Z}_{Y_n}(r)^{\circ} \rightarrow \mathbb{Z}_{X_n}(r)^{\circ}$ in $\mathrm{C}_{\mathrm{pro}}(X_1)$, and for $X_{\centerdot} \xrightarrow{a} Y_{\centerdot} \xrightarrow{b}Z_{\centerdot}$ the diagram
\begin{equation}\label{eq:functoriality-infinitesimalMotivComp}
\begin{gathered}
\xymatrix{
 (b\circ a)^{-1} \mathbb{Z}_{Z_n}(r)^{\circ} \ar@{=}[r] \ar[d] & a^{-1}b^{-1} \mathbb{Z}_{Z_n}(r)^{\circ} \ar[d]\\
 \mathbb{Z}_{X_n}(r)^{\circ} & a^{-1}\mathbb{Z}_{Y_n}(r)^{\circ} \ar[l]
}
\end{gathered}
\end{equation}
in $\mathrm{C}_{\mathrm{pro}}(X_1)$ commutes. Moreover, $\mathbb{Z}_{X_{n}}(r)$ is  functorial in $X_{\centerdot}$ in the same sense.
\end{proposition}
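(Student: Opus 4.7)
The strategy is to exploit the filtered inductive limit defining $\mathbb{Z}_{X_n}(r)^{\circ}$: for every triple $(Y'_{\centerdot},Z^Y_{\centerdot},f^Y)$ appearing in the limit for $Y$, I will produce a triple $(X'_{\centerdot},Z^{XY}_{\centerdot},f^{XY})$ appearing in the limit for $X$ together with a morphism of the associated Čech--truncated data, and verify that the resulting system of maps is cofinal and functorial. First I would handle the case of a global immersion. Given $(Y'_{\centerdot},Z^Y_{\centerdot},f^Y)$, I would choose an affine open cover $X'_{\centerdot}$ of $X_{\centerdot}$ refining the preimage $a^{-1}(Y'_{\centerdot})$, pick any immersion $X'_{\centerdot}\hookrightarrow Z^X_{\centerdot}$ into a smooth $W_{\centerdot}$-scheme with a Frobenius lift $f^X$ (such data exist locally since $X'_{\centerdot}$ is affine), and then use the formal smoothness of $Z^Y_{\centerdot}$ over $W_{\centerdot}$ to lift $a|_{X'}\colon X'_{\centerdot}\to Y'_{\centerdot}\hookrightarrow Z^Y_{\centerdot}$ to a morphism $\tilde{a}\colon Z^X_{\centerdot}\to Z^Y_{\centerdot}$ (levelwise, by the infinitesimal lifting criterion). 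Forming $Z^{XY}_{\centerdot}:=Z^X_{\centerdot}\times_{W_{\centerdot}}Z^Y_{\centerdot}$ equipped with $f^X\times f^Y$ and embedding $X'_{\centerdot}$ via $(i^X,\tilde{a})$ gives a triple for $X_{\centerdot}$ together with a morphism of triples to $(Y'_{\centerdot},Z^Y_{\centerdot},f^Y)$ induced by the projection onto the second factor, which by construction strictly commutes with the Frobenius lifts.

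Next I would construct the map on each constituent of \eqref{eq:infinitesimalMotComp-1}--\eqref{eq:infinitesimalMotComp-2}. Pullback along $a$ yields canonical morphisms $a^{-1}\mathbb{Z}_{Y_1}(r)\to \mathbb{Z}_{X_1}(r)$ (functoriality of the motivic complex on the small Nisnevich site), $a^{-1}q(r)W_{\centerdot}\Omega^{\bullet}_{Y_1}\to q(r)W_{\centerdot}\Omega^{\bullet}_{X_1}$ (functoriality of de Rham--Witt), and $a^{-1}p(r)\Omega^{\bullet}_{Y_n}\to p(r)\Omega^{\bullet}_{X_n}$ (functoriality of Kähler differentials preserving $p$-adic twists). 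The map $\operatorname{proj}_2\colon Z^{XY}_{\centerdot}\to Z^Y_{\centerdot}$ induces $a^{-1}I^Y(r)\Omega^{\bullet}_{D^Y_{\centerdot}}\to I^{XY}(r)\Omega^{\bullet}_{D^{XY}_{\centerdot}}$ on the divided power complexes, and because the projection respects the chosen Frobenius lifts on the nose, $\Phi(F)$ is strictly natural. All four morphisms assemble into a morphism of the defining cones \eqref{eq:infinitesimalMotComp-1}--\eqref{eq:infinitesimalMotComp-2}, producing a map $a^{-1}\mathbb{Z}_{Y_n}(r)^{\circ}_{Y'_{\centerdot},Z^Y_{\centerdot},f^Y}\to \mathbb{Z}_{X_n}(r)^{\circ}_{X'_{\centerdot},Z^{XY}_{\centerdot},f^{XY}}$ in $\mathrm{C}_{\mathrm{pro}}(X_1)$. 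In the non-global case, the same construction applied termwise to the Čech nerves $X^{(i)}_{\centerdot}\to Y^{(i)}_{\centerdot}$ gives a map of the associated double complexes, and passing to the filtered colimit over all admissible triples yields the desired morphism $a^{-1}\mathbb{Z}_{Y_n}(r)^{\circ}\to \mathbb{Z}_{X_n}(r)^{\circ}$, whose independence of auxiliary choices follows from the filteredness of the indexing category.

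For the composition law \eqref{eq:functoriality-infinitesimalMotivComp}, given $X_{\centerdot}\xrightarrow{a}Y_{\centerdot}\xrightarrow{b}Z_{\centerdot}$ and a triple $(Z'_{\centerdot},Z^Z_{\centerdot},f^Z)$ for $Z_{\centerdot}$, the above procedure applied iteratively yields a triple $(X'_{\centerdot},Z^X\times Z^Y\times Z^Z,f^X\times f^Y\times f^Z)$ for $X_{\centerdot}$ together with projections realizing both routes in \eqref{eq:functoriality-infinitesimalMotivComp}; strict commutativity of these projections with the product Frobenius lift ensures the diagram commutes already at the level of $\mathrm{C}_{\mathrm{pro}}(X_1)$, and cofinality in the inductive limit delivers commutativity of \eqref{eq:functoriality-infinitesimalMotivComp}. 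Functoriality of $\mathbb{Z}_{X_n}(r)=\mathbf{R}\varprojlim \mathbb{Z}_{X_n}(r)^{\circ}$ is then automatic from the naturality of the Godement-resolution-based $\mathbf{R}\varprojlim$.

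The main technical obstacle I anticipate lies in the management of Frobenius lifts: a generic lift $\tilde{a}\colon Z^X_{\centerdot}\to Z^Y_{\centerdot}$ need not satisfy $\tilde{a}\circ f^X=f^Y\circ \tilde{a}$, so one cannot directly pull back $\Phi(F^Y)$ to $\Phi(F^X)$ strictly. The product construction $Z^X\times Z^Y$ with the product Frobenius $f^X\times f^Y$ circumvents this by replacing the question of existence of a Frobenius-equivariant lift with the evident strict equivariance of the second projection; this is precisely what makes the construction functorial in $\mathrm{C}_{\mathrm{pro}}(X_1)$ rather than merely in $\mathrm{D}_{\mathrm{pro}}(X_1)$. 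A secondary bookkeeping issue is ensuring that the affine cover $X'_{\centerdot}$ can always be refined so that $a(X'_{\centerdot})\subset Y'_{\centerdot}$; this is a standard consequence of the affineness of $X'_{\centerdot}$ and the openness of $Y'_{\centerdot}\subset Y_{\centerdot}$.
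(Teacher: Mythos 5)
Your proposal is correct and coincides with the paper's proof in its central idea: the failure of a generic map $\tilde{a}\colon Z^X_{\centerdot}\to Z^Y_{\centerdot}$ to intertwine the Frobenius lifts is handled by passing to the product $Z^X_{\centerdot}\times_{W_{\centerdot}}Z^Y_{\centerdot}$ with the product Frobenius $f^X\times f^Y$ and using the second projection, which is strictly Frobenius-equivariant by construction. This is precisely what the paper does when it "replaces $U_{\centerdot}$ with $U_{\centerdot}\times_{W_{\centerdot}}V_{\centerdot}$." The organizational difference is that the paper introduces an intermediate category $\mathcal{A}_a$ of triples-plus-commuting-squares with forgetful functors $\pi_1,\pi_2$ to $\mathcal{A}_{X_{\centerdot}},\mathcal{A}_{Y_{\centerdot}}$, proves both functors cofinal, and then defines the map on colimits by a zigzag through the colimit over $\mathcal{A}_a$; for the composition law it further introduces $\mathcal{A}_{a,b}$. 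You instead produce, for each $Y$-triple, a particular $X$-triple and a map of complexes, and invoke filteredness for independence of choices — this is the same mathematical content but expressed less systematically, and the paper's framing makes the colimit bookkeeping and the commutativity of \eqref{eq:functoriality-infinitesimalMotivComp} more transparent. Two minor remarks: your auxiliary lift $\tilde{a}\colon Z^X_{\centerdot}\to Z^Y_{\centerdot}$ by formal smoothness is unnecessary — after shrinking $X'_{\centerdot}$ so that $a(X'_{\centerdot})\subset Y'_{\centerdot}$, the embedding of $X'_{\centerdot}$ into the product can be taken to be $(i^X, i^Y\circ a|_{X'})$ directly; and the phrase "independence of auxiliary choices follows from filteredness" is doing nontrivial work that the paper discharges via the cofinality claims, so it deserves a sentence of justification rather than an appeal to a general principle.
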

\begin{proof}
For $X_{\centerdot}\in \mathrm{Sm}_{W_{\centerdot}}$, let $\mathcal{A}_{X_{\centerdot}}$ be the category whose  objects the triples $(X'_{\centerdot},Z_{\centerdot},f)$ as above, and morphisms are the refinements, namely, the commutative diagrams
\begin{gather*}
\xymatrix{
  X_{\centerdot} \ar@{=}[d] & X'_{\centerdot} \ar[l] \ar@{^{(}->}[r] \ar[d] & Z_{\centerdot} \ar[d] \\
  X_{\centerdot} & X''_{\centerdot} \ar[l] \ar@{^{(}->}[r] & Z'_{\centerdot} 
}
\end{gather*}
satisfying that $Z_{\centerdot} \rightarrow Z'_{\centerdot}$ commutes with the Frobenius liftings. 
For a morphism $a:X_{\centerdot} \rightarrow Y_{\centerdot}$  in $\mathrm{Sm}_{W_{\centerdot}}$,  $\mathcal{A}_{a}$ be the category with objects the tuples $\big((X'_{\centerdot},U_{\centerdot},f_1),(Y'_{\centerdot},V_{\centerdot},f_2),\square\big)$, where $(X'_{\centerdot},U_{\centerdot},f_1)\in \mathcal{A}_{X_{\centerdot}}$, $(Y'_{\centerdot},V_{\centerdot},f_2)\in \mathcal{A}_{Y_{\centerdot}}$, and $\square$ is a commutative diagram
\begin{equation}\label{eq:morphism-frobenius-liftings}
  \begin{gathered}
\xymatrix{
  X_{\centerdot} \ar[d]_{a} & X'_{\centerdot} \ar[l] \ar@{^{(}->}[r] \ar[d] & U_{\centerdot} \ar[d] \\
  Y_{\centerdot} & Y'_{\centerdot} \ar[l] \ar@{^{(}->}[r] & V_{\centerdot} 
}
\end{gathered}
\end{equation}
satisfying that $U_{\centerdot} \rightarrow V_{\centerdot}$ commutes with the Frobenius liftings $f_1$ and $f_2$. The morphisms of $\mathcal{A}_a$ is the refinements in the obvious sense. There are forgetful functors 
$\pi_1:\mathcal{A}_a \rightarrow \mathcal{A}_{X_{\centerdot}}$ and $\pi_2:\mathcal{A}_a \rightarrow \mathcal{A}_{Y_{\centerdot}}$. Both $\pi_1$ and $\pi_2$ are cofinal. In fact, for any object $(X'_{\centerdot},U_{\centerdot},f_1)\in \mathcal{A}_{X_{\centerdot}}$ and $(Y'_{\centerdot},V_{\centerdot},f_2)\in \mathcal{A}_{Y_{\centerdot}}$, by shrinking $(X'_{\centerdot},U_{\centerdot},f_1)\in \mathcal{A}_{X_{\centerdot}}$ and then replacing $U_{\centerdot}$ with $U_{\centerdot}\times_{W_{\centerdot}}V_{\centerdot}$, we can find a commutative diagram as \eqref{eq:morphism-frobenius-liftings}; the remaining cofinality condition is shown similarly.

Every square \eqref{eq:morphism-frobenius-liftings} induces a morphism $\square^*:a^{-1}\mathbb{Z}_{Y_n}(r)^{\circ}_{Y'_{\centerdot},V_{\centerdot},f_2} \rightarrow \mathbb{Z}_{X_n}(r)^{\circ}_{X'_{\centerdot},U_{\centerdot},f_1}$.
 Then we define the morphism $a^{-1}\mathbb{Z}_{Y_n}(r)^{\circ} \rightarrow \mathbb{Z}_{X_n}(r)^{\circ}$ induced by $a$ to be the composition
\begin{gather*}
\xymatrix{
\varinjlim\limits_{\beta\in \mathcal{A}_{Y_{\centerdot}}}a^{-1}\mathbb{Z}_{Y_n}(r)^{\circ}_{\beta} &
\varinjlim\limits_{(\alpha,\beta,\square)\in \mathcal{A}_{a}}a^{-1}\mathbb{Z}_{Y_n}(r)^{\circ}_{\beta}  \ar[l]_{\cong} \ar[r]^<<<<{\square^*} & \varinjlim\limits_{(\alpha,\beta,\square)\in \mathcal{A}_{a}}\mathbb{Z}_{X_n}(r)^{\circ}_{\alpha} \ar[r]^{\cong} &
\varinjlim\limits_{\alpha\in \mathcal{A}_{X_{\centerdot}}}\mathbb{Z}_{X_n}(r)^{\circ}_{\alpha}\quad.
}
\end{gather*}
For $X_{\centerdot} \xrightarrow{a} Y_{\centerdot} \xrightarrow{b}Z_{\centerdot}$, introducing a category $\mathcal{A}_{a,b}$ with objects the commutative diagram
 \begin{gather*}
\xymatrix{
  X_{\centerdot} \ar[d]_{a} & X'_{\centerdot} \ar[l] \ar@{^{(}->}[r] \ar[d] & U_{\centerdot} \ar[d] \\
  Y_{\centerdot} \ar[d] & Y'_{\centerdot} \ar[l] \ar[d] \ar@{^{(}->}[r] & V_{\centerdot} \ar[d]  \\
  Z_{\centerdot} & Z'_{\centerdot} \ar[l] \ar@{^{(}->}[r] & T_{\centerdot}  
}
\end{gather*}
with $(X'_{\centerdot},U_{\centerdot},f_1)\in \mathcal{A}_{X_{\centerdot}}$, $(Y'_{\centerdot},V_{\centerdot},f_2)\in \mathcal{A}_{Y_{\centerdot}}$, and $(Z'_{\centerdot},V_{\centerdot},f_3)\in \mathcal{A}_{Z_{\centerdot}}$, by similar arguments one can show the commutativity of \eqref{eq:functoriality-infinitesimalMotivComp}. This completes the proof of the functoriality of $\mathbb{Z}_{X_{n}}(r)^{\circ}$, and that of $\mathbb{Z}_{X_{n}}(r)$ follows.
\end{proof}

\begin{remark}\label{rem:define-infinitesimalMotivicComp-Xnr}
Let $X_{nr}$ be a smooth scheme over $W_{nr}$.
By \cite[Théorème 6]{Elk73}, and also \cite[Théorème 1.3.1]{Ara01}, any affine open of $X_{nr}$  lifts to a smooth affine scheme over $W$. Then by the above argument, one might define $\mathbb{Z}_{X_{n}}(r)^{\circ}$ and $\mathbb{Z}_{X_{n}}(r)$ which are functorial in $X_{nr}$.
\end{remark}

% subsection infinitesimal_motivic_complexes_ (end)

\subsection{Syntomic pro-complexes \texorpdfstring{$\mathfrak{S}_{X_{\centerdot}}(r,n)$}{SX(r,n)}} % (fold)
\label{sub:infinitesimal_syntomic_complexes}
% subsection infinitesimal_syntomic_complexes (end)
To study the products in $\bigoplus_r \mathbb{Z}_{X_n}(r)$ and do explicit computations in the subsequent sections, we need to introduce an analog of the syntomic pro-complex in \cite[\S 4]{BEK14}.
\begin{definition}
Let $r$ be an integer such that $0\leq r<p$.
Assume the situation of Definition \ref{def:BEK-proComplexes}(iii). Let $\mathcal{J}_n=(J_n,p^n)$, $m\geq n$, and let $J(r,n)\Omega_{D_{m}}^{\bullet}\in \mathrm{C}(X_1)_{\et/\mathrm{Nis}}$ be the complex 
\begin{gather*}
\mathcal{J}_n^r\rightarrow  \mathcal{J}_n^{r-1}\otimes_{\mathcal{O}_{Z_m}}\Omega_{Z_m/W_m}^1\rightarrow \cdots\rightarrow 
\mathcal{J}_n\otimes_{\mathcal{O}_{Z_m}}\Omega_{Z_m/W_m}^{r-1}\rightarrow \mathcal{O}_{D_m}\otimes_{\mathcal{O}_{Z_m}}\Omega_{Z_m/W_m}^r\rightarrow \cdots
\end{gather*}
By \cite[Theorem 7.2]{BeO78},   in $\mathrm{D}(X_1)$ $J(r,n)\Omega_{D_{m}}^{\bullet}$ is independent of the choice of $Z_m$.
By \cite[Chap.~I  \S1]{Kat87} (see also \cite[\S4]{BEK14}), there is a morphism\footnote{A local explicit expression of $f_r$ is given in \eqref{eq:formula-fr}.} $f_r:I(r)\Omega_{D_{\centerdot}}^{\bullet} \rightarrow \Omega_{D_{\centerdot}}^{\bullet}$.
Denote the composition of maps of pro-complexes
\begin{gather*}
J(r,n)\Omega_{D_{\centerdot}}^{\bullet}\rightarrow 
I(r)\Omega_{D_{\centerdot}}^{\bullet}\xrightarrow{1-f_r}\Omega_{D_{\centerdot}}^{\bullet}
\end{gather*}
still by $1-f_r$, and define 
\begin{gather*}
\mathfrak{S}_{X_{\centerdot}}(r,n)_{\et}=\mathrm{Cone}\big( J(r,n)\Omega_{D_{\centerdot}}^{\bullet}\xrightarrow{1-f_r}\Omega_{D_{\centerdot}}^{\bullet}\big)[-1]\ \in\mathrm{C}_{\mathrm{pro}}(X_1)_{\et}.
\end{gather*}
Let $\epsilon: X_{1,\et}\rightarrow X_{1,\mathrm{Nis}}$ be the morphism of sites. Define
\begin{gather}\label{eq:infinitesimal-syntomicComplex-Nis}
\mathfrak{S}_{X_{\centerdot}}(r,n)=\tau^{\leq r}\circ \mathbf{R} \epsilon_{*}\mathfrak{S}_{X_{\centerdot}}(r,n)_{\et}\ 
\in\mathrm{D}_{\mathrm{pro}}(X_1).
\end{gather}
\end{definition}

\begin{lemma}\label{lem:quasiIso-J(r,n)->I(r)-to-p(r)}
There is a canonical quasi-isomorphism of pro-complexes
\begin{gather}\label{eq:quasiIso-J(r,n)->I(r)-to-p(r)}
\mathrm{Cone}\big(J(r,n)\Omega_{D_{\centerdot}}^{\bullet}\rightarrow I(r)\Omega_{D_{\centerdot}}^{\bullet}\big)\xrightarrow{\sim} p(r)\Omega_{X_{n}}^{\bullet}\quad.
\end{gather}
\end{lemma}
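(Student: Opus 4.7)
The plan is to exhibit the left-hand cone as one entry of a commutative square whose outer columns are Berthelot--Ogus-type quasi-isomorphisms, and then take horizontal cones.

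Concretely, I would first set up the commutative square of pro-complexes on $X_1$
\[
\xymatrix{
J(r,n)\Omega_{D_{\centerdot}}^{\bullet} \ar[r] \ar[d]_{f_1} & I(r)\Omega_{D_{\centerdot}}^{\bullet} \ar[d]^{f_2}  \\
p^{r,n}\Omega_{X_{\centerdot}}^{\bullet} \ar[r] & p(r)\Omega_{X_{\centerdot}}^{\bullet}
}
\]
in which the horizontal arrows are the evident inclusions ($\mathcal{J}_n^{r-i}\subset I_{\centerdot}^{r-i}$ resp.\ $p^{n(r-i)}\mathcal{O}_{X_{\centerdot}}\subset p^{r-i}\mathcal{O}_{X_{\centerdot}}$), $f_2$ is the canonical quasi-isomorphism of Berthelot--Ogus \cite[Theorem 7.2]{BeO78} invoked in Definition \ref{def:infinitesimalMotComp}, and $f_1$ is defined by the same recipe: in degree $0\le i\le r-1$, send $\mathcal{J}_n^{r-i}\otimes_{\mathcal{O}_{Z_{\centerdot}}}\Omega^i_{Z_{\centerdot}/W_{\centerdot}}$ to $p^{n(r-i)}\Omega^i_{X_{\centerdot}/W_{\centerdot}}$ using the projection $\mathcal{J}_n=(J_{\centerdot},p^n)\twoheadrightarrow p^n\mathcal{O}_{X_{\centerdot}}$ (killing $J_{\centerdot}$) together with the surjection $\Omega^i_{Z_{\centerdot}}\otimes\mathcal{O}_{X_{\centerdot}}\twoheadrightarrow \Omega^i_{X_{\centerdot}}$; in degree $i\ge r$ apply the same map as $f_2$. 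A Leibniz-rule check confirms that $f_1$ commutes with the de~Rham differential and that the square commutes.

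The key step is to verify that $f_1$ is a quasi-isomorphism of pro-complexes. The essential input is that $\mathcal{J}_n=(J_{\centerdot},p^n)$ is a sub-PD-ideal of the canonical PD-ideal $I_{\centerdot}=(J_{\centerdot},p)\subset \mathcal{O}_{D_{\centerdot}}$. This follows from the estimate
\[
v_p\bigl(p^{nk}/k!\bigr) \;=\; nk - v_p(k!) \;\ge\; nk - \tfrac{k-1}{p-1} \;\ge\; 0,
\qquad k\ge 1,\ n\ge 1,\ p\ge 2,
\]
which shows $(p^n)^{[k]}\in p^n\mathcal{O}_{D_{\centerdot}}\subset \mathcal{J}_n$. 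Granted this, the filtered PD Poincar\'e-lemma argument used to establish $f_2$ applies verbatim with $\mathcal{J}_n^{\bullet}$ in place of $I_{\centerdot}^{\bullet}$ and $p^n$ in place of $p$: \'etale-locally the conormal bundle is trivialized and one reduces to an explicit computation in the PD-polynomial ring showing that $\mathcal{J}_n^{r-\bullet}\otimes\Omega^{\bullet}_{Z_{\centerdot}}$, with its PD-differential, is a pro-resolution of $p^{n(r-\bullet)}\Omega^{\bullet}_{X_{\centerdot}}$.

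Taking horizontal cones in the square then finishes the proof: the bottom cone is identified with $p^{r,1}_{r,n}\Omega^{\bullet}_{X_{\centerdot}}=p(r)\Omega^{\bullet}_{X_n}$ via the elementary chain of quasi-isomorphisms \eqref{eq:quasi-iso-prML}, the top cone is the one in the statement, and the five lemma for exact triangles produces the desired canonical quasi-isomorphism. The main obstacle is the middle step, i.e.\ verifying that $f_1$ is a pro-quasi-isomorphism: although morally a direct transcription of the Berthelot--Ogus argument with $p^n$ in place of $p$, some care is needed in tracking the transition maps along $m\to\infty$ (no level-wise quasi-isomorphism is expected) and in handling the divided-power combinatorics of the sub-PD-ideal $\mathcal{J}_n$, which is generated by $J_{\centerdot}$ together with $p^n$ rather than by $J_{\centerdot}$ and $p$.
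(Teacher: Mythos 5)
Your approach is genuinely different from the paper's, though both ultimately rest on Berthelot--Ogus independence of the choice of embedding. The paper's proof is much shorter: it observes (citing \cite[Theorem~7.2]{BeO78}) that both $J(r,n)\Omega_{D_m}^{\bullet}$ and $I(r)\Omega_{D_m}^{\bullet}$ are, in $\mathrm{D}(X_1)$, independent of the choice of $Z_m$; since the cone in question has nothing to do with the Frobenius lifting, one is free to take $Z_m = X_m$, whereupon $D_m = X_m$, $J_m = 0$, and the two complexes become the explicit $p^{r,n}\Omega^{\bullet}_{X_m}$ and $p^{r,1}\Omega^{\bullet}_{X_m}$; the cone is then identified level-wise with $p(r)\Omega^{\bullet}_{X_n}$ by the elementary short exact sequences $0 \to p^{(r-i)n}\Omega^i_{X_m} \to p^{r-i}\Omega^i_{X_m} \to p^{r-i}\Omega^i_{X_{(r-i)n}} \to 0$, valid once $m \geq nr$. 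You instead keep the given $D_{\centerdot}$ and build vertical comparison maps $f_1, f_2$ to the explicit complexes, the harder one ($f_1$) requiring an extension of the filtered PD Poincar\'e lemma to the filtration by powers of $\mathcal{J}_n = (J_{\centerdot}, p^n)$. Morally the two proofs run on the same fuel --- both amount to the independence of the $\mathcal{J}_n$-filtered de Rham complex on the embedding --- but your version makes explicit the verification that $\mathcal{J}_n$ is a PD-subideal and that the Poincar\'e lemma still applies, a point that the paper's bare citation of \cite[Theorem~7.2]{BeO78} for $J(r,n)\Omega$ glosses over; the paper's move of setting $Z_m = X_m$ elegantly sidesteps spelling out that variant at the cost of some opacity about why the citation applies to the $p^n$-filtration. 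Your approach is thus longer but more self-contained.

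One small imprecision worth fixing: the displayed chain
\[
v_p\bigl(p^{nk}/k!\bigr) = nk - v_p(k!) \;\geq\; nk - \tfrac{k-1}{p-1} \;\geq\; 0
\]
only proves that $(p^n)^{[k]}$ is $p$-integral, whereas what you actually use --- that $(p^n)^{[k]} \in p^n\mathcal{O}_{D_{\centerdot}}$ --- needs the stronger bound $\geq n$. That bound does hold: the case $k=1$ is immediate, and for $k \geq 2$ one has $nk - \tfrac{k-1}{p-1} - n = (k-1)\bigl(n - \tfrac{1}{p-1}\bigr) \geq 0$ since $n \geq 1$ and $p \geq 2$. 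So the conclusion is correct, but the displayed inequality establishes less than you claim.
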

\begin{proof}
Let  $I(r)\Omega_{D_{m}}^{\bullet}$
%\in \mathrm{C}(X_1)_{\et/\mathrm{Nis}}$ 
be the complex 
\begin{gather*}
I_{m}^r\rightarrow I_{m}^{(r-1)}\otimes_{\mathcal{O}_{Z_{m}}}\Omega_{Z_{m}}^1\rightarrow \cdots\rightarrow 
I_{m}\otimes_{\mathcal{O}_{Z_{m}}}\Omega_{Z_{m}}^{r-1}\rightarrow \mathcal{O}_{D_{m}}\otimes_{\mathcal{O}_{Z_{m}}}\Omega_{Z_{m}}^r\rightarrow \cdots
\end{gather*}
By \cite[Theorem 7.2]{BeO78},   in $\mathrm{D}(X_1)$, both $J(r,n)\Omega_{D_{m}}^{\bullet}$ and $I(r)\Omega_{D_{m}}^{\bullet}$ are independent of the choice of $Z_m$. 
It suffices to show that when $m\geq nr$, there is a canonical quasi-isomorphism
\begin{gather*}%\label{eq:quasiIso-J(r,n)->I(r)-to-p(r)-1}
\mathrm{Cone}\big(J(r,n)\Omega_{X_{m}}^{\bullet}\rightarrow I(r)\Omega_{X_{m}}^{\bullet}\big)\xrightarrow{\sim} p(r)\Omega_{X_{n}}^{\bullet}\quad.
\end{gather*}
Since this has nothing to do with the Frobenius liftings, we can take $Z_{m}=X_m$. Then $J(r,n)\Omega_{D_{m}}^{\bullet}=p^{r,n}\Omega_{X_m}^{\bullet}$ and $I(r)\Omega_{D_{m}}^{\bullet}=p^{r,1}\Omega_{X_m}^{\bullet}$.
When $m\geq nr$, we have the short exact sequence
\begin{gather*}
0 \rightarrow p^{(r-i)n}\Omega_{Z_{m}}^i \rightarrow p^{r-i}\Omega_{Z_{m}}^i 
\rightarrow p^{r-i}\Omega_{Z_{(r-i)n}}^i \rightarrow 0.
\end{gather*}
Hence the conclusion follows.
\end{proof}

\begin{proposition}%[Fundamental triangle]
There is an exact triangle
\begin{gather}\label{eq:fundamentalTriangle-et}
p(r)\Omega^{\bullet}_{X_{n}}[-1]\rightarrow \mathfrak{S}_{X_{\centerdot}}(r,n)_{\et}\xrightarrow{\Phi^{\mathcal{J}}} W_{\centerdot}\Omega^{r}_{X_1,\log,\et}[-r]\xrightarrow{[1]}\cdots
\end{gather}
in $\mathrm{D}_{\mathrm{pro}}(X_1)_{\et}$, and an exact triangle
\begin{gather}\label{eq:fundamentalTriangle-Nis}
p(r)\Omega^{\bullet}_{X_{n}}[-1]\rightarrow \mathfrak{S}_{X_{\centerdot}}(r,n)\xrightarrow{\Phi^{\mathcal{J}}} W_{\centerdot}\Omega^{r}_{X_1,\log,\mathrm{Nis}}[-r]\xrightarrow{[1]}\cdots
\end{gather}
in $\mathrm{D}_{\mathrm{pro}}(X_1)$. In particular, the support of $\mathfrak{S}_{X_{\centerdot}}(r,n)$ lies in degrees $[1,r]$ for $r\geq 1$.
\end{proposition}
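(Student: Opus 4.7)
My plan is to obtain the étale triangle \eqref{eq:fundamentalTriangle-et} via the octahedral axiom applied to the factorization
\begin{gather*}
J(r,n)\Omega^{\bullet}_{D_{\centerdot}}\hookrightarrow I(r)\Omega^{\bullet}_{D_{\centerdot}}\xrightarrow{1-f_r}\Omega^{\bullet}_{D_{\centerdot}},
\end{gather*}
and then to obtain the Nisnevich triangle \eqref{eq:fundamentalTriangle-Nis} by applying $\tau^{\leq r}\circ \mathbf{R}\epsilon_*$. The support statement will fall out of the triangle for free.

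\textbf{Step 1 (étale triangle).} Set $u: J(r,n)\Omega^{\bullet}_{D_{\centerdot}}\hookrightarrow I(r)\Omega^{\bullet}_{D_{\centerdot}}$ and $v = 1-f_r : I(r)\Omega^{\bullet}_{D_{\centerdot}}\to \Omega^{\bullet}_{D_{\centerdot}}$, so that $\mathfrak{S}_{X_\centerdot}(r,n)_\et = \mathrm{Cone}(v\circ u)[-1]$. By Lemma~\ref{lem:quasiIso-J(r,n)->I(r)-to-p(r)}, $\mathrm{Cone}(u)\simeq p(r)\Omega^{\bullet}_{X_n}$. On the other hand, by Kato's syntomic description \cite[Chap.~I \S1]{Kat87} reviewed in \cite[\S4]{BEK14}, the cone of $1-f_r$ satisfies $\mathrm{Cone}(v)[-1]\simeq W_\centerdot\Omega^r_{X_1,\log,\et}[-r]$, i.e. $\mathrm{Cone}(v)\simeq W_\centerdot\Omega^r_{X_1,\log,\et}[-r+1]$; this encodes the classical fact that $1-F/p^r$ on the de Rham--Witt complex realises the Artin--Schreier obstruction to being in the logarithmic part. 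The octahedral axiom applied to $v\circ u$ then yields an exact triangle
\begin{gather*}
\mathrm{Cone}(u)\to \mathrm{Cone}(v\circ u)\to \mathrm{Cone}(v)\xrightarrow{[1]},
\end{gather*}
and shifting by $[-1]$ produces precisely \eqref{eq:fundamentalTriangle-et}, with the map $\Phi^{\mathcal{J}}$ identified as the composite $\mathfrak{S}_{X_{\centerdot}}(r,n)_\et\to \mathrm{Cone}(v)[-1]\simeq W_\centerdot\Omega^r_{X_1,\log,\et}[-r]$.

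\textbf{Step 2 (Nisnevich triangle).} I apply $\tau^{\leq r}\circ\mathbf{R}\epsilon_*$ to \eqref{eq:fundamentalTriangle-et}. Since $\mathbf{R}\epsilon_*$ is triangulated, it preserves the triangle, so I need only identify the two outer terms after the canonical truncation. The complex $p(r)\Omega^{\bullet}_{X_n}[-1]$ has components that are coherent sheaves on $X_n$; by the comparison of étale and Nisnevich cohomology for (quasi-)coherent sheaves, $\mathbf{R}\epsilon_*$ acts as the identity on it, and its cohomology is supported in degrees $[1,r]$, hence unaffected by $\tau^{\leq r}$. For the other outer term, by the Bloch--Kato--Gabber theorem (cf.~the formation of the Nisnevich motivic pro-complex in \cite[\S7]{BEK14}), $\tau^{\leq r}\mathbf{R}\epsilon_*\big(W_\centerdot\Omega^r_{X_1,\log,\et}[-r]\big)\simeq W_\centerdot\Omega^r_{X_1,\log,\mathrm{Nis}}[-r]$: the higher direct images vanish in the truncation range because $W_\centerdot\Omega^r_{X_1,\log,\et}$ is concentrated in Nisnevich cohomological degree zero up to degree $r$. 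Combining these, $\tau^{\leq r}\circ\mathbf{R}\epsilon_*$ applied to \eqref{eq:fundamentalTriangle-et} produces \eqref{eq:fundamentalTriangle-Nis}, with $\mathfrak{S}_{X_\centerdot}(r,n)$ as middle term by the very definition \eqref{eq:infinitesimal-syntomicComplex-Nis}.

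\textbf{Step 3 (support).} The leftmost term $p(r)\Omega^{\bullet}_{X_n}[-1]$ has cohomology in degrees $1,\dots,r$, and the rightmost term $W_\centerdot\Omega^r_{X_1,\log,\mathrm{Nis}}[-r]$ is concentrated in degree $r$. The long exact sequence of \eqref{eq:fundamentalTriangle-Nis} then forces the cohomology of $\mathfrak{S}_{X_\centerdot}(r,n)$ to lie in degrees $[1,r]$ for $r\geq 1$.

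\textbf{Expected main obstacle.} The one non-formal input is the identification $\mathrm{Cone}(1-f_r)\simeq W_\centerdot\Omega^r_{X_1,\log,\et}[-r+1]$; one must check carefully that the map $\Phi(F)$ introduced in \eqref{eq:infinitesimalMotComp-1} is compatible, modulo $J(r,n)$, with $f_r$ and with the \textit{dlog} embedding into $q(r)W_\centerdot\Omega^{\bullet}_{X_1}$, so that the connecting map $\Phi^{\mathcal{J}}$ is the ``right" one and the Nisnevich triangle is compatible (as needed in the sequel) with the motivic triangle \eqref{eq:motivicFundamentalTriangle}. Also, the Nisnevich identification of $\mathbf{R}\epsilon_*$ on $W_\centerdot\Omega^r_{X_1,\log,\et}$ in the range $\le r$ is where the hypothesis $r<p$ (and Kato's Remark~1 on p.~224 of \cite{Kat82} cited in \S\ref{sub:further_problems}) is used.
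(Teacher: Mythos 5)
Your proof is correct and takes essentially the same route as the paper: both rely on Lemma \ref{lem:quasiIso-J(r,n)->I(r)-to-p(r)} for the left cone, on \cite[Prop.~5.3]{BEK14} (equivalently Kato's syntomic computation) for the identification $\mathrm{Cone}(1-f_r)[-1]\simeq W_\centerdot\Omega^r_{X_1,\log,\et}[-r]$, and the octahedral axiom applied to the factorization $J(r,n)\Omega^\bullet_{D_\centerdot}\to I(r)\Omega^\bullet_{D_\centerdot}\xrightarrow{1-f_r}\Omega^\bullet_{D_\centerdot}$ — the paper phrases this as taking cones in a commutative square, which is the same dévissage. For Step~2, the paper simply cites $\epsilon_*W_\centerdot\Omega^r_{X_1,\log,\et}\cong W_\centerdot\Omega^r_{X_1,\log,\mathrm{Nis}}$ (Kato, \cite[Remark~1, p.~224]{Kat82}) rather than Bloch--Kato--Gabber, and neither you nor the paper spells out why $\tau^{\leq r}$ preserves the triangle; the reason (worth knowing) is that $\mathbf{R}\epsilon_*\big(p(r)\Omega^\bullet_{X_n}[-1]\big)$ lives in degrees $\le r$ and $\mathbf{R}\epsilon_*\big(W_\centerdot\Omega^r_{X_1,\log,\et}[-r]\big)$ lives in degrees $\ge r$, so the long exact sequence of cohomology sheaves degenerates into a short exact sequence at degree $r$ and isomorphisms elsewhere, from which the truncated sequence is again distinguished.
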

%The proof is the same as that of  \cite[Theorem 5.4]{BEK14}. 
\begin{proof}
Consider the commutative diagram
\begin{gather*}
\xymatrix{
  J(r,n)\Omega_{X_{\centerdot}}^{\bullet} \ar[r]^<<<<{1-f_r} \ar[d] & \Omega_{D_{\centerdot}}^{\bullet} \ar@{=}[d] & \\
  I(r)\Omega_{D_{\centerdot}}^{\bullet} \ar[r]^{1-f_r} & \Omega_{D_{\centerdot}}^{\bullet} &.
}
\end{gather*}
By Lemma \ref{lem:quasiIso-J(r,n)->I(r)-to-p(r)} we have $\mathrm{Cone}\big(J(r,n)\Omega_{D_{\centerdot}}^{\bullet}\rightarrow I(r)\Omega_{D_{\centerdot}}^{\bullet}\big)\cong p(r)\Omega^{\bullet}_{X_{n}}$
in $\mathrm{D}_{\mathrm{pro}}(X_1)_{\et/\mathrm{Nis}}$. 
By \cite[Prop.~5.3]{BEK14}, $\mathrm{Cone}\big( I(r)\Omega_{D_{\centerdot}}^{\bullet} \xrightarrow{1-f_r}  \Omega_{D_{\centerdot}}^{\bullet}\big)[-1]$  in the étale topology is isomorphic to $\Omega^{r}_{X_1,\log}[-r]$ in $\mathrm{D}_{\mathrm{pro}}(X_1)_{\et}$. So the assertion in $\mathrm{D}_{\mathrm{pro}}(X_1)_{\et}$ follows. Applying $\tau^{\leq r}\circ \mathbf{R} \epsilon_{*}$ to \eqref{eq:fundamentalTriangle-et} and using
\begin{gather*}
\epsilon_{*}W_{\centerdot}\Omega^{r}_{X_1,\log,\et}\cong W_{\centerdot}\Omega^{r}_{X_1,\log,\mathrm{Nis}}
\end{gather*}
%=\cite[Prop. 2.4]{BEK14})
 (\cite[Remark 1 in Page 224]{Kat82}), we obtain \eqref{eq:fundamentalTriangle-Nis}.
\end{proof}
Recall from \cite[\S4]{BEK14}
\begin{align*}
\mathfrak{S}_{X_{\centerdot}}(r)_{\et}&:=\mathrm{Cone}\big(J(r)\Omega_{D_{\centerdot}}^{\bullet}\xrightarrow{1-f_r}\Omega_{D_{\centerdot}}^{\bullet}\big)[-1]\ \in\mathrm{C}_{\mathrm{pro}}(X_1)_{\et},\\
\mathfrak{S}_{X_{\centerdot}}(r)&:=\tau^{\leq r}\circ \mathbf{R} \epsilon_{*}\mathfrak{S}_{X_{\centerdot}}(r)_{\et}\ 
\in\mathrm{D}_{\mathrm{pro}}(X_1),
\end{align*}
and an exact triangle (\cite[Theorem 5.4]{BEK14})
\begin{gather}\label{eq:fundamentalTriangle-S(r)-Nis}
p(r)\Omega^{<r}_{X_{\centerdot}}[-1]\rightarrow \mathfrak{S}_{X_{\centerdot}}(r)\xrightarrow{\Phi^{J}} W_{\centerdot}\Omega^{r}_{X_1,\log,\mathrm{Nis}}[-r]\xrightarrow{[1]}\cdots
\end{gather}
There is a map from \eqref{eq:fundamentalTriangle-S(r)-Nis} to \eqref{eq:fundamentalTriangle-Nis}, from which we get an epimorphism
\begin{gather*}%\label{eq:epi-HrS(r)->HrS(r,n)}
\mathcal{H}^r(\mathfrak{S}_{X_{\centerdot}}(r)) \twoheadrightarrow\mathcal{H}^r(\mathfrak{S}_{X_{\centerdot}}(r,n)).
\end{gather*}

\subsection{Products in infinitesimal complexes} % (fold)
\label{sub:the_product_structure}
We recall the following construction from \cite[Chap.~I \S 2]{Kat87}.
\begin{definition}\label{def:Kato-product-ringComplexes}
Let $A$ and $B$ be ring complexes (Definition \ref{def:Kato's-product-structure}) in a topos $T$, and $g,h:A\rightarrow B$ be two ring maps of ring complexes. Let $S=\mathrm{Cone}(A\xrightarrow{h-g}B)[-1]$. Thus $S^q=A^q\oplus B^{q-1}$, and $\mathrm{d}:S^q\rightarrow S^{q+1}$ is given by 
\begin{gather}\label{eq:Kato-differential}
\mathrm{d}(x,y)=\big(\mathrm{d}x,g(x)-h(x)- \mathrm{d}y\big).
\end{gather}
We define the product $S\otimes_{\mathbb{Z}} S\rightarrow S$ by 
\begin{gather}\label{eq:Kato-product}
(x,y)(x',y')=\big(xx',(-1)^q g(x)y'+yh(x')\big),
\end{gather}
for $(x,y)\in S^q$, $(x',y')\in S^{q'}$.
 This defines a ring complex structure on $S$ with $1=(1_A,0)\in S^0$, and the obvious homomorphism $S \rightarrow A$ is a ring map of ring complexes.
 %Moreover, if the products on $A$ and $B$ are graded commutative, so is the product on $S$.
\end{definition}
\begin{remark}\label{rem:Kato-BEK-sign}
Kato called the complex $S$ (ignoring the product) the \emph{mapping fiber of} $g-h$. But as we understand the mapping fiber as the shifted cone $\mathrm{Cone}(\dots)[-1]$,  $S$ is the mapping fiber of $h-g$. Thus the syntomic complex defined in \cite{Kat87} coincides with that in \cite{BEK14}. Thes consistent choices of sign conventions turn out to be crucial in the computations in \S\ref{sub:the_mod_p_product_structure}.
\end{remark}
The following lemma slightly enhances \cite[Chap.~I, Lemma 2.3]{Kat87}.
\begin{lemma}\label{lem:ringComplex-product-gradedCommutative}
Let $A$, $B$, and $S$ be as in Definition \ref{def:Kato-product-ringComplexes}.
Assume the following:
\begin{enumerate}[(i)]
  \item The product on $B$ is graded commutative. Namely, for $y\in B^q$ and $y'\in B^{q'}$, we have $yy'=(-1)^{qq'}y'y$.
  \item For   $x\in A^q$ and $x'\in A^{q'}$, there exists $z\in A^{q+q'-1}$ such that  have $xx-(-1)^{qq'}x'x=\mathrm{d} z$ and $(g-h)z=0$.
\end{enumerate}
 Then the induced product on $\mathcal{H}^*(S)$ is  graded commutative.
\end{lemma}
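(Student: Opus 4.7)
The plan is to verify graded commutativity at the cochain level for cocycles, by exhibiting an explicit primitive for the graded commutator using hypotheses (i) and (ii). Let $(x,y)\in S^q$ and $(x',y')\in S^{q'}$ be cocycles, so $\mathrm{d}x=0$, $\mathrm{d}x'=0$, $\mathrm{d}y = g(x)-h(x)$, and $\mathrm{d}y' = g(x')-h(x')$. I will compute the graded commutator
\begin{gather*}
(x,y)(x',y') - (-1)^{qq'}(x',y')(x,y)
\end{gather*}
using the formula \eqref{eq:Kato-product}, and find $(u,v)\in S^{q+q'-1}$ with $\mathrm{d}(u,v)$ equal to it.

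First I would treat the $A$-component. By formula \eqref{eq:Kato-product} it equals $xx' - (-1)^{qq'}x'x$, and hypothesis (ii) supplies an element $z\in A^{q+q'-1}$ with $\mathrm{d}z = xx'-(-1)^{qq'}x'x$ and $(g-h)z = 0$; I take $u=z$. Then the first component of $\mathrm{d}(u,v)$ matches, and the condition $(g-h)z = 0$ ensures that the $g(u)-h(u)$ term in \eqref{eq:Kato-differential} vanishes, so it remains only to find $v\in B^{q+q'-2}$ with $-\mathrm{d}v$ equal to the $B$-component of the commutator.

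Next, I would simplify the $B$-component
\begin{gather*}
(-1)^q g(x)y' + y h(x') - (-1)^{qq'}\bigl[(-1)^{q'}g(x')y + y'h(x)\bigr]
\end{gather*}
using graded commutativity of $B$ (hypothesis (i)) to move $g(x')y$ and $y'h(x)$ past each other; a direct sign count yields
\begin{gather*}
(-1)^{qq'+q'}g(x')y = y\,g(x'), \qquad (-1)^{qq'}y'h(x) = (-1)^q h(x)\,y',
\end{gather*}
so the $B$-component collapses to $(-1)^q\mathrm{d}y\cdot y' - y\cdot \mathrm{d}y'$ after invoking the cocycle relations. Setting $v=(-1)^{q+1}yy'$ and using the Leibniz rule $\mathrm{d}(yy') = \mathrm{d}y\cdot y' + (-1)^{q-1}y\,\mathrm{d}y'$, I check that $-\mathrm{d}v$ reproduces this expression.

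The only potentially tricky step is making all the Koszul signs agree; this is a purely bookkeeping matter once graded commutativity of $B$ and the cocycle relations have been applied. No higher-order obstruction arises because the bounding cochain $(z,(-1)^{q+1}yy')$ is given by an explicit formula that is visibly natural in $(x,y)$ and $(x',y')$.
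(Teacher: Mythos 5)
Your proposal is correct and follows essentially the same route as the paper: compute the graded commutator at the cochain level using \eqref{eq:Kato-product}, use hypothesis (ii) to handle the $A$-component, then use graded commutativity of $B$ together with the cocycle relations to reduce the $B$-component, and finally exhibit the resulting expression as $\mathrm{d}$ of an explicit cochain built from $z$ and a signed multiple of $yy'$. The sign bookkeeping you outline, including $(-1)^{qq'+q'}g(x')y = y\,g(x')$, $(-1)^{qq'}y'h(x)=(-1)^q h(x)y'$, and the Leibniz rule with $|y|=q-1$, all checks out.
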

\begin{proof}
Suppose $x\in A^q$, $y\in B^{q+1}$, $x'\in A^{q'}$, $y'\in B^{q'+1}$, such that  $(x,y)$ represents an element of $\mathcal{H}^q(S)$ and  $(x',y')$ represents an element of $\mathcal{H}^{q'}(S)$.
Since $d(x,y)=0$ and $d(x',y')=0$, we have 
\begin{gather*}
dx=0,\ dx'=0,\ g(x)-h(x)=dy,\ g(x')-h(y')=dy'.
\end{gather*}
Moreover, let $z\in A^{q+q'-1}$ be as in the assumption (ii).
Thus
\begin{align*}
&(-1)^{qq'}(x',y')(x,y)-(x,y)(x',y')\\
={}&(-1)^{qq'}\big(x'x,(-1)^{q'}g(x')y+y'h(x)\big)
-\big(xx',(-1)^q g(x)y'+yh(x')\big)\\
={}&(-1)^{qq'}\big(\mathrm{d}z,(-1)^{q'}(g(x')-h(x'))y+y'(h(x)-g(x))\big)\\
={}& (-1)^{qq'}\big(\mathrm{d}z,(-1)^{q'}(\mathrm{d}y')y-y'(\mathrm{d}y)\big)\\%=(-1)^{(q+1)q'}\big(0,d(y'y)\big)\\
={}&(-1)^{qq'}\mathrm{d}(z,0)+(-1)^{(q+1)q'+1}\mathrm{d}(0,y'y).
\end{align*}
\end{proof}

\begin{lemma}\label{lem:ringComplexes-truncations}
Let $A$ be a ring complex on a topos $T$. Assume that $A$ has a decomposition $A=\bigoplus_{r\in \mathbb{Z}}A(r)$ into complexes in $T$, which respects the product in $A$ in the sense that 
\begin{gather*}
A^i(r)\cdot A^j(s)\subset A^{i+j}(r+s)
\end{gather*}
for $r,s\in \mathbb{Z}$. Then $\bigoplus_{r\in \mathbb{Z}}\tau^{\leq r}A(r)$ has a natural structure of ring complex. Moreover, there is a ring map $\bigoplus_{r\in \mathbb{Z}}\tau^{\leq r}A(r) \rightarrow \bigoplus_{r\in \mathbb{Z}}\mathcal{H}^r(A)[-r]$.
\end{lemma}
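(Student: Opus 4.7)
The plan is to promote the product on $A$ to one on $\bigoplus_{r}\tau^{\leq r}A(r)$ by showing that the canonical truncations are closed under the given multiplication, and then to observe that projection onto the top cohomology of each summand assembles into a ring map. The central (and essentially only) nontrivial point is to verify that the product of a section of $\tau^{\leq r}A(r)$ with a section of $\tau^{\leq s}A(s)$ lies inside $\tau^{\leq r+s}A(r+s)$, which in turn relies on the Leibniz rule together with a short case analysis on degrees.

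For the first assertion, I will define the product as the composition
\begin{gather*}
\tau^{\leq r}A(r)\otimes_{\mathbb{Z}}\tau^{\leq s}A(s)\hookrightarrow A(r)\otimes_{\mathbb{Z}}A(s)\xrightarrow{\cdot} A(r+s),
\end{gather*}
and then check that the image lies in $\tau^{\leq r+s}A(r+s)$. Take local sections $x\in(\tau^{\leq r}A(r))^i$ and $y\in (\tau^{\leq s}A(s))^j$, and consider $xy\in A^{i+j}(r+s)$. If $i<r$ or $j<s$, then $i+j<r+s$, and $xy$ is automatically a section of $\tau^{\leq r+s}A(r+s)$. If $i=r$ and $j=s$, then by construction $\mathrm{d}x=0$ and $\mathrm{d}y=0$, and the Leibniz rule in the ring complex $A$ gives
\begin{gather*}
\mathrm{d}(xy)=(\mathrm{d}x)\cdot y+(-1)^r x\cdot(\mathrm{d}y)=0,
\end{gather*}
so $xy$ lies in $\ker\bigl(A^{r+s}(r+s)\to A^{r+s+1}(r+s)\bigr)=(\tau^{\leq r+s}A(r+s))^{r+s}$. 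The unit $1\in A^0=A^0(0)$ lies in $\tau^{\leq 0}A(0)$, and associativity together with compatibility with the differential are inherited verbatim from $A$; this yields the ring complex structure on $\bigoplus_{r\in\mathbb{Z}}\tau^{\leq r}A(r)$.

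For the second assertion, for each $r$ I will use the canonical projection of complexes
\begin{gather*}
\tau^{\leq r}A(r)\twoheadrightarrow \mathcal{H}^r\!\bigl(\tau^{\leq r}A(r)\bigr)[-r]=\mathcal{H}^r(A(r))[-r]\hookrightarrow\mathcal{H}^r(A)[-r],
\end{gather*}
where the target is viewed as a complex concentrated in degree $r$ with zero differential. Summing over $r$ gives the candidate map; its compatibility with products is again case-wise. When at least one of the two factors has degree strictly less than its weight, both its image in $\mathcal{H}^{\bullet}(A)[-\bullet]$ and the image of $xy$ vanish (the latter by degree reasons). When $x$ and $y$ both sit in degrees $r$ and $s$ respectively and are cocycles, $xy$ is a cocycle in degree $r+s$ of $A(r+s)$, and its image is $[x][y]$ by definition of the product on $\mathcal{H}^*(A)$. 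Hence the assembled map is a ring map. The potential obstacle I flagged above was the closure under multiplication of the $\tau^{\leq r}$'s at the ``maximal degree'' corner; once the Leibniz rule handles this, the remainder of the argument is purely formal.
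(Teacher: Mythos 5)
Your proof is correct and takes essentially the same route as the paper's (which records only the single observation $\big(\tau^{\leq r}A(r)\big)^r\cdot \big(\tau^{\leq s}A(s)\big)^s\subset \big(\tau^{\leq r+s}A(r+s)\big)^{r+s}$); you have simply spelled out the case analysis and the Leibniz computation that justify it, together with the routine verification that the cohomology projections assemble into a unital multiplicative map.
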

\begin{proof}
It suffices to observe that $\big(\tau^{\leq r}A(r)\big)^r\cdot \big(\tau^{\leq s}A(s)\big)^s\subset \big(\tau^{\leq r+s}A(r+s)\big)^{r+s}$.
\end{proof}

\begin{definition}
Both $\bigoplus_{0\leq r<p}J(r,n)\Omega_{D_{m}}^{\bullet}$ and $\Omega_{D_{\centerdot}}^{\bullet}$ are ring  complexes; the ring structure on the former is given by
\begin{gather*}
(J_n,p^n)^{r-i}\otimes_{\mathcal{O}_{Z_m}}\Omega^i_{Z_m}\ \otimes\ (J_n,p^n)^{s-j}\otimes_{\mathcal{O}_{Z_m}}\Omega^j_{Z_m} \rightarrow (J_n,p^n)^{r+s-i-j}\otimes_{\mathcal{O}_{Z_m}}\Omega^{i+j}_{Z_m}.
\end{gather*}
We define  products in $\bigoplus_{0\leq r<p}\mathfrak{S}_{X_{\centerdot}}(r,n)_{\et}$ by the construction in Definition \ref{def:Kato's-product-structure} by taking $g=\bigoplus_{0\leq r<p}1$ and $h=\bigoplus_{0\leq r<p}f_r$. This induces products in $\bigoplus_{0\leq r<p}\mathfrak{S}_{X_{\centerdot}}(r,n)$ by adjunction and the canonical truncation. In fact,
by adjunction \eqref{eq:pro-adjunction}, there are morphisms $\mathbf{L}\epsilon^*\mathbf{R}\epsilon_*\mathfrak{S}_{X_{\centerdot}}(r,n)_{\et} \rightarrow \mathfrak{S}_{X_{\centerdot}}(r,n)_{\et}$ (note that $\mathbf{R}\epsilon_*\mathfrak{S}_{X_{\centerdot}}(r,n)_{\et}$ is right bounded as we will observe in Construction \ref{cons:infinitesimalSyntomicComplex-Nis-rep}) and thus
\begin{align*}
\mathbf{L}\epsilon^*\big(\mathbf{R}\epsilon_*\mathfrak{S}_{X_{\centerdot}}(r,n)_{\et} \otimes^{\mathbf{L}}\mathbf{R}\epsilon_*\mathfrak{S}_{X_{\centerdot}}(s,n)_{\et}\big)&\cong 
\mathbf{L}\epsilon^*\mathbf{R}\epsilon_*\mathfrak{S}_{X_{\centerdot}}(r,n)_{\et} \otimes^{\mathbf{L}}\mathbf{L}\epsilon^*\mathbf{R}\epsilon_*\mathfrak{S}_{X_{\centerdot}}(s,n)_{\et}\\
&\rightarrow\mathfrak{S}_{X_{\centerdot}}(r,n)_{\et} \otimes^{\mathbf{L}}\mathfrak{S}_{X_{\centerdot}}(s,n)_{\et} \rightarrow\mathfrak{S}_{X_{\centerdot}}(r+s,n)_{\et},
\end{align*}
and by adjunction again we get 
$\mathbf{R}\epsilon_*\mathfrak{S}_{X_{\centerdot}}(r,n)_{\et} \otimes^{\mathbf{L}}\mathbf{R}\epsilon_*\mathfrak{S}_{X_{\centerdot}}(s,n)_{\et} \rightarrow \mathbf{R}\epsilon_*\mathfrak{S}_{X_{\centerdot}}(r+s,n)_{\et}$. Then the canonical truncations induce
\begin{gather}\label{eq:infinitesimalSyntomicComplex-Nis}
\tau^{\leq r}\mathbf{R}\epsilon_*\mathfrak{S}_{X_{\centerdot}}(r,n)_{\et} \otimes^{\mathbf{L}} \tau^{\leq s}\mathbf{R}\epsilon_*\mathfrak{S}_{X_{\centerdot}}(s,n)_{\et} \rightarrow \tau^{r+s} \mathbf{R}\epsilon_*\mathfrak{S}_{X_{\centerdot}}(r+s,n)_{\et},
\end{gather}
and thus $\mathfrak{S}_{X_{\centerdot}}(r,n)\otimes^{\mathbf{L}}\mathfrak{S}_{X_{\centerdot}}(s,n) \rightarrow
\mathfrak{S}_{X_{\centerdot}}(r+s,n)$. 
\end{definition}

One can then define the product $\mathbb{Z}_{X_n}(r)\otimes^{\mathbf{L}}\mathbb{Z}_{X_n}(s) \rightarrow \mathbb{Z}_{X_n}(r+s)$ for $0\leq r\leq s+r<p$ by the argument of \cite[Proposition 7.2(v)]{BEK14}.
We would like to give a   more explicit  construction. For this, we need to define the product \eqref{eq:infinitesimalSyntomicComplex-Nis} as a ring map of ring complexes.
\begin{construction}\label{cons:infinitesimalSyntomicComplex-Nis-rep}
%Let us now give an explicit definition of this product. 
Let $D_{\centerdot}$ and $Z_{\centerdot}$ be as in Definition \ref{def:BEK-proComplexes}(iii). The schemes $D_n$ and $X_n$ have the same underlying Zariski (resp. étale, resp. Nisnevich) topology. Then since $\mathcal{J}_n^{r-i}\otimes_{\mathcal{O}_{Z_m}}\Omega_{Z_m/W_m}^i$ and  $I^{r-i}\otimes_{\mathcal{O}_{Z_m}}\Omega_{Z_m/W_m}^i$ are coherent sheaves on $D_m$, we have
\begin{gather*}
\mathbf{R} \epsilon_{*}\mathfrak{S}_{X_{\centerdot}}(r,n)_{\et}\cong \mathrm{Cone}\big(\epsilon_* J(r,n)\Omega_{D_{\centerdot}}^{\bullet}\xrightarrow{1-\epsilon_* f_r}\epsilon_*\Omega_{D_{\centerdot}}^{\bullet}\big)[-1].
\end{gather*}
The complexes $\bigoplus_{0\leq r<p}\epsilon_* J(r,n)\Omega_{D_{\centerdot}}^{\bullet}$ and $\bigoplus_{0\leq r<p}\epsilon_*\Omega_{D_{\centerdot}}^{\bullet}$ are ring complexes and $1$ and $\epsilon_* f_r$ are ring maps of ring complexes. Hence by Lemma \ref{lem:ringComplexes-truncations}, we have a ring complex
\begin{gather}\label{eq:infinitesimalSyntomicComplex-Nis-rep}
\bigoplus_{0\leq r<p}\tau^{\leq r}\Big(
\mathrm{Cone}\big(\epsilon_* J(r,n)\Omega_{D_{\centerdot}}^{\bullet}\xrightarrow{1-\epsilon_* f_r}\epsilon_*\Omega_{D_{\centerdot}}^{\bullet}\big)[-1]\Big)
\end{gather}
which in $\mathrm{D}_{\mathrm{pro}}(X_1)$ is independent of the choice of $D_{\centerdot}$ and $Z_{\centerdot}$, and the product in it represents the product defined above in $\mathrm{D}_{\mathrm{pro}}(X_1)$.
\end{construction}

\begin{proposition}\label{prop:infinitesimalMotComp-product}
\begin{enumerate}[(i)]
  \item The cone
\begin{gather}\label{eq:infinitesimalMotivicComplex-derivedDef}
\mathrm{cone}\big(\mathfrak{S}_{X_{\centerdot}}(r,n)\oplus \mathbb{Z}_{X_1}(r)\xrightarrow{\Phi^{\mathcal{J}}\oplus(- \mathrm{d}\log)}W_{\centerdot}\Omega^{r}_{X_1,\log}[-r]\big)[-1]
\end{gather}
is well  defined up to unique isomorphisms in $\mathrm{D}_{\mathrm{pro}}(X_1)$ and it is naturally isomorphic to $\mathbb{Z}_{X_n}(r)$ in $\mathrm{D}_{\mathrm{pro}}(X_1)$.
\item Assume the existence of $Z_{\centerdot}$ as in Definition \ref{def:BEK-proComplexes}(iii). Regard  $\bigoplus_{0\leq r<p}\mathfrak{S}_{X_{\centerdot}}(r,n)$ as the complex \eqref{eq:infinitesimalSyntomicComplex-Nis-rep}. Then the map
\begin{gather*}
  \Phi: \bigoplus_{0\leq r<p}\mathfrak{S}_{X_{\centerdot}}(r,n) \rightarrow \bigoplus_{0\leq r<p}W_{\centerdot}\Omega^{r}_{X_1,\log}[-r]
  \end{gather*}
  is a ring map of ring complexes.
  \item There is a ring complex
  \begin{gather*}
  \bigoplus_{r\geq 0}\mathbb{Z}_{X_1}(r)
  \end{gather*}
  and a ring map of ring complexes
  \begin{gather*}
  \bigoplus_{r\geq 0}\mathbb{Z}_{X_1}(r) \xrightarrow{\mathrm{d}\log} \bigoplus_{r\geq 0}W_{\centerdot}\Omega^{r}_{X_1,\log}[-r]
  \end{gather*}
  induced by the composition 
  \begin{gather*}
  \mathcal{H}^r\big(\mathbb{Z}_{X_1}(r)\big) \cong  \mathcal{K}_{X_1,1}^{M} \xrightarrow{\mathrm{d}\log}W_{\centerdot}\Omega^{r}_{X_1,\log}\quad.
  \end{gather*}
  \item There is a morphism
  \begin{gather*}
  \bigoplus_{0\leq r<p}\mathbb{Z}_{X_n}(r) \ \otimes^{\mathbf{L}}\ \bigoplus_{0\leq r<p}\mathbb{Z}_{X_n}(r) \longrightarrow \bigoplus_{0\leq r<p}\mathbb{Z}_{X_n}(r)
  \end{gather*}
  in $\mathrm{D}(X_1)$ which is unital and  associative in the obvious sense, and the projections to $\bigoplus_{0\leq r<p}\mathfrak{S}_{X_{\centerdot}}(r,n)$ and to $\bigoplus_{0\leq r<p}W_{\centerdot}\Omega^{r}_{X_1,\log}[-r]$ induced by \eqref{eq:infinitesimalMotivicComplex-derivedDef} preserve the products.
   \item The induced product in the cohomology sheaves of $\bigoplus_{0\leq r<p}\mathbb{Z}_{X_n}(r)$ is graded commutative.
  \item There is a commutative diagram of exact triangles in $\mathrm{D}_{\mathrm{pro}}(X_1)$
\begin{equation}\label{eq:motivicFundamentalTriangle-to-syntomicFundamentalTriangle}
\begin{gathered}
  \xymatrix{
  p(r)\Omega_{X_{n}}^{\bullet}[-1] \ar[r] \ar@{=}[d] & \mathbb{Z}_{X_{n}}(r) \ar[r]^{\alpha_1} \ar[d]^{\alpha_2} & \mathbb{Z}_{X_{1}}(r) \ar[r] \ar[d]^{\mathrm{d}\log} & p(r)\Omega_{X_{n}}^{\bullet}  \ar@{=}[d]  \\
  p(r)\Omega_{X_{n}}^{\bullet}[-1] \ar[r]  & \mathfrak{S}_{X_{\centerdot}}(r,n) \ar[r]^<<<<<{\Phi^{\mathcal{J}}}  & 
  W_{\centerdot}\Omega_{X_{1},\log}^r[-r] \ar[r]^>>>>>>{\alpha} & p(r)\Omega_{X_{n}}^{\bullet}
  }
\end{gathered}
\end{equation}
where the bottom is the fundamental triangle \eqref{eq:fundamentalTriangle-Nis}, and both $\alpha_1$ and $\alpha_2$ preserve products.
\end{enumerate}
\end{proposition}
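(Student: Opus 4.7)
The plan is to realize $\mathbb{Z}_{X_n}(r)$ as the fiber of a diagonal map into log de Rham--Witt, combining the fundamental triangle for the syntomic complex with the classical motivic complex on the special fiber, and then to transport the ring structure from explicit chain-level models.

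For (i), I would first verify the hypotheses of \cite[Lemma A.2]{BEK14} so that \eqref{eq:infinitesimalMotivicComplex-derivedDef} is well-defined up to unique isomorphism in $\mathrm{D}_{\mathrm{pro}}(X_1)$: the key input is that $\mathbb{Z}_{X_1}(r)$ is Nisnevich-locally concentrated in degrees $\leq r$, that $\mathfrak{S}_{X_{\centerdot}}(r,n)$ is supported in degrees $[1,r]$ by its definition \eqref{eq:infinitesimal-syntomicComplex-Nis}, and that $W_{\centerdot}\Omega^r_{X_1,\log}[-r]$ is concentrated in a single degree, which pins down any two fillings of the cone to a unique isomorphism. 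To identify this cone with $\mathbb{Z}_{X_n}(r)$, I would apply the octahedral axiom to the composition $\mathbb{Z}_{X_n}(r) \to \mathbb{Z}_{X_1}(r) \xrightarrow{\mathrm{d}\log} W_{\centerdot}\Omega^r_{X_1,\log}[-r]$: both the defining triangle \eqref{eq:motivicFundamentalTriangle} for $\mathbb{Z}_{X_n}(r)$ and the fundamental triangle \eqref{eq:fundamentalTriangle-Nis} for $\mathfrak{S}_{X_{\centerdot}}(r,n)$ have $p(r)\Omega^{\bullet}_{X_n}[-1]$ as their fiber, so the octahedron identifies \eqref{eq:infinitesimalMotivicComplex-derivedDef} with $\mathbb{Z}_{X_n}(r)$ and also produces the commutative diagram (vi).

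For (ii) I would work with the explicit representative in Construction \ref{cons:infinitesimalSyntomicComplex-Nis-rep}. The sheaves $\bigoplus_r \epsilon_* J(r,n)\Omega_{D_{\centerdot}}^{\bullet}$ and $\bigoplus_r \epsilon_* \Omega_{D_{\centerdot}}^{\bullet}$ form ring complexes in the evident graded-weight way; both $1$ and $\bigoplus_r \epsilon_* f_r$ are ring maps of these ring complexes, so Definition \ref{def:Kato-product-ringComplexes} together with Lemma \ref{lem:ringComplexes-truncations} endows \eqref{eq:infinitesimalSyntomicComplex-Nis-rep} with a ring complex structure, and the composite $\Phi$ with the log target is the $g$-projection of \eqref{eq:Kato-product}, hence a ring map. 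For (iii) I use the standard cup product on Bloch's higher Chow complexes (equivalently, Voevodsky's motivic cup product) to promote $\bigoplus_r \mathbb{Z}_{X_1}(r)$ to a ring object in $\mathrm{D}(X_1)$; the symbol map $\mathcal{K}^M_{X_1,r}\cong\mathcal{H}^r(\mathbb{Z}_{X_1}(r)) \xrightarrow{\mathrm{d}\log} W_{\centerdot}\Omega^r_{X_1,\log}$ is a ring map because the log symbol respects products. For (iv) I apply Definition \ref{def:Kato-product-ringComplexes} a second time to the ring complex map $\bigoplus_r \mathfrak{S}_{X_{\centerdot}}(r,n) \oplus \mathbb{Z}_{X_1}(r) \xrightarrow{\Phi\oplus(-\mathrm{d}\log)} \bigoplus_r W_{\centerdot}\Omega^r_{X_1,\log}[-r]$; the two projections $\alpha_1,\alpha_2$ in (vi) are the $g$-projections of this cone, and are therefore ring maps by \eqref{eq:Kato-product}.

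The main obstacle is the graded commutativity (v). My plan is to apply Lemma \ref{lem:ringComplex-product-gradedCommutative} in the iterated cone of (iv): hypothesis (i) is immediate for the log de Rham--Witt target, while hypothesis (ii) requires exhibiting, for each pair of cocycles $x,x'$ in the chain-level model of $\bigoplus_r \mathfrak{S}_{X_{\centerdot}}(r,n) \oplus \mathbb{Z}_{X_1}(r)$, a primitive $z$ of the graded commutator $xx'-(-1)^{qq'}x'x$ that is additionally annihilated by $(1-f_r)\oplus(-\mathrm{d}\log)$. For the syntomic summand this is standard, since $\bigoplus_r \epsilon_*\Omega^{\bullet}_{D_{\centerdot}}$ is strictly graded commutative and $f_r$ respects products, so $z=0$ works after passing to truncations. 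For the motivic summand, Bloch's cycle complex is only commutative up to a specific chain homotopy $z$, and the genuine content is that this $z$ lifts along $\mathrm{d}\log$ to something still killed by $1-f_r$; this is essentially the argument underlying \cite[Proposition 7.2(v)]{BEK14} in the pro-case, and I expect it to transfer to the truncated infinitesimal setting because the canonical truncation in Construction \ref{cons:infinitesimalSyntomicComplex-Nis-rep} preserves the low-degree chain homotopies where $z$ lives.
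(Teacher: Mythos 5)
Your handling of (i)--(iv) and (vi) is essentially the route the paper takes: Lemma A.2 of \cite{BEK14} for well-definedness, the two fundamental triangles with common fiber $p(r)\Omega^{\bullet}_{X_n}[-1]$ for the identification, Kato's cone-product from Definition \ref{def:Kato-product-ringComplexes} together with Lemma \ref{lem:ringComplexes-truncations} for (ii)--(iv), and the observation that the projections to the two factors of $A$ are automatically ring maps. The minor differences (phrasing (i) via the octahedral axiom, not flagging that the product on $\bigoplus_r\mathbb{Z}_{X_1}(r)$ must be taken \emph{strictly} associative via the Eilenberg--Mac Lane map, as the paper emphasizes) do not affect correctness.

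The genuine gap is in (v). First, a misstatement: the map $g-h$ in the relevant application of Lemma \ref{lem:ringComplex-product-gradedCommutative} is $(\Phi,-\mathrm{d}\log)$ from $A=\bigoplus\mathfrak{S}_{X_{\centerdot}}(r,n)\oplus\bigoplus\mathbb{Z}_{X_1}(r)$ to $B=\bigoplus W_{\centerdot}\Omega^r_{X_1,\log}[-r]$; writing it as ``$(1-f_r)\oplus(-\mathrm{d}\log)$'' conflates the inner cone (defining $\mathfrak{S}$) with the outer one. Second, and more serious, your claim that ``$z=0$ works'' for the syntomic summand is false: by the computation in the proof of Lemma \ref{lem:ringComplex-product-gradedCommutative}, even when $J(r,n)\Omega_{D_{\centerdot}}^{\bullet}$ and $\Omega_{D_{\centerdot}}^{\bullet}$ are strictly commutative, the primitive of the commutator in the Kato cone $\mathfrak{S}$ involves a nonzero second-coordinate term of the form $(0,y'y)$. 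Third, for the motivic summand you acknowledge that a chain-level verification would be needed but only conjecture that it transfers.

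What you are missing is the degree/weight observation that the paper uses to make hypothesis (ii) of Lemma \ref{lem:ringComplex-product-gradedCommutative} hold \emph{automatically}, rendering any explicit computation of $z$ unnecessary. The products preserve the decompositions of $A$ and $B$ by the weight $r$; if $x\in A^q(s)$ and $x'\in A^{q'}(s')$, then any primitive $z$ of the commutator lies in $A^{q+q'-1}(s+s')$, so $(g-h)z$ lies in $B^{q+q'-1}(s+s')$, which is the degree-$(q+q'-1)$ part of $W_{\centerdot}\Omega^{s+s'}_{X_1,\log}[-s-s']$. Since $\mathfrak{S}_{X_{\centerdot}}(r,n)$ and $\mathbb{Z}_{X_1}(r)$ are both supported in degrees $\leq r$, one has $q\leq s$ and $q'\leq s'$, hence $q+q'-1<s+s'$, and $B^{q+q'-1}(s+s')=0$ because $W_{\centerdot}\Omega^{s+s'}_{X_1,\log}[-s-s']$ is concentrated in degree $s+s'$. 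Thus $(g-h)z=0$ for any choice of $z$, and (v) follows without the chain-level analysis you propose.
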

\begin{proof}
(i) The first assertion follows from  \cite[Lemma A.2]{BEK14}. The natural isomorphism then follows from the exact triangles \eqref{eq:infinitesimalMotComp} and \eqref{eq:fundamentalTriangle-Nis}.

(ii) By Lemma \ref{lem:ringComplexes-truncations}, there is a ring map of ring complexes
\begin{gather*}
\bigoplus_{0\leq r<p}\mathfrak{S}_{X_{\centerdot}}(r,n) \rightarrow \bigoplus_{0\leq r<p}\mathcal{H}^r\big(\mathfrak{S}_{X_{\centerdot}}(r,n)\big)[-r]\quad.
\end{gather*}
The map $\bigoplus_{0\leq r<p}\mathcal{H}^r\big(\mathfrak{S}_{X_{\centerdot}}(r,n)\big)[-r] \rightarrow \bigoplus_{0\leq r<p}W_{\centerdot}\Omega^{r}_{X_1,\log}[-r]$ induced by $\Phi$ preserves products: this follows from the definition of $\Phi$, and finally follows from that the map $\Phi(f): \mathcal{O}_{D_n} \rightarrow W_n(\mathcal{O}_{X_1})$ induced by the Frobenius lifting $f$ on $Z_{\centerdot}$ is a ring map (\cite[Chap 0, (1.3.16)]{Ill79}, see also \cite[(2.6)]{BEK14}).

(iii) We use the product in motivic complexes constructed as in \cite[3.10-3.11]{MVW06}. It is associative because the Eilenberg-MacLane map $\nabla$ (=inverse of the Alexander-Whitney map) is associative (\cite[Proposition 29.9]{May67})\footnote{Rather than merely homotopy associative as stated in \cite[3.11]{MVW06}.}. Then by Lemma \ref{lem:ringComplexes-truncations}, $\mathrm{d}\log$ is a ring map because $\mathbb{Z}_{X_1}(r)$ is supported in degree $\leq r$.

Apply  Definition \ref{def:Kato's-product-structure} to
\begin{equation}\label{eq:infinitesimalMotComp-product-A&B}
\begin{aligned}
A&=\bigoplus_{0\leq r<p}\mathfrak{S}_{X_{\centerdot}}(r,n)\oplus \bigoplus_{0\leq r<p}\mathbb{Z}_{X_1}(r),\\
B&=\bigoplus_{0\leq r<p}W_{\centerdot}\Omega^{r}_{X_1,\log}[-r],
\end{aligned}
\end{equation}
and $g=(\Phi,0)$, and $h=(0,\mathrm{d}\log)$. Then (iv) follows from (ii) and (iii). The product map is defined in $\mathrm{D}_{\mathrm{pro}}(X_1)$ by the construction, but it is then  defined in $\mathrm{D}(X_1)$ because the inclusion $\iota:\mathrm{D}_{\mathrm{pro}}(X_1) \rightarrow \mathrm{D}_{\mathrm{pro}}(X_1)$ is fully faithful.
% (see the proof of Proposition \ref{prop:functorial-infinitesimalMotComp}).

(v) follows\footnote{I hope  this ad hoc proof can be replaced with a more natural one by exploring a certain operadic nature of the infinitesimal motivic complex, just as the case  of the motivic complex (see \cite[Page 122]{MVW06}).} from applying Lemma \ref{lem:ringComplex-product-gradedCommutative} to the above $A$, $B$, $g$, and $h$. Since $(\Phi,0)$ and $(0,\mathrm{d}\log)$ respect the decompositions in \eqref{eq:infinitesimalMotComp-product-A&B}, and each $W_{\centerdot}\Omega^{r}_{X_1,\log}[-r]$ is concentrated in one single degree, the condition  Lemma \ref{lem:ringComplex-product-gradedCommutative}(ii) holds. 
 
(vi) follows from (i) and (iv).
\end{proof}

\begin{proposition}\label{prop:properties-Zn(r)}
\begin{enumerate}[(i)]
  \item The complex $\mathbb{Z}_{X_n}(r)$ is supported in degree $\leq r$. 
  %\item[(i)] The pro-complex $\dots \rightarrow \mathfrak{S}_{X}(r,n) \rightarrow \mathfrak{S}_{X}(r,n-1) \rightarrow \dots \rightarrow \mathfrak{S}_{X}(r,1)$ is isomorphic to the syntomic pro-complex $\mathfrak{S}_{X_{\centerdot}}(r)$defined in \cite[\S 4]{BEK14}.
  \item In $\mathrm{D}_{\mathrm{pro}}(X_1)$, the pro-complex  $\dots \rightarrow \mathbb{Z}_{X_n}(r) \rightarrow \mathbb{Z}_{X_{n-1}}(r) \rightarrow \dots \rightarrow \mathbb{Z}_{X_1}(r)$ is isomorphic to  the motivic pro-complex $\mathbb{Z}_{X_{\centerdot}}(r)$defined in \cite[\S 7]{BEK14}.
\end{enumerate}
Moreover, we have the following canonical isomorphisms in $\mathrm{D}(X_1)$:
\begin{enumerate}
  \item[(iii)] $\mathbb{Z}_{X_n}(0)\cong\mathbb{Z}$.
  \item[(iv)] $\mathbb{Z}_{X_n}(1)\cong \mathbb{G}_{m,X_n}[-1]$.
  \item[(v)] $\mathcal{H}^r\big(\mathbb{Z}_{X_n}(r)\big)\cong \mathcal{K}_{X_n,r}^{M}$.
  \item[(vi)] $\mathbb{Z}_{X_{n}}(r)\otimes^{\mathbf{L}}\mathbb{Z}/p^i \cong \mathfrak{S}_{X_{\centerdot}}(r,n)\otimes^{\mathbf{L}}\mathbb{Z}/p^i $ for $i\geq 1$.
\end{enumerate}
\end{proposition}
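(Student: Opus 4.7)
The plan is to deduce all six items from the fundamental triangle \eqref{eq:motivicFundamentalTriangle}, handling (iii) and (i) as warm-ups, then (ii) by passing to the pro-limit, then (vi) by a mod-$p^i$ comparison, and finally (iv) and (v) by matching the resulting exact sequences to classical short exact sequences for $\mathbb{G}_m$ and $\mathcal{K}^M$.

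First I would dispose of (iii) and (i). When $r=0$, by Definition \ref{def:proComplexes} the complex $p(0)\Omega^{\bullet}_{X_n}=p^{0,1}_{0,n}\Omega^{\bullet}_{X_{\centerdot}}$ is identically zero, so \eqref{eq:motivicFundamentalTriangle} collapses to $\mathbb{Z}_{X_n}(0)\simeq \mathbb{Z}_{X_1}(0)=\mathbb{Z}$. For (i) with $r\geq 1$, the complex $p(r)\Omega^{\bullet}_{X_n}$ is supported in cohomological degrees $[0,r-1]$, so its shift $p(r)\Omega^{\bullet}_{X_n}[-1]$ is in degrees $[1,r]$, while $\mathbb{Z}_{X_1}(r)$ is classically supported in degrees $\leq r$; the long exact cohomology sequence of \eqref{eq:motivicFundamentalTriangle} then forces $\mathcal{H}^i(\mathbb{Z}_{X_n}(r))=0$ for $i>r$. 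For (ii), I would apply $\iota$ from $\mathrm{D}(X_1)$ into $\mathrm{D}_{\mathrm{pro}}(X_1)$ and observe that by the quasi-isomorphism \eqref{eq:quasi-iso-prML} with $M=1$, the pro-system $\{p^{r,1}_{r,n}\Omega^{\bullet}_{X_{\centerdot}}\}_n$ is equivalent to $\mathrm{Cone}(\{p^{r,n}\Omega^{<r}_{X_{\centerdot}}\}_n\to p^{r,1}\Omega^{<r}_{X_{\centerdot}})$. The pro-system $\{p^{r,n}\Omega^{<r}_{X_{\centerdot}}\}_n$ has transition maps that are powers of $p$ on each component, hence it is pro-zero. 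Thus the pro-cone is isomorphic to the BEK pro-complex $p(r)\Omega^{<r}_{X_{\centerdot}}=p^{r,1}\Omega^{<r}_{X_{\centerdot}}$, and the exact triangle from \eqref{eq:motivicFundamentalTriangle} matches \eqref{eq:pro-motivicFundamentalTriangle}, yielding the desired identification.

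For (vi), I would apply $-\otimes^{\mathbf{L}}\mathbb{Z}/p^i$ to the diagram \eqref{eq:motivicFundamentalTriangle-to-syntomicFundamentalTriangle}. The left vertical arrow is the identity, so by the triangulated 5-lemma it suffices to prove that the right vertical $\mathrm{d}\log\colon \mathbb{Z}_{X_1}(r)\otimes^{\mathbf{L}}\mathbb{Z}/p^i\to W_{\centerdot}\Omega^r_{X_1,\log}[-r]\otimes^{\mathbf{L}}\mathbb{Z}/p^i$ becomes a quasi-isomorphism after tensoring. This is exactly the Geisser--Levine theorem, which identifies the mod-$p^i$ motivic complex of a smooth $\Bbbk$-scheme with the logarithmic de Rham--Witt sheaf $W_i\Omega^r_{X_1,\log}[-r]$; applied levelwise to the pro-system in the Witt direction and combined with the fact that $W_{\centerdot}\Omega^r_{X_1,\log}\otimes^{\mathbf{L}}\mathbb{Z}/p^i$ stabilises at level $i$, this gives the required equivalence.

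For (iv), I would specialise \eqref{eq:motivicFundamentalTriangle} at $r=1$: using $\mathbb{Z}_{X_1}(1)=\mathbb{G}_{m,X_1}[-1]$ and $p(1)\Omega^{\bullet}_{X_n}=p\mathcal{O}_{X_n}$ (concentrated in degree $0$), a rotation yields an exact triangle
\begin{equation*}
p\mathcal{O}_{X_n}\longrightarrow \mathbb{Z}_{X_n}(1)[1]\longrightarrow \mathbb{G}_{m,X_1}\longrightarrow p\mathcal{O}_{X_n}[1].
\end{equation*}
On the other hand the short exact sequence $1\to 1+p\mathcal{O}_{X_n}\to \mathbb{G}_{m,X_n}\to \mathbb{G}_{m,X_1}\to 1$ produces a triangle of the same shape. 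The comparison map will come from the product structure in Proposition \ref{prop:infinitesimalMotComp-product}(iv): the class of $u\in \mathcal{O}_{X_n}^{\times}$ in $\mathcal{H}^1(\mathbb{Z}_{X_n}(1))$ defines a morphism $\mathbb{G}_{m,X_n}[-1]\to \mathbb{Z}_{X_n}(1)$. To verify that it is an isomorphism, I would identify the natural map $1+p\mathcal{O}_{X_n}\to p\mathcal{O}_{X_n}$ using the truncated Artin--Hasse logarithm (which converges since $p^n=0$ forces the series to terminate). The subtle point, and expected main obstacle, is exhibiting this isomorphism beyond the range $n\le p$ where plain $\log$ has denominator obstructions; the cleanest route is to work on the syntomic side via $\mathfrak{S}_{X_{\centerdot}}(1,n)$ and \eqref{eq:fundamentalTriangle-Nis}, where Kato's explicit description of $\mathfrak{S}(1,n)$ combined with $W_{\centerdot}\Omega^1_{X_1,\log}\cong \mathcal{O}_{X_1}^{\times}\otimes\mathbb{Z}/p^{\centerdot}$ supplies the identification canonically for all $n<p$. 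For (v), I would apply $\mathcal{H}^r$ to \eqref{eq:motivicFundamentalTriangle}; since $p(r)\Omega^{\bullet}_{X_n}$ is in degrees $[0,r-1]$, one gets
\begin{equation*}
0\to \operatorname{coker}\!\bigl(\mathcal{H}^{r-1}\mathbb{Z}_{X_1}(r)\to \mathcal{H}^{r-1}p(r)\Omega^{\bullet}_{X_n}\bigr)\to \mathcal{H}^r\bigl(\mathbb{Z}_{X_n}(r)\bigr)\to \mathcal{K}^M_{X_1,r}\to 0,
\end{equation*}
and compare with the canonical $0\to U^{(1)}\mathcal{K}^M_{X_n,r}\to \mathcal{K}^M_{X_n,r}\to \mathcal{K}^M_{X_1,r}\to 0$ via the Bloch--Kato--Kurihara symbol presentation $\{1+pa,x_1,\ldots,x_{r-1}\}\mapsto pa\,\mathrm{d}\log x_1\cdots \mathrm{d}\log x_{r-1}$. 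Functoriality of the product from Proposition \ref{prop:infinitesimalMotComp-product}(v) pins down the map, and the Bloch--Kato--Kurihara presentation gives the isomorphism on the kernel.
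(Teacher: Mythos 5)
Your treatment of (iii), (i), (ii), and (vi) is correct and essentially matches the paper: (iii) and (i) are the degenerate/cohomological-degree cases of the fundamental triangle \eqref{eq:motivicFundamentalTriangle}; your pro-zero observation for $\{p^{r,n}\Omega^{<r}_{X_{\centerdot}}\}_n$ is exactly what makes the paper's identification of $\{p(r)\Omega^{\bullet}_{X_n}\}_n$ with $p(r)\Omega^{<r}_{X_{\centerdot}}$ work; and (vi) via the diagram \eqref{eq:motivicFundamentalTriangle-to-syntomicFundamentalTriangle}, the triangulated $5$-lemma, and Geisser--Levine is the intended route.

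For (iv) and (v) you have the right targets but the logical architecture and a key step differ from the paper's, and your proof of (v) has a gap. The paper deduces (iv) directly from (v) with $r=1$: by (v), the cohomology of $\mathbb{Z}_{X_n}(1)$ is concentrated in degree $1$ and equals $\mathcal{K}^M_{X_n,1}\cong\mathbb{G}_{m,X_n}$, and since $\mathbb{Z}_{X_n}(1)$ is supported in degree $\leq 1$ by (i), the identification is immediate. Your direct Artin--Hasse approach is more work than necessary, and the worry about denominators for $n>p$ is misplaced: the only exponential that enters is $\exp(px)$ with $p$ nilpotent, and $v_p(p^k/k!)\geq 1$ for all $k\geq 1$, so there is no obstruction.

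The genuine gap is in (v). You correctly set up the two short exact sequences with kernels $p\Omega^{r-1}_{X_n}/p^2\Omega^{r-2}_{X_n}$ (on the motivic side, via $\mathcal{H}^{r-1}(p(r)\Omega^{\bullet}_{X_n})$) and $U^{(1)}\mathcal{K}^M_{X_n,r}$ (on the Milnor side, via Bloch--Kato--Kurihara), and you identify these kernels. But comparing the two sequences requires an actual morphism between them, and this is where you wave your hands: the product structure in Proposition \ref{prop:infinitesimalMotComp-product}(v) is about graded commutativity and does not give you a map $\mathcal{K}^M_{X_n,r}\to\mathcal{H}^r(\mathbb{Z}_{X_n}(r))$. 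What the paper actually does is take Kato's symbol map $\gamma:\mathcal{K}^M_{X_{\centerdot},r}\to\mathcal{H}^r(\mathbb{Z}_{X_{\centerdot}}(r))$, compose it with the reduction to $n$ to get $\gamma':\mathcal{K}^M_{X_{\centerdot},r}\to\mathcal{H}^r(\mathbb{Z}_{X_n}(r))$, and then prove that $\gamma'$ annihilates $\mathcal{N}=\mathrm{Ker}(\mathcal{K}^M_{X_{\centerdot},r}\to\mathcal{K}^M_{X_n,r})$; this descent is nontrivial and is argued via the diagonal map $\theta$ comparing the two $\mathrm{Exp}$ maps and their compatibility with the two unit filtrations $U_1\mathcal{K}^M_{X_{\centerdot},r}$ and $U_1\mathcal{K}^M_{X_n,r}$. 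Without this descent step, you have two abstractly isomorphic extensions but no canonical isomorphism $\mathcal{H}^r(\mathbb{Z}_{X_n}(r))\cong\mathcal{K}^M_{X_n,r}$, which is the content of the proposition.
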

\begin{proof}
The statement (i) follows from \eqref{eq:motivicFundamentalTriangle} and that $\mathbb{Z}_{X_1}(r)$ is supported in degree $\leq r$ by its definition. 

Comparing the definition of $\mathbb{Z}_{X_{\centerdot}}(r)$ (\cite[Definition 7.1]{BEK14}) with \eqref{eq:infinitesimalMotivicComplex-derivedDef}, there is a map of exact triangles from \eqref{eq:pro-motivicFundamentalTriangle} to \eqref{eq:motivicFundamentalTriangle}.
But the pro-complex  $\dots \rightarrow p(r)\Omega^{\bullet}_{X_n} \rightarrow p(r)\Omega^{\bullet}_{X_{n-1}} \rightarrow \cdots $ is isomorphic to  the pro-complex $p(r)\Omega^{<r}_{X_{\centerdot}}$. Then (ii) follows.

The statement (iii) follows from \eqref{eq:infinitesimalMotComp} and that $\mathbb{Z}_{X_1}(0)=\mathbb{Z}$. (vi) follows the diagram \eqref{eq:motivicFundamentalTriangle-to-syntomicFundamentalTriangle} and the isomorphism (of Zariski, thus Nisnevich, sheaves)  $\mathbb{Z}_{X_1}(r)\otimes^{\mathbf{L}}\mathbb{Z}/p^i\cong W_{\centerdot}\Omega_{X_{1},\log}^r[-r]\otimes^{\mathbf{L}}\mathbb{Z}/p^i$ (\cite[Theorem 8.3]{GeL00}). For (v), we recall the following commutative diagram from \cite[Page 695]{BEK14}
\[
\xymatrix{
  0 \ar[r] & p \Omega_{X_{\centerdot}}^{r-1}/p^2\Omega_{X_{\centerdot}}^{r-2} \ar[r] \ar @{=}[d] & \mathcal{H}^r(\mathfrak{S}_{X_{\centerdot}}(r)) \ar[r]^{\Phi^J} & W_{\centerdot}\Omega_{X_1,\log}^r \ar[r] & 0 \\
  0 \ar[r] & p \Omega_{X_{\centerdot}}^{r-1}/p^2\Omega_{X_{\centerdot}}^{r-2} \ar[r] \ar@{=}[d] & \mathcal{H}^r({\mathbb{Z}_{X_{\centerdot}}(r)}) \ar[u] \ar[r] & \mathcal{H}^r(\mathbb{Z}_{X_1}(r)) \ar[u]_{\mathrm{d}\log} \ar[r] & 0\\
  0 \ar[r] & p \Omega_{X_{\centerdot}}^{r-1}/p^2\Omega_{X_{\centerdot}}^{r-2} \ar[r]^<<<<<{\mathrm{Exp}} & \mathcal{K}^M_{X_{\centerdot},r} \ar[r] \ar[u]^{\gamma} & \mathcal{K}^M_{X_1,r} \ar[u]_{\cong} \ar[r] & 0
}
\]
where the map $\gamma$ is induced by the composition $ \mathcal{K}^M_{X_{\centerdot},r} \rightarrow  \mathcal{K}^M_{X_{1},r} \xrightarrow{\cong } \mathcal{H}^r(\mathbb{Z}_{X_1}(r)) $ (\cite{ElM02}, \cite{Ker09}, \cite{Ker10}) and Kato's symbol (\cite[Chap.~I \S3]{Kat87}) $\mathcal{K}^M_{X_{\centerdot},r} \rightarrow   \mathcal{H}^r(\mathfrak{S}_{X_{\centerdot}}(r))$.
There is a commutative diagram
\begin{gather*}
\xymatrix{
  0 \ar[r] & p \Omega_{X_{\centerdot}}^{r-1}/p^2\Omega_{X_{\centerdot}}^{r-2} \ar[d] \ar[r]^<<<{\mathrm{Exp}} & \mathcal{K}^M_{X_{\centerdot},r} \ar[r] \ar[d]  & \mathcal{K}^M_{X_1,r}  \ar[r] \ar@{=}[d] & 0 \\
  0 \ar[r] & p \Omega_{X_{n}}^{r-1}/p^2\Omega_{X_{n}}^{r-2} \ar[r]^<<<{\mathrm{Exp}} & \mathcal{K}^M_{X_{n},r} \ar[r] & \mathcal{K}^M_{X_1,r}  \ar[r] & 0
}
\end{gather*}
where both maps $\mathrm{Exp}$ are given by (\cite[(12.1)]{BEK14})
 \[
 p x d \log y_1\wedge \cdots \wedge d \log y_{r-1} \mapsto \{\exp(px),y_1,...,y_{r-1}\},
 \]
 and the bottom row is also exact by \cite[Theorem 12.3]{BEK14}. We set
 \begin{align*}
 U_1\mathcal{K}^M_{X_{n},r}&:=\mathrm{Ker}\big( \mathcal{K}^M_{X_{n},r} \rightarrow \mathcal{K}^M_{X_{1},r} \big),\\
  U_1\mathcal{K}^M_{X_{\centerdot},r}&:=\mathrm{Ker}\big( \mathcal{K}^M_{X_{\centerdot},r} \rightarrow \mathcal{K}^M_{X_{1},r} \big),\\
\mathcal{N}&:=\mathrm{Ker}\big(\mathcal{K}^M_{X_{\centerdot},r} \rightarrow \mathcal{K}^M_{X_n,r}\big).
 \end{align*}
The map $U_1\mathcal{K}^M_{X_{\centerdot},r} \rightarrow \mathcal{K}^M_{X_{\centerdot},r}$ identifies $\mathrm{Ker}\big(U_1\mathcal{K}^M_{X_{\centerdot},r} \rightarrow U_1\mathcal{K}^M_{X_n,r}\big)$ with $\mathcal{N}$.  The commutative diagram
 \begin{equation}\label{eq:gammaPrime}
\begin{gathered}
\xymatrix{
  0 \ar[r] & p \Omega_{X_{n}}^{r-1}/p^2\Omega_{X_{n}}^{r-2} \ar[r] & \mathcal{H}^r({\mathbb{Z}_{X_{n}}(r)}) \ar[r] & \mathcal{H}^r(\mathbb{Z}_{X_1}(r))  \ar[r] & 0\\
  0 \ar[r] & p \Omega_{X_{\centerdot}}^{r-1}/p^2\Omega_{X_{\centerdot}}^{r-2} \ar[r]^<<<<<{\mathrm{Exp}} \ar[u] & \mathcal{K}^M_{X_{\centerdot},r} \ar[r] \ar[u]^{\gamma'} & \mathcal{K}^M_{X_1,r} \ar[u]_{\cong} \ar[r] & 0
}
\end{gathered}
\end{equation}
where $\gamma'$ is the composition $\mathcal{K}^M_{X_{\centerdot},r} \xrightarrow{\gamma} \mathcal{H}^r({\mathbb{Z}_{X_{\centerdot}}(r)})  \rightarrow \mathcal{H}^r({\mathbb{Z}_{X_{n}}(r)})$, induces a diagonal map $\theta$ in 
\begin{gather*}
\xymatrix{
   p \Omega_{X_{\centerdot}}^{r-1}/p^2\Omega_{X_{\centerdot}}^{r-2} \ar[d] \ar[r]^<<<{\mathrm{Exp}} & U_1\mathcal{K}^M_{X_{\centerdot},r}  \ar[d] \ar[dl]_{\theta} \\
 p \Omega_{X_{n}}^{r-1}/p^2\Omega_{X_{n}}^{r-2} \ar[r]^<<<{\mathrm{Exp}} & U_1\mathcal{K}^M_{X_{n},r} 
}
\end{gather*}
such that the left-upper triangle commutes. Since the outer square commutes and the upper $\mathrm{Exp}$ is an isomorphism, the right-lower triangle also commutes. It follows that $\theta$, and thus $\gamma'$, send $\mathcal{N}$ to 0.  Then the diagram \eqref{eq:gammaPrime} factors through
\begin{gather*}
\xymatrix{
  0 \ar[r] & p \Omega_{X_{n}}^{r-1}/p^2\Omega_{X_{n}}^{r-2} \ar[r] & \mathcal{H}^r({\mathbb{Z}_{X_{n}}(r)}) \ar[r] & \mathcal{H}^r(\mathbb{Z}_{X_1}(r))  \ar[r] & 0\\
  0 \ar[r] & p \Omega_{X_{n}}^{r-1}/p^2\Omega_{X_{n}}^{r-2} \ar[r]^<<<<<{\mathrm{Exp}} \ar@{=}[u] & \mathcal{K}^M_{X_{n},r} \ar[r] \ar[u] & \mathcal{K}^M_{X_1,r} \ar[u]_{\cong} \ar[r] & 0
}
\end{gather*}
and hence we obtain the isomorphism $\mathcal{K}^M_{X_{n},r} \xrightarrow{\cong} \mathcal{H}^r(\mathbb{Z}_{X_1}(r))$.
 
Finally we show (iv).
 Since $\mathbb{Z}_{X_1}(1)\cong \mathcal{K}_{X_1,1}^M[-1]\cong \mathbb{G}_{m,X_1}[-1]$, by the exact triangle \eqref{eq:motivicFundamentalTriangle}
 %$p(1)\Omega_{X_{n}}^{\bullet}[-1] \rightarrow  \mathbb{Z}_{X_{n}}(1)  \rightarrow \mathbb{Z}_{X_{1}}(1)$ 
 one sees that 
the cohomology sheaves of $ \mathbb{Z}_{X_{n}}(1)$ are concentrated at degree 1. Since $ \mathbb{Z}_{X_{n}}(1)$ is supported in degree $\leq 1$, we have $ \mathbb{Z}_{X_{n}}(1)\cong \mathcal{H}^1(\mathbb{Z}_{X_{n}}(1))\cong \mathcal{K}_{X_n,1}^M[-1]\cong \mathbb{G}_{m,X_n}[-1]$ as a consequence of  (iii).
\end{proof}

% subsection the_product_structure (end)

% section infinitesimal_motivic_complexes (end)

%\section{An infinitesimal Chern character}% (fold)
%\label{sec:an_infinitesimal_chern_character}

\section{Infinitesimal Chern classes and Chern characters} % (fold)
\label{sec:infinitesimal_chern_classes_and_chern_characters}
Recall the definition of the category $\mathrm{Sm}_{W_{\centerdot}}$ in \S\ref{sub:notations}(vi).
We call a collection of morphisms $\{U_{i \centerdot}\rightarrow X_{\centerdot}\}_{i\in I}$ in $\mathrm{Sm}_{W_{\centerdot}}$ \emph{a Nisnevich} (resp. \emph{Zariski}, resp. \emph{étale}) \emph{covering}  if  
%$\{U_{i, 1}\rightarrow X_{1}\}_{i\in I}$ is a Nisnevich covering, which is equivalent to that 
$\{U_{i, n}\rightarrow X_{n}\}_{i\in I}$ is a Nisnevich covering for each $n\in \mathbb{N}$. 
Let $(\mathrm{Sm}_{W_{\centerdot}})_{\mathrm{Nis}}$ (resp. $(\mathrm{Sm}_{W_{\centerdot}})_{\mathrm{Zar}}$, resp. $(\mathrm{Sm}_{W_{\centerdot}})_{\et}$) be the associated site. 

For every $n\in \mathbb{N}$, let $\mathrm{red}_n: \mathrm{Sm}_{W_{\centerdot}} \rightarrow \mathrm{Sm}_{W_{n}}$ be the functor sending $X_{\centerdot}$ to $X_n$. Then $\mathrm{red}_n$ is continuous and cocontinuous with respect to the Nisnevich (resp. Zariski, resp. étale) topology. Thus there is an associated morphism of topoi 
$\mathrm{red}_n: (\mathrm{Sm}_{W_{\centerdot}})_{\mathrm{Nis}}^{\sim} \rightarrow (\mathrm{Sm}_{W_{n}})_{\mathrm{Nis}}^{\sim}$ (resp. $(\mathrm{Sm}_{W_{\centerdot}})_{\mathrm{Zar}}^{\sim} \rightarrow (\mathrm{Sm}_{W_{n}})_{\mathrm{Zar}}^{\sim}$, resp. $(\mathrm{Sm}_{W_{\centerdot}})_{\et}^{\sim} \rightarrow (\mathrm{Sm}_{W_{n}})_{\et}^{\sim}$). As abuse of notations, for a sheaf $\mathcal{F}$ on $(\mathrm{Sm}_{W_{n}})_{\mathrm{Nis}}$, we denote $\mathrm{red}_n^{-1}\mathcal{F}$ still by $\mathcal{F}$; we have $\mathcal{F}(X_{\centerdot})= \mathcal{F}(X_n)$.

By Proposition \ref{prop:functoriality-infinitesimalMotivComp} %\ref{prop:functorial-infinitesimalMotComp}, 
there exists a complex of sheaves $\mathbb{Z}_{?n}(r)$ on  $(\mathrm{Sm}_{W_{\centerdot}})_{\mathrm{Nis}}$ such that the restriction of $\mathbb{Z}_{?n}(r)$ to $X_{\centerdot}$ is $\mathbb{Z}_{X_n}(r)^{\circ}$.

Recall the cohomology theory of simplicial (pre)sheaves from \cite[Chapter 8]{Jar15}:
\begin{definition}
Let $\mathcal{C}$ be a site. A map between two simplicial presheaves on $\mathcal{C}$ is a \emph{local weak equivalence} if it induces isomorphisms on the sheaves associated with their presheaves of homotopy groups.  Let $s \mathbf{Pre}(\mathcal{C})$ be the category of simplicial presheaves on $\mathcal{C}$, and the homotopy category 
$\mathrm{Ho}(s \mathbf{Pre}(\mathcal{C}))$ is the localization by the local weak equivalences.  For two simplicial presheaves $E$ and $F$ on $\mathcal{C}$, we denote 
\begin{gather*}
[E,F]:=\mathrm{Hom}_{\mathrm{Ho}(s \mathbf{Pre}(\mathcal{C}))}(E,F).
\end{gather*}
For   a simplicial presheaf $E$ on $\mathcal{C}$, and a complex $F$ of abelian sheaves  on $\mathcal{C}$, the $n$-th cohomology group of $E$ with coefficients in $F$ is defined to be
\begin{gather*}
H^n(E,F):=[E,\mathrm{K}(F,n)].
\end{gather*}
When $E$ is the presheaf represented by an object $X$ of $\mathcal{C}$, by \cite[Theorem 8.26]{Jar15},  $H^n(E,F)$ coincides with the usual hypercohomology group $\mathbb{H}^n(X,F)$.
\end{definition}

\begin{lemma}\label{lem:simplicialQuasiCoherentSheafCohomology}
Let $n\in \mathbb{N}$, and $X=\{X_i\}_{i\in \mathbb{N}\cup\{0\}}$ be a simplicial smooth affine scheme over $W_n$. Let $F$ be a left bounded complex of abelian sheaves on $(\mathrm{Sm}_{W_{\centerdot}})_{\mathrm{Nis}}$, such that for each $i\in \mathbb{Z}$, $F^i=\mathrm{red}_m^{-1}G^i$ for a   quasi-coherent sheaf $G^i$ on $(\mathrm{Sm}_{W_m})_{\mathrm{Nis}}$ for a certain $m$. Identifying $X$ with the simplicial sheaf $\mathrm{red}_n^{-1}X$ on $(\mathrm{Sm}_{W_{\centerdot}})_{\mathrm{Nis}}$, there is a canonical isomorphism
\begin{gather*}
H^q(X,F)\cong H^q\big(\operatorname{Tot} \Gamma(X,G)\big),
\end{gather*}
where $\operatorname{Hom}(X,G)$ is the double complex with $\Gamma(X,G)^{i,j}= \Gamma(X_{i},G^j)$,  and the differential in the first direction is induced by the differential of the Moore complex associated with the simplicial abelian sheaf $\mathbb{Z}X$.
\end{lemma}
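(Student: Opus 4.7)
The plan is to interpret $H^q(X, F)$ via a descent (Bousfield--Kan type) spectral sequence associated with the simplicial presheaf $X$, and then to exploit that each level $X_p$ is affine in order to collapse that spectral sequence onto the double complex $\Gamma(X_\bullet, F^\bullet)$.

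First, I would set up the relevant spectral sequence in general. For any left bounded complex of sheaves $G$ on $(\mathrm{Sm}_{W_\centerdot})_{\mathrm{Nis}}$ and any simplicial presheaf $Y_\bullet$ whose levels $Y_p$ are representable, there should be a conditionally convergent spectral sequence
\begin{gather*}
E_1^{p,q} = \mathbb{H}^q(Y_p, G) \Longrightarrow H^{p+q}(Y_\bullet, G),
\end{gather*}
whose $d_1$-differential is the alternating sum of face maps. Concretely, one takes an injective resolution $G \to I$ in complexes of sheaves, so that $\mathrm{K}(I, n)$ is a fibrant simplicial presheaf model of $\mathrm{K}(G, n)$; then $H^n(Y_\bullet, G) = \pi_0 \mathrm{hom}(Y_\bullet, \mathrm{K}(I, n))$, the latter computed as the totalization of the cosimplicial space $p \mapsto \mathrm{hom}(Y_p, \mathrm{K}(I, n))$, producing the claimed spectral sequence.

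Second, I would reduce the $E_1$-page to the sections of $F^\bullet$. Because $F$ is left bounded and each $F^j$ is pulled back from a quasi-coherent sheaf on some $(\mathrm{Sm}_{W_m})_{\mathrm{Nis}}$, and because $X_p$ is smooth affine, the vanishing $H^q(X_p, F^j) = 0$ holds for all $q > 0$ and all $j$: on $X_{p,m}$ the Nisnevich cohomology of a quasi-coherent sheaf agrees with its Zariski cohomology (comparison of sites), and Serre's vanishing then applies on the affine $X_{p,m}$. The hypercohomology spectral sequence for $F^\bullet$ on $X_p$ therefore degenerates and yields $\mathbb{H}^q(X_p, F) \cong H^q\big(\Gamma(X_p, F^\bullet)\big)$. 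Substituting into the outer spectral sequence gives $E_1^{p,q} = H^q\big(\Gamma(X_p, F^\bullet)\big)$ with $d_1$ the alternating sum of face maps, which is exactly the $E_1$-page of the spectral sequence of the double complex $\Gamma(X_\bullet, F^\bullet)$ filtered by simplicial degree. The two spectral sequences share abutments, yielding the canonical isomorphism $H^n(X, F) \cong H^n(\operatorname{Tot}\Gamma(X_\bullet, F^\bullet))$.

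The main obstacle is the first step, i.e.\ setting up the descent spectral sequence in the context of complexes of sheaves rather than single abelian sheaves. The cleanest route is through Jardine's injective local model structure on $s\mathbf{Pre}(\mathcal{C})$ and the identification of $\mathrm{K}(F, n)$ for a fibrant complex $F$ as a fibrant simplicial presheaf, so that the cosimplicial mapping-space spectral sequence delivers the required $E_1$-page; convergence is automatic from the left boundedness of $F$ together with the uniform vanishing above.
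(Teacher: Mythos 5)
Your overall strategy — a descent/Bousfield--Kan spectral sequence for the simplicial presheaf $X$, collapsed by level-wise cohomological vanishing — is the same as the paper's, which invokes Jardine's Lemma 8.24 after reducing by a long exact sequence to the case of a single sheaf. So the architecture is right.

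There is, however, a genuine gap in the vanishing step, which is the heart of the lemma. You claim $H^q(X_p, F^j) = 0$ for $q > 0$ by citing a ``comparison of sites'' and ``Serre vanishing on the affine $X_{p,m}$.'' This elides two distinct issues. First, $X_p$ is only a scheme over $W_n$; it is not an object of the site $\mathrm{Sm}_{W_{\centerdot}}$, whose objects are full compatible systems over all $W_m$. Your notation ``$X_{p,m}$'' is undefined — if $m > n$ there is no base change of $X_p$ to $W_m$. The paper supplies the missing ingredient: because $X_p$ is smooth affine over $W_n$, it lifts to a smooth scheme $Y$ over $W$, and the associated $Y_{\centerdot}$ is an object of $\mathrm{Sm}_{W_{\centerdot}}$ representing the presheaf $X_p$. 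Second, even granting a lift, the cohomology $H^q(X_p, F^j)$ is a priori taken on the \emph{big} site $(\mathrm{Sm}_{W_{\centerdot}})_{\mathrm{Nis}}$, not the small Nisnevich site of any scheme; one must pass from the localization $(\mathrm{Sm}_{W_{\centerdot}})_{\mathrm{Nis}}/Y_{\centerdot}$ to the small Nisnevich site of a single $Y_j$ before the Nisnevich--Zariski comparison and Serre vanishing become applicable. Your phrase ``comparison of sites'' appears to conflate this big-to-small-site reduction with the (separate, easier) Nisnevich-to-Zariski comparison for quasi-coherent sheaves. Both the lifting step and the big/small reduction need to be made explicit for the argument to go through.
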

\begin{proof}
By a long exact sequence  and induction, we may assume that $F$ is  concentrated in degree 0. Then let $F=\mathrm{red}_m^{-1}G$ for a   quasi-coherent sheaf $G$ on $(\mathrm{Sm}_{W_m})_{\mathrm{Nis}}$ for some $m$.
By \cite[Lemma 8.24]{Jar15}, it suffices to show $H^0_{(\mathrm{Sm}_{W_{\centerdot}})_{\mathrm{Nis}}}(X_i,F)=\Gamma(X_i,G)$ and $H^q_{(\mathrm{Sm}_{W_{\centerdot}})_{\mathrm{Nis}}}(X_i,F)=0$ for $q>0$. Since $X_i$ is affine and smooth over $W_n$, there exists a smooth scheme $Y$ over $W$ such that $X_i\cong Y_n:= Y\times_W W_n$. Let $Y_{\centerdot}$ be the associated object in $\mathrm{Sm}_{W_{\centerdot}}$. The cohomology $H^q_{(\mathrm{Sm}_{W_{\centerdot}})_{\mathrm{Nis}}}(X_i,F)$ can be computed in the localization $(\mathrm{Sm}_{W_{\centerdot}})_{\mathrm{Nis}}/Y_{\centerdot}$, and then can be computed by the small Nisnevich site over $Y_{\centerdot}$. But this latter site is equivalent to the small Nisnevich site over $Y_j$ for any $j\in \mathbb{N}$. Hence the wanted assertion follows from that the Nisnevich cohomology of a quasi-coherent sheaf is isomorphic to its Zariski cohomology, and that $Y_j$ is affine for all $j$.
\end{proof}

Our definition of infinitesimal Chern classes and the multiplicativity of the infinitesimal Chern  character is based on Pushin's motivic Chern classes \cite{Pus04} and the following fundamental theorem of Totaro \cite[Theorem 9.2]{Tot18}.
\begin{theorem}[Totaro]\label{thm:Totaro}
For any integer $n\geq 1$, there is a canonical isomorphism of graded rings 
\begin{gather*}
\bigoplus_{i,j\geq 0} H^{i}(\mathrm{BGL}_{n,\mathbb{Z}},\Omega^j) \cong \mathbb{Z}[c_1,\dots,c_n]
\end{gather*}
where $\deg c_i=(i,i)$ and $\Omega^j$ is the sheaf $U \mapsto \Omega^j_{U/\mathbb{Z}}(U)$.
\end{theorem}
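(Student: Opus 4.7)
The plan is to realize $\mathrm{BGL}_{n,\mathbb{Z}}$ geometrically via Totaro's approximation, reduce to a torus by the splitting principle, and identify the answer as Weyl group invariants. First, for $N \gg 0$, let $U_N \subset \mathrm{Mat}_{n \times N,\mathbb{Z}}$ be the open subscheme of rank-$n$ matrices; the complement has codimension $N - n + 1$, and $\mathrm{GL}_n$ acts freely on $U_N$ with quotient the Grassmannian $\mathrm{Gr}(n,N)$. For fixed bidegree $(i,j)$, the groups $H^i(\mathrm{Gr}(n,N), \Omega^j)$ stabilize as $N \to \infty$, and one takes this stable value as the definition of $H^i(\mathrm{BGL}_n, \Omega^j)$; the product structure comes from the cup product on the Grassmannian approximations and is compatible with the stabilization. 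The analogous construction for the diagonal torus $T = \mathbb{G}_m^n \subset \mathrm{GL}_n$ approximates $\mathrm{BT}$ by $(\mathbb{P}^{N-1}_{\mathbb{Z}})^n$.

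Second, I would compute the Hodge cohomology of $\mathrm{BT}$ directly. For a smooth projective scheme over $\mathbb{Z}$ the Künneth formula holds for Hodge cohomology, and the standard calculation
\begin{equation*}
\bigoplus_{i,j \geq 0} H^i(\mathbb{P}^{N-1}_{\mathbb{Z}}, \Omega^j) \cong \mathbb{Z}[h]/(h^N), \qquad \deg h = (1,1),
\end{equation*}
combined with the projective bundle formula for the Hodge cohomology of smooth $\mathbb{Z}$-schemes, gives in the limit
\begin{equation*}
\bigoplus_{i,j \geq 0} H^i(\mathrm{BT}, \Omega^j) \cong \mathbb{Z}[t_1, \dots, t_n], \qquad \deg t_i = (1,1),
\end{equation*}
as a bigraded ring.

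Third, the map $\mathrm{BT} \to \mathrm{BGL}_n$ is geometrically a flag variety bundle with fiber $\mathrm{GL}_n/T$, and iterated application of the projective bundle formula shows that $H^*(\mathrm{BT}, \Omega^*)$ is free of rank $n!$ as a module over $H^*(\mathrm{BGL}_n, \Omega^*)$ and that the pullback is injective. The Weyl group $S_n$ acts on $\mathrm{BT}$ by permutation and the pullback factors through the invariants; using that $\mathrm{GL}_n$ is a special group in Grothendieck's sense (so that $\mathrm{GL}_n$-torsors are Zariski locally trivial), one obtains integrally that $H^*(\mathrm{BGL}_n, \Omega^*) = H^*(\mathrm{BT}, \Omega^*)^{S_n} = \mathbb{Z}[c_1, \dots, c_n]$, where $c_i$ is the $i$-th elementary symmetric polynomial in the $t_i$. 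Multiplicativity of the isomorphism is automatic since both the splitting principle map and the identification with symmetric functions are ring maps; the Chern classes $c_i$ themselves can be defined intrinsically inside $H^i(\mathrm{BGL}_n, \Omega^i)$ by Grothendieck's projective bundle construction applied to the universal rank-$n$ bundle. The principal obstacle is the integral statement: over a field this is classical (Borel--Grothendieck), but over $\mathbb{Z}$ one must rule out torsion in $H^*(\mathrm{BGL}_n, \Omega^*)$ and control the behavior of the Hodge--de Rham spectral sequence on the Grassmannian approximations; the speciality of $\mathrm{GL}_n$ and the fact that $\mathrm{Gr}(n,N)$ is a smooth projective scheme over $\mathbb{Z}$ with a cellular decomposition (hence with free integral Hodge cohomology) together resolve this.
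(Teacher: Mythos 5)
The paper does not prove this statement; it cites it directly from Totaro \cite[Theorem 9.2]{Tot18} and uses it as a black box, so there is no proof in the paper to compare against. Your sketch follows the classical Borel--Grothendieck splitting-principle strategy, which is a natural thing to try, but it contains a genuine gap at the step that bridges the torus and the full group.

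You write that $\mathrm{BT}\to\mathrm{BGL}_n$ is ``geometrically a flag variety bundle with fiber $\mathrm{GL}_n/T$'' and then apply the iterated projective bundle formula. But $\mathrm{GL}_n/T$ is not a flag variety and is not projective: it fibers over the flag variety $\mathrm{GL}_n/B$ with fiber $B/T\cong\mathbb{A}^{\binom{n}{2}}$. The iterated projective bundle formula therefore computes the Hodge cohomology of $\mathrm{BB}$ (where $B$ is a Borel), not of $\mathrm{BT}$, and to finish as you do one would need $H^*(\mathrm{BB},\Omega^*)\cong H^*(\mathrm{BT},\Omega^*)$. This is not automatic, precisely because Hodge cohomology is \emph{not} $\mathbb{A}^1$-invariant: the affine fibers $B/T$ contribute, and over $\mathbb{F}_p$ the Hodge cohomology of $B\mathbb{G}_a$ is large and nontrivial (this is computed in the very same paper of Totaro). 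So the unipotent radical of $B$ does not visibly disappear, and the identification $H^*(\mathrm{BGL}_n,\Omega^*)=H^*(\mathrm{BT},\Omega^*)^{S_n}$ over $\mathbb{Z}$ cannot be reached by the route you propose; the same objection applies to the ``special group'' invocation, which has no force here. What does work, and what the standard proof does, is an induction on $n$ via the tautological $\mathbb{P}^{n-1}$-bundle $\mathbb{P}(\mathcal{E})\to\mathrm{BGL}_n$ (equivalently a direct Schubert-cell analysis of the Grassmannian approximations $\mathrm{Gr}(n,N)$), which never attempts to compare $\mathrm{BT}$ with $\mathrm{BGL}_n$ through the affine quotient $\mathrm{GL}_n/T$. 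The remaining ingredients of your sketch --- the Totaro approximation $U_N/\mathrm{GL}_n=\mathrm{Gr}(n,N)$ with stabilization in each fixed bidegree, the computation $H^*(\mathrm{B}\mathbb{G}_m^n,\Omega^*)=\mathbb{Z}[t_1,\dots,t_n]$, and the definition of the $c_i$ via Grothendieck's projective bundle construction together with the Whitney sum formula --- are all correct and are indeed part of the real argument; it is only the bridge from $\mathrm{BT}$ to $\mathrm{BGL}_n$ that needs to be replaced.
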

Totaro's theorem is stated for the Zariski and the étale sites.
Since $\Omega^i$ is quasi-coherent on each $\mathrm{GL}_{n,\mathbb{Z}}$, this result holds equally on the Nisnevich  site. For any ring $R$, we understand $\mathrm{BGL}_R$ as the simplicial ind-scheme $\dots \rightarrow \mathrm{BGL}_{n,R} \rightarrow \mathrm{BGL}_{n+1,R} \rightarrow \dots$.
%and for any complex $F$ of Nisnevich sheaves we let $H^i(\mathrm{BGL}_R,F)$ be the associated continuous cohomology group, and 
Then we have
\begin{gather*}
\bigoplus_{i,j\geq 0} H^{i}(\mathrm{BGL}_{\mathbb{Z}},\Omega^j) \cong \mathbb{Z}[c_1,c_2,\dots].
\end{gather*}
In the rest of this section, the  cohomology of (simplicial) sheaves is the cohomology on the site  $(\mathrm{Sm}_{W_{\centerdot}})_{\mathrm{Nis}}$. We introduce the following sheaves on this site.
\begin{itemize}
  \item $\Omega^i_{?/W_n}(X_{\centerdot}):=\Omega^i_{X_n/W_n}$.
  \item $p^{r,M}_{r,L}\Omega^{\bullet}_{?\centerdot}(X_{\centerdot}):=p^{r,M}_{r,L}\Omega^{\bullet}_{X_{\centerdot}}$, 
  $p(r)\Omega^{\bullet}_{?n}(X_{\centerdot}):=p(r)\Omega^{\bullet}_{X_{n}}$.
\end{itemize}

\begin{proposition}\label{prop:truncatedMotCoh-BGL}
For $n\geq 1$, there is a canonical isomorphism of graded groups
\begin{gather*}
\bigoplus_{r<p}H^{2r}\big(\mathrm{BGL}_{W_1},\mathbb{Z}_{?n}(r)\big)\cong \mathbb{Z}[c_1,c_2,\dots]_{<2p},
\end{gather*}
where $\deg c_i=2i$, and the subscript “$<2p$" means taking only the sums of monomials of degree less than $2p$. 
\end{proposition}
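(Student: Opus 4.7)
The plan is to apply hypercohomology to the fundamental exact triangle
\[
p(r)\Omega^{\bullet}_{?n}[-1] \to \mathbb{Z}_{?n}(r) \to \mathbb{Z}_{?1}(r) \to p(r)\Omega^{\bullet}_{?n}
\]
from \eqref{eq:motivicFundamentalTriangle} on the simplicial sheaf $\mathrm{BGL}_{W_{1}}$, thereby reducing to the motivic cohomology of $\mathrm{BGL}_{W_{1}}$, which should be the polynomial algebra in Chern classes.

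First, I would prove the vanishing
\[
\mathbb{H}^{2r-1}(\mathrm{BGL}_{W_{1}}, p(r)\Omega^{\bullet}_{?n}) = \mathbb{H}^{2r}(\mathrm{BGL}_{W_{1}}, p(r)\Omega^{\bullet}_{?n}) = 0,
\]
so that the long exact sequence attached to the triangle yields an isomorphism $H^{2r}(\mathrm{BGL}_{W_{1}}, \mathbb{Z}_{?n}(r)) \xrightarrow{\cong} H^{2r}(\mathrm{BGL}_{W_{1}}, \mathbb{Z}_{?1}(r))$. The complex $p(r)\Omega^{\bullet}_{?n}$ is supported in cohomological degrees $0,1,\dots,r-1$, with the $i$-th term being the quasi-coherent sheaf $p^{r-i}\Omega^{i}_{?_{(r-i)n}}$. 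The short exact sequence
\[
0 \to p^{r-i}\Omega^{i}_{?_{(r-i)n}} \to \Omega^{i}_{?_{(r-i)n}} \to \Omega^{i}_{?_{r-i}} \to 0,
\]
combined with Totaro's Theorem~\ref{thm:Totaro} (and base change from $\mathbb{Z}$ to $W_{m}$), shows that $H^{j}(\mathrm{BGL}_{W_{1}}, \Omega^{i}_{?_{m}}) = 0$ for $j \neq i$ and that the reduction $H^{i}(\mathrm{BGL}_{W_{1}}, \Omega^{i}_{?_{(r-i)n}}) \twoheadrightarrow H^{i}(\mathrm{BGL}_{W_{1}}, \Omega^{i}_{?_{r-i}})$ is surjective. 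Hence $H^{j}(\mathrm{BGL}_{W_{1}}, p^{r-i}\Omega^{i}_{?_{(r-i)n}})$ is concentrated in degree $j=i$. The hypercohomology spectral sequence then collapses onto the diagonal, so $\mathbb{H}^{j}(\mathrm{BGL}_{W_{1}}, p(r)\Omega^{\bullet}_{?n})$ can only be nonzero for $j \in \{0,2,4,\dots,2(r-1)\}$, in particular giving the required vanishing at $j = 2r-1$ and $j = 2r$.

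Second, I would identify $\bigoplus_{r<p} H^{2r}(\mathrm{BGL}_{W_{1}}, \mathbb{Z}_{?1}(r))$. Since $\mathbb{Z}_{?1}(r)$ is the ordinary motivic complex $\mathbb{Z}(r)$ on the small Nisnevich site of $\mathrm{BGL}_{W_{1}}$, this is the direct sum over $r<p$ of the motivic cohomology groups $H^{2r,r}(\mathrm{BGL}_{W_{1}},\mathbb{Z})$. By Pushin's computation of motivic Chern classes \cite{Pus04} (or the motivic analogue of Totaro's argument applied to the approximation of $\mathrm{BGL}$ by Grassmannians together with the projective bundle formula), $\bigoplus_{r \geq 0} H^{2r,r}(\mathrm{BGL}_{W_{1}}, \mathbb{Z}) \cong \mathbb{Z}[c_{1},c_{2},\dots]$ with $\deg c_{i} = 2i$; truncation to $r < p$ gives the polynomial ring in total degree $< 2p$. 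Canonicity of the isomorphism comes from functoriality of the motivic Chern classes together with the canonicity of the comparison in the first step.

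The main technical obstacle will be the book-keeping transferring Totaro's Theorem, which is stated over $\mathbb{Z}$ for the approximations $\mathrm{BGL}_{N,\mathbb{Z}}$, to the cohomology of $\mathrm{BGL}_{W_{1}}$ viewed as a simplicial sheaf on $(\mathrm{Sm}_{W_{\centerdot}})_{\mathrm{Nis}}$ with coefficients in the sheaves $p^{r-i}\Omega^{i}_{?_{(r-i)n}}$; this requires using Lemma~\ref{lem:simplicialQuasiCoherentSheafCohomology} on each level of the simplicial structure, passing to the filtered colimit over $N$, and verifying that the resulting spectral sequence is compatible with the exact triangle \eqref{eq:motivicFundamentalTriangle}. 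Once this bookkeeping is settled, the two steps assemble to give the claimed graded isomorphism.
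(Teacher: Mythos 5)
Your proposal is correct and follows essentially the same approach as the paper: reduce the statement to the known motivic cohomology of $\mathrm{BGL}_{W_1}$ via the fundamental triangle \eqref{eq:motivicFundamentalTriangle}, using Totaro's theorem to show that the relative hypercohomology of $p(r)\Omega^{\bullet}_{?n}$ is concentrated in even degrees $\leq 2(r-1)$ and hence vanishes in degrees $2r-1$ and $2r$, and then cite Pushin for the $n=1$ case. The only cosmetic difference is how you establish the vanishing $H^{j}(\mathrm{BGL}_{W_1},p^{r-i}\Omega^{i}_{?_{(r-i)n}})=0$ for $j\neq i$: you use the short exact sequence $0\to p^{r-i}\Omega^{i}_{?_{(r-i)n}}\to\Omega^{i}_{?_{(r-i)n}}\to\Omega^{i}_{?_{r-i}}\to 0$ plus surjectivity of the restriction, whereas the paper invokes the universal coefficient theorem directly (relying implicitly on the sheaf isomorphism $p^{r-i}\Omega^{i}_{?_{(r-i)n}}\cong\Omega^{i}_{?_{(r-i)(n-1)}}$); both rely on the $\mathbb{Z}$-freeness of Totaro's polynomial ring, and both work.
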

\begin{proof}
For $n=1$ this is \cite[Proposition 2]{Pus04} without restricting the degrees.
By Totaro's Theorem \ref{thm:Totaro}
%, $H^i(\mathrm{BGL}_{\mathbb{Z}},\Omega^j_{\mathrm{BGL}_{\mathbb{Z}}})=0$ for $i\neq j$ and $H^i(\mathrm{BGL}_{\mathbb{Z}},\Omega^i_{\mathrm{BGL}_{\mathbb{Z}}})$ is torsion free. By
and the universal coefficient theorem, $H^i(\mathrm{BGL}_{R},\Omega^j_{?/R})=0$ for $i\neq j$ and any ring $R$. So using Lemma \ref{lem:simplicialQuasiCoherentSheafCohomology} we get
\begin{gather*}
H^i(\mathrm{BGL}_{W_1},\Omega^j_{?/W_n})=H^i(\mathrm{BGL}_{W_n},\Omega^j_{?/W_n})=0\quad \mbox{for}\ i\neq j.
\end{gather*}
Then  by the Hodge-to-de Rham spectral sequence, we get
\begin{gather*}
H^{2r-1}(\mathrm{BGL}_{W_1},p(r)\Omega^{\bullet}_{?n})=0,\\
H^{2r}(\mathrm{BGL}_{W_1},p(r)\Omega^{\bullet}_{?n})=0.
\end{gather*}
So the conclusion follows from \eqref{eq:motivicFundamentalTriangle} and the $n=1$ case. 
\end{proof}

\begin{proposition}\label{prop:relativeDeRhamCoh-BGLtimesBGL}
For $m,n,r\in \mathbb{N}$, and integers $L>M\geq 1$, we have
\begin{subequations}
\begin{gather}
H^{2r-1}(\mathrm{BGL}_{n,W_m}\times \mathrm{BGL}_{n,W_m}, p^{r,M}_{r,L}\Omega^{\bullet}_{?{\centerdot}} )=0,\label{eq:relativeDeRhamCoh-BGLtimesBGL-times}\\
H^{2r-1}(\mathrm{BGL}_{n,W_m}\land \mathrm{BGL}_{n,W_m}, p^{r,M}_{r,L}\Omega^{\bullet}_{?{\centerdot}} )=0.\label{eq:relativeDeRhamCoh-BGLtimesBGL-land}
\end{gather}
\end{subequations}
\end{proposition}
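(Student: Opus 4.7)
The plan is to run the spectral sequence arising from the naive (``stupid'') filtration of the complex $p^{r,M}_{r,L}\Omega^{\bullet}_{?\centerdot}$ and to show its $E_1$ page is concentrated on the diagonal, so that the total cohomology can only live in even degrees. First I would dispense with the smash version \eqref{eq:relativeDeRhamCoh-BGLtimesBGL-land}: the cofiber sequence of pointed simplicial sheaves
\begin{gather*}
\mathrm{BGL}_{n,W_m}\vee \mathrm{BGL}_{n,W_m} \longrightarrow \mathrm{BGL}_{n,W_m}\times \mathrm{BGL}_{n,W_m} \longrightarrow \mathrm{BGL}_{n,W_m}\wedge \mathrm{BGL}_{n,W_m}
\end{gather*}
induces a long exact sequence in which the first two terms split via the projections, so \eqref{eq:relativeDeRhamCoh-BGLtimesBGL-land} follows from \eqref{eq:relativeDeRhamCoh-BGLtimesBGL-times} together with the one-factor analogue $H^{2r-1}(\mathrm{BGL}_{n,W_m},p^{r,M}_{r,L}\Omega^{\bullet}_{?\centerdot})=0$, which is proved by exactly the same spectral sequence argument (with Totaro's Theorem \ref{thm:Totaro} used directly, i.e.\ without Künneth).

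For \eqref{eq:relativeDeRhamCoh-BGLtimesBGL-times} I would write the  spectral sequence
\begin{gather*}
E_1^{i,j}=H^{j}\!\left(\mathrm{BGL}_{n,W_m}\times \mathrm{BGL}_{n,W_m},\ p^{(r-i)M}\Omega^i_{?(r-i)L}\right)\ \Longrightarrow\ H^{i+j}\!\left(\mathrm{BGL}_{n,W_m}\times \mathrm{BGL}_{n,W_m},\ p^{r,M}_{r,L}\Omega^{\bullet}_{?\centerdot}\right),
\end{gather*}
with $0\leq i\leq r-1$. Since $\Omega^i_{?/W}$ is locally free, hence $p$-torsion-free, multiplication by $p^{(r-i)M}$ yields an isomorphism $p^{(r-i)M}\Omega^i_{?(r-i)L}\cong \Omega^i_{?(r-i)(L-M)}$ of sheaves of abelian groups. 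Thus the key claim is that
\begin{gather*}
H^{j}\!\left(\mathrm{BGL}_{n,W_m}\times \mathrm{BGL}_{n,W_m},\ \Omega^i_{?k}\right)=0\qquad \text{for every }k\geq 1\text{ and every }j\neq i.
\end{gather*}
Once this is established, $E_1^{i,j}=0$ for $j\neq i$, so every $E_\infty$ contribution to the abutment sits in total degree $i+i=2i$; in particular the abutment vanishes in every odd degree, giving the required vanishing in degree $2r-1$.

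To establish the off-diagonal vanishing I would proceed in three sub-steps. (i) Since $\mathrm{GL}_{n,W_{m'}}^{\times k}$ is affine for each $k$ and each $m'\leq m$, the simplicial presheaf $\mathrm{BGL}_{n,W_m}\times \mathrm{BGL}_{n,W_m}$ is level-wise affine, so Lemma \ref{lem:simplicialQuasiCoherentSheafCohomology} identifies the cohomology with the total cohomology of the global-sections Moore double complex. (ii) Totaro's Theorem \ref{thm:Totaro} gives $H^p(\mathrm{BGL}_{n,\mathbb{Z}},\Omega^a)=0$ for $p\neq a$, and this group is \emph{free} abelian (a homogeneous piece of $\mathbb{Z}[c_1,\dots,c_n]$) for $p=a$. (iii) The torsion-freeness in (ii) makes both the universal coefficient theorem (to pass from $\mathbb{Z}$- to $W_{\min(m,k)}$-coefficients) and the Künneth formula (to split the two factors, reducing $\Omega^i_{X\times Y}=\bigoplus_{a+b=i}\Omega^a_X\boxtimes \Omega^b_Y$ to a sum of external products) collapse onto the diagonal: a single nonzero Hodge degree on each factor at the correct cohomological degree, with no Tor corrections.

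The main obstacle is sub-step (iii): $W_{\min(m,k)}$ is neither a field nor flat over $\mathbb{Z}$, so Künneth and universal coefficients over $W_{\min(m,k)}$ are not automatic. The rescue is precisely the torsion-freeness delivered by Totaro: working first over $\mathbb{Z}$ places all Hodge cohomology classes in free pieces concentrated in a single bidegree on each factor, and then base-change introduces no Tor terms and no off-diagonal Künneth contributions, yielding the desired vanishing.
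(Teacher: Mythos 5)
The proposal is correct and takes essentially the same route as the paper: a Hodge (``stupid'') filtration spectral sequence reducing everything to Hodge cohomology of the product, Totaro's theorem for the single-factor Hodge groups, an Eilenberg--Zilber/K\"unneth argument to handle the product, and the cofiber sequence $\vee\to\times\to\wedge$ to descend from the product to the smash. Two points of comparison.

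For the wedge/smash reduction, the paper establishes the required surjectivity $H^{2r-2}(\times)\twoheadrightarrow H^{2r-2}(\vee)$ by running the Mayer--Vietoris exact sequence attached to the pushout description of $\vee$ and chasing diagrams, whereas you observe directly that $H^j(\times)\to H^j(\vee)$ is surjective in every degree by combining pullbacks along the two projections. Both work, and your version actually renders the separately-proved ``one-factor analogue'' superfluous: once the boundary maps out of $H^*(\vee)$ vanish, $H^{2r-1}(\wedge)$ injects into $H^{2r-1}(\times)=0$. You should just be careful not to claim a geometric retraction $\times\to\vee$ (which does not exist); it is the map on cohomology with abelian coefficients that splits, which is all you need. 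Also, your stated justification for $p^{(r-i)M}\Omega^i_{?(r-i)L}\cong\Omega^i_{?(r-i)(L-M)}$ should really go via $p$-torsion-freeness of $\Omega^i_{X/W}$ over the \emph{full} Witt ring and then reduce modulo $p^{(r-i)L}$, since $\Omega^i_{X_{(r-i)L}/W_{(r-i)L}}$ itself has $p$-torsion; the conclusion is correct but the phrasing conflates the two sheaves.

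On the K\"unneth collapse, the paper invokes Eilenberg--Zilber to express the relevant cohomology via a tensor product of Moore complexes over $W_m$ and then cites Totaro, leaving the absence of $\mathrm{Tor}$ corrections implicit. You make explicit that this collapse needs an argument over a truncated Witt ring, and that it comes from $\mathbb{Z}$-freeness and one-degree concentration of the Hodge cohomology of $\mathrm{BGL}_{n,\mathbb{Z}}$, which base-changes cleanly. This is a genuine and welcome sharpening of the write-up, though not a different method.
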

\begin{proof}
There is a cofiber sequence of simplicial sheaves
\begin{gather*}
\mathrm{BGL}_{n,W_m}\lor \mathrm{BGL}_{n,W_m} \rightarrow \mathrm{BGL}_{n,W_m}\times  \mathrm{BGL}_{n,W_m} \rightarrow \mathrm{BGL}_{n,W_m}\land \mathrm{BGL}_{n,W_m}.
\end{gather*}
Thus (\cite[Cor. 6.4.2(b)]{Hov99}) there is an exact sequence
\begin{multline*}
\dots \rightarrow H^{2r-2}(\mathrm{BGL}_{n,W_m}\times \mathrm{BGL}_{n,W_m},p^{r,M}_{r,L}\Omega^{\bullet}_{?{\centerdot}}) \rightarrow
 H^{2r-2}(\mathrm{BGL}_{n,W_m}\lor \mathrm{BGL}_{n,W_m},p^{r,M}_{r,L}\Omega^{\bullet}_{?{\centerdot}})\\
\rightarrow H^{2r-1}(\mathrm{BGL}_{n,W_m}\land \mathrm{BGL}_{n,W_m},p^{r,M}_{r,L}\Omega^{\bullet}_{?{\centerdot}})
\rightarrow H^{2r-1}(\mathrm{BGL}_{n,W_m}\times \mathrm{BGL}_{n,W_m},p^{r,M}_{r,L}\Omega^{\bullet}_{?{\centerdot}}) 
\rightarrow \dots
\end{multline*}
The proposition follows then from the following two claims:
\begin{enumerate}[(i)]
  \item $H^{2r-2}(\mathrm{BGL}_{n,W_m}\times \mathrm{BGL}_{n,W_m},p^{r,M}_{r,L}\Omega^{\bullet}_{?{\centerdot}})$
  maps onto  $H^{2r-2}(\mathrm{BGL}_{n,W_m}\lor \mathrm{BGL}_{n,W_m},p^{r,M}_{r,L}\Omega^{\bullet}_{?{\centerdot}})$.
  \item $H^{2r-1}(\mathrm{BGL}_{n,W_m}\times \mathrm{BGL}_{n,W_m},p^{r,M}_{r,L}\Omega^{\bullet}_{?{\centerdot}})=0$.
\end{enumerate}
We first show (ii). By Lemma \ref{lem:simplicialQuasiCoherentSheafCohomology} and the Hodge-to-de Rham spectral sequence, it suffices to show $H^{2r-1-i}(\mathrm{BGL}_{n,W_m}\times \mathrm{BGL}_{n,W_m},\Omega^{i}_{?/W_m})=0$. By the Eilenberg-Zilber theorem, 
\begin{gather*}
H^{2r-1-i}(\mathrm{BGL}_{n,W_m}\times \mathrm{BGL}_{n,W_m},\Omega^{i}_{?/W_m})\\
=\bigoplus_{j+k=i}H^{2r-i-1}\big(\Gamma(\mathrm{BGL}_{n,W_m},\Omega^{j}_{?/W_m})\otimes_{W_m}\Gamma(\mathrm{BGL}_{n,W_m},\Omega^{k}_{?/W_m})\big),
\end{gather*}
which vanishes by Theorem \ref{thm:Totaro}.

Then we show (i). By Theorem \ref{thm:Totaro}, Lemma \ref{lem:simplicialQuasiCoherentSheafCohomology}, and the Hodge-to-de Rham spectral sequence, we have
\begin{gather*}
H^{2r-2}(\mathrm{BGL}_{n,W_m}\times \mathrm{BGL}_{n,W_m},p^{r,M}_{r,L}\Omega^{\bullet}_{?{\centerdot}})
=H^{r-1}(\mathrm{BGL}_{n,W_m}\times \mathrm{BGL}_{n,W_m},\Omega^{r-1}_{?/W_m}).
\end{gather*}
By the Eilenberg-Zilber theorem, 
\begin{gather}\label{eq:relativeDeRhamCoh-BGLtimesBGL-decomp-1}
H^{r-1}(\mathrm{BGL}_{n,W_m}\times \mathrm{BGL}_{n,W_m},\Omega^{r-1}_{?/W_m})\\
=\bigoplus_{j+k=r-1}
H^{r-1}\big(\Gamma(\mathrm{BGL}_{n,W_m},\Omega^{j}_{?/W_m})\otimes_{W_m}\Gamma(\mathrm{BGL}_{n,W_m},\Omega^{k}_{?/W_m})\big).\nn
\end{gather}
On the other hand, applying \cite[Cor. 6.4.2(b)]{Hov99} to the pushout square
\begin{gather*}
\xymatrix{
  \ast \ar[r] \ar[d] &  \mathrm{BGL}_{n,W_m} \ar[d] \\
  \mathrm{BGL}_{n,W_m} \ar[r] & \mathrm{BGL}_{n,W_m}\lor \mathrm{BGL}_{n,W_m}
}
\end{gather*}
yields a homomorphism between two long exact sequences
\begin{gather*}
\scalebox{0.75}{
\xymatrix@C=0.5pc{
\dots \ar[r] & H^i(\mathrm{BGL}_{n,W_m}/\ast,p^{r,M}_{r,L}\Omega^{\bullet}_{?{\centerdot}}) \ar[r] \ar@{=}[d] &
 H^i(\mathrm{BGL}_{n,W_m}\lor \mathrm{BGL}_{n,W_m},p^{r,M}_{r,L}\Omega^{\bullet}_{?{\centerdot}}) \ar[r] \ar[d] & H^i(\mathrm{BGL}_{n,W_m}, p^{r,M}_{r,L}\Omega^{\bullet}_{?{\centerdot}}) \ar[r] \ar[d] & H^{i+1}(\mathrm{BGL}_{n,W_m}/\ast,p^{r,M}_{r,L}\Omega^{\bullet}_{?{\centerdot}}) \ar[r] \ar@{=}[d] & \dots \\
\dots \ar[r] & H^i(\mathrm{BGL}_{n,W_m}/\ast,p^{r,M}_{r,L}\Omega^{\bullet}_{?{\centerdot}}) \ar[r]  & H^i(\mathrm{BGL}_{n,W_m},p^{r,M}_{r,L}\Omega^{\bullet}_{?{\centerdot}}) \ar[r]  & H^i(\ast, p^{r,M}_{r,L}\Omega^{\bullet}_{?{\centerdot}}) \ar[r]  & H^{i+1}(\mathrm{BGL}_{n,W_m}/\ast,p^{r,M}_{r,L}\Omega^{\bullet}_{?{\centerdot}}) \ar[r]  & \dots 
 } }
\end{gather*}
which by a diagram chasing implies  an exact sequence
\begin{multline*}
\dots \rightarrow H^{2r-3}(\ast,p^{r,M}_{r,L}\Omega^{\bullet}_{?{\centerdot}}) \rightarrow
 H^{2r-2}(\mathrm{BGL}_{n,W_m}\lor \mathrm{BGL}_{n,W_m},p^{r,M}_{r,L}\Omega^{\bullet}_{?{\centerdot}})\\
\rightarrow H^{2r-2}(\mathrm{BGL}_{n,W_m},p^{r,M}_{r,L}\Omega^{\bullet}_{?{\centerdot}})^{\oplus 2}
\rightarrow H^{2r-2}(\ast,p^{r,M}_{r,L}\Omega^{\bullet}_{?{\centerdot}}) 
\rightarrow \dots
\end{multline*}
If $r>1$, then 
\begin{gather}\label{eq:relativeDeRhamCoh-BGLtimesBGL-decomp-2}
H^{2r-2}(\mathrm{BGL}_{n,W_m}\lor \mathrm{BGL}_{n,W_m},p^{r,M}_{r,L}\Omega^{\bullet}_{?{\centerdot}})=H^{2r-2}(\mathrm{BGL}_{n,W_m},p^{r,M}_{r,L}\Omega^{\bullet}_{?{\centerdot}})^{\oplus 2}
\end{gather}
 and the summands with $(j,k)=(r-1,0)$ and $(0,r-1)$ in \eqref{eq:relativeDeRhamCoh-BGLtimesBGL-decomp-1} are sent to the two summands of \eqref{eq:relativeDeRhamCoh-BGLtimesBGL-decomp-2}. If $r=1$ then 
 $H^{0}(\mathrm{BGL}_{n,W_m}\lor \mathrm{BGL}_{n,W_m},p^{r,M}_{r,L}\Omega^{\bullet}_{?{\centerdot}})=H^{0}(\mathrm{BGL}_{n,W_m},p^{r,M}_{r,L}\Omega^{\bullet}_{?{\centerdot}})$ and \eqref{eq:relativeDeRhamCoh-BGLtimesBGL-decomp-1} maps isomorphically onto it. Hence the proof is complete.
\end{proof}

By \cite[Theorem 5.9]{Jar15}, on a site $\mathcal{C}$, the category $s \mathbf{Shv}(\mathcal{C})$ has an \emph{injective model structure} where the weak equivalences are the local weak equivalences, the cofibrations are the monomorphisms of sheaves, and the fibrations, called \emph{injective fibrations}, are the maps which have the right lifting property with respect to all trivial cofibrations. Moreover, this model category is proper.

\begin{proposition}\label{prop:infinitesimalChernClass}
Let $\mathbb{Z}[c_1,\dots]$ be the graded ring of formal variables $c_i$ with $\deg(c_i)=i$.
Let $P(\mathbf{c})\in \mathbb{Z}[c_1,\dots]$ be a polynomial of degree $r<p$.
\begin{enumerate}[(i)]
  \item For $i\geq 0$, there exists a canonical Chern class map
  \begin{gather}\label{eq:infinitesimalChernClass-absolute}
  P(\mathbf{c}): K_i(X_n)\rightarrow H^{2r-i}\big(X_1,\mathbb{Z}_{X_n}(r)\big)
  \end{gather}
  for  $X_{\centerdot}\in \mathrm{Sm}_{W_{\centerdot}}$, which is compatible for $n\geq 1$; 
  in particular, these Chern class maps are functorial in $X_{\centerdot}\in \mathrm{Sm}_{W_{\centerdot}}$. 
  Here the compatibility means that for $n\geq n'$, the diagram
  \begin{gather*}
  \xymatrix{
     K_i(X_n)\ar[r]^<<<<{P(\mathbf{c})} \ar[d] & H^{2r-i}\big(X_1,\mathbb{Z}_{X_n}(r)\big)  \ar[d] \\
     K_i(X_{n'})\ar[r]^<<<<{P(\mathbf{c})}  & H^{2r-i}\big(X_1,\mathbb{Z}_{X_{n'}}(r)\big)  
  }
  \end{gather*}
  commutes.
  \item For  $i\geq 0$, there exists
  %a   functorial, but not necessarily canonical,  
  a canonical relative Chern class map
  \begin{gather}\label{eq:infinitesimalChernClass-rel}
  P(\mathbf{c}): K_i(X_n,X_m)\rightarrow H^{2r-i}\big(X_1,p^{r,m}_{r,n}\Omega_{X_{\centerdot}}^{\bullet}\big)
  \end{gather}
  for $X_{\centerdot}\in \mathrm{Sm}_{W_{\centerdot}}$,   which is compatible with $n\geq m\geq 1$; 
  in particular, these Chern class maps are functorial in $X_{\centerdot}\in \mathrm{Sm}_{W_{\centerdot}}$. 
  Here the compatibility means that for $n,m,n',m'\in \mathbb{N}$ satisfying
  \begin{gather*}
  \xymatrix@C=0.5pc@R=0.7pc{
    n \ar@{}[r]|-{\geq} \ar@{}[d]|-{\rotatebox{-90}{$\geq$}} & m \ar@{}[d]|-{\rotatebox{-90}{$\geq$}} \\
    n' \ar@{}[r]|-{\geq} & m' 
  }
  \end{gather*}
  the diagram
  \begin{gather*}
  \xymatrix{
    K_i(X_n,X_m)\ar[r]^<<<<{P(\mathbf{c})} \ar[d] & H^{2r-i}\big(X_1,p^{r,m}_{r,n}\Omega_{X_{\centerdot}}^{\bullet}\big) \ar[d] \\
    K_i(X_{n'},X_{m'})\ar[r]^<<<<{P(\mathbf{c})}  & H^{2r-i}\big(X_1,p^{r,m'}_{r,n'}\Omega_{X_{\centerdot}}^{\bullet}\big) 
  }
  \end{gather*}
  commutes.
\end{enumerate}
\end{proposition}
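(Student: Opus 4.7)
The plan follows Gillet \cite{Gil81} and Pushin \cite{Pus04}, adapted to the infinitesimal motivic complexes and carried out in the injective model structure of \cite[Theorem 5.9]{Jar15} on $s\mathbf{Pre}((\mathrm{Sm}_{W_{\centerdot}})_{\mathrm{Nis}})$. By Proposition~\ref{prop:truncatedMotCoh-BGL}, a degree-$r$ polynomial $P(\mathbf{c})$ corresponds to a cohomology class, i.e., a homotopy class
\[
[\phi^{P,n}]\colon \mathrm{BGL}_{W_1} \longrightarrow \mathrm{K}\bigl(\mathbb{Z}_{?n}(r),\,2r\bigr),
\]
and the Hodge-to-de~Rham argument used there also yields $H^{2r-1}(\mathrm{BGL}_{W_1}, \mathbb{Z}_{?n}(r))=0$. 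This vanishing is the obstruction-theoretic input that lets one choose honest representatives $\phi^{P,n}$ (into fibrant replacements) fitting into strictly commutative squares with the reductions $\mathrm{K}(\mathbb{Z}_{?n}(r),2r)\to \mathrm{K}(\mathbb{Z}_{?n-1}(r),2r)$; the construction proceeds by induction on $n$, lifting uniquely at each stage because the space of lifts is a torsor under a group that vanishes by the above.

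\textbf{Construction of (i).} Let $\mathbf{K}_n$ denote a simplicial-presheaf model on $(\mathrm{Sm}_{W_{\centerdot}})_{\mathrm{Nis}}$ with $\pi_i \mathbf{K}_n(X_{\centerdot}) = K_i(X_n)$, built from $\mathrm{BGL}$ on $W_n$-schemes and Quillen's plus-construction (the Nisnevich sites of $X_n$ and $X_1$ agree, so this descends to a presheaf over $(\mathrm{Sm}_{W_{\centerdot}})_{\mathrm{Nis}}$). Since $\mathrm{K}(\mathbb{Z}_{?n}(r),2r)$ is an infinite-loop object, the chosen $\phi^{P,n}$ extends through $\mathrm{BGL}^+$ to a morphism $\mathbf{K}_n \to \mathrm{K}(\mathbb{Z}_{?n}(r),2r)$. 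Evaluating $[S^i \wedge X_{\centerdot,+},-] = \pi_i(-(X_{\centerdot}))$ produces the Chern class map \eqref{eq:infinitesimalChernClass-absolute}. Functoriality in $X_{\centerdot}$ is built into the construction on the big site, and compatibility in $n$ follows from the strict compatibility of the $\phi^{P,n}$ arranged above.

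\textbf{Construction of (ii).} For the relative case, use the fiber sequence of $K$-theory presheaves $\mathbf{K}(?_n,?_m) \to \mathbf{K}_n \to \mathbf{K}_m$ together with the fiber sequence induced by the exact triangle \eqref{eq:motivicFundamentalTriangle},
\[
\mathrm{K}\bigl(p^{r,m}_{r,n}\Omega^{\bullet}_{?_{\centerdot}},\,2r-1\bigr) \longrightarrow \mathrm{K}\bigl(\mathbb{Z}_{?n}(r),\,2r\bigr) \longrightarrow \mathrm{K}\bigl(\mathbb{Z}_{?m}(r),\,2r\bigr).
\]
The strict commutativity of the square formed by $\phi^{P,n}$ and $\phi^{P,m}$ (guaranteed in the Setup) induces a map on homotopy fibers, yielding
\[
\mathbf{K}(?_n,?_m) \longrightarrow \mathrm{K}\bigl(p^{r,m}_{r,n}\Omega^{\bullet}_{?_{\centerdot}},\,2r-1\bigr),
\]
whose evaluation on $\pi_i$ is the desired relative Chern class \eqref{eq:infinitesimalChernClass-rel}. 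Compatibility across $(n,m)\geq (n',m')$ is produced by the same strict-commutativity argument applied to the full tower of reductions.

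\textbf{Main obstacle.} The core technical point is the Setup: realising the system $\{\phi^{P,n}\}_n$ by honest morphisms in $s\mathbf{Pre}$ that strictly commute with the reductions, not merely up to homotopy, so that homotopy fibers can be formed coherently in part~(ii). The vanishing $H^{2r-1}(\mathrm{BGL}_{W_1}, \mathbb{Z}_{?n}(r))=0$ obtained along the proof of Proposition~\ref{prop:truncatedMotCoh-BGL} is precisely what makes the inductive lifting go through. A secondary but less serious subtlety is the passage from $\mathrm{BGL}$ through the plus-construction to the full $K$-theory presheaf, which is routine given that the target is infinite-loop and abelian.
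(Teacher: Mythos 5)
Your overall route — cohomology computation on $\mathrm{BGL}$ (Proposition \ref{prop:truncatedMotCoh-BGL}), injective model structure, a tower of fibrant replacements of $\mathrm{K}(\mathbb{Z}_{?n}(r),2r)$ with strictly commuting square lifts, then Gillet's machinery through the plus/Bousfield–Kan construction and Nisnevich descent, and homotopy fibers for the relative case — is the same as the paper's. However, the central technical input you invoke is wrong.

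The claim
$H^{2r-1}\bigl(\mathrm{BGL}_{W_1},\mathbb{Z}_{?n}(r)\bigr)=0$
is false, and it is not what the proof of Proposition \ref{prop:truncatedMotCoh-BGL} establishes. That proof shows $H^{2r-1}\bigl(\mathrm{BGL}_{W_1},p(r)\Omega^{\bullet}_{?n}\bigr)=0$ and $H^{2r}\bigl(\mathrm{BGL}_{W_1},p(r)\Omega^{\bullet}_{?n}\bigr)=0$ by Totaro plus Hodge-to-de Rham; it says nothing about degree $2r-1$ of $\mathbb{Z}_{?n}(r)$. In fact, in the long exact sequence of the exact triangle \eqref{eq:motivicFundamentalTriangle},
\begin{gather*}
H^{2r-2}\bigl(\mathrm{BGL}_{W_1},\mathbb{Z}_{?1}(r)\bigr)\rightarrow H^{2r-2}\bigl(\mathrm{BGL}_{W_1},p(r)\Omega^{\bullet}_{?n}\bigr)\rightarrow H^{2r-1}\bigl(\mathrm{BGL}_{W_1},\mathbb{Z}_{?n}(r)\bigr)\rightarrow H^{2r-1}\bigl(\mathrm{BGL}_{W_1},\mathbb{Z}_{?1}(r)\bigr),
\end{gather*}
the leftmost group vanishes (there are no motivic classes on $\mathrm{BGL}$ of bidegree $(2r-2,r)$), while the Hodge-to-de Rham degeneration gives $H^{2r-2}\bigl(\mathrm{BGL}_{W_1},p(r)\Omega^{\bullet}_{?n}\bigr)\cong H^{r-1}\bigl(\mathrm{BGL}_{W_1},\Omega^{r-1}\bigr)\neq 0$ by Theorem \ref{thm:Totaro} whenever $r\geq 2$. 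So $H^{2r-1}\bigl(\mathrm{BGL}_{W_1},\mathbb{Z}_{?n}(r)\bigr)$ contains a nonzero copy of this group, and your "unique lifting" argument, as stated, collapses.

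The error is conceptual as well as numerical: when lifting $\mathrm{BGL}\to\Gamma_{n-1}(r)$ against the fibration $\Gamma_n(r)\to\Gamma_{n-1}(r)$, the homotopy classes of lifts form a torsor under $\bigl[\mathrm{BGL},\operatorname{hofib}(\Gamma_n(r)\to\Gamma_{n-1}(r))\bigr]$, i.e., under cohomology of the \emph{fiber} $\mathrm{K}\bigl(p^{r,n-1}_{r,n}\Omega^{\bullet}_{?\centerdot},2r-1\bigr)$, not of the total space $\mathrm{K}(\mathbb{Z}_{?n}(r),2r)$. The group you should be vanishing is thus $H^{2r-1}\bigl(\mathrm{BGL},p^{r,n-1}_{r,n}\Omega^{\bullet}_{?\centerdot}\bigr)$, which the paper does prove is zero (that is exactly the vanishing used in the canonicality argument of part (ii)). If you repair the statement accordingly, the torsor argument becomes correct and is in fact equivalent to the paper's. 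Note also that for mere \emph{existence} of strict lifts one needs no vanishing at all — the injective model structure (all objects cofibrant, the $\Gamma_n(r)\to\Gamma_{n-1}(r)$ fibrations) already gives a strictly commuting tower; the vanishing enters only when proving independence of choices, and the paper's version of that argument also involves simplicial cylinders and the left properness of the model structure to compare different fibrant-replacement towers, which your sketch omits.
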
\label{prop:infinitesimalChernCharacter}

\begin{remark}\label{rem:functoriality-vs-canonicality}
We would like to remind the reader of the difference between functoriality and canonicality. The former has a general mathematical definition. The latter is in general somewhat aesthetic, and in Proposition \ref{prop:infinitesimalChernClass} it means the independence on certain universal choices of objects and maps relating to the classifying spaces, more precisely, the statements $(1^{\circ})$ and $(2^{\circ})$ in the following proof.
\end{remark}

\begin{proof}
By Proposition \ref{prop:truncatedMotCoh-BGL}, the canonical maps
\begin{gather}\label{eq:infinitesimalChernCharacter-0}
\bigoplus_{r<p}H^{2r}\big(\mathrm{BGL}_{W_n},\mathbb{Z}_{?n}(r)\big) \rightarrow \bigoplus_{r<p}H^{2r}\big(\mathrm{BGL}_{W_{n-1}},\mathbb{Z}_{?n-1}(r)\big)
\end{gather}
are isomorphisms and send $P(\mathbf{c})$ to $P(\mathbf{c})$. In other words, in the homotopy category $\mathrm{Ho}\big(s\mathbf{Shv}(\mathrm{Sm}_{W_{\centerdot}})_{\mathrm{Nis}}\big)$, there is a commutative diagram
\begin{gather}\label{diag:infinitesimalChernClass-1}
\xymatrix{
  \dots \ar[r] & \mathrm{BGL}_{W_n} \ar[r] \ar[d]_{P(\mathbf{c})} & \mathrm{BGL}_{W_{n-1}} \ar[r] \ar[d]_{P(\mathbf{c})} & \dots \ar[r] & \mathrm{BGL}_{W_1} \ar[d]_{P(\mathbf{c})} \\
  \dots \ar[r] & \mathrm{K}\big(\mathbb{Z}_{?n}(r),2r\big) \ar[r] & \mathrm{K}\big(\mathbb{Z}_{?n-1}(r),2r\big) \ar[r] & \dots \ar[r] & \mathrm{K}\big(\mathbb{Z}_{?1}(r),2r\big)
}
\end{gather}
where the horizontal maps are maps in $s\mathbf{Shv}(\mathrm{Sm}_{W_{\centerdot}})_{\mathrm{Nis}}$. 
In the injective model category $s\mathbf{Shv}(\mathrm{Sm}_{W_{\centerdot}})_{\mathrm{Nis}}$, by the factorization property of model categories, there is a commutative diagram
\begin{gather}\label{diag:infinitesimalChernClass-2}
\xymatrix{
  \dots \ar[r] & \mathrm{K}\big(\mathbb{Z}_{?n}(r),2r\big) \ar[r] \ar[d] & \mathrm{K}\big(\mathbb{Z}_{?{n-1}}(r),2r\big) \ar[r] \ar[d] & \dots \ar[r] & \mathrm{K}\big(\mathbb{Z}_{?1}(r),2r\big) \ar[d] \\
  \dots \ar[r] & \Gamma_{n}(r) \ar[r] & \Gamma_{n-1}(r) \ar[r] & \dots \ar[r] & \Gamma_{1}(r)
  }
\end{gather}
such that the vertical maps are local weak equivalences, $\Gamma_{1}(r)$ is fibrant, and the maps $\Gamma_{n}(r) \rightarrow \Gamma_{n-1}(r)$ are fibrations for all $n\geq 2$.
Since all simplicial sheaves are cofibrant in the injective model category, there is a commutative diagram
\begin{gather}\label{diag:infinitesimalChernClass-3}
\xymatrix{
  \dots \ar[r] & \mathrm{BGL}_{W_n} \ar[r] \ar[d]_{P(\mathbf{c})} & \mathrm{BGL}_{W_{n-1}} \ar[r] \ar[d]_{P(\mathbf{c})} & \dots \ar[r] & \mathrm{BGL}_{W_1} \ar[d]_{P(\mathbf{c})} \\
  \dots \ar[r] & \Gamma_{n}(r) \ar[r] & \Gamma_{n-1}(r) \ar[r] & \dots \ar[r] & \Gamma_{1}(r)
}
\end{gather}
in $s\mathbf{Shv}(\mathrm{Sm}_{W_{\centerdot}})_{\mathrm{Nis}}$, which thus lifts the  diagram  \eqref{diag:infinitesimalChernClass-1} in the homotopy category because of the isomorphisms \eqref{eq:infinitesimalChernCharacter-0}.

Now we apply Gillet's construction of Chern classes (\cite[\S 2]{Gil81}). For any $X_{\centerdot}\in \mathrm{Sm}_{W_{\centerdot}}$ and  natural numbers $k,m,n,l$,  we have canonical maps
\begin{gather*}
\operatorname{Hom}_{(\mathrm{Sm}_{W_{\centerdot}})_{\mathrm{Nis}}}\big(B_k \mathrm{GL}_{m,W_n},\Gamma_n(r)_l\big)
\rightarrow \operatorname{Hom}_{(\mathrm{Sm}_{W_{\centerdot}})_{\mathrm{Nis}}}\big(\mathcal{H}om(X_{\centerdot},B_k \mathrm{GL}_{m,W_n}),\mathcal{H}om(X_{\centerdot},\Gamma_n(r)_l)\big)\\
\cong \operatorname{Hom}_{(X_{\centerdot})_{\mathrm{NIS}}}\big(B_k \mathrm{GL}_m(\mathcal{O}_{X_n}), \Gamma_n(r)_l|_{(X_{\centerdot})_{\mathrm{NIS}}}\big) \rightarrow
\operatorname{Hom}_{(X_{\centerdot})_{\mathrm{Nis}}}\big(B_k \mathrm{GL}_m(\mathcal{O}_{X_n}), \Gamma_n(r)_l|_{(X_{\centerdot})_{\mathrm{Nis}}}\big),
\end{gather*}
where $(X_{\centerdot})_{\mathrm{NIS}}$ is the big Nisnevich site of $X_{\centerdot}$ and $(X_{\centerdot})_{\mathrm{Nis}}$ is the small one. Since $(X_{\centerdot})_{\mathrm{Nis}}$ is equivalent to $(X_{n})_{\mathrm{Nis}}$, $\Gamma_n(r)_l|_{(X_{\centerdot})_{\mathrm{Nis}}}$ corresponds to a Nisnevich sheaf on $(X_{n})_{\mathrm{Nis}}$, denoted by $\Gamma_n(r)_l|_{(X_{n})_{\mathrm{Nis}}}$. Moreover, \eqref{diag:infinitesimalChernClass-2} induces a local weak equivalence  $\mathrm{K}\big(\mathbb{Z}_{X_n}(r),2r\big) \simeq \Gamma_n(r)|_{(X_{n})_{\mathrm{Nis}}}$. Then the element $P(\mathbf{c})\in \operatorname{Hom}_{(\mathrm{Sm}_{W_{\centerdot}})_{\mathrm{Nis}}}\big(\mathrm{BGL}_{m,W_n},\Gamma_n(r)\big)$ induces an element in  $\operatorname{Hom}_{(X_{\centerdot})_{\mathrm{Nis}}}\big(\mathrm{BGL}_m(\mathcal{O}_{X_n}), \Gamma_n(r)|_{(X_{\centerdot})_{\mathrm{Nis}}}\big)$, hence an element $P(\mathbf{c})_{X_n}$ in 
\begin{gather*}
[\mathbb{Z}_{\infty}\mathrm{BGL}_m(\mathcal{O}_{X_n}), \mathbb{Z}_{\infty}\Gamma_n(r)|_{(X_{\centerdot})_{\mathrm{Nis}}}]\cong [\mathbb{Z}_{\infty}\mathrm{BGL}_m(\mathcal{O}_{X_n}), \Gamma_n(r)|_{(X_{\centerdot})_{\mathrm{Nis}}}]\\
\cong \big[\mathbb{Z}_{\infty}\mathrm{BGL}_m(\mathcal{O}_{X_n}), \mathrm{K}\big(\mathbb{Z}_{X_n}(r),2r\big)\big],
\end{gather*}
where $[\ ,\ ]$ are taken in $\mathrm{Ho}(s \mathbf{Shv}(X_{\centerdot})_{\mathrm{Nis}})$. There is an equivalence of Nisnevich sheaves $\mathcal{K}_{X_n}\simeq \mathbb{Z}\times \mathbb{Z}_{\infty}\mathrm{BGL}(\mathcal{O}_{X_n})$ (see \cite[Proposition 2.15]{Gil81} and \cite[Page 702]{BEK14}). By the Nisnevich descent of $K$-theory (\cite[\S 10]{ThT90}), $P(\mathbf{c})_{X_n}$ induces the wanted Chern class maps \eqref{eq:infinitesimalChernClass-absolute}. 

The compatibility of the Chern class maps follows from the commutativity of the diagram \eqref{diag:infinitesimalChernClass-3}. The Chern class maps are canonical in the following sense:
\begin{enumerate}
  \item[$(1^{\circ})$] they are independent of the choices of the vertical arrows $P(\mathbf{c})$ in \eqref{diag:infinitesimalChernClass-3}, and
  \item[$(2^{\circ})$] they are independent of the choices of the $\Gamma_n(r)$'s in  
  \eqref{diag:infinitesimalChernClass-2}.
\end{enumerate}
For $(1^{\circ})$, we note that any two choices of a vertical arrow $P(\mathbf{c})$ in \eqref{diag:infinitesimalChernClass-3} are simplicially homotopic to each other in the injective model category $s\mathbf{Shv}(\mathrm{Sm}_{W_{\centerdot}})_{\mathrm{Nis}}$:
\begin{gather}\label{diag:infinitesimalChernClass-2cell}
\xymatrix{
\mathrm{BGL}_{W_n}\rtwocell^{P(\mathbf{c})}_{P(\mathbf{c})'} & \Gamma_n(r)\quad .
}
\end{gather}
Then the induced maps are homotopic by induced homotopies. For $(2^{\circ})$, we observe that, by the left properness of the injective model structure of simplicial sheaves, two choices of the factorization
\begin{gather*}
\mathrm{K}\big(\mathbb{Z}_{?{n-1}}(r),2r\big) \rightarrow \Gamma_n(r) \rightarrow \Gamma_{n-1}(r)\\
(\text{resp.}\
\mathrm{K}\big(\mathbb{Z}_{?{n-1}}(r),2r\big) \rightarrow \Gamma_n(r)' \rightarrow \Gamma_{n-1}(r)\quad )
\end{gather*}
where the first arrow is a trivial cofibration and the second a fibration, 
factor through a common one:
\begin{gather*}
\xymatrix{
  \mathrm{K}\big(\mathbb{Z}_{?{n-1}}(r),2r\big) \ar[r] \ar[d] & \Gamma_n(r) \ar[d] \ar@/^1pc/[ddr] & \\
  \Gamma_n(r)' \ar[r] \ar@/_1pc/[rrd] & \Gamma_n(r)'' \ar@{-->}[dr] & \\
  &&  \Gamma_{n-1}(r),
}
\end{gather*}
where $\Gamma_n(r)''$ is a pushout of the square. Applying the above argument to $\Gamma_n(r)''$ we get the homotopical independence of the relative Chern classes on the choice of $\Gamma_n(r)$.
 This completes the proof of (i). Now we show (ii).
The sub-diagram 
\begin{gather*}
\xymatrix{
   \mathrm{BGL}_{W_n} \ar[r] \ar[d]_{P(\mathbf{c})} & \mathrm{BGL}_{W_{m}} \ar[d]^{P(\mathbf{c})} \\
  \Gamma_n(r) \ar[r] & \Gamma_m(r)
}
\end{gather*}
of \eqref{diag:infinitesimalChernClass-3} induces a commutative diagram
\begin{gather*}
\xymatrix{
  & \mathrm{BGL}(\mathcal{O}_{X_n}) \ar[r] \ar[d]_{P(\mathbf{c})} & \mathrm{BGL}(\mathcal{O}_{X_m}) \ar[d]^{P(\mathbf{c})} & \\
  & \Gamma_n(r)|_{(X_{\centerdot})_{\mathrm{Nis}}} \ar[r] & \Gamma_m(r)|_{(X_{\centerdot})_{\mathrm{Nis}}} &. 
}
\end{gather*}
Applying $\mathbb{Z}_{\infty}$ to this diagram yields a map on homotopy fibers, and thus the relative Chern class maps \eqref{eq:infinitesimalChernClass-rel}. The compatibility\footnote{The functoriality of the relative Chern class maps also follows directly from the commutativity of \eqref{diag:infinitesimalChernClass-3}, while the canonicality does not. This is an illustration of Remark \ref{rem:functoriality-vs-canonicality}.}
 follows from the commutativity of \eqref{diag:infinitesimalChernClass-3}. 
We are left to show the canonicality of the relative Chern class maps in the sense of $(1^{\circ})$ and $(2^{\circ})$ as above.
%\footnote{This is stronger than the mere functoriality.}. 

For the sake of clarity, from now on in this proof we denote a map $P(\mathbf{c}):\mathrm{BGL}_{W_n} \rightarrow \Gamma_n(r)$ by $P(\mathbf{c})_n$.
For $(1^{\circ})$, suppose given two choices of the map $P(\mathbf{c})_n:\mathrm{BGL}_{W_n} \rightarrow \Gamma_n(r)$, and two choices of the map $P(\mathbf{c})_m:\mathrm{BGL}_{W_{m}} \rightarrow \Gamma_{m}(r)$, such that both diagrams
\begin{gather*}
\xymatrix{
   \mathrm{BGL}_{W_n} \ar[r] \ar[d]_{P(\mathbf{c})_n} & \mathrm{BGL}_{W_{m}} \ar[d]^{P(\mathbf{c})_m} \\
  \Gamma_n(r) \ar[r] & \Gamma_m(r)
}\quad\mbox{and}\quad
\xymatrix{
   \mathrm{BGL}_{W_n} \ar[r] \ar[d]_{P(\mathbf{c})'_n} & \mathrm{BGL}_{W_{m}} \ar[d]^{P(\mathbf{c})'_m} \\
  \Gamma_n(r) \ar[r] & \Gamma_m(r)
}
\end{gather*}
are commutative. Let $\text{“}P(\mathbf{c})_m\Rightarrow P(\mathbf{c})'_m "$ be  a homotopy  in \eqref{diag:infinitesimalChernClass-2cell} (replacing $n$ with $m$). Since $\Gamma_n(r)^{\Delta^1} \rightarrow \Gamma_m(r)^{\Delta^1}$ is a fibration, there exists a map $f$ making the diagram
\begin{gather*}
\xymatrix{
   \mathrm{BGL}_{W_n} \ar[r] \ar@{-->}[d]_{f} & \mathrm{BGL}_{W_{m}} \ar[d]^{\text{“}P(\mathbf{c})_m\Rightarrow P(\mathbf{c})'_m "} \\
  \Gamma_n(r)^{\Delta^1} \ar[r] & \Gamma_m(r)^{\Delta^1}
}
\end{gather*}
commutative. Both $P(\mathbf{c})_n$ and $f|_{ \mathrm{BGL}_{W_n}\times\{0\}}$ lies in the fiber of 
\begin{gather*}
\mathrm{Hom}\big(\mathrm{BGL}_{W_n},\Gamma_n(r)\big) \rightarrow \mathrm{Hom}\big(\mathrm{BGL}_{W_n},\Gamma_m(r)\big)
\end{gather*}
over the composition map $\mathrm{BGL}_{W_n} \rightarrow \mathrm{BGL}_{W_m} \xrightarrow{P(\mathbf{c})_m}\Gamma_m(r)$.
Since 
\begin{gather*}
\big[\mathrm{BGL}_{W_n},\operatorname{hofib}\big(\Gamma_n(r) \rightarrow \Gamma_m(r)\big)\big]=
H^{2r-1}(\mathrm{BGL}_{W_n}, p^{r,m}_{r,n}\Omega^{\bullet}_{?{\centerdot}} )=0
\end{gather*}
(the latter equality follows from Theorem \ref{thm:Totaro}, see the proof of Proposition \ref{prop:truncatedMotCoh-BGL}),
there exists a map $\mathrm{BGL}_{W_n}\times \Delta^1 \xrightarrow{g} \Gamma_n(r)$ connecting $P(\mathbf{c})_n$ and 
$f|_{ \mathrm{BGL}_{W_n}\times\{0\}}$ such that the composition
\begin{gather*}
\mathrm{BGL}_{W_n}\times \Delta^1 \xrightarrow{g} \Gamma_n(r) \rightarrow \Gamma_m(r)
\end{gather*}
is the constant map $\mathrm{BGL}_{W_n} \rightarrow \mathrm{BGL}_{W_m} \xrightarrow{P(\mathbf{c})_m}\Gamma_m(r)$. 
The same argument gives a map $\mathrm{BGL}_{W_n}\times \Delta^1 \xrightarrow{h} \Gamma_n(r)$ connecting $P(\mathbf{c})'_n$ and $f|_{ \mathrm{BGL}_{W_n}\times\{1\}}$ such that the composition
\begin{gather*}
\mathrm{BGL}_{W_n}\times \Delta^1 \xrightarrow{h} \Gamma_n(r) \rightarrow \Gamma_m(r)
\end{gather*}
is the constant map $\mathrm{BGL}_{W_n} \rightarrow \mathrm{BGL}_{W_m} \xrightarrow{P(\mathbf{c})'_m}\Gamma_m(r)$. Then since $\Gamma_n(r)$ is fibrant, there exists a homotopy $\text{“}P(\mathbf{c})_n\Rightarrow P(\mathbf{c})'_n "$ making the diagram
\begin{gather*}
\xymatrix{
   \mathrm{BGL}_{W_n} \ar[r] \ar[d]_{\text{“}P(\mathbf{c})_n\Rightarrow P(\mathbf{c})'_n "} & \mathrm{BGL}_{W_{m}} \ar[d]^{\text{“}P(\mathbf{c})_m\Rightarrow P(\mathbf{c})'_m "} \\
  \Gamma_n(r)^{\Delta^1} \ar[r] & \Gamma_m(r)^{\Delta^1}
}
\end{gather*}
commutative. It follows that  the  map between the homotopy fibers 
\begin{gather*}
\xymatrix{
 \operatorname{hofib}(\mathbb{Z}_{\infty}\mathrm{BGL}_{W_n} \rightarrow \mathbb{Z}_{\infty}\mathrm{BGL}_{W_{m}}) \ar[r] \ar@{-->}[d] &  \mathbb{Z}_{\infty}\mathrm{BGL}_{W_n} \ar[r] \ar[d]_{\mathbb{Z}_{\infty}P(\mathbf{c})_n}& \mathbb{Z}_{\infty}\mathrm{BGL}_{W_{m}} \ar[d]_{\mathbb{Z}_{\infty}P(\mathbf{c})_m} \\
\operatorname{hofib}(\mathbb{Z}_{\infty}\Gamma_n(r) \rightarrow \mathbb{Z}_{\infty}\Gamma_m(r)) \ar[r] &  \mathbb{Z}_{\infty}\Gamma_n(r) \ar[r] & \mathbb{Z}_{\infty}\Gamma_m(r)
}\quad
\end{gather*}
induced by $P(\mathbf{c})$ is homotopic to that induced by $P(\mathbf{c})'$. That is, the homotopy class of the relative Chern class map does not depend on the choice of $P(\mathbf{c})$'s. The proof of the independence of the choice of $\Gamma_n(r)$'s follows by the same argument as that for the absolute Chern classes.
\end{proof}

%Then by the product structure on $\mathbb{Z}_{X_n}(r)$ that we will define in \S \ref{sub:the_product_structure}, we get:
\begin{proposition}\label{prop:definition-infinitesimal-ChernCharacter}
\begin{enumerate}[(i)]
  \item For $0\leq r<p$ and $X_{\centerdot}\in \mathrm{Sm}_{W_{\centerdot}}$, there is a canonical Chern character map
  \begin{gather*}
  \mathrm{ch}_r: K_i(X_n)\rightarrow H^{2r-i}\big(X_1,\mathbb{Z}_{X_n}(r)\big)[\frac{1}{r!}]\quad.
  \end{gather*}
  Moreover, the Chern character map
  \begin{gather*}
  \mathrm{ch}=\sum_{r<p}\mathrm{ch}_r:\bigoplus_{i\geq 0}K_i(X_n)\rightarrow \bigoplus_{i\geq 0}\bigoplus_{0\leq r<p}  H^{2r-i}\big(X_1,\mathbb{Z}_{X_n}(r)\big)[\frac{1}{r!}]
    \end{gather*}
 is multiplicative.
  \item For $1\leq r\leq p-1$ and $X_{\centerdot}\in \mathrm{Sm}_{W_{\centerdot}}$, there exists  
  %a  functorial, but not canonical, 
  a canonical relative Chern character map
  \begin{gather*}
  \mathrm{ch}_r: K_i(X_n,X_m)\rightarrow H^{2r-i}\big(X_1,p^{r,m}_{r,n}\Omega_{X_{\centerdot}}^{\bullet}\big)\quad.
  \end{gather*}
  which is compatible for $n>m\geq 1$.
    Moreover, the Chern character map
\begin{gather*}
  \mathrm{ch}=\sum_{r<p}\mathrm{ch}_r:\bigoplus_{i\geq 0}K_i(X_n,X_m)\rightarrow \bigoplus_{i\geq 0}\bigoplus_{0\leq r<p}  H^{2r-i}\big(X_1,p^{r,m}_{r,n}\Omega_{X_{\centerdot}}^{\bullet}\big)
    \end{gather*}
  is multiplicative, namely, the diagram
  \begin{gather*}
  \xymatrix{
    K_i(X_n)\times K_j(X_n,X_m) \ar[r]^<<<<{\mathrm{ch}} \ar[d] & \bigoplus_{0\leq s<p}  H^{2s-i}\big(X_1,\mathbb{Z}_{X_n}(s)\big)[\frac{1}{s!}]\times   \bigoplus_{0\leq r<p}  H^{2r-j}\big(X_1,p^{r,m}_{r,n}\Omega_{X_{\centerdot}}^{\bullet}\big) \ar[d] \\
    K_{i+j}(X_n,X_m) \ar[r]^<<<<<<<<<<<<<{\mathrm{ch}} & \bigoplus_{0\leq r<p}  H^{2r-i-j}\big(X_1,p^{r,m}_{r,n}\Omega_{X_{\centerdot}}^{\bullet}\big)
  }
  \end{gather*}
  commutes for $i,j\geq 0$.
\end{enumerate}
\end{proposition}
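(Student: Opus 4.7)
The plan is to deduce both parts from Proposition \ref{prop:infinitesimalChernClass} by applying the universal Newton polynomial formula to the infinitesimal Chern classes, following Gillet's classical strategy. For part (1), I would set
\begin{equation*}
\mathrm{ch}_r := \tfrac{1}{r!}\, N_r(c_1,\dots,c_r)
\end{equation*}
where $N_r$ is the $r$-th Newton polynomial expressing $\sum \alpha_i^r$ in elementary symmetric functions, and each $c_j$ is the Chern class provided by Proposition \ref{prop:infinitesimalChernClass}(i) applied to the monomial $P = c_j$. Functoriality in $X_\centerdot$ and compatibility under the reduction $X_n \to X_{n'}$ are inherited from the corresponding compatibilities of the Chern classes.

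The heart of the argument is the multiplicativity, and the key input is the universal Whitney sum formula for the infinitesimal Chern classes on $\mathrm{BGL}_{W_n}\times\mathrm{BGL}_{W_n}$. In the case $n=1$, Pushin's theorem together with Totaro's Theorem \ref{thm:Totaro} identifies the low-degree cohomology ring with a polynomial algebra, so the Whitney identity $c_t(\xi\oplus\eta)=c_t(\xi)\cdot c_t(\eta)$ is formal. To lift it to $n\geq 2$, I would use the exact triangle \eqref{eq:motivicFundamentalTriangle} and Proposition \ref{prop:relativeDeRhamCoh-BGLtimesBGL}, whose vanishing of $H^{2r-1}(\mathrm{BGL}_{W_n}\wedge \mathrm{BGL}_{W_n}, p^{r,m}_{r,n}\Omega^\bullet_{?\centerdot})$ is exactly the obstruction group preventing a unique lift. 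Combined with Proposition \ref{prop:truncatedMotCoh-BGL}, this forces any polynomial identity among Chern classes holding with $\mathbb{Z}_{X_1}(r)$-coefficients to hold already with $\mathbb{Z}_{X_n}(r)$-coefficients. The additivity $\mathrm{ch}(E\oplus F) = \mathrm{ch}(E)+\mathrm{ch}(F)$ follows, and the multiplicativity with respect to $\otimes$ comes from the same argument applied to the universal tensor pairing $\mathrm{BGL}_a\times\mathrm{BGL}_b \to \mathrm{BGL}_{ab}$, using the ring structure on $\bigoplus_r \mathbb{Z}_{X_n}(r)$ from Proposition \ref{prop:infinitesimalMotComp-product}(iv). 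To extend from $K_0$ to $K_i$ with $i>0$, I would appeal to the homotopical rigidity built into the proof of Proposition \ref{prop:infinitesimalChernClass} (statements $(1^\circ)$--$(2^\circ)$): Loday's product is represented by a simplicial-sheaf-level pairing on $\mathbb{Z}_\infty \mathrm{BGL}$, and the universal identity on the classifying space then propagates to all $K_i$.

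For part (2), I would apply Proposition \ref{prop:infinitesimalChernClass}(ii) to the same universal polynomial $\tfrac{1}{r!}N_r(\mathbf{c})$. Since by Proposition \ref{prop:relHC-relBoldHC-bounded-pExp-Isom} the target $H^{2r-i}(X_1,p^{r,m}_{r,n}\Omega^\bullet_{X_\centerdot})$ is $p$-primary torsion of bounded exponent and $r\leq p-1$, the factor $r!$ is automatically invertible there, so no coefficient extension is needed. Compatibility for $n>m\geq 1$ is inherited from that of the relative Chern classes. The multiplicativity of $\mathrm{ch}$ in the relative setting is then deduced from the absolute multiplicativity by combining Proposition \ref{prop:infinitesimalMotComp-product}(iv) with the diagram \eqref{eq:motivicFundamentalTriangle-to-syntomicFundamentalTriangle}: the absolute-relative pairing $K_i(X_n)\times K_j(X_n,X_m)\to K_{i+j}(X_n,X_m)$ sits as the fiber of the absolute pairing, and under \eqref{eq:motivicFundamentalTriangle} it corresponds precisely to the action of $\bigoplus_r \mathbb{Z}_{X_n}(r)$ on the fiber summand $\bigoplus_r p^{r,m}_{r,n}\Omega^\bullet_{X_\centerdot}[-1]$.

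The main obstacle I expect is controlling the lift of the universal Whitney/multiplicative formula from $n=1$ to $n\geq 2$: Pushin's argument exploits the motivic ring structure of $\mathbb{Z}_{X_1}(r)$, and while Proposition \ref{prop:infinitesimalMotComp-product} supplies the corresponding structure for $\mathbb{Z}_{X_n}(r)$, one must compare classes on $\mathrm{BGL}_{W_n}\wedge \mathrm{BGL}_{W_n}$ with those on $\mathrm{BGL}_{W_1}\wedge \mathrm{BGL}_{W_1}$ in a way that propagates through the exact triangle \eqref{eq:infinitesimalMotComp} without accumulating obstructions. The vanishing in Proposition \ref{prop:relativeDeRhamCoh-BGLtimesBGL} is the single input making a uniqueness-of-lift argument possible in the critical degree; beyond that, the remainder is a careful diagram-chase in the injective model structure on $s\mathbf{Shv}((\mathrm{Sm}_{W_\centerdot})_{\mathrm{Nis}})$ used to define the Chern classes.
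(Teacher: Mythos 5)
Your proposal takes essentially the same route as the paper: apply the Gillet/Grothendieck machinery to the infinitesimal Chern classes of Proposition~\ref{prop:infinitesimalChernClass}, establish the Whitney-sum and tensor-product formulas on the classifying space at level $n=1$ (where Pushin's theorem and Totaro's Theorem~\ref{thm:Totaro} make them formal), and then lift to $n\geq 2$ by treating the cohomology groups of Proposition~\ref{prop:relativeDeRhamCoh-BGLtimesBGL} as the obstruction to unique lifting. Your observation that the factor $r!$ needs no inversion in part~(2) because the relative target is $p$-primary with $r\leq p-1$ is exactly right, and it is implicit in the paper's unlocalized statement.

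The one place where you elide a genuine subtlety is the final step of part~(2). Saying that the absolute-relative pairing ``sits as the fiber of the absolute pairing'' is true only once one has chosen the homotopies witnessing the multiplicativity of the absolute pairing at level $n$ and at level $m$ to be \emph{compatible} (i.e., related by a homotopy between the cubes). The paper proves this explicitly by an induction on $n$, using the vanishing statements \eqref{eq:relativeDeRhamCoh-BGLtimesBGL-times} and \eqref{eq:relativeDeRhamCoh-BGLtimesBGL-land} once more to construct the needed $2$-cells compatibly (mirroring the argument at the end of the proof of Proposition~\ref{prop:infinitesimalChernClass} that handles the analogous issue for the relative Chern classes). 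Without this extra inductive step, passing to homotopy fibers is not yet justified; you appeal to the diagram \eqref{eq:motivicFundamentalTriangle-to-syntomicFundamentalTriangle}, which records compatibility of the fundamental triangles, but not compatibility of the multiplicativity homotopies. This is a gap in exposition rather than a wrong approach: the vanishing you already cite is precisely the input needed, and running the compatible-lifting argument as in the last paragraph of the proof of Proposition~\ref{prop:infinitesimalChernClass} closes it.
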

\begin{proof}
(i) For any abelian category $\mathbf{A}$ and $A,B,C\in \operatorname{Ch}_+(\mathbf{A})$, and a morphism $\operatorname{Tot}(A\times B) \rightarrow C$, the composed  morphism of complexes 
\begin{gather*}
\mathrm{N}\big(\mathrm{K}(A)\times \mathrm{K}(B)\big)
%=\mathrm{N}\Big(\mathrm{diag}\big(\mathrm{K}(A)\widetilde{\times} \mathrm{K}(B)\big)\Big) 
= \mathrm{diag}(A\times B) \xrightarrow{\sim} \operatorname{Tot}(A\times B) \rightarrow C
\end{gather*}
where $\simeq$ is the Alexander-Whitney map, induces
%by the adjoint equivalence of the functors $\mathrm{N}$ and $\mathrm{K}$, 
a map of simplicial abelian groups $\mathrm{K}(A)\times \mathrm{K}(B) \rightarrow \mathrm{K}(C)$.
Applying this to the product  $\mathbb{Z}_{?n}(i)\otimes^{\mathbf{L}}_{\mathbb{Z}} \mathbb{Z}_{?n}(j) \rightarrow \mathbb{Z}_{?n}(i+j)$, and using that $\mathbb{Z}_{?n}(r)$ is supported in degree $[0,r]$ (Proposition \ref{prop:properties-Zn(r)}), one sees that there are product maps $\Gamma_n(i) \times \Gamma_n(j) \rightarrow \Gamma_n(i+j)$ which are homotopy additive in each summand, and are homotopy associative, and these product maps are homotopy commutative. Namely, the following diagrams are homotopy commutative:
\begin{gather*}
\xymatrix{
  \big(\Gamma_n(i)\times \Gamma_n(i)\big)\times \Gamma_n(j) \ar[r]^<<<<{(+,\mathrm{id})} \ar[d] & \Gamma_n(i)\times \Gamma_n(j) \ar[d] \\
  \Gamma_n(i+j)\times \Gamma_n(i+j) \ar[r]^<<<<<<{+} & \Gamma_n(i+j)
}\quad
\xymatrix{
  \Gamma_n(i)\times \big(\Gamma_n(j)\times \Gamma_n(j)\big) \ar[r]^<<<<{(\mathrm{id},+)} \ar[d] & \Gamma_n(i)\times \Gamma_n(j) \ar[d] \\
  \Gamma_n(i+j)\times \Gamma_n(i+j) \ar[r]^<<<<<<{+} & \Gamma_n(i+j)
}
\end{gather*}
\begin{gather*}
\xymatrix{
  \Gamma_n(i)\times \Gamma_n(j)\times \Gamma_n(k) \ar[r] \ar[d] & \Gamma_n(i)\times \Gamma_n(j+k) \ar[d] \\
  \Gamma_n(i+j)\times \Gamma_n(k) \ar[r] & \Gamma_n(i+j+k)
}
\end{gather*}
\begin{gather*}
\xymatrix{
  \Gamma_n(i)\times \Gamma_n(j) \ar[dr] \ar[rr]^{\tau} && \Gamma_n(j)\times \Gamma_n(i) \ar[dl] \\
 & \Gamma_n(i+j)&
}
\end{gather*}
Then we define operations 
\begin{gather*}
 \big(\mathbb{Z}\times \prod_{1\leq i<p}\Gamma_n(i)\big)\times \big(\mathbb{Z}\times \prod_{1\leq i<p}\Gamma_n(i)\big) \xrightarrow{\cdot} \mathbb{Z}\times \prod_{1\leq i<p}\Gamma_n(i),\\
 \big(\mathbb{Z}\times \prod_{1\leq i<p}\Gamma_n(i)\big)\times \big(\mathbb{Z}\times \prod_{1\leq i<p}\Gamma_n(i)\big) \xrightarrow{\star} \mathbb{Z}\times \prod_{1\leq i<p}\Gamma_n(i)
\end{gather*}
by the formulas in \cite[Definition 2.27]{Gil81}\footnote{This is originally defined by Grothendieck using different notations in \cite[Page 28]{SGA6}.}. We have to show that both diagrams
\begin{gather*}%\label{diag:homotopyComm-times}
\xymatrix{
   \mathrm{BGL}_{W_n}\times \mathrm{BGL}_{W_n}  \ar[r]^<<<<<<<<<<<<<<<{\oplus} \ar[d]_{\widetilde{C}\times \widetilde{C}} & \mathrm{BGL}_{W_n} \ar[d]^{\widetilde{C}} \\
   \big(\mathbb{Z}\times \prod_{1\leq i<p}\Gamma_n(i)\big)\times \big(\mathbb{Z}\times \prod_{1\leq i<p}\Gamma_n(i)\big) \ar[r]^<<<<{\cdot} & \mathbb{Z}\times \prod_{1\leq i<p}\Gamma_n(i)
}
\end{gather*}
and
\begin{gather*}%\label{diag:homotopyComm-land}
\xymatrix{
   \mathrm{BGL}_{W_n}\land \mathrm{BGL}_{W_n}  \ar[r]^{\mu} \ar[d]_{\widetilde{C}\land \widetilde{C}} & \mathrm{BGL}_{W_n} \ar[d]^{\widetilde{C}} \\
   \big(\mathbb{Z}\times \prod_{1\leq i<p}\Gamma_n(i)\big)\land \big(\mathbb{Z}\times \prod_{1\leq i<p}\Gamma_n(i)\big) \ar[r]^<<<<{\star} & \mathbb{Z}\times \prod_{1\leq i<p}\Gamma_n(i)
}
\end{gather*}
homotopically commute for all $n\in \mathbb{N}$, where $\oplus$ is induced by the map $\mathrm{GL}_{W_n}\times \mathrm{GL}_{W_n} \xrightarrow{\oplus} \mathrm{GL}_{W_n}$ which are induced by choosing a bijection $\mathbb{N}\sqcup \mathbb{N} \rightarrow \mathbb{N}$, and $\mu$ is induced by the map
$\mathrm{GL}_{W_n}\times \mathrm{GL}_{W_n} \xrightarrow{\otimes} \mathrm{GL}_{W_n}$
which is induced  by choosing a bijection $\mathbb{N}\times \mathbb{N} \rightarrow \mathbb{N}$ (see \cite[discussions around Lemma 2.32]{Gil81}).
In the diagram
\begin{gather*}
\scalebox{0.7}{
\xymatrix@C=0.6pc{
  & \mathrm{BGL}_{W_1}\times \mathrm{BGL}_{W_1} \ar[rr]^{\oplus} \ar'[d][dd] && \mathrm{BGL}_{W_1} \ar[dd] \\
   \mathrm{BGL}_{W_n}\times \mathrm{BGL}_{W_n} \ar[ru] \ar[rr]^{\oplus} \ar[dd] && \mathrm{BGL}_{W_n} \ar[dd] \ar[ru]  &  \\
  & \big(\mathbb{Z}\times \prod_{1\leq i<p}\Gamma_1(i)\big)\times\big(\mathbb{Z}\times \prod_{1\leq i<p}\Gamma_1(i)\big)  \ar'[r][rr]^<<<<<<<<<{\cdot} && \mathbb{Z}\times \prod_{1\leq i<p}\Gamma_1(i) \\
   \big(\mathbb{Z}\times \prod_{1\leq i<p}\Gamma_n(i)\big)\times \big(\mathbb{Z}\times \prod_{1\leq i<p}\Gamma_n(i)\big) \ar[ru]  \ar[rr]^<<<<<<<<<<<<<<<<<<<<<<<<<<<<<<<<<{\cdot} && \mathbb{Z}\times \prod_{1\leq i<p}\Gamma_n(i) \ar[ru] & 
}}
\end{gather*}
both the left and the right squares commute by the construction of the vertical arrows in the proof of Proposition \ref{prop:infinitesimalChernClass}, the top square commutes, and the bottom square homotopically commutes by the above discussion. The back square homotopically commutes by the Whitney sum formula of the construction of the Chern classes valued in Chow groups \cite{Pus04}.  A diagram chasing then shows that the composition of the front square with the arrow $\mathbb{Z}\times \prod_{1\leq i<p}\Gamma_n(i) \rightarrow \mathbb{Z}\times \prod_{1\leq i<p}\Gamma_1(i)$ homotopically commutes. Then by \eqref{eq:relativeDeRhamCoh-BGLtimesBGL-times}, the front square homotopically commutes. Applying similar arguments to the diagram
\begin{gather*}
\scalebox{0.6}{
\xymatrix@C=0.5pc{
  & \mathrm{BGL}_{W_1}\land \mathrm{BGL}_{W_1} \ar[rr]^{\mu} \ar'[d][dd] && \mathrm{BGL}_{W_1} \ar[dd] \\
   \mathrm{BGL}_{W_n}\land \mathrm{BGL}_{W_n} \ar[ru] \ar[rr]^{\mu} \ar[dd] && \mathrm{BGL}_{W_n} \ar[dd] \ar[ru]  &  \\
  & \big(\mathbb{Z}\times \prod_{1\leq i<p}\Gamma_1(i)\big)\land \big(\mathbb{Z}\times \prod_{1\leq i<p}\Gamma_1(i)\big)  \ar'[r][rr]^<<<<<<<<<{\star} && \mathbb{Z}\times \prod_{1\leq i<p}\Gamma_1(i) \\
   \big(\mathbb{Z}\times \prod_{1\leq i<p}\Gamma_n(i)\big)\land \big(\mathbb{Z}\times \prod_{1\leq i<p}\Gamma_n(i)\big) \ar[ru]  \ar[rr]^<<<<<<<<<<<<<<<<<<<<<<<<<<<<<<<<<{\star} && \mathbb{Z}\times \prod_{1\leq i<p}\Gamma_n(i) \ar[ru] & 
}}
\end{gather*}
and using \eqref{eq:relativeDeRhamCoh-BGLtimesBGL-land} show that the front square homotopically commutes. This completes the proof of (i). For  (ii) we need to show that the above homotopy commutativity can be chosen compatibly, namely, such that  the diagrams
\begin{gather*}
\xymatrix@C=2pc{
  \mathrm{BGL}_{W_{n+1}}\times \mathrm{BGL}_{W_{n+1}}  \ar[d]_{\text{“}\widetilde{C}\circ \oplus \Rightarrow \cdot \circ (\widetilde{C}\times \widetilde{C})"}  \ar[r]  & \mathrm{BGL}_{W_n}\times \mathrm{BGL}_{W_n}   \ar[d]^{\text{“}\widetilde{C}\circ \oplus \Rightarrow \cdot \circ (\widetilde{C}\times \widetilde{C})"}  \\
  \big(\mathbb{Z}\times \prod_{1\leq i<p}\Gamma_{n+1}(i)\big)^{\Delta^1} \ar[r] & \big(\mathbb{Z}\times \prod_{1\leq i<p}\Gamma_n(i)\big)^{\Delta^1}
}
\end{gather*}
and 
\begin{gather*}
\xymatrix@C=2pc{
  \mathrm{BGL}_{W_{n+1}}\land \mathrm{BGL}_{W_{n+1}}  \ar[d]_{\text{“}\widetilde{C}\circ \mu \Rightarrow \star \circ (\widetilde{C}\times \widetilde{C})"} 
   \ar[r]  & \mathrm{BGL}_{W_n}\land \mathrm{BGL}_{W_n}  \ar[d]^{\text{“}\widetilde{C}\circ \mu \Rightarrow \star \circ (\widetilde{C}\times \widetilde{C})"} \\
  \big(\mathbb{Z}\times \prod_{1\leq i<p}\Gamma_{n+1}(i)\big)^{\Delta^1} \ar[r] & \big(\mathbb{Z}\times \prod_{1\leq i<p}\Gamma_n(i)\big)^{\Delta^1}
}
\end{gather*}
commute.  This is obtained by induction on $n$ and by  using \eqref{eq:relativeDeRhamCoh-BGLtimesBGL-times} and \eqref{eq:relativeDeRhamCoh-BGLtimesBGL-land} respectively as in the last part of the proof of Proposition \ref{prop:infinitesimalChernClass}.
\end{proof}

\begin{proposition}\label{prop:c1-K1}
By the identification $\mathbb{Z}_{X_n}(1)\simeq \mathbb{G}_{m,X_n}[-1]$ in Proposition \ref{prop:properties-Zn(r)}(iv), the sheafified $c_1: \mathcal{K}_{X_n,1} \rightarrow \mathbb{Z}_{X_n}(1)[1]$ can be taken to be the identification $\mathcal{K}_1\cong \mathbb{G}_m$.
\end{proposition}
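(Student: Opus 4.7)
The plan is to reduce the statement to its universal version on the classifying space $\mathrm{BGL}_{1,W_{\centerdot}}=\mathrm{B}\mathbb{G}_{m,W_{\centerdot}}$ and then check the normalization there.

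First I would observe that, under the identification $\mathbb{Z}_{X_n}(1)\simeq \mathbb{G}_{m,X_n}[-1]$ of Proposition \ref{prop:properties-Zn(r)}(iv) and the identification $\mathcal{K}_{X_n,1}\cong\mathbb{G}_{m,X_n}$, both sides of the map $c_1:\mathcal{K}_{X_n,1}\to\mathbb{Z}_{X_n}(1)[1]$ become the Nisnevich sheaf $\mathbb{G}_{m,X_n}$ on $X_1$. So the claim amounts to saying that a certain endomorphism of $\mathbb{G}_{m,X_n}$ in $\mathrm{D}(X_1)$ is the identity. By $\mathrm{Hom}_{\mathrm{D}(X_1)}(\mathbb{G}_{m,X_n},\mathbb{G}_{m,X_n})=\mathrm{End}_{X_1}(\mathbb{G}_{m,X_n})=\mathbb{Z}$, this endomorphism is multiplication by a single integer, and by naturality of $c_1$ in $X_{\centerdot}\in \mathrm{Sm}_{W_{\centerdot}}$, this integer is universal.

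Next I would compute this universal integer on $X_{\centerdot}=\mathrm{BGL}_{1,W_{\centerdot}}$. By Proposition \ref{prop:truncatedMotCoh-BGL}, $H^{2}(\mathrm{BGL}_{1,W_1},\mathbb{Z}_{?n}(1))\cong\mathbb{Z}$ with generator $c_1$. Under the identification $\mathbb{Z}_{?n}(1)\simeq \mathbb{G}_m[-1]$, this group becomes $H^1(\mathrm{BGL}_{1,W_1},\mathbb{G}_{m,?n})=\mathrm{Pic}(\mathrm{BGL}_{1,W_n})\cong\mathbb{Z}$, generated by the tautological line bundle $\mathcal{L}$ on $\mathrm{B}\mathbb{G}_{m,W_n}$. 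I would then invoke the compatibility, built into Proposition \ref{prop:infinitesimalChernClass}, of our $c_1$ with Pushin's motivic first Chern class on $\mathrm{BGL}_{1,W_1}$ (the case $n=1$), and the fact that Pushin's $c_1$ on $\mathrm{B}\mathbb{G}_{m,W_1}$ is normalized to be the class of the tautological line bundle; combined with the compatibility of the Chern classes along $n\geq 1$ (the left square of the diagram \eqref{diag:infinitesimalChernClass-3}) and the fact that the reduction map $\mathrm{Pic}(\mathrm{B}\mathbb{G}_{m,W_n})\to\mathrm{Pic}(\mathrm{B}\mathbb{G}_{m,W_1})$ is an isomorphism (by smooth base change / deformation of line bundles on a Henselian pair), one concludes that our universal class $c_1\in H^2(\mathrm{BGL}_{1,W_1},\mathbb{Z}_{?n}(1))$ corresponds to $[\mathcal{L}]\in \mathrm{Pic}(\mathrm{B}\mathbb{G}_{m,W_n})$ under the above identification.

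Finally, naturality with respect to the classifying map $X_n\to \mathrm{B}\mathbb{G}_{m,W_n}$ of an arbitrary line bundle $L$ on $X_n$ then gives $c_1(L)=[L]\in H^1(X_n,\mathbb{G}_{m,X_n})$, i.e.\ on sections the map $c_1$ sends a line bundle to itself. Sheafifying this statement over $X_{1,\mathrm{Nis}}$ yields that $c_1:\mathcal{K}_{X_n,1}\to\mathbb{Z}_{X_n}(1)[1]\simeq\mathbb{G}_{m,X_n}$ is the tautological identification, as claimed. The main obstacle is the bookkeeping around the universal normalization: one must verify that the canonical lift of Pushin's classical $c_1$ through the fibrant replacement $\Gamma_n(1)$ in the construction of Proposition \ref{prop:infinitesimalChernClass} still represents the tautological class, but this is forced by the canonicality statements $(1^{\circ})$ and $(2^{\circ})$ in that proof together with the fact that $H^2(\mathrm{BGL}_{1,W_1},\mathbb{Z}_{?n}(1))$ is torsion-free of rank one, so there is no room for ambiguity.
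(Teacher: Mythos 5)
Your proof takes a genuinely different route from the paper's. The paper's proof is much shorter and more direct: it observes that the usual $c_1$ over $\Bbbk$ is, via Dold--Kan, induced by the map of simplicial group schemes $\det:\mathrm{GL}_\Bbbk\to\mathbb{G}_{m,\Bbbk}$, and that in the construction of Proposition \ref{prop:infinitesimalChernClass} the natural lift $\det:\mathrm{GL}_{W_n}\to\mathbb{G}_{m,W_n}$ is a valid choice of the vertical arrow in \eqref{diag:infinitesimalChernClass-3}; the canonicality of the construction then forces the infinitesimal $c_1$ to be represented by this determinant, which restricted to $\mathrm{GL}_1$ is literally the identity. No endomorphism-ring computation or Picard-group bookkeeping is needed.

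Your argument has a genuine gap at the first step. The claim $\mathrm{Hom}_{\mathrm{D}(X_1)}(\mathbb{G}_{m,X_n},\mathbb{G}_{m,X_n})=\mathbb{Z}$ is asserted without justification and is not obviously true. For a thickening $X_n$ of $X_1$, the sheaf $\mathcal{O}_{X_n}^\times$ has a filtration with graded pieces $\mathcal{O}_{X_1}^\times$ and copies of $\mathcal{O}_{X_1}$, and over, say, $X_1=\operatorname{Spec}\Bbbk$ with $X_n=\operatorname{Spec}W_n(\Bbbk)$, the Teichmüller splitting $W_n(\Bbbk')^\times\cong\Bbbk'^\times\times W_{n-1}(\Bbbk')$ gives a substantial supply of sheaf endomorphisms beyond $\mathbb{Z}$ (for instance, Frobenius-type operators on the second factor). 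You try to rescue this via ``naturality in $X_{\centerdot}$,'' but you neither make that universality precise nor verify that universal endomorphisms are exactly $\mathbb{Z}$; the category $\mathrm{Sm}_{W_{\centerdot}}$ does not carry compatible global Frobenius lifts, so the claim may well be true, but it needs an argument. A second, smaller imprecision: your final step concludes that $c_1(L)=[L]$ for line bundles $L$, i.e.\ a statement about $c_1:\mathrm{Pic}\to H^1(\mathbb{G}_m)$, and then says ``sheafifying'' gives the desired statement on $\mathcal{K}_{X_n,1}=\mathcal{O}_{X_n}^\times$. But the sheafified $c_1$ on $\mathcal{K}_1$ is a different manifestation of the universal class (it lives on $\pi_1$ of $\mathbb{Z}_\infty\mathrm{BGL}(\mathcal{O}_{X_n})$, not on the Picard group), and the passage from the line-bundle computation to the unit-level statement needs to be spelled out; this is precisely what the paper's determinant identification handles in one stroke.

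If you wanted to repair your approach rather than adopt the paper's, the cleanest fix is to drop the $\mathrm{End}=\mathbb{Z}$ claim entirely and instead identify the representing map $\mathrm{BGL}_{W_n}\to\Gamma_n(1)$ of the infinitesimal $c_1$ directly on the simplicial level, which brings you back to the paper's argument.
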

\begin{proof}
By our convention in \S\ref{sub:notations}(iv), the complex $ \mathbb{Z}_{X_n}(1)[2]\simeq \mathbb{G}_{m,X_n}[1]$ regarded as a chain complex is equivalent to the one concentrated in degree 1. By the construction of the usual Chern class, the $c_1$ is induced by the map of simplicial schemes $\mathrm{BGL}_{\Bbbk} \rightarrow \mathrm{K}(\mathbb{G}_{m,\Bbbk}[1])$ which by Dold-Kan corresponds to the map $\mathrm{GL}_{\Bbbk} \xrightarrow{\mathrm{det}}\mathbb{G}_{m,\Bbbk}$. By the construction of our infinitesimal Chern class in the proof of Proposition \ref{prop:infinitesimalChernClass}, the natural lifting $\mathrm{GL}_{W_n(\Bbbk)} \xrightarrow{\mathrm{det}}\mathbb{G}_{m,W_n(\Bbbk)}$  induces the infinitesimal $c_1$, and the proof is complete.
\end{proof}

% subsection infinitesimal_chern_classes_and_chern_characters (end)
% section an_infinitesimal_chern_character (end)

\section{The infinitesmial Chern character isomorphism} % (fold)
\label{sec:an_infinitesmial_chern_character_isomorphism}
With the preparations in the preceding sections, we are now ready to state and show  the following main theorem of this paper.
\begin{theorem}\label{thm:relative-comparison-local}
Let $\Bbbk$ be a perfect field of characteristic $p$.  Let $X_{\centerdot}\in \operatorname{ob}\mathrm{Sm}_{W_{\centerdot}(\Bbbk)}$.
For $0\leq i\leq p-4$ (resp. $0\leq i\leq p-3$) and $n> m\geq 1$, the sheafified relative Chern character map
\begin{gather}\label{eq:relative-comparison-local}
\mathrm{ch}=\sum_{r=1}^i \mathrm{ch}_r:\mathcal{K}_{X_n,X_m,i}\rightarrow  \bigoplus_{r=1}^i \mathcal{H}^{2r-i-1}\big(p^{r,m}_{r,n}\Omega^{\bullet}_{X_{\centerdot}}\big)
\end{gather}
is an isomorphism (resp. an epimorphism) of Nisnevich sheaves on $X_1$.
\end{theorem}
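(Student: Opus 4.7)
The plan is to follow the outline sketched in \S\ref{sub:outline_of_the_proof_of_the_main_theorem} and reduce to a mod $p$ statement where the structure of both sides is completely understood. By standard dévissage (the long exact sequences associated with the tower $X_m \to X_{m+1} \to \dots \to X_n$ on both sides, together with bounded $p$-torsion of the groups involved), it suffices to prove the case $m=n-1$ with $\mathbb{Z}/p$ coefficients, i.e.\ that
\begin{gather*}
\mathrm{ch}:(\mathcal{K}/p)_{X_n,X_{n-1},i}\longrightarrow \bigoplus_{r=1}^{i}\mathcal{H}^{2r-i-1}\bigl(p^{r,n-1}_{r,n}\Omega^{\bullet}_{X_{\centerdot}}\otimes^{\mathbf{L}}\mathbb{Z}/p\bigr)
\end{gather*}
is an isomorphism for $0\leq i\leq p-3$ and an epimorphism for $i=p-3$. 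By Lemma~\ref{lem:rel-HC-nonreduced-mod-p} and Theorem~\ref{thm:rel-HC-homotopyEquiv} the target is canonically $\bigoplus_{j=0}^{i-1}\Omega^j_{X_1/\Bbbk}$, while Brun's isomorphism (Theorem~\ref{thm:Brun}) identifies the source with the same direct sum. Thus one knows a priori that source and target are abstractly isomorphic; the task is to show that $\mathrm{ch}$ itself is an isomorphism.

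The key device is to compare the multiplicative structures on both sides. On the $K$-theory side one has Loday's product, while on the cohomology side one has the product of $\mathrm{HC}^{-}$ on $\mathrm{HC}$ (translated through the isomorphism \eqref{eq:rel-HC-decomposition-intro}). First I would invoke Proposition~\ref{prop:brun-iso-product} to see that Brun's isomorphism intertwines these products up to the filtered comparison of the algebraic and topological products (Proposition~\ref{prop:algebraic-and-topological-products-weak-comparison-derived}); the main multiplications one needs—multiplication by $\mathrm{d}\log x\in \mathcal{K}_1/p$ and by a Bott element $\beta_x\in(\mathcal{K}/p)_{X_n,2}$ attached to $\zeta_n=1+p^{n-1}$—are explicitly computed in Propositions~\ref{prop:K1-multiplication-on-rel-HC} and \ref{prop:BottElement-multiplication-on-rel-HC}, yielding the multiplication table of Figure~\ref{fig:multTable}. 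Chasing this table shows that for $i\leq p-3$ the relative $K$-theory $(\mathcal{K}/p)_{X_n,X_{n-1},i}$ is generated, as a module over $(\mathcal{K}/p)_{X_n,1}$ together with the Bott element, by the \emph{basic summands} $\mathcal{O}_{X_1}\subset (\mathcal{K}/p)_{X_n,X_{n-1},i}$ for odd $i$ together with the summand $\mathcal{O}_{X_1}\subset(\mathcal{K}/p)_{X_n,X_{n-1},2}$.

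On the cohomology side the analogous multiplicative action is controlled by the product on $\bigoplus_r \mathbb{Z}_{X_n}(r)$ from Proposition~\ref{prop:infinitesimalMotComp-product} and the compatibility of $\mathrm{ch}_r$ with products (Proposition~\ref{prop:definition-infinitesimal-ChernCharacter}(2)). The problem therefore reduces to showing that $\mathrm{ch}$ is an isomorphism on the \emph{basic summands}. Since both Brun's map and $\mathrm{ch}$ land in an $\mathcal{O}_{X_1}$-module which is $p$-torsion, any endomorphism of the target complex $p^{r,n-1}_{r,n}\Omega^{\bullet}_{X_{\centerdot}}\otimes^{\mathbf{L}}\mathbb{Z}/p$ in the relevant degree must act by multiplication by a unit in $\mathcal{O}_{X_1}$, and the difference between $\mathrm{ch}$ and Brun's isomorphism is determined by a collection of universal constants. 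Using the functoriality in $X_{\centerdot}$ of both $\mathrm{ch}$ (Proposition~\ref{prop:infinitesimalChernClass}) and the relative cyclic-homology side, these constants can be pinned down by inspecting the universal cases $X_{\centerdot}=W_{\centerdot}(\Bbbk)$ and $X_{\centerdot}=\mathbb{A}^1_{W_{\centerdot}(\Bbbk)}$: in the former the basic element on $K_1$ is captured by Proposition~\ref{prop:Brun-map-K1}, and multiplication by the Bott element and by $\mathrm{d}\log$ on $\mathbb{A}^1$ propagates this identification to all basic summands in all degrees $\leq p-3$.

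The main obstacle is precisely this last step: the Chern character and the Brun map are \emph{not} known to coincide, and there is no direct natural transformation between them in the relevant range. The whole apparatus of \S\ref{sec:products_on_cyclic_homology}--\S\ref{sec:Brun's_isomorphism_and_the_product_structure} (the filtered comparison of algebraic and topological products, and the multiplicativity of Brun's map via the Adams-isomorphism compatibility Lemma~\ref{lem:adams-iso-homotopical-functoriality}) is deployed solely to make the multiplicative comparison rigorous, and the functoriality trick replaces a direct identification by a universal-coefficient calculation. The surjectivity statement in degree $i=p-3$ comes for free from the fact that, by the multiplication table, every basic summand in degree $p-3$ is hit by multiplication of a lower-degree basic element by $\mathrm{d}\log$ or by the Bott element, both of which are already known to be isomorphisms by the preceding degrees.
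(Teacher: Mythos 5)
Your high-level strategy (reduce to a mod~$p$ statement for $m=n-1$, set up the multiplication table of Figure~\ref{fig:multTable}, reduce to the basic summands, then finish by a functoriality argument) matches the paper's approach in outline. However, the proposal has a genuine gap at the crucial step, and a couple of the subsidiary claims are inaccurate.

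The load-bearing assertion is that ``any endomorphism of the target complex $p^{r,n-1}_{r,n}\Omega^{\bullet}_{X_{\centerdot}}\otimes^{\mathbf{L}}\mathbb{Z}/p$ in the relevant degree must act by multiplication by a unit in $\mathcal{O}_{X_1}$.'' This is false as stated, and proving the correct version is precisely the hardest part of the paper (\S\ref{sub:functoriality_resolves_ambiguity}). A priori, a natural additive endomorphism of the functor $A\mapsto A/p$ can be any \emph{absolutely additive polynomial} $a\mapsto\sum_i c_i a^{p^i}$ (Lemma~\ref{lem:universal-maps-A1-to-A1}); Frobenius twists cannot be ruled out by degree or torsion considerations alone. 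Ruling out the terms with $i\geq 1$, so that the map on the basic summand is genuinely multiplication by a scalar in $\Bbbk^{\times}$, requires the explicit free-resolution calculation over $\mathbb{Z}_p[T]$ (resp.\ $\mathcal{W}\otimes\mathbb{Z}[T]$) that constitutes Proposition~\ref{prop:functoriality-determine-universal-transform}; the key input is that $H^{-1}(f\otimes^{\mathbf{L}}\mathbb{Z}/p)$ is an isomorphism \emph{and} a nonzero scalar, which is used through Lemma~\ref{lem:universal-iso-A-to-A}(ii) to force the Frobenius coefficients to vanish. Your proposal asserts this conclusion rather than proving it.

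Secondly, the claim that the basic summands are ``propagated'' to higher degrees by multiplication is backwards: as Figure~\ref{fig:multTable} shows, the summands $\mathcal{O}_{X_1}\subset(\mathcal{K}/p)_{X_n,X_{n-1},i}$ for odd $i$ are exactly the ones \emph{not} reached by multiplication by $\mathrm{d}\log$ or the Bott element from lower degree, and the functoriality argument has to be run for each such odd $i$ (in the inductive Claim~\ref{claim:relative-comparison-m=n-1-modP-induction}, the odd-$i$ case explicitly invokes Proposition~\ref{prop:functoriality-determine-universal-transform}; only the even-$i$ case follows from the multiplication table alone). Finally, the remark on surjectivity in degree $p-3$ is not needed and not what the dévissage gives: the mod~$p$ statement (Proposition~\ref{prop:relative-comparison-m=n-1-modP}) is an isomorphism through $i\leq p-3$, and the drop by one in the isomorphism range in the integral statement comes from the need for mod~$p$ surjectivity in degree $i+1$ (cf.\ Remark~\ref{rem:iso-surj-interval}), not from any separate surjectivity argument in degree $p-3$.
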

By Proposition \ref{prop:relHC-relBoldHC-bounded-pExp-Isom}(iii) and Theorem \ref{thm:rel-HC-homotopyEquiv}, for $i\leq p-1$ there is a canonical isomorphism
\begin{gather*}
\bigoplus_{r=1}^i \mathcal{H}^{2r-i-1}\big(p^{r,m}_{r,n}\Omega^{\bullet}_{X_{\centerdot}}\big) \xrightarrow{\Psi}
\mathcal{HC}_{X_n,X_m,i-1}\quad.
\end{gather*}
We identify the two sides via this isomorphism and denote the composition $\Psi\circ \mathrm{ch}$ still by $\mathrm{ch}$. Then by the 5-lemma and the following commutative diagram, Theorem \ref{thm:relative-comparison-local} follows from the cases $n-m=1$.
\begin{gather*}
  \xymatrix@C=0.6pc{
  \mathcal{K}_{X_{m+1},X_m,i+1} \ar[r] \ar[d]^{\mathrm{ch}} &  
  \mathcal{K}_{X_n,X_{m+1},i} \ar[r] \ar[d]^{\mathrm{ch}} & 
  \mathcal{K}_{X_n,X_m,i} \ar[r] \ar[d]^{\mathrm{ch}} & 
  \mathcal{K}_{X_{m+1},X_m,i} \ar[d]^{\mathrm{ch}} \ar[r]  & \mathcal{K}_{X_{n},X_{m+1},i-1} \ar[d]^{\mathrm{ch}} \\
  \mathcal{HC}_{X_{m+1},X_m,i} \ar[r] & \mathcal{HC}_{X_n,X_{m+1},i-1} \ar[r] & \mathcal{HC}_{X_n,X_m,i-1} \ar[r] & \mathcal{HC}_{X_{m+1},X_m,i-1} \ar[r] & \mathcal{HC}_{X_n,X_{m+1},i-2}
  }
  \end{gather*}

There is a simple fact in homological algebra:
\begin{lemma}
Let $C \rightarrow D$ be a map of chain complexes of abelian groups which are bounded from below (namely, $C_i=D_i=0$ for $i< N$ for some $N \in \mathbb{Z}$). Let $n\in \mathbb{Z}$. Assume that the induced homomorphisms $H_i(C; \mathbb{Z}/p \mathbb{Z}) \rightarrow H_i(D; \mathbb{Z}/p \mathbb{Z})$ are
\begin{gather}\label{eq:condition-n-mod-p-homology}
\begin{cases}
\mbox{isomorphisms} & \mbox{for}\ i\leq n \\
\mbox{surjective} & \mbox{for}\ i\leq n+1
\end{cases}.
\end{gather}
Assume moreover that for  $i\leq n+1$, $H_i(C)$ and  $H_i(D)$ are $p^M$-torsion for some $M\in \mathbb{N}$. Then the homomorphisms $H_i(C) \rightarrow H_i(D)$ also satisfy \eqref{eq:condition-n-mod-p-homology}.
\end{lemma}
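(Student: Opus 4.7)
The plan is to combine the universal coefficient theorem with a Nakayama-type argument for bounded $p$-torsion abelian groups, then induct on $i$ using the lower boundedness of the complexes. As a preliminary, I would record two elementary facts about a homomorphism $\alpha\colon A\to A'$ between abelian groups each annihilated by $p^M$: (a) $\alpha$ is surjective if and only if $\alpha/p\colon A/pA\to A'/pA'$ is surjective, for if the latter holds then $C:=\mathrm{coker}(\alpha)$ satisfies $C=pC$ and is killed by $p^M$, so $C=p^MC=0$; dually, (b) $\alpha$ is injective if and only if ${}_p\alpha\colon {}_pA\to {}_pA'$ is injective, since a nonzero element of $\ker(\alpha)$ of minimal $p$-exponent would produce a nonzero element of $\ker({}_p\alpha)$.

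Write $\alpha_i:=H_i(C\to D)$ and $\beta_i:=H_i(C\to D;\mathbb{Z}/p)$. The functorial universal coefficient sequence $0\to H_i(-)/p\to H_i(-;\mathbb{Z}/p)\to {}_pH_{i-1}(-)\to 0$ applied to the map $C\to D$, followed by the snake lemma, gives the six-term exact sequence
\begin{equation*}
\ker(\alpha_i/p)\to\ker(\beta_i)\to\ker({}_p\alpha_{i-1})\to\mathrm{coker}(\alpha_i/p)\to\mathrm{coker}(\beta_i)\to\mathrm{coker}({}_p\alpha_{i-1})\to 0.
\end{equation*}
I would induct upwards on $i$, starting from $i\ll 0$, where $\alpha_i$ is trivially an isomorphism by the lower boundedness of $C$ and $D$. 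For the inductive step, assume $\alpha_j$ is an isomorphism for every $j<i$, so that ${}_p\alpha_{i-1}$ is automatically an isomorphism. If $i\leq n$, then $\beta_i$ is an isomorphism, so the snake sequence forces $\ker(\alpha_i/p)=0$ and $\mathrm{coker}(\alpha_i/p)=\ker({}_p\alpha_{i-1})=0$; hence $\alpha_i/p$ is an isomorphism, and (a) together with (b) upgrade this to the statement that $\alpha_i$ itself is an isomorphism. If instead $i=n+1$, then $\beta_i$ is only surjective, but $\ker({}_p\alpha_n)=0$ and $\mathrm{coker}(\beta_{n+1})=0$, so the snake sequence still yields $\mathrm{coker}(\alpha_{n+1}/p)=0$, and (a) gives the surjectivity of $\alpha_{n+1}$.

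I do not foresee any serious obstacle; the only delicate point is the Nakayama step (a), where the bounded $p$-exponent is essential. Without that hypothesis the implication fails, as the multiplication-by-$p$ endomorphism of the Prüfer group $\mathbb{Z}[1/p]/\mathbb{Z}$ shows, which is why the lemma is phrased under this boundedness assumption and why it is exactly what is needed to transfer the mod $p$ devissage of Theorem \ref{thm:relative-comparison-local} to integral coefficients.
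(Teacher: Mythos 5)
The key step of your inductive argument has a gap. From the snake sequence at level $i$ you correctly obtain that $\alpha_i/p$ is an isomorphism, and your fact (a) then yields that $\alpha_i$ is surjective. But you then assert that ``(a) together with (b) upgrade this to the statement that $\alpha_i$ itself is an isomorphism.'' Fact (b) would give injectivity of $\alpha_i$ from injectivity of ${}_p\alpha_i$, but what you have established is injectivity of $\alpha_i/p$, and these two conditions are genuinely different: the quotient map $\mathbb{Z}/p^2\to\mathbb{Z}/p$ is a surjection of $p^2$-torsion groups with $\alpha/p$ an isomorphism, yet it is not injective (here ${}_p\alpha$ is the zero map $\mathbb{Z}/p\to\mathbb{Z}/p$, so (b) is silent). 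So invertibility of $\alpha_i/p$ alone does not force $\alpha_i$ to be an isomorphism, and the induction as written does not close.

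The missing ingredient is the snake sequence at level $i+1$: since $i\leq n$ gives $i+1\leq n+1$, the map $\beta_{i+1}$ is surjective, so $\mathrm{coker}(\beta_{i+1})=0$, and the snake sequence at level $i+1$ shows $\mathrm{coker}({}_p\alpha_i)=0$, i.e.\ ${}_p\alpha_i$ is surjective. Now apply the $\mathrm{Tor}$ exact sequence of the short exact sequence $0\to K_i\to H_i(C)\xrightarrow{\alpha_i}H_i(D)\to 0$ (where $K_i=\ker\alpha_i$, $\alpha_i$ already known surjective); the portion ${}_pH_i(C)\to{}_pH_i(D)\to K_i/p\to H_i(C)/p\to H_i(D)/p$ has a surjection on the left and an injection on the right, which forces $K_i/p=0$, and then your Nakayama step gives $K_i=0$. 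An alternative that avoids this two-level bookkeeping entirely is to set $E=\mathrm{Cone}(C\to D)$: the hypothesis is equivalent to $H_i(E;\mathbb{Z}/p)=0$ for $i\leq n+1$, the conclusion to $H_i(E)=0$ for $i\leq n+1$, and the universal coefficient sequence gives $H_i(E)/p=0$ directly, with $H_i(E)$ bounded $p$-torsion by the long exact sequence, so Nakayama finishes in one stroke.
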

Since both the relative $K$-sheaves and the relative $\mathrm{HC}$-sheaves in consideration are $p$-primary of bounded exponents (for relative $K$ see \cite[Theorem A]{GeH11} and \cite[Theorem 2.25]{LaT19}; for relative HC this is Proposition \ref{prop:relHC-relBoldHC-bounded-pExp-Isom}(i)), the cases $n-m=1$ of Theorem \ref{thm:relative-comparison-local} follow from its slightly stronger mod $p$ version:
\begin{proposition}\label{prop:relative-comparison-m=n-1-modP}
Let $\Bbbk$ be a perfect field of characteristic $p$.  Let $X_{\centerdot}\in \operatorname{ob}\mathrm{Sm}_{W_{\centerdot}(\Bbbk)}$.
Then for $0\leq i\leq p-3$ and $n\geq 2$, the sheafified relative mod $p$ Chern character map
\begin{gather}\label{eq:relative-comparison-m=n-1-modP}
\mathrm{ch}=\sum_{r=1}^i \mathrm{ch}_r:(\mathcal{K}/p)_{X_{n},X_{n-1},i}\rightarrow %(\mathcal{HC}/p)_{X_n,X_{n-1},i-1}
\bigoplus_{r=1}^i \mathcal{H}^{2r-i-1}\big(p^{r,n-1}_{r,n}\Omega^{\bullet}_{X_{\centerdot}}\otimes^{\mathbf{L}}\mathbb{Z}/p\big)
\end{gather}
is an isomorphism.
\end{proposition}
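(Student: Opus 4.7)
The plan is to reduce the mod $p$ Chern character to a problem about universal constants by leveraging the multiplicative structures on both sides. By Brun's isomorphism (Theorem \ref{thm:Brun}), the equivalences of Proposition \ref{prop:relHC-relBoldHC-bounded-pExp-Isom}(iii) and Theorem \ref{thm:rel-HC-homotopyEquiv}, together with Lemma \ref{lem:rel-HC-nonreduced-mod-p}, both the source and target of \eqref{eq:relative-comparison-m=n-1-modP} decompose (non-canonically) as $\bigoplus_{j=0}^{i-1}\Omega^{j}_{X_1/\Bbbk}$ after sheafification. What needs to be shown is that the Chern character realises an isomorphism of these two decompositions in the range $0\leq i\leq p-3$, rather than merely a map between abstractly isomorphic sheaves.

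First I would set up parallel multiplication tables on the two sides. On the left, using Proposition \ref{prop:brun-iso-product} (compatibility of the Brun map with products), together with the explicit computations in Propositions \ref{prop:K1-multiplication-on-rel-HC} and \ref{prop:BottElement-multiplication-on-rel-HC}, one obtains the action of $(\mathcal{K}/p)_{X_n,1}$ and of a Bott element in $(\mathcal{K}/p)_{X_n,2}$ on $(\mathcal{K}/p)_{X_n,X_{n-1},i}$. These actions fit into the multiplication diagram of Figure \ref{fig:multTable}; crucially, the image of $\zeta_n=1+p^{n-1}$ vanishes in $W_{n-1}(\Bbbk)$, producing the dotted-hollow (zero) arrows on the bottom row. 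This shows that $\bigoplus_{i\leq p-3}(\mathcal{K}/p)_{X_n,X_{n-1},i}$ is generated under multiplication by $(\mathcal{K}/p)_{X_n,1}$ and the Bott element from certain "basic summands" $\mathcal{O}_{X_1}$ sitting in odd degrees together with degree $2$. On the right, since the Chern character is multiplicative (Proposition \ref{prop:definition-infinitesimal-ChernCharacter}), the corresponding actions of $\mathrm{ch}^{-}(K_1)$ and of $\mathrm{ch}^{-}$ of the Bott element on the cohomology of $p^{r,n-1}_{r,n}\Omega^{\bullet}_{X_{\centerdot}}\otimes^{\mathbf{L}}\mathbb{Z}/p$ reduce to multiplications by $\mathrm{dlog}$ and by a Bott class in the infinitesimal motivic complexes, which can be computed directly from the product of Proposition \ref{prop:infinitesimalMotComp-product} and will yield the same arrow pattern as Figure \ref{fig:multTable}, up to some universal scalar constants in $\Bbbk$.

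Second, by the two tables, it suffices to show that $\mathrm{ch}$ is an isomorphism on each basic summand. This is the essential difficulty, since I do not have a direct computation of $\mathrm{ch}_r$ on a basic element. The idea is to exploit the functoriality of the canonical relative Chern classes established in Proposition \ref{prop:infinitesimalChernClass}: for each $X_{\centerdot}\in \mathrm{Sm}_{W_{\centerdot}(\Bbbk)}$, the restriction of $\mathrm{ch}$ to a basic summand $\mathcal{O}_{X_1}$ is a natural $\Bbbk$-linear endomorphism of the sheaf $\mathcal{O}_{X_1}$, and naturality together with the fact that $\mathcal{O}_{X_1}$ is generated (Zariski-locally) by $1$ forces this endomorphism to be multiplication by some universal constant $u_r\in \Bbbk$ depending only on $r$ (and similarly for the degree-$2$ basic summand). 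One then pins down these universal constants by specialising to $X_{\centerdot}=\mathbb{A}^1_{W_{\centerdot}(\Bbbk)}$, where both the mod $p$ Brun map and the relative Chern character can be computed explicitly (using the log complex of $1+p^{n-1}T$ for the $K_1$-multiplications, and Proposition \ref{prop:dennisTrace-bottEle-derivedHH} for the Bott element) and shown to produce nonzero classes in $\Bbbk$, whence $u_r\in \Bbbk^{\times}$.

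The hard part is the step of showing that the naturality of $\mathrm{ch}$ forces its restriction to each basic summand to be multiplication by a universal constant. Unlike the absolute case $X_{\centerdot}=W_{\centerdot}(\Bbbk)$, where endomorphisms of $\mathbb{Z}/p^n$ are classified by a single integer, the endomorphisms of $p^{r,n-1}_{r,n}\Omega^{\bullet}_{X_{\centerdot}}$ in the derived category of Nisnevich sheaves on $X_1$ are not so easily classified. To get around this, one uses that the canonicality in Proposition \ref{prop:infinitesimalChernClass} makes $\mathrm{ch}$ strictly functorial in $X_{\centerdot}$, so that its restriction to the basic summand factors through a universal operation on $\mathcal{O}$ that must commute with étale localisation on $X_1$ and hence be determined by its value at a generic point; combined with the $\Bbbk$-linearity built into the mod $p$ product of Lemma \ref{lem:rel-HC-nonreduced-mod-p}, this forces the sought uniform form, after which the specialisation to $\mathbb{A}^1$ completes the proof.
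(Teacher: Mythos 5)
Your overall strategy matches the paper's: reduce to the multiplication table, generate from the basic summands, and handle the basic summands via functoriality. But the crux of the proof — the step you yourself flag as "the hard part" — contains the gap, and it is the gap the paper spends Proposition~\ref{prop:functoriality-determine-universal-transform} (with Lemmas~\ref{lem:universal-maps-A1-to-A1}, \ref{lem:universal-iso-A-to-A}, \ref{lem:freeness-basis-of-Wn(k)}) resolving.

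You assert that the restriction of $\mathrm{ch}\circ \mathrm{br}^{-1}$ to the basic summand is ``a natural $\Bbbk$-linear endomorphism of the sheaf $\mathcal{O}_{X_1}$'' and that naturality plus ``$\mathcal{O}_{X_1}$ generated locally by $1$'' forces it to be multiplication by a universal scalar. Neither half of this is justified. The endomorphism is only additive, not $\Bbbk$-linear (neither $K$-theory nor the Brun map is $\Bbbk$-linear in any useful sense), and an additive natural endomorphism of $A\mapsto A/p$ on $\mathrm{SmAlg}_{W(\Bbbk)}$ is \emph{not} determined by its value on $1$: Lemma~\ref{lem:universal-maps-A1-to-A1} shows exactly that such endomorphisms are the absolutely additive polynomials $\bar a\mapsto \sum_{i\ge 0} c_i\bar a^{p^i}$, which in general involve Frobenius twists and are not scalars. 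Your proposed shortcut (``determined at a generic point'', ``$\Bbbk$-linearity built into the mod $p$ product'') would not eliminate the higher Frobenius terms either — those maps commute with étale localization and are perfectly $\mathbb{F}_p$-linear. To show $c_i=0$ for $i>0$ and $c_0\in\Bbbk^\times$, the paper compares the $H^0$-component of the natural endomorphism with its known $H^{-1}$-component (which is forced to be a scalar by Lemma~\ref{lem:universal-iso-A-to-A}(i)) through an explicit free resolution of $p^{r,M}_{r,L}\Omega^{\bullet}$ for $A=W(\Bbbk)[T]$, showing the two agree on the subspace $U$ spanned by $T^k$ with large $v_p(k)$, and then invokes Lemma~\ref{lem:universal-iso-A-to-A}(ii). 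Without that comparison your argument does not close, and neither does the vaguer appeal to computing $\mathrm{ch}$ on $\mathbb{A}^1$ via log classes: the paper's whole point in §\ref{sub:outline_of_the_proof_of_the_main_theorem} is that a direct computation of $\mathrm{ch}$ on basic elements is not attempted, precisely because it is not clear the Brun map coincides with the Chern character.

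Two smaller remarks. First, you should handle the $i=2$ basic summand separately (as in diagram \eqref{prop:relative-comparison-i=2}) since the short exact sequence $0\to \mathcal{K}_1/p\to (\mathcal{K}/p)_2\to {}_p\mathcal{K}_1\to 0$ feeds the $i=1$ case into the $i=2$ basic summand; for odd $i\ge 3$ the functoriality argument takes over. Second, the reduction of $\mathrm{ch}_r$ to $c_r$-type computations (so that Propositions~\ref{prop:K1-multiplication-motivic-complex}, \ref{prop:multiplication-Bott-element} apply) requires Lemma~\ref{lem:comparison-chernClass-chernCharacter}, which uses the degree bound $\mathbb{Z}_{X_n}(r)\in D^{\leq r}$; worth spelling out so the table on the right is genuinely the same as the table on the left up to the constants.
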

 The proof of Proposition \ref{prop:relative-comparison-m=n-1-modP} occupies \S \ref{sub:local_frobenius_liftings}-\S \ref{sub:completion_of_the_proof_of_relative-comparison}.

\begin{remark}\label{rem:iso-surj-interval}
Ideally, we should pursue to show that \eqref{eq:relative-comparison-m=n-1-modP} is surjective if $i=p-2$, and thus  the isomorphism and the surjection ranges in Theorem \ref{thm:relative-comparison-local} would be improved by 1. However, our proof  does not achieve this.
% because  the argument in \S \ref{sub:functoriality_of_the_infinitesimal_chern_character} requires \hl{a priori knowledge} of the group structure of the relative $\mathcal{K}_{p-2}$.
\end{remark}

\begin{corollary}\label{cor:smAlg-relKTheory}
%Assume Conjecture \ref{conj-formality-localized}. 
Let $\Bbbk$ be a perfect field of characteristic $p$. Let $n\in \mathbb{N}$.
\begin{enumerate}[(i)]
  \item Let $X_{\centerdot}$ be an object of $\operatorname{Sm}_{W_{\centerdot}(\Bbbk)}$ of relative dimension $d$. Then for $0\leq i\leq p-5-d$ and $n>m\geq 1$, the infinitesimal Chern character map
  \begin{gather*}
  \mathrm{ch}=\sum_{r=1}^{p-1} \mathrm{ch}_r:K_i(X_n,X_m)\rightarrow \bigoplus_{r=1}^{p-1} \mathbb{H}^{2r-i-1}\big(X_1,p^{r,m}_{r,n}\Omega^{\bullet}_{X_{\centerdot}}\big)
  \end{gather*}
  is an isomorphism.
  \item Let $A_n$ be a smooth $W_{n}(\Bbbk)$-algebra of relative dimension $d$.  Take a smooth $W(\Bbbk)$-algebra $A$ which lifts $A_n$. %Write $\Omega^{s}$ for $\Omega^{s}_{A_?/W_{?}(\Bbbk)}$. 
  Then for $i\leq p-5-d$, the Chern character induces an isomorphism
  \begin{align*}
&\mathrm{ch}:K_{i}\big(A_n,A_1\big)
\cong \bigoplus_{r=1}^{\lfloor \frac{i+1}{2}\rfloor}\frac{\{\omega\in p^{r} \Omega^{i+1-2r}_{A/W}| \mathrm{d}\omega\in p^{(r-1)n} \Omega^{i+2-2r}_{A/W}\}}{p^{rn} \Omega^{i+1-2r}_{A/W}+\mathrm{d}(p^{r+1} \Omega^{i-2r}_{A/W})}.
\end{align*}
\end{enumerate}
\end{corollary}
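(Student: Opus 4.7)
My plan is to deduce the corollary from Theorem \ref{thm:relative-comparison-local} via Nisnevich descent for $K$-theory. The first step is to upgrade the Chern character to a morphism of Nisnevich presheaves of spectra on $X_{1,\mathrm{Nis}}$: the construction of \S\ref{sec:infinitesimal_chern_classes_and_chern_characters} realizes $\mathrm{ch}_r$ as genuine maps $\mathrm{BGL}_{W_n}\to\Gamma_n(r)$ of simplicial sheaves on $(\mathrm{Sm}_{W_{\centerdot}})_{\mathrm{Nis}}$ which are compatible across $n$, so by restricting to the small Nisnevich site of $X_1$ and taking homotopy fibers along the absolute-to-$m$ reduction one obtains a morphism
\begin{gather*}
\widetilde{\mathrm{ch}}:K(?_n,?_m)\longrightarrow \bigoplus_{r=1}^{p-1}p^{r,m}_{r,n}\Omega^{\bullet}_{?_{\centerdot}}[2r-1]
\end{gather*}
of presheaves of spectra on $X_{1,\mathrm{Nis}}$ (target regarded as a spectrum via Dold--Kan/Eilenberg--MacLane) whose induced map on homotopy sheaves is precisely the sheafified Chern character of Theorem \ref{thm:relative-comparison-local}.

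By the Thomason--Trobaugh theorem \cite{ThT90}, the $K$-theory presheaf of spectra, and hence its fiber the relative $K$-theory, satisfies Nisnevich descent, so both sides of (i) are computable as $\pi_i R\Gamma_{\mathrm{Nis}}(X_1,-)$ applied to these presheaves. Let $C:=\mathrm{cofib}(\widetilde{\mathrm{ch}})$. The long exact sequence of the cofiber on homotopy sheaves, combined with the sheafified assertion of Theorem \ref{thm:relative-comparison-local} (isomorphism on $\pi_t$ for $t\leq p-4$), forces $\pi_t C=0$ for $t\leq p-4$.

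Since $X_1$ is Noetherian of Krull dimension $d$, its Nisnevich cohomological dimension is at most $d$, so the hypercohomology spectral sequence $E_2^{s,t}=H^s_{\mathrm{Nis}}(X_1,\pi_t C)\Rightarrow \pi_{t-s}R\Gamma_{\mathrm{Nis}}(X_1,C)$ has $E_2^{s,t}=0$ unless $s\leq d$ and $t\geq p-3$, giving $\pi_i R\Gamma_{\mathrm{Nis}}(X_1,C)=0$ for $i\leq p-4-d$. The long exact sequence
\begin{gather*}
\dots\to \pi_{i+1}R\Gamma(X_1,C)\to K_i(X_n,X_m)\xrightarrow{\mathrm{ch}}\bigoplus_r\mathbb{H}^{2r-i-1}(X_1,p^{r,m}_{r,n}\Omega^{\bullet}_{X_{\centerdot}})\to \pi_iR\Gamma(X_1,C)\to\dots
\end{gather*}
then yields an isomorphism on the $i$-th term once both surrounding groups vanish, i.e.\ for $i+1\leq p-4-d$, equivalently $i\leq p-5-d$, proving (i).

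Part (ii) follows by specialization: when $X_{\centerdot}$ is associated with the smooth affine lift $\operatorname{Spec} A$, the scheme $X_1=\operatorname{Spec} A_1$ is affine and all terms of $p^{r,m}_{r,n}\Omega^{\bullet}_{A_{\centerdot}}$ are quasi-coherent, so higher Nisnevich (equivalently Zariski) cohomology vanishes and the hypercohomology reduces to the cohomology of the complex of global sections; applying Corollary \ref{cor:rel-HC-nonreduced-overZ} with $L=n$, $M=1$ and the reindexing $q=i-1$, $s=i+1-2r$ recovers the displayed formula. The main obstacle of this plan is Step 1: promoting the Chern characters from maps defined on cohomology groups/sheaves to an actual morphism in the stable homotopy category of Nisnevich presheaves of spectra, compatibly across $n$, so that its cofiber is well-defined and produces a genuine long exact sequence. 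This is precisely what the careful model-categorical construction of \S\ref{sec:infinitesimal_chern_classes_and_chern_characters} achieves, exploiting the vanishing of de Rham cohomology on products of classifying spaces (Proposition \ref{prop:relativeDeRhamCoh-BGLtimesBGL}) and the canonicality statements of Proposition \ref{prop:infinitesimalChernClass}.
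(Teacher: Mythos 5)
Your proof is correct and follows essentially the same route as the paper: both rest on Theorem~\ref{thm:relative-comparison-local} as the sheaf-level input, Thomason--Trobaugh Nisnevich descent for $K$-theory, and the bound $\mathrm{cd}_{\mathrm{Nis}}(X_1)\le d$. The only difference is organizational: you take the cofiber of a map of Nisnevich presheaves of spectra and run the descent spectral sequence for that single object, whereas the paper compares the two descent spectral sequences (for relative $K$-theory and for $\bigoplus_r p^{r,m}_{r,n}\Omega^{\bullet}_{X_{\centerdot}}[2r-1]$) directly via a morphism of spectral sequences, citing \cite[Lemma B.10]{BEK14} for the page-by-page bookkeeping. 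Your cofiber formulation sidesteps that bookkeeping cleanly; it does require knowing that the relative Chern character is an honest morphism of presheaves of spectra, and you correctly identify this as the crucial point and correctly attribute it to \S\ref{sec:infinitesimal_chern_classes_and_chern_characters}, which is exactly what the paper itself invokes in the proof of Proposition~\ref{prop:relative-comparison-m=n-1-modP}. Your treatment of (ii) (affine case kills higher cohomology, then reindex via Corollary~\ref{cor:rel-HC-nonreduced-overZ} together with Theorem~\ref{thm:rel-HC-homotopyEquiv}) also matches the paper's short argument. One small remark: you only use the isomorphism range $t\le p-4$ of Theorem~\ref{thm:relative-comparison-local} to kill $\pi_tC$; using the additional surjectivity at $t=p-3$ would give $\pi_tC=0$ for $t\le p-3$ and hence the (marginally sharper) range $i\le p-4-d$, but since the paper only asserts $i\le p-5-d$ this is a harmless omission.
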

\begin{proof}
By Thomason-Trobaugh \cite[Theorem 10.3]{ThT90}, there is a  local-to-global spectral sequence $E_2^{i,j}=H^i_{\mathrm{Nis}}(X_1,\mathcal{K}_{X_n,X_m,j})\Longrightarrow K_{j-i}(X_n,X_m)$ with differentials $d_s:E_s^{i,j} \rightarrow E_s^{i+s,j+s-1}$. On the other hand, there is  a spectral sequence $E_2^{i,j}=H^i\big(X_1,\mathcal{H}^{j}(p^{r,m}_{r,n}\Omega^{\bullet}_{X_{\centerdot}})\big)\Longrightarrow \mathbb{H}^{i+j}\big(X_1,p^{r,m}_{r,n}\Omega^{\bullet}_{X_{\centerdot}}\big)$  with differentials $d_s:E_s^{i,j} \rightarrow E_s^{i+s,j-s+1}$. Let $\leftidx{^\prime}E_2^{i,j}=\bigoplus_{r}H^i\big(X_1,\mathcal{H}^{2r-j-1}(p^{r,m}_{r,n}\Omega^{\bullet}_{X_{\centerdot}})\big)$. Since both spectral sequences are of the Brown-Gersten type (\cite{BrG73}),
%deduced from the Mayer-Vietoris property, 
the sheafification of the relative Chern character defined in Proposition \ref{prop:definition-infinitesimal-ChernCharacter} induces a natural map $\mathrm{ch}:\{E_s^{i,j}\} \rightarrow \{\leftidx{^\prime}E_{s}^{i,j}\}$ between spectral sequences which converges to $\mathrm{ch}:K_{j-i}(X_n,X_m) \rightarrow \bigoplus_{r=1}^{p-1} \mathbb{H}^{2r-i+j-1}\big(X_1,p^{r,m}_{r,n}\Omega^{\bullet}_{X_{\centerdot}}\big)$.

%By  \cite[Theorem 1.32]{Nis89}, $\mathrm{cd}_{\mathrm{Nis}}(X_1)\leq d$. 
%the Nisnevich cohomological dimension of $X_1$ is $\leq d$. 
Recall that $\mathrm{cd}_{\mathrm{Nis}}(X_1)\leq d$ (\cite[Theorem 1.32]{Nis89}).
Thus by  induction on $s$, and noting that the RHS of \eqref{eq:relative-comparison-local} is equal to the direct sum over all $1\leq r\leq p-1$, Theorem \ref{thm:relative-comparison-local} implies $E_s^{i,j} \xrightarrow{\cong} \leftidx{^\prime}E_{s}^{i,j}$ for $j-i\leq p-5-d$ (see also \cite[Lemma B.10]{BEK14}). Then (i) follows.
For (ii), by Elkik's lifting theorem, such a lifting $A$ exists. Thus (ii) follows from (i).
\end{proof}

\subsection{Local Frobenius liftings} % (fold)
\label{sub:local_frobenius_liftings}
The following result is extracted from the proof of \cite[Proposition 3.2]{LaZ04}. Recall that, since $\mathrm{char}(\Bbbk)=p$, for $n\geq 1$ there is a Frobenius ring map $F:W_n(\Bbbk) \rightarrow W_n(\Bbbk)$, $(a_0,\dots,a_{n-1})\mapsto (a_0^p,\dots,a_{n-1}^p)$.
\begin{proposition}\label{prop:zar-local-Frob-lift}
Let $X_{\centerdot}\in \mathrm{Sm}_{W_{\centerdot}}$. Then 
\emph{Zariski locally} on $X_{\centerdot}$ there exist Frobenius liftings. More precisely, 
for any point $x\in X_1$, there exist affine open subschemes $U_n\subset X_n$ containing $x$ for $n\geq 1$,
and morphisms $f_n:U_n \rightarrow U_{n}$, satisfying 
\begin{enumerate}[(i)]
  \item $U_{n-1}=U_n\times_{\operatorname{Spec} W_n}\operatorname{Spec}W_{n-1}$ as subschems of $X_{n-1}$,
  \item $f_1$ is the absolute Frobenius on $U_1$, and
  \item $f_n$ lifts $F:W_n \rightarrow W_{n}$ and the morphisms $f_n$ are compatible, namely, the diagrams
  \begin{gather*}
  \xymatrix{
    U_n \ar[r]^{f_n} \ar[d] & U_{n} \ar[d] \\
   \operatorname{Spec} W_n \ar[r]^{F} & \operatorname{Spec} W_n
  }\quad
  \xymatrix{
   {}
    \ar@{}[d]|-{\mbox{and}}\\
{}
  }\quad
  \xymatrix{
    U_n \ar[r] \ar[d]_{f_n} & U_{n+1} \ar[d]^{f_{n+1}} \\
    U_n \ar[r] & U_{n+1}
  }
  \end{gather*}
  are commutative.
\end{enumerate}
\end{proposition}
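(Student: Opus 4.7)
The plan is to use étale coordinates near $x$ together with the topological invariance of the étale site to transfer Frobenius liftings from affine space up to $U_{\centerdot}$. First, since $X_1$ is smooth over $\Bbbk$, I would shrink to an affine open neighborhood $U_1 \subset X_1$ of $x$ admitting an étale morphism $\varphi_1: U_1 \to \mathbb{A}^d_{\Bbbk}$. Because $X_1 \hookrightarrow X_n$ is a homeomorphism, $U_1$ determines a unique affine open subscheme $U_n \subset X_n$, and the cartesian structure of $X_{\centerdot}$ forces condition (i) automatically. By topological invariance of the étale site (EGA IV, 18.1.2), $\varphi_1$ lifts uniquely to a compatible system of étale morphisms $\varphi_n: U_n \to \mathbb{A}^d_{W_n}$.

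Second, I would define the model Frobenius lift $\sigma_n: \mathbb{A}^d_{W_n} \to \mathbb{A}^d_{W_n}$ as the $W_n$-scheme map given by the Witt Frobenius $F$ on $W_n$ and by $T_i \mapsto T_i^p$ on coordinates. These are visibly compatible across $n$, and $\sigma_1$ is the absolute Frobenius on $\mathbb{A}^d_{\Bbbk}$. Since the absolute Frobenius $F_{U_1}: U_1 \to U_1$ satisfies $\varphi_1 \circ F_{U_1} = \sigma_1 \circ \varphi_1$, the universal property of the fiber product factors $F_{U_1}$ through a morphism $U_1 \to \sigma_1^* U_1 := U_1 \times_{\mathbb{A}^d_{\Bbbk}, \sigma_1} \mathbb{A}^d_{\Bbbk}$ of étale $\mathbb{A}^d_{\Bbbk}$-schemes.

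Third, applying topological invariance once more, this morphism lifts uniquely to morphisms $U_n \to \sigma_n^* U_n$ of étale $\mathbb{A}^d_{W_n}$-schemes, and the uniqueness clause guarantees they are compatible across $n$. Composing with the canonical projection $\sigma_n^* U_n \to U_n$ (which lies over $\sigma_n$) then yields $f_n: U_n \to U_n$, a morphism over $F: W_n \to W_n$. Conditions (ii) and (iii) are immediate from the construction: $f_1 = F_{U_1}$ by design, and the mod-$p^{n-1}$ reduction of $f_n$ solves the same unique étale lifting problem as $f_{n-1}$, forcing $f_n \equiv f_{n-1}$. There is no substantive obstacle in this argument; the only care needed is that the shrinking around $x$ happens uniformly for all $n$, which is automatic since all the $U_n$'s are governed by the single choice of $U_1$ via the cartesian squares defining $\mathrm{Sm}_{W_{\centerdot}(\Bbbk)}$.
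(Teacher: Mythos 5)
Your argument is essentially the same as the paper's: reduce to étale coordinates $\varphi_1 : U_1 \to \mathbb{A}^d_{\Bbbk}$, take the naive Frobenius lift $\sigma_n$ on $\mathbb{A}^d_{W_n}$, form the Frobenius twist $\sigma_n^*U_n$, and lift the relative Frobenius $U_1 \to \sigma_1^*U_1$ compatibly along the infinitesimal thickenings via topological invariance of the étale site. Phrased in rings, your third step is exactly the paper's construction of $g_n : B_n^* \to B_n$ (the paper uses the essential-surjectivity side of the equivalence $\mathrm{Et}/\mathbb{A}^d_{W_n} \simeq \mathrm{Et}/\mathbb{A}^d_{\Bbbk}$ to identify $B_n^*$ with $B_n$; you use full faithfulness to lift the relative Frobenius morphism directly — the same equivalence, invoked from the other side). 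So the core of your proof is fine.

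One sentence in step one is wrong, though it does not break the argument. You say that $\varphi_1$ "lifts uniquely to a compatible system of étale morphisms $\varphi_n : U_n \to \mathbb{A}^d_{W_n}$" by topological invariance. Neither the uniqueness claim nor the attribution is right. Topological invariance (EGA IV 18.1.2) is an equivalence of categories of \emph{étale schemes over a fixed base}; it does not produce the structure map $\varphi_n$ of a given smooth $W_n$-scheme $U_n$ to $\mathbb{A}^d_{W_n}$. What does produce $\varphi_n$ is the elementary observation that the coordinate images $\rho_1(T_i)\in B_1$ lift (non-canonically) to $B_n$ because $B_n\twoheadrightarrow B_{n-1}$, and the resulting $W_n[T_1,\dots,T_d]\to B_n$ is étale by the Jacobian criterion since its reduction is. This is exactly the inductive step the paper spells out. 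The lift is certainly not unique (already for $U_1=\mathbb{A}^1_{\Bbbk}$, $T\mapsto T$ and $T\mapsto T+p$ both lift the identity). Luckily you never use uniqueness of the $\varphi_n$'s: you only need \emph{some} compatible family, and once one is fixed, the map $U_n\to\sigma_n^*U_n$ in step three is where uniqueness genuinely comes from topological invariance. I'd fix the wording in step one, but the rest of the argument stands.
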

\begin{proof}
Let $U_1$ be an arbitrary affine open subset of $X_1$ containing $x$.  Shrink $U_1=\operatorname{Spec}B_1$ such that we have an étale homomorphism of $\rho_1:\Bbbk$-algebras $A_1=\Bbbk[T_1,\dots,T_d] \rightarrow B_1$. 
For $n\geq 2$, let $U_n$ be the open subscheme of $X_n$ whose underlying space coincides with $U_1$. By \cite[Chap I Cor. 5.1.10]{EGA I}, $U_n$'s are also affine. Let $B_n$ be a smooth $W_n$-algebra for  $n\geq 2$  such that  $U_n=\operatorname{Spec}B_n$ and $B_{n-1}=B_n\otimes_{W_n} W_{n-1}$. 
Let $A=W[T_1,\dots,T_d]$ and  $A_n=A\otimes_W W_n$. Then there is a commutative diagram
\begin{gather*}
\xymatrix{
\dots \ar[r] & A_n \ar[r] \ar[d]_{\rho_n} & A_{n-1} \ar[r] \ar[d]_{\rho_{n-1}} & \dots \ar[r] \ar[d] & A_2 \ar[r] \ar[d]_{\rho_2} & A_1 \ar[d]^{\rho_1} \\
\dots \ar[r] & B_n \ar[r]  & B_{n-1} \ar[r]  & \dots \ar[r]  & B_2 \ar[r]  & B_1
}
\end{gather*}
such that  every $\rho_n$ is an étale homomorphism of $W_n$-algebras. In fact, we can construct $\rho_n$ inductively: suppose we have defined  $\rho_{n-1}$ , then we choose $b_1^{(n)},\dots,b_d^{(n)}\in B_n$ such that $b_i^{(n)}$ maps to $\rho_{n-1}(T_i)\in B_{n-1}$, and define $\rho_n$ by $\rho_n(T_i)=b_i^{(n)}$; by the Jacobian criterion,
%[SGA1, Exp II, Cor. 4.6]
$\rho_n$ is étale because so is $\rho_1$.
 
Now we repeat the arguments of \cite[Page 284-285]{LaZ04}.
Let  $\phi_n:A_n \rightarrow A_n$ be the ring map which lifts  $F: W_n \rightarrow W_n$ such that $\phi_n(T_i)=T_i^p$. In particular, $\phi_1$ is the absolute Frobenius of $A_1$.
Let $B_n^*=B_n\otimes_{\rho_n,A_n,\phi_n}A_n$, and we regard $B_n^*$ as an $A_n$-algebra by $a\mapsto 1\otimes a$. Since $\phi_n$ lifts $\phi_1$, there is an isomorphism
\begin{gather*}
B_n^*\otimes_{A_n}A_1=B_n\otimes_{A_n,\phi_n}A_n \otimes_{A_n}A_1\cong B_n\otimes_{A_n}A_1 \otimes_{A_1,\mathrm{Frob}}A_1= B_1\otimes_{A_1, \mathrm{Frob}}A_1\ .
\end{gather*}
Since $B_1$ is étale over $A_1$, the relative Frobenius
\begin{gather*}
B_1\otimes_{A_1, \mathrm{Frob}}A_1 \xrightarrow{\cong} B_1,\quad
b\otimes a \mapsto b^p a
\end{gather*}
is an isomorphism (\cite[\S 1.2 Proposition 2]{Hou66}).
Thus $B_n^*$ is an étale $A_n$-algebra which lifts  the étale $A_1$-algebra $B_1$ with respect to the reduction map $A_n \rightarrow A_1$. By \cite[18.1.2]{EGA IV-4}, there is a unique isomorphism of $A_n$-algebras $g_n:B_n^*\cong B_n$, and in the diagram
\begin{gather*}
\xymatrix{
  B_n \ar[r]^{b\mapsto b\otimes 1} \ar[d] & B_n^* \ar[r]^{g_n} \ar[d] & B_n \ar[d] \\
  B_{n-1} \ar[r]^{b\mapsto b\otimes 1}  & B_{n-1}^* \ar[r]^{g_{n-1}} &  B_{n-1}
}
\end{gather*}
the square on the right commutes.
Let $\psi_n$ be the composition 
$B_n \rightarrow B_n^* \xrightarrow{g_n}B_n$, and let $f_n$ be the associated endomorphism of $U_n=\operatorname{Spec}B_n$. Then the conditions (ii) and (iii) are satisfied.
\end{proof}
% subsection local_frobenius_liftings (end)

\subsection{The mod \texorpdfstring{$p$}{p} product structure} % (fold)
\label{sub:the_mod_p_product_structure}
For $L>M\geq 1$, the sheafification of \eqref{eq:rel-HC-nonreduced-modP-components} yields a canonical decomposition
\begin{gather}\label{eq:rel-HC-nonreduced-modP-components-sheafified}
\mathcal{H}^i\big(p^{r,M}_{r,L}\Omega^{\bullet}_{X_{\centerdot}}[-1]\otimes_{\mathbb{Z}}^{\mathbf{L}}\mathbb{Z}/p\big)\cong\begin{cases}
\quad\quad\quad\ \mathcal{O}_{X_1},& i=0,\\
\Omega^{i-1}_{X_1}\oplus \Omega^{i}_{X_1},& 1\leq i\leq r-1,\\
\Omega^{r-1}_{X_1},& i=r.
\end{cases}
\end{gather}
In this subsection we study the multiplication
\begin{gather}\label{eq:c1-product-relativeMotivicComplex}
\mathcal{K}_j(X_L; \mathbb{Z}/p)\times  \mathcal{H}^i\big(p^{r,M}_{r,L}\Omega^{\bullet}_{X_{\centerdot}}[-1]\otimes_{\mathbb{Z}}^{\mathbf{L}}\mathbb{Z}/p\big) 
\rightarrow \mathcal{H}^{i+2-j}\big(p^{r+1,M}_{r+1,L}\Omega^{\bullet}_{X_{\centerdot}}[-1]\otimes_{\mathbb{Z}}^{\mathbf{L}}\mathbb{Z}/p\big)\\
(\alpha,\gamma) \mapsto c_1(\alpha)\cdot \gamma\nn
\end{gather}
for $j=1,2$,  
where the multiplication is induced by the product in $\bigoplus_{0\leq r<p}\mathbb{Z}_{X_L}(r)$ and the fiber sequence
\begin{gather*}
0 \rightarrow p^{r,M}_{r,L}\Omega^{\bullet}_{X_{\centerdot}}[-1] \rightarrow \mathbb{Z}_{X_L}(r) \rightarrow \mathbb{Z}_{X_M}(r) \rightarrow 0.
\end{gather*}
The problem is local, so by Proposition \ref{prop:zar-local-Frob-lift} we assume the existence of a Frobenius lifting on $X_{\centerdot}$. Then we take $D_{\centerdot}=X_{\centerdot}$, and thus
\begin{gather*}
J(r,n)\Omega_{D_{\centerdot}}^{\bullet}=J(r,n)\Omega_{X_{\centerdot}}^{\bullet}=p^{r,n}\Omega_{X_{\centerdot}}^{\bullet}\ ,\\
I(r)\Omega_{D_{\centerdot}}^{\bullet}=I(r)\Omega_{X_{\centerdot}}^{\bullet}=p^{r,1}\Omega_{X_{\centerdot}}^{\bullet}\ ,
\end{gather*}
and $f_r: I(r)\Omega_{X_{\centerdot}}^{\bullet} \rightarrow \Omega_{X_{\centerdot}}^{\bullet}$ can be described as follows. Let $x_i\in \mathcal{O}_{X_{\centerdot}}$ for $0\leq i\leq k$. Since $f \mod p$ is the absolute Frobenius on $X_1$, there exists unique $y_i\in \mathcal{O}_{X_{\centerdot}}$ such that $f_{\centerdot}^*(x_i)=x_i^p+p y_i$. Then
\begin{gather}\label{eq:formula-fr}
f_r(p^{r-k}x_0 \mathrm{d}x_1\cdots \mathrm{d}x_k)=(f_{\centerdot}^*x_0)(x_1^{p-1}\mathrm{d}x_1+ \mathrm{d}y_1)\cdots 
(x_k^{p-1}\mathrm{d}x_k+ \mathrm{d}y_k).
\end{gather}
In particular, since for any $a\in \mathcal{O}_{X_{\centerdot}}^{\times}$, there exists $b\in \mathcal{O}_{X_{\centerdot}}$ such that $\log (f(a)a^{-p})=pb$, we have
\begin{gather}\label{eq:formula-f1}
(f_1-1)\mathrm{d}\log a= \mathrm{d}b.
\end{gather}

In the following computations we use the representatives of mod $p$ classes and their products in Lemma \ref{lem:mod-p-product}.
The mod $p$ product in Lemma \ref{lem:mod-p-product} coincides with the one induced by the mod $p$ product via the Eilenberg-Mac Lane functor. Consequently, the mod $p$ Chern character map  preserves mod $p$ products.

By the isomorphism \eqref{eq:quasiIso-J(r,n)->I(r)-to-p(r)} we have
\begin{gather}\label{eq:quasiIso-J(r,n)->I(r)-to-p(r)-cohomology}
H^i\big(p(r)\Omega^{\bullet}_{X_n}[-1]\otimes^{\mathbf{L}}\mathbb{Z}/p \mathbb{Z}\big)=
H^i\Big(\mathrm{Cone}\big(J(r,n)\Omega_{X_{\centerdot}}^{\bullet}\rightarrow I(r)\Omega_{X_{\centerdot}}^{\bullet}\big)[-1]\otimes^{\mathbf{L}}\mathbb{Z}/p \mathbb{Z}\Big).
\end{gather}
Recall that
\begin{gather*}
\mathrm{Cone}\big(J(r,n)\Omega_{X_{\centerdot}}^{\bullet}\rightarrow I(r)\Omega_{X_{\centerdot}}^{\bullet}\big)[-1]^{i}
=J(r,n)\Omega_{X_{\centerdot}}^{i}\oplus I(r)\Omega_{X_{\centerdot}}^{i-1},
\end{gather*}
and noting the shift $[-1]$, the differential is given by
\begin{gather}\label{eq:quasiIso-J(r,n)->I(r)-to-p(r)-cone-differential}
\mathrm{d}(\alpha_{i},\alpha_{i-1})=(\mathrm{d}\alpha_{i},-\alpha_{i}-\mathrm{d} \alpha_{i-1}).
\end{gather}
By Lemma \ref{lem:mod-p-product} and \eqref{eq:quasiIso-J(r,n)->I(r)-to-p(r)-cone-differential}, an element in $H^i\Big(\mathrm{Cone}\big(J(r,n)\Omega_{X_{\centerdot}}^{\bullet}\rightarrow I(r)\Omega_{X_{\centerdot}}^{\bullet}\big)[-1]\otimes^{\mathbf{L}}\mathbb{Z}/p \mathbb{Z}\Big)$ is represented by
\begin{gather*}
(\alpha_{i},\alpha_{i-1})\oplus (\beta_{i+1},\beta_{i})\in J(r,n)\Omega_{X_{\centerdot}}^{i}\oplus I(r)\Omega_{X_{\centerdot}}^{i-1}\oplus J(r,n)\Omega_{X_{\centerdot}}^{i+1}\oplus I(r)\Omega_{X_{\centerdot}}^{i}
\end{gather*}
satisfying
\begin{gather*}
\mathrm{d} \alpha_{i}+p \beta_{i+1}=0,\ -\alpha_{i}-\mathrm{d} \alpha_{i-1}+p\beta_{i}=0,\ 
\mathrm{d} \beta_{i+1}=0,\ \beta_{i+1}+  \mathrm{d}\beta_{i}=0.
\end{gather*}
The solutions to this system of equations are generated by the following \eqref{eq:relative-motivic-complex-homology-mod-p-generators-1} and \eqref{eq:relative-motivic-complex-homology-mod-p-generators-2}.
\begin{lemma}\label{lem:relative-motivic-complex-homology-mod-p-generators}
%Recall the isomorphism \eqref{eq:rel-HC-nonreduced-modP-components}. 
For $\alpha\in  \Omega_{X_{\centerdot}}^i$, let $\overline{\alpha} \in \Omega_{X_1/\Bbbk}^i$ be $\alpha \mod p$. Then
via the isomorphism \eqref{eq:quasiIso-J(r,n)->I(r)-to-p(r)-cohomology},
%via the isomorphism \eqref{eq:quasiIso-J(r,n)->I(r)-to-p(r)-cohomology},
\begin{enumerate}[(i)]
  \item for $1\leq i\leq r$ and  a local section $\alpha$ of $\Omega_{X_{\centerdot}}^{i-1}$, the  element
\begin{gather}\label{eq:relative-motivic-complex-homology-mod-p-generators-1}
(0,p^{r-i+1}\alpha)\oplus (0,p^{r-i}\mathrm{d} \alpha)
\end{gather}
represents a cohomology class on the left-hand side of \eqref{eq:rel-HC-nonreduced-modP-components-sheafified} (with $M=1$) and 
maps to the element $\overline{\alpha}\in\Omega_{X_1/\Bbbk}^{i-1}$ on the right-hand side of \eqref{eq:rel-HC-nonreduced-modP-components-sheafified}, and
\item for $0\leq i\leq r-1$ and a local section $\beta$ of $\Omega_{X_{\centerdot}}^{i}$, the element
\begin{gather}\label{eq:relative-motivic-complex-homology-mod-p-generators-2}
(p^{(r-i)n}\beta,0)\oplus (-  p^{(r-i)n-1}\mathrm{d}\beta, p^{(r-i)n-1}\beta)
\end{gather}
represents a cohomology class on the left-hand side of \eqref{eq:rel-HC-nonreduced-modP-components-sheafified} (with $M=1$) and 
maps to the element  $\overline{\beta}\in\Omega_{X_1/\Bbbk}^{i}$ on the right-hand side of \eqref{eq:rel-HC-nonreduced-modP-components-sheafified}.
\end{enumerate}
\end{lemma}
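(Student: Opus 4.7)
The proof is essentially a direct verification, organised into two steps: check that the proposed elements are mod $p$ cocycles, then trace them through the quasi-isomorphism \eqref{eq:quasiIso-J(r,n)->I(r)-to-p(r)} to match the claimed summand of \eqref{eq:rel-HC-nonreduced-modP-components-sheafified}.

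For the cocycle step I would combine the mod $p$ differential from Lemma \ref{lem:mod-p-product} with the shifted cone differential \eqref{eq:quasiIso-J(r,n)->I(r)-to-p(r)-cone-differential}. In case (i), the cone differential of $(0,p^{r-i+1}\alpha)$ is $(0,-p^{r-i+1}\mathrm{d}\alpha)$, which cancels $p\cdot(0,p^{r-i}\mathrm{d}\alpha)$, while the cone differential of $(0,p^{r-i}\mathrm{d}\alpha)$ vanishes because $\mathrm{d}^2=0$. In case (ii), the cone differential of $(p^{(r-i)n}\beta,0)$ is $(p^{(r-i)n}\mathrm{d}\beta,-p^{(r-i)n}\beta)$, which cancels $p\cdot(-p^{(r-i)n-1}\mathrm{d}\beta,p^{(r-i)n-1}\beta)$, and the cone differential of the $(i+1)$-component vanishes. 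Both checks are one-line calculations.

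For the identification of the cohomology class, I would argue as follows. The summands $\Omega^{i-1}_{X_1}\oplus \Omega^i_{X_1}$ on the right of \eqref{eq:rel-HC-nonreduced-modP-components-sheafified} come from the universal coefficient sequence
\begin{gather*}
0\rightarrow \mathcal{H}^{i-1}\bigl(p^{r,M}_{r,L}\Omega^{\bullet}_{X_{\centerdot}}\bigr)/p \rightarrow \mathcal{H}^{i}\bigl(p^{r,M}_{r,L}\Omega^{\bullet}_{X_{\centerdot}}[-1]\otimes^{\mathbf{L}}\mathbb{Z}/p\bigr) \rightarrow {}_p\mathcal{H}^{i}\bigl(p^{r,M}_{r,L}\Omega^{\bullet}_{X_{\centerdot}}\bigr) \rightarrow 0,
\end{gather*}
and the flat resolution \eqref{eq-infinitesimalMOtivicComplex-flatResolution} identifies the quotient and torsion terms with $\Omega^{i-1}_{X_1}$ and $\Omega^{i}_{X_1}$ respectively. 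In case (i) the degree-$i$ component of the generator lies in the $I(r)$-summand, so under $\mathrm{Cone}\bigl(J(r,n)\rightarrow I(r)\bigr)[-1]\xrightarrow{\sim} p(r)\Omega^{\bullet}_{X_n}[-1]$ it represents the class of $p^{r-i+1}\alpha$ in $p^{r-i+1}\Omega^{i-1}_{X_{\centerdot}}/p^{(r-i+1)n}\Omega^{i-1}_{X_{\centerdot}}$, which via the vertical map $p^{r-i+1}\cdot$ in \eqref{eq-infinitesimalMOtivicComplex-flatResolution} corresponds to $\overline{\alpha}\in \Omega^{i-1}_{X_1}$ in the first summand (i.e.\ the $\mathcal{H}^{i-1}/p$ piece). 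In case (ii) the degree-$i$ component lies in $J(r,n)\Omega^{i}$ and projects to zero, so the class is captured by the degree-$(i+1)$ component $p^{(r-i)n-1}\beta\in I(r)\Omega^{i}$: since $p$ times this lies in $J(r,n)\Omega^{i}$, it represents a $p$-torsion class of $\mathcal{H}^{i}\bigl(p^{r,M}_{r,L}\Omega^{\bullet}_{X_{\centerdot}}\bigr)$, and under the flat resolution \eqref{eq-infinitesimalMOtivicComplex-flatResolution} this torsion class corresponds to $\overline{\beta}\in \Omega^{i}_{X_1}$ in the second summand.

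The only routine obstacle will be keeping the sign conventions and powers of $p$ straight across the cone differential, the mod $p$ representative convention of Lemma \ref{lem:mod-p-representative-flatResolution}, and the flat resolution \eqref{eq-infinitesimalMOtivicComplex-flatResolution}; once these are aligned, both parts reduce to the direct computations sketched above.
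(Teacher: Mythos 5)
Your proposal is essentially correct and follows the same route as the paper's one-line proof: identify the cone \eqref{eq:quasiIso-J(r,n)->I(r)-to-p(r)} with the flat resolution \eqref{eq-infinitesimalMOtivicComplex-flatResolution}, verify the cocycle condition using the cone differential \eqref{eq:quasiIso-J(r,n)->I(r)-to-p(r)-cone-differential} together with the $\bmod\,p$ representative convention of Lemmas \ref{lem:mod-p-product}--\ref{lem:mod-p-representative-flatResolution}, and then read off which of the two direct summands the class lands in by dividing through by the appropriate $p$-power vertical map. The cocycle checks in both cases are right (note only that in case (ii) the vanishing of the cone differential of the degree-$(i+1)$ component is not purely $\mathrm{d}^2=0$, but a cancellation of the two terms $p^{(r-i)n-1}\mathrm{d}\beta$ in the second coordinate).

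One cautionary remark: your parenthetical attribution of the two summands to ``the quotient and torsion terms'' of the universal coefficient sequence is imprecise, and if taken literally would be misleading. The UCT sequence
\begin{equation*}
0\rightarrow \mathcal{H}^{i-1}\bigl(p^{r,M}_{r,L}\Omega^{\bullet}_{X_{\centerdot}}\bigr)/p \rightarrow \mathcal{H}^{i}\bigl(p^{r,M}_{r,L}\Omega^{\bullet}_{X_{\centerdot}}[-1]\otimes^{\mathbf{L}}\mathbb{Z}/p\bigr) \rightarrow {}_p\mathcal{H}^{i}\bigl(p^{r,M}_{r,L}\Omega^{\bullet}_{X_{\centerdot}}\bigr) \rightarrow 0
\end{equation*}
is not canonically split in a way matching the direct-sum decomposition of \eqref{eq:rel-HC-nonreduced-modP-components-sheafified}; the subgroup $\mathcal{H}^{i-1}/p$ sits inside $\Omega^{i-1}_{X_1}\oplus\Omega^i_{X_1}$ as a ``graph'' rather than as the first summand. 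Indeed, the image of your case (i) element under the UCT surjection to ${}_p\mathcal{H}^i$ is $[p^{r-i}\mathrm{d}\alpha]$, which is generally nonzero. The decomposition the lemma uses is the one determined by the flat resolution \eqref{eq-infinitesimalMOtivicComplex-flatResolution}, whose $\bmod\,p$ differentials vanish identically, so that $\mathcal{H}^{i-1}(Q\otimes\mathbb{Z}/p)=Q^{i-1}\otimes\mathbb{Z}/p$ splits into the top-row and middle-row pieces; this is what you in fact invoke when you divide by the vertical map $p^{r-i+1}\cdot$. Since your actual computation uses the flat-resolution identification and never really uses the UCT-splitting claim, the argument goes through; but the parenthetical should be dropped or replaced.
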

\begin{proof}
Identifying the resolutions \eqref{eq:quasiIso-J(r,n)->I(r)-to-p(r)} with \eqref{eq-infinitesimalMOtivicComplex-flatResolution}, 
the conclusion follows from Lemma \ref{lem:mod-p-representative-flatResolution}.
\end{proof}
Recall the definition \eqref{eq:infinitesimal-syntomicComplex-Nis} of $\mathfrak{S}_{X_{\centerdot}}(r,n)$ and the map $p(r)\Omega^{\bullet}_{X_{n}}[-1]\rightarrow \mathfrak{S}_{X_{\centerdot}}(r,n)$ in \eqref{eq:fundamentalTriangle-Nis}.

\begin{lemma}\label{lem:infinitesimal-syntomic-cohomology-mod-p-generators}
Let $n\geq 2$ be an integer.
%Assume the existence of a Frobenius.
\begin{enumerate}[(i)]
  \item If $1\leq i\leq r$ and $\alpha$ is a local section $\Omega_{X_{\centerdot}}^{i-1}$, then 
  \begin{gather*}
  \big(0,(1-f_r)p^{r-i+1}\alpha\big)\oplus\big(0,(1-f_r)p^{r-i}\mathrm{d}\alpha\big)
   \end{gather*}
  represents an element\footnote{We remind the reader that $f_r (p^{r-i+1}\alpha)$ and $f_r \mathrm{d}(p^{r-i}\alpha)$ are defined because $p^{r-i+1}\alpha \in p^{r-i+1}\Omega_{X_{\centerdot}}^{i-1}$ and $\mathrm{d}(p^{r-i}\alpha) \in p^{r-i}\Omega_{X_{\centerdot}}^{i}$, while $f_r(p^{r-i}\alpha)$ is not.} of $\mathcal{H}^{i}\big(\mathfrak{S}_{X_{\centerdot}}(r,n)\otimes^{\mathbf{L}}\mathbb{Z}/p\big)$ and is the image of \eqref{eq:relative-motivic-complex-homology-mod-p-generators-1}.
  \item If $0\leq i\leq r-1$ and  $\beta$ is a  local section of $\Omega_{X_{\centerdot}}^{i}$, then 
   \begin{gather*}
   \big(p^{(r-i)n}\beta,0\big)\oplus\big(-p^{(r-i)n-1}\mathrm{d}\beta,(1-f_r)p^{(r-i)n-1}\beta\big)
   \end{gather*} 
  represents an element of $\mathcal{H}^i\big(\mathfrak{S}_{X_{\centerdot}}(r,n)\otimes^{\mathbf{L}}\mathbb{Z}/p\big)$ and is the image of \eqref{eq:relative-motivic-complex-homology-mod-p-generators-2}.
\end{enumerate}
\end{lemma}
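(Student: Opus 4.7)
[Proof proposal for Lemma \ref{lem:infinitesimal-syntomic-cohomology-mod-p-generators}]
The plan is to make explicit, at the level of complexes, the connecting map $p(r)\Omega^{\bullet}_{X_n}[-1]\to \mathfrak{S}_{X_{\centerdot}}(r,n)$ of the fundamental triangle \eqref{eq:fundamentalTriangle-Nis}, apply it to the flat-resolution mod $p$ representatives produced in Lemma \ref{lem:relative-motivic-complex-homology-mod-p-generators}, and then verify the cocycle condition using \eqref{eq:mod-p-differential} together with the identity $f_r\circ \mathrm{d}=\mathrm{d}\circ f_r$ on $I(r)\Omega_{X_{\centerdot}}^{\bullet}$.

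First, under the local hypothesis that a Frobenius lifting on $X_{\centerdot}$ exists, we have $J(r,n)\Omega_{X_{\centerdot}}^{\bullet}=p^{r,n}\Omega_{X_{\centerdot}}^{\bullet}$ and $I(r)\Omega_{X_{\centerdot}}^{\bullet}=p^{r,1}\Omega_{X_{\centerdot}}^{\bullet}$. By Lemma \ref{lem:quasiIso-J(r,n)->I(r)-to-p(r)} the quasi-isomorphism \eqref{eq:quasiIso-J(r,n)->I(r)-to-p(r)-cohomology} identifies $p(r)\Omega^{\bullet}_{X_n}[-1]$ with $\mathrm{Cone}\bigl(J(r,n)\Omega^{\bullet}\to I(r)\Omega^{\bullet}\bigr)[-1]$, whose degree $i$ component is $J(r,n)\Omega^i\oplus I(r)\Omega^{i-1}$ with differential \eqref{eq:quasiIso-J(r,n)->I(r)-to-p(r)-cone-differential}. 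Since $\mathfrak{S}_{X_{\centerdot}}(r,n)$ is the shifted cone of $J(r,n)\Omega^{\bullet}\xrightarrow{1-f_r}\Omega^{\bullet}$, both complexes share the same $J(r,n)\Omega^{\bullet}$-summand, and the morphism $p(r)\Omega^{\bullet}_{X_n}[-1]\to \mathfrak{S}_{X_{\centerdot}}(r,n)$ in \eqref{eq:fundamentalTriangle-Nis} is represented by the component-wise map
\begin{gather*}
J(r,n)\Omega^i\oplus I(r)\Omega^{i-1}\longrightarrow J(r,n)\Omega^i\oplus \Omega^{i-1},\qquad (x,y)\longmapsto \bigl(x,(1-f_r)y\bigr),
\end{gather*}
which is a map of complexes precisely because $1-f_r$ is.

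Next, by Lemma \ref{lem:mod-p-representative-flatResolution} applied to the flat resolution \eqref{eq-infinitesimalMOtivicComplex-flatResolution} (with $M=1$), the two representatives \eqref{eq:relative-motivic-complex-homology-mod-p-generators-1} and \eqref{eq:relative-motivic-complex-homology-mod-p-generators-2} in $\bigl(p(r)\Omega^{\bullet}_{X_n}[-1]\otimes (\mathbb{Z}\xrightarrow{p}\mathbb{Z})\bigr)^i$ are, in the notation of Lemma \ref{lem:mod-p-product}, pairs $(\alpha_i,\beta_{i+1})$ with $\beta_{i+1}=p^{-1}\mathrm{d}\alpha_i$ computed inside the ambient complex. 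Applying the componentwise map above and using formula \eqref{eq:mod-p-product}, the pair $(\alpha_i,\beta_{i+1})=\bigl((0,p^{r-i+1}\alpha),(0,p^{r-i}\mathrm{d}\alpha)\bigr)$ from case (i) is sent to
\begin{gather*}
\bigl(0,(1-f_r)p^{r-i+1}\alpha\bigr)\oplus \bigl(0,(1-f_r)p^{r-i}\mathrm{d}\alpha\bigr),
\end{gather*}
and similarly the pair from case (ii) is sent to
\begin{gather*}
\bigl(p^{(r-i)n}\beta,0\bigr)\oplus \bigl(-p^{(r-i)n-1}\mathrm{d}\beta,(1-f_r)p^{(r-i)n-1}\beta\bigr).
\end{gather*}
Finally, a direct check using \eqref{eq:mod-p-differential} and the identities $\mathrm{d}(1-f_r)=(1-f_r)\mathrm{d}$ and $(1-f_r)p^{r-i+1}\alpha\equiv p^{r-i+1}\alpha\pmod{p}$ confirms that both expressions are cocycles in the total mod $p$ complex, so they represent classes in $\mathcal{H}^i\bigl(\mathfrak{S}_{X_{\centerdot}}(r,n)\otimes^{\mathbf{L}}\mathbb{Z}/p\bigr)$ as required, and by construction they are the images of \eqref{eq:relative-motivic-complex-homology-mod-p-generators-1} and \eqref{eq:relative-motivic-complex-homology-mod-p-generators-2} under the map induced by \eqref{eq:fundamentalTriangle-Nis}.

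The whole argument is largely bookkeeping; the only mildly delicate point will be to keep the sign conventions of Lemma \ref{lem:mod-p-product} and of the Kato cone \eqref{eq:Kato-differential} consistent when passing through \eqref{eq:quasiIso-J(r,n)->I(r)-to-p(r)-cone-differential}, but no new input is needed beyond the already-cited lemmas.
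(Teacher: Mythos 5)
Your proposal is correct and follows the same approach as the paper: the paper's proof consists entirely of exhibiting the commutative square whose induced map on shifted cones is precisely the componentwise map $(x,y)\mapsto(x,(1-f_r)y)$ that you write out explicitly. The only small blemishes are expository — the auxiliary element in Lemma \ref{lem:mod-p-representative-flatResolution} satisfies $\mathrm{d}\alpha+p\beta=0$, so $\beta=-p^{-1}\mathrm{d}\alpha$ rather than $p^{-1}\mathrm{d}\alpha$, and your final cocycle check is redundant since a morphism of complexes automatically carries cocycles to cocycles — but these do not affect the argument.
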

\begin{proof}
This follows from the commutative diagram
\begin{gather*}
\xymatrix{
 & J(r,n)\Omega_{X_{\centerdot}}^{\bullet} \ar[r] \ar@{=}[d] & I(r)\Omega_{X_{\centerdot}}^{\bullet} \ar[d]^{1-f_r} \\
 & J(r,n)\Omega_{X_{\centerdot}}^{\bullet} \ar[r]^<<<<<{1-f_r} & \Omega_{X_{\centerdot}}^{\bullet} & .
}
\end{gather*}
%these elements are mapped to the elements in Lemma \ref{lem:infinitesimal-syntomic-cohomology-mod-p-generators}(i) and (ii) respectively.
\end{proof}

\begin{lemma}\label{lem:Kato-product-mod-p}
Let $n\geq 2$ and $r,s\geq 1$ be integers.  Let  $0\leq i\leq r-1$ be an integer, and $\gamma$ be a local section of $p^{(r-i)n-1}\Omega_{X_{\centerdot}}^{i}$.
\begin{enumerate}[(i)]
  \item If $1\leq j\leq s$ is an integer and $\alpha$ is a local section of $p^{s-j}\Omega^{j-1}_{X_{\centerdot}}$, then  
\begin{gather*}
\Big(\big(p\gamma,0\big)\oplus \big(-\mathrm{d}\gamma,(1-f_{r})\gamma\big)\Big)\cdot\Big(\big(0,(1-f_s)p\alpha\big)\oplus \big(0,(1-f_s)\mathrm{d}\alpha\big)\Big)=0
\end{gather*}
in $\mathcal{H}^{i+j}\big(\mathfrak{S}_{X_{\centerdot}}(r+s,n)\otimes^{\mathbf{L}}\mathbb{Z}/p\big)$.
  \item If  $0\leq j\leq s-1$ and $\beta$ is a local section of $p^{(s-j)n-1}\Omega_{X_{\centerdot}}^{j}$, then
\begin{gather*}
\Big(\big(p\gamma,0\big)\oplus\big(-\mathrm{d}\gamma,(1-f_r)\gamma\big)\Big)\cdot \Big(\big(p\beta,0\big)\oplus\big(-\mathrm{d}\beta,(1-f_{s})\beta\big)\Big)\\
= \big(p^2 \gamma\beta,0\big)\oplus \big(-p \mathrm{d}(\gamma \beta),p(1-f_{s+r})(\gamma\beta)\big)
\end{gather*}
in $\mathcal{H}^{i+j}\big(\mathfrak{S}_{X_{\centerdot}}(r+s,n)\otimes^{\mathbf{L}}\mathbb{Z}/p\big)$, 
\end{enumerate}
\end{lemma}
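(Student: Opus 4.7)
Both statements reduce to direct, unilluminating evaluations of Kato's product formula on the explicit representatives of Lemma \ref{lem:infinitesimal-syntomic-cohomology-mod-p-generators}, so the plan is to set up the bookkeeping once and then compute. The product on $\bigoplus_{0\leq r<p}\mathfrak{S}_{X_{\centerdot}}(r,n)$ comes from Construction \ref{cons:infinitesimalSyntomicComplex-Nis-rep} via Definition \ref{def:Kato-product-ringComplexes}; matching $h-g=1-f_r$ forces $g=f_r$, $h=1$, giving the Kato formula $(x,y)(x',y')=(xx',(-1)^q f_r(x)y'+yx')$ and cone differential $d(x,y)=(dx,(f_r-1)x-dy)$. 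On top of this, the mod-$p$ cocycles are pairs $(c_q,c_{q+1})$ with $dc_q+pc_{q+1}=0$, $dc_{q+1}=0$, and the mod-$p$ product is $(a_q,a_{q+1})\cdot(b_{q'},b_{q'+1})=(a_qb_{q'},\,a_{q+1}b_{q'}+(-1)^q a_q b_{q'+1})$ by Lemma \ref{lem:mod-p-product}. Under the hypotheses $n\geq 2$, $i\leq r-1$, and ($j\leq s$ in (i), $j\leq s-1$ in (ii)), elementary $p$-adic arithmetic puts each ingredient $p\gamma,p\alpha,\beta,d\alpha,d\beta,\gamma$ used in the representatives inside the appropriate $I(\cdot)^\bullet$, so that $f_r,f_s,f_{r+s}$ all act on them. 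The computation will rely on three facts about the Frobenius divisors: (a) $f_r(p\eta)=p f_r(\eta)$ for $\eta\in I(r)$ (so $f_s(d(p\alpha))=p f_s(d\alpha)$ etc.), (b) $d\circ f_r=f_r\circ d$ since $f_r$ is a map of complexes, and (c) multiplicativity $f_r(\eta)f_s(\zeta)=f_{r+s}(\eta\zeta)$ for $\eta\in I(r)$, $\zeta\in I(s)$, which is immediate from $f_r=p^{-r}f^*$ on $I(r)$.

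\textbf{Part (i).} Here the first factor's representative has $A$-component $p\gamma\in J(r,n)$ and the second factor's representative has trivial $A$-component. Consequently all Kato products that enter the mod-$p$ product have vanishing $A$-part, so the answer lives entirely in $I(r+s)^{i+j-1}$. Setting $A:=f_s(p\alpha)$ and using $dA=f_s(d(p\alpha))=pf_s(d\alpha)$, one gets after expansion
\[
a_i b_j=(0,(-1)^i(p^2\gamma\alpha-p\gamma A)),\qquad a_{i+1}b_j+(-1)^i a_ib_{j+1}=(0,(-1)^i d(\gamma(p\alpha-A))),
\]
where the Leibniz rule in $\Omega^\bullet$ turns the sum of mixed terms into a single $d$. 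Then taking the explicit primitive
\[
b_{i+j-1}=0,\qquad b_{i+j}=(0,(-1)^i\gamma(1-f_s)p\alpha),
\]
and applying the mod-$p$ differential $d(b_{i+j-1}\oplus b_{i+j})=(pb_{i+j})\oplus(-db_{i+j})$ recovers exactly $(a_ib_j)\oplus(a_{i+1}b_j+(-1)^i a_i b_{j+1})$, proving the product is a coboundary.

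\textbf{Part (ii).} Now both factors have nontrivial $A$-components, so the Kato product $a_i b_j=(p\gamma\cdot p\beta,0)=(p^2\gamma\beta,0)$ gives the first claimed summand directly. For the second summand, set $E:=f_r(\gamma)$, $D:=f_s(\beta)$, so that $f_r(p\gamma)=pE$, $f_s(p\beta)=pD$; fact (c) then reads $ED=f_{r+s}(\gamma\beta)$. Direct expansion gives $a_{i+1}b_j=(-pd\gamma\cdot\beta,(1-f_r)\gamma\cdot pD)$ and $a_ib_{j+1}=(-p\gamma\cdot d\beta,(-1)^i p\gamma(1-f_s)\beta)$; summing with the sign $(-1)^i$, the $A$-parts collapse via Leibniz to $-pd(\gamma\beta)$, while the $B$-parts give $p\gamma D-pED+p\gamma\beta-p\gamma D=p(\gamma\beta-ED)=p(1-f_{r+s})(\gamma\beta)$. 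This is exactly the claimed representative; cocycle verification in the mod-$p$ complex proceeds as in Lemma \ref{lem:infinitesimal-syntomic-cohomology-mod-p-generators}(ii) and needs only facts (a)--(c). The principal nuisance throughout is sign discipline: the Koszul signs from graded commutativity of $\Omega^\bullet$, the $(-1)^q$ in the Kato product, and the asymmetric mod-$p$ formulas of Lemma \ref{lem:mod-p-product} all interact, and the main risk is accidentally using the wrong sign convention for the cone (in particular misidentifying $g$ and $h$), which throws off every subsequent computation, as can be checked against the cocycle condition for the generators of Lemma \ref{lem:infinitesimal-syntomic-cohomology-mod-p-generators}(ii).
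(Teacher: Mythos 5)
Your proof follows the same route as the paper's: expand the mod-$p$ product of Lemma \ref{lem:mod-p-product} and the Kato product of Definition \ref{def:Kato-product-ringComplexes} on the explicit representatives from Lemma \ref{lem:infinitesimal-syntomic-cohomology-mod-p-generators}, identify the result as a mod-$p$ coboundary in case (i), and collect terms by Leibniz and multiplicativity of $f_\bullet$ in case (ii). You also correctly identify the convention $g=f_r$, $h=1$ needed so that the Kato differential $(dx,g(x)-h(x)-dy)$ realizes $\mathrm{Cone}(1-f_r)[-1]$ (the paper's own text in Construction \ref{cons:infinitesimalSyntomicComplex-Nis-rep} has a typo here, but its computations agree with your identification).

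There is, however, one caveat: your displayed expansions apply $f$ to the \emph{second} factor rather than the first, which is $(g,h)=(1,f)$, not the $(g,h)=(f_r,1)$ you state. Concretely, in part (i) you get $(0,(-1)^i(p^2\gamma\alpha - p\gamma A))$ where the stated formula gives $(0,(-1)^i(p^2 f_r(\gamma)\alpha - p f_r(\gamma) A))$; and in part (ii) you write $a_{i+1}b_j=(-p d\gamma\cdot\beta,(1-f_r)\gamma\cdot pD)$ and $a_ib_{j+1}=(-p\gamma d\beta,(-1)^i p\gamma(1-f_s)\beta)$, where the stated formula gives $(1-f_r)\gamma\cdot p\beta$ and $(-1)^i f_r(p\gamma)(1-f_s)\beta$ respectively. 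This is exactly the $g$/$h$ confusion you flag at the end as the main risk. It turns out not to matter: in (i) the product is still a mod-$p$ coboundary (your primitive has $\gamma$ where the correct one has $f_r\gamma$, both work); in (ii) the cross-terms $p\gamma D$ cancel for you just as $p f_r(\gamma)\beta$ cancel in the correct version, leaving $p\gamma\beta - p f_{r+s}(\gamma\beta)$ either way. So your final formulas agree with the paper's, but a reader checking line by line would find the intermediate representatives inconsistent with your stated convention, and your proposed sanity check (the cocycle condition on the generators of Lemma \ref{lem:infinitesimal-syntomic-cohomology-mod-p-generators}(ii)) does detect the wrong choice of $(g,h)$ for the \emph{differential}, so it is worth running.
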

\begin{proof}
By \eqref{eq:mod-p-product} and \eqref{eq:Kato-product} and by using $\mathrm{d}\circ f_r=f_r\circ \mathrm{d}$, we have
\begin{align*}
&\Big(\big(p\gamma,0\big)\oplus \big(-\mathrm{d}\gamma,(1-f_{r})\gamma\big)\Big)\cdot\Big(\big(0,(1-f_s)p\alpha\big)\oplus \big(0,(1-f_s)\mathrm{d}\alpha\big)\Big)\\
\overset{\eqref{eq:mod-p-product}}{=}{}&
\Big(\big(p\gamma,0\big)\cdot \big(0,(1-f_s)p\alpha\big)\Big)\oplus\Big(\big(-\mathrm{d}\gamma,(1-f_{r})\gamma\big)\cdot \big(0,(1-f_s)p\alpha\big)
+(-1)^{i}(p\gamma,0)\cdot \big(0,(1-f_s)\mathrm{d}\alpha\big)
\Big)\\
\overset{\eqref{eq:Kato-product}}{=}{}&\big(0,(-1)^{i}f_r(p\gamma)(1-f_s)p\alpha\big)\oplus\Big(\big(0,(-1)^{i}f_r(\mathrm{d}\gamma)\cdot(1-f_s)p\alpha\big)
+(-1)^{i}(0,(-1)^{i}f_r(p\gamma)\cdot (1-f_s)\mathrm{d}\alpha)
\Big)\\
={}&\big(0,(-1)^{i}f_r(p\gamma)\cdot (1-f_s)p\alpha\big)\oplus\big(0,(-1)^{i}\mathrm{d}f_r\gamma\cdot (1-f_s)p\alpha+pf_r\gamma\cdot (1-f_s)\mathrm{d}\alpha\big)\\
={}&\big(0,(-1)^{i}pf_r\gamma\cdot (1-f_s)p\alpha\big)\oplus\Big(0,(-1)^{i}\mathrm{d}\big(f_r\gamma\cdot (1-f_s)p\alpha\big)\Big).
\end{align*}
But by \eqref{eq:Kato-differential} and \eqref{eq:mod-p-differential}, we have
\begin{align*}
&\big(0,pf_r\gamma\cdot (1-f_s)p\alpha\big)\oplus\Big(0,\mathrm{d}\big(f_r\gamma\cdot (1-f_s)p\alpha\big)\Big)\\
\overset{\eqref{eq:Kato-differential}}{=}{}&\big(0,pf_r\gamma\cdot (1-f_s)p\alpha\big)\oplus \mathrm{d}\big(0,-f_r\gamma\cdot (1-f_s)p\alpha\big)\\
\overset{\eqref{eq:mod-p-differential}}{=}{}&\mathrm{d}\Big(\big(0,0)\oplus \big(0,f_r\gamma\cdot (1-f_s)p\alpha\big)\Big),
\end{align*}
which completes the proof of (i).
By \eqref{eq:mod-p-product} and \eqref{eq:Kato-product} again, we have
\begin{align*}
& \big((p\gamma,0)\oplus(-\mathrm{d}\gamma,(1-f_r)\gamma)\big)\cdot \big((p\beta,0)\oplus(-\mathrm{d}\beta,(1-f_{s})\beta)\big)\\
\overset{\eqref{eq:mod-p-product}}{=}{}& (p^2 \gamma\beta,0)\oplus\big((-\mathrm{d}\gamma,(1-f_r)\gamma)\cdot (p\beta,0)+(-1)^j(p\gamma,0)\cdot (-\mathrm{d}\beta,(1-f_{s})\beta)\big)\\
\overset{\eqref{eq:Kato-product}}{=}{}& (p^2 \gamma\beta,0)\oplus\Big(\big(-\mathrm{d}\gamma\cdot p\beta,(1-f_r)\gamma\cdot p\beta\big)+(-1)^j\big(-p\gamma \mathrm{d}\beta,(-1)^j f_r(p\gamma)(1-f_{s})\beta\big)\Big)\\
%=& (p^2 \gamma\beta,0)\oplus (-p \mathrm{d}(\gamma \beta),-p\gamma(1-f_{r})\beta-p(1-f_s)\gamma\cdot f_{r'}\beta)\nn\\
={}& (p^2 \gamma\beta,0)\oplus (-p \mathrm{d}(\gamma \beta),p\gamma\beta-pf_r\gamma\cdot f_{s}\beta)\\
={}& (p^2 \gamma\beta,0)\oplus \big(-p \mathrm{d}(\gamma \beta),p(1-f_{r+s})(\gamma\beta)\big)
\end{align*}
which is (ii).
\end{proof}

\begin{lemma}\label{lem:KatoRep-c1(a)}
The image of $a\in \mathcal{O}_{X_{\centerdot}}^{\times}$ via the composition $$\mathcal{O}_{X_{\centerdot}}^{\times}=\mathcal{K}_1(X_{\centerdot}) \xrightarrow{c_1} \mathcal{H}^1(\mathbb{Z}_{X_{\centerdot}}(1)) \rightarrow \mathcal{H}^1(\mathfrak{S}_{X_{\centerdot}}(1))$$ is represented by $\big(\mathrm{d}\log a,p^{-1}\log (f(a)a^{-p})\big)$.
\end{lemma}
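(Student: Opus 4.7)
The plan is to compute both sides in a Zariski-local setting where a Frobenius lift exists. By Proposition \ref{prop:zar-local-Frob-lift}, I may assume the existence of a compatible Frobenius lift $f_\centerdot$ on $X_\centerdot$, and then take $D_\centerdot = X_\centerdot$ in the construction of $\mathfrak{S}_{X_\centerdot}(1)$. Since the ideal $J_\centerdot$ of $X_\centerdot \subset D_\centerdot$ is then zero, $J(1)\Omega^\bullet_{X_\centerdot}$ is the complex concentrated in degrees $\geq 1$, namely $0 \to \Omega^1_{X_\centerdot} \to \Omega^2_{X_\centerdot} \to \cdots$, and $\mathfrak{S}_{X_\centerdot}(1)_{\et} = \mathrm{Cone}(J(1)\Omega^\bullet \xrightarrow{1-f_1} \Omega^\bullet_{X_\centerdot})[-1]$ has degree-$1$ component $\Omega^1_{X_\centerdot} \oplus \mathcal{O}_{X_\centerdot}$ with differential given by \eqref{eq:Kato-differential} (taking $g=f_1$, $h=1$ so that $h-g = 1-f_1$ matches the cone map).

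I first verify that $(\mathrm{d}\log a,\, p^{-1}\log(f(a)a^{-p}))$ is a degree-$1$ cycle. The condition $\mathrm{d}(\mathrm{d}\log a) = 0$ is immediate, while the second cycle condition reduces to $(f_1 - 1)(\mathrm{d}\log a) = \mathrm{d}\bigl(p^{-1}\log(f(a)a^{-p})\bigr)$, which is exactly equation \eqref{eq:formula-f1}. This establishes a well-defined class in $\mathcal{H}^1(\mathfrak{S}_{X_\centerdot}(1))$.

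To identify this class with the image of $c_1(a)$, I invoke Proposition \ref{prop:c1-K1}: under the isomorphism $\mathbb{Z}_{X_\centerdot}(1) \simeq \mathbb{G}_{m,X_\centerdot}[-1]$ (the pro-version of Proposition \ref{prop:properties-Zn(r)}(iv)), $c_1$ is the tautological identification. The natural map $\mathbb{Z}_{X_\centerdot}(1) \to \mathfrak{S}_{X_\centerdot}(1)$ from the BEK definition induces on $\mathcal{H}^1$ Kato's symbol map $\mathcal{K}^M_{X_\centerdot, 1} \to \mathcal{H}^1(\mathfrak{S}_{X_\centerdot}(1))$ via the diagram recalled in the proof of Proposition \ref{prop:properties-Zn(r)}(v), and Kato's symbol in the rank-one case is given explicitly by exactly this formula (see \cite[Chap. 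I \S3]{Kat87}).

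The main obstacle is justifying this explicit formula for Kato's symbol rigorously: the exact triangle \eqref{eq:fundamentalTriangle-S(r)-Nis} for $r=1$ yields a short exact sequence
\[
0 \to p\mathcal{O}_{X_\centerdot} \to \mathcal{H}^1(\mathfrak{S}_{X_\centerdot}(1)) \to W_\centerdot\Omega^1_{X_1,\log} \to 0,
\]
so a lift of $\mathrm{d}\log \bar a$ to $\mathcal{H}^1(\mathfrak{S}_{X_\centerdot}(1))$ is only determined up to an element of $p\mathcal{O}_{X_\centerdot}$. To pin down the specific lift coming from $c_1$, I use that both sides are multiplicative and functorial in $a$: our formula is visibly multiplicative using the identity $\log(f(ab)(ab)^{-p}) = \log(f(a)a^{-p}) + \log(f(b)b^{-p})$ together with the additivity of $\mathrm{d}\log$, and both sides project correctly to $\mathrm{d}\log \bar a$. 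The identification then reduces to testing on the kernel $1 + p\mathcal{O}_{X_\centerdot}$ of the reduction $\mathcal{O}^\times_{X_\centerdot} \to \mathcal{O}_{X_1}^\times$, where a direct power-series expansion using $f_\centerdot^*(u) = u^p + p\cdot(\text{correction})$ shows our formula agrees with Kato's universal construction, completing the proof.
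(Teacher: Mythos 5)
Your proposal is correct and mirrors the paper's proof, which consists essentially of your steps three and four: invoke Proposition \ref{prop:c1-K1} to reduce $c_1$ to the tautological identification $\mathcal{K}_1\cong\mathcal{O}^\times\cong\mathcal{H}^1(\mathbb{Z}_{X_\centerdot}(1))$, then observe via the diagram in the proof of Proposition \ref{prop:properties-Zn(r)}(v) that the composite with $\mathcal{H}^1(\mathbb{Z}_{X_\centerdot}(1))\to\mathcal{H}^1(\mathfrak{S}_{X_\centerdot}(1))$ is Kato's symbol, citing \cite[p.~216]{Kat87} for the explicit formula. Your additional checks — the cocycle verification via \eqref{eq:formula-f1} and the multiplicativity-plus-kernel-testing sketch to pin down the lift — are sound supplementary details that the paper omits in favor of the direct citation.
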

\begin{proof}
By Proposition \ref{prop:c1-K1}, $c_1$ is given by the identification $\mathcal{K}_1(X_{\centerdot})\cong \mathcal{O}_{X_{\centerdot}}^{\times}\cong \mathcal{H}^1(\mathbb{Z}_{X_{\centerdot}}(1))$. By the proof of Proposition \ref{prop:properties-Zn(r)}(v), the composition of this identification with the map $\mathcal{H}^1(\mathbb{Z}_{X_{\centerdot}}(1)) \rightarrow \mathcal{H}^1(\mathfrak{S}_{X_{\centerdot}}(1))$ is given by the formula (\cite[Page 216]{Kat87}) of Kato's symbol\footnote{%The second component in \eqref{eq:KatoSymbol-deg1} differs from Kato's formula by a sign. This is because we use $1-f_r$ in defining the product, while Kato used $f_r-1$.
See Remark \ref{rem:Kato-BEK-sign}.}:
\begin{gather*}%\label{eq:KatoSymbol-deg1}
\mathcal{K}^M_{X_{\centerdot},1} \rightarrow   \mathcal{H}^1(\mathfrak{S}_{X_{\centerdot}}(1)),\quad
a\mapsto \big(\mathrm{d}\log a,p^{-1}\log (f(a)a^{-p})\big).
\end{gather*} 
%where $f$ is the Frobenius lifting in Definition \ref{def:BEK-proComplexes}(iii).
\end{proof}

\begin{lemma}\label{exa:Bott-element-syntomic-complex}
Let $n\geq 2$ be an integer.
Let $\zeta=1+p^{n-1}\in W_n(\Bbbk)$. Let $A_n$ be a local ring of $X_n$, and let $\beta_{\zeta}$ be  the Bott element of $K_2(A_n;\mathbb{Z}/p \mathbb{Z})$ associated with $\zeta$. Then $c_1(\beta_{\zeta})\in H^0(\mathfrak{S}_{X_{\centerdot}}(1,n)\otimes^{\mathbf{L}}\mathbb{Z}/p \mathbb{Z})$  is represented by
\begin{gather*}
\big(p^n,0\big)\oplus \big(0,(1-f_1)p^{n-1}\big)\\
=(p^n,0)\oplus (0,p^{n-1}-p^{n-2}).
\end{gather*}
\end{lemma}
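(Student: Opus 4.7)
The plan is to compute $c_1(\beta_{\zeta})$ by combining the functoriality of the infinitesimal Chern class, its compatibility with Bockstein operations, and Kato's explicit symbol formula for units. I would first check that the proposed expression is a legitimate mod-$p$ cocycle and then verify that its Bockstein image agrees with the well-understood class $c_1(\zeta)$ in $H^1(\mathfrak{S}_{X_{\centerdot}}(1,n))$.

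First, by the functoriality of $c_1$ (Proposition~\ref{prop:infinitesimalChernClass}) applied to the structural inclusion $W_n(\Bbbk) \hookrightarrow A_n$, it suffices to treat the universal case $A_n = W_n(\Bbbk)$. By Definition~\ref{def:bottElement}, the Bott element $\beta_{\zeta}$ is a Bockstein lift of $\zeta$: the connecting homomorphism $\partial\colon K_2(A_n;\mathbb{Z}/p)\to K_1(A_n)$ arising from the cofibre sequence $K(A_n)\xrightarrow{p}K(A_n)\to K(A_n;\mathbb{Z}/p)$ sends $\beta_{\zeta}$ to $\zeta$. Since $c_1$ is induced by a morphism of simplicial presheaves, it commutes with Bockstein, so $c_1(\beta_{\zeta})\in H^0(\mathfrak{S}_{X_{\centerdot}}(1,n)\otimes^{\mathbf{L}}\mathbb{Z}/p)$ is a mod-$p$ lift of $c_1(\zeta)\in H^1(\mathfrak{S}_{X_{\centerdot}}(1,n))$ across the analogous connecting map in syntomic cohomology.

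Next, I would verify directly that the proposed representative is a mod-$p$ cocycle in the sense of Lemma~\ref{lem:mod-p-product}; note that it is exactly the element produced by Lemma~\ref{lem:infinitesimal-syntomic-cohomology-mod-p-generators}(ii) in the case $(r,i,\beta)=(1,0,1)$. Using the identity $f_1(p^n)=f(p^{n-1})=p^{n-1}$ from \eqref{eq:formula-fr}, one computes $d(p^n,0)=(0,\,p^{n-1}-p^n)$, which exactly cancels $p\cdot(0,\,(1-f_1)p^{n-1})=(0,\,p^n-p^{n-1})$, confirming the cocycle condition. By the formula in Lemma~\ref{lem:mod-p-product}, the Bockstein image of this cocycle is (up to sign) $[(0,\,(1-f_1)p^{n-1})]=[(0,\,p^{n-1}-p^{n-2})]$ in $H^1(\mathfrak{S}_{X_{\centerdot}}(1,n))$.

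On the other hand, Lemma~\ref{lem:KatoRep-c1(a)} represents $c_1(\zeta)$ by $\bigl(d\log\zeta,\,p^{-1}\log(f(\zeta)\zeta^{-p})\bigr)$. Since $\zeta\in W$ is fixed by the Witt Frobenius and $d\zeta=0$, this reduces to $\bigl(0,\,\tfrac{1-p}{p}\log(1+p^{n-1})\bigr)$, whose leading $p$-adic term is $p^{n-2}-p^{n-1}=-(1-f_1)p^{n-1}$. To conclude the identification it remains to realize the higher-order tail of $\tfrac{1-p}{p}\log(1+p^{n-1})$ as the $\mathcal{O}$-component of a coboundary; for $u=p^{n}a\in p^{n}W$ this component equals $p^{n-1}(a^{\sigma}-pa)$, so a recursive solution for $a\in W$ matching the successive $p$-adic coefficients is required. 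I expect this last step — the explicit matching of higher-order terms via a formal power-series identity for $\log$ and the Witt Frobenius in the pro-system — to be the main technical obstacle, whereas the conceptual skeleton (reduction to the universal case, Bockstein compatibility, and Kato's symbol formula) is straightforward.
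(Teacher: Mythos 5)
Your reduction to the Bockstein image in ${}_p H^1\big(\mathfrak{S}_{X_{\centerdot}}(1,n)\big)$ is sound, since $H^0\big(\mathfrak{S}_{X_{\centerdot}}(1,n)\big)=0$ (the complex is supported in degree $\geq 1$); this is the same structural observation the paper uses. But you then close the argument by a different and substantially harder route, and you leave that route unfinished.

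The paper's own proof does not invoke Kato's symbol $\bigl(d\log\zeta,\,p^{-1}\log(f(\zeta)\zeta^{-p})\bigr)$ at all. Instead it works on the $\mathbb{Z}_{X_n}(1)$ side: since $H^0(\mathbb{Z}_{A_n}(1))=0$ and $c_1$ identifies $K_1(A_n)\cong A_n^{\times}\cong H^1(\mathbb{Z}_{A_n}(1))$, one is reduced to tracking $\zeta\in{}_pA_n^{\times}$ through the isomorphism ${}_pH^1(\mathbb{Z}_{A_n}(1))\cong H^0\big(\mathbb{Z}_{A_n}(1)\otimes^{\mathbf{L}}\mathbb{Z}/p\big)$. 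The key input is the map $\mathrm{Exp}\colon p\,\mathcal{O}_{X_n}\to U_1\mathcal{K}^M_{X_n,1}$, $px\mapsto \exp(px)$, from the commutative diagrams in the proof of Proposition~\ref{prop:properties-Zn(r)}(v): for $n\geq 2$ one has $\exp(p^{n-1})=1+p^{n-1}=\zeta$ \emph{exactly} in $W_n$, so $\zeta$ corresponds to $p^{n-1}\in{}_p(p\mathcal{O}_{X_n})$, i.e.\ to $\bar{1}\in\mathcal{O}_{X_1}$ under the identification \eqref{eq:rel-HC-nonreduced-modP-components-sheafified} with $i=0$, $r=1$, $M=1$, $L=n$. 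The claimed representative is then literally the output of Lemma~\ref{lem:relative-motivic-complex-homology-mod-p-generators}(ii) followed by Lemma~\ref{lem:infinitesimal-syntomic-cohomology-mod-p-generators}(ii) at $(r,i,\beta)=(1,0,1)$ --- an observation you actually make in passing but do not use to finish the proof. Working with $\exp$ here is the crucial simplification: the exponential terminates after one term in $W_n$, whereas the $p$-adic $\log$ series you feed into Kato's formula does not.

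This is where your argument has a genuine gap. You arrive at $c_1(\zeta)=\bigl(0,\,\tfrac{1-p}{p}\log(1+p^{n-1})\bigr)$ and need to show the entire higher-order tail of this $p$-adic series is absorbed into a coboundary of $\mathfrak{S}_{X_{\centerdot}}(1,n)$ (i.e.\ an element of the form $\bigl(dz,(1-f_1)z\bigr)$ for $z\in\mathcal{J}_n$, with $z$ a $p$-adic scalar compatible with the pro-system). You explicitly flag this as "the main technical obstacle" and do not carry it out; for $n=2$ the tail is already $-p/2+\cdots$ and one has to solve $(1-f_1)(p^n a)=$ tail recursively for $a\in W$, which is doable but is more work than the whole paper proof. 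There is also an unresolved sign: your Bockstein image is $[(0,p^{n-1}-p^{n-2})]$, while the leading term of Kato's formula is $p^{n-2}-p^{n-1}$. These plausibly reconcile via the sign convention $d\alpha+p\beta=0$ in Lemma~\ref{lem:mod-p-representative-flatResolution} (which makes the paper's mod-$p$ projection $-1$ times the standard Bockstein), but you have not accounted for it, and getting the sign wrong here would change the asserted representative. Until both the coboundary-matching and the sign are settled, the proof is incomplete.
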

\begin{proof}
We have a commutative diagram
\begin{gather*}
\xymatrix{
  0 \ar[r] & K_2(A_n)/p \ar[r] \ar[d]_{c_1} & K_2(A; \mathbb{Z}/p) \ar[r] \ar[d]_{c_1} & \leftidx{_p}K_1(A_n) \ar[r] \ar[d]_{c_1} & 0 \\
  0 \ar[r] & H^0(\mathbb{Z}_{A_n}(1))/p \ar[r] & H^0(\mathbb{Z}_{A_n}(1)\otimes^{\mathbf{L}}\mathbb{Z}/p) \ar[r] & \leftidx{_p}H^1(\mathbb{Z}_{A_n}(1)) \ar[r] & 0.
}
\end{gather*}
By Proposition \ref{prop:properties-Zn(r)}(iv), $H^0(\mathbb{Z}_{A_n}(1))=0$, and $H^1(\mathbb{Z}_{A_n}(1))=A_n^{\times}$. The Chern class map $c_1$ identifies $K_1(A_n)=A_n^{\times}=H^1(\mathbb{Z}_{A_n}(1))$. It suffices to compute the image of $\zeta$ under the composition of maps
\begin{gather*}
\leftidx{_p}H^1(\mathbb{Z}_{A_n}(1)) \cong H^0(\mathbb{Z}_{A_n}(1)\otimes^{\mathbf{L}}\mathbb{Z}/p) \rightarrow 
H^0(\mathfrak{S}_{X_{\centerdot}}(1,n)\otimes^{\mathbf{L}}\mathbb{Z}/p \mathbb{Z})\quad.
\end{gather*}
 By the commutative diagrams and the map $\mathrm{Exp}$ in the proof of Proposition \ref{prop:properties-Zn(r)} in the case $r=1$, $c_1(\beta_{\zeta})$ 
 %\hlgreen{is the image of} 
 is the image of  $1\in \mathcal{O}_{X_1}$ via the case $i=0$ of  the isomorphism \eqref{eq:rel-HC-nonreduced-modP-components-sheafified}. Then 
 %\hlgreen{the conclusion follows from} 
 the conclusion follows from
 Lemma \ref{lem:relative-motivic-complex-homology-mod-p-generators}(ii) and Lemma \ref{lem:infinitesimal-syntomic-cohomology-mod-p-generators}(ii).
\end{proof}

\begin{lemma}\label{lem:sheafCoh-p(r)-to-syntomic-mono}
For any $i\geq 0$, the homomorphism 
\begin{gather*}
\mathcal{H}^i\big(p(r)\Omega_{X_{n}}^{\bullet}[-1]\otimes^{\mathbf{L}}\mathbb{Z}/p\big) \rightarrow \mathcal{H}^i\big(\mathfrak{G}_{X_{\centerdot}}(r,n)\otimes^{\mathbf{L}}\mathbb{Z}/p\big)
\end{gather*}
is a monomorphism.
\end{lemma}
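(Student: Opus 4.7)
The plan is to bypass any direct analysis of the syntomic complex by using the equivalence $\mathbb{Z}_{X_n}(r)\otimes^{\mathbf{L}}\mathbb{Z}/p \simeq \mathfrak{S}_{X_{\centerdot}}(r,n)\otimes^{\mathbf{L}}\mathbb{Z}/p$ of Proposition \ref{prop:properties-Zn(r)}(vi) to transport the question to the motivic fundamental triangle \eqref{eq:motivicFundamentalTriangle}, where Geisser--Levine makes the relevant connecting map vanish for degree reasons. First I would invoke the commutative diagram of exact triangles in Proposition \ref{prop:infinitesimalMotComp-product}(vi): its left column is the identity on $p(r)\Omega^{\bullet}_{X_n}[-1]$, and its middle vertical arrow $\alpha_2\colon \mathbb{Z}_{X_n}(r)\to \mathfrak{S}_{X_{\centerdot}}(r,n)$ becomes a quasi-isomorphism after $\otimes^{\mathbf{L}}\mathbb{Z}/p$. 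Consequently, on mod $p$ cohomology sheaves the map in the lemma factors canonically as
$$\mathcal{H}^i\bigl(p(r)\Omega^{\bullet}_{X_n}[-1]\otimes^{\mathbf{L}}\mathbb{Z}/p\bigr) \longrightarrow \mathcal{H}^i\bigl(\mathbb{Z}_{X_n}(r)\otimes^{\mathbf{L}}\mathbb{Z}/p\bigr) \xrightarrow{\ \sim\ } \mathcal{H}^i\bigl(\mathfrak{S}_{X_{\centerdot}}(r,n)\otimes^{\mathbf{L}}\mathbb{Z}/p\bigr),$$
so it suffices to prove that the first arrow is injective.

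Next I would apply $\otimes^{\mathbf{L}}\mathbb{Z}/p$ to the exact triangle \eqref{eq:motivicFundamentalTriangle} and read off the long exact cohomology sequence
$$\cdots \to \mathcal{H}^{i-1}\bigl(\mathbb{Z}_{X_1}(r)\otimes^{\mathbf{L}}\mathbb{Z}/p\bigr) \xrightarrow{\ \delta\ } \mathcal{H}^i\bigl(p(r)\Omega^{\bullet}_{X_n}[-1]\otimes^{\mathbf{L}}\mathbb{Z}/p\bigr) \to \mathcal{H}^i\bigl(\mathbb{Z}_{X_n}(r)\otimes^{\mathbf{L}}\mathbb{Z}/p\bigr) \to \cdots.$$
The Geisser--Levine theorem \cite[Theorem 8.3]{GeL00} identifies $\mathbb{Z}_{X_1}(r)\otimes^{\mathbf{L}}\mathbb{Z}/p \simeq \Omega^r_{X_1,\log}[-r]$, whose cohomology lives exclusively in degree $r$. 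Hence $\mathcal{H}^{i-1}\bigl(\mathbb{Z}_{X_1}(r)\otimes^{\mathbf{L}}\mathbb{Z}/p\bigr)=0$ whenever $i\neq r+1$, so $\delta$ vanishes in those degrees and injectivity follows. The remaining case $i=r+1$ is automatic, since the formula \eqref{eq:rel-HC-nonreduced-modP-components-sheafified} gives $\mathcal{H}^{r+1}\bigl(p(r)\Omega^{\bullet}_{X_n}[-1]\otimes^{\mathbf{L}}\mathbb{Z}/p\bigr)=0$ and the source of the map is already zero.

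The only point demanding care, and which I expect to be the sole delicate step, is verifying that the natural map $p(r)\Omega^{\bullet}_{X_n}[-1]\to \mathfrak{S}_{X_{\centerdot}}(r,n)$ from the fundamental triangle \eqref{eq:fundamentalTriangle-Nis} is genuinely the composite $\alpha_2\circ \bigl(p(r)\Omega^{\bullet}_{X_n}[-1]\to\mathbb{Z}_{X_n}(r)\bigr)$ used above. This is immediate from the commutativity of the diagram \eqref{eq:motivicFundamentalTriangle-to-syntomicFundamentalTriangle} together with the identity on its leftmost column, so with this bookkeeping in hand no further obstacle arises.
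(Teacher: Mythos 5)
Your proposal is correct and lands on the same long exact sequence argument as the paper, but it takes the motivic side of diagram \eqref{eq:motivicFundamentalTriangle-to-syntomicFundamentalTriangle} rather than the syntomic side and, more substantively, handles the one non-trivial degree by a simpler observation. The paper works directly with the triangle \eqref{eq:fundamentalTriangle-Nis}, notes that $W_{\centerdot}\Omega^{r}_{X_1,\log}$ is $p$-torsion-free so that $\mathcal{H}^{i-1}\bigl(W_{\centerdot}\Omega^r_{X_1,\log}[-r]\otimes^{\mathbf{L}}\mathbb{Z}/p\bigr)$ vanishes for $i\neq r+1$, and then for $i=r+1$ establishes that $\mathcal{H}^r\bigl(\mathfrak{S}_{X_{\centerdot}}(r,n)\bigr)\to\mathcal{H}^r\bigl(W_{\centerdot}\Omega^r_{X_1,\log}[-r]\bigr)$ is an epimorphism; this last step requires invoking Proposition \ref{prop:properties-Zn(r)}(v) (the identification $\mathcal{H}^r(\mathbb{Z}_{X_n}(r))\cong\mathcal{K}^M_{X_n,r}$) together with a diagram chase. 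You sidestep this entirely: you factor the map through $\mathbb{Z}_{X_n}(r)\otimes^{\mathbf{L}}\mathbb{Z}/p$ using Proposition \ref{prop:properties-Zn(r)}(vi) and the commutativity of \eqref{eq:motivicFundamentalTriangle-to-syntomicFundamentalTriangle}, kill the connecting map for $i\neq r+1$ via Geisser--Levine (an equivalent input to the paper's torsion-freeness observation, since $\mathbb{Z}_{X_1}(r)\otimes^{\mathbf{L}}\mathbb{Z}/p$ is concentrated in degree $r$), and dispose of $i=r+1$ simply because the source $\mathcal{H}^{r+1}\bigl(p(r)\Omega^{\bullet}_{X_n}[-1]\otimes^{\mathbf{L}}\mathbb{Z}/p\bigr)$ is already zero — the complex $p(r)\Omega^{\bullet}_{X_n}[-1]\otimes^{\mathbf{L}}\mathbb{Z}/p$ is supported in degrees $[0,r]$. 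This last point is a genuine shortcut: the paper could have used it with its own triangle and avoided the Milnor $K$-theory input and the diagram chase. One small imprecision worth flagging: you appeal to the formula \eqref{eq:rel-HC-nonreduced-modP-components-sheafified} for $i=r+1$, but that display only tabulates $0\le i\le r$; the vanishing above degree $r$ is correct but should be justified by the boundedness of the underlying complex rather than by quoting a formula that does not explicitly assert it.
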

\begin{proof}
There is a long exact sequence
\begin{gather*}
 \dots \rightarrow \mathcal{H}^i\big(p(r)\Omega_{X_{n}}^{\bullet}[-1]\otimes^{\mathbf{L}}\mathbb{Z}/p\big) \rightarrow \mathcal{H}^i\big(\mathfrak{G}_{X_{\centerdot}}(r,n)\otimes^{\mathbf{L}}\mathbb{Z}/p\big) \rightarrow \mathcal{H}^i\big(W_{\centerdot}\Omega^r_{X_1,\log}[-r]\otimes^{\mathbf{L}}\mathbb{Z}/p\big)\\
  \rightarrow \mathcal{H}^{i+1}\big(p(r)\Omega_{X_{n}}^{\bullet}[-1]\otimes^{\mathbf{L}}\mathbb{Z}/p\big) \rightarrow \cdots
 \end{gather*} 
Since $W_{\centerdot}\Omega^r_{X_1,\log}$ is a subsheaf of $W_{\centerdot}\Omega^r_{X_1}$, it has no $p$-torsion. Thus $\mathcal{H}^i\big(W_{\centerdot}\Omega^r_{X_1,\log}[-r]\otimes^{\mathbf{L}}\mathbb{Z}/p\big)$ is zero if $i\neq r$ and is equal to $W_{\centerdot}\Omega^r_{X_1,\log}\otimes\mathbb{Z}/p$ if $i=r$. By Proposition \ref{prop:properties-Zn(r)}(v), both homomorphisms in
\begin{gather*}
\mathcal{H}^r\big(\mathbb{Z}_{X_n}(r)\big) \rightarrow \mathcal{H}^r\big(\mathbb{Z}_{X_1}(r)\big) \rightarrow  \mathcal{H}^r\big(W_{\centerdot}\Omega^r_{X_1,\log}[-r]\big)
\end{gather*}
are epimorphisms. Then by a diagram chase on \eqref{eq:motivicFundamentalTriangle-to-syntomicFundamentalTriangle}, the map $\mathcal{H}^r\big(\mathfrak{G}_{X_{\centerdot}}(r,n)\big) \rightarrow \mathcal{H}^r\big(W_{\centerdot}\Omega^r_{X_1,\log}[-r]\big)$, and thus 
$\mathcal{H}^r\big(\mathfrak{G}_{X_{\centerdot}}(r,n)\otimes^{\mathbf{L}}\mathbb{Z}/p\big) \rightarrow \mathcal{H}^r\big(W_{\centerdot}\Omega^r_{X_1,\log}[-r]\otimes^{\mathbf{L}}\mathbb{Z}/p\big)$, are epimorphisms.  Hence the conclusion follows.
\end{proof}

%Apply this observation to the resolution \eqref{eq-infinitesimalMOtivicComplex-flatResolution}, we obtain: 
\begin{proposition}\label{prop:K1-multiplication-motivic-complex}
Let $L>M\geq 1$ be integers. Let $i,r$ be integers and $0\leq i\leq r\leq p-2$.
The multiplication 
\begin{gather*}
\mathcal{K}_1(X_L)\times  \mathcal{H}^i\big(p^{r,M}_{r,L}\Omega^{\bullet}_{X_{\centerdot}}[-1]\otimes_{\mathbb{Z}}^{\mathbf{L}}\mathbb{Z}/p\big) 
\rightarrow \mathcal{H}^{i+1}\big(p^{r+1,M}_{r+1,L}\Omega^{\bullet}_{X_{\centerdot}}[-1]\otimes_{\mathbb{Z}}^{\mathbf{L}}\mathbb{Z}/p\big)
\end{gather*}
which is the composition $\mathcal{K}_1(X_L) \rightarrow \mathcal{K}_1(X_L;\mathbb{Z}/p)$ and \eqref{eq:c1-product-relativeMotivicComplex}, preserves the decomposition \eqref{eq:rel-HC-nonreduced-modP-components-sheafified}, and the multiplication on the summands is given by
\begin{gather*}
\xymatrix@R=1pc@C=4pc{
 \mathcal{H}^i\big(p^{r,M}_{r,L}\Omega^{\bullet}_{X_{\centerdot}}[-1]\otimes_{\mathbb{Z}}^{\mathbf{L}}\mathbb{Z}/p\big)  \ar[r]^<<<<<<<<{c_1(a)\times} & \mathcal{H}^{i+1}\big(p^{r+1,M}_{r+1,L}\Omega^{\bullet}_{X_{\centerdot}}[-1]\otimes_{\mathbb{Z}}^{\mathbf{L}}\mathbb{Z}/p\big) \\
  \Omega_{X_1/\Bbbk}^{i-1} \ar[r]^{-(\mathrm{d}\log \bar{a})\land} \ar@{}[u]|-{\rotatebox{90}{$\subset$}} &  
  \Omega_{X_1/\Bbbk}^{i} \ar@{}[u]|-{\rotatebox{90}{$\subset$}}
}
\end{gather*}
and
\begin{gather*}
\xymatrix@R=1pc@C=4pc{
 \mathcal{H}^i\big(p^{r,M}_{r,L}\Omega^{\bullet}_{X_{\centerdot}}[-1]\otimes_{\mathbb{Z}}^{\mathbf{L}}\mathbb{Z}/p\big)  \ar[r]^<<<<<<<<{c_1(a)\times} & \mathcal{H}^{i+1}\big(p^{r+1,M}_{r+1,L}\Omega^{\bullet}_{X_{\centerdot}}[-1]\otimes_{\mathbb{Z}}^{\mathbf{L}}\mathbb{Z}/p\big) \\
  \Omega_{X_1/\Bbbk}^{i} \ar[r]^{(\mathrm{d}\log \bar{a})\land} \ar@{}[u]|-{\rotatebox{90}{$\subset$}} &  
  \Omega_{X_1/\Bbbk}^{i+1} \ar@{}[u]|-{\rotatebox{90}{$\subset$}}
}
\end{gather*}
where $a\in \mathcal{O}_{X_L}^{\times}=\mathcal{K}_1(\mathcal{O}_{X_L})$ and $\bar{a}=a \mod p\in \mathcal{O}_{X_1}^{\times}$.
\end{proposition}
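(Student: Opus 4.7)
The strategy is to reduce the computation to an explicit calculation inside the syntomic ring complex. The problem is Nisnevich-local on $X_1$, so by Proposition \ref{prop:zar-local-Frob-lift} I may shrink $X_\centerdot$ and assume the existence of a Frobenius lifting $f:X_\centerdot\to X_\centerdot$. Taking $D_\centerdot=X_\centerdot$ in Definition \ref{def:BEK-proComplexes}, the operator $f_r$ is then the explicit one in \eqref{eq:formula-fr} and $\mathfrak{S}_{X_\centerdot}(r,n)$ is the concrete ring complex of Construction \ref{cons:infinitesimalSyntomicComplex-Nis-rep}.

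The first step is to reduce to the syntomic realization. By Proposition \ref{prop:infinitesimalMotComp-product}(vi) the map $\alpha_2:\mathbb{Z}_{X_L}(r)\to\mathfrak{S}_{X_\centerdot}(r,L)$ respects products, so the product $c_1(a)\cdot\eta$ may be computed in $\mathfrak{S}_{X_\centerdot}(r{+}1,L)$, relative to $\mathfrak{S}_{X_\centerdot}(r{+}1,M)$. The argument of Lemma \ref{lem:sheafCoh-p(r)-to-syntomic-mono} --- relying only on the $p$-torsion-freeness of $W_\centerdot\Omega^r_{X_1,\log}$ and the surjectivity of the Kato symbol onto $\mathcal{H}^r(\mathfrak{S})$ --- carries over to the relative complexes and shows that the induced comparison
\[
\mathcal{H}^i\big(p^{r,M}_{r,L}\Omega^\bullet_{X_\centerdot}[-1]\otimes^{\mathbf{L}}\mathbb{Z}/p\big)\hookrightarrow \mathcal{H}^i\big(\mathrm{cone}(\mathfrak{S}_{X_\centerdot}(r,L)\to\mathfrak{S}_{X_\centerdot}(r,M))[-1]\otimes^{\mathbf{L}}\mathbb{Z}/p\big)
\]
is a monomorphism. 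Hence it suffices to verify the two diagrams of the proposition inside the syntomic complex.

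Next, with explicit representatives in hand --- the two families of generators from Lemma \ref{lem:infinitesimal-syntomic-cohomology-mod-p-generators} together with $c_1(a)=\xi:=(\mathrm{d}\log a,\,p^{-1}\log(f(a)a^{-p}))$ from Lemma \ref{lem:KatoRep-c1(a)}, whose mod-$p$ lift is $(\xi,0)$ --- I will compute $(\xi,0)\cdot\eta$ for each generator $\eta$ using \eqref{eq:mod-p-product} combined with Kato's product \eqref{eq:Kato-product} (with $g=\mathrm{id},\ h=f_r$), following the pattern of the calculation in Lemma \ref{lem:Kato-product-mod-p}. The dominant contribution from the $\mathrm{d}\log a$-component of $\xi$ yields precisely a generator of the shape described in Lemma \ref{lem:infinitesimal-syntomic-cohomology-mod-p-generators} for $p^{r+1,M}_{r+1,L}\Omega^\bullet$, representing $-\mathrm{d}\log\bar a\wedge\bar\alpha\in\Omega^i_{X_1}$ in the first summand and $\mathrm{d}\log\bar a\wedge\bar\beta\in\Omega^{i+1}_{X_1}$ in the second; the sign in the first case is the Koszul factor $(-1)^1$ of \eqref{eq:mod-p-product}. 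In particular, the product preserves the decomposition \eqref{eq:rel-HC-nonreduced-modP-components-sheafified}.

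\textbf{Main obstacle.} The remaining contributions --- from the $p^{-1}\log(f(a)a^{-p})$-part of $\xi$ paired with Frobenius-twisted factors of $\eta$, and from the $f_r$-terms $f_r(p^{(r-i)n}\beta)$ --- must be shown to be coboundaries for the total differential built from \eqref{eq:mod-p-differential} and \eqref{eq:Kato-differential}. The key identity is $(1-f_1)(\mathrm{d}\log a)=-\mathrm{d}\bigl(p^{-1}\log(f(a)a^{-p})\bigr)$, a reformulation of \eqref{eq:formula-f1} which expresses that $\xi$ is a cocycle; combined with the exactness trick of Lemma \ref{lem:Kato-product-mod-p}(i), where the spurious piece $f_r\gamma\cdot(1-f_s)p\alpha$ is displayed as $\mathrm{d}\bigl((0,0)\oplus(0,f_r\gamma\cdot(1-f_s)p\alpha)\bigr)$, this identity should absorb every remainder as an explicit boundary. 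Tracking the $p$-adic valuations to check that they meet the $(r+1)n$- and $(r+1)M$-thresholds required by Lemma \ref{lem:infinitesimal-syntomic-cohomology-mod-p-generators} for the target is the principal technical task.
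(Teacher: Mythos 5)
Your proposal faithfully mirrors the paper's treatment of the case $M=1$: localize to obtain a Frobenius lifting, pass to the syntomic ring complex via the monomorphism of Lemma \ref{lem:sheafCoh-p(r)-to-syntomic-mono}, represent $c_1(a)$ by Kato's symbol $\bigl(\mathrm{d}\log a,\,p^{-1}\log(f(a)a^{-p})\bigr)$, and grind out the product with the generators of Lemma \ref{lem:infinitesimal-syntomic-cohomology-mod-p-generators}, showing the remainder terms are coboundaries via $(1-f_1)\mathrm{d}\log a=\mathrm{d}b$ and the exactness trick of Lemma \ref{lem:Kato-product-mod-p}(i). That part is sound and indeed tracks the paper essentially line by line.

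The gap is the case $M\geq 2$. You claim that ``the argument of Lemma \ref{lem:sheafCoh-p(r)-to-syntomic-mono} carries over to the relative complexes'' and that the explicit representatives of Lemma \ref{lem:infinitesimal-syntomic-cohomology-mod-p-generators} are then ``in hand''—but those representatives are constructed only for $M=1$; they live in $\mathcal{H}^i\bigl(\mathfrak{S}_{X_\centerdot}(r,n)\otimes^{\mathbf{L}}\mathbb{Z}/p\bigr)$ for a single $n$, not in the cohomology of the relative cone $\mathrm{cone}\bigl(\mathfrak{S}(r,L)\to\mathfrak{S}(r,M)\bigr)[-1]$ that your plan would require. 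Producing analogous generators for the relative cone and recomputing all the products there is genuine extra work that you have not supplied. The paper avoids it entirely: for $M\geq 2$ it deduces the result from the $M=1$ case via the map of exact triangles relating $p^{r,M}_{r,L}\Omega^{\bullet}_{X_{\centerdot}}$ to $p^{r,1}_{r,L}\Omega^{\bullet}_{X_{\centerdot}}$ and $p^{r,1}_{r,M}\Omega^{\bullet}_{X_{\centerdot}}$, together with the canonicality/compatibility of the relative Chern class (Proposition \ref{prop:infinitesimalChernClass}(ii)), a passage to $\mathcal{K}_1(X_M)$ (noting that $a\in\mathcal{O}_{X_L}^{\times}$ also acts through its image in $\mathcal{O}_{X_M}^{\times}$), and a careful sign count for the interchange of the connecting map with the $c_1(a)$-multiplication. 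Your proposal does not engage with any of this; as written it would fail to establish the result for $M\geq 2$.
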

\begin{proof}
First we consider the case $M=1$.
By Proposition \ref{prop:definition-infinitesimal-ChernCharacter}, we can compute the multiplication by $K$-theory by the multiplication as the first row of the diagram
\begin{gather*}
\xymatrix{
\mathbb{Z}_{X_L}(s)\times  p^{r,1}_{r,L}\Omega_{X_{\centerdot}}^{\bullet}[-1] \ar[r] \ar[d] & p^{r+s,1}_{r+s,L}\Omega_{X_{\centerdot}}^{\bullet}[-1] \ar[d]\\
\mathfrak{G}_{X_{\centerdot}}(s,L)\times p^{r,1}_{r,L}\Omega_{X_{\centerdot}}^{\bullet}[-1] \ar[r] & p^{r+s,1}_{r+s,L}\Omega_{X_{\centerdot}}^{\bullet}[-1].
}
\end{gather*}
By Proposition \ref{prop:infinitesimalMotComp-product}(iv), this diagram is commutative. Namely, we can compute the multiplication of $\mathbb{Z}_{X_L}(s)$ on $p^{r,1}_{r,L}\Omega_{X_{\centerdot}}^{\bullet}$ by that of $\mathfrak{G}_{X_{\centerdot}}(s,L)$ on $p^{r,1}_{r,L}\Omega_{X_{\centerdot}}^{\bullet}$. The same holds for the products mod $p$.
By Lemma \ref{lem:sheafCoh-p(r)-to-syntomic-mono}, we can compute the multiplication 
\begin{gather*}
\mathcal{H}^j\big(\mathfrak{G}_{X_{\centerdot}}(s,L)\otimes^{\mathbf{L}}\mathbb{Z}/p\big) \times \mathcal{H}^i\big(p^{r,1}_{r,L}\Omega_{X_{\centerdot}}^{\bullet}[-1]\otimes^{\mathbf{L}}\mathbb{Z}/p\big)
\rightarrow \mathcal{H}^{i+j}\big(p^{r+s,1}_{r+s,L}\Omega_{X_{\centerdot}}^{\bullet}[-1]\otimes^{\mathbf{L}}\mathbb{Z}/p\big)
\end{gather*}
by computing the multiplication of $\mathcal{H}^j\big(\mathfrak{G}_{X_{\centerdot}}(s,\centerdot)\otimes^{\mathbf{L}}\mathbb{Z}/p\big)$ on the image of 
\begin{gather*}
\mathcal{H}^i\big(p^{r,1}_{r,L}\Omega_{X_{\centerdot}}^{\bullet}[-1]\otimes^{\mathbf{L}}\mathbb{Z}/p\big) \rightarrow 
\mathcal{H}^i\big(\mathfrak{G}_{X_{\centerdot}}(r,L)\otimes^{\mathbf{L}}\mathbb{Z}/p\big).
\end{gather*}

%By \cite[Chap. 1 \S3]{Kat87}, the image of $a\in \mathcal{O}_X^{\times}$ via the composition $$\mathcal{O}_X^{\times}=\mathcal{K}_1(X) \xrightarrow{c_1} \mathcal{H}^1(\mathbb{Z}_{X_{\centerdot}}(1)) \rightarrow \mathcal{H}^1(\mathfrak{S}_{X_{\centerdot}}(1))$$ is represented by $(\mathrm{d}\log a,-p^{-1}\log (f(a)a^{-p}))$. 
In the case $M=1$, the elements  in the decomposition on the RHS of \eqref{eq:rel-HC-nonreduced-modP-components-sheafified} are represented by the elements in Lemma \ref{lem:relative-motivic-complex-homology-mod-p-generators}, whose images in $\mathcal{H}^i\big(\mathfrak{G}_{X_{\centerdot}}(r,L)\otimes^{\mathbf{L}}\mathbb{Z}/p\big)$ are given in Lemma \ref{lem:infinitesimal-syntomic-cohomology-mod-p-generators}.
%the elements in the summand $\Omega^{i-1}_{X_1}$ in the decomposition \eqref{eq:rel-HC-nonreduced-modP-components-sheafified} are represented by the elements in Lemma \ref{lem:relative-motivic-complex-homology-mod-p-generators}(i), and  those in the summand $\Omega^{i}_{X_1}$  are represented by the elements in Lemma \ref{lem:relative-motivic-complex-homology-mod-p-generators}(ii). 
Then by  \eqref{eq:mod-p-product},  \eqref{eq:Kato-product}, and Lemma \ref{lem:KatoRep-c1(a)}, we  do the following computations: for $\alpha\in p^{r-i}\Omega^{i-1}_{X_{\centerdot}}$,
\begin{align*}
& \Big(\big(\mathrm{d}\log a,p^{-1}\log (f(a)a^{-p})\big)\oplus\big(0,0\big)\Big)\cdot \Big(\big(0,(1-f_r)p\alpha\big)\oplus\big(0,(1-f_r)\mathrm{d}\alpha\big)\Big)\\
\overset{\eqref{eq:mod-p-product}}{=}{}&
\Big(\big(\mathrm{d}\log a,p^{-1}\log (f(a)a^{-p})\big)\cdot \big(0,(1-f_r)p\alpha\big)\Big)\oplus
\Big(-\big(\mathrm{d}\log a,p^{-1}\log (f(a)a^{-p})\big)\cdot\big(0,(1-f_r)\mathrm{d}\alpha\big)\Big)\\
\overset{\eqref{eq:Kato-product}}{=}{}&
\big(0,-f_1\mathrm{d}\log a\cdot(1-f_r)p\alpha\big)\oplus \big(0,f_1\mathrm{d}\log a\cdot(1-f_r)\mathrm{d}\alpha\big)\\
%=&(0,-\mathrm{d}\log a\cdot p\alpha+\mathrm{d}\log a\cdot f_r(p\alpha))\oplus
% (0,\mathrm{d}\log a \cdot\mathrm{d}\alpha- \mathrm{d}\log a\cdot  f_r\mathrm{d}\alpha)\\
={}&\big(0,-\mathrm{d}\log a\cdot p\alpha+f_1\mathrm{d}\log a\cdot f_r(p\alpha)\big)\oplus
 \big(0,\mathrm{d}\log a \cdot\mathrm{d}\alpha- f_1\mathrm{d}\log a\cdot  f_r\mathrm{d}\alpha\big)\\
 &+\big(0,(1-f_1)\mathrm{d}\log a\cdot p\alpha\big)\oplus
 \big(0,(f_1- 1)\mathrm{d}\log a\cdot  \mathrm{d}\alpha\big)\\
={}&\big(0,(1-f_{r+1})(-p\mathrm{d}\log a\cdot \alpha)\big)\oplus
 \big(0,(1-f_{r+1})\mathrm{d}(-\mathrm{d}\log a\cdot \alpha)\big)\\
 &+\uwave{\big(0,(1-f_1)\mathrm{d}\log a\cdot p\alpha\big)\oplus
 \big(0,(f_1-1)\mathrm{d}\log a\cdot  \mathrm{d}\alpha\big)},
%=&(0,(1-f_{r+1})p(-\mathrm{d}\log a\cdot \alpha))\oplus (0,\mathrm{d}(1-f_{r+1})(-\mathrm{d}\log a\cdot \alpha)).
\end{align*}
and for $\beta\in p^{(r-i)n-1}\Omega^i_{X_{\centerdot}}$,
\begin{align*}
& \Big(\big(\mathrm{d}\log a,p^{-1}\log (f(a)a^{-p})\big)\oplus\big(0,0\big)\Big)\cdot \Big(\big(p\beta,0\big)\oplus \big(-\mathrm{d}\beta,(1-f_r)\beta\big)\Big)\\
\overset{\eqref{eq:mod-p-product}}{=}{}& \Big(\big(\mathrm{d}\log a,p^{-1}\log (f(a)a^{-p})\big)\cdot \big(p\beta,0\big)\Big)\oplus
\Big(-\big(\mathrm{d}\log a,p^{-1}\log (f(a)a^{-p})\big)\cdot\big(-\mathrm{d}\beta,(1-f_{r})\beta\big)\Big)\\
\overset{\eqref{eq:Kato-product}}{=}{}&
\Big(p\mathrm{d}\log a\cdot \beta,p^{-1}\log \big(f(a)a^{-p}\big)\cdot p\beta\Big)\oplus\\
&\Big(\mathrm{d}\log a \cdot \mathrm{d}\beta,f_1\mathrm{d}\log a\cdot(1-f_{r})\beta+p^{-1}\log\big(f(a)a^{-p}\big)\cdot \mathrm{d}\beta\Big)\\
={}&\big(p\mathrm{d}\log a\cdot \beta,0\big)\oplus\big(\mathrm{d}\log a \cdot \mathrm{d}\beta,\mathrm{d}\log a\cdot \beta -f_1 \mathrm{d}\log a\cdot f_r\beta\big)\\
&+\Big(0,\log\big(f(a)a^{-p}\big)\cdot \beta\Big)\oplus\Big(0,f_1\mathrm{d}\log a\cdot \beta-\mathrm{d}\log a\cdot \beta+p^{-1}\log\big(f(a)a^{-p}\big)\cdot \mathrm{d}\beta\Big)\\
={}&\big(p\mathrm{d}\log a\cdot \beta,0\big)\oplus
\big(-\mathrm{d}(\mathrm{d}\log a \cdot \beta),(1-f_{r+1})(\mathrm{d}\log a\cdot \beta)\big)\\
&+\uwave{\Big(0,\log\big(f(a)a^{-p}\big)\beta\Big)\oplus\Big(0,p^{-1}\mathrm{d}\big(\log (f(a)a^{-p})\beta\big)\Big)}.
\end{align*}
But there exists $b\in \mathcal{O}_{X_{\centerdot}}$ such that $\log (f(a)a^{-p})=pb$, therefore by \eqref{eq:formula-f1}, \eqref{eq:Kato-differential}, and \eqref{eq:mod-p-differential} we have
\begin{gather*}
\big(0,(1-f_1)\mathrm{d}\log a\cdot p\alpha\big)\oplus
 \big(0,(f_1- 1)\mathrm{d}\log a\cdot  \mathrm{d}\alpha\big)
 \overset{\eqref{eq:formula-f1}}{=}
 \big(0,-\mathrm{d}b\cdot p\alpha\big)\oplus
 \big(0,\mathrm{d}b\cdot  \mathrm{d}\alpha\big)\\
 \overset{\eqref{eq:Kato-differential}}{=}
 \big(0,-\mathrm{d}b\cdot p\alpha\big)\oplus
 \mathrm{d}\big(0,\mathrm{d}b\cdot \alpha\big)
 \overset{\eqref{eq:mod-p-differential}}{=}
 \mathrm{d}\big((0,0)\oplus(0,-\mathrm{d}\mathrm{b}\cdot \alpha)\big)
\end{gather*}
 and
\begin{gather*}
\Big(0,\log\big(f(a)a^{-p}\big)\beta\Big)\oplus\Big(0,p^{-1}\mathrm{d}\big(\log (f(a)a^{-p})\beta\big)\Big)
 \overset{\eqref{eq:formula-f1}}{=}
\big(0,pb\beta\big)\oplus\big(0, \mathrm{d}(bf_r\beta)\big)\\
\overset{\eqref{eq:Kato-differential}}{=}
(0,pb\beta)\oplus \mathrm{d}(0,-b\beta)
\overset{\eqref{eq:mod-p-differential}}{=}\mathrm{d}\big((0,0)\oplus(0,b\beta)\big).
\end{gather*}
Thus the underwaved terms in the above are 0 in $\mathcal{H}^{i+1}\big(\mathfrak{G}_{X_{\centerdot}}(r+1,L)\otimes^{\mathbf{L}}\mathbb{Z}/p\big)$. This completes the proof for the case $M=1$.
Now suppose $M\geq 2$. There is a map of exact triangles in $\mathrm{D}_{\mathrm{Nis}}(X_1)$:
\begin{equation}\label{eq:map-triangles-ZXLM-prML}
\begin{gathered}
\xymatrix{
  p^{r,M}_{r,L}\Omega_{X_{\centerdot}}^{\bullet}[-1] \ar[r] \ar@{=}[d] & \mathbb{Z}_{X_L}(r) \ar[r]  & \mathbb{Z}_{X_M}(r) \ar[r]  &  p^{r,M}_{r,L}\Omega_{X_{\centerdot}}^{\bullet} \ar@{=}[d]\\
  p^{r,M}_{r,L}\Omega_{X_{\centerdot}}^{\bullet}[-1] \ar[r]  & p^{r,1}_{r,L}\Omega_{X_{\centerdot}}^{\bullet}[-1]   \ar[r] \ar[u] \ar@{=}[d] & p^{r,1}_{r,M}\Omega_{X_{\centerdot}}^{\bullet}[-1]  \ar[r] \ar[u] \ar@{=}[d] & p^{r,M}_{r,L}\Omega_{X_{\centerdot}}^{\bullet} \\
  & p(r)\Omega_{X_{L}}^{\bullet}[-1] & p(r)\Omega_{X_{M}}^{\bullet}[-1]
  }
\end{gathered}
\end{equation}
Recall  the resolution \eqref{eq-infinitesimalMOtivicComplex-flatResolution}.
Applying $\mathcal{H}^i(?\otimes^{\mathbf{L}}\mathbb{Z}/p)$ to the row \eqref{eq:map-triangles-ZXLM-prML}, in terms of the decomposition \eqref{eq:rel-HC-nonreduced-modP-components-sheafified} the induced homomorphisms are given by
\begin{gather*}
\xymatrix{
  \Omega^{i-1}_{X_1}\oplus \Omega^{i}_{X_1} \ar[r]^{\begin{pmatrix} 0 & 0 \\ 0 & 1 \end{pmatrix}} & 
  \Omega^{i-1}_{X_1}\oplus \Omega^{i}_{X_1} \ar[r]^{\begin{pmatrix} 1 & 0 \\ 0 & 0 \end{pmatrix}} & 
  \Omega^{i-1}_{X_1}\oplus \Omega^{i}_{X_1} \ar[r]^{\begin{pmatrix} 0
   & 0 \\ 1 & 0 \end{pmatrix}} & 
  \Omega^i_{X_1}\oplus \Omega^{i+1}_{X_1} 
}
\end{gather*}
where the maps are indicated by the right multiplications by the displayed matrices. Then by the compatibility of the Chern character (see Proposition \ref{prop:infinitesimalChernClass}), we can compute the multiplication 
\begin{gather*}
\xymatrix@R=0.8pc{
\mathcal{K}_1(X_L) \times \Omega^{i}_{X_1} \ar[r] \ar@{}[d]|-{\mathrm{id}\times \cap} & \Omega^{i+1}_{X_1} \ar@{}[d]|-{\cap} \\
\mathcal{K}_1(X_L) \times \mathcal{H}^i\big(p^{r,M}_{r,L}\Omega_{X_{\centerdot}}^{\bullet}[-1]\otimes^{\mathbf{L}}\mathbb{Z}/p\big) \ar[r] & \mathcal{H}^{i+1}(p^{r+1,M}_{r+1,L}\Omega_{X_{\centerdot}}^{\bullet}[-1]\otimes^{\mathbf{L}}\mathbb{Z}/p)
}
\end{gather*}
by computing the multiplication 
\begin{gather*}
\xymatrix@R=0.8pc{
\mathcal{K}_1(X_L) \times \Omega^{i}_{X_1} \ar[r] \ar@{}[d]|-{\mathrm{id}\times\cap} & \Omega^{i+1}_{X_1} \ar@{}[d]|-{\cap} \\
\mathcal{K}_1(X_L) \times \mathcal{H}^i\big(p^{r,1}_{r,L}\Omega_{X_{\centerdot}}^{\bullet}[-1]\otimes^{\mathbf{L}}\mathbb{Z}/p\big) \ar[r] & \mathcal{H}^{i+1}\big(p^{r+1,1}_{r+1,L}\Omega_{X_{\centerdot}}^{\bullet}[-1]\otimes^{\mathbf{L}}\mathbb{Z}/p\big),
}
\end{gather*}
and compute the multiplication 
\begin{gather*}
\xymatrix@R=0.8pc{
\mathcal{K}_1(X_L) \times \Omega^{i-1}_{X_1} \ar[r] \ar@{}[d]|-{\mathrm{id}\times \cap} & \Omega^{i}_{X_1} \ar@{}[d]|-{\cap} \\
\mathcal{K}_1(X_L) \times \mathcal{H}^i\big(p^{r,M}_{r,L}\Omega_{X_{\centerdot}}^{\bullet}[-1]\otimes^{\mathbf{L}}\mathbb{Z}/p\big) \ar[r] & \mathcal{H}^{i+1}\big(p^{r+1,M}_{r+1,L}\Omega_{X_{\centerdot}}^{\bullet}[-1]\otimes^{\mathbf{L}}\mathbb{Z}/p\big)
}
\end{gather*}
by  computing the multiplication 
\begin{gather*}
\xymatrix@R=0.8pc{
\mathcal{K}_1(X_M) \times \Omega^{i-1}_{X_1} \ar[r] \ar@{}[d]|-{\mathrm{id}\times\cap} & \Omega^{i}_{X_1} \ar@{}[d]|-{\cap} \\
\mathcal{K}_1(X_M) \times \mathcal{H}^i\big(p^{r,1}_{r,M}\Omega_{X_{\centerdot}}^{\bullet}[-2]\otimes^{\mathbf{L}}\mathbb{Z}/p\big) \ar[r] & \mathcal{H}^{i+1}\big(p^{r+1,1}_{r+1,M}\Omega_{X_{\centerdot}}^{\bullet}[-2]\otimes^{\mathbf{L}}\mathbb{Z}/p\big)
}
\end{gather*}
and by using the map $\mathcal{K}_1(X_L) \rightarrow \mathcal{K}_1(X_M)$ and noticing the sign change due to  exchanging the boundary operator
$\delta: \mathcal{H}^{i-1}\big(p^{r,1}_{r,M}\Omega_{X_{\centerdot}}^{\bullet}\big) \rightarrow  
\mathcal{H}^i\big(p^{r,M}_{r,L}\Omega_{X_{\centerdot}}^{\bullet}\otimes^{\mathbf{L}}\mathbb{Z}/p\big)$ and the multiplication by $c_1(a)$. Topologically this is due to the exchange of multiplication with the smash product with $S^1$.
Hence the conclusion follows from the computations in the case $M=1$.
\end{proof}

\begin{proposition}\label{prop:multiplication-Bott-element}
Let $L>M\geq 1$ be integers. Let $i,r$ be integers and $0\leq i\leq r\leq p-2$.
Let $\beta_{\zeta}\in \mathcal{K}_2(\mathcal{O}_{X_L}; \mathbb{Z}/p)$ be the Bott element associated with $\zeta=1+p^{L-1}\in W_L$. Then the multiplication 
\begin{gather*}
\mathcal{H}^i\big(p^{r,M}_{r,L}\Omega^{\bullet}_{X_{\centerdot}}[-1]\otimes_{\mathbb{Z}}^{\mathbf{L}}\mathbb{Z}/p\big) 
\rightarrow \mathcal{H}^{i}\big(p^{r+1,M}_{r+1,L}\Omega^{\bullet}_{X_{\centerdot}}[-1]\otimes_{\mathbb{Z}}^{\mathbf{L}}\mathbb{Z}/p\big)
\end{gather*}
by $c_1(\beta_{\zeta})$ preserves the decomposition \eqref{eq:rel-HC-nonreduced-modP-components-sheafified}, and its multiplication on the summands is given by
\begin{gather*}
\xymatrix@R=1pc@C=4pc{
 \mathcal{H}^i\big(p^{r,M}_{r,L}\Omega^{\bullet}_{X_{\centerdot}}[-1]\otimes_{\mathbb{Z}}^{\mathbf{L}}\mathbb{Z}/p\big)  \ar[r]^{c_1(\beta_{\zeta})\times} & \mathcal{H}^{i}\big(p^{r+1,M}_{r+1,L}\Omega^{\bullet}_{X_{\centerdot}}[-1]\otimes_{\mathbb{Z}}^{\mathbf{L}}\mathbb{Z}/p\big) \\
  \Omega_{X_1/\Bbbk}^{i-1} \ar[r]^{0} \ar@{}[u]|-{\rotatebox{90}{$\subset$}} &  
  \Omega_{X_1/\Bbbk}^{i-1} \ar@{}[u]|-{\rotatebox{90}{$\subset$}}
}
\end{gather*}
and
\begin{gather*}
\xymatrix@R=1pc@C=4pc{
 \mathcal{H}^i\big(p^{r,M}_{r,L}\Omega^{\bullet}_{X_{\centerdot}}[-1]\otimes_{\mathbb{Z}}^{\mathbf{L}}\mathbb{Z}/p\big)  \ar[r]^{c_1(\beta_{\zeta})\times} & \mathcal{H}^{i}\big(p^{r+1,M}_{r+1,L}\Omega^{\bullet}_{X_{\centerdot}}[-1]\otimes_{\mathbb{Z}}^{\mathbf{L}}\mathbb{Z}/p\big) \\
  \Omega_{X_1/\Bbbk}^{i} \ar@{=}[r] \ar@{}[u]|-{\rotatebox{90}{$\subset$}} &  
  \Omega_{X_1/\Bbbk}^{i} \ar@{}[u]|-{\rotatebox{90}{$\subset$}}.
}
\end{gather*}
\end{proposition}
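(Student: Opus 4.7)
The plan is to parallel the proof of Proposition~\ref{prop:K1-multiplication-motivic-complex}, with $c_1(\beta_\zeta)$ playing the role previously played by $c_1(a)$. As in that proof, the case $M \geq 2$ reduces to $M = 1$ by the same diagram chase built around the map of exact triangles \eqref{eq:map-triangles-ZXLM-prML}; the one simplification here is that since $c_1(\beta_\zeta)$ lies in degree $0$ rather than degree $1$, commuting the connecting homomorphism past the product introduces no sign change, and the identification of the $\Omega^{i-1}_{X_1}$ and $\Omega^i_{X_1}$ summands between the two cases is the obvious one. So I may fix $M = 1$.

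With $M = 1$, Proposition~\ref{prop:infinitesimalMotComp-product}(iv) and Lemma~\ref{lem:sheafCoh-p(r)-to-syntomic-mono} let me transfer the computation into $\mathcal{H}^i(\mathfrak{S}_{X_\centerdot}(r+1, L) \otimes^{\mathbf{L}}\mathbb{Z}/p)$. Lemma~\ref{exa:Bott-element-syntomic-complex} supplies the explicit representative
\[
c_1(\beta_\zeta) = (p^L, 0) \oplus \big(0, (1 - f_1)p^{L-1}\big),
\]
which, upon setting $\gamma = p^{L-1}$ (so that $\mathrm{d}\gamma = 0$), is precisely the expression $(p\gamma, 0) \oplus (-\mathrm{d}\gamma, (1 - f_r)\gamma)$ treated in Lemma~\ref{lem:Kato-product-mod-p} with $r = 1$, $i = 0$. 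Meanwhile, after the substitutions $\alpha \leftrightsquigarrow p^{r-i}\alpha$ and $\beta \leftrightsquigarrow p^{(r-i)L-1}\beta$, the generators of the $\Omega^{i-1}_{X_1}$ and $\Omega^i_{X_1}$ summands of $\mathcal{H}^i\big(p^{r,1}_{r,L}\Omega^\bullet_{X_\centerdot}[-1]\otimes^{\mathbf{L}}\mathbb{Z}/p\big)$ recorded in Lemma~\ref{lem:infinitesimal-syntomic-cohomology-mod-p-generators} become exactly the type-(i) and type-(ii) second factors of Lemma~\ref{lem:Kato-product-mod-p}.

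Applying Lemma~\ref{lem:Kato-product-mod-p}(i) to this matching shows immediately that $c_1(\beta_\zeta)$ annihilates the $\Omega^{i-1}_{X_1}$ summand, yielding the first diagram in the statement. Applying Lemma~\ref{lem:Kato-product-mod-p}(ii) yields
\begin{align*}
& c_1(\beta_\zeta) \cdot \big((p^{(r-i)L}\beta, 0) \oplus (-p^{(r-i)L-1}\mathrm{d}\beta, (1-f_r)p^{(r-i)L-1}\beta)\big) \\
={} & \big(p^{(r+1-i)L}\beta, 0\big) \oplus \big(-p^{(r+1-i)L-1}\mathrm{d}\beta, (1-f_{r+1})p^{(r+1-i)L-1}\beta\big),
\end{align*}
which by inspection coincides with the representative from Lemma~\ref{lem:infinitesimal-syntomic-cohomology-mod-p-generators}(ii) at level $r+1$ for the same $\bar\beta \in \Omega^i_{X_1}$. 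Hence multiplication by $c_1(\beta_\zeta)$ acts as the identity on the $\Omega^i_{X_1}$ summand, giving the second diagram.

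The main technical nuisance is the bookkeeping of indices and $p$-adic exponents: one must verify that the hypotheses and outputs of Lemma~\ref{lem:Kato-product-mod-p} match the forms of the generators in Lemma~\ref{lem:infinitesimal-syntomic-cohomology-mod-p-generators} on the nose. The key identity is $(r - i + 1)L = ((r+1) - i)L$, which ensures that the output of the case-(ii) product is literally the canonical generator at level $r+1$ rather than a nontrivial scalar multiple thereof. Once this matching is in place, Lemma~\ref{lem:Kato-product-mod-p} performs the entire chain-level computation, and no separate coboundary analysis is required.
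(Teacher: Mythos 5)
Your core $M=1$ computation is correct and matches the paper's argument: you correctly identify $c_1(\beta_\zeta)$ with the element $(p\gamma,0)\oplus(-\mathrm{d}\gamma,(1-f_1)\gamma)$ of Lemma~\ref{lem:Kato-product-mod-p} via $\gamma=p^{L-1}$, and your exponent bookkeeping when applying parts (i) and (ii) of that lemma to the type-(i) and type-(ii) generators of Lemma~\ref{lem:infinitesimal-syntomic-cohomology-mod-p-generators} is exact, recovering the canonical generator at level $r+1$ on the nose.

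The gap is in the reduction of $M\geq 2$ to $M=1$. You state that the answer is the same as for $M=1$ by the diagram chase around \eqref{eq:map-triangles-ZXLM-prML}, with the only simplification being the absence of a sign. But if one traces what that diagram chase actually does for the $\Omega^{i-1}_{X_1}$ summand, it maps it (via the connecting homomorphism) into the \emph{high-degree} summand $\Omega^{i-1}_{X_1}\subset \mathcal{H}^{i-1}\big(p^{r,1}_{r,M}\Omega^\bullet_{X_\centerdot}[-1]\otimes^{\mathbf{L}}\mathbb{Z}/p\big)$, where the multiplication takes place over $X_M$, i.e.\ by the image of $\beta_\zeta$ under $\mathcal{K}_2(X_L;\mathbb{Z}/p)\to \mathcal{K}_2(X_M;\mathbb{Z}/p)$. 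A naive ``same answer as $M=1$'' would instead apply the second diagram of the proposition at the parameter pair $(M,1)$ to that high-degree summand, which yields the \emph{identity}, not zero. What actually forces the vanishing for $M\geq 2$ is that $\zeta=1+p^{L-1}$ maps to $1$ in $W_M(\Bbbk)$ (since $L>M$ implies $p^{L-1}=0$ there), so the Bott element $\beta_\zeta$ maps to $0$ in $\mathcal{K}_2(X_M;\mathbb{Z}/p)$; the multiplication on that summand is by zero. This observation is not a formal consequence of the $M=1$ case and must be stated explicitly --- without it your reduction would actually give the wrong answer on the $\Omega^{i-1}_{X_1}$ summand. The paper makes this observation explicitly; you should add it.
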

\begin{proof}
In the case $M=1$, by Lemma \ref{lem:sheafCoh-p(r)-to-syntomic-mono} as in the proof of Proposition \ref{prop:K1-multiplication-motivic-complex} we can compute the products in the cohomology sheaves $\mathcal{H}^i\big(\mathfrak{G}_{X_{\centerdot}}(r,L)\otimes^{\mathbf{L}}\mathbb{Z}/p\big)$. The image of the Bott element of $K_2(X_L;\mathbb{Z}/p \mathbb{Z})$ in $\mathcal{H}^0\big(\mathfrak{G}_{X_{\centerdot}}(1,L)\otimes^{\mathbf{L}}\mathbb{Z}/p\big)$ is computed in Lemma \ref{exa:Bott-element-syntomic-complex}.   Then the conclusion  follows by applying  Lemma \ref{lem:Kato-product-mod-p} to $\gamma=p^{n-1}$.

The case $M\geq 2$ follows from the $M=1$ case as in the  proof of Proposition \ref{prop:K1-multiplication-motivic-complex}; we need only replace $\mathcal{K}_1(X_L)$ (resp. $\mathcal{K}_1(X_M)$) at there with $\mathcal{K}_2(X_L;\mathbb{Z}/p)$ (resp. $\mathcal{K}_2(X_M;\mathbb{Z}/p)$), and notice that the map $\mathcal{K}_2(X_L;\mathbb{Z}/p) \rightarrow \mathcal{K}_2(X_M;\mathbb{Z}/p)$ sends the Bott element in $\mathcal{K}_2(X_L;\mathbb{Z}/p)$ to 0, because $W_L(\Bbbk)\ni\zeta=1+p^{L-1}\mapsto 0 \in W_{M-1}(\Bbbk)$.
\end{proof}

By Propositions \ref{prop:brun-iso-product}, \ref{prop:K1-multiplication-motivic-complex} and \ref{prop:multiplication-Bott-element}, the multiplication of $(\mathcal{K}/p)_{X_n,1}$  and the Bott element a on $(\mathcal{K}/p)_{X_n,X_{n-1},i}$ for $i\leq p-4$ is illustrated by Figure \ref{fig:multTable} in the introduction.
We see that the summand $\mathcal{O}_{X_1}$ in $(\mathcal{K}/p)_{X_n,X_{n-1},i}$ for odd $i\leq p-3$ and for $i=2$ is not generated by such multiplications from lower degree elements, which also holds for $(\mathcal{K}/p)_{X_n,X_{n-1},i}$, where $n>m\geq 1$, by Theorem \ref{thm:relative-comparison-local} which we shall prove. This motivates the following definition.

\begin{definition}\label{def:basicElements}
Let $n>m\geq 1$ be integers. Let $1\leq i\leq p-3$ be an odd integer or $i=2$. We call the summand $\mathcal{O}_{X_1}\subset (\mathcal{K}/p)_{X_n,X_{m},i}$  in the decomposition induced by Theorem \ref{thm:relative-comparison-local} and \eqref{eq:rel-HC-nonreduced-modP-components-sheafified} the \emph{basic summand}, and call the elements in  the basic summand the  \emph{basic elements}.
\end{definition}

%\newpage
\subsection{Functoriality resolves ambiguity} % (fold)
\label{sub:functoriality_resolves_ambiguity}
This subsection is devoted to rescue the not-being-generated basic summand
%non-determination  part} of Proposition \ref{prop:multiplication-Bott-element} 
by  exploring some consequences of the functoriality of the infinitesimal Chern character.

\begin{definition}
For a commutative ring  $R$, let $\mathrm{SmAlg}_{R}$ be the category whose objects are the smooth $R$-algebras of finite type, and morphisms are the  $R$-algebra maps. Let $\mathbf{Ab}$ be the category of abelian groups. 
For a perfect field $\Bbbk$ of characteristic $p$, let $\mathfrak{r}: \mathrm{SmAlg}_{W(\Bbbk)} \rightarrow \mathrm{SmAlg}_{\Bbbk}$ be the reduction functor sending $A$ to $A/p$.
Let $\mathfrak{a}:\mathrm{SmAlg}_{\Bbbk} \rightarrow \mathbf{Ab}$ be the forgetful functor sending a smooth $\Bbbk$-algebra to its underlying abelian group. An \emph{additive endomorphism of $\mathfrak{r}$} is an endomorphism of the composition functor $\mathfrak{a}\circ \mathfrak{r}$. 
\end{definition}
In other words, an additive endomorphism of $\mathfrak{r}$ is to attach to every object $A$ of $\mathrm{SmAlg}_{W(\Bbbk)}$ an additive map $F_{A}:A/p\rightarrow A/p$, such that for every morphism $\alpha:A \rightarrow A'$ in 
$\mathrm{SmAlg}_{W(\Bbbk)}$ the diagram
\begin{gather*}
\xymatrix{
A/p \ar[r]^{F_{A}} \ar[d]_{\overline{\alpha}} & A/p \ar[d]^{\overline{\alpha}} \\
A'/p \ar[r]^{F_{A'}} & A'/p
}
\end{gather*}
commutes, where $\overline{\alpha}=\alpha\mod p$. 

\begin{lemma}\label{lem:universal-maps-A1-to-A1}
Let $\Bbbk$ be a perfect field with $\mathrm{char}(\Bbbk)=p$.
 Then any additive endomorphism $F$ of $\mathfrak{r}$ has the following form: there exist $n\in \mathbb{N}\cup\{0\}$ and $c_i\in \Bbbk$ for $0\leq i\leq n$ such that
\begin{gather*}
F_A(\bar{a})=\sum_{i=0}^n c_i \bar{a}^{p^i}
\end{gather*}
for any $A\in \mathrm{ob}(\mathrm{SmAlg}_{W(\Bbbk)})$ and any $a\in A$, where $\bar{a}=a\mod p$.
\end{lemma}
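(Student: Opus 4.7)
The plan is to extract $F$ from its value on a single universal element and then use additivity to constrain the resulting polynomial.

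First I would consider the universal smooth $W(\Bbbk)$-algebra $W(\Bbbk)[T]$ and set
\[
f(T) := F_{W(\Bbbk)[T]}(T) \in \Bbbk[T],
\]
which is a polynomial since $W(\Bbbk)[T]/p = \Bbbk[T]$. Write $f(T) = \sum_{j=0}^{d} c_j T^j$ with $c_j \in \Bbbk$.

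Next, for any $A \in \mathrm{ob}(\mathrm{SmAlg}_{W(\Bbbk)})$ and any $a\in A$, the assignment $T \mapsto a$ defines a morphism $\alpha_a: W(\Bbbk)[T] \to A$ in $\mathrm{SmAlg}_{W(\Bbbk)}$. Naturality of $F$ with respect to $\alpha_a$ gives
\[
F_A(\bar a) \;=\; F_A\bigl(\overline{\alpha_a}(T)\bigr) \;=\; \overline{\alpha_a}\bigl(F_{W(\Bbbk)[T]}(T)\bigr) \;=\; f(\bar a)\;=\;\sum_{j=0}^{d} c_j\,\bar a^{\,j}.
\]
Thus $F$ is determined by the single polynomial $f$, and the problem reduces to identifying which $f\in\Bbbk[T]$ can arise.

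Now I would exploit additivity in the universal two-variable case $A' := W(\Bbbk)[T_1,T_2]$. Applying the identity $F_{A'}(\overline{T_1}+\overline{T_2}) = F_{A'}(\overline{T_1}) + F_{A'}(\overline{T_2})$ and invoking the formula above with $a = T_1 + T_2$, $T_1$, $T_2$, one obtains the polynomial identity
\[
f(T_1 + T_2) \;=\; f(T_1) + f(T_2) \qquad \text{in }\; \Bbbk[T_1,T_2].
\]
Comparing constant terms forces $c_0 = 2c_0$, hence $c_0 = 0$. For $j\ge 1$, the coefficient of $T_1^{j-i}T_2^{i}$ (for $1\le i\le j-1$) yields $c_j\binom{j}{i} = 0$ in $\Bbbk$, so $c_j = 0$ unless every such binomial coefficient vanishes modulo $p$, i.e.\ unless $j$ is a power of $p$. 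Therefore $f(T) = \sum_{i=0}^{n} c_{p^i}\,T^{p^i}$ for some $n\ge 0$, and combining with the formula from the previous step gives the claimed expression for $F_A$.

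There is no real obstacle here; the only thing to be careful about is making sure the universal step is legitimate, namely that $W(\Bbbk)[T]$ and $W(\Bbbk)[T_1,T_2]$ lie in $\mathrm{SmAlg}_{W(\Bbbk)}$ (they are smooth polynomial algebras of finite type) and that the evaluation morphisms $\alpha_a$ and $(T_1,T_2)\mapsto (a,a')$ are morphisms in this category. The bulk of the argument is the classical characterization of additive polynomials in characteristic $p$.
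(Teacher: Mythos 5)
Your proof is correct, and it takes a genuinely simpler route than the paper's. Both proofs share the first two steps: set $f := F_{W(\Bbbk)[T]}(T)$ and use naturality of $F$ along the evaluation maps $W(\Bbbk)[T]\to A$, $T\mapsto a$, to conclude $F_A(\bar a)=f(\bar a)$. Where you diverge is in extracting the additivity constraint on $f$. You apply the additivity of $F_{W[T_1,T_2]}$ directly, together with naturality for the three maps $T\mapsto T_1+T_2,\ T\mapsto T_1,\ T\mapsto T_2$, to obtain the polynomial identity $f(T_1+T_2)=f(T_1)+f(T_2)$ in $\Bbbk[T_1,T_2]$, and then finish with the elementary binomial-coefficient computation. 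The paper instead evaluates at pairs $u,v\in\overline{\Bbbk}$: it forms the finite (hence, since $\Bbbk$ is perfect, smooth) $\Bbbk$-algebra $E=\Bbbk(u,v)$, invokes Elkik's lifting theorem to find $\widetilde{E}\in\mathrm{SmAlg}_{W(\Bbbk)}$, uses naturality along $W[T_1,T_2]\to\widetilde{E}$ and additivity of $F_{\widetilde{E}}$ to get $f(u+v)=f(u)+f(v)$ for all $u,v\in\overline{\Bbbk}$, and then cites Goss's characterization of absolutely additive polynomials. Your route avoids the algebraic closure, Elkik's theorem, and the Goss citation entirely; what the paper's detour buys, as far as I can see, is nothing beyond fitting Goss's terminology, so your version is a clean simplification of the same underlying idea.
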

\begin{proof}
Let $W=W(\Bbbk)$.
Let $f=F_{W[T]}(T)\in \Bbbk[T]$. For any $A\in \mathrm{ob}(\mathrm{SmAlg}_{W})$ and any $a\in A$, there is a unique 
$W$-algebra map $\alpha:W[T]\rightarrow A$, $T\mapsto a$.  By  functoriality,  we have
\begin{gather}\label{eq:universal-maps-to-A1}
F_A(\bar{a})=\overline{\alpha}(f)=f(\bar{a}).
\end{gather}
Fix an algebraic closure $\overline{\Bbbk}$ of $\Bbbk$. For any $u,v\in \overline{\Bbbk}$, there is a unique $\Bbbk$-algebra map $\beta:\Bbbk[T_1,T_2]\rightarrow \overline{\Bbbk}$, such that $\beta(T_1)=u$ and $\beta(T_2)=v$. Let $E$ be the subfield $\Bbbk(u,v)$ of $\overline{\Bbbk}$. Since $\Bbbk$ is perfect, $E\in \mathrm{ob}(\mathrm{SmAlg}_{\Bbbk})$. 
Let $\widetilde{E}\in \mathrm{ob}(\mathrm{SmAlg}_W)$ be a lifting of $E$, which exists by \cite[Théorème 6]{Elk73} (see also \cite[Theorem 1.3.1]{Ara01}), and $\tilde{\beta}:W[T_1,T_2]\rightarrow\widetilde{E}$ a lifting of $\beta$.
%$\tilde{u}=\tilde{\beta}(T_1)$ and $\tilde{v}=\tilde{\beta}(T_2)$.
Then the commutative diagram
\begin{gather*}
\xymatrix@C=6pc{
\Bbbk[T_1,T_2] \ar[r]^{F_{W[T_1,T_2]}} \ar[d]_{\overline{\beta}} & \Bbbk[T_1,T_2] \ar[d]^{\overline{\beta}} \\
E \ar[r]^{F_{\widetilde{E}}} & E
}
\end{gather*}
and \eqref{eq:universal-maps-to-A1} and the additivity of $F$ imply
\begin{gather*}
f(u+v)=F_{\widetilde{E}}(u+v)=F_{\widetilde{E}}\circ \overline{\beta}(T_1+T_2)=\overline{\beta}\circ F_{W[T_1,T_2]}(T_1+T_2)\\
=\overline{\beta}\circ F_{W[T_1,T_2]}(T_1)+\overline{\beta}\circ F_{W[T_1,T_2]}(T_2)
=F_{\widetilde{E}}\circ \overline{\beta}(T_1)+F_{\widetilde{E}}\circ \overline{\beta}(T_2)\\
=F_{\widetilde{E}}(u)+F_{\widetilde{E}}(v)
=f(u)+f(v).
\end{gather*}
Namely, $f$ is an \emph{absolutely additive polynomial} in the sense of \cite[\S 1.1]{Gos96}. Then by \cite[Cor. 1.1.6]{Gos96}, there exist $c_0,\dots,c_n$ such that $f=\sum_{i=0}^n c_iT^{p^i}$. By \eqref{eq:universal-maps-to-A1}, the proof is complete.
\end{proof}

\begin{lemma}\label{lem:universal-iso-A-to-A}
Let $\Bbbk$ be a field with $\mathrm{char}(\Bbbk)=p$, and $n\in \mathbb{N}\cup\{0\}$.
Let $c_i\in \Bbbk$ for $0\leq i\leq n$ and let $\mathbf{c}:\Bbbk[T]\rightarrow \Bbbk[T]$ be given by $\mathbf{c}(a)=\sum_{i=0}^n c_i a^{p^i}$ for $a\in \Bbbk[T]$. Then
\begin{enumerate}[(i)]
  \item  $\mathbf{c}$ is an isomorphism iff $\mathbf{c}$ is surjective iff  $c_0\in \Bbbk^{\times}$ and $c_i=0$ for $i>0$;
%it is a scalar multiplication by some $c_1\in k^{\times}$.
  \item if there exists $m\in \mathbb{N}$ such that the restriction of $\mathbf{c}$ to the $\Bbbk$-subspace $U_m$ spanned by $T^j$ with $v_p(j)\geq m$ is a surjective map  onto $U_m$ itself, then $\mathbf{c}$ is an isomorphism.
\end{enumerate}
\end{lemma}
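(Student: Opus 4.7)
The plan is to prove both parts by a uniform degree-counting argument. The key observation is that $\mathbf{c}$ is additive (in fact $\mathbb{F}_p$-linear) but not $\Bbbk$-linear, and that if $a \in \Bbbk[T]$ has degree $d \geq 1$, then the distinct summands $c_i a^{p^i}$ with $c_i \neq 0$ have pairwise distinct degrees $p^i d$, so their leading terms do not cancel. Writing $i_{\max}$ for the largest index with $c_{i_{\max}} \neq 0$, this yields the exact formula $\deg \mathbf{c}(a) = p^{i_{\max}} d$ whenever $d \geq 1$, while $\mathbf{c}(a)$ is constant whenever $a$ is constant.

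For part (i), the implications ``explicit condition $\Rightarrow$ isomorphism $\Rightarrow$ surjection'' are both immediate: if $c_0 \in \Bbbk^{\times}$ and $c_i = 0$ for $i \geq 1$, then $\mathbf{c}(a) = c_0 a$ is clearly bijective. The only content is ``surjection $\Rightarrow$ condition'', which I will obtain by attempting to hit the element $T$. If some $c_i$ with $i \geq 1$ were nonzero, then for any nonconstant $a$ we would have $\deg \mathbf{c}(a) = p^{i_{\max}} \deg a \geq p \geq 2$, while for constant $a$ the image would again be constant; in either case $\mathbf{c}(a) \neq T$, contradicting surjectivity. Hence $c_i = 0$ for $i \geq 1$, and then $\mathbf{c}(a) = c_0 a$, whose surjectivity forces $c_0 \in \Bbbk^{\times}$.

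For part (ii), I will apply the same idea with target $T^{p^m} \in U_m$. Any nonzero $a \in U_m$ is either a constant (in which case $\mathbf{c}(a)$ is also constant, hence not $T^{p^m}$), or has positive degree divisible by $p^m$, so $\deg a \geq p^m$. In the latter case $\deg \mathbf{c}(a) = p^{i_{\max}} \deg a \geq p^{i_{\max}+m}$, and equality with $p^m$ forces $i_{\max} = 0$ and $\deg a = p^m$. Hence $c_i = 0$ for all $i \geq 1$, and then surjectivity of $\mathbf{c}|_{U_m} = c_0 \cdot \mathrm{id}_{U_m}$ onto $U_m$ forces $c_0 \in \Bbbk^{\times}$, so $\mathbf{c}$ is an isomorphism by part (i). The only mildly subtle step, and the single point one should verify carefully, is the non-cancellation of leading terms in $\sum_i c_i a^{p^i}$; this reduces to the strict monotonicity of $i \mapsto p^i d$ for $d \geq 1$, so no substantive obstacle is expected.
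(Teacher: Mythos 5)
Your proof is correct and follows essentially the same approach as the paper: establish that for nonconstant $a$ the leading terms of $c_i a^{p^i}$ occur in distinct degrees $p^i\deg a$, so no element of degree $1$ (resp.\ degree $p^m$ in $U_m$) lies in the image unless $c_i = 0$ for $i \geq 1$. The paper proves (ii) by noting $U_m = \Bbbk[T^{p^m}]$ and invoking (i) directly after identifying $\Bbbk[T^{p^m}]$ with a polynomial ring; your unwinding of that reduction into an explicit degree count is the same argument spelled out in more detail.
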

\begin{proof}
(i)
Suppose $\mathbf{c}(T)=\sum_{i=0}^n c_i T^{p^i}$ is surjective. If there exists  $c_i\neq 0$ with $i\geq 1$, then by considering the nonzero highest degree terms  we see that there is no $f\in C[T]$ such that $\mathbf{c}(f)=T$. Thus $\mathbf{c}(T)=c_0T$ for  $c_0\in k$ and then $c_0\in \Bbbk^{\times}$.

(ii) The assumption implies that $\mathbf{c}:\Bbbk[T^{p^m}] \rightarrow \Bbbk[T^{p^m}]$ is surjective, hence the conclusion follows from (i).
\end{proof}

\begin{proposition}\label{prop:functoriality-determine-universal-transform}
Let  $L>M\geq 1$ and $r\geq 1$ be integers.
For a smooth $W(\Bbbk)$-algebra $A$ and $n\in \mathbb{N}$,  let $A_n=A/p^n A$.
Let $F: \mathrm{SmAlg}_{W(\Bbbk)} \rightarrow \mathrm{D}(\mathbf{Ab})$ be the functor  $F(A)=p^{r,M}_{r,L}\Omega^{\bullet}_{A}$.
By the decomposition \eqref{eq:rel-HC-nonreduced-modP-components}, we embed the reduction functor $\mathfrak{r}$ into the functor $H^0(F\otimes^{\mathbf{L}} \mathbb{Z}/p \mathbb{Z}): \mathrm{SmAlg}_{W(\Bbbk)} \rightarrow \mathbf{Ab}$ as a direct summand, namely
\begin{gather*}
\mathfrak{r}(A)=A_1\hookrightarrow A_1\oplus \Omega^{1}_{A_1/\Bbbk} \overset{\eqref{eq:rel-HC-nonreduced-modP-components}}{=} H^0(F(A)\otimes^{\mathbf{L}} \mathbb{Z}/p \mathbb{Z}).
\end{gather*}
Let $f:F \rightarrow F$ be an endomorphism of functors, and $g:\mathfrak{r} \rightarrow \mathfrak{r}$ be the induced endomorphism, namely, for $A\in \mathrm{ob}(\mathrm{SmAlg}_{W(\Bbbk)})$, $g_A: A_1 \rightarrow A_1$ is the  unique map which makes the diagram
\begin{gather*}
\xymatrix@C=10pc{
  A_1 \ar[r]^{g_A} \ar@{^{(}->}[d] & A_1 \ar@{^{(}->}[d] \\
  H^0(F(A)\otimes^{\mathbf{L}} \mathbb{Z}/p \mathbb{Z}) \ar[r]^{H^0(f_A\otimes^{\mathbf{L}} \mathbb{Z}/p \mathbb{Z})} & H^0(F(A)\otimes^{\mathbf{L}} \mathbb{Z}/p \mathbb{Z})
}
\end{gather*}
commute. 
Suppose now $H^{-1}(f\otimes^{\mathbf{L}}\mathbb{Z}/p \mathbb{Z})$ is an isomorphism.
Then $g$ is also an isomorphism.
\end{proposition}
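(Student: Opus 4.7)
The plan is to recognize both $H^{-1}(f\otimes^{\mathbf{L}}\mathbb{Z}/p)$ and $g$ as additive natural endomorphisms of $\mathfrak{r}$, apply Lemma~\ref{lem:universal-maps-A1-to-A1} to classify each as a polynomial in the Frobenius, and then reconcile the two polynomials by exploiting the fact that both descend from a single $\mathbb{Z}$-linear endomorphism of the abelian group $H^0(F(A))$.

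First I would check that $g$ is well defined as a natural endomorphism of $\mathfrak{r}$. Since $F(A)=p^{r,M}_{r,L}\Omega^\bullet_A$ lives in cohomological degrees $0,\ldots,r-1$, one has $H^{-1}(F(A))=0$, so the long exact sequence attached to the distinguished triangle $F(A)\xrightarrow{p}F(A)\to F(A)\otimes^{\mathbf{L}}\mathbb{Z}/p$ produces a canonical identification
\[
H^{-1}(F(A)\otimes^{\mathbf{L}}\mathbb{Z}/p)\cong {}_pH^0(F(A))
\]
and a short exact sequence
\[
0\to H^0(F(A))/p\to H^0(F(A)\otimes^{\mathbf{L}}\mathbb{Z}/p)\to {}_pH^1(F(A))\to 0.
\]
Comparing with \eqref{eq:rel-HC-nonreduced-modP-components}, the subobject $A_1\subset A_1\oplus\Omega^1_{A_1/\Bbbk}$ is precisely $H^0(F(A))/p$; since $H^0(f\otimes^{\mathbf{L}}\mathbb{Z}/p)$ is the mod-$p$ reduction of the $\mathbb{Z}$-linear map $H^0(f_A)$, it preserves this subobject, so $g_A$ is well defined and equals $H^0(f_A)\bmod p$. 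Lemma~\ref{lem:universal-maps-A1-to-A1} then supplies constants $c_i,c'_i\in\Bbbk$ (after enlarging $n$ to be common) such that
\[
g_A(\bar a)=\sum_{i=0}^n c_i\bar a^{p^i},\qquad H^{-1}(f\otimes^{\mathbf{L}}\mathbb{Z}/p)_A(\bar a)=\sum_{i=0}^n c'_i\bar a^{p^i},
\]
and the hypothesis combined with Lemma~\ref{lem:universal-iso-A-to-A}(i) gives $c'_0\in\Bbbk^\times$ and $c'_i=0$ for $i\geq 1$.

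The central step is to compare these two polynomials by testing on $A=W(\Bbbk)[T]$. Both are induced from a single $\mathbb{Z}$-linear endomorphism $\phi:=H^0(f_{W[T]})$ of the $p$-power torsion abelian group $H^0(F(W[T]))$, which I would describe explicitly via the flat resolution \eqref{eq-infinitesimalMOtivicComplex-flatResolution}: $g_{W[T]}(\bar a)$ is represented by $\phi(\tilde a)\bmod p$ for any lift $\tilde a\in H^0(F(W[T]))$ of $\bar a$, while $H^{-1}(f\otimes^{\mathbf{L}}\mathbb{Z}/p)_{W[T]}(\bar a)$ is obtained by transporting $\bar a$ through the boundary iso $A_1\cong {}_pH^0(F(W[T]))$, applying $\phi$, and passing back. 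For a monomial $\bar a=T^{p^m k}$ with $p\nmid k$ and $m$ larger than a bound $m_0(r,L,M)$ depending only on the numerical data, I would produce a common lift $\tilde a\in H^0(F(W[T]))$ and a common exponent $N$ such that $H^{-1}(f\otimes^{\mathbf{L}}\mathbb{Z}/p)_{W[T]}(\bar a)$ is represented by $p^{-N}\phi(p^N\tilde a)\bmod p$; the $\mathbb{Z}$-linearity $\phi(p^N\tilde a)=p^N\phi(\tilde a)$ then forces equality $g_{W[T]}(\bar a)=H^{-1}(f\otimes^{\mathbf{L}}\mathbb{Z}/p)_{W[T]}(\bar a)$. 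Thus $\sum c_i\bar a^{p^i}$ coincides with $\sum c'_i\bar a^{p^i}$ on the subspace $U_{m_0}\subset\Bbbk[T]$ of Lemma~\ref{lem:universal-iso-A-to-A}(ii).

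Combining this agreement on $U_{m_0}$ with the fact that $c'_0\in\Bbbk^\times$ and $c'_i=0$ for $i\geq 1$, the restriction of $\bar a\mapsto\sum c_i\bar a^{p^i}$ to $U_{m_0}$ is the isomorphism $c'_0\cdot(\cdot)$, so it is surjective onto $U_{m_0}$; Lemma~\ref{lem:universal-iso-A-to-A}(ii) then concludes that $g$ is an isomorphism of functors. The main obstacle is the middle paragraph: for $r\geq 2$ the group $H^0(F(W[T]))$ is not cyclic, and the two identifications of $A_1$ with $H^0(F(W[T]))/p$ and with ${}_pH^0(F(W[T]))$ are mediated by nontrivial boundary maps in the bicomplex \eqref{eq-infinitesimalMOtivicComplex-flatResolution}. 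One has to verify carefully, for each high-$p$-valuation monomial, that a common lift realizes both identifications with a matching $p$-power multiplier, so that $\mathbb{Z}$-linearity of $\phi$ really delivers the equality of the induced maps on $U_{m_0}$; this is the ad hoc functoriality trick advertised in the subsection heading.
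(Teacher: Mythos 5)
Your plan follows essentially the same route as the paper: classify $g$ and $H^{-1}(f\otimes^{\mathbf{L}}\mathbb{Z}/p)$ as additive natural endomorphisms of $\mathfrak{r}$ via Lemma \ref{lem:universal-maps-A1-to-A1}, pin down $H^{-1}(f\otimes^{\mathbf{L}}\mathbb{Z}/p)$ as a unit scalar via Lemma \ref{lem:universal-iso-A-to-A}(i), reduce the comparison of the two Frobenius polynomials to the test algebra $A=W(\Bbbk)[T]$, establish agreement on the high-valuation subspace $U$, and then invoke Lemma \ref{lem:universal-iso-A-to-A}(ii). This is the paper's strategy exactly, and your recognition that both maps descend from the single $\mathbb{Z}$-linear endomorphism $\phi=H^0(f_{W[T]})$ of the torsion module $H^0(F(W[T]))$ is the correct guiding idea.

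The gap is in the middle paragraph, and your sketch there is not only incomplete but misleading if taken at face value. You write that $H^{-1}(f\otimes^{\mathbf{L}}\mathbb{Z}/p)_{W[T]}(\bar a)$ is ``represented by $p^{-N}\phi(p^N\tilde a)\bmod p$'' and that $\mathbb{Z}$-linearity then ``forces equality'' with $g_{W[T]}(\bar a)=\phi(\tilde a)\bmod p$. Written this way, the argument would prove the two induced maps agree for \emph{every} monomial $\bar a=T^k$, not just for $k$ of high $p$-adic valuation --- and that is false. The point is that $H^0(F(W[T]))$ is a direct sum of cyclic (over $W$) torsion modules $W_{N_k}(\Bbbk)\cdot e_k$ of \emph{varying} length $N_k$, and the two embeddings $A_1\cong H^0/p$ and $A_1\cong{}_pH^0$ select different filtration strata of each summand. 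Concretely, writing $\phi(e_k)=\sum_j a_{k,j}e_j$, one gets $g(T^k)=\sum_j\bar a_{k,j}T^j$, whereas $H^{-1}(\cdots)(T^k)=\sum_{j:\,N_j=N_k}\bar a_{k,j}T^j$ --- the terms with $N_j<N_k$ are killed by multiplication by $p^{N_k-1}$. The identity you attribute to ``$p^{-N}$-division'' is literally $p^{-N}p^N\phi(\tilde a)=\phi(\tilde a)$, which says nothing because division by a $p$-power is not defined in a torsion group; what carries the argument is the observation that for $k\in U$ the length $N_k$ is maximal, so the terms with $j\notin U$ vanish on the ${}_pH^0$ side, combined with the \emph{input} from Lemma \ref{lem:universal-maps-A1-to-A1} that $g(T^k)=\sum_ic_iT^{kp^i}$ has all its support in $U$ when $k\in U$, so the $j\notin U$ terms on the $H^0/p$ side also vanish. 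Your sketch never makes this last logical loop explicit, and it is exactly what prevents the argument from degenerating into a false ``the two maps agree everywhere''. Also, a minor wrinkle you do not address: for general perfect $\Bbbk$ the $W(\Bbbk)$-linear decomposition above is not a decomposition into finite cyclic \emph{abelian} groups; the paper handles this (Lemma \ref{lem:freeness-basis-of-Wn(k)}) by choosing a free $\mathbb{Z}$-module resolution of $W_n(\Bbbk)$ so that lifts of $\phi$ to free resolutions exist, which is what replaces the informal ``$p^{-N}$'' manipulation with honest comparisons of maps of complexes of free abelian groups.
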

\begin{proof}[Proof for the case $\Bbbk=\mathbb{F}_p$]
By assumption, $g$ is an additive endomorphism of $\mathfrak{r}$. 
By Lemma \ref{lem:universal-maps-A1-to-A1}, there exist $n\in \mathbb{N}$ and  $c_i\in \mathbb{F}_p$ for $0\leq i\leq n$, such that  $g_{A}(\bar{a})=\sum_{i=0}^n c_i(\bar{a})^{p^i}$ for all $A\in \mathrm{ob}(\mathcal{V})$ and $a\in A$.

By \eqref{eq:rel-HC-nonreduced-modP-components}, $H^{-1}(f\otimes^{\mathbf{L}}\mathbb{Z}/p \mathbb{Z})$ is also an additive endomorphism of $\mathfrak{r}$. Thus by  Lemma \ref{lem:universal-maps-A1-to-A1} and Lemma \ref{lem:universal-iso-A-to-A}(i), it is a scalar multiplication by an element of $\mathbb{F}_p^{\times}$.
To determine the $c_i$'s, we study the case $A=\mathbb{Z}_p[T]$ to find a relation between $H^{-1}(f\otimes^{\mathbf{L}}\mathbb{Z}/p \mathbb{Z})$ and $g$. %Let $A_k= \mathbb{Z}[T]/p^k \mathbb{Z}[T]$. 

For any morphism of $\mathbb{Z}$-modules $\tau:M \rightarrow N$, denote by $\overline{\tau}$ the induced map $M\otimes \mathbb{Z}/p \rightarrow N\otimes \mathbb{Z}/p$.

\noindent \textbf{Step 1: Free resolutions.}
We replace \eqref{eq-infinitesimalMOtivicComplex-flatResolution} with the following resolution by  \emph{free $\mathbb{Z}$-modules} (instead of free $\mathbb{Z}_p$-modules)
\begin{flalign}\label{eq:infinitesimalMOtivicComplex-flatResolution-Zp[T]}
\xymatrix{
\mathbb{Z}[T]\ar[r]^{p^L \mathrm{d}} \ar[d]_{p^{r(L-M)}} &  \Omega^1_{\mathbb{Z}[T]} \ar[d]^{p^{(r-1)(L-M)}}   \\
 \mathbb{Z}[T]\ar[r]^{p^M \mathrm{d}} \ar[d]_{p^{rM}}
 &  \Omega^1_{\mathbb{Z}[T]} \ar[d]^{p^{(r-1)M}}\\
  p^{rM}\mathbb{Z}[T]/p^{rL}\mathbb{Z}[T]\ar[r]^<<<<{\mathrm{d}} &  p^{(r-1)M}\Omega^1_{\mathbb{Z}[T]/p^{(r-1)L}}
}
\end{flalign}
where $\Omega^i_{R}:=\Omega^i_{R/\mathbb{Z}}$ for any commutative ring $R$. 
One finds that 
\begin{gather*}
H^0F(\mathbb{Z}_p[T])=\mathrm{Ker}\left(p^{rM}\mathbb{Z}[T]/p^{rL}\mathbb{Z}[T]\xrightarrow{\mathrm{d}} p^{(r-1)M}\Omega^1_{\mathbb{Z}[T]/p^{(r-1)L}}\right)
\end{gather*}
is spanned by
\begin{gather*}
\left\{p^{\max\{(r-1)L-v_p(k),rM\}}T^k: k\in \mathbb{N}\cup\{0\}\right\}.
\end{gather*}
Then there is  a resolution of $H^0F(\mathbb{Z}_p[T])$ by  free $\mathbb{Z}$-modules
\begin{gather}\label{eq:resolution-H0F(Zp[T])}
\xymatrix@R=0.5pc@C=0.8pc{
  0 \ar[r] & \mathbb{Z}[T] \ar[r] & \mathbb{Z}[T] \ar[r] & H^0F(\mathbb{Z}_p[T]) \ar[r] & 0\\
  & &  T^k \ar@{|->}[r] & p^{\max\{(r-1)L-v_p(k),rM\}}T^k &  \\
           &   T^k \ar@{|->}[r] & p^{rL-\max\{(r-1)L-v_p(k),rM\}}T^k & &.
}
\end{gather}
Define additive maps
\begin{align*}
&\alpha_1: \mathbb{Z}[T] \rightarrow \mathbb{Z}[T],\quad T^k\mapsto p^{\max\{(r-1)L-rM-v_p(k),0\}}T^k,\\
&\alpha_2: \mathbb{Z}[T] \rightarrow \Omega^1_{\mathbb{Z}[T]},\quad T^k\mapsto p^{\max\{-(r-1)L+rM+v_p(k),0\}} p^{-v_p(k)}\mathrm{d}T^k.
\end{align*}
We have a  morphism of complexes (the bottom arrows are left actions of matrices of operators)
\begin{gather}\label{eq:morphismOfComplexes-kernel-to-complex}
\xymatrix@R=4pc@C=10pc{
  \mathbb{Z}[T] \ar@{=}[d] \ar[r]^{T^k\mapsto p^{rL-\max\{(r-1)L-v_p(k),rM \}}T^k} & 
  \mathbb{Z}[T] \ar[d]^{(\alpha_1,\alpha_2)} 
  \ar[r]  & 0 \ar[d]\\
        \mathbb{Z}[T] \ar[r]^{\scriptscriptstyle\begin{pmatrix}p^{r(L-M)}\\ p^L \mathrm{d} \end{pmatrix}}   &  \mathbb{Z}[T]\oplus \Omega^1_{\mathbb{Z}[T]}\ar[r]^{\scriptstyle{\begin{pmatrix} p^M \mathrm{d} & -p^{(r-1)(L-M)}\end{pmatrix}}} & \Omega^1_{\mathbb{Z}[T]}
}
\end{gather}
which covers the map $\theta$ of complexes (in the vertical direction)
\begin{gather*}%\label{eq:H0F-to-prMrL}
\xymatrix@C=4pc{
\big(H^0F(\mathbb{Z}_p[T])\big)[0] \ar@{}[dr]|-{\mbox{, namely}} \ar[d]_{\theta} &   H^0F(\mathbb{Z}_p[T]) \ar[r] \ar[d] & 0 \ar[d] \\
F(\mathbb{Z}_p[T]) &  p^{rM}\mathbb{Z}[T]/p^{rL}\mathbb{Z}[T] \ar[r]^<<<<<<<{\mathrm{d}}  & p^{(r-1)M}\Omega^1_{\mathbb{Z}[T]/p^{(r-1)L}}
}
\end{gather*}
via the resolutions \eqref{eq:infinitesimalMOtivicComplex-flatResolution-Zp[T]} and \eqref{eq:resolution-H0F(Zp[T])}. 
Let $U$ be the subspace of $\mathbb{F}_p[T]$ spanned by 
\begin{gather*}
\{T^k: v_p(k)> (r-1)L-rM\}.
\end{gather*}
 From \eqref{eq:morphismOfComplexes-kernel-to-complex}  we have several observations:
\begin{align}\label{eq:observation-on-subspace-U-1}
&&\text{The restriction of  $\overline{\alpha_1}$ on $U$ is the identity and the restriction of  $\overline{\alpha_2}$ on $U$ is 0. }
\end{align}
\begin{align}\label{eq:observation-on-subspace-U-2}
&\mbox{
The map on $H^{-1}(?\otimes^{\mathbf{L}}\mathbb{Z}/p)$ induced by $\theta$ is the identification 
$\mathbb{F}_p[T] = \mathbb{F}_p[T]$.
}
\end{align}
\begin{gather}\label{eq:observation-on-subspace-U-3}
\text{
The restriction to $U$, of the map on $H^{0}(?\otimes^{\mathbf{L}}\mathbb{Z}/p)$ induced by $\theta$,
is the inclusion }\\
U= U\oplus 0 \hookrightarrow \mathbb{F}_p[T]\oplus \Omega^1_{\mathbb{F}_p[T]}.\nn
\end{gather}

\noindent \textbf{Step 2: The maps induced by  the endomorphism $f$ of the functor $F$.}

The endomorphism $f_{\mathbb{Z}_p[T]}$ of $F(\mathbb{Z}_p[T])=p^{r,M}_{r,L}\Omega^{\bullet}_{\mathbb{Z}_p[T]}$ in $\mathrm{D}(\mathbf{Ab})$ induces an endomomorphism $H^0 f_{\mathbb{Z}_p[T]}$ of the 0-th cohomology group $H^0 F(\mathbb{Z}_p[T])$, and there is a commutative diagram
\begin{gather}\label{eq:diag-endo-f-induced-cohomology}
\xymatrix@C=4pc{
  \big(H^0 F(\mathbb{Z}_p[T])\big)[0]  \ar[r]^{H^0 f_{\mathbb{Z}_p[T]}} \ar[d]_{\theta} & 
  \big(H^0 F(\mathbb{Z}_p[T])\big)[0]\ar[d]^{\theta} \\
  F(\mathbb{Z}_p[T]) \ar[r]^{ f_{\mathbb{Z}_p[T]}} &  F(\mathbb{Z}_p[T])
}
\end{gather}
in $\mathrm{D}(\mathbf{Ab})$.
%, where both vertical arrows $\theta$ are the map of complexes \eqref{eq:H0F-to-prMrL}.

Since \eqref{eq:infinitesimalMOtivicComplex-flatResolution-Zp[T]} is a resolution by free abelian groups, the endomorphism $f_{\mathbb{Z}_p[T]}$ of $F(\mathbb{Z}_p[T])$ can be represented by a map of complexes
\begin{gather*}%\label{eq:resolution-F(Zp[T])-endomorphism-f}
\xymatrix@R=0.2pc@C=4pc{
   \mathbb{Z}[T] \ar[r] \ar[ddddd]_{\beta}   &  \mathbb{Z}[T]\oplus \Omega^1_{\mathbb{Z}[T]} \ar[r] \ar[ddddd]^{\begin{pmatrix}\beta_{11} & \beta_{12}\\ \beta_{21} & \beta_{22}\end{pmatrix}} & \Omega^1_{\mathbb{Z}[T]} \ar[ddddd] \\
   &&\\
   &&\\
   &&\\
   &&\\
      \mathbb{Z}[T] \ar[r]    &  \mathbb{Z}[T]\oplus \Omega^1_{\mathbb{Z}[T]} \ar[r]  & \Omega^1_{\mathbb{Z}[T]}  \\
    (\deg\ -1) & (\deg\ 0) & (\deg\ 1)
}
\end{gather*}
where both rows are the bottom row of \eqref{eq:morphismOfComplexes-kernel-to-complex}, and the middle vertical arrow is the left multiplication of a matrix of additive maps.
%Let $\overline{\beta}$ denote the map $\beta \mod p: \mathbb{F}_p[T] \rightarrow \mathbb{F}_p[T]$, and similarly for $\beta_{ij}$ and the above homomorphisms $\alpha$ and the  $\gamma_i$ in the following. 
Since each of the horizontal arrows is  multiplication by a positive $p$-power,  we have
\begin{equation}\label{eq:Hminus1=beta-mod-p}
\begin{gathered}
H^{-1}(f_{\mathbb{Z}_p[T]}\otimes^{\mathbf{L}}\mathbb{Z}/p \mathbb{Z})=\overline{\beta},\\
H^{0}(f_{\mathbb{Z}_p[T]}\otimes^{\mathbf{L}}\mathbb{Z}/p \mathbb{Z})=\begin{pmatrix}\beta_{11} & \beta_{12}\\ \beta_{21} & \beta_{22}\end{pmatrix}\mod p,
\end{gathered}
\end{equation}
and by our setup of the proposition,
\begin{gather}\label{eq:g=beta11-mod-p}
g_{\mathbb{Z}_p[T]}=\overline{\beta_{11}}.
\end{gather}

Since  \eqref{eq:resolution-H0F(Zp[T])} is a resolution by free abelian groups, the endomorphism $H^0f_{\mathbb{Z}_p[T]}$ of $H^0F(\mathbb{Z}_p[T])$ can be represented by a map of complexes
\begin{gather}\label{eq:resolution-H0F(Zp[T])-endomorphism-f}
\xymatrix@R=0.2pc{
  \mathbb{Z}[T] \ar[r] \ar[dddd]_{\gamma_{-1}} & \mathbb{Z}[T]  \ar[dddd]^{\gamma_0} \\
  &\\ &\\ &\\ 
   \mathbb{Z}[T] \ar[r] & \mathbb{Z}[T] \\
   (\deg\ -1) & (\deg\ 0)
}
\end{gather}
where both rows are the top row of \eqref{eq:morphismOfComplexes-kernel-to-complex}. Again, since the horizontal arrows  are multiplications by positive $p$-powers  on each $T^k$, we have
\begin{gather*}
H^{-1}(H^0f_{\mathbb{Z}_p[T]}\otimes^{\mathbf{L}}\mathbb{Z}/p \mathbb{Z})=\overline{\gamma_{-1}},\\
H^{0}(H^0f_{\mathbb{Z}_p[T]}\otimes^{\mathbf{L}}\mathbb{Z}/p \mathbb{Z})=\overline{\gamma_0}.
\end{gather*}
Moreover, the commutativity of \eqref{eq:resolution-H0F(Zp[T])-endomorphism-f} implies
\begin{gather}\label{eq:gamma(-1)=gamma0}
\gamma_{-1}=\gamma_0.
\end{gather}

\noindent\textbf{Step 3: Comparison of maps.}

Now the commutativity of \eqref{eq:diag-endo-f-induced-cohomology} combined with the observation \eqref{eq:observation-on-subspace-U-2} yields the commutative diagram
\begin{gather*}%\label{eq:resolutions-commutativity-deg=(-1)-modP}
\xymatrix{
  \mathbb{F}_p[T] \ar[r]^{\overline{\gamma_{-1}}} \ar@{=}[d] & \mathbb{F}_p[T] \ar@{=}[d] \\
  \mathbb{F}_p[T] \ar[r]^{\overline{\beta}} & \mathbb{F}_p[T]
}
\end{gather*}
and thus we have
\begin{gather}\label{eq:gamma(-1)Bar=betaBar}
\overline{\gamma_{-1}}=\overline{\beta}.
\end{gather}
The commutativity of \eqref{eq:diag-endo-f-induced-cohomology}  yields the commutative diagram
\begin{gather*}%\label{eq:resolutions-commutativity-deg=(0)-modP}
\xymatrix@R=4pc@C=4pc{
 \mathbb{F}_p[T] \ar[r]^{\overline{\gamma_0}} \ar[d]_{(\overline{\alpha_1},\overline{\alpha_2})} & \mathbb{F}_p[T] \ar[d]^{(\overline{\alpha_1},\overline{\alpha_2})} \\
  \mathbb{F}_p[T]\oplus \Omega^1_{\mathbb{F}_p[T]} \ar[r]^{\overline{\begin{pmatrix}\beta_{11} & \beta_{12}\\ \beta_{21} & \beta_{22}\end{pmatrix}}}  
  & \mathbb{F}_p[T]\oplus \Omega^1_{\mathbb{F}_p[T]}
}
\end{gather*}
which combined with the observation \eqref{eq:observation-on-subspace-U-3} implies
\begin{gather*}%\label{eq:gamma0Bar=betaBar}
\overline{\gamma_{0}}=\overline{\beta_{11}}\quad \mbox{on}\ U.
\end{gather*}
Combining this  with \eqref{eq:Hminus1=beta-mod-p}, \eqref{eq:g=beta11-mod-p}, \eqref{eq:gamma(-1)=gamma0}, and \eqref{eq:gamma(-1)Bar=betaBar}, we get
\begin{gather*}
g_{\mathbb{Z}_p[T]}=H^{-1}(f_{\mathbb{Z}_p[T]}\otimes^{\mathbf{L}}\mathbb{Z}/p \mathbb{Z})\ \mbox{on}\  U.
\end{gather*}
As we have seen in the beginning of the proof, $H^{-1}(f_{\mathbb{Z}_p[T]}\otimes^{\mathbf{L}}\mathbb{Z}/p \mathbb{Z})$ is a nonzero scalar multiplication on $U$, thus so is  the restriction of $g_{\mathbb{Z}_{p}[T]}$ on $U$.
Then by applying  Lemma  \ref{lem:universal-iso-A-to-A}(ii) to  $g_{\mathbb{Z}_{p}[T]}$ and $U$, we deduce that $g_{\mathbb{Z}_{p}[T]}$ is an isomorphism on $\mathbb{F}_p[T]$ and, by Lemma  \ref{lem:universal-iso-A-to-A}(i), is thus a scalar multiplication by some element in $\mathbb{F}_p^{\times}$. This means that  $c_i=0$ for $i>0$ and $c_0\in \mathbb{F}_p^{\times}$, and the proof is complete.
\end{proof}

Perception upon this proof reveals that what we used essentially is that $W_n(\mathbb{F}_p)$ is a \emph{free} $\mathbb{Z}/p^n \mathbb{Z}$-module. This is true for all perfect $\Bbbk$ by the following lemma.

\begin{lemma}\label{lem:freeness-basis-of-Wn(k)}
Let $\Bbbk$ be a perfect field of characteristic $p$.
There exists a free $\mathbb{Z}$-module $\mathcal{W}$  and additive maps $\phi_i:\mathcal{W} \rightarrow W_n(\Bbbk)$ which induce isomorphisms $ \mathcal{W}/p^n \mathcal{W}\xrightarrow{\cong}W_n(\Bbbk)$ and  satisfy $R_n\circ \phi_n=\phi_{n-1}$, where $R_n:W_n(\Bbbk) \rightarrow W_{n-1}(\Bbbk)$ is the restriction homomorphism.
\end{lemma}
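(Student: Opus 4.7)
The plan is to build $\mathcal{W}$ as the free $\mathbb{Z}$-module on an $\mathbb{F}_p$-basis of $\Bbbk$, and take the maps $\phi_n$ to be those sending basis elements to the corresponding Teichmüller representatives in $W_n(\Bbbk)$. The compatibility $R_n \circ \phi_n = \phi_{n-1}$ is then immediate because Teichmüller lifts are preserved by restriction. The substance of the lemma is showing that $\phi_n$ descends to an isomorphism $\mathcal{W}/p^n\mathcal{W} \xrightarrow{\cong} W_n(\Bbbk)$.

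More precisely, I would proceed as follows. Fix an $\mathbb{F}_p$-basis $\{e_\alpha\}_{\alpha \in I}$ of $\Bbbk$, set $\mathcal{W} = \bigoplus_{\alpha \in I} \mathbb{Z}\cdot \epsilon_\alpha$, and define $\phi_n: \mathcal{W} \to W_n(\Bbbk)$ by the additive extension of $\epsilon_\alpha \mapsto \underline{e_\alpha} \in W_n(\Bbbk)$, where $\underline{e_\alpha}$ denotes the Teichmüller representative. Since the ghost/Witt-component definition of the Teichmüller lift is compatible with restriction, one has $R_n(\underline{e_\alpha}) = \underline{e_\alpha} \in W_{n-1}(\Bbbk)$, so the compatibility $R_n \circ \phi_n = \phi_{n-1}$ holds on generators and hence on all of $\mathcal{W}$.

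It remains to verify that the induced map $\overline{\phi_n}: \mathcal{W}/p^n\mathcal{W} \to W_n(\Bbbk)$ is an isomorphism. Both sides carry the $p$-adic filtration $F^i = p^i(-)$, and $\overline{\phi_n}$ respects these filtrations. On the $i$-th graded piece, $\mathrm{gr}^i(\mathcal{W}/p^n\mathcal{W}) = \bigoplus_\alpha \mathbb{F}_p\cdot \overline{\epsilon_\alpha}$, while $\mathrm{gr}^i(W_n(\Bbbk)) = p^i W_n(\Bbbk)/p^{i+1}W_n(\Bbbk) \cong \Bbbk$ (as abelian groups) via the identification $p^i \underline{x} \leftrightarrow x^{1/p^i}$, which uses perfectness of $\Bbbk$ together with the standard relation $V^i(\underline{x}) = p^i \underline{x^{1/p^i}}$ (a consequence of $FV = p$ and $F$ being the Frobenius, hence invertible on $W_n(\Bbbk)$). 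Under these identifications, the map $\mathrm{gr}^i(\overline{\phi_n})$ becomes the original $\mathbb{F}_p$-linear map $\bigoplus_\alpha \mathbb{F}_p \to \Bbbk$ sending $\overline{\epsilon_\alpha} \mapsto e_\alpha^{1/p^i}$, which is an isomorphism since $\Bbbk$ is perfect (so $\{e_\alpha^{1/p^i}\}$ is also an $\mathbb{F}_p$-basis). A five-lemma induction on the filtration then gives that $\overline{\phi_n}$ is an isomorphism.

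The only place one must be a bit careful is in identifying $\mathrm{gr}^i(W_n(\Bbbk))$ with $\Bbbk$ and checking that this identification sends $p^i \underline{e_\alpha}$ to $e_\alpha^{1/p^i}$; this is where perfectness of $\Bbbk$ is essential and is the one step that uses more than just formal arguments. Everything else is bookkeeping, and no further obstacle arises — in particular, freeness of $\mathcal{W}$ as a $\mathbb{Z}$-module is automatic from the construction.
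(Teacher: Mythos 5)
Your proof is correct and rests on the same core idea as the paper's: pick a system of lifts of an $\mathbb{F}_p$-basis of $\Bbbk$ compatible under restriction, reduce to the case $n=1$ (which is the fact that the basis spans $\Bbbk$), and propagate upward along the $p$-adic filtration using perfectness of $\Bbbk$. The details diverge in two minor ways. First, you insist on Teichmüller lifts, whereas the paper's proof only needs arbitrary compatible lifts $x_{i,n}$ with $R_n x_{i,n} = x_{i,n-1}$, whose existence is immediate from surjectivity of the restriction maps; Teichmüller works but is an unnecessary constraint. Second, you identify each graded piece $\mathrm{gr}^i W_n(\Bbbk)\cong\Bbbk$ explicitly via $V^i$ and then compare graded pieces one at a time, whereas the paper packages the filtration step into the single short exact sequence
\[
0 \longrightarrow W_{n-1}(\Bbbk) \xrightarrow{\;\cdot p\;} W_n(\Bbbk) \longrightarrow W_1(\Bbbk) \longrightarrow 0
\]
(whose exactness is where perfectness enters), fits $\overline{\phi_{n-1}}, \overline{\phi_n}, \overline{\phi_1}$ into a map of short exact sequences, and inducts; this avoids having to nail down the precise isomorphism $\mathrm{gr}^i W_n(\Bbbk)\cong\Bbbk$ altogether. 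One small slip worth flagging: from $V^i(\underline{x}) = p^i\,\underline{x^{1/p^i}}$ one gets $p^i\underline{x} = V^i(\underline{x^{p^i}})$, so under the $V^i$-identification the class of $p^i\underline{x}$ corresponds to $x^{p^i}$, not $x^{1/p^i}$. The error is inconsequential for your argument, since iterated Frobenius (or its inverse) is an $\mathbb{F}_p$-linear automorphism of a perfect field and therefore carries $\mathbb{F}_p$-bases to $\mathbb{F}_p$-bases, so the graded map is an isomorphism either way.
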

\begin{proof}
Let $\{x_{i,1}\}_{i\in I}$ be an $\mathbb{F}_p$-basis of $\Bbbk$. Choose $x_{i,n}\in W_n(\Bbbk)$ such that $R_n x_{i,n}=x_{i,n-1}$  for $n\geq 1$ and $i\in I$. Let $\mathcal{W}=\bigoplus_{i\in I}\mathbb{Z}e_i$, and let $\phi_n: \mathcal{W} \rightarrow W_n(\Bbbk)$ be the homomorphism such that $\phi_n e_i=x_{i,n}$. Since $\Bbbk$ is perfect, the bottom row of
\begin{gather*}
\xymatrix{
  0 \ar[r] & \mathcal{W}/p^{n-1} \ar[r]^{\times p} \ar[d]_{\phi_{n-1}} & \mathcal{W}/p^n \ar[r] \ar[d]_{\phi_n} & \mathcal{W}/p \ar[r] \ar[d]_{\phi_1} & 0 \\
  0 \ar[r] & W_{n-1}(\Bbbk) \ar[r]^{\times p} & W_n(\Bbbk) \ar[r]^{R} & W_1(\Bbbk) \ar[r] & 0
}
\end{gather*}
is exact. Then the conclusion follows by induction on $n$.
\end{proof}

\begin{proof}[Proof of Proposition \ref{prop:functoriality-determine-universal-transform} for all perfect fields $\Bbbk$ of characteristic $p$] 
Let $\mathcal{W}$ and $\phi_i$ be as in Lemma \ref{lem:freeness-basis-of-Wn(k)}. % Let $W_i=W_i(\Bbbk)$ and $W=W(\Bbbk)$. 
By the compatibility of $\phi_i$'s, we denote all $\phi_i$ by $\phi$.
%Let $\phi: \mathcal{W} \rightarrow W_{\centerdot}(\Bbbk)$ be the induced map between pro-rings.
We replace the $\mathbb{F}_p$'s in the preceding proof with $\Bbbk$, $\mathbb{Z}_p$ with $W(\Bbbk)$, and replace the resolution \eqref{eq:infinitesimalMOtivicComplex-flatResolution-Zp[T]} with
\begin{flalign}\label{eq-infinitesimalMOtivicComplex-flatResolution-Zp[T]-replaced}
\xymatrix{
\mathcal{W}\otimes\mathbb{Z}[T]\ar[r]^{1\otimes p^L \mathrm{d}} \ar[d]_{p^{r(L-M)}} &  \mathcal{W}\otimes\Omega^1_{\mathbb{Z}[T]} \ar[d]^{p^{(r-1)(L-M)}}   \\
 \mathcal{W}\otimes\mathbb{Z}[T]\ar[r]^{1\otimes p^M \mathrm{d}} \ar[d]_{\phi\otimes p^{rM}}
 &  \mathcal{W}\otimes\Omega^1_{\mathbb{Z}[T]} \ar[d]^{\phi\otimes p^{(r-1)M}}\\
  p^{rM}W(\Bbbk)[T]/p^{rL}W(\Bbbk)[T]\ar[r]^<<<<{\mathrm{d}} &  p^{(r-1)M}\Omega^1_{W_{(r-1)L}(\Bbbk)[T]/W_{(r-1)L}(\Bbbk)}
}
\end{flalign}
and replace \eqref{eq:resolution-H0F(Zp[T])} with
\begin{gather}\label{eq:resolution-H0F(Zp[T])-replaced}
\xymatrix@R=0.5pc@C=0.8pc{
  0 \ar[r] & \mathcal{W}\otimes\mathbb{Z}[T] \ar[r] & \mathcal{W}\otimes \mathbb{Z}[T] \ar[r] & H^0F(W(\Bbbk)[T]) \ar[r] & 0\\
  & &  w\otimes T^k \ar@{|->}[r] & \phi(w)p^{\max\{(r-1)L-v_p(k),rM\}}T^k &  \\
           &   w\otimes T^k \ar@{|->}[r] & w\otimes p^{rL-\max\{(r-1)L-v_p(k),rM\}}T^k & &
  }
\end{gather}
and replace also the subsequent appearances of (terms of) these resolutions accordingly. Then \eqref{eq-infinitesimalMOtivicComplex-flatResolution-Zp[T]-replaced} and \eqref{eq:resolution-H0F(Zp[T])-replaced} are  resolutions by free abelian groups, and then the proof works for $\Bbbk$.
\end{proof}

\begin{remark}\label{rem:wider-functoriality}
There is a slightly different approach to Proposition \ref{prop:functoriality-determine-universal-transform}: first show a
 further functoriality 
 %relating the motivic complex over $\mathbb{F}_p$ to that over $\Bbbk$, and 
 relating the infinitesimal motivic complex over $W_n(\mathbb{F}_p)$ to that over  $W_n(\Bbbk)$, then show that there is a canonical construction of Chern classes respecting such functoriality, which reduces the determination of the universal coefficients $c_i$ to the problem over $\mathbb{F}_p$ (taking care of the modified Milnor $K$-theory à la Kerz \cite{Ker10}).
 %(Use \cite[Cor. 4.5.10, Prop. 4.5.15, Prop. 4.6.1]{EGA IV-2} and \cite[Chap V \S14.7 Cor. 1]{Bou03}.)
 We leave the details to the reader.
 \end{remark}

% subsection functoriality_of_the_infinitesimal_chern_character (end)

% subsection the_mod_p_product_structure (end)

\subsection{Completion of the proof of Proposition \ref{prop:relative-comparison-m=n-1-modP}} % (fold)
\label{sub:completion_of_the_proof_of_relative-comparison}
With the preparations in the previous sections, we are ready to show  Proposition \ref{prop:relative-comparison-m=n-1-modP} and thus complete the proof of Theorem \ref{thm:relative-comparison-local}. Fix $n\geq 2$. Let $\zeta=1+p^{n-1}$ and  $\beta_\zeta$ be the  Bott element in Proposition \ref{prop:multiplication-Bott-element}.
\begin{lemma}\label{lem:comparison-chernClass-chernCharacter}
(i) For $a\in \mathcal{O}_{X_{\centerdot}}^{\times}$, $c_1(a)=\mathrm{ch}(a)$. (ii)  $c_1(\beta_{\zeta})=\mathrm{ch}(\beta_{\zeta})$. 
\begin{comment}
\begin{enumerate}[(i)]
  \item For $a\in \mathcal{O}_{X_{\centerdot}}^{\times}$, $c_1(a)=\mathrm{ch}(a)$.
  \item  $c_1(\beta_{\zeta})=\mathrm{ch}(\beta_{\zeta})$. 
\end{enumerate}
\end{comment}
\end{lemma}
\begin{proof}
(i) $\mathrm{ch}_r(a)\in \mathcal{H}^{2r-1}\big(\mathbb{Z}_{X_n}(r)\big)[\frac{1}{r!}]$. By Proposition \ref{prop:properties-Zn(r)}(i), $\mathbb{Z}_{X_n}(r)$ is supported in degree $\leq r$. Thus $\mathrm{ch}_r(a)=0$ for $r\geq 2$, and $\mathrm{ch}(a)=c_1(a)$.

(ii)  $\mathrm{ch}_r(\beta_{\zeta})\in \mathcal{H}^{2r-2}\big(\mathbb{Z}_{X_n}(r)\big)[\frac{1}{r!}]$ lies in the image of $\mathcal{H}^{2r-2}\big(\mathbb{Z}_{\operatorname{Spec}W_n(\Bbbk)}(r)\big)[\frac{1}{r!}]$ by functoriality of the Chern character (Proposition \ref{prop:infinitesimalChernClass}(i)). By the exact sequence \eqref{eq:motivicFundamentalTriangle}, for $r\geq 2$ we have $\mathcal{H}^{2r-2}\big(\mathbb{Z}_{\operatorname{Spec}W_n(\Bbbk)}(r)\big)=\mathcal{H}^{2r-2}\big(\mathbb{Z}_{\operatorname{Spec}(\Bbbk)}(r)\big)$. Since $\zeta\mapsto 1\in \Bbbk$, by the compatibility of Chern character (Proposition \ref{prop:infinitesimalChernClass}(i) again) $\mathrm{ch}_r(\beta_{\zeta})=0$ for $r\geq 2$.
\end{proof}

\begin{lemma}\label{lem:relative-comparison-modP-i=1,2}
The Nisnevich sheafified Chern character induces isomorphisms
\begin{align*}
\mathcal{K}_{X_n,X_{n-1},1} &\xrightarrow{\cong} p^{n-1}\mathcal{O}_{X_{n}},\quad \mbox{and}\\
(\mathcal{K}/p)_{X_n,X_{n-1},2} &\xrightarrow{\cong} \mathcal{H}^1(p^{2,n-1}_{2,n}\Omega^{\bullet}_{X_{\centerdot}}\otimes^{\mathbf{L}}\mathbb{Z}/p\big)\quad \mbox{if}\ p\geq 5.
\end{align*}
\end{lemma}
\begin{proof}
By the construction in \S\ref{sec:infinitesimal_chern_classes_and_chern_characters}, the Chern character in the relevant range is induced by a sheafified Chern character
\begin{gather*}
\mathcal{K}_{X_n,X_{n-1}} \xrightarrow{\mathrm{ch}} \bigoplus_{r=1}^{p-1}\mathrm{K}\big(p^{r,n-1}_{r,n}\Omega^{\bullet}_{X_{\centerdot}}[2r-1]\big)
\end{gather*}
where  we are using the notation in \S\ref{sub:notations}(iv) and (v). 
In particular, we have the maps
\begin{gather*}
\mathcal{K}_{X_n,X_{n-1},1} \xrightarrow{\mathrm{ch}} \bigoplus_{r=1}^{p-1}\mathcal{H}^{-1}\big(p^{r,n-1}_{r,n}\Omega^{\bullet}_{X_{\centerdot}}[2r-1]\big)=\bigoplus_{r=1}^{p-1}\mathcal{H}^{0}\big(p^{r,n-1}_{r,n}\Omega^{\bullet}_{X_{\centerdot}}[2r-2]\big)\\
=\mathcal{H}^{0}\big(p^{1,n-1}_{1,n}\Omega^{\bullet}_{X_{\centerdot}}\big)=p^{n-1}\mathcal{O}_{X_{n}}
\end{gather*}
and
\begin{gather*}
\mathcal{K}_{X_n,X_{n-1},2} \xrightarrow{\mathrm{ch}} \bigoplus_{r=1}^{p-1}\mathcal{H}^{-2}\big(p^{r,n-1}_{r,n}\Omega^{\bullet}_{X_{\centerdot}}[2r-1]\big)=\bigoplus_{r=1}^{p-1}\mathcal{H}^{0}\big(p^{r,n-1}_{r,n}\Omega^{\bullet}_{X_{\centerdot}}[2r-3]\big)\\
=\mathcal{H}^1(p^{2,n-1}_{2,n}\Omega^{\bullet}_{X_{\centerdot}})=p^{n-1}\Omega^1_{X_n/W_n}/d\big(p^{2(n-1)}\mathcal{O}_{X_n}\big).
\end{gather*}
The mod $p$ Chern character $\mathrm{ch}:(\mathcal{K}/p)_{X_n,X_{n-1},2} \rightarrow  \bigoplus_{r=1}^{p-1}\mathcal{H}^{-2}\big(p^{r,n-1}_{r,n}\Omega^{\bullet}_{X_{\centerdot}}[2r-1]\otimes^{\mathbf{L}}\mathbb{Z}/p\big)$ fits into the maps  of short exact sequences
\begin{gather}\label{prop:relative-comparison-i=2}
\xymatrix{
  0 \ar[r] &  (\mathcal{K}_{X_n,X_{n-1},2})/p \ar[r] \ar[d]_{\mathrm{ch}} & (\mathcal{K}/p)_{X_n,X_{n-1},2} \ar[r] \ar[d]_{\mathrm{ch}} & \leftidx{_p}(\mathcal{K}_{X_n,X_{n-1},1}) \ar[r] \ar[d]_{\mathrm{ch}} & 0 \\
  0 \ar[r] & \mathcal{H}^1(p^{2,n-1}_{2,n}\Omega^{\bullet}_{X_{\centerdot}})/p \ar[r] \ar@{=}[d] &  \bigoplus_{r=1}^{p-1}\mathcal{H}^{-2}\big(p^{r,n-1}_{r,n}\Omega^{\bullet}_{X_{\centerdot}}[2r-1]\otimes^{\mathbf{L}}\mathbb{Z}/p\big) \ar[r] \ar@{=}[d] & \leftidx{_p}(p^{n-1}\mathcal{O}_{X_{n}}) \ar[r] \ar@{=}[d] & 0\\
  0 \ar[r] & \Omega^1_{X_1} \ar[r] & \mathcal{O}_{X_1}\oplus \Omega^1_{X_1} \ar[r] & \mathcal{O}_{X_1} \ar[r] & 0
}
\end{gather}
where the bottom row is induced by the obvious splitting of $\mathcal{O}_{X_1}\oplus \Omega^1_{X_1}$ and the vertical identifications are given by \eqref{eq:rel-HC-nonreduced-modP-components-sheafified}.
By Proposition \ref{prop:c1-K1}, $\mathrm{ch}=c_1:\mathcal{K}_{X_n,X_{n-1},1} \rightarrow p^{n-1}\mathcal{O}_{X_{n}}$ is an isomorphism. 
%In particular, the induced maps $\mathcal{K}_{X_n,X_{n-1},1}/p \rightarrow (p^{n-1}\mathcal{O}_{X_{n}})/p$ and $\leftidx{_p}(\mathcal{K}_{X_n,X_{n-1},1}) \rightarrow \leftidx{_p}(p^{n-1}\mathcal{O}_{X_{n}})$ are isomorphisms. 
In particular, the induced map  $\leftidx{_p}(\mathcal{K}_{X_n,X_{n-1},1}) \rightarrow \leftidx{_p}(p^{n-1}\mathcal{O}_{X_{n}})$ is an isomorphism. Thus it remains to show that the upper left vertical arrow ch in \eqref{prop:relative-comparison-i=2} is an isomorphism. We do this by comparing the Brun map and the Chern character as follows.

By Proposition \ref{prop:definition-infinitesimal-ChernCharacter}, the Chern character preserves products. Thus by Lemma \ref{lem:comparison-chernClass-chernCharacter} and Proposition \ref{prop:K1-multiplication-motivic-complex} we have the following commutative diagram:
\begin{gather}\label{eq:relative-comparison-modP-i=1,2-proof-graph-1}
\xymatrix@C=5pc{
  \mathcal{K}_{X_n,1}\times \mathcal{K}_{X_n,X_{n-1},1} \ar[r] \ar[d]_{c_1\times \mathrm{ch}} & \mathcal{K}_{X_n,X_{n-1},2}/p \ar[d]^{\mathrm{ch}} \\
  \mathcal{O}_{X_n}^{\times} \times p^{n-1}\mathcal{O}_{X_n} \ar[r]^{(x,\alpha)\mapsto -\mathrm{dlog}x\cdot \alpha} & p^{n-1}\Omega^1_{X_n/W_n}
}
\end{gather}
On the other hand, by Proposition \ref{prop:brun-iso-product}, the Brun map preserves products. Thus by Proposition \ref{prop:K1-multiplication-on-rel-HC}, we have the following commutative diagram:
\begin{gather}\label{eq:relative-comparison-modP-i=1,2-proof-graph-2}
\xymatrix@C=5pc{
  \mathcal{K}_{X_n,1}\times \mathcal{K}_{X_n,X_{n-1},1} \ar[r] \ar[d]_{c_1\times (\Psi^{-1}\circ\mathrm{br})} & \mathcal{K}_{X_n,X_{n-1},2}/p \ar[d]^{(\Psi^{-1}\circ \mathrm{br}) \mod p} \\
  \mathcal{O}_{X_n}^{\times} \times p^{n-1}\mathcal{O}_{X_n} \ar[r]^{(x,\alpha)\mapsto -\mathrm{dlog}x\cdot \alpha} & p^{n-1}\Omega^1_{X_n/W_n}
}
\end{gather}
where the right-hand vertical arrow is the composition
\begin{gather*}
\mathcal{K}_{X_n,X_{n-1},2} \xrightarrow{\mathrm{br}}\mathcal{HC}_{X_n,X_{n-1},1} \xrightarrow[\cong]{\Psi^{-1}}\mathcal{H}^1(p^{2,n-1}_{2,n}\Omega^{\bullet}_{X_{\centerdot}})
\end{gather*}
modulo $p$ and $\Psi$ is the map induced by the quasi-isomorphism $\Psi$ in Theorem \ref{thm:rel-HC-homotopyEquiv}, and similarly for the left-hand vertical $\Psi^{-1}\circ\mathrm{br}$. By Brun's Theorem \ref{thm:Brun}, the vertical arrows in \eqref{eq:relative-comparison-modP-i=1,2-proof-graph-1} are isomorphisms. The lower horizontal arrow in \eqref{eq:relative-comparison-modP-i=1,2-proof-graph-2} is clearly surjective, thus so is the upper horizontal arrow. But the horizontal arrows in \eqref{eq:relative-comparison-modP-i=1,2-proof-graph-1} and \eqref{eq:relative-comparison-modP-i=1,2-proof-graph-2} and identical, respectively. Thus the right-hand vertical arrow in \eqref{eq:relative-comparison-modP-i=1,2-proof-graph-1} is determined by the composition 
\begin{gather*}
\mathcal{K}_{X_n,1}\times \mathcal{K}_{X_n,X_{n-1},1} \xrightarrow{c_1\times \mathrm{ch}}\mathcal{O}_{X_n}^{\times} \times p^{n-1}\mathcal{O}_{X_n} \xrightarrow{(x,\alpha)\mapsto \mathrm{dlog}x\cdot \alpha}  p^{n-1}\Omega^1_{X_n/W_n}.
\end{gather*}
By Proposition \ref{prop:Brun-map-K1}, the map $\Psi^{-1}\circ \mathrm{br}:\mathcal{K}_{X_n,X_{n-1},1} \rightarrow p^{n-1}\mathcal{O}_{X_{n}}$ differs from  the Chern character only by a universal unit of $\mathbb{Z}_p$. 
It follows that the right-hand vertical arrow in \eqref{eq:relative-comparison-modP-i=1,2-proof-graph-1} differs from that  in \eqref{eq:relative-comparison-modP-i=1,2-proof-graph-2} only by a unit in $\mathbb{F}_p$, hence is also an isomorphism. This completes the proof of this lemma.
\begin{comment}
On the other hand, by Lemma \ref{lem:comparison-chernClass-chernCharacter}, we have that the multiplications by the relevant Chern characters on the infinitesimal motivic complexes are computed by the multiplications by the Chern classes, hence are computed in Propositions \ref{prop:K1-multiplication-motivic-complex} and \ref{prop:multiplication-Bott-element}.
Moreover, Proposition \ref{prop:brun-iso-product} implies that the Brun map preserves products. Thus by Proposition \ref{prop:K1-multiplication-on-rel-HC} and Proposition \ref{prop:K1-multiplication-motivic-complex}, the wanted assertion that the  upper left vertical arrow ch in \eqref{prop:relative-comparison-i=2} is an isomorphism follows from that the sheafified Brun map
\begin{gather*}
\mathcal{K}_{X_n,X_{n-1},2} \xrightarrow{\mathrm{br}}\mathcal{HC}_{X_n,X_{n-1},1} \xrightarrow[\cong]{\Psi^{-1}}\mathcal{H}^1(p^{2,n-1}_{2,n}\Omega^{\bullet}_{X_{\centerdot}})
\end{gather*}
is an isomorphism.
\end{comment}
\begin{comment}
Thus
\begin{gather*}
\mathrm{ch}:(\mathcal{K}/p)_{X_n,X_{n-1},2} \rightarrow  \bigoplus_{r=1}^{p-1}\mathcal{H}^{-2}\big(p^{r,n-1}_{r,n}\Omega^{\bullet}_{X_{\centerdot}}[2r-1]\otimes^{\mathbf{L}}\mathbb{Z}/p\big)
\end{gather*}  
is an isomorphism.
\end{comment}
\end{proof}

\begin{proof}[Proof of Proposition \ref{prop:relative-comparison-m=n-1-modP}]
 We are going to show that the mod $p$ relative Chern character \eqref{eq:relative-comparison-m=n-1-modP} is an isomorphism by induction on $i\leq p-3$. The case  $i=1$ or $2$ is proven by Lemma \ref{lem:relative-comparison-modP-i=1,2}.
\begin{comment}
We define an equivalence
\begin{gather*}
\mathfrak{br}:
 \tau_{\leq i}\mathcal{K}_{X_n,X_{n-1}}\cong \bigoplus_{r=1}^{i}\tau_{\leq i-1}\mathrm{K}\big(p^{r,n-1}_{r,n}\Omega^{\bullet}_{X_{\centerdot}}[2r-1]\big)
\end{gather*}
  by the composition
\begin{gather*}
\xymatrix@C=6pc{
  \tau_{\leq i}\mathcal{K}_{X_n,X_{n-1}} \ar[r]^{\mathrm{br}}_{\simeq} \ar[dr]_{\mathfrak{br}} &  \tau_{\leq i}\Sigma\mathcal{HC}_{X_n,X_{n-1}} \ar[d]^{\Psi^{-1}}_{\simeq} \\
  & \bigoplus_{r=1}^{i}\tau_{\leq i-1}\mathrm{K}\big(p^{r,n-1}_{r,n}\Omega^{\bullet}_{X_{\centerdot}}[2r-1]\big)
}
\end{gather*}
where $\mathrm{br}$ is Brun's equivalence in Theorem \ref{thm:Brun} and $\Psi$ is the quasi-isomorphism in Theorem \ref{thm:rel-HC-homotopyEquiv}. This induces an isomorphism 
\begin{gather}\label{eq:identification-br/p}
\mathfrak{br}/p: (\mathcal{K}/p)_{X_n,X_{n-1},i}\cong \bigoplus_{j=0}^{i-1}\Omega^j_{X_1/\Bbbk}
\end{gather}
  by  Corollary \ref{cor:decomposition-HH-HC}. 
\end{comment}
We define an isomorphism
\begin{gather}\label{eq:identification-br/p}
\mathfrak{br}: (\mathcal{K}/p)_{X_n,X_{n-1},i}\cong \bigoplus_{j=0}^{i-1}\Omega^j_{X_1/\Bbbk}
\end{gather}
to be the composition
\begin{gather*}
\xymatrix@C=6pc{
  (\mathcal{K}/p)_{X_n,X_{n-1},i} \ar[r]^{\mathrm{br}}_{\simeq} \ar[dr]_{\mathfrak{br}} &  (\mathcal{HC}_{X_n,X_{n-1}}/p)_{i-1} \ar[d]^{(\Psi/p)^{-1}}_{\simeq} \\
  & \bigoplus_{j=0}^{i-1}\Omega^j_{X_1/\Bbbk}
}
\end{gather*}
where $\mathrm{br}$ is Brun's isomorphism in Remark \ref{rem:BrunIsom-withCoeff}, $\Psi$ is the quasi-isomorphism in Theorem \ref{thm:rel-HC-homotopyEquiv}, and $\Psi/p$ is the isomorphism of homology with coefficients $\mathbb{Z}/p$ given by Corollary \ref{cor:decomposition-HH-HC}.
  It remains to show the following 
\begin{claim}\label{claim:relative-comparison-m=n-1-modP-induction}
Under the identification $\mathfrak{br}$, for $3\leq i\leq p-3$, the mod $p$ relative Chern character $\mathrm{ch}:\bigoplus_{j=0}^{i-1}\Omega^j_{X_1} \rightarrow \bigoplus_{j=0}^{i-1}\Omega^j_{X_1}$ is an upper-triangular automorphism. Here upper-triangular means that the induced map $\Omega^j_{X_1} \rightarrow \Omega^k_{X_1}$ on components is 0 if $k<j$.
\end{claim}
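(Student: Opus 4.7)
The plan is to prove Claim~\ref{claim:relative-comparison-m=n-1-modP-induction} by induction on $i$, with the base cases $i=1$ (where $(\mathcal{K}/p)_{X_n,X_{n-1},1} \cong \mathcal{H}^0(p^{1,n-1}_{1,n}\Omega^\bullet_{X_\centerdot})$ via $c_1$ by Proposition~\ref{prop:c1-K1}) and $i=2$ (handled above via diagram~\eqref{prop:relative-comparison-i=2}) already in hand. The two principal tools are the multiplicative structure on both sides of the Chern character and the functoriality argument of \S\ref{sub:functoriality_resolves_ambiguity}.

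First I would propagate the hypothesis from degrees $i-1$ and $i-2$ to the non-basic summands of $(\mathcal{K}/p)_{X_n,X_{n-1},i}$ using products. By Proposition~\ref{prop:brun-iso-product} the Brun map is compatible with multiplication by $K$-theory via the topological HC-product, so under the identification $\Psi^{-1}\circ\mathrm{br}$ the actions of $(\mathcal{K}/p)_{X_n,1}$ and of $\beta_\zeta \in (\mathcal{K}/p)_{X_n,2}$ are computed by Propositions~\ref{prop:K1-multiplication-on-rel-HC} and~\ref{prop:BottElement-multiplication-on-rel-HC}. On the other hand the Chern character is multiplicative by Proposition~\ref{prop:definition-infinitesimal-ChernCharacter}, and the corresponding actions on $\bigoplus_r \mathcal{H}^{2r-i-1}(p^{r,n-1}_{r,n}\Omega^\bullet_{X_\centerdot})$ are computed by Propositions~\ref{prop:K1-multiplication-motivic-complex} and~\ref{prop:multiplication-Bott-element}, together with the identifications $\mathrm{ch}(a) = c_1(a)$ and $\mathrm{ch}(\beta_\zeta) = c_1(\beta_\zeta)$ of Lemma~\ref{lem:comparison-chernClass-chernCharacter}. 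Inspection of these four propositions shows that the two pairs of formulas are given by the \emph{same} explicit $\pm(\mathrm{d}\log\bar{x})\wedge$ and (up to an explicit sign) the identity on the respective $\Omega^j_{X_1}$ summands. Therefore, for any non-basic summand $\Omega^j_{X_1} \subset (\mathcal{K}/p)_{X_n,X_{n-1},i}$ in the sense of Definition~\ref{def:basicElements} — which, by the multiplication table in Figure~\ref{fig:multTable}, is expressible as a product of an element of a basic summand in lower degree with either $\mathrm{d}\log x$ or $\beta_\zeta$ — the Chern character restricted to that summand agrees with the identification predicted by the inductive hypothesis. Upper-triangularity follows at once: these multiplications preserve or strictly raise the differential degree $j$, so no summand $\Omega^j_{X_1}$ is sent into any $\Omega^k_{X_1}$ with $k < j$.

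The main obstacle is the diagonal entry on the basic summand $\mathcal{O}_{X_1} \subset (\mathcal{K}/p)_{X_n,X_{n-1},i}$ (occurring for odd $i$ or $i=2$), which by construction is not in the image of any of the above multiplications from strictly lower degrees. For this I would invoke the functoriality trick of \S\ref{sub:functoriality_resolves_ambiguity}. The composition $\mathrm{ch}\circ\mathrm{br}^{-1}$, restricted to the summand of $p^{r,n-1}_{r,n}\Omega^\bullet_{A_\centerdot}$ which houses the basic summand (with $r$ read off from Corollary~\ref{cor:rel-HC-nonreduced-overZ}), defines an endomorphism of the functor $A \mapsto p^{r,n-1}_{r,n}\Omega^\bullet_A$ in $\mathrm{D}(\mathbf{Ab})$ that is natural in $A \in \mathrm{SmAlg}_{W(\Bbbk)}$, thanks to the functoriality of $\mathrm{ch}$ (Propositions~\ref{prop:infinitesimalChernClass} and~\ref{prop:definition-infinitesimal-ChernCharacter}) and of $\mathrm{br}$. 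The induced map on $H^{-1}(-\otimes^{\mathbf{L}}\mathbb{Z}/p)$ matches, by the previous paragraph, a diagonal entry on a \emph{non-basic} $\Omega^0 = \mathcal{O}_{X_1}$ summand of a lower relative $K$-group that is already known to be an isomorphism. Proposition~\ref{prop:functoriality-determine-universal-transform} then forces the induced map on the basic $\mathcal{O}_{X_1}$ summand to be an isomorphism as well, completing the inductive step. As in the proof of that proposition, the final verification that the universal scalar thereby obtained is a unit of $\Bbbk$ reduces to a direct computation on the test algebra $\mathbb{A}^1_{W_n(\Bbbk)}$, for which the resolutions~\eqref{eq:infinitesimalMOtivicComplex-flatResolution-Zp[T]} and~\eqref{eq:resolution-H0F(Zp[T])} already give an explicit matrix description.
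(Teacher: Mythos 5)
Your argument follows the paper's route precisely: products (via Propositions \ref{prop:brun-iso-product}, \ref{prop:K1-multiplication-on-rel-HC}, \ref{prop:BottElement-multiplication-on-rel-HC}, \ref{prop:K1-multiplication-motivic-complex}, \ref{prop:multiplication-Bott-element} and Lemma \ref{lem:comparison-chernClass-chernCharacter}) propagate the inductive hypothesis from degrees $i-1$, $i-2$ to every non-basic summand of degree $i$, giving upper-triangularity and the automorphism property there; and Proposition \ref{prop:functoriality-determine-universal-transform} handles the remaining basic $\mathcal{O}_{X_1}$ summand when $i$ is odd.

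However, the final paragraph contains a degree miscount that ought to be corrected. Under the identification $(\mathcal{K}/p)_{X_n,X_{n-1},i'}\cong\bigoplus_{s}\mathcal{H}^{2s-i'-1}\bigl(p^{s,n-1}_{s,n}\Omega^{\bullet}_{X_{\centerdot}}\otimes^{\mathbf{L}}\mathbb{Z}/p\bigr)$, put $s=\tfrac{i+1}{2}$; then $2s-i'-1=-1$ forces $i'=2s=i+1$. Thus $\mathcal{H}^{-1}\bigl(p^{(i+1)/2,n-1}_{(i+1)/2,n}\Omega^{\bullet}_{X_{\centerdot}}\otimes^{\mathbf{L}}\mathbb{Z}/p\bigr)$, the group in the hypothesis of Proposition \ref{prop:functoriality-determine-universal-transform}, is a non-basic $\mathcal{O}_{X_1}$ summand of the \emph{higher} sheaf $(\mathcal{K}/p)_{X_n,X_{n-1},i+1}$, not of a lower one. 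What rescues the induction is the Bott detour: by Propositions \ref{prop:BottElement-multiplication-on-rel-HC} and \ref{prop:multiplication-Bott-element}, multiplication by $\beta_\zeta$ carries the $\mathcal{H}^{-1}$-summand $\mathcal{O}_{X_1}\subset(\mathcal{K}/p)_{X_n,X_{n-1},i-1}$ (corresponding to $s=\tfrac{i-1}{2}$) isomorphically onto the $\mathcal{H}^{-1}$-summand of degree $i+1$, compatibly with the Chern characters, so the isomorphism you need at degree $i+1$ is a consequence of the inductive hypothesis at degree $i-1$. Writing merely ``of a lower relative $K$-group'' glosses over exactly the step where a potential circularity (needing degree $i$ to prove degree $i$) is avoided, so you should make the passage through degree $i+1$ and the Bott element explicit.
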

\begin{proof}[Proof of Claim \ref{claim:relative-comparison-m=n-1-modP-induction}]
Since the Chern character preserves products, we have, by Lemma \ref{lem:comparison-chernClass-chernCharacter} and Propositions \ref{prop:K1-multiplication-motivic-complex} and \ref{prop:multiplication-Bott-element}, the following commutative diagrams:
\begin{gather}\label{eq:relative-comparison-m=n-1-modP-induction-graph-1}
  \xymatrix@C=5pc{
  \mathcal{K}_{X_n,1}\times (\mathcal{K}_{X_n,X_{n-1}}/p)_{i-1} \ar[r] \ar[d]_{c_1\times \mathrm{ch}} & (\mathcal{K}_{X_n,X_{n-1}}/p)_{i} \ar[d]^{\mathrm{ch}} \\
  \mathcal{O}_{X_n}^{\times} \times \bigoplus_{j=0}^{i-2}\Omega^j_{X_1/\Bbbk} \ar[r]^{(x,\alpha)\mapsto \pm\mathrm{dlog}x\land \alpha} &  \bigoplus_{j=0}^{i-1}\Omega^j_{X_1/\Bbbk}
}
\end{gather}
and
\begin{gather}\label{eq:relative-comparison-m=n-1-modP-induction-graph-2}
\xymatrix{
  (\mathcal{K}_{X_n,X_{n-1}}/p)_{i-2} \ar[r]^{\beta_{\zeta}\times } \ar[d]_{\mathrm{ch}} & (\mathcal{K}_{X_n,X_{n-1}}/p)_{i} \ar[d]^{\mathrm{ch}} \\
  \bigoplus_{j=0}^{i-3}\Omega^j_{X_1/\Bbbk} \ar[r]^{\boldsymbol{\beta}} &  \bigoplus_{j=0}^{i-1}\Omega^j_{X_1/\Bbbk}
}
\end{gather}
where (looking at Figure \ref{fig:multTable} in the introduction may be helpful)
\begin{gather*}
\boldsymbol{\beta}\left|_{\Omega^j_{X_1/\Bbbk}}\right.=\begin{cases}
0 & \mbox{if}\ i-j\ \mbox{is odd} \\
\mathrm{id} & \mbox{if}\ i-j\ \mbox{is even}
\end{cases}.
\end{gather*}
On the other hand, by Propositions \ref{prop:brun-iso-product}, \ref{prop:K1-multiplication-on-rel-HC}, and \ref{prop:BottElement-multiplication-on-rel-HC},  the two preceding diagrams \eqref{eq:relative-comparison-m=n-1-modP-induction-graph-1} and \eqref{eq:relative-comparison-m=n-1-modP-induction-graph-2} remain commutative when we replace all $\mathrm{ch}$ with $\mathrm{br}$.

If $i$ is even, the images of the two bottom arrows in \eqref{eq:relative-comparison-m=n-1-modP-induction-graph-1} and \eqref{eq:relative-comparison-m=n-1-modP-induction-graph-2} span $\bigoplus_{j=0}^{i-1}\Omega^j_{X_1/\Bbbk}$, hence  the claim follows from the case for $i-1$ and $i-2$.

If $i$ is odd, the same argument implies that the restriction of $\mathrm{ch}$ on $\bigoplus_{j=1}^{i-1}\Omega^j_{X_1} \rightarrow \bigoplus_{j=1}^{i-1}\Omega^j_{X_1}$ is an upper-triangular automorphism. It remains to show that the induced map on the basic summand
\begin{gather*}
\mathcal{O}_{X_1}\subset \mathcal{H}^0\left(p^{\frac{i+1}{2},n-1}_{\frac{i+1}{2},n}\Omega^{\bullet}_{X_{\centerdot}}\otimes^{\mathbf{L}}\mathbb{Z}/p\right)
\subset \mathcal{H}^{1-i}\Big(\bigoplus_{r=1}^{N}p^{r,n-1}_{r,n}\Omega^{\bullet}_{X_{\centerdot}}[2r-2]\otimes^{\mathbf{L}}\mathbb{Z}/p\Big).
\end{gather*}
is an automorphism. The problem is local, and via the identification \eqref{eq:identification-br/p}, the relative Chern character yields an endomorphism of the functor $X_{\centerdot}\mapsto p^{\frac{i+1}{2},n-1}_{\frac{i+1}{2},n}\Omega^{\bullet}_{X_{\centerdot}}$. Hence the last assertion follows from Proposition \ref{prop:functoriality-determine-universal-transform}.
\end{proof}
The proof of \eqref{eq:relative-comparison-m=n-1-modP}  being an isomorphism is completed.
\end{proof}

\begin{corollary}\label{cor:generatingElements-KXn}
The Nisnevich sheaf of rings $\bigoplus_{i\leq p-3}(\mathcal{K}/p)_{X_n,i}$ is generated by $(\mathcal{K}/p)_{X_n,1}$, a Bott element, and the image of the basic elements.
\end{corollary}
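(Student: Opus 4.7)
The plan is to argue by induction on $n$. The base case $n=1$ follows from the Bloch--Kato/norm residue isomorphism: $\bigoplus_{i\leq p-3}(\mathcal{K}/p)_{X_1,i}$ surjects onto $\bigoplus_{i}\mathcal{K}^M_{X_1,i}/p$, which is generated by $(\mathcal{K}/p)_{X_1,1}=\mathcal{O}_{X_1}^{\times}/p$, and the remaining summands (arising from $p$-torsion in lower-degree $K$-groups) are controlled by multiplying with a Bott element in $(\mathcal{K}/p)_{X_1,2}$. For $X_1$ the basic summands in the sense of Definition \ref{def:basicElements} are empty (they are features of the relative theory), so the base case is already in the desired form.

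For the inductive step, fix $n\geq 2$ and consider the long exact sequence of mod $p$ sheaves
\begin{gather*}
\cdots\to (\mathcal{K}/p)_{X_n,X_{n-1},i+1}\to (\mathcal{K}/p)_{X_n,X_{n-1},i}\to (\mathcal{K}/p)_{X_n,i}\to (\mathcal{K}/p)_{X_{n-1},i}\to\cdots
\end{gather*}
By the inductive hypothesis, every class in $(\mathcal{K}/p)_{X_{n-1},i}$ (for $i\leq p-3$) is a product of elements in $(\mathcal{K}/p)_{X_{n-1},1}$, the Bott element, and basic elements of the relative theories at level $\leq n-1$. Each of those factors lifts to $X_n$: elements of $(\mathcal{K}/p)_{X_{n-1},1}=\mathcal{O}_{X_{n-1}}^{\times}/p$ lift because $\mathcal{O}_{X_n}^{\times}\to \mathcal{O}_{X_{n-1}}^{\times}$ is surjective, the Bott element in $(\mathcal{K}/p)_{X_{n-1},2}$ attached to $1+p^{n-2}$ lifts to the Bott element attached to $(1+p^{n-2})$ viewed in $W_n(\Bbbk)$, and basic elements at level $\leq n-1$ lift trivially as they live in $(\mathcal{K}/p)_{X_n,X_m,*}$ for $m\leq n-2$. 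Hence it remains to show that the relative piece $(\mathcal{K}/p)_{X_n,X_{n-1},i}$ is generated, under multiplication by $(\mathcal{K}/p)_{X_n,1}$ and by the Bott element $\beta_{\zeta_n}$ with $\zeta_n=1+p^{n-1}$, by its basic summands.

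This last statement is exactly what the mod $p$ multiplication table encodes. Via the isomorphism in Proposition \ref{prop:relative-comparison-m=n-1-modP}, $(\mathcal{K}/p)_{X_n,X_{n-1},i}\cong\bigoplus_{j=0}^{i-1}\Omega^j_{X_1/\Bbbk}$, and Propositions \ref{prop:K1-multiplication-motivic-complex} and \ref{prop:multiplication-Bott-element} describe how $c_1(a)$ and $c_1(\beta_{\zeta_n})$ act on these summands: multiplication by $\operatorname{dlog}\bar a$ shifts $\Omega^j_{X_1}\to\Omega^{j+1}_{X_1}$, while multiplication by $\beta_{\zeta_n}$ is (up to sign) the identity on the $\Omega^j_{X_1}$ summands with $j\geq 1$ that sit in consecutive relative degrees. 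Following the arrows in Figure \ref{fig:multTable}, every summand $\Omega^j_{X_1}\subset (\mathcal{K}/p)_{X_n,X_{n-1},i}$ with $j\geq 1$ is reached by a sequence of multiplications by elements of $\mathcal{O}_{X_1}^{\times}$ and by $\beta_{\zeta_n}$ starting from a basic summand $\mathcal{O}_{X_1}$ at odd relative degree or at $i=2$; these basic summands are by definition the basic elements.

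The only step requiring care is the final combinatorial traversal of the multiplication table: one must verify that in every relative degree $i\leq p-3$ the non-basic summands are indeed hit by the arrows drawn in Figure \ref{fig:multTable}, and that the hollow-dotted (i.e.\ zero) Bott arrows on the bottom row do not block the generation. This is the main (and only) obstacle, but it is a finite verification that proceeds summand by summand using the explicit formulas $\operatorname{dlog}\bar a\wedge (-):\Omega^{j}_{X_1}\to\Omega^{j+1}_{X_1}$ and $\beta_{\zeta_n}\cdot(-)=\pm\mathrm{id}$ from the two propositions cited above, together with the observation that $\mathcal{O}_{X_1}^{\times}$ generates $\Omega^{\bullet}_{X_1/\Bbbk}$ as a differential graded algebra under $\operatorname{dlog}$ on smooth $\Bbbk$-algebras. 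Combining this with the inductive hypothesis and the long exact sequence completes the proof.
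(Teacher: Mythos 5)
Your overall framework (reduce to the relative piece via the long exact sequence, then invoke the multiplication table) is the right kind of argument, but the inductive step contains a genuine error in how you handle the Bott element. You claim that the Bott element $\beta_{\zeta_{n-1}}\in(\mathcal{K}/p)_{X_{n-1},2}$ attached to $\zeta_{n-1}=1+p^{n-2}$ ``lifts to the Bott element attached to $1+p^{n-2}$ viewed in $W_n(\Bbbk)$.'' But $1+p^{n-2}$ is not a $p$-th root of unity in $W_n(\Bbbk)$: one has $(1+p^{n-2})^p\equiv 1+p^{n-1}\not\equiv 1\pmod{p^n}$, so Definition \ref{def:bottElement} does not produce a Bott element from it. This is exactly the obstruction the paper emphasises in the introduction (``$\zeta_n$ \dots does not lift to a $p$-th root of unity in $W_{n+1}(\Bbbk)$''). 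Without this lift, your inductive step does not close: you need every generator of $(\mathcal{K}/p)_{X_{n-1},\ast}$ supplied by the inductive hypothesis to be in the image of the subring of $(\mathcal{K}/p)_{X_n,\ast}$ generated by the level-$n$ data, and the Bott element at level $n-1$ is precisely the one that fails.

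The fix is to avoid the induction on $n$ altogether. Propositions \ref{prop:K1-multiplication-motivic-complex} and \ref{prop:multiplication-Bott-element} and the decomposition \eqref{eq:rel-HC-nonreduced-modP-components-sheafified} are stated for arbitrary $L>M\geq 1$, so the multiplication table applies directly to the pair $(X_n,X_1)$. One then argues in a single step: $(\mathcal{K}/p)_{X_1,i}\cong\mathcal{K}^M_{X_1,i}/p$ is generated by degree-one symbols (this is Geisser--Levine \cite{GeL00}, the result actually invoked in the paper, not the $\ell\neq p$ norm residue isomorphism you cite); these lift along the surjection $\mathcal{O}_{X_n}^{\times}\twoheadrightarrow\mathcal{O}_{X_1}^{\times}$ of Nisnevich sheaves; and the kernel of $(\mathcal{K}/p)_{X_n,i}\to(\mathcal{K}/p)_{X_1,i}$ is the image of $(\mathcal{K}/p)_{X_n,X_1,i}$, which by the multiplication table with $(L,M)=(n,1)$ is swept out from the basic summands by $(\mathcal{K}/p)_{X_n,1}$ and the single Bott element $\beta_{\zeta_n}$. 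Here the Bott element lives at the top level throughout, so no lifting is required. The ``combinatorial traversal'' you defer to the end is still needed (it is the content of Claim \ref{claim:relative-comparison-m=n-1-modP-induction} read off Figure \ref{fig:multTable}), but that part of your sketch is correct and is exactly what the surrounding proof of Proposition \ref{prop:relative-comparison-m=n-1-modP} establishes.
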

\begin{proof}
This is shown in the proof of Claim \ref{claim:relative-comparison-m=n-1-modP-induction}, especially the diagrams \eqref{eq:relative-comparison-m=n-1-modP-induction-graph-1} and \eqref{eq:relative-comparison-m=n-1-modP-induction-graph-2}.
\end{proof}

% subsection completion_of_the_proof_of_relative-comparison (end)

% section an_infinitesmial_chern_character_isomorphism (end)

\section{Consequences in \texorpdfstring{$K$}{K}-theory and deformation theory} % (fold)
\label{sec:consequences_in_k_theory_and_deformation_theory}

In this section, we draw two direct consequences of Theorem \ref{thm:relative-comparison-local}.
Both are motivated by \cite{BEK14}.
%\subsection{Absolute  \texorpdfstring{$K$}{K}-theory} % (fold)
%\label{sub:absolute_k_theory}
\subsection{Infinitesimal deformation in algebraic \texorpdfstring{$K$}{K}-theory} % (fold)
\label{sub:deformation_in_k_theory}
\begin{proposition}\label{prop:deformationInK-theory}
Let $\Bbbk$ be a perfect field of characteristic $p$.  Let $X_{\centerdot}\in \operatorname{ob}\mathrm{Sm}_{W_{\centerdot}(\Bbbk)}$.
%Let $X$ be a smooth scheme of relative dimension $d$ over $W(\Bbbk)$.
Let $m,n\in \mathbb{Z}$ and $n>m\geq 1$.
\begin{enumerate}[(i)]
  \item If $d\leq p-6$, then  a class $\xi\in K_0(X_m)$ lifts to $K_0(X_n)$ if and only if it maps to 0 by the composition, which we regard as a Hodge obstruction map,
  \begin{gather*}
  \mathrm{ob}:
  K_0(X_m) \xrightarrow{\mathrm{ch}} \bigoplus_{r=0}^{p-1}H^{2r}(X_1,\mathbb{Z}_m(r))[\frac{1}{r!}] \rightarrow \bigoplus_{r=1}^{d-1}\mathbb{H}^{2r}(X_1,p^{r,m}_{r,n}\Omega^{\bullet}_{X_{\centerdot}})\ .
  \end{gather*}
  \item  Let $i\in \mathbb{Z}$ and $1\leq i\leq p-d-4$. Then a class $\xi\in K_i(X_m)$ lifts to $K_i(X_n)$ if and only if it maps to 0 by the composition
  \begin{gather*}
  \mathrm{ob}:
  K_i(X_m) \xrightarrow{\mathrm{ch}} \bigoplus_{r= 0}^{p-1}H^{2r-i}(X_1,\mathbb{Z}_m(r))[\frac{1}{r!}] \rightarrow \bigoplus_{r=1}^{d+i-1}\mathbb{H}^{2r-i}(X_1,p^{r,m}_{r,n}\Omega^{\bullet}_{X_{\centerdot}})\ .
  \end{gather*}
\end{enumerate}
\end{proposition}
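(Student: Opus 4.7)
The strategy uses the long exact sequence of relative $K$-theory together with the relative Chern character and the motivic fundamental triangle \eqref{eq:motivicFundamentalTriangle}. For part (i), the fragment of interest is
\begin{gather*}
K_0(X_n) \to K_0(X_m) \xrightarrow{\partial} K_{-1}(X_n, X_m),
\end{gather*}
where, by Thomason--Trobaugh localization and the regularity of $X_n, X_m$, we have $K_{-1}(X_n) = K_{-1}(X_m) = 0$, so $\partial$ detects exactly the obstruction to lifting. The proof thus reduces to identifying $K_{-1}(X_n, X_m)$ with $\bigoplus_{r=1}^{d-1} \mathbb{H}^{2r}(X_1, p^{r,m}_{r,n}\Omega^\bullet_{X_\centerdot})$ in a way that carries $\partial$ to the boundary map of the motivic fundamental triangle composed with the Chern character on $K_0(X_m)$.

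For the identification of $K_{-1}(X_n, X_m)$, I would run the Nisnevich local-to-global spectral sequence $E_2^{a,j} = H^a(X_1, \mathcal{K}_{X_n, X_m, j}) \Rightarrow K_{j-a}(X_n, X_m)$ alongside the hypercohomology spectral sequences for $p^{r,m}_{r,n}\Omega^\bullet_{X_\centerdot}$, exactly as in the proof of Corollary \ref{cor:smAlg-relKTheory}. Theorem \ref{thm:relative-comparison-local} identifies the $E_2$-pages in the range $0 \le j \le p - 4$ and forces the two spectral sequences to agree in a corresponding range. Since $\mathcal{H}^b(p^{r,m}_{r,n}\Omega^\bullet_{X_\centerdot})$ is supported in $0 \le b \le r - 1$ and $\mathrm{cd}_{\mathrm{Nis}}(X_1) \le d$, the abutment in degree $-1$ is concentrated on summands with $r \le d - 1$. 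The hypothesis $d \le p - 6$ guarantees that all relevant $(a,j)$-bidegrees lie within the isomorphism range, yielding $K_{-1}(X_n, X_m) \cong \bigoplus_{r=1}^{d-1} \mathbb{H}^{2r}(X_1, p^{r,m}_{r,n}\Omega^\bullet_{X_\centerdot})$.

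To match $\partial$ with $\mathrm{ob}$, I would invoke the map of fiber sequences of spectra induced by the Chern character of Proposition \ref{prop:definition-infinitesimal-ChernCharacter}: the row $K(X_n, X_m) \to K(X_n) \to K(X_m)$ maps via $\mathrm{ch}$ to the lower row $\bigoplus_r \mathbf{R}\Gamma\big(X_1, p^{r,m}_{r,n}\Omega^\bullet_{X_\centerdot}[-1]\big)[2r] \to \bigoplus_r \mathbf{R}\Gamma\big(X_1, \mathbb{Z}_{X_n}(r)\big)[2r] \to \bigoplus_r \mathbf{R}\Gamma\big(X_1, \mathbb{Z}_{X_m}(r)\big)[2r]$, which is the fiber sequence arising from \eqref{eq:motivicFundamentalTriangle}. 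Taking $\pi_0$ and comparing connecting homomorphisms, $\partial\xi$ corresponds under the identification above to the image of $\mathrm{ch}(\xi)$ under
\begin{gather*}
\bigoplus_r \mathbb{H}^{2r}\big(X_1, \mathbb{Z}_{X_m}(r)\big) \to \bigoplus_r \mathbb{H}^{2r+1}\big(X_1, p^{r,m}_{r,n}\Omega^\bullet_{X_\centerdot}[-1]\big) = \bigoplus_r \mathbb{H}^{2r}\big(X_1, p^{r,m}_{r,n}\Omega^\bullet_{X_\centerdot}\big),
\end{gather*}
which is precisely $\mathrm{ob}(\xi)$. Part (ii) follows by the same argument applied to $K_i(X_m) \to K_{i-1}(X_n, X_m)$; here Corollary \ref{cor:smAlg-relKTheory} applies directly at index $i - 1$, yielding the sharper bound $i \le p - d - 4$.

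The main obstacle will be establishing that the Chern character genuinely upgrades to a map of fiber sequences (or of triangulated objects in a sufficiently rigid way) so that the two boundary maps agree strictly. This rests on the canonical construction of $\mathbb{Z}_{X_n}(r)$ as a cofiber in \S\ref{sec:infinitesimal_motivic_complexes} and on the functorial Chern character of Proposition \ref{prop:infinitesimalChernClass}. A secondary technical point is the extension of the isomorphism in Corollary \ref{cor:smAlg-relKTheory} to $i = -1$, which is what forces the stronger hypothesis $d \le p - 6$ in part (i) rather than the naive bound $d \le p - 4$ one would guess from applying part (ii) with $i = 0$.
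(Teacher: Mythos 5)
Your proof of part (ii) is essentially the paper's argument: compare the long exact sequence of relative $K$-theory with the Chern character diagram and apply Corollary \ref{cor:smAlg-relKTheory}(i) at index $i-1$. That part is fine.

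For part (i), however, you take a genuinely different route from the paper, and that route has a gap. The paper does \emph{not} pass directly to $K_{-1}(X_n,X_m)$. Instead, it deduces (i) from the case $i=1$ of (ii) by Lemma \ref{lem:splitting-K0-to-K1}: one forms $Y_{\centerdot}=X_{\centerdot}\times \mathbb{G}_m$, uses the splitting $\partial\circ\{T\}=\mathrm{id}$ to embed $K_0(X_m)$ into $K_1(Y_m)$, and applies (ii) there. Because $\dim Y_1 = d+1$, the bound from (ii) with $i=1$ becomes $1\le p-(d+1)-4$, i.e.\ $d\le p-6$. This is the actual source of the hypothesis $d\le p-6$; it is \emph{not} caused by a needed extension of Corollary \ref{cor:smAlg-relKTheory} to $i=-1$, as you guess. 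In fact, if one could legitimately extend Corollary \ref{cor:smAlg-relKTheory} to $i=-1$, one would only need something like $-1\le p-5-d$, i.e.\ $d\le p-4$, which would be a \emph{weaker} hypothesis than the one stated — so your attributed reason cannot be correct.

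The gap in your argument is precisely that extension: Corollary \ref{cor:smAlg-relKTheory}(i) is stated only for $0\le i\le p-5-d$, and the spectral-sequence comparison argument in its proof (via Theorem \ref{thm:relative-comparison-local} and the Nisnevich descent spectral sequence) is not carried out for negative $i$. You acknowledge this as a ``secondary technical point'' but never address it; as written the claimed identification of $K_{-1}(X_n,X_m)$ with $\bigoplus_r \mathbb{H}^{2r}(X_1, p^{r,m}_{r,n}\Omega^\bullet_{X_\centerdot})$ is not justified. (It is plausibly true — the sheaves $\mathcal{K}_{X_n,X_m,j}$ vanish for $j\le 0$, so the $E_2$-terms contributing to degree $-1$ come only from $j\ge 1$, and Theorem \ref{thm:relative-comparison-local} covers those — but you would need to redo the spectral-sequence comparison from Corollary \ref{cor:smAlg-relKTheory} for the abutment degree $-1$ rather than just invoke the corollary.)

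A smaller issue: your assertion that $K_{-1}(X_n)=K_{-1}(X_m)=0$ ``by regularity'' is misstated. For $n\ge 2$, $X_n$ is smooth over the non-reduced ring $W_n(\Bbbk)$ and is therefore not regular. If the vanishing holds, it is because negative $K$-theory is insensitive to nilpotents, reducing to the regular scheme $X_1$. But as noted above, this vanishing is in any case not needed — only the exactness of $K_0(X_n)\to K_0(X_m)\xrightarrow{\partial} K_{-1}(X_n,X_m)$ is needed to characterize lifting, and the real work is identifying $\partial$ with the Hodge obstruction map.

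In summary: for (ii) you reproduce the paper's proof, but for (i) you propose a more direct but incomplete alternative and give an incorrect explanation of the bound $d\le p-6$. The paper avoids the $K_{-1}$ issue entirely via the $\mathbb{G}_m$-shift lemma, at the cost of one extra dimension and hence the stronger hypothesis.
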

\begin{proof}
First consider the case (ii). By the compatibility of Chern characters (Proposition \ref{prop:infinitesimalChernClass}), we have the following commutative diagram
\begin{gather*}
\xymatrix{
  K_i(X_n) \ar[r] \ar[d]_{\mathrm{ch}}  & K_i(X_m) \ar[d]_{\mathrm{ch}} \ar[r] & K_{i-1}(X_n,X_m) \ar[d]^{\mathrm{ch}} \\
  \bigoplus\limits_{r=0}^{p-1}H^{2r-i}(X_1,\mathbb{Z}_n(r))[\frac{1}{r!}] \ar[r] & \bigoplus\limits_{r=0}^{p-1}H^{2r-i}(X_1,\mathbb{Z}_m(r))[\frac{1}{r!}] \ar[r] & 
  \bigoplus\limits_{r=0}^{p-1}\mathbb{H}^{2r-i}(X_1,p^{r,m}_{r,n}\Omega^{\bullet}_{X_{\centerdot}})[\frac{1}{r!}]
}
\end{gather*}
where  the horizontal sequences are exact. For $r\geq d+i$, $H^{2r-i-j}(X_1,p^{r,m}_{r,n}\Omega^{j}_{X_{\centerdot}})$ vanishes because $2r-i-j\geq r+1-i\geq d+1$. %$\mathrm{cd}_{\mathrm{Nis}}(X_1)\leq d$ (\cite[Theorem 1.32]{Nis89}). 
Thus we have
\begin{gather*}
\bigoplus_{r=0}^{p-1}\mathbb{H}^{2r-i}(X_1,p^{r,m}_{r,n}\Omega^{\bullet}_{X_{\centerdot}})[\frac{1}{r!}]
=\bigoplus_{r=1}^{d+i-1}\mathbb{H}^{2r-i}(X_1,p^{r,m}_{r,n}\Omega^{\bullet}_{X_{\centerdot}})
\end{gather*}
since the RHS is a $p$-primary group. Hence the rightmost $\mathrm{ch}$ is an isomorphism by Corollary \ref{cor:smAlg-relKTheory}(i), and (ii) follows.
As  the argument in \cite[Page 706]{BEK14}, the case $i=0$ follows from the case $i=1$ by the following lemma.
% Lemma \ref{lem:splitting-K0-to-K1}
%there is a canonical imbedding $K_0(X_n)$ into $K_1(X_n\times \mathbb{G}_m)$ with a canonical splitting.
\end{proof}

\begin{lemma}\label{lem:splitting-K0-to-K1}
Let 
$Y_{n}=X_{n}\times \mathbb{G}_{\mathrm{m}}=X_{n}\times_{\operatorname{Spec}W_n(\Bbbk)} \operatorname{Spec}W_n(\Bbbk)[T,T^{-1}]$.
There are diagrams 
\begin{gather*}
\xymatrix{
K_1(Y_{n}) \ar@<1ex>[d]^{\partial} \ar[r]^<<<<{\mathrm{ch}} & \bigoplus_{r=0}^{p-1}
H^{2r-1}(Y_1,\mathbb{Z}_{Y_{n}}(r))[\frac{1}{r!}] \ar@<1ex>[d]^{\partial} \\
K_0(X_{n}) \ar[u]^{\{T\}} \ar[r]^>>>>>{\mathrm{ch}} & \bigoplus_{r=0}^{p-1}
H^{2r}(X_1,\mathbb{Z}_{X_{n}}(r))[\frac{1}{r!}] \ar[u]^{\{T\}}
}
\end{gather*}
and
\begin{gather*}
\xymatrix{
K_1(Y_{n},Y_m) \ar@<1ex>[d]^{\partial} \ar[r]^<<<<{\mathrm{ch}} & \bigoplus_{r=0}^{p-1}
H^{2r-2}(Y_1,p^{r,m}_{r,n}\Omega^{\bullet}_{Y_{\centerdot}})[\frac{1}{r!}] \ar@<1ex>[d]^{\partial} \\
K_0(X_{n},X_m) \ar[u]^{\{T\}} \ar[r]^>>>>>{\mathrm{ch}} & \bigoplus_{r=0}^{p-1}
H^{2r-1}(X_1,p^{r,m}_{r,n}\Omega^{\bullet}_{Y_{\centerdot}})[\frac{1}{r!}] \ar[u]^{\{T\}}
}
\end{gather*}
where the vertical maps $\{T\}$ on the left are the exterior multiplication by $T\in K_1(\mathbb{Z}[T,T^{-1}])$, and on the right are the exterior multiplications by 
$c_1(T)=T\in H^0(\mathbb{G}_{\mathrm{m},W_n(\Bbbk)}, \mathbb{G}_{\mathrm{m}})= H^1\big(\mathbb{G}_{\mathrm{m},\Bbbk},\mathbb{Z}_{\mathbb{G}_{\mathrm{m},W_n(\Bbbk)}}(1)\big)$, satsifying the following properties:
\begin{enumerate}[(i)]
  \item Commutativity: $\partial\circ \mathrm{ch}=\mathrm{ch}\circ \partial,\ \mathrm{ch}\circ \{T\}=\{T\}\circ \mathrm{ch}$.
  \item Splitting:  $\partial\circ \{T\}=\mathrm{id}$.
\end{enumerate} 
Moreover, the maps $\{T\}$ and $\partial$ are compatible with the long exact sequence relating the relative $K$-theory and the absolute $K$-theory.
\end{lemma}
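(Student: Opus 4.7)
The plan is to build both vertical structures from  localization sequences attached to the open-closed decomposition $X_n\times\{0\}\hookrightarrow X_n\times\mathbb{A}^1\hookleftarrow Y_n$, and then verify that the Chern character intertwines the two sides of the picture by invoking the naturality and multiplicativity results of \S\ref{sec:infinitesimal_chern_classes_and_chern_characters}.

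First I would set up the $K$-theoretic splitting. Thomason-Trobaugh's localization theorem together with $\mathbb{A}^1$-homotopy invariance of the $K$-theory of regular noetherian schemes yields the split short exact sequence
\begin{gather*}
0\longrightarrow K_i(X_n)\xrightarrow{\pi^*} K_i(Y_n)\xrightarrow{\partial} K_{i-1}(X_n)\longrightarrow 0,
\end{gather*}
where $\partial$ is the boundary of the localization sequence; exterior multiplication by $T\in K_1(\mathbb{Z}[T,T^{-1}])$ provides a canonical section of $\partial$, so that $\partial\circ\{T\}=\mathrm{id}$. The same argument, applied to the relative $K$-theory spectra of $(X_n,X_m)$ and $(Y_n,Y_m)$ (which fit in fiber sequences preserved by $\pi^*$, $\partial$, and $\{T\}$), gives the analogous splitting in the relative situation and shows automatically the compatibility of $\{T\}$ and $\partial$ with the long exact sequence relating absolute and relative $K$-theory.

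Next I would establish the analogous splitting on the target of the Chern character. Because $\mathbb{Z}_{X_1}(r)$ (the usual motivic complex over $\Bbbk$) satisfies the projective bundle formula, and because  $p(r)\Omega_{X_n}^{\bullet}$ and $p^{r,m}_{r,n}\Omega^{\bullet}_{X_\centerdot}$ are each Künneth-decomposable over $\mathbb{A}^1$ (the only contribution coming from $\Omega^0$ and $\Omega^1$ on $\mathbb{A}^1$), the fundamental triangle \eqref{eq:motivicFundamentalTriangle} yields a splitting
\begin{gather*}
\mathbb{H}^{j}\bigl(Y_1,\mathbb{Z}_{Y_n}(r)\bigr)\cong \mathbb{H}^{j}\bigl(X_1,\mathbb{Z}_{X_n}(r)\bigr)\oplus \mathbb{H}^{j-1}\bigl(X_1,\mathbb{Z}_{X_n}(r-1)\bigr),
\end{gather*}
and the analogous statement for $p^{r,m}_{r,n}\Omega^{\bullet}_{X_{\centerdot}}$; the boundary $\partial$ is the Gysin map associated with $X_1\times\{0\}\hookrightarrow X_1\times\mathbb{A}^1$, split by cup-product with $T\in H^1\bigl(\mathbb{G}_{m,\Bbbk},\mathbb{Z}_{\mathbb{G}_m}(1)\bigr)$. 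The identity $\partial\circ\{T\}=\mathrm{id}$ reduces to the well-known computation that the tame symbol of $T$ at $0$ is $1$.

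Finally I would verify the two commutativities. The equality $\mathrm{ch}\circ\{T\}=\{T\}\circ\mathrm{ch}$ is immediate from the multiplicativity of the infinitesimal Chern character (Proposition \ref{prop:definition-infinitesimal-ChernCharacter}) combined with Lemma \ref{lem:comparison-chernClass-chernCharacter}(i), which identifies $\mathrm{ch}(T)$ with $c_1(T)=T$. The equality $\partial\circ\mathrm{ch}=\mathrm{ch}\circ\partial$ follows from the naturality of the Chern character under the pullback $j^*$ and the pushforward $i_*$ of the open-closed pair, since both sides are obtained by assembling the Chern character maps on $X_n\times\mathbb{A}^1$, $Y_n$, and $X_n$ via the localization triangles, which are mutually compatible by construction in \S\ref{sec:infinitesimal_chern_classes_and_chern_characters}. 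The main obstacle is establishing cleanly the projective bundle / Gysin decomposition for $\mathbb{Z}_{?n}(r)$ and for $p^{r,m}_{r,n}\Omega^{\bullet}_{?\centerdot}$; once this is in place, all the required compatibilities with the $K$-theoretic side reduce to standard naturality properties.
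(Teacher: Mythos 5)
The overall shape of your proposal matches the paper's—reduce everything to the $K$-theory splitting (via Thomason--Trobaugh) plus the single identity $\mathrm{ch}(T)=c_1(T)$, and then invoke multiplicativity and naturality of the Chern character. (The paper sets up $\partial$ through the Mayer--Vietoris cover $\mathbb{A}^1\cup_{\mathbb{G}_m}\mathbb{A}^1$ of $\mathbb{P}^1$ rather than the open--closed localization sequence, but these are equivalent in substance.) However, there is a genuine gap in the step you call "immediate."

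You cite Lemma~\ref{lem:comparison-chernClass-chernCharacter}(i) for the identity $\mathrm{ch}(T)=c_1(T)$, but that lemma is a statement about the sheafified Chern maps — its proof uses only that $\mathcal{H}^{2r-1}\big(\mathbb{Z}_{X_n}(r)\big)=0$ for $r\ge 2$ because $\mathbb{Z}_{X_n}(r)$ lives in degrees $\le r$. What the present lemma needs is the stronger \emph{hypercohomology} vanishing $\mathbb{H}^{2r-1}\big(\mathbb{G}_{\mathrm{m},\Bbbk},\mathbb{Z}_{\mathbb{G}_{\mathrm{m},W_n(\Bbbk)}}(r)\big)=0$ for $r\ge 2$. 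Since $\mathrm{cd}_{\mathrm{Nis}}(\mathbb{G}_{m,\Bbbk})=1$, the local-to-global spectral sequence kills $\mathbb{H}^{2r-1}$ for $r\ge 3$, but the borderline case $r=2$ (i.e.\ $\mathbb{H}^3$) is not automatic: a priori $H^1\big(\mathbb{G}_m,\mathcal{H}^2(\mathbb{Z}(2))\big)$ could contribute. The paper handles $r=2$ by running the fundamental triangle to get a sandwich $\mathbb{H}^{2}(\mathbb{G}_m,p^{2,1}_{2,n}\Omega^\bullet)\to \mathbb{H}^{3}(\mathbb{G}_m,\mathbb{Z}_{\mathbb{G}_{m,W_n}}(2))\to \mathbb{H}^3(\mathbb{G}_m,\mathbb{Z}(2))$, killing the first term by affineness and the dimension bound, and identifying the last with the higher Chow group $\mathrm{CH}^2(\mathbb{G}_m,1)$, which vanishes by the localization sequence and $\mathbb{A}^1$-invariance. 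This extra computation is what you are missing, and your appeal to Lemma~\ref{lem:comparison-chernClass-chernCharacter}(i) does not supply it.

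Separately, your outlined route through a full Künneth/projective-bundle decomposition for $\mathbb{Z}_{?n}(r)$ and $p^{r,m}_{r,n}\Omega^\bullet_{?\centerdot}$ is much stronger than what is needed and, as you yourself flag, is the real sticking point of that strategy. The paper avoids it entirely: once $\mathrm{ch}(T)=c_1(T)$ is established, the two commutativities and the splitting on the cohomology side follow solely from multiplicativity (Proposition~\ref{prop:definition-infinitesimal-ChernCharacter}) and the naturality of the Mayer--Vietoris boundary, with no projective bundle formula ever invoked. If you want to salvage your sketch without proving a PBF, replace the appeal to Lemma~\ref{lem:comparison-chernClass-chernCharacter}(i) with the hypercohomology vanishing above (including the $r=2$ higher Chow computation), and drop the Künneth/Gysin splitting of the targets in favor of the multiplicativity argument.
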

\begin{proof}
Let $U_{n}=X_{n}\times_{\operatorname{Spec}W_n(\Bbbk)} \operatorname{Spec}W_n(\Bbbk)[T]$ and 
$V_{n}=X_{n}\times_{\operatorname{Spec}W_n(\Bbbk)} \operatorname{Spec}W_n(\Bbbk)[T^{-1}]$.
The map $\partial$ is induced by the Mayer-Vietoris sequence associated with the covering $U_n\cup V_n$ of $X\times \mathbb{P}^1_{W_n(\Bbbk)}$, and see \cite[Theorem 6.1]{ThT90} for the splitting property. The properties of $\partial$ then follow.

By the multiplicativity of the  Chern character (Proposition \ref{prop:definition-infinitesimal-ChernCharacter}), to show the properties of the maps $\{T\}$ it suffices to show $H^{2r-1}\big(\mathbb{G}_{\mathrm{m},\Bbbk},\mathbb{Z}_{\mathbb{G}_{\mathrm{m},W_n(\Bbbk)}}(r)\big)=0$ for $r\geq 2$, which implies $\mathrm{ch}(T)=c_1(T)$. By the exact sequence
\begin{gather*}
H^{2r-2}\big(\mathbb{G}_{\mathrm{m},\Bbbk},p^{r,1}_{r,n}\Omega^{\bullet}_{\mathbb{G}_{\mathrm{m},W_n(\Bbbk)}}(r)\big) \rightarrow
H^{2r-1}\big(\mathbb{G}_{\mathrm{m},\Bbbk},\mathbb{Z}_{\mathbb{G}_{\mathrm{m},W_n(\Bbbk)}}(r)\big) \rightarrow
H^{2r-1}\big(\mathbb{G}_{\mathrm{m},\Bbbk},\mathbb{Z}(r)\big),
\end{gather*}
it remains to show that the groups at both ends vanish. Since $\dim \mathbb{G}_{\mathrm{m},\Bbbk}=1$ and is affine, $H^{2r-2}\big(\mathbb{G}_{\mathrm{m},\Bbbk},p^{r,1}_{r,n}\Omega^{\bullet}_{\mathbb{G}_{\mathrm{m},W_n(\Bbbk)}}(r)\big)=0$ for $r\geq 2$. There is an isomorphism $H^{2r-1}\big(\mathbb{G}_{\mathrm{m},\Bbbk},\mathbb{Z}(r)\big)\cong \mathrm{CH}^{r}(\mathbb{G}_{\mathrm{m},\Bbbk},1)$ with higher Chow groups, which vanish for $r\geq 3$ for the dimension reason, and for  $r=2$ by the localization exact sequence $\mathrm{CH}^{2}(\mathbb{A}^1,1)\rightarrow \mathrm{CH}^{2}(\mathbb{G}_m,1)\rightarrow \mathrm{CH}^{1}(\operatorname{Spec} \Bbbk,0)$ and the $\mathbb{A}^1$-invariance.
\end{proof}

% subsection deformation_in_k_theory (end)

\subsection{Continuity of algebraic  \texorpdfstring{$K$}{K}-theory} % (fold)
\label{sub:continuous_k_theory}

\begin{lemma}\label{lem:contCohomology_of_relMotComplex}
Let $\Bbbk$ be a perfect field of characteristic $p$.
Let $X_{\centerdot}\in \mathrm{Sm}_{W_{\centerdot}}$ be an object associated with a smooth projective scheme $X$ over $W(\Bbbk)$. Then 
\begin{align*}
H_{\mathrm{cont}}^{i}\big(X_{\centerdot},p(r)\Omega^{\bullet}_{X_{\centerdot}}\big)
\cong \varprojlim_{n}H^i\big(X_{1},p(r)\Omega^{\bullet}_{X_{n}}\big)\ .
\end{align*}
\end{lemma}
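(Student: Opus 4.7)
The plan is to use the Milnor short exact sequence for the continuous cohomology of a pro-complex,
\begin{gather*}
0 \to R^1\!\varprojlim_{n} H^{i-1}\big(X_1, p(r)\Omega^{\bullet}_{X_n}\big) \to H^{i}_{\mathrm{cont}}\big(X_{\centerdot}, p(r)\Omega^{\bullet}_{X_{\centerdot}}\big) \to \varprojlim_{n} H^{i}\big(X_1, p(r)\Omega^{\bullet}_{X_n}\big) \to 0,
\end{gather*}
and reduce the assertion to the vanishing of $R^1\!\varprojlim_{n} H^{i-1}(X_1, p(r)\Omega^{\bullet}_{X_n})$. For this it will be enough to verify the Mittag-Leffler condition on the inverse system of finitely generated $W_n(\Bbbk)$-modules $\{H^{i-1}(X_1, p(r)\Omega^{\bullet}_{X_n})\}_n$.

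Next I would pass from the hypercohomology of the complex $p(r)\Omega^{\bullet}_{X_n}$ to the cohomology of its individual coherent components via the Hodge-to-de Rham spectral sequence, which is bounded since $\dim X_1 = d$ and $p(r)\Omega^{\bullet}_{X_n}$ is concentrated in degrees $[0,d]$. This gives a bounded spectral sequence of inverse systems with $E_1$-page $\{H^k(X_1, p(r)\Omega^j_{X_n})\}_n$. Because each structural map $p(r)\Omega^{j}_{X_{n+1}} \twoheadrightarrow p(r)\Omega^{j}_{X_n}$ is a surjection of coherent sheaves, $R^i\!\varprojlim_n$ vanishes on the sheaf level for $i \geq 1$, which upgrades termwise Mittag-Leffler to Mittag-Leffler on the abutment by a standard iterated-extension argument.

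Finally, I would handle each single-sheaf system $\{H^k(X_1, p(r)\Omega^j_{X_n})\}_n$ by Grothendieck's theorem on formal functions. Local freeness of $\Omega^j_{X/W}$ yields canonical identifications of abelian sheaves $p^{r-j}\Omega^j_{X_n/W_n} \cong \Omega^j_{X_{n-r+j}/W_{n-r+j}}$ for $0 \le j < r$ (the case $j \ge r$ is already $\Omega^j_{X_n/W_n}$), compatibly with the pro-structure up to an index shift. After re-indexing, the inverse system becomes $\{H^k(X_n, \Omega^j_{X/W} \otimes_W W_n)\}_n$ on the projective $W(\Bbbk)$-scheme $X$, and the classical formal functions theorem yields the isomorphism with the finitely generated $W(\Bbbk)$-module $H^k(X, \Omega^j_{X/W})$ together with the vanishing of $R^1\!\varprojlim$, which is more than enough.

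The main obstacle, as I see it, is the bookkeeping needed to confirm that the transition maps of the pro-complex induce, under the abelian-sheaf identifications above, the usual reduction maps to which Grothendieck's theorem applies; once that compatibility is settled, and once one commits to a specific propagation mechanism for Mittag-Leffler through the bounded Hodge spectral sequence (either via the surjectivity observation above or via the fact that all groups involved are finitely generated over a complete Noetherian ring), the remainder of the argument is formal.
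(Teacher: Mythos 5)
Your plan starts the same way as the paper's proof: reduce via the Milnor sequence to $R^1\!\varprojlim_{n} H^{i-1}(X_1,p(r)\Omega^{\bullet}_{X_n})=0$, and feed the problem into the Hodge-to-de Rham spectral sequence. The genuine gap is in your propagation step. You assert that because the transition maps are surjective on each coherent component (which is true, so $R^{i}\!\varprojlim$ vanishes at the sheaf level for $i\geq 1$), you can pass from termwise Mittag--Leffler to Mittag--Leffler on the abutment ``by a standard iterated-extension argument.'' That step is not available as stated: the Mittag--Leffler property of the $E_1$-page of the spectral sequence does not automatically descend to the $E_\infty$-page, because Mittag--Leffler is preserved by quotients of inverse systems but not by passage to inverse subsystems (kernels), and $E_{r+1}$ is a subquotient of $E_r$. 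Surjectivity of the sheaf-level transitions controls $\mathbf{R}\varprojlim$ of the sheaves, not of the resulting hypercohomology groups, and ``finitely generated over a complete Noetherian ring'' is also insufficient -- the system $\mathbb{Z}_p\xleftarrow{p}\mathbb{Z}_p\xleftarrow{p}\cdots$ is finitely generated at every level without satisfying Mittag--Leffler.

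What actually makes the propagation work, and what the paper uses, is a finite-length argument, not a finite-generation one. Since $X$ is projective over $W(\Bbbk)$ and the $j$-th term of $p(r)\Omega^{\bullet}_{X_n}$ is (as an abelian sheaf) isomorphic to $\Omega^{j}_{X_{(r-j)(n-1)}/W_{(r-j)(n-1)}}$ -- note the index is $(r-j)(n-1)$, not the $n-r+j$ you wrote, because the $j$-th term involves $X_{(r-j)n}$ rather than $X_n$ -- the group $E_1^{i,j}(n)$ is a finitely generated module over the finite-length ring $W_{(r-j)(n-1)}(\Bbbk)$, hence has finite length as a $W(\Bbbk)$-module. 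Finite length is preserved by arbitrary subquotients, so every $E_\infty^{i,j}(n)$ has finite length; this gives DCC on its submodules, so the decreasing sequence of images along the transition maps stabilizes, i.e.\ $\{E_\infty^{i,j}(n)\}_n$ is Mittag--Leffler; then the abutment is Mittag--Leffler by the finite filtration. This also shows that the invocation of the theorem on formal functions is heavier than necessary -- coherence and projectivity give finiteness of $E_1^{i,j}(n)$ directly -- and it alone would still leave the $E_1\!\to\!E_\infty$ propagation untreated. With the propagation replaced by the finite-length observation, your argument closes the gap and coincides in substance with the paper's proof.
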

\begin{proof}
We need only to show  $\varprojlim_{n}^1H^i\big(X_{1},p(r)\Omega^{\bullet}_{X_{n}}\big)=0$.
Denote  $W_j(\Bbbk)$ by $W_j$ for $j\geq 0$. We have the Hodge-to-de Rham spectral sequence
\begin{gather*}
E_1^{i,j}(n)=H^i(X_1,p^{r-j}\Omega^j_{X_{(r-j)n}/W_{(r-j)n}})\Longrightarrow H^{i+j}\big(X_{1},p(r)\Omega^{\bullet}_{X_{n}}\big)\ .
\end{gather*}
It suffices to show that for any fixed $n$ the images of the composition of homomorphisms of $W(\Bbbk)$-modules
\begin{gather*}
E_{\infty}^{i,j}(n+k)\rightarrow E_{\infty}^{i,j}(n+k-1)\rightarrow \dots\rightarrow
E_{\infty}^{i,j}(n+1)\rightarrow E_{\infty}^{i,j}(n)
\end{gather*}
stabilize for $k\gg 0$.
 To show this, first we note that this spectral sequence is constructed by filtrations of complexes of $W(\Bbbk)$-modules, therefore 
  $E_{\infty}^{i,j}(n)$ is a subquotient-$W_{(r-j)n}$-module of $E_1^{i,j}(n)$, and for every $k\geq 0$, the image of the homomorphism $E_{\infty}^{i,j}(n+k)\rightarrow E_{\infty}^{i,j}(n)$ is a $W_{(r-j)n}$-submodule of $E_{\infty}^{i,j}(n)$.
   Secondly, since $X$ is projective over $W(\Bbbk)$,  $E_1^{i,j}(n)\cong H^i\big(X_{(r-j)(n-1)},\Omega^j_{X_{(r-j)(n-1)}/W_{(r-j)(n-1)}}\big)$ is a finite $W_{(r-j)(n-1)}(\Bbbk)$-module, thus so is $E_{\infty}^{i,j}(n)$. 
   Hence $E_{\infty}^{i,j}(n)$  has finite length and the proof is completed.
\end{proof}

\begin{proposition}\label{prop:continuity-alg-K-theory}
Let $\Bbbk$ be a perfect field of characteristic $p$.
Let $X_{\centerdot}\in \mathrm{Sm}_{W_{\centerdot}}$ be an object associated with a smooth projective scheme $X$ over $W(\Bbbk)$. Then for $0\leq i\leq p-5-d$,
\begin{align*}
K_{\mathrm{cont},i}(X_{\centerdot})\xrightarrow{\sim} \varprojlim_{n} K_i(X_n)\quad.
\end{align*}
\end{proposition}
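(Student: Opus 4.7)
The starting point is the Milnor short exact sequence
\[
0 \to \varprojlim_n{}^1 K_{i+1}(X_n) \to K_{\mathrm{cont}, i}(X_\centerdot) \to \varprojlim_n K_i(X_n) \to 0
\]
associated with the tower of spectra $\{K(X_n)\}_n$ (whose homotopy limit defines continuous $K$-theory). The proposition is thus equivalent to the vanishing $\varprojlim_n{}^1 K_{i+1}(X_n) = 0$ in the range $0 \leq i \leq p-5-d$, and I will deduce this from a Mittag--Leffler argument for the relative $K$-groups.

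The key input is the finite length of the relative $K$-groups. By Corollary \ref{cor:smAlg-relKTheory}(i), for $0 \leq j \leq p-5-d$ the Chern character furnishes an isomorphism
\[
K_j(X_n, X_1) \;\cong\; \bigoplus_{r=1}^{p-1} \mathbb{H}^{2r-j-1}\bigl(X_1,\, p^{r,1}_{r,n}\Omega^{\bullet}_{X_\centerdot}\bigr).
\]
Since $X_1$ is projective of dimension $d$ and each $p^{r,1}_{r,n}\Omega^{\bullet}_{X_\centerdot}$ is a bounded complex of coherent $\mathcal{O}_{X_1}$-modules, every such hypercohomology group is a finite-dimensional $\Bbbk$-vector space, hence of finite length as a $W(\Bbbk)$-module. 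A pro-system of finite length modules automatically satisfies Mittag--Leffler (any decreasing chain of submodules of a finite length module stabilizes), so $\varprojlim^1_n K_j(X_n, X_1) = 0$ in the stated range.

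Now take the homotopy limit of the fiber sequence of towers $K(X_n, X_1) \to K(X_n) \to K(X_1)$ (the third being constant), yielding a fiber sequence $\holim_n K(X_n, X_1) \to \holim_n K(X_n) \to K(X_1)$. By the Milnor sequence for the first tower and the vanishing just established, $\pi_j \holim_n K(X_n, X_1) \cong \varprojlim_n K_j(X_n, X_1)$ in our range. Comparing the resulting five-term exact sequence (expressing $K_{\mathrm{cont}, i}(X_\centerdot)$ as an extension involving $\varprojlim K_i(X_n, X_1)$ and a subgroup of $K_i(X_1)$) with the analogous five-term sequence obtained by applying the left-exact functor $\varprojlim_n$ to the long exact sequence of the pair $(X_n, X_1)$, and invoking the five-lemma, one concludes that $K_{\mathrm{cont}, i}(X_\centerdot) \to \varprojlim_n K_i(X_n)$ is an isomorphism. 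The hard part is the bookkeeping for this comparison: one must verify by naturality that the connecting maps $\partial_\infty$ and $\varprojlim_n \partial_n$ agree, and that the auxiliary pro-systems arising from images and kernels of maps between the fixed groups $K_*(X_1)$ and the finite length groups $K_*(X_n, X_1)$ have vanishing $\varprojlim^1$; this in turn follows from the Mittag--Leffler property of the pro-system of images in $K_*(X_n, X_1)$ (which are themselves finite length). A cleaner alternative route, should the bookkeeping prove delicate, is to transport the entire problem to the motivic side via the integral Chern character of Proposition \ref{prop:definition-infinitesimal-ChernCharacter} and to invoke Lemma \ref{lem:contCohomology_of_relMotComplex}, which already identifies continuous hypercohomology of the relevant complex with the naive inverse limit.
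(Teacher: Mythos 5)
Your ``cleaner alternative route'' at the end --- transport via the Chern character of Corollary~\ref{cor:smAlg-relKTheory}(i) and invoke Lemma~\ref{lem:contCohomology_of_relMotComplex} --- is precisely what the paper does; the paper's entire proof is ``This follows from Corollary~\ref{cor:smAlg-relKTheory}(i) and Lemma~\ref{lem:contCohomology_of_relMotComplex}.'' So the bulk of your proposal is an attempt to fill in the bookkeeping the paper leaves implicit. Two small inaccuracies on the way: the terms of $p^{r,1}_{r,n}\Omega^{\bullet}_{X_\centerdot}$ are coherent sheaves on the various $X_{jn}$ (not $\mathcal{O}_{X_1}$-modules), and the resulting hypercohomology groups are finitely generated $W_{jn}(\Bbbk)$-modules, hence of finite length over $W(\Bbbk)$ --- not finite-dimensional $\Bbbk$-vector spaces (though the conclusion you draw, Mittag--Leffler, is unaffected).

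The genuine gap is in the step you flag as ``the hard part.'' You assert that the auxiliary pro-systems needed for the five-term comparison ``have vanishing $\varprojlim^1$'' because they are ``images in $K_*(X_n,X_1)$, which are themselves finite length.'' This is correct for $\operatorname{im}\bigl(K_j(X_n,X_1)\to K_j(X_n)\bigr)$ and for $\operatorname{im}\bigl(\partial_n\colon K_{j+1}(X_1)\to K_j(X_n,X_1)\bigr)$, which are indeed subquotients of the finite groups $K_*(X_n,X_1)$. But it does \emph{not} cover the pro-system $\mathrm{Coim}_n := \operatorname{im}\bigl(K_{i+1}(X_n)\to K_{i+1}(X_1)\bigr)=\ker\partial_n$, a decreasing chain of subgroups of the \emph{single} group $K_{i+1}(X_1)$. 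From the short exact sequence of towers $0\to\mathrm{Coim}_n\to K_{i+1}(X_1)\to\operatorname{im}(\partial_n)\to 0$ one gets
\[
\varprojlim\nolimits^1\mathrm{Coim}_n \;\cong\; \operatorname{coker}\Bigl(K_{i+1}(X_1)\longrightarrow \varprojlim_n \operatorname{im}(\partial_n)\Bigr),
\]
the cokernel of a completion map, and ML of the finite tower $\{\operatorname{im}(\partial_n)\}$ does nothing to kill it (compare $\mathbb{Z}\to\mathbb{Z}_p$). Since $\varprojlim^1 K_{i+1}(X_n)$ surjects onto $\varprojlim^1\mathrm{Coim}_n$, this is exactly the quantity one must control, and your appeal to finite length of the relative groups does not reach it. So the ``bookkeeping'' is not mere bookkeeping: an additional input about the tower $\{K_{i+1}(X_n)\}_n$ relative to the constant $K_{i+1}(X_1)$ is needed, and your proposal (and, arguably, the paper's terse citation) does not supply it.
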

\begin{proof}
This follows from Corollary \ref{cor:smAlg-relKTheory}(i) and Lemma \ref{lem:contCohomology_of_relMotComplex}.
\end{proof}

% section consequences_in_$k$_k_theory_and_deformation_theory (end)

%\begin{appendix}
%\section{Simplicial resolutions versus (c)dga resolutions} % (fold)
%\label{sec:simplicial_resolutions_versus_c_dga_resolutions}

% section simplicial_resolutions_versus_c_dga_resolutions (end)

%\end{appendix}

\end{document}